\title{The equivalence between two real Seiberg-Witten Floer homologies}
\author{Yonghan Xiao}
\address{Department of Mathematics, School of Mathematical Sciences\\Peking University}
\email{judy\_xyh0530@stu.pku.edu.cn}
\date{\today}
\begin{document}

\maketitle
\begin{abstract} 
We show that for a real rational homology sphere $(Y,\iota)$ equipped with a real $\mathrm{spin^c}$ structure $\s$, the real monopole Floer homology defined by Li and the real Seiberg-Witten Floer homology defined by Konno, Miyazawa and Taniguchi are isomorphic. As corollaries, we identify some Fr\o yshov-type invariants and prove two Smith-type inequalities.
    
\end{abstract}
\tableofcontents

\section{Introduction}
Let $Y$ be a three-manifold with a $\mathrm{spin^c}$ structure $\underline{\s}$. There are two different ways of defining Seiberg-Witten Floer homology for $(Y,\underline{\s})$. The first one is the monopole Floer homology $\mathit{HM}^\circ$ constructed by Kronheimer and Mrowka in \cite{Kronheimer_Mrowka_2007}. They used formal gradient on the configuration space to perturb the Seiberg-Witten map in order to achieve transversality and then took half-infinite dimensional Morse homology of the perturbed vector field. This approach works generally for all three-manifolds. When specialized to rational homology spheres, Manolescu used finite-dimensional approximation to produce the Seiberg-Witten Floer stable homotopy type $\mathit{SWF}(Y,\underline{\s})$, which is an $S^1$-equivariant suspension spectrum. Taking its Borel homology leads to another version of Seiberg-Witten Floer homology. In \cite{Lidman2016TheEO}, the two notions were identified in the sense that there is an isomorphism \[\widecheck{\mathit{HM}}_{*}(Y,\underline{\s})\cong \widetilde{H}^{S^1}_{*}(\mathit{SWF}(Y,\underline{\s});\Z),\] as absolutely graded $\widetilde{H}^*_{S^1}(S^0;\Z)\cong\Z[U]$-modules and its two counterparts.

Now we add more symmetry to this picture. Let $Y$ be a three-manifold with a real structure $\iota$. Here, a real structure on $Y$ is an orientation preserving involution on $Y$ with a \emph{non-empty} one-dimensional fixed set. A compatible real $\mathrm{spin^c}$ structure $\s$ is a $\mathrm{spin^c}$ structure $\underline{\s}$ together with a lift $\tau$ of $\iota$ to the spinor bundle. Following Kronheimer and Mrowka's approach, Li introduced real monopole Floer homology $\mathit{HMR}^{\circ}$ in \cite{li2022monopolefloerhomologyreal}. On the other hand, when $Y$ is a rational homology sphere, Konno, Miyazawa and Taniguchi defined the real Seiberg-Witten spectrum $\mathit{SWF}_{\Z_2}(Y,\iota,\s)$ in \cite{Konno2024}, following Manolescu's idea. They conjectured that these two real theories also coincide on rational homology spheres. In this paper, we prove this conjecture and also obtain 
\cite[Conjecture 1.4]{li2024realmonopolesspectralsequence} as a corollary.

\begin{thm}(\cite[Conjecture 1.27]{Konno2024}) \label{thm:main theorem}
Let $Y$ be a rational homology three sphere with a real structure $\iota$ and a compatible real $\mathrm{spin^c}$ structure $\s$. Then we have an isomorphism of relatively graded $\widetilde{H}_{\Z_2}^*(S^0;\F)\cong \F[v]$-modules \[\widecheck{\mathit{HMR}}_{*}(Y,\iota,\s)\cong \widetilde{H}^{\Z_2}_{*}(\mathit{SWF}_{\Z_2}(Y,\iota,\s);\F).\] Here, $\widecheck{\mathit{HMR}}_{*}(Y,\iota,\s)$ is the ``to'' version of real monopole Floer homology defined in \cite{li2022monopolefloerhomologyreal}  and $\mathit{SWF}_{\Z_2}(Y,\iota,\s)$ is the real Seiberg-Witten Floer homotopy type defined in \cite{Konno2024}. The isomorphism respects absolute grading when a well-defined absolute grading exists on $\widecheck{\mathit{HMR}}_{*}(Y,\iota,\s)$ (See Subsection \ref{subsub:Gradings}).
\end{thm}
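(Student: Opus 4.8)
The plan is to follow the architecture of Lidman--Manolescu's proof of the non-equivariant statement in \cite{Lidman2016TheEO}, systematically replacing the residual $S^1$-symmetry by the residual $\Z_2=\{\pm1\}$-symmetry that survives after fixing the real Coulomb gauge. The common intermediate object is the $\Z_2$-equivariant Conley index $I_\lambda^\mu$ of the (perturbed) Seiberg--Witten flow on a finite-dimensional approximation $V_\lambda^\mu$ of the real global Coulomb slice. By construction $\mathit{SWF}_{\Z_2}(Y,\iota,\s)$ of \cite{Konno2024} is a formal desuspension of $I_\lambda^\mu$, so the right-hand side of the theorem is a shift of the Borel homology of $I_\lambda^\mu$ essentially by definition. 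The substance of the proof is therefore to show that the left-hand side $\widecheck{\mathit{HMR}}_*(Y,\iota,\s)$ also computes this Borel homology. This splits into: (i) an analytic reduction of Li's infinite-dimensional, blown-up construction in \cite{li2022monopolefloerhomologyreal} to the finite-dimensional approximate flow; (ii) a reconciliation of the Kronheimer--Mrowka blow-up formalism with Borel-equivariant homology; (iii) a comparison of gradings and of the $\F[v]$-module structures. Working over $\F=\F_2$ is convenient throughout, as it removes the orientation subtleties coming from the involution on moduli spaces.

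\textbf{Step (i): finite-dimensional approximation.} I would adapt Manolescu's compactness theorem and the Conley-index continuation argument to the equivariant real setting. Real Seiberg--Witten trajectories on $Y\times\R$ obey the same elliptic estimates as in the ordinary case and are preserved by $\iota$, so the uniform $L^2_k$-bounds and the absence of energy concentration carry over; consequently, for $\lambda$ and $\mu$ large the approximate flow on $V_\lambda^\mu$ has an isolated invariant set whose $\Z_2$-Conley index is stable up to suspension by the expected representation spheres. The real difficulty, and what I expect to be the main obstacle of the whole proof, is that $\widecheck{\mathit{HMR}}$ is defined on the \emph{blown-up} real configuration space with abstract Kronheimer--Mrowka perturbations \cite{Kronheimer_Mrowka_2007}, whereas the approximate picture uses the projected Seiberg--Witten flow on $V_\lambda^\mu$. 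Bridging these requires a two-stage argument: first compare the two perturbation classes on the infinite-dimensional side through a generic path, obtaining a chain homotopy equivalence; then show that for large approximation level the perturbed flow is related to the approximate one by a continuation of isolating neighborhoods. The delicate point is the behavior near the real reducible locus: one must check that broken trajectories limiting onto the reducibles are accounted for by exactly the portion of the $\Z_2$-Conley index supported near the fixed locus, so that no information is lost in the reduction.

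\textbf{Step (ii): blow-up versus Borel.} Off the reducibles the real gauge action has residual stabilizer $\{\pm1\}$ acting freely on nonzero spinors, and the real reducibles are precisely the $\Z_2$-fixed points; structurally this is identical to the $S^1$-picture with ``$S^1$ acting freely away from the reducible'' replaced by ``$\Z_2$ acting freely away from the reducibles''. The blow-up replaces the reducible locus by the real-projectivized normal data of the real Dirac operator, on which $-1$ acts by negation, and $H^*(\mathbb{RP}^\infty;\F_2)\cong\F_2[v]$ matches $H^*_{\Z_2}(S^0;\F)$. I would therefore transplant Lidman--Manolescu's identification of the blown-up monopole complex with an honest Borel construction: resolve the $\Z_2$-fixed locus, use freeness of the action on the complement so that Borel homology there agrees with ordinary homology, and match the boundary contributions of the blown-up reducibles with the equivariant cells. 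This produces an isomorphism between $\widecheck{\mathit{HMR}}_*(Y,\iota,\s)$ and the shifted Borel homology of $I_\lambda^\mu$.

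\textbf{Step (iii): gradings and module structure.} Composing the isomorphisms of Steps (i)--(ii) with the definition of $\mathit{SWF}_{\Z_2}$ yields the abstract isomorphism; it remains to track structure. The relative grading on each side is the spectral flow of the family of real Dirac operators along paths of configurations --- the same geometric quantity --- so relative gradings are intertwined, and the approximation-dependent shifts match the formal desuspensions defining $\mathit{SWF}_{\Z_2}$, so they cancel. For the module structure I would compare the two degree $-1$ endomorphisms: on the homotopy side, cap product with the generator $v\in H^1_{\Z_2}(S^0;\F)$; on the monopole side, the natural degree $-1$ module-structure map built from the inclusion of the reducible locus. The identification of Step (ii) intertwines them because there $v$ is exactly the first Stiefel--Whitney class of the tautological real line bundle used to resolve the fixed locus. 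Finally, when a well-defined absolute grading exists (Subsection \ref{subsub:Gradings}), it is pinned down on both sides by the same index formula on a chosen real filling, so the isomorphism respects absolute grading there as well.
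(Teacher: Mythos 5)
Your overall architecture is the right one — the Lidman--Manolescu blueprint with $S^1$ replaced by $\Z_2$, with a $\Z_2$-equivariant finite-dimensional Conley index as the intermediate object and $H^*_{\Z_2}(S^0;\F)\cong\F[v]$ coming from $\mathbb{RP}^\infty$ as in your Step (ii). But Step (i) as you describe it would not go through, and glossing over it misses what the paper singles out as the main technical tension.

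Your proposed first stage — ``compare the two perturbation classes on the infinite-dimensional side through a generic path, obtaining a chain homotopy equivalence'' — presupposes freedom in choosing the perturbation that the real setting does not allow. The finite-dimensional $\Z_2$-Conley index $\mathit{SWF}_{\Z_2}$ is only well-defined when the perturbed vector field is $I$-equivariant, which forces one to use Li's real cylinder functions (pairings with $I$-invariant forms and spinors). Those perturbations are exactly what makes the flow equivariant, but they need not be admissible on all of $W$: there is no reason for them to achieve transversality for the non-$I$-invariant stationary points and trajectories. Conversely, perturbations generic enough to work on the full configuration space are not $I$-equivariant and hence cannot be used on the finite-dimensional side. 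There is therefore no ``generic path'' bridging the two worlds; the paper avoids the issue by committing to a single fixed choice (real cylinder functions), proving $\mathit{SWF}_{\Z_2,\frakq}\simeq\mathit{SWF}_{\Z_2}$ by Conley continuation (not a chain homotopy equivalence — one side has no chain complex yet), and then \emph{reconstructing} the auxiliary objects of \cite{Lidman2016TheEO} — the almost-gradient function $F^R_\lambda$ and the self-diffeomorphism $\Xi_\lambda$ — directly on $W^I$ rather than restricting them from $W$, precisely because the restriction route is blocked by the admissibility issue.

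A second gap in Step (ii): because the residual constant gauge group on $W^I$ is the discrete group $\Z_2$ rather than $S^1$, there is no tangent to the gauge orbit in the global Coulomb slice. The extended Hessians and the Fredholm setup on path spaces used in \cite{Lidman2016TheEO} all split the tangent space into ``along orbit'' and ``local Coulomb slice'' pieces, and these definitions have to be rewritten (and the $\Z_2$-equivariant quasi-gradient blow-up has to be developed from scratch) before ``blow-up versus Borel'' can be reconciled. Your proposal takes the framework as transportable, but the intermediate object is actually the Morse complex of a $\Z_2$-equivariant Morse quasi-gradient on $B(2R)\cap W^{\lambda,I}$, whose existence depends on the modified Hessian analysis and on $F^R_\lambda$. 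Finally, you do not make explicit the closing step: the Morse complex of the approximate flow only computes $\widetilde H^{\Z_2}_*(I^\lambda_\frakq)$ up to the connectivity of the pair $(I^\lambda_\frakq,(I^\lambda_\frakq-(I^\lambda_\frakq)^{\Z_2})\cup *)$, and one must show — via the smash decomposition $I^\lambda_\frakq\simeq I^\mu_\frakq\wedge I(l)^\lambda_\mu$ and a count of trivial versus sign representations — that this grading window grows without bound as $\lambda\to\infty$.
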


Similarly, we have isomorphisms:\[\widehat{\mathit{HMR}}_{*}(Y,\iota,\s)\cong c\widetilde{H}_{*}(\mathit{SWF}_{\Z_2}(Y,\iota,\s);\F),\] \[\overline{\mathit{HMR}}_{*}(Y,\iota,\s)\cong t\widetilde{H}^{\Z_2}_{*}(\mathit{SWF}_{\Z_2}(Y,\iota,\s);\F).\]
where $c\widetilde{H}_{*}(\mathit{SWF}_{\Z_2}(Y,\iota,\s);\F)$ and $t\widetilde{H}^{\Z_2}_{*}(\mathit{SWF}_{\Z_2}(Y,\iota,\s);\F)$ are the coBorel and Tate homology of $\mathit{SWF}_{\Z_2}(Y,\iota,\s)$, respectively.
The main idea of the proof is to restrict the constructions in \cite{Lidman2016TheEO} to the invariant part, but we have to be careful when doing this. A key difference is that now the constant gauge group is a discrete group $\Z_2$, so the interpolation argument no longer works, and we have no tangent to the orbit after moving into the global Coulomb slice. For the former, we need some algebraic topology argument when we identify the grading and module structure (see Subsection \ref{sub: Gradings and U-action}). For the latter, we need to modify the definition of extended Hessians and some other notions on the path spaces. Also, we choose to use a real cylinder function to perturb Seiberg-Witten map equivariantly, so that we can use their result on `very compactness' without proof. But this means we may not achieve the necessary transversality in the non-invariant part of configuration space, so we cannot simply restrict all their constructions to the invariant part. Instead, we must adapt the construction of functions for the quasi-gradient and the self-diffeomorphism to the real case. (For the reason of our choice, see Subsection \ref{sub:Morse quasi-gradient flow and Morse Smale condition}.)

As noted above, the main difference between two approaches is that one introduces perturbation to the Seiberg-Witten equation, while the other one restricts to the global Coulomb slice (See Subsection \ref{subsub: Configuration space and Coulomb slices} for its definition) and takes a finite dimensional cut-off. We relate them by first reformulating $\widecheck{\mathit{HMR}}$ in the global Coulomb slice and generalizing the definition of $\mathit{SWF}_{\Z_2}$ to perturbed Seiberg-Witten map. Then, we construct an intermediate chain complex depending on the parameter $\lambda$ of cut-off, which is the Morse complex of an equivariant Morse quasi-gradient on a ball in the cut-off global Coulomb slice, and show that it is isomorphic to both $\widecheck{\mathit{HMR}}_*$ and $\widetilde{H}^{\Z_2}_*(\mathit{SWF}_{\Z_2})$ in a grading range determined by $\lambda$. Finally, we show that the grading range tends to $\infty$ as $\lambda$ does. Along this way, we also identify the module structure and grading on various homology groups.

\begin{remark}\label{rmk:generalize to nonhomology sphere}
It was pointed out by Manolescu that the argument in this paper works not only for real rational homology spheres, but also for any real three-manifold $(Y,\iota)$ satisfying that $H^1(Y;\Z)^{-\iota^*}=0$. Miyazawa showed in \cite{miyazawa2023gaugetheoreticinvariantembedded} that $\mathit{SWF}_{\Z_2}(Y,\iota,\s)$ is indeed well-defined for such a real manifold equipped with a real $\mathrm{spin^c}$ structure. This idea was also used in \cite{miyazawa2025satelliteformularealseibergwitten}. We will show how to fit our proof into this case after we finish the proof of our main theorem in Subsection \ref{sub:Main thm}.  
\end{remark}

As a corollary, we will prove the following proposition.
\begin{prop}\label{prop:Froyshov invariant identification}
    Let $Y$ be the double branched cover of $S^3$ over some link $K$ with $\mathrm{det}(K)\ne 0$ equipped with the canonical real structure $\iota$ and a compatible real $\mathrm{spin^c}$ structure. Then $d(Y,\iota,\s)=-h_R(K,\s)$, in which $h_R$ and $d$ are the Fr\o yshov-type invariants defined in \cite{Konno2024} and \cite{li2022triangle}, respectively.
\end{prop}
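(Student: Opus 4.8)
The plan is to deduce Proposition~\ref{prop:Froyshov invariant identification} from the main theorem (Theorem~\ref{thm:main theorem}) by a formal argument: both Fr\o yshov-type invariants $d(Y,\iota,\s)$ and $h_R(K,\s)$ are extracted from the same algebraic data (an $\F[v]$-module, or equivalently the tower structure in Borel/coBorel homology), once we know the two homology theories are isomorphic. The first step is to recall the precise definitions. On the one hand, $h_R(K,\s)$ is defined by Konno--Miyazawa--Taniguchi in \cite{Konno2024} from the $\Z_2$-equivariant Seiberg-Witten homotopy type $\mathit{SWF}_{\Z_2}(Y,\iota,\s)$ of the double branched cover $Y=\Sigma_2(K)$, as (up to sign and normalization) the grading at which the image of the map from Tate (or coBorel) to Borel homology stabilizes, i.e.\ the bottom of the infinite $v$-tower. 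On the other hand, $d(Y,\iota,\s)$ is defined by Li in \cite{li2022triangle} from $\widecheck{\mathit{HMR}}$ (or $\widehat{\mathit{HMR}}$, $\overline{\mathit{HMR}}$) in exactly the analogous way, as the minimal grading of a non-torsion element, or equivalently via the image of $\widehat{\mathit{HMR}}\to\widecheck{\mathit{HMR}}$. So the statement is essentially that these two numerical invariants are read off isomorphic $\F[v]$-module diagrams.

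The second step is to invoke Theorem~\ref{thm:main theorem} together with its stated companions for $\widehat{\mathit{HMR}}$ and $\overline{\mathit{HMR}}$: these give isomorphisms of relatively graded $\F[v]$-modules fitting into a commutative diagram with the exact triangles relating the three flavors (Borel/coBorel/Tate on one side, $\widecheck{}/\widehat{}/\overline{}$ on the other), since the isomorphisms in \cite{Lidman2016TheEO}, which we are restricting to the invariant part, are natural with respect to these triangles. Hence the map $\widehat{\mathit{HMR}}_*\to\widecheck{\mathit{HMR}}_*$ is identified with $c\widetilde{H}_*(\mathit{SWF}_{\Z_2})\to\widetilde{H}^{\Z_2}_*(\mathit{SWF}_{\Z_2})$, so the gradings of the bottoms of their images agree. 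This pins down $d(Y,\iota,\s)$ and $h_R(K,\s)$ up to the overall sign asserted; the sign $d=-h_R$ comes from the opposite grading conventions in \cite{li2022triangle} versus \cite{Konno2024} (homological vs.\ cohomological indexing of the respective Floer groups), which I would track explicitly by comparing the grading normalizations fixed in each source — the canonical real $\mathrm{spin^c}$ structure on $\Sigma_2(K)$ provides the absolute grading needed for both, as referenced in Subsection~\ref{subsub:Gradings}.

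The third step is to confirm that the hypotheses line up: we need $\det(K)\ne 0$ precisely so that $Y=\Sigma_2(K)$ is a rational homology sphere (since $|H_1(\Sigma_2(K);\Z)|=|\det(K)|$), which is the setting of Theorem~\ref{thm:main theorem}; and the canonical real structure $\iota$ on the double branched cover (the covering involution, which is orientation preserving with one-dimensional fixed set equal to the branch locus) together with a compatible real $\mathrm{spin^c}$ structure $\s$ is exactly what both \cite{Konno2024} and \cite{li2022triangle} require to define their invariants. So no extra work is needed there beyond citing that $h_R$ and $d$ are defined under these hypotheses.

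\emph{The main obstacle} I expect is the bookkeeping of absolute gradings and the sign: the isomorphism in Theorem~\ref{thm:main theorem} is a priori only relatively graded, upgraded to absolutely graded when a well-defined absolute grading exists, and one must check that in the double-branched-cover case the absolute grading used to normalize $d$ in \cite{li2022triangle} matches (up to the claimed overall sign) the one used to normalize $h_R$ in \cite{Konno2024}. This is not conceptually hard but requires carefully chasing the grading shifts through the finite-dimensional approximation (the correction terms involving the dimension of the kernel/cokernel of the relevant operators, which appear in both constructions) and through the reformulation in the global Coulomb slice; the homological-versus-cohomological convention is what produces the minus sign. Once the grading conventions are reconciled, the proposition follows immediately from the module isomorphisms and their compatibility with the exact triangles.
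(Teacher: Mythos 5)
Your proposal follows essentially the same route as the paper's proof: characterize both invariants as the starting grading of the infinite $v$-tower in the respective graded $\F[v]$-modules, then invoke the absolutely graded isomorphism from Theorem~\ref{thm:main theorem}. This is exactly what the paper does, and the main idea is right. However, two points deserve correction.

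First, your explanation of the minus sign is off. You attribute $d = -h_R$ to "opposite grading conventions ... homological vs.\ cohomological indexing." But in this paper both $d$ and $h_R$ are extracted from \emph{homology}: the paper explicitly re-states $d$ via $\widetilde{H}^{\Z_2}_*(X;\F)$ (noting it modifies the original cohomological definition from \cite{Konno2024} to a homological one precisely so that the comparison can be made directly), and $h_R$ in Definition~3.2 of \cite{li2022triangle} is already phrased so that the element of lowest $\mathrm{gr}^{\Q}$ in $i_*(\overline{\mathit{HMR}}_*)$ has grading $-h_R(K,\s)$. The minus sign is therefore simply part of the \emph{definition} of $h_R$ (a Fr\o yshov-style sign convention), not a byproduct of homology-versus-cohomology degree reversal. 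Once both invariants are read off from homological $\F[v]$-modules, they are literally the same number, namely the grading of the bottom of the infinite $v$-tower, with the sign discrepancy built into $h_R$'s definition. If you tried to derive the sign from an indexing flip you would not find one.

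Second, and more minor, you do not actually need the companion isomorphisms for $\overline{\mathit{HMR}}$ and $\widehat{\mathit{HMR}}$ nor the naturality of the exact triangles. Although $h_R$ is nominally defined through the image of $i_*:\overline{\mathit{HMR}}_*\to\widecheck{\mathit{HMR}}_*$, for a rational homology sphere that image coincides with the set of elements of $\widecheck{\mathit{HMR}}_*$ that survive all powers of $v$, i.e.\ the infinite $v$-tower. This algebraic reformulation lets the paper work entirely inside $\widecheck{\mathit{HMR}}_*\cong\widetilde{H}^{\Z_2}_*(\mathit{SWF}_{\Z_2})$, which is the single isomorphism Theorem~\ref{thm:main theorem} provides as absolutely graded $\F[v]$-modules. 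Bringing in the three-flavor exact triangles and their commutativity would work but requires an extra naturality argument that the paper sidesteps. (Also, be careful with attributions: the invariant $d$ is the one built from the real Seiberg--Witten homotopy type in \cite{Konno2024}, while $h_R$ is Li's invariant from $\widecheck{\mathit{HMR}}$ in \cite{li2022triangle}; the phrasing in the proposition statement is admittedly easy to misread, but the body of the paper is unambiguous on this.)
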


Based on the argument in \cite{LM2016coveringspaces}, we have two interesting Smith-type inequalities and some observations on L-spaces.

\begin{thm}\label{thm:Smith inequality}
Let $Y$ be a rational homology sphere with a real structure $\iota$ and a compatible real $\mathrm{spin^c}$ structure $\s$. Then we have the following inequality: \[\mathrm{dim} \widetilde{\mathit{HMR}}(Y,\iota,\s)\le \mathrm{dim} \widetilde{HM}(Y,\underline{\s}).\] 
Here, $\widetilde{\mathit{HM}}$ is the ``tilde version'' of monopole Floer homology introduced by Bloom in \cite{BLOOM20113216} and $\widetilde{\mathit{HMR}}$ is its real counterpart defined by Li in \cite{li2024realmonopolesspectralsequence}, both are considered over $\F$. In particular, when $Y$ is an L-space, $(Y,\iota)$ is a real L-space for any real structure $\iota$ on Y. 
\end{thm}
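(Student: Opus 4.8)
The substantive work needed here is already contained in the proof of Theorem~\ref{thm:main theorem}; what remains is to combine it with the classical Smith inequality for $\Z_2$-actions, in the spirit of \cite{LM2016coveringspaces}. Throughout I take $\F=\F_2$, which is exactly what makes $\Z_2$-Smith theory applicable and is consistent with $H^*_{\Z_2}(S^0;\F)\cong\F[v]$. The plan is: first realize both sides as the ordinary (non-equivariant) $\F$-homology of finite pointed CW complexes arising from finite-dimensional approximation; then recognize the real one as the $\Z_2$-fixed-point subcomplex of the non-real one; then quote Smith. For the first point, by Bloom's work (cf.\ \cite{BLOOM20113216, Lidman2016TheEO}) one has, for a rational homology sphere, $\widetilde{\mathit{HM}}(Y,\s)\cong\widetilde{H}_*(\mathit{SWF}(Y,\s);\F)$, the non-$S^1$-equivariant reduced homology of the Seiberg--Witten Floer spectrum, and the analogous real statement $\widetilde{\mathit{HMR}}(Y,\iota,\s)\cong\widetilde{H}_*(\mathit{SWF}_{\Z_2}(Y,\iota,\s);\F)$ is \cite[Conjecture~1.4]{li2024realmonopolesspectralsequence}, which drops out of the proof of Theorem~\ref{thm:main theorem}: the intermediate Morse complex at cut-off parameter $\lambda$ computes, non-equivariantly, the reduced $\F$-homology of a finite pointed CW complex $I_\lambda$ --- the Conley index of the perturbed real Seiberg--Witten flow on a large ball in the $\lambda$-truncated real global Coulomb slice --- in a band of gradings that grows with $\lambda$; since $\widetilde{\mathit{HMR}}(Y,\iota,\s)$ is finitely generated, it is isomorphic up to an overall grading shift to $\widetilde{H}_*(I_\lambda;\F)$ for $\lambda\gg 0$. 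Running the same argument in the full (non-real) truncated global Coulomb slice gives $\widetilde{\mathit{HM}}(Y,\s)\cong\widetilde{H}_*(J_\lambda;\F)$ up to a grading shift, where $J_\lambda$ is the Conley index of the perturbed flow on the full truncated slice.

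Next I would check that $I_\lambda$ is precisely the $\Z_2$-fixed-point subcomplex of $J_\lambda$, where $\Z_2=\{1,\tau\}$ acts through the lift $\tau$ of the real structure. One can arrange the spectral truncation of the global Coulomb slice to be $\tau$-equivariant (the linear operator used commutes with $\tau$), so that the $\tau$-invariant part of the full truncated slice is exactly the truncated real slice; the equivariant cylinder perturbation used in the proof of Theorem~\ref{thm:main theorem} keeps the finite-dimensional Seiberg--Witten flow $\tau$-equivariant; and if $(N,L)$ is a $\tau$-invariant index pair for the relevant isolated invariant set $S$, then $(N^{\Z_2},L^{\Z_2})$ is an index pair for $S^{\Z_2}$, so the real Conley index is $I(S^{\Z_2})=N^{\Z_2}/L^{\Z_2}=(N/L)^{\Z_2}=I(S)^{\Z_2}$. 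Hence $I_\lambda=J_\lambda^{\Z_2}$ as pointed $\Z_2$-CW complexes; equivalently, $\mathit{SWF}_{\Z_2}(Y,\iota,\s)$ is the genuine $\Z_2$-fixed-point spectrum of $\mathit{SWF}(Y,\s)$, up to desuspension.

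Now I would invoke the classical Smith inequality: for any finite pointed $\Z_2$-CW complex $X$ one has $\sum_i\dim_\F\widetilde{H}_i(X^{\Z_2};\F)\le\sum_i\dim_\F\widetilde{H}_i(X;\F)$. Applying this with $X=J_\lambda$ and combining with the two identifications above (overall grading shifts do not change total dimension) gives $\dim\widetilde{\mathit{HMR}}(Y,\iota,\s)\le\dim\widetilde{\mathit{HM}}(Y,\s)$. For the last assertion: if $Y$ is an L-space then $\dim_\F\widetilde{\mathit{HM}}(Y,\s)=1$ for every $\mathrm{spin^c}$ structure $\s$ (for a rational homology sphere one always has $\dim_\F\widetilde{\mathit{HM}}(Y,\s)\ge 1$, and equality for all $\s$ characterizes L-spaces, by Bloom); on the other hand $\widetilde{\mathit{HMR}}(Y,\iota,\s)\ne 0$, since $\mathit{SWF}_{\Z_2}(Y,\iota,\s)$ is a non-trivial finite spectrum --- for instance its fixed-point Conley index retains the reducible solution and so is not a point. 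Therefore $\dim\widetilde{\mathit{HMR}}(Y,\iota,\s)=1$ for every compatible real $\mathrm{spin^c}$ structure, i.e.\ $(Y,\iota)$ is a real L-space for any real structure $\iota$ on $Y$.

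The step I expect to require the most care is recognizing the real finite-dimensional approximation --- together with its perturbation data and its isolating neighbourhoods --- as the honest $\Z_2$-fixed-point set of the non-real one, so that Smith theory applies verbatim. This is precisely where the equivariant machinery developed for Theorem~\ref{thm:main theorem} (equivariant truncations, equivariant cylinder perturbations, invariant index pairs) is needed; granting it, the rest of the argument is formal.
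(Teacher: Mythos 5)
Your overall route matches the paper's: realize both tilde-invariants as ordinary (non-equivariant) reduced $\F$-homology of the two Seiberg--Witten Floer spectra, observe that $\mathit{SWF}_{\Z_2}(Y,\iota,\s)$ is the $\Z_2$-fixed-point spectrum of $\mathit{SWF}(Y,\underline{\s})$ under the involution $I$, and quote the classical Smith inequality. Your identification of the real Conley index as the $\Z_2$-fixed-point subcomplex via invariant index pairs is a correct unpacking, at the Conley-index level, of the paper's one-line observation $\mathit{SWF}_{\Z_2}(Y,\iota,\s)=\mathit{SWF}(Y,\underline{\s})^I$. Your argument for non-triviality of $\widetilde{\mathit{HMR}}$ in the L-space part is in fact more careful than what is written in the paper.

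However, there is a real gap in how you obtain the key intermediate identification $\widetilde{\mathit{HMR}}(Y,\iota,\s)\cong \widetilde{H}_*(\mathit{SWF}_{\Z_2}(Y,\iota,\s);\F)$ (the paper's Corollary~\ref{cor:tilde version isom}, i.e.\ \cite[Conjecture~1.4]{li2024realmonopolesspectralsequence}). You assert that this ``drops out of the proof of Theorem~\ref{thm:main theorem}'' because the intermediate Morse complex at cut-off $\lambda$ ``computes, non-equivariantly, the reduced $\F$-homology of \dots\ $I_\lambda$.'' But the intermediate Morse complex $\widecheck{C}_\lambda$ in the proof of Theorem~\ref{thm:main theorem} is built on $(W^{\lambda,I}\cap B(2R))^\sigma/\Z_2$ and, via the Conley-index comparison, computes $\widetilde{H}^{\Z_2}_*(I^\lambda)$ --- the Borel equivariant homology --- not the ordinary homology $\widetilde{H}_*(I^\lambda)$, of the real Conley index. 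What is proved is $\widecheck{\mathit{HMR}}\cong \widetilde{H}^{\Z_2}_*(\mathit{SWF}_{\Z_2})$; to pass to $\widetilde{\mathit{HMR}}=H_*(\Cone(v))$ and to $\widetilde{H}_*(\mathit{SWF}_{\Z_2})$ one needs the precise statement that, for a finite $\Z_2$-CW complex $X$, the mapping cone of the $v$-action on $H^{\Z_2}_*(X;\F)$ recovers $\widetilde{H}_*(X;\F)$. This is exactly Lemma~\ref{lem:equivariant homology and usual homology} in the paper (the $\Z_2$ analogue of \cite[Lemma~14.0.1]{Lidman2016TheEO}, proved using the Gysin sequence of the real line bundle / mod~2 Euler class), and you would need to state and prove it --- or else construct an honest non-equivariant Morse complex for $I^\lambda$, which is not what the proof of Theorem~\ref{thm:main theorem} supplies. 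Once that lemma is in hand, the rest of your argument closes the proof as intended.
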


\begin{thm}\label{thm:inequality in reduced version}
    Under the same assumption as Theorem \ref{thm:Smith inequality}, we also have \[\mathrm{dim} \mathit{HMR}_{\mathrm{red}}(Y,\iota,\s)\le 2 \mathrm{dim} \mathit{HM}_{\mathrm{red}}(Y,\underline{\s}).\]
    Here, $\mathit{HMR}_{\mathrm{red}}(Y,\iota,\s)$ and $\mathit{HM}_{\mathrm{red}}(Y,\underline{\s})$ are reduced versions of the real and the usual monopole Floer homology model on $\mathit{HF}_{\mathrm{red}}(Y,\underline{\s})$. We will define them in Subsection \ref{sub: Smith type inequalities}.
\end{thm}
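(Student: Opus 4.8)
The plan is to transport both sides onto the Seiberg--Witten spectra and then run the Smith-theoretic argument of \cite{LM2016coveringspaces}. By Theorem~\ref{thm:main theorem} together with its coBorel and Tate analogues, and by the corresponding identifications of \cite{Lidman2016TheEO} for $(Y,s)$, it suffices to prove the inequality for the spectrum-side models of the reduced groups, which is how we will define them in Subsection~\ref{sub: Smith type inequalities}. Write $W=\mathit{SWF}(Y,s)$ for the $S^1$-spectrum and $X=\mathit{SWF}_{\Z_2}(Y,\iota,\s)$ for the $\Z_2$-spectrum; then $\mathit{HM}_{\mathrm{red}}(Y,s)$ is the cokernel of the Tate-to-Borel map in $S^1$-equivariant homology of $W$ --- the ``non-tower'' part of $\widetilde H^{S^1}_*(W;\F)$ --- and $\mathit{HMR}_{\mathrm{red}}(Y,\iota,\s)$ is the corresponding non-tower part of $\widetilde H^{\Z_2}_*(X;\F)$. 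The geometric input I would use is that $X$ arises as a fixed-point spectrum of $W$: the pair $(\iota,\tau)$ induces a conjugate-linear involution $\widetilde\iota$ on the Seiberg--Witten configuration space of $(Y,s)$ which reverses the residual $S^1$-action and has the real configuration space as its fixed locus; carrying out the finite-dimensional approximation with an $\widetilde\iota$-invariant perturbation (which we use in any case, see Subsection~\ref{sub:Morse quasi-gradient flow and Morse Smale condition}) makes the real approximation the $\widetilde\iota$-fixed part of the ordinary one, and --- since forming Conley indices commutes with passing to fixed points --- this produces a $\Z_2$-spectrum $Z$, equivalent to $W$ after forgetting the involution, with $X\simeq Z^{\widetilde\iota}$, residual $\Z_2$ equal to $\{\pm1\}\subset S^1$, and $X^{\Z_2}=(Z^{S^1})^{\widetilde\iota}$ the real reducible sphere sitting inside the ordinary one $Z^{S^1}$.

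With this in hand I would run the Smith argument of \cite{LM2016coveringspaces}. Set $\overline Z:=Z/Z^{S^1}$ and $\overline X:=X/X^{\Z_2}$, which are free $S^1$- and $\Z_2$-spectra away from the basepoint. From the cofiber sequences $Z^{S^1}\to Z\to\overline Z$, $X^{\Z_2}\to X\to\overline X$, the localization theorem, and the identification $\widetilde H^{S^1}_*(Z;\F)\cong\widecheck{\mathit{HM}}_*(Y,s)$, one reads off $\mathit{HM}_{\mathrm{red}}(Y,s)$ from $\widetilde H^{S^1}_*(\overline Z;\F)=\widetilde H_*(\overline Z/S^1;\F)$ and, in exactly the same way, $\mathit{HMR}_{\mathrm{red}}(Y,\iota,\s)$ from $\widetilde H^{\Z_2}_*(\overline X;\F)=\widetilde H_*(\overline X/\Z_2;\F)$. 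Since $\widetilde\iota$ permutes the free $S^1$-orbits of $\overline Z$ it descends to an involution on $\overline Z/S^1$, and each $\widetilde\iota$-invariant free $S^1$-orbit meets $\overline Z^{\widetilde\iota}=\overline X$ in exactly one $\{\pm1\}$-orbit, so the fixed set of the descended involution is $\overline X/\Z_2$. I would then apply the Smith inequality over $\F=\F_2$ to the finite $\Z_2$-CW spectrum $\overline Z/S^1$, refined to track the equivariant module structures: a free $\Z_2$-cell contributes a free $\F_2[v]$-module with $\deg v=1$ to the $\Z_2$-Borel homology, whereas a free $S^1$-cell contributes a free $\F_2[U]$-module with $\deg U=2$ to the $S^1$-Borel homology, so that after the fixed-point construction each reduced $\F_2[v]$-summand of $\widetilde H^{\Z_2}_*(\overline X;\F)$ of length $\ell$ is accounted for by a reduced $\F_2[U]$-summand of $\widetilde H^{S^1}_*(\overline Z;\F)$ of length at least $\lceil \ell/2\rceil$. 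Summing over summands then gives $\dim_{\F}\mathit{HMR}_{\mathrm{red}}(Y,\iota,\s)\le 2\dim_{\F}\mathit{HM}_{\mathrm{red}}(Y,s)$.

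The hard part will be this last refinement. One has to set up the localization long exact sequences for $Z$ and for $X=Z^{\widetilde\iota}$ compatibly --- handling the fact that the real theory and the ordinary theory use different, and only partially transverse, perturbations --- and then upgrade the classical Smith inequality to one that respects the correspondence between the reduced $\F_2[v]$- and $\F_2[U]$-summands coming from the fixed-point functor. The factor $2$ rather than $1$ is exactly the price of this folding: the $B\Z_2$-tower has a class in every degree while the $BS^1$-tower only in every other degree, so a $U$-torsion summand of length $m$ can unfold to a $v$-torsion summand of length up to $2m$. This is the point at which the covering-space machinery of \cite{LM2016coveringspaces} must be adapted to the present $S^1$-versus-$\Z_2$ setting. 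As a byproduct, taking $\mathit{HM}_{\mathrm{red}}(Y,s)=0$ reproves that if $Y$ is an $L$-space then $\mathit{HMR}_{\mathrm{red}}(Y,\iota,\s)=0$, in line with the $L$-space statement of Theorem~\ref{thm:Smith inequality}.
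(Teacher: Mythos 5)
Your overall strategy---use the spectrum models, apply a Smith inequality, and account separately for the $S^1$-versus-$\Z_2$ mismatch---is the right one, but you explicitly leave the key step unproved: the ``refined'' Smith inequality that would match each $\F_2[v]$-torsion summand of length $\ell$ in $\widetilde H^{\Z_2}_*(\overline X)$ with an $\F_2[U]$-torsion summand of length $\ge\lceil\ell/2\rceil$ in $\widetilde H^{S^1}_*(\overline Z)$ is not a standard consequence of Smith theory, and you yourself say it is ``the hard part'' and ``must be adapted''. As stated, this is the content that needs a proof, not a tool you get to invoke; the classical Smith inequality only controls total mod-$2$ Betti numbers of a $\Z_2$-space and its fixed set and says nothing about how $\F_2[v]$- versus $\F_2[U]$-module summands correspond. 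Moreover, the intermediate picture of $X\simeq Z^{\widetilde\iota}$ with $\widetilde\iota$ anti-$S^1$-equivariant, while geometrically suggestive, is not actually needed to set up the comparison and would require its own verification at the level of Conley indices.

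The paper resolves precisely this difficulty by splitting the estimate into two clean steps through the intermediate quantity $\mathit{HM}^{\Z_2}_{\mathrm{red}}(Y,\underline{\s})=\widetilde H^{\Z_2}_*(X)/v^N\widetilde H^{\Z_2}_*(X)$, where $X=\mathit{SWF}(Y,\underline{\s})$ is viewed only with its residual $\Z_2\subset S^1$ action. The first step proves $\dim\mathit{HMR}_{\mathrm{red}}\le\dim\mathit{HM}^{\Z_2}_{\mathrm{red}}$ by the long exact sequence of the pair $(X\wedge S(\R^N)_+, X\wedge D(\R^N)_+)$, which identifies $\dim\widetilde H_*(X\wedge_{\Z_2}S(\R^N)_+)=N+2\dim\mathit{HM}^{\Z_2}_{\mathrm{red}}$, and then applies the plain Smith inequality to the $\Z_2$-action on $X\wedge_{\Z_2}S(\R^N)_+$ with fixed set $X^I\wedge_{\Z_2}S(\R^N)_+$---a genuine $\Z_2$-versus-$\Z_2$ comparison, so no refinement is needed. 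The second step proves $\dim\mathit{HM}^{\Z_2}_{\mathrm{red}}\le 2\dim\mathit{HM}_{\mathrm{red}}$ by the Serre spectral sequence of the $S^1$-bundle $X_{\Z_2}\to X_{S^1}$, together with the observation (checked on the universal example $S^1\to\R P^\infty\to\C P^\infty$) that the $U$-action on the base converges to the $v^2$-action on the total space. This is exactly where the factor of $2$ comes from, and it is isolated from the Smith step altogether. You would do well to look for an intermediate object that lets you keep the Smith inequality un-refined and push the $U$-versus-$v^2$ discrepancy into a separate, purely algebro-topological computation.
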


The idea of these two theorems originates from \cite{LM2016coveringspaces}. However, what they compared were the Floer homologies of a manifold and its covering space, while we are comparing the usual Floer homology and its real counterpart associated to the same manifold. Our proof is more involved since we need to deal with the difference between $S^1$ and $\Z_2$ equivariant theories.

\begin{remark}
    The implication for $L$-space has its real Heegaard Floer analogue conjectured in \cite{guth2025realheegaardfloerhomology} and proved in \cite{hendricks2025noterealheegaardfloer}. Hendricks's approach is roughly a localization spectral sequence from $\widehat{\mathit{HF}}$ to $\widehat{\mathit{HFR}}$, which is quite different from our approach, so it would be interesting to compare these. See Subsection \ref{sub: Smith type inequalities} for details. We also make the following conjecture, which is a real analogue of the main result from \cite{HF=HM1} and its sequel papers.
\end{remark}
\begin{conjecture}
    For a real rational homology three-sphere, or more generally any real three-manifold, we have the following isomorphisms \[\widetilde{\mathit{HMR}}(Y,\iota,\s)\cong \widehat{\mathit{HFR}}(Y,\iota,\s),\text{ } \widecheck{\mathit{HMR}}(Y,\iota,\s)\cong \mathit{HFR}^+(Y,\iota,\s),\]
    \[\widehat{\mathit{HMR}}(Y,\iota,\s)\cong \mathit{HFR}^-(Y,\iota,\s),\text{ } \overline{\mathit{HMR}}(Y,\iota,\s)\cong \mathit{HFR}^\infty(Y,\iota,\s).\]
    as graded $\F_2$-vector spaces or modules over a suitable ring.
\end{conjecture}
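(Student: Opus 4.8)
The plan is to prove a $\Z_2$-equivariant analogue of the Kutluhan--Lee--Taubes and Colin--Ghiggini--Honda equivalences between Heegaard Floer and monopole Floer homology; combined with Theorem \ref{thm:main theorem}, this also amounts to identifying $\widetilde{H}^{\Z_2}_{*}(\mathit{SWF}_{\Z_2}(Y,\iota,\s);\F)$ with $\widehat{\mathit{HFR}}(Y,\iota,\s)$. Since the finite-dimensional approximation picture of \cite{Konno2024} and of this paper has no Heegaard-Floer-side counterpart, the argument has to be geometric. I would introduce a \emph{real embedded contact homology} $\mathit{ECHR}(Y,\iota,\s)$ as an intermediary and establish two chain-level isomorphisms: $\widehat{\mathit{HFR}}\cong \mathit{ECHR}$ by adapting the Colin--Ghiggini--Honda chain of isomorphisms, and $\mathit{ECHR}\cong \widehat{\mathit{HMR}}$ by adapting Taubes's isomorphism $\mathit{ECH}\cong\widehat{\mathit{HM}}$, from which the four isomorphisms of the conjecture follow by matching each monopole flavor to the corresponding Heegaard flavor through the standard ECH dictionary (exact triangles and the $U$-action); alternatively one could run the Kutluhan--Lee--Taubes argument of \cite{HF=HM1} directly. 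A convenient first test case is $Y=\Sigma(K)$, a double branched cover with its canonical real structure, where one can hope to compare both sides with invariants of $K$ compatibly with Proposition \ref{prop:Froyshov invariant identification}.

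The first task is to set up compatible real structures in all three models. On the Heegaard side, isotope $\iota$ so that it preserves a Heegaard surface $\Sigma$ with $\Sigma^\iota\cong S^1$ and permutes (or fixes) the attaching curves; the induced involution on $\mathrm{Sym}^g(\Sigma)$ and on the Lagrangian tori is the input of \cite{guth2025realheegaardfloerhomology}, and $\widehat{\mathit{HFR}}$ is defined from the fixed loci of these involutions. On the contact side, choose the contact form (or stable Hamiltonian structure) and the almost complex structure on the symplectization $\R\times Y$ compatibly with $\iota$, so that $\iota$ acts on Reeb orbits and on $J$-holomorphic currents and $\mathit{ECHR}$ is generated by $\iota$-invariant orbit sets with differential counting $\iota$-invariant currents of ECH index one. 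On the gauge-theory side this is precisely the real structure $(\iota,\tau)$ of \cite{li2022monopolefloerhomologyreal} on $\R\times Y$, which Theorem \ref{thm:main theorem} already ties to $\mathit{SWF}_{\Z_2}$.

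The core analytic step is an equivariant upgrade of Taubes's large-$r$ correspondence: for $r\gg 0$ the $\iota$-invariant solutions of the $r$-perturbed Seiberg--Witten equations on $\R\times Y$ should be in bijection with $\iota$-invariant ECH generators, and $\iota$-invariant Seiberg--Witten flow lines with $\iota$-invariant broken holomorphic currents, yielding $\mathit{ECHR}\cong\widehat{\mathit{HMR}}$ and its companions. One then runs the Colin--Ghiggini--Honda program equivariantly: its open-book and sutured-manifold steps all admit Heegaard-diagram models carrying an $\iota$-action, and restricting to fixed loci should give $\widehat{\mathit{HFR}}\cong\mathit{ECHR}$. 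Finally I would match the $U$- and $v$-actions, the $\F[v]$-module structures, and the relative (or absolute, where defined) gradings, using uniqueness of the module structure as in the proof of Theorem \ref{thm:main theorem}; since everything is over $\F$, orientations of moduli spaces never enter, which removes one layer of difficulty present in the original non-real arguments.

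The hard part will be equivariant transversality together with redoing Taubes's compactness and gluing in the $r\to\infty$ limit so that every estimate respects the involution. As this paper already emphasizes in the $S^1$ versus $\Z_2$ discussion, one cannot in general perturb to achieve transversality while staying equivariant, so the Seiberg--Witten and holomorphic-curve moduli spaces must be cut out equivariantly and the contributions of the fixed-point sets analyzed directly; Taubes's weighted estimates, the spectral-flow and index computations, and the gluing theorems would all have to be revisited in this equivariant category. Rigorously defining $\mathit{ECHR}$ itself — in particular making sense of the ECH index and its expected degree shift relative to $\widehat{\mathit{HFR}}$ for $\iota$-invariant current configurations, and proving $\partial^2=0$ and topological invariance — is a substantial preliminary, and is where I would expect the first serious technical work to lie.
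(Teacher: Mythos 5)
This statement is labeled a \emph{Conjecture} in the paper, and the paper offers no proof of it; there is therefore no paper argument against which to compare your proposal. What you have written is a research program, not a proof, and you are candid about this (``I would introduce $\ldots$'', ``the hard part will be $\ldots$''). The route you sketch --- passing through a hypothetical real embedded contact homology and equivariantly upgrading the Taubes and Colin--Ghiggini--Honda chains, or alternatively running the Kutluhan--Lee--Taubes argument equivariantly --- is the natural one, and you correctly identify the principal obstructions: equivariant transversality for the relevant moduli spaces, the absence of any construction of $\mathit{ECHR}$ (including a well-defined ECH index and $\partial^2 = 0$ in the presence of the involution), and the need to redo Taubes's $r\to\infty$ compactness, spectral-flow, and gluing estimates in the equivariant category. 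But none of these steps is carried out, and several (especially the construction of $\mathit{ECHR}$ and the equivariant gluing theory) are genuinely open. As it stands your submission is a plausible plan with the right landmarks, not a proof, and should not be presented as one; the conjecture remains open both in the paper and in your proposal.
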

\subsection{Convention}
Throughout this paper, we use $\F=\F_2$ as the coefficient for (co)homology theories unless otherwise stated. To make things clear, we use $\Z_2$ for the group and $\F$ for the field, though they are essentially the same. For a real three-manifold, we setup the involution to be orientation-preserving with non-empty one-dimensional fixed set. For a real $\mathrm{spin^c}$ structure $\s$, we denote its underlying $\mathrm{spin^c}$ structure by $\underline{\s}$. We also abuse $\underline{\s}$ for an arbitrary $\mathrm{spin^c}$ structure which does not necessarily support a real structure. For various notations from gauge theory, we adopt notations from \cite{Lidman2016TheEO} as much as possible and add superscripts to distinguish the real analogue with its original construction. This is different from the convention in \cite{li2022monopolefloerhomologyreal}, which used the simplest notations for real configuration spaces and added underlines to the original spaces.

\subsection{Organization}
In Section \ref{sec:review}, we review some important notions that we shall use regarding the equivariant Morse quasi-gradient and the definitions of real Seiberg-Witten Floer homotopy type and real monopole Floer homology. Next, in Section \ref{sec:Real Monopole Floer homology in global Coulomb slice}, we reconstruct real monopole Floer homology in the global Coulomb slice and show that it is equivalent to the original version. In Section \ref{sec:Relating finite and infinite dimensional Morse homologies}, we first add a perturbation to the definition of homotopy type and construct the intermediate chain complex as we mentioned in the outline of the proof strategy. Then, we show convergence of the approximate stationary points and trajectories using the inverse function theorem. Finally, after some preparation in Subsection \ref{sub:Stabilit} and \ref{sub:U-action}, we prove our main theorem in Subsection \ref{sub:Main thm}. In the last two subsections, we apply our main theorem to prove the proposition on Fr\o yshov-type invariants and those Smith-type inequalities.
\begin{ack}
The author expresses gratitude to Jiakai Li, Masaki Taniguchi, Boyu Zhang for their helpful discussion. She thanks Jianfeng Lin and Ciprian Manolescu for their comments on an early draft and for their suggestions on generalizations and applications. She is also grateful for correction and advice from anonymous referees.
    
\end{ack}
\section{Review}\label{sec:review}
\subsection{Morse homology and Morse quasi-gradient flow}\label{sub:Morse homology and Morse quasi gradient flow}

In this subsection, we give some preliminary definitions and propositions about equivariant Morse homology and Morse quasi-gradient flow. For basic Morse homology, one can refer to \cite[Section 2.1-2.5]{Lidman2016TheEO} for a brief review. 

We consider equivariant Morse homology of a smooth manifold $X$ (without boundary) with a $\Z_2$-action. In the finite-dimensional setting, we will use $Q$ to denote the fixed point set. We can blow up $X$ to \[X^{\sigma}=(X-Q)\cup (N^1(Q)\times[0,\epsilon)),\] where the gluing is formed by identifying $N(Q)-Q$ with $N^1(Q)\times[0,\epsilon)$, for $N(Q)$ the normal bundle of $Q$ in $X$ and $N^1(Q)$ the unit normal bundle. Now, $X^{\sigma}$ has a free $\Z_2$-action, so that we can take the quotient $X^{\sigma}/\Z_2$ which is a smooth manifold with boundary $N^1(Q)/\Z_2$.

Similar to the $S^1$ case stated in \cite[Section 2.6]{Lidman2016TheEO}, a $\Z_2$-equivariant vector field $\widetilde{v}$ on $X$ induces smooth vector fields $v$ on $(X-Q)/\Z_2$ and $v^{\sigma}$ on $X^\sigma/\Z_2$, that are tangent to the boundary.

A point on the boundary can be written as $(q,[\phi])$ for $q\in Q$ and $\phi\in N^1_q(Q)$. The tangent space of $X^\sigma/\Z_2$ at this point decomposes as $T_q Q\oplus \left \langle\phi \right \rangle^{\perp}\oplus\R$, where $\left \langle\phi \right \rangle^{\perp}$ is the orthogonal complement of $\phi$ in $N_qQ$ and  $\R$ is the normal direction to the boundary.

The covariant derivative $(\nabla\tilde{v})_{q}$ is $\Z_2$-equivariant, so it takes the normal part $N_q Q$ to itself. Let $L_q$ be $(\nabla\tilde{v})|_{N_q Q}$. Using the decomposition of tangent space at $(q,[\phi])$ above, \[v^{\sigma}(q,[\phi])=(\widetilde{v}(q),\mathbb{L}_q\phi,0).\] When $\widetilde{v}(q)=0$, $\mathbb{L}_q\phi=L_q\phi-\mathrm{Re}\left \langle \phi,L_q\phi\right \rangle\phi$. Thus, stationary points of $v^\sigma$ on the boundary are those $(q,[\phi])$ for which $\widetilde{v}(q)=0$ and $\phi$ is an eigenvector of $L_q$.

\begin{definition}\label{defi:Z_2 equivariant Morse quasi-gradient}
    A smooth $\Z_2$-equivariant vector field $\widetilde{v}$ is a \emph{Morse equivariant quasi-gradient} if \begin{enumerate}
        \item All stationary points of $v$ on $(X-Q)/\Z_2$ are hyperbolic.
        \item All stationary points of $\widetilde{v}|_{Q}$ are hyperbolic.
        \item At each stationary point $q$ of $\widetilde{v}|_{Q}$, the operator $L_q:N_q\to N_q$ is self-adjoint with a simple spectrum away from zero. 
        \item There exists a smooth $\Z_2$-equivariant function $f:X\to \R$ such that $df(\widetilde{v})\ge 0$ for all $x\in X$ and equality holds iff $\tilde{v}=0$.
    \end{enumerate}
\end{definition}
Here, we remind readers that an operator is called \emph{hyperbolic} if its complexification has no purely imaginary eigenvalue and its spectrum is called \emph{simple} if each eigenvalue has multiplicity one.
\begin{lem}\label{lem:index of reducible in f.d. for Morse quasi gradient}
Conditions (1)-(3) in the above definition are equivalent to the requirement that all the stationary points of $v^\sigma$ on $X^\sigma/\Z_2$ are hyperbolic and the operator $L_q$ for any boundary stationary point $q$ is self-adjoint with a simple spectrum away from zero. Fix such a $q$ and label eigenvalues of $L_q$ by $\lambda_1(q)<\ldots<\lambda_{n}(q)$. Let $(q,[\phi_i(q)])$ be the corresponding stationary point of $v^\sigma$, then \[\mathrm{ind}(q,[\phi_i(q)])=\begin{cases}
    \mathrm{ind}_{Q}(q)+i-1 \text{  if } \lambda_i(q)>0\\
    \mathrm{ind}_{Q}(q)+i \text{  if } \lambda_i(q)<0\\
\end{cases}\]    
\end{lem}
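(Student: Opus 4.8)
The plan is to reduce everything to a local model near a boundary stationary point and then compute the Morse index by understanding the linearization of $v^\sigma$ in the three-term decomposition $T_q Q \oplus \langle\phi\rangle^\perp \oplus \R$ given just before the statement. First I would establish the equivalence of the two sets of conditions. Conditions (1) and (2) say the stationary points of $v$ on $(X-Q)/\Z_2$ and of $\tilde v|_Q$ are hyperbolic; on the blow-up $X^\sigma/\Z_2$ the stationary points are precisely the stationary points of $v$ in the interior together with the boundary points $(q,[\phi_i(q)])$ described in the paragraph preceding the lemma. For the interior points there is nothing to do. For a boundary point $(q,[\phi_i(q)])$, using $v^\sigma(q,[\phi]) = (\tilde v(q), \mathbb{L}_q\phi, 0)$, the linearization at a stationary point block-decomposes: the $T_q Q$-block is $(\nabla \tilde v|_Q)_q$ (hyperbolic $\iff$ condition (2)), the $\langle\phi_i\rangle^\perp$-block is governed by $L_q$ restricted to $\langle\phi_i\rangle^\perp$ — more precisely by the derivative of $\phi \mapsto L_q\phi - \mathrm{Re}\langle\phi,L_q\phi\rangle\phi$ at the eigenvector $\phi_i$ — and the $\R$-block (normal to the boundary) comes from the radial part of $\mathbb{L}$. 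So the content is: conditions (1)--(3) $\iff$ all these blocks are hyperbolic and $L_q$ is self-adjoint with simple nonzero spectrum. The self-adjointness/simplicity of $L_q$ is exactly condition (3), and I would check that, given this, the $\langle\phi_i\rangle^\perp$- and $\R$-blocks are automatically hyperbolic (nonzero real eigenvalues), so (1)--(3) together are equivalent to the blow-up statement. This mirrors the $S^1$-case in \cite{Lidman2016TheEO}, the only change being that the gauge group is discrete so there is no orbit direction to quotient out.

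Next I would carry out the index computation. Fix the boundary stationary point $(q,[\phi_i(q)])$ and differentiate $v^\sigma$ there. In the $T_q Q$ direction the negative eigenspace has dimension $\mathrm{ind}_Q(q)$ — this accounts for the $\mathrm{ind}_Q(q)$ term. In the $\langle\phi_i\rangle^\perp$ direction: since $L_q$ is self-adjoint with eigenvalues $\lambda_1(q) < \cdots < \lambda_n(q)$ and eigenvectors $\phi_1,\dots,\phi_n$, one computes that the derivative of $\phi \mapsto L_q\phi - \mathrm{Re}\langle\phi,L_q\phi\rangle\phi$ at $\phi = \phi_i$, restricted to $\langle\phi_i\rangle^\perp = \mathrm{span}\{\phi_j : j\ne i\}$, acts on $\phi_j$ with eigenvalue $\lambda_j(q) - \lambda_i(q)$. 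Hence the number of negative eigenvalues in this block is $\#\{j \ne i : \lambda_j(q) < \lambda_i(q)\} = i-1$. Finally the normal-to-boundary $\R$-direction: the inward normal coordinate is the radial variable in $N(Q)$, and the radial component of $L_q\phi$ along the ray through $\phi_i$ is $\mathrm{Re}\langle\phi_i, L_q\phi_i\rangle = \lambda_i(q)$; so the $\R$-block is multiplication by $\lambda_i(q)$, contributing to the Morse index (as computed for a vector field tangent to the boundary, one counts this direction) exactly when $\lambda_i(q) < 0$. Adding up: $\mathrm{ind} = \mathrm{ind}_Q(q) + (i-1) + [\lambda_i(q) < 0]$, which is $\mathrm{ind}_Q(q) + i - 1$ if $\lambda_i(q) > 0$ and $\mathrm{ind}_Q(q) + i$ if $\lambda_i(q) < 0$, as claimed.

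The main obstacle I expect is the bookkeeping around the normal-to-boundary direction and the precise sign convention: one must be careful about whether the Morse index of a vector field tangent to $\partial(X^\sigma/\Z_2)$ counts the boundary-normal direction, and about the orientation of that normal (pointing into vs. out of the blown-up manifold), since getting this backwards would swap the two cases. The cleanest way to handle this is to work in an explicit local model — take $Q = \R^k$ near $q$, $N_q Q = \R^n$ with $L_q = \mathrm{diag}(\lambda_1,\dots,\lambda_n)$ in the $\phi_j$-basis, pass to polar-type coordinates $(r,[\phi])$ on the normal bundle with $r \in [0,\epsilon)$, and directly write $v^\sigma$ in these coordinates — and then the linearization is literally a diagonal matrix whose signs can be read off. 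I would also double-check the identification of the boundary stationary points themselves: the condition "$\phi$ is an eigenvector of $L_q$" picked out $\pm\phi_i$ as a $\Z_2$-orbit, and simplicity of the spectrum guarantees these are isolated and nondegenerate in the $\langle\phi_i\rangle^\perp$-block. Everything else — transporting the interior hyperbolicity statement and assembling the equivalence — is routine and parallels \cite[Section 2.6]{Lidman2016TheEO} closely enough that I would cite it rather than reprove it.
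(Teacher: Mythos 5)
Your proof is correct and is exactly the intended argument: the paper states this lemma without proof as the direct $\Z_2$-analogue of the $S^1$-equivariant index computation in \cite[Section 2.6]{Lidman2016TheEO}, and your local-model calculation (block-triangular linearization with diagonal blocks $(\nabla\tilde v|_Q)_q$, the operator $\phi_j\mapsto(\lambda_j-\lambda_i)\phi_j$ on $\langle\phi_i\rangle^\perp$, and multiplication by $\lambda_i$ in the radial direction) is precisely that adaptation, with the real one-dimensional eigenspaces replacing the complex ones. The sign bookkeeping also checks out against the stated formula, since the index here counts negative eigenvalues of $\nabla v^\sigma$.
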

Below are some properties of $\Z_2$-equivariant Morse quasi-gradient analogous to \cite[Lemma 2.6.4-2.6.6]{Lidman2016TheEO}.
\begin{lem} \label{lem:properties of equivariant Morse quasi gradient}
Let $\tilde{v}$ be a $\Z_2$-equivariant Morse quasi-gradient on $X$.
\begin{itemize}
    \item If $f$ is the function from (4), then any stationary point of $\tilde{v}$ is a critical point of $f$.
    \item If $\gamma:\R\to X$ is a flow line of $\tilde{v}$, then $\lim_{t\to \pm\infty}[\gamma(t)]$ exist in $X/\Z_2$ and both of them are projections of stationary points of $\tilde{v}$.
    \item If $\gamma:\R\to X^\sigma/\Z_2$ is a flow line of $v^\sigma$, then $\lim_{t\to \pm\infty}\gamma(t)$ exist in $X^\sigma/\Z_2$ and both of them are stationary points of $v^\sigma$.
\end{itemize}
    
\end{lem}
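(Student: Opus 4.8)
The overall plan is to transplant the arguments of \cite[Lemmas 2.6.4--2.6.6]{Lidman2016TheEO}, which handle the $S^1$-equivariant case, to the present $\Z_2$-equivariant setting; the only structural difference is that the boundary fibres of the blow-up become real projective spaces $S^{k-1}/\Z_2$ rather than complex projective spaces. All three bullets reduce to the following soft principle: if a continuous flow on a compact Hausdorff space admits a continuous function $g$ that is non-decreasing along every trajectory, with equality set equal to the stationary set, and if the stationary points are isolated, then the $\omega$- and $\alpha$-limit set of every (precompact) trajectory is a single stationary point. Indeed such a limit set is non-empty, compact, connected and flow-invariant, $g$ is constant on it, so every one of its points is stationary (the trajectory through that point stays inside and keeps $g$ constant), and a non-empty connected subset of a discrete set is a singleton. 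So the work is to produce the right $g$ in each case and to check that the relevant stationary sets are isolated; we assume throughout, as in the finite-dimensional Morse theory of \cite{Lidman2016TheEO}, that $X$ (or at least the closure of the trajectory in question) is compact.

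\textbf{First bullet.} Set $h(x)=df_x(\widetilde v(x))$; by (4), $h\ge 0$ and $h^{-1}(0)$ is the stationary set, so a stationary point $p$ is a global minimum of $h$ and $dh_p=0$. In local coordinates $dh_p(w)=\langle\nabla f(p),(\nabla_w\widetilde v)(p)\rangle$ since $\widetilde v(p)=0$, and $(\nabla\widetilde v)_p$ is invertible: it preserves $T_pQ$ and $N_pQ$, is hyperbolic on the former by (1) or (2), and equals $L_p$ on the latter, which by (3) is self-adjoint with spectrum bounded away from $0$. Hence $\nabla f(p)=0$. The same invertibility, via the inverse function theorem applied to $\widetilde v$, shows stationary points of $\widetilde v$ are isolated, a fact used in the remaining bullets. \textbf{Second bullet.} Since $\widetilde v$ is $\Z_2$-equivariant, $Q$ is flow-invariant, so a flow line $\gamma$ lies entirely in $Q$ or misses $Q$; in either case $[\gamma]$ is a trajectory of the induced smooth vector field on $Q$ or on $(X-Q)/\Z_2$, and the induced flow on $X/\Z_2$ is continuous. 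The descent $\bar f$ of $f$ satisfies $\tfrac{d}{dt}\bar f=\bar h\ge 0$ with equality set the images of stationary points, which are isolated in $X/\Z_2$ because the stationary points are isolated in $X$ and $\Z_2$ is finite. The soft principle gives the claim.

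\textbf{Third bullet --- the main point, and the only real obstacle.} Let $f^\sigma$ be the function on $X^\sigma/\Z_2$ pulled back from $f$ along the blow-down $\pi^\sigma$. On the interior $df^\sigma(v^\sigma)=\bar h\ge 0$, and on the boundary a short computation gives $df^\sigma(v^\sigma)(q,[\phi])=df_q(\widetilde v(q))=h(q)$, because the differential of $\pi^\sigma$ annihilates the sphere directions $\langle\phi\rangle^\perp$ and the normal-to-boundary component of $v^\sigma$ vanishes. Thus $f^\sigma$ is a Lyapunov function, but it is \emph{constant on each boundary fibre} $F_q:=\{q\}\times N^1_q(Q)/\Z_2$ over a stationary point $q$ of $\widetilde v|_Q$, so it cannot by itself detect the limit of a trajectory. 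As in the soft principle, any $\omega$-limit set $\Omega$ of $v^\sigma$ is compact, connected, flow-invariant, with $f^\sigma$ constant on it, hence $\Omega\subseteq C:=\mathrm{Crit}(v)\ \sqcup\ \bigsqcup_q F_q$. The connected components of $C$ are the individual hyperbolic points of $\mathrm{Crit}(v)$ and the individual fibres $F_q$ (each connected, being $S^{k-1}/\Z_2\cong\R P^{k-1}$), so connectedness of $\Omega$ puts it in a single component. If that component is a point, we are done. If $\Omega\subseteq F_q$, then $F_q$ is $v^\sigma$-invariant and $v^\sigma|_{F_q}$ is half the gradient vector field $\nabla R_q$ of the Rayleigh quotient $R_q(\phi)=\mathrm{Re}\langle\phi,L_q\phi\rangle$ on $\R P^{k-1}$ (using that $L_q$ is self-adjoint), a \emph{strict} Lyapunov function whose critical set is the finite set of eigendirections $[\phi_i(q)]$ --- finite by the simple-spectrum hypothesis (3). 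Applying the soft principle a second time, inside $F_q$, forces $\Omega$ to be a single eigendirection $(q,[\phi_i(q)])$, a stationary point of $v^\sigma$; the $t\to-\infty$ case is identical with $\alpha$-limit sets. I expect the only delicate points to be the boundary computation $df^\sigma(v^\sigma)(q,[\phi])=h(q)$ and the identification $v^\sigma|_{F_q}=\tfrac12\nabla R_q$ together with the description of its critical set; everything else is the standard gradient-flow convergence argument, carried out twice.
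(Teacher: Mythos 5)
Your argument is correct and follows exactly the route the paper implicitly intends by citing \cite[Lemmas 2.6.4--2.6.6]{Lidman2016TheEO}: transplant the $S^1$-equivariant argument, with the one structural change that the boundary fibres of the blow-up are $\mathbb{RP}^{k-1}$ rather than $\mathbb{CP}^{k-1}$ (legitimate because the $\Z_2$-action on $N_qQ$ is $-\mathrm{id}$, having no nonzero fixed vector). The computations you flag as delicate --- that $\pi^\sigma_*$ kills the $\langle\phi\rangle^\perp$ component and the normal-to-boundary component of $v^\sigma$ vanishes, giving $df^\sigma(v^\sigma)(q,[\phi])=h(q)$, and that $v^\sigma|_{F_q}=\tfrac12\nabla R_q$ with critical set the finite set of eigendirections --- both check out against the explicit formula $v^\sigma(q,[\phi])=(\widetilde v(q),\mathbb{L}_q\phi,0)$ with $\mathbb{L}_q\phi=L_q\phi-\mathrm{Re}\langle\phi,L_q\phi\rangle\phi$, so the two-stage application of the soft principle closes the third bullet as intended.
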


Recall that a Morse quasi-gradient vector field is called \emph{Morse-Smale} if for any pair of stationary points, their stable and unstable manifolds intersect transversely. We now introduce a $\Z_2$-equivariant version.
\begin{definition}
    A $\Z_2$-equivariant Morse quasi-gradient is called \emph{Morse-Smale} if the induced vector field $v^\sigma$ on $X^\sigma/\Z_2$ satisfies the Morse-Smale condition for boundary-unobstructed trajectories; and the Morse-Smale condition in $\partial(X^\sigma/\Z_2)$ for boundary-obstructed trajectories.
\end{definition}

This assumption tells us that $v^\sigma$ is a usual Morse-Smale quasi-gradient on $X^\sigma/\Z_2$, so we can associate a Morse complex \[(\widecheck{C}(X^\sigma/\Z_2),\widecheck{\partial})\]to it, as in \cite[Section 2.5]{Lidman2016TheEO}. 

$X^\sigma/\Z_2$ can be regarded as an approximation of the homotopy quotient $X//\Z_2=X\times _{\Z_2} E\Z_2$. More precisely, when $n$ is the connectivity of $(X,X-Q)$, we have \[H_j(X^\sigma/\Z_2)\cong H_j(X//\Z_2),\] for $j\le n-1$ since $X-Q$ is $\Z_2$-equivariantly homotopy equivalent to $X^\sigma$.

The description of the $U$-action in \cite[Section 2.7]{Lidman2016TheEO} for $S^1$-equivariant Morse homology works for $\Z_2$-equivariant version. The only changes are \begin{itemize}
    \item $\Z[U]$ with $\mathrm{deg}U=2$ $\mapsto$ $\F[v]$ with $\mathrm{deg}v=1$;
    \item an $S^1$-principal bundle leads to a complex line bundle $\mapsto$ a $\Z_2$-principal bundle leads to a real line bundle.
\end{itemize}
This improves the isomorphism of groups \[H_j(X^\sigma/\Z_2)\cong H_j(X//\Z_2), \text{ } j\le n-1\] to one of the $\F[v]$-modules.

Based on remarks at the beginning of \cite[Section 2.8]{Lidman2016TheEO}, we combine the construction above with the Conley index theory. \cite[Section 2.4]{Lidman2016TheEO} briefly reviewed the Conley index theory, and \cite{Pruszko1999} provided more details. Let $\tilde{v}$ be a smooth vector field on $X$ and $\scrI$ be a $\Z_2$-invariant, isolated invariant set. If $M\subset X$ is a closed $\Z_2$-invariant subset of $X$, then we denote by $M^\sigma$ the closure of $p^{-1}(M-Q)$ in $X^\sigma$ where $p$ is the blow-down map. We seek a chain complex $\widecheck{C}(X^\sigma/\Z_2)[\scrI]$ using only trajectories in $\scrI^\sigma/\Z_2$. To achieve this, we need a $\Z_2$-invariant neighborhood $A$ of $\scrI$ so that $\widetilde{v}|_{A}$ is a Morse-Smale equivariant quasi-gradient.

Assuming the existence of such an $A$, we can obtain the desired Morse chain complex $\widecheck{C}(X^\sigma/\Z_2)[\scrI]$ using trajectories in $A^\sigma/\Z_2$.  Since $\scrI/\Z_2$ is an isolated invariant set for $v^\sigma$ in $X^\sigma/\Z_2$, we can form its Conley index $I(\scrI^\sigma/\Z_2)$  and we have \[H_*(\widecheck{C}(X^\sigma/\Z_2)[\scrI])\cong \widetilde{H}_*(I(\scrI^\sigma/\Z_2)).\] 

On the other hand, $\scrI$ is a $\Z_2$-invariant, isolated invariant set, so we can take its equivariant Conley index $I_{\Z_2}(\scrI)$. Let $I_{\Z_2}(\scrI)^{\Z_2}$ be its fixed point set. Then we have the following approximation of homology:\[H_{\le n-1}(\widecheck{C}(X^\sigma/\Z_2)[\scrI])\cong \widetilde{H}_{\le n-1}^{\Z_2}(I^{\Z_2}(\scrI)),\] where $n$ is the connectivity of $(I^{\Z_2}(\scrI),(I^{\Z_2}(\scrI)-I^{\Z_2}(\scrI)^{\Z_2})\cup *)$, with $*$  denoting the base point of $I^{\Z_2}(\scrI)$. This is an isomorphism between $\F[v]$-modules when we define the $v$-actions properly. This will be important in our proof of Theorem \ref{thm:main theorem}.
 
\subsection{Real Seiberg-Witten homotopy type}\label{sub:Real Seiberg-Witten homotopy type}
\subsubsection{Configuration space and Coulomb slices}\label{subsub: Configuration space and Coulomb slices}
We will focus on a real $\mathrm{spin^c}$ three-manifold $(Y,\iota,g,\s,\mathbb{S})$, where $Y$ is a rational homology sphere, $\iota:Y\to Y$ is an involution with a codimension two fixed set, $g$ is an invariant metric on $Y$, $\s$ is a real $\mathrm{spin^c}$ structure and $\bS$ is the spinor bundle associated to $\s$, on which we have an involution $\tau$ satisfying $\rho(\iota_*\xi)\tau(\phi_y)=\tau(\rho(\xi)\phi_y)$ for any $y\in Y$, any vector field $\xi$ on $Y$ and any spinor $\phi\in \Gamma(\bS)$. Here, $\rho$ denotes the Clifford multiplication on the spinor bundle.

We will be working with the configuration space \[\cC(Y)=\Omega^1(Y;i\R)\oplus \Gamma(\bS).\] The gauge transformation group $\cG(Y)=C^\infty(Y,S^1)$ acts on $\cC(Y)$ by $u\cdot (a,\phi)=(a-u^{-1}du,u\cdot \phi)$. Since $Y$ is a rational homology sphere, any gauge transformation $u$ can be written as $u=e^{f}$ for some $f:Y\to i\R$. The normalized gauge group $\cG^\circ(Y)$ consists of those $u\in \cG$ such that $u=e^{f}$ with $\int_{Y} f=0$. 

The real structure on $Y$ and a compatible real $\mathrm{spin^c}$ structure $(\s,\tau)$ give rise to involutions $I:\cC(Y) \to \cC(Y)$ defined by $I(a,\phi)=(-\iota^*a,\tau(\phi))$ and $I:\cG(Y)\to \cG(Y)$ defined by $I(u)(y)=\overline{u(\iota(y))}$. Here, we identify the space of $\mathrm{spin^c}$ connection on $\bS$ with $\Omega^1(Y;i\R)$ by choosing an invariant connection as the base one. A $\mathrm{spin^c}$ connection $A_0$ is \emph{invariant} if for any section $\phi$ of $\bS$, $\nabla_{\tau^*A}(\phi)=\tau(\nabla_A(\tau\phi))$. We will be interested in the fixed part of these involutions, i.e., we will consider $\cG^I$ acting on $\cC^I$. 

As usual, we can consider Sobolev completion of the configuration space and gauge group. We will add a subscript $k$ when the space is completed with respect to the $L^2_k$ norm. 

The gauge group $\cG^I(Y)$ splits as $\cG^{I,\circ}\times \cG^{I,h}$ where $\cG^{I,\circ}=(\cG(Y)^\circ)^I$ is contractible and $\cG^{I,h}$ consists of harmonic maps $u:Y\to S^1$ fixed by $I$. 
Recall from \cite[section 5.7]{li2022monopolefloerhomologyreal} that we have an exact sequence \[0\to \Z_2\to\pi_0(\cG^{I,h})\to H^1(Y;\Z)^{-\iota^*}\to 0.\] Since $Y$ is a rational homology sphere, the fourth term in this sequence is zero, so $\pi_0(\cG^I) =\pi_0(\cG^{I,h})=\Z_2$. For any $p\in \mathrm{fix}(\iota)$, the evaluation map $ev_{p}:\cG^I(Y)\to \Z_2$ is continuous, so it is constant on each component. Since we have the constant map $1$ in the identity component $\cG^I_{+}$ and constant $-1$ in the other component $\cG^I_{-}$,  we can conclude that for any $u_{\pm}\in \cG^I_{\pm}$ and any $p\in \mathrm{fixed}(\iota)$, we have $u_{\pm}(p)=\pm 1$. 

As noted in the proof of \cite[Lemma 2.6]{Konno2024}, for each $u_{\pm}\in \cG_{\pm}$, there is a \emph{real function} $f:Y\to i\R$ satisfying $u=\pm e^{if}$. A function $f:Y\to i\R$ is called \emph{real} if $f(y)=-f(\iota(y))$.

We have a \emph{global Coulomb slice} for the action of $\cG$ on $\cC$ \[W=\mathrm{ker} d^*\oplus \Gamma(\bS)\subset \cC(Y),\] where $d^*$ acts on the imaginary-valued 1-forms. Given any $(a,\phi)\in \cC(Y)$, there is a unique element of $W$ that lies in the same orbit of the normalized gauge group action. We will call it the \emph{global Coulomb projection} of $(a,\phi)$. Explicitly, we have \[\Pi^{gC}(a,\phi)=(a-df,e^f \phi),\] where $f:Y\to i\R$ is such that $d^*(a-df)=0$ and $\int_Y f=0$, i.e., $f=Gd^*a$, where $G$ is the Green function for $\Delta=dd^*$. 

When $I(a,\phi)=(a,\phi)$, $f=Gd^*a$ is a real function on $Y$ since we are using an invariant metric $g$. Thus, $e^f$ lies in $\cG^{I,\circ}$, and we can conclude that each orbit of real configurations has a unique representative in $W^I$. That is, $W^I$ is a global Coulomb slice for the action of $\cG^{I,\circ}$ on $\cC^I(Y)$, and $\Pi^{gC}$ restricts to a map $\cC^I(Y)\to W^I$. As in \cite{Lidman2016TheEO}, we denote the $L^2$ orthogonal projection $a\mapsto a-dGd^*a$ from $\Omega^1(Y;i\R)$ to $\mathrm{ker}d^*$ by $\pi$.

$\Pi^{gC}$ has derivative \[(\Pi_{*}^{gC})_{(a,\phi)}(b,\psi)=(b-dGd^*b,e^{Gd^*a}(\psi+(Gd^*b)\phi)).\] 

When $(a,\phi)$ is already in $W$, this simplifies to \[(\Pi_{*}^{gC})_{(a,\phi)}(b,\psi)=(b-d\xi,\psi+\xi\phi)=(\pi(b),\psi+\xi\phi)\in T_{(a,\phi)}W,\] where $\xi=Gd^*b$.

We also consider an infinitesimal slice for the gauge group action. More precisely, we define the \emph{local Coulomb slice} $\calK_{(a,\phi)}$, at $(a,\phi)\in \cC(Y)$ to consist of tangent vectors $(b,\psi)$ satisfying \[-d^*b+i\mathrm{Re}\left \langle i\phi,\psi\right \rangle=0.\] Let $\cT$ denote the tangent space of $\cC(Y)$; away from the \emph{reducibles} (those $(a,\phi)$ with $\phi=0$),  we have a direct sum decomposition \[\cT_k=\cJ_k\oplus\calK_k,\] where $\cJ_k$ consists of $(b,\psi)$ tangent to the $\cG_{k+1}$ orbit. We also have a \emph{local Coulomb slice projection} $\Pilc_{(a,\phi)}\colon \cT_{(a,\phi)}\to \calK_{(a,\phi)}$ defined by \[\Pilc_{(a,\phi)}(b,\psi)=(b-d\zeta,\psi+\zeta\phi).\]  Here, when $\phi\ne 0$, $\zeta:Y\to i\R$ is the unique function satisfying $-d^*(b-d\zeta)+i \mathrm{Re}\left \langle i\phi,\psi+\zeta\phi\right \rangle =0$. 

There is another \emph{enlarged local Coulomb slice}, defined for the normalized gauge gauge group action characterized by $(b,\psi)\in \cT_{(a,\phi)}$ lies in $\calK^e_{(a,\phi)}$ if and only if $-d^*b+i\mathrm{Re}\left \langle i\phi,\psi\right \rangle$ is a constant function. Similarly, we have the \emph{enlarged local Coulomb slice projection} $\Pielc_{(a,\phi)}\colon\cT_{(a,\phi)}\to \calK^e_{(a,\phi)}$ defined by \[\Pielc_{(a,\phi)}(b,\psi)=(b-d\zeta,\psi+\zeta\phi),\] for which when $\phi\ne 0$, $\zeta:Y\to i\R$ is the unique function satisfying $-d^*(b-d\zeta)+i \mathrm{Re}\left \langle i\phi,\psi+\zeta\phi\right \rangle^\circ =0$. For $f:Y\to i\R$, $f^\circ(y)$ takes the value $f(y)-\mu_{Y}(f)$, in which $\mu_{Y}(f)$ denotes the average of $f$ over $Y$.

All definitions and formulas above can be restricted to $\cC^I$, which appears as a submanifold of $\cC$ and its tangent space. Since all the topological spaces in the discussion above are naturally closed under the $I$-action, we add a superscript $I$ to the denote their fixed point sets and call them \emph{real parts}. For example, we have used $\cC^I$ for the real configuration space and $W^I$ for the real global Coulomb slice. It is easy to see that the estimations in \cite[Section 3.2]{Lidman2016TheEO} still hold after restricting to the real subspaces. The only change is that after taking $I$-invariant part, the enlarged local Coulomb slice is no longer enlarged. Since on $W^I$ the remaining constant gauge group is $\Z_2$, the local slice for the normalized gauge group and the ordinary gauge group are the same and $\Pielc=\Pilc$ when restricting to $\cC^I$. Nevertheless, if we use $\cT^{gC,I}_{(a,\phi)}$ to denote the tangent space along $W^I$ at $(a,\phi)$, $(\Pigc_*)_{(a,\phi)}:\calK^{e,I}_{(a,\phi)}\to \cT^{gC,I}_{(a,\phi)}$  and $(\Pielc_*)_{(a,\phi)}:\cT^{gC,I}_{(a,\phi)}\to \calK^{e,I}_{(a,\phi)}$ still act as inverses to each other for any $(a,\phi)\in W^I$. The proof in \cite[Section~3.2]{Lidman2016TheEO} generalizes directly to our case. 

\subsubsection{The Seiberg-Witten equation}
For $(Y,\iota,g,\s,\mathbb{S})$ defined as above, we fix an invariant  $\mathrm{spin^c}$ connection $A_0$ and the induced identification between imaginary 1-forms and connections as above. We will use $D_a:\Gamma(\bS)\to \Gamma(\bS) $ to denote the Dirac operator corresponding to the connection $A_0+a$ and $D$ for the case $a=0$.

On $\cC(Y)$, we have the Chern-Simons-Dirac (CSD) functional, $\cL$ defined by \[\cL(a,\phi)=\frac{1}{2}(\int_Y \left \langle \phi,D_a\phi \right \rangle-\int_Ya \wedge da).\] Following the notation in \cite{Lidman2016TheEO}, we let $\cX$ denote the $L^2$ gradient of the CSD functional:\[\cX(a,\phi)=(*da+\chi(\phi,\phi),D_a\phi),\] where $\chi(\phi,\phi)=\rho^{-1}((\phi\phi^*)_0)$ is given by taking the preimage of the traceless part of $\phi\phi^*$. The critical points of $\cL$ are exactly solutions to the \emph{Seiberg-Witten equation} $\cX(a,\phi)=0$. Regarding $\cX$ as a map $\cC(Y)\to \cC(Y)$, it is equivariant with respect to the involution $I$, so it restricts to a map $\cC^I(Y)\to \cC^I(Y)$.

Following \cite{Lidman2016TheEO}, we introduce a new metric $\tg$ on $W$ defined by \[\left \langle (b,\psi),(b',\psi') \right \rangle_{\tg}=\mathrm{Re} \left \langle \Pi^{elC}_{(a,\phi)}(b,\psi), \Pi^{elC}_{(a,\phi)}(b',\psi')\right \rangle,\]  for $(b,\psi)$ and $(b',\psi')$ in $T_{(a,\phi)}W$. This metric restricts to a metric on $W^I$ and still has the nice property that the trajectories of the gradient flow of $\cL$ restricted to $W^I$ are precisely the Coulomb projections of the original gradient flow trajectories in $\cC^I(Y)$.

In the global slice $W$ with metric $\tg$, the (downward) gradient flow equation is given by \[\frac{d}{dt} \gamma(t)=-(\Pigc)_{\gamma(t)}\cX(\gamma(t)),\] where $\gamma(t)=(a(t),\phi(t))$. The right-hand side can be rewritten as $(l+c)(\gamma(t))$ where \[l(a,\phi)=(*da,D\phi)\]
\[c(a,\phi)=(\pi\circ\chi(\phi,\phi),\rho(a)\phi+\xi(\phi)\phi),\] with $\xi(\phi)$ characterized by $d\xi(\phi)=(1-\pi)\circ \chi(\phi,\phi)$ and $\int_Y\xi=0$. Taking completion, we have the maps \[\cX^{gC}=l+c:W_k\to W_{k-1},\] in which $l$ is a linear Fredholm operator (self-adjoint in $L^2$ metric, but not in $\tg$) and $c$ is a compact operator. The corresponding flow lines are called \emph{Seiberg-Witten trajectories}(in the global Coulomb slice). Such a flow line $\gamma:\R\to W$ is \emph{of finite type} if $\cL(\gamma(t))$ and $\left\Vert \phi(t) \right\Vert_{C^0}$ are bounded in $t$.  

\subsubsection{Finite dimensional approximation}\label{subsub:Finite dimensional approximation}
Let $W^\lambda$ denote the finite-dimensional subspace of $W$ generated by eigenvectors of $l$ with eigenvalues in the range $(-\lambda,\lambda)$ and $\Tilde{p}^\lambda$ denote the $L^2$ orthogonal projection onto $W^\lambda$. This can be modified to a smooth family in $\lambda$.
As in \cite[section 3.4]{Lidman2016TheEO}, we fix a sequence of positive real numbers \[\lambda_1^{\bullet}<\lambda_2^{\bullet}<\ldots\] such that $\lambda_i^{\bullet} \to \infty$ and none of them (and their negatives) are eigenvalues of $l$. We can take a smooth family of projections $p^\lambda:W\to W^\lambda$ so that $p^{\lambda_i^\bullet}$ are genuine $L^2$ projections.

On $W^{\lambda}$, we will consider flow equation \[\frac{d}{dt}\gamma(t)=-(l+p^{\lambda}c)(\gamma(t)).\]

Fix any natural number $k\ge 5$. There exists a constant $R>0$ such that all the Seiberg-Witten trajectories $\gamma:\R\to W$ of finite type are contained in $B(R)$, the ball of radius $R$ in $W_k$.

\begin{prop}(\cite[Proposition 3]{Manolescu_2003}, \cite[Proposition 3.4.2]{Lidman2016TheEO})\label{prop:flow lines in B(R)}
For any $\lambda$ sufficiently large compared to $R$, if $\gamma:\R\to W$ is a trajectory of $l+p^\lambda c$ and $\gamma(t)$ is in $\overline{B(2R)}$ for all $t$, then $\gamma(t)$ is contained in $B(R)$. 
\end{prop}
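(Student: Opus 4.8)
The plan is to follow Manolescu's original argument (\cite[Proposition 3]{Manolescu_2003}), adapting it to the present setting where the flow equation is $\frac{d}{dt}\gamma = -(l + p^\lambda c)(\gamma)$ in the cut-off global Coulomb slice. The key point is that a trajectory of $l + p^\lambda c$ which stays in $\overline{B(2R)}$ is, after projecting by $\Tilde p^\lambda$, \emph{close} (as $\lambda \to \infty$) to an honest Seiberg-Witten trajectory of $l + c$; since the latter all lie in the open ball $B(R)$, the approximate trajectory must lie in $B(R)$ too once $\lambda$ is large. First I would set up the contradiction: suppose there is a sequence $\lambda_n \to \infty$ and trajectories $\gamma_n : \R \to W^{\lambda_n}$ of $l + p^{\lambda_n}c$ with $\gamma_n(t) \in \overline{B(2R)}$ for all $t$ but with $\gamma_n$ not contained in $B(R)$, so that there is a time $t_n$ with $\|\gamma_n(t_n)\|_{L^2_k} \ge R$. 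After translating the time parameter we may assume $t_n = 0$.

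Next I would extract a limit. On the time-translated trajectories, the flow equation gives uniform bounds: $\gamma_n$ is bounded in $L^2_k$ pointwise, and differentiating the equation together with the mapping properties of $l$ (Fredholm, order $1$) and $c$ (compact, hence bounded $W_k \to W_k$ say, or with a slight loss) yields uniform bounds on $\frac{d}{dt}\gamma_n$ and hence, by bootstrapping, on every derivative on compact time intervals, in $L^2_{k'}$ for any $k'$. By Arzelà–Ascoli and a diagonal argument one gets a subsequence converging in $C^\infty_{\mathrm{loc}}$ on $\R \times Y$ (equivalently in $C^0_{\mathrm{loc}}(\R; W_{k-1})$, which suffices) to some $\gamma_\infty : \R \to W$. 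Because $p^{\lambda_n} \to \mathrm{id}$ strongly and the $\gamma_n$ are uniformly bounded with $c$ compact, $p^{\lambda_n} c(\gamma_n) \to c(\gamma_\infty)$, so $\gamma_\infty$ solves $\frac{d}{dt}\gamma_\infty = -(l+c)(\gamma_\infty)$, i.e.\ it is a genuine Seiberg-Witten trajectory in the global Coulomb slice. One must also check $\gamma_\infty$ is of finite type: the CSD functional is monotone along the flow and uniformly bounded (since the trajectories stay in $\overline{B(2R)}$ and the $L^2_k \hookrightarrow C^0$ bound on the spinor follows from $k \ge 5$), which gives the required boundedness of $\cL(\gamma_\infty(t))$ and $\|\phi_\infty(t)\|_{C^0}$. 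Hence $\gamma_\infty(t) \in B(R)$ for all $t$ by the defining property of $R$; in particular $\|\gamma_\infty(0)\|_{L^2_k} < R$. But $\gamma_n(0) \to \gamma_\infty(0)$ in $L^2_{k-1}$ — and in fact in $L^2_k$ after one more interpolation step using the uniform $L^2_{k+1}$ bound on compact time intervals — contradicting $\|\gamma_n(0)\|_{L^2_k} \ge R$. This contradiction proves the proposition.

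I expect the main obstacle to be the compactness/convergence step: squeezing out enough elliptic regularity from the ODE $\frac{d}{dt}\gamma_n = -(l + p^{\lambda_n}c)(\gamma_n)$ to pass to a $C^\infty_{\mathrm{loc}}$ limit, and in particular handling the fact that the $\gamma_n$ live in the varying subspaces $W^{\lambda_n}$ and that $c$ involves the nonlinear, nonlocal terms $\pi\circ\chi(\phi,\phi)$ and $\xi(\phi)\phi$ built from the Green operator $G$. One has to be careful that these terms enjoy the same boundedness and compactness after the cut-off $p^{\lambda_n}$ is applied, uniformly in $n$ — but this is exactly the content of the estimates in \cite[Section 3.2, 3.4]{Lidman2016TheEO}, which, as noted in Subsection \ref{subsub: Configuration space and Coulomb slices}, restrict without change to the relevant subspaces. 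A secondary point requiring care is verifying that the limit $\gamma_\infty$ is genuinely of finite type rather than merely bounded on a compact interval, but the monotonicity of $\cL$ along the flow handles this uniformly. Since the statement is quoted verbatim from \cite{Manolescu_2003} and \cite{Lidman2016TheEO} and does not involve the real structure at all, in practice one would simply cite those references; the sketch above records the argument for completeness.
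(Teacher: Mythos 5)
The paper does not prove this proposition; it is a purely $S^1$-equivariant statement about the unperturbed finite-dimensional approximation, does not touch the real structure, and is cited verbatim from \cite{Manolescu_2003} and \cite{Lidman2016TheEO}. Your sketch is a correct reconstruction of the standard contradiction argument in those references (time-translate so the bad time is $0$, bootstrap to uniform $L^2_{k+1}$ bounds on compact intervals, extract a $C^\infty_{\mathrm{loc}}$ limit which is a finite-type Seiberg--Witten trajectory, and upgrade the convergence at $t=0$ to $L^2_k$ by interpolation against the $L^2_{k+1}$ bound), and you correctly note at the end that in practice one would simply cite, which is exactly what this paper does.
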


Moving to the real setup, everything holds without change when we restrict from $W$ to $W^I$.
\subsubsection{The Conley index and the real Seiberg-Witten Floer spectrum}\label{subsub:The Conley index and the real Seiberg-Witten Floer spectrum}
Using Proposition \ref{prop:flow lines in B(R)}, we know that, for $\lambda$ and $R$ as above, the union $S^\lambda$ of all trajectories of an appropriate cut-off $l+p^\lambda c$ inside $B(R)$ is an isolated invariant set. Then $S^\lambda\cap W^I$ is an isolated invariant set in $W^I$ for the same flow. Inside $W^I$, everything mentioned above is $\Z_2$-equivariant, so we can construct a $\Z_2$-equivariant Conley index $I^{\lambda}$.  We define the real Seiberg-Witten Floer homotopy type \[\mathit{SWF}_{\Z_2}(Y,\iota,\s)=\Sigma^{-\mathrm{dim}(V^{0}_{-\lambda})\R} \Sigma^{-(\mathrm{dim}(U^{0}_{-\lambda})+n^R(Y,\iota,\s,g) )\Tilde{\R}} I^{\lambda},\]
where $V^{\mu}_{\lambda}$ ($U^{\mu}_{\lambda}$) denotes the summand of $(W^I)^{\mu}_{\lambda}$ that is isomorphic to a direct sum of trivial $\Z_2$ representation $\R$ (sign representation $\Tilde{\R}$). $n^R(Y,\iota,\s,g)=\mathrm{ind}_{\C}(D^+_A)-1/8 (c_1^2(\s)-\sigma(X))$ is the correction term for the choice of metric, in which $X$ is a four-dimensional $\mathrm{spin^c}$ bound for the $\mathrm{spin^c}$ three-manifold $(Y,\underline{\s})$ equipped with a metric extending $g$. Here, we almost follows the notations from \cite{Konno2024}, but to distinguish the summand isomorphic to copies of $\tilde{\R}$ with the global Coulomb slice, we use $U$ in place of their $W$. 
%This $n^R$ is almost the same as the one appeared in \cite[Section 3.5]{Konno2024} since $\mathrm{ind}_{\C}(D^+_A)=\mathrm{ind}_{\widetilde{\R}}(D^+_A)$, but careful readers should note that the term $c_1^2(\s)$ was missing from their formula. 
The argument in \cite[Proposition 3.22]{Konno_Miyazawa_Taniguchi_2025} shows that this is indeed an invariant of $(Y,\iota,\s)$.

\begin{remark}\label{rmk: different definitions of correction terms}
We have an alternative definition for the correction term when the real $\mathrm{spin^c}$ three-manifold $(Y,\iota,\s)$ bounds a real $\mathrm{spin^c}$ four-manifold $(X,\iota,\s')$. In this case, we can define \[n^R(Y,\iota,\s,g)=\mathrm{ind}_{\R}^{\tau}(D^+_A)-1/8 (c_1^2(\s)-\sigma(X)),\] in which we equip $X$ with an equivariant metric extending the $g$ on $Y$.
       
The two definitions can be identified since the $\tau$-action on the spinor bundle is anti-complex-linear, so \[\mathrm{ind}_{\R}^{\tau}(D^+_A)=\mathrm{ind}_{\C}(D^+_A).\](cf.\cite[Section 4.3]{li2022monopolefloerhomologyreal}) The characterization using real $\mathrm{spin^c}$ bound will be useful when we identify the grading from $\mathit{SWF}_{\Z_2}$ with the one from $\mathit{HMR}$ in Subsection \ref{sub:Identification of grading}. 
%Thus, our definition coincides with the one given in \cite[Section 3.5]{Konno2024} and Proposition 3.7 there tells us that this is an invariant associated to the real $\mathrm{spin^c}$ three-manifold $(Y,\iota,\s)$. (For a detailed proof, see.)
\end{remark}

\subsection{Real monopole Floer homology}\label{sub:Real Monopole Floer homology}
Unlike real Seiberg-Witten homotopy type, we will be working on the invariant part of entire configuration space \[\cC^I(Y)= \Omega^1(Y;i\R)^{-\iota^*}\oplus\Gamma(\bS)^{\tau}.\]

\subsubsection{Seiberg-Witten equations on the blow-up}

Let $Z$ be the cylinder $I\times Y$ where $I$ is an interval and it might be equal to $\R$. As usual, the four-dimensional spinor bundle splits into $\bS^{\pm}$ according to the eigenvalue of $\rho(d\mathrm{vol}_Z)$, both of them can be identified with three-dimensional spinor bundle $\bS$ over $Y$. After choosing a base $\mathrm{spin^c}$ connection (we always choose a real flat connection as the base connection), we have an identification \[\cC(Z)=\{(a,\phi)|a\in \Omega^{1}(Z;i\R), \phi\in \Gamma(\bS)^{+}\}.\] This comes naturally with a gauge group action by $\cG(Z)=C^{\infty}(Z;S^1)$. An element in $\cC(Z)$ is \emph{in temporal gauge} if it is given by a path $\gamma(t)=(a(t),\phi(t))$ in $\cC(Y)$. Any configuration in $\cC(Z)$ is gauge-equivalent to one in the temporal gauge. 

The real structure and real $\mathrm{spin^c}$ structure on $Y$ give rise to involutions on $Z$ and its spinor bundles, so it makes sense to talk about the $I$ action on $\cC(Z)$ and $\cG(Z)$. Taking the invariant part, we obtain the real configuration space $\cC^I(Z)$ on which $\cG^I(Z)$ acts.

The four-dimensional Seiberg-Witten equations on $Z$ is \[\cF(a,\phi)=(d^+a-\rho^{-1}((\phi\phi^*)_0),D_a^+\phi)=0,\] where $\cF$ is a map from $\cC(Z)$ to $\Omega^+_2(Z;i\R)\oplus\Gamma(\bS^-)$. Let $\cV$ be the trivial bundle over $\cC(Z)$ with fiber $\Gamma(Z;i\bigwedge_+^2T^*Z\oplus \bS^-)$, then $\cF$ can be regarded as a section of this bundle. Note that $\cF$ is equivariant with respect to the involution $I$, so it restricts to a map between the invariant parts of the domain and codomain.

To deal with reducible solutions, we blow up the configuration space to \[ C^\sigma(Y)=\{(a,s,\phi)|s\ge 0,\left\Vert\phi\right\Vert_{L^2}=1\} \subset \Omega^1(Y;i\R)\times \R_{\ge 0}\times \Gamma(\bS).\] Similarly, we have the real blow-up configuration space \[C^{\sigma,I}(Y)=\{(a,s,\phi)|s\ge 0,\left\Vert\phi\right\Vert_{L^2}=1\} \subset \Omega^1(Y;i\R)^{-\iota^*}\times \R_{\ge 0}\times \Gamma(\bS)^{\tau}.\] The same construction gives rise to $C^{\sigma}(Z)$ and $C^{\sigma,I}(Z)$. Note that $C^{\sigma,I}$ embeds as a regular submanifold of $C^\sigma$, since it is the invariant part of the inherited $I$-action on the first and third factor. 

On the blow-up configuration spaces, we have \emph{blow-up Seiberg-Witten equations} \[\cF^\sigma(a,s,\phi)=(d^+a-s^2\rho^{-1}((\phi\phi^*)_0),D^+_a\phi).\] 
It is also useful to consider the $\tau$-model for blow-up: \[\cC^{\tau}(Z) \subset \Omega^1(Z;i\R)\times C^\infty(I)\times C^\infty(Z;\bS^+),\] the space of triples $(a,s,\phi)$ with $s(t)\ge 0$ and $\left\Vert \phi(t) \right\Vert_{L^2(Y)}=1$ for all $t$. Again, $\cC^\tau(Z)$ comes naturally with an involution extending the three-dimensional one, so we can talk about $\cC^{\tau,I}(Z)$. 

By rewriting the Seiberg-Witten equation in temporal gauge using the $\tau$-model, we get a vector field $\cX^{\sigma}$ on $C^\sigma(Y)$ extending $\cX$ on $\cC(Y)$. This vector field is equivariant with respect to the real structures, so we have a real analogue of \cite[Proposition 4.1.1]{Lidman2016TheEO}.
\begin{prop}\label{prop:zeros on the blow up}
    If $s>0$, then $(a,s,\phi)\in C^{\sigma,I}(Y)$ is a zero of $\cX^\sigma$ if and only if $(a,s\phi)\in C^{I}(Y) $ is a zero of $\cX$. If $s=0$, then $(a,s,\phi)\in C^{\sigma,I}(Y)$ is a zero of $\cX^\sigma$ if and only if $(a,0)\in C^{I}(Y) $ is a zero of $\cX$ and $\phi$ is an eigenvector of $D_a$.
\end{prop}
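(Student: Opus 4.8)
The plan is to deduce this from its non-equivariant counterpart \cite[Proposition 4.1.1]{Lidman2016TheEO}, together with the $I$-equivariance of $\cX^\sigma$ recorded just above the statement. First I would note that $C^{\sigma,I}(Y)$ sits in $C^\sigma(Y)$ as the fixed-point set of the involution $I$ (acting on the first and third factors, leaving $s$ fixed), hence as a regular submanifold, and that the $I$-equivariant vector field $\cX^\sigma$ is tangent to it and restricts there to a smooth vector field. A point of $C^{\sigma,I}(Y)$ is a zero of this restriction if and only if it is a zero of $\cX^\sigma$ on the ambient $C^\sigma(Y)$; so it is enough to intersect the conclusion of \cite[Proposition 4.1.1]{Lidman2016TheEO} with $C^{\sigma,I}(Y)$.

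For $s>0$, I would use that the blow-down map $(a,s,\phi)\mapsto(a,s\phi)$ intertwines the $I$-actions on $C^\sigma(Y)$ and $\cC(Y)$ and restricts to a diffeomorphism of $\{s>0\}$ onto the irreducible configurations, so that $(a,s,\phi)\in C^{\sigma,I}(Y)$ precisely when $(a,s\phi)\in\cC^I(Y)$. By \cite[Proposition 4.1.1]{Lidman2016TheEO}, $(a,s,\phi)$ is a zero of $\cX^\sigma$ iff $(a,s\phi)$ is a zero of $\cX$; and since $\cX$ itself is $I$-equivariant, being a zero of $\cX$ on $\cC^I(Y)$ is the same as being a zero of $\cX$ on $\cC(Y)$. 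This gives the first assertion.

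For $s=0$, a triple $(a,0,\phi)$ lies in $C^{\sigma,I}(Y)$ exactly when $a\in\Omega^1(Y;i\R)^{-\iota^*}$ and $\phi\in\Gamma(\bS)^\tau$ with $\Vert\phi\Vert_{L^2}=1$. By \cite[Proposition 4.1.1]{Lidman2016TheEO} it is a zero of $\cX^\sigma$ iff $(a,0)$ is a zero of $\cX$ (equivalently $da=0$) and $\phi$ is an eigenvector of $D_a$. Here I would add the small remark that, because the base connection is real ($\iota^*A_0=-A_0$) and $\tau$ intertwines Clifford multiplication with $\iota_*$, the operator $\tau$ commutes with $D_a$; hence $D_a$ preserves $\Gamma(\bS)^\tau$, an eigenvector of $D_a$ lying in $\Gamma(\bS)^\tau$ is the same as an eigenvector of the restriction $D_a|_{\Gamma(\bS)^\tau}$, and its eigenvalue is automatically real by self-adjointness, so the invariance condition creates no mismatch. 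Combining the two cases proves the proposition.

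Essentially all the analytic content already sits in \cite[Proposition 4.1.1]{Lidman2016TheEO}, so the one step I would flag as the main obstacle — modest though it is — is verifying that the blow-up construction of $\cX^\sigma$ (rewriting the Seiberg-Witten flow in temporal gauge via the $\tau$-model, which brings in the normalization $\Vert\phi\Vert_{L^2}=1$ and the projection killing the component of $D_a\phi$ along $\phi$) genuinely respects the invariant subspaces, so that its restriction to $C^{\sigma,I}(Y)$ really is the vector field induced by the $I$-equivariant $\cX^\sigma$. This follows from the facts that $I$ is an $L^2$-isometry for the invariant metric $g$, that it commutes with the linear operators entering the blow-up, and that $\cX$ (equivalently $\cF$) is $I$-equivariant — all of which are in place in the preceding subsections — so once these are invoked the verification is routine.
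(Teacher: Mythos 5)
Your proposal is correct and follows the same route as the paper, which proves the statement simply by observing that $\cX^\sigma$ is $I$-equivariant and therefore its restriction to the invariant locus $C^{\sigma,I}(Y)=(C^\sigma(Y))^I$ inherits the zero-set description from the non-equivariant Proposition 4.1.1 of Lidman--Manolescu. Your additional remarks (that the blow-down intertwines the $I$-actions, and that $\tau$ commutes with $D_a$ so the eigenvector condition is well-posed on $\Gamma(\bS)^\tau$) are consistent with, and slightly elaborate, what the paper leaves implicit.
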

The gauge group action on the blow-ups is well-defined and free. As we did for configuration spaces before blowing up, we can consider the decomposition of the tangent bundle into its tangent and normal parts to the orbit, as usual (as well as their Sobolev completions). We omit details of this part; see \cite[Section 4.1]{Lidman2016TheEO} and \cite[Section 5]{li2022monopolefloerhomologyreal}.

For further references, we introduce notations \[\cB(Y)=\cC(Y)/\cG, \text{ } \cB^\sigma(Y)=\cC^\sigma(Y)/\cG, \text{ } \cB_k(Y)=\cC_k(Y)/\cG_{k+1}, \text{ }, \cB^{\sigma}_k(Y)=\cC^\sigma_k(Y)/\cG_{k+1} \text{ }\] and add superscript $I$ to their real analogues. Similar notations will also be used in dimension four.
\subsubsection{Perturbations}\label{subsub:Perturbations}

To achieve transversality on various moduli spaces, we need to perturb the Seiberg-Witten equations. This is done by adding $\frakq: \cC(Y)\to \cT_0$ to $\cX$. Such a $\frakq$ is \emph{the formal gradient} of $f$ if for any $\gamma\in C^{\infty}([0,1],\cC(Y))$, \[\int_{0}^1 \left \langle \frac{d}{dt}\gamma(t),\frakq(\gamma(t)) \right \rangle_{L^2}dt=f\circ\gamma(1)-f\circ\gamma(0).\]  We will consider \[\cX_{\frakq}=\cX+\frakq\text{ and } \cL_\frakq=\cL+f.\]

The perturbation can be decomposed into its form and spinor parts, denoted by $\frakq^0$ and $\frakq^1$, respectively. From this, we can also obtain perturbations on four-dimensional configuration spaces and blow-up configuration spaces. For a summary, one should look up \cite[Section 4.2]{Lidman2016TheEO}, and for details, one should refer to \cite{Kronheimer_Mrowka_2007}.

An analogue of Proposition \ref{prop:zeros on the blow up} holds for zeros of $\cX_\frakq^\sigma$ in $\cC^{\sigma}(Y)$ and $\cC^{\sigma,I}(Y)$. (cf. \cite[Proposition 4.2.1]{li2022monopolefloerhomologyreal})

In \cite[Section 10]{Kronheimer_Mrowka_2007}, they introduced a concept called \emph{k-tame} perturbation and constructed cylinder functions by pairing with certain 1-forms and sections of spinor bundles. The gradient of these functions generates \emph{large Banach spaces} of tame perturbations. In \cite[Section 6]{li2022monopolefloerhomologyreal}, Li constructed cylinder functions on the real configuration space similarly by pairing with invariant objects. He also obtained corresponding large Banach spaces. It is easy to observe that Li's perturbation can be regarded as the restriction of a cylinder function on $\cC$ to $\cC^I$. The cylinder function defined using pairing with invariant objects has the nice property that the perturbed function $\cL_{\frakq}$ is $I$-invariant and the vector field $\cX_\frakq$ and its various generalizations are $I$-equivariant when regarded as a section of an appropriate bundle. So although in \cite[Remark 6.3]{li2022monopolefloerhomologyreal}, Li remarked that we can define perturbations using pairing with more general forms or sections, we will restrict ourselves to this special class in order to get better equivariant properties.

%As remarked in  \cite[Remark 6.3]{li2022monopolefloerhomologyreal}, we actually have no need to pairing with invariant objects when defining perturbations on $\cC^{I}$ and its blow-ups. In other word, since cylinder functions constructed in \cite[Section 11]{Kronheimer_Mrowka_2007} are $\cG$ invariant on $\cC$, it restrict to $\cG^I$ invariant functions on $\cC^I$, so they can be used as perturbations for the real Monopole Floer homology. This observation will be useful for us to apply various results from \cite{Lidman2016TheEO} to the real setup.

In \cite[Section 4.3]{Lidman2016TheEO}, they further introduce a concept called \emph{very tame perturbation} based on a \emph{functionally bounded} property for non-linear operators. Such perturbations exist and form large Banach spaces is a fact that follows from the construction of cylinder functions, so this is also true for real configuration spaces.
 
Fix a tame perturbation $\frakq$ and a Sobolev number $k$. $\cX_{\frakq}^\sigma$ takes $\cC^{\sigma,I}_{k}(Y)$ to $\cT_{k-1}^{\sigma,I}$. The stationary points of $\cX_{\frakq}^\sigma$ are not isolated, because the gauge group action preserves stationary points. A stationary point $x$ of $\cX_{\frakq}^\sigma$ is called \emph{non-degenerate} if $\cX_{\frakq}^\sigma$ is transverse to $\cJ^{\sigma,I}$ at $x$. This condition can be reformulated in terms of the blow-down configuration for irreducibles and the spectrum of $D_a$ at reducibles, as in \cite[Proposition 7.3]{li2022monopolefloerhomologyreal}. \cite[Section 7.4]{li2022monopolefloerhomologyreal} tells us that we have a residual set of perturbations for which all the critical points are non-degenerate in a large Banach space of perturbations.

We will use $\frC$ to denote the set of critical points in $\cC^\sigma$ of $\cX^\sigma_\frakq$, which decomposed into $\frC^o$, $\frC^u$, $\frC^s$ according to whether they are irreducible, boundary unstable, or boundary stable. A reducible critical point is \emph{boundary-unstable (boundary-stable)} if it has negative (positive) spinorial energy $\Lambda_{\frakq}(a,s,\phi)=\mathrm{Re}\left \langle \phi,D_a\phi+\Tilde{\frakq}^1(a,s,\phi) \right \rangle_{L^2}$, where $\Tilde{\frakq}^1(a,s,\phi)=\int_{0}^1 \cD_{(a,st\phi)} \frakq^1(0,\phi)dt$. We further add a superscript $I$ when we talk about the invariant critical points lying in $\cC^{\sigma,I}(Y)$.

Now we specialize to $\R\times Y$, the infinite cylinder associated to $Y$. On the configuration space associated to $Z$, we can perform completion using local Sobolev norms $L_{k,loc}^2$ in place of the the usual $L_k^2$ norm. In particular, we have the locally integrable real configuration spaces $\cC^{\tau}_{k,loc}(Z)$ and $\cB^{\tau}_{k,loc}(Z)= \cC^{\tau}_{k,loc}(Z)/ \cG^{\tau}_{k,loc}(Z)$.

For regularity of trajectories, we introduce \[\cC^\tau_k(x,y)=\{\gamma\in \cC^{\tau}_{k,loc}(Z)| \gamma-\gamma_0\in L^2_k(Z;iT^*Z)\times L^2_k(\R;\R)\times L^2_k(Z;\bS^+)\}\] after fixing a smooth path $\gamma_0:\R\to \cC^\sigma(Y)$ with $\gamma(t)=x$ for $t\ll0$ and $\gamma(t)=y$ for $t\gg0$. This space has a gauge group action by maps $u:Z\to S^1$ with $1-u\in L^2_{k+1}$. The quotient will be denoted by $\cB^\tau_k([x],[y])$. We remove the condition $s(t)\ge 0$ to obtain the corresponding $\Tilde{\cC}^\tau_k(x,y)$ and $\Tilde{\cB}^\tau_k([x],[y])$. 

For $x,y\in \frC^I$, we can consider ${\cC^{\tau,I}_{k}(x,y)}$ consisting of paths in $\cC^I_{k,loc}(Y)$ from $x$ to $y$. Similarly, this space has an action by real maps $u:Z\to S^1$ with $1-u\in L^2_{k+1}$.  Then, we define the \emph{real moduli space of trajectories} from $[x]$ to $[y]$ to be \[ M([x],[y])=\{[\gamma]\in{\cB^{\tau,I}_{k,loc}(Z)} |\cF_\frakq
^\tau(\gamma)=0, \lim_{t\to-\infty} [\tau_t^*\gamma]=[\gamma_x], \lim_{t\to\infty} [\tau_t^*\gamma]=[\gamma_y]\},\] where $\tau_t$ is the translation $s\mapsto t+s$ on $\R$ and $\gamma_x$, $\gamma_y$ are constant trajectories at $x$, $y$. The moduli space is called \emph{boundary-obstructed} if $[x]$ is boundary-stable and $[y]$ is boundary-unstable. We will use $\Breve{M}$ to denote the quotient of $M$ by the usual $\R$-action. 

For $\gamma=(a,s,\phi)\in {\cC^{\tau,I}_k(x,y)}$, we consider the operator \[Q_{\gamma}=\cD^{\tau}_{\gamma}\cF^{\tau}_{\frakq}\oplus \bfd^{\tau,\dagger}_{\gamma}:\cT_{j,\gamma}^{\tau,I}\to \cV^{\tau,I}_{j-1,\gamma}(Z)\oplus L_{j-1}^2(Z;i\R)^{-\iota^{*}},\] where $\bfd^{\tau,\dagger}_{\gamma}(b,r,\psi)=-d^*b+is^2 \mathrm{Re} \left \langle i\phi,\psi \right \rangle +i\vert \phi \vert \mathrm{Re} \mu_{Y} \left \langle i\phi,\psi \right \rangle$ and $0<j\le k$. The moduli space $M([x],[y])$ is \emph{regular} if $Q_{\gamma}$ is surjective for all $\gamma$ when it is not boundary-obstructed. In the boundary-obstructed case, $M([x],[y])$ is \emph{regular} if for each $\gamma$ in it, $Q_{\gamma}$ has a 1-dimensional cokernel.

A perturbation $\frakq$ is called \emph{admissible} if it is tame and all critical points of $\cX_\frakq$ are non-degenerate and all moduli spaces of trajectories between critical points are regular. 

%For detail of these concepts, one can refer to \cite[Section 4.5]{Lidman2016TheEO} or relevant chapters in \cite{Kronheimer_Mrowka_2007}. For real analogues, one should refer to \cite[Section 6-7]{li2022monopolefloerhomologyreal}.

From \cite[Section 15]{Kronheimer_Mrowka_2007} and \cite[Section 6-7]{li2022monopolefloerhomologyreal}, we know that in any large Banach space of tame perturbations, we have an admissible one. Since we have a large Banach space of very tame perturbations, we can find a $\frakq$ that is very tame and admissible.

\subsubsection{Real monopole Floer homology}\label{subsub:Real Monopole Floer homology}
Let $C^{\theta}$ be the $\F$-vector space generated by $\frC^{\theta,I}$ for $\theta\in\{o,s,u\}$, the real monopole Floer chain complex is \[\widecheck{CMR}(Y,\s,\frakq)=C^o\oplus C^s.\] By counting zero-dimensional irreducible and reducible moduli spaces, we introduce $\partial^{\theta}_{\varpi}$ and $\bar{\partial}^{\theta}_{\varpi}$. The boundary operator $\check{\partial}$ on $\widecheck{CMR}$ is defined as a combination of this; the admissibility of $\frakq$ guarantees that $\check{\partial}^2=0$. We do not need the precise expression of $\check{\partial}$, so we refer readers to \cite[Section 11.1]{li2022monopolefloerhomologyreal} for an explicit formula. Thus, we can define $\widecheck{\mathit{HMR}}(Y,\iota,\s)$ to be the homology of this chain complex. In \cite{li2022monopolefloerhomologyreal}, Li showed that this is an invariant associated to the real $\mathrm{spin^c}$ three-manifold $(Y,\iota,\s)$.

Recall that we have assumed that $Y$ is a rational homology sphere, so $\cG^I$ has exactly two components, characterized by taking the fixed set to $\pm 1$ (recall the we assume $\mathrm{fix}(\iota)$ is non-empty), as we had analyzed in Subsection \ref{subsub: Configuration space and Coulomb slices}. So the $\cR_n$-module structure ($n= |\mathrm{fix}(\iota)|$) defined in \cite{li2022monopolefloerhomologyreal} reduces to a $\F[v]$-module structure. Take any $p\in \mathrm{fix}(\iota)$. The evaluation $ev_p:\cG^I\to \Z_2$ gives rise to a real line bundle $L\to\cB^{\sigma,I}(Y\times \R)$. The $v$-action is defined by counting zero-dimensional moduli spaces of the form \[M_z([x],[y])\cap V,\]  where $V$ is the zero set of a generic smooth section of $L$ and $M_z([x],[y])\subset M([x],[y])$ consists of paths in the relative homotopy class $z$. This will be identified with the $\F[v]$-action of $\widetilde{H}_{\Z_2}^*(S^0;\F)$ on $H^{\Z_2}_*(\mathit{SWF}_{\Z_2}(Y,\iota,\s);\F)$.

\subsubsection{Gradings}\label{subsub:Gradings}
In \cite[Section 8.3]{li2022monopolefloerhomologyreal}, Li defined a relative grading by \[\mathrm{gr}(x,y)=\mathrm{ind}(Q_\gamma),\] for $x$,$y$, critical points of $\cX^{\sigma}_{\frakq}$ in $\cC^I(Y)$. For gauge orbits $[x]$, $[y]$ and $z$, the relative homotopy class of projection of the path $\gamma$ in $\cB^{\tau,I}$, he defined \[\mathrm{gr}_z([x],[y])=\mathrm{ind}(Q_\gamma).\] This is additive before taking gauge orbits: \[\mathrm{gr}(x,y)+\mathrm{gr}(y,w)=\mathrm{gr}(x,w),\] for stationary points $x$, $y$, $w$ in $\cC^I(Y)$. 

Using \cite[Lemma~8.16]{li2022monopolefloerhomologyreal} (see also  \cite[Lemma~14.4.6]{Kronheimer_Mrowka_2007}), we know that when $H^1(Y;\Z)^{-\iota^*}=0$, the grading on $\cB^{\tau,I}$ does not depend on the choice of a relative homotopy class. So from now on, we will write $\mathrm{gr}([x],[y])$ for simplicity.

In \cite{li2022triangle}, he defined an absolute lift of this grading when $Y=\Sigma(K)$ is the double branched cover over $S^3$ branched over a link $K \subset S^3$ with $\mathrm{det}(K)\ne 0$. We generalize that definition as follows: For a real $\mathrm{spin^c}$ cobordism $W$ between two real $\mathrm{spin^c}$ rational homology spheres $Y_+$ and $Y_-$, we define \[\iota^R(W)=b^1_{-\iota^*}(W)-b^+_{-\iota^*}(W)-b^0_{-\iota^*}(W).\]
When a real $\mathrm{spin^c}$ rational homology sphere $(Y,\iota,\s)$ bounds a real $\mathrm{spin^c}$ four-manifold $(W,\iota,\s')$, we puncture $W$ so that it becomes a real $\mathrm{spin^c}$ cobordism from $S^3$ to $Y$. We define \[\mathrm{gr}^{\Q}([x])=-\mathrm{gr}_z([x_0],W,[x])+\frac{1}{8}(c_1(\s')^2-\sigma(W))+\iota^R(W),\]
where $[x_0]$ is the reducible critical point represented by an eigenvector of the lowest positive eigenvalue of a perturbed Dirac operator on $S^3$, equipped with the real structure as the branched double cover of the unknot. This is well-defined, since for a closed real $\mathrm{spin^c}$ four-manifold, we have the dimension formula \[d= \frac{1}{8}(c_1(\s')^2-\sigma(W))+(b^1_{-\iota^*}(W)-b^+_{-\iota^*}(W)-b^0_{-\iota^*}(W)).\](cf.\cite[Lemma 4.3]{li2022monopolefloerhomologyreal} and \cite[Section 28]{Kronheimer_Mrowka_2007})

Specializing to double branched covers, the discussion in \cite[Section 3]{li2022triangle} tells us that this coincides with the definition given there.

\section{Real monopole Floer homology in global Coulomb slice}\label{sec:Real Monopole Floer homology in global Coulomb slice}
In this section, we will recast the construction in subsection \ref{sub:Real Monopole Floer homology} in the global coulomb slice $W^I= (\mathrm{ker}d^*)^{-\iota^*}\oplus \Gamma(\bS)^\tau$ following the procedure in \cite[Chapter 5]{Lidman2016TheEO}.

\subsection{Construction in $W^I$}\label{sub:Construction in W^I}
On $W^I$, we use the Sobolev norm defined by $\nabla_{A_0}$ to perform completions, and we let $\cT_{j}^{gC,I}$ be the trivial bundle over $W_k^{I}$ with fiber $W_j^I$, $j\le k$. We can alternatively use the norm $\nabla_{A_0+a}$ for the fiber over $(a,\phi)\in W^I$, but these two norms are strongly equivalent, so we use the former one for simplicity.

Note that on $W^{I}$, the remaining constant gauge group is $\Z_2$, which is discrete. We do not have a tangent to orbit on $W^I$, so the bundle decomposition of the form $\cT=\cJ\oplus\calK$ becomes $\cT^{gC,I}_k=\calK^{gC,I}_k$. In \cite[Section 5.1]{Lidman2016TheEO}, they introduced \emph{anticircular global Coulomb slice} $\calK^{agC}_k$ as the orthogonal complement of the tangent to gauge orbit with respect to the new metric $\tg$. In the real case, $\calK^{agC,I}_k=\cT^{gC,I}_k=\calK^{gC,I}_k$ as vector bundles over $W_k$, but we keep the different notations to keep track of the metric. Moreover, $\Piagc=\Pigc_{*}$ now.

The metric $\tg$ can be extended to the trivial bundle $\cT_{j}^I$ over $W_k^I$ with fiber $\cC_j^I$. Explicit formulas are given in \cite[Section 5.1]{Lidman2016TheEO}; we restrict them to real subspaces.
%More pre have bundle decomposition \[\cT_j^I=\cJ^I\oplus\cT^{gC,I},\] where $\cJ^I$ is the tangent to the normalized gauge group action.

We have considered blow-up configuration space $\cC^{\sigma,I}(Y)$ in Subsection \ref{sub:Real Monopole Floer homology}. We now blow up $W^I$ using its remaining $\Z_2$-action. That is, \[(W^I)^\sigma=\{(a,s,\phi)| d^*a=0,s\ge 0,\left\Vert \phi \right\Vert_{L^2}=1\}\subset (\cC^I)^\sigma.\] Note that we still have $I$ acting on $\cC^{\sigma}$ by $(a,s,\phi)\mapsto (-\iota^*a,s,\tau(\phi))$, so we have $(\cC^I)^\sigma=(\cC^\sigma)^I$ and $(W^I)^\sigma=(W^\sigma)^I$. We will use notations like $W^{\sigma,I}$ for simplicity. 

On the blow-ups, we still have a global Coulomb projection: \[\Pigcs:\cC^{\sigma,I}\to W^{\sigma,I},\text{  } (a,s,\phi)\mapsto (a-df,s,e^f\phi),\] in which $f=Gd^*a$. The infinitesimal version is given by \[(\Pigcs_*)_x: \cT^I_x\to \cT^{gC,I}_{\Pi^{gC,\sigma}(x)},\text{  }(b,r,\psi)\mapsto (\pi(b),r,\psi+(Gd^*b)\phi)\]  at $x=(a,s,\phi)$.
We also have the local Coulomb slice projection \[\Pilcs_{(a,s,\phi)}:\cT^{\sigma,I}_{j,x}\to \calK^{\sigma,I}_{j,x},\text{ } (b,r,\psi)\mapsto (b-d\zeta,r,\psi+\zeta\phi),\] in which $\zeta$ is characterized by $-d^*(b-d\zeta)+is^2 \mathrm{Re} \left \langle i\phi, \psi+\zeta\phi \right \rangle=0$ and $\mathrm{Re} \left \langle i\phi, \psi+\zeta\phi \right \rangle_{L^2}=0$. 

The anticircular global Coulomb slice is again equal to the whole tangent space, thus the anticircular global Coulomb slice projection coincides with the global Coulomb slice projection. We will use the notation $agC$ to address that we are using the $\tg$ metric instead of the usual $L^2$ metric. Also, the enlarged local Coulomb projection is the same as local Coulomb slice projection on the blow-up. 

As \cite[Lemma 5.1.4-5.1.6]{Lidman2016TheEO}, we have the following:
\begin{lem}
Let $x=(a,s,\phi)\in W^{\sigma,I}$.
\begin{enumerate}
    \item If it is irreducible, the blow-down projection is an isomorphism from $\calK^{agC,\sigma,I}_{j,(a,s,\phi)}$ to $\calK^{agC,I}_{j,(a,s\phi)}$.
    \item The local Coulomb slice projection $\Pilcs_x$ induces a linear isomorphism between $\calK^{agC,\sigma,I}_{j,x}$ and $\calK^{\sigma,I}_{j,x}$. Its inverse is given by the anticircular global Coulomb projection.
    \item If it is reducible, then $\calK^{agC,\sigma,I}_{j,x}=\calK^{\sigma,I}_{j,x}$ and the global Coulomb projection and anticircular global Coulomb projection are both the identity maps.
\end{enumerate}
    
\end{lem}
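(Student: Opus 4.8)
The plan is to follow the proofs of \cite[Lemma 5.1.4--5.1.6]{Lidman2016TheEO} essentially verbatim, restricting every construction to the $I$-invariant subspaces, and to isolate the two places where the real structure genuinely changes (indeed simplifies) the picture. I would use two facts already recorded above: on $W^I$ and on $W^{\sigma,I}$ the residual constant gauge group is the \emph{discrete} group $\Z_2$, so there is no tangent to the gauge orbit and hence $\cK^{agC,\sigma,I}=\cT^{gC,\sigma,I}$ (and $\cK^{agC,I}=\cT^{gC,I}$); and $(\Pigc_*)$ and $(\Pielc_*)=(\Pilc_*)$ are mutually inverse on $W^I$, with the analogous statement on the blow-up following from the displayed formulas for $\Pigcs_*$ and $\Pilcs_*$. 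I will treat the parts in the order $(1)$, $(3)$, $(2)$, since $(2)$ rests on the other two.

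For part $(1)$, I would write the blow-down projection as $\pi^\sigma(a,s,\phi)=(a,s\phi)$, with differential $(b,r,\psi)\mapsto(b,\,r\phi+s\psi)$ at $x=(a,s,\phi)$, and check directly that for $s>0$ (the irreducible case) this is a linear isomorphism $\cT^{gC,\sigma,I}_{j,x}\to\cT^{gC,I}_{j,(a,s\phi)}$: injectivity by pairing $r\phi+s\psi=0$ with $\phi$ and using $\|\phi\|_{L^2}=1$ together with $\mathrm{Re}\langle\phi,\psi\rangle_{L^2}=0$ (tangency of $\psi$ to the unit sphere), and surjectivity by exhibiting the explicit inverse on the spinor factor. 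Since in the real setting both anticircular slices are the full tangent spaces, this already gives the claim; if $(1)$ is instead to be read in the non-invariant slices, I would argue, exactly as in \cite{Lidman2016TheEO}, that $\pi^\sigma$ is $S^1$-equivariant for the constant gauge action and compatible with $\tilde g$, so its differential carries orbit tangents to orbit tangents and hence their $\tilde g$-orthogonal complements to one another.

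For part $(3)$, let $x=(a,0,\phi)$ be reducible. Every $(b,r,\psi)\in\cT^{gC,\sigma,I}_{j,x}$ has $d^*b=0$, so the pointwise local Coulomb condition $-d^*b+is^2\mathrm{Re}\langle i\phi,\psi\rangle=0$ defining $\cK^{\sigma,I}_{j,x}$ holds automatically at $s=0$. The only genuinely new input is the remaining global condition $\mathrm{Re}\langle i\phi,\psi\rangle_{L^2}=0$, which I would deduce from $\tau$ being anti-complex-linear and isometric: then $\langle\phi,\psi\rangle_{L^2}=\langle\tau\phi,\tau\psi\rangle_{L^2}=\overline{\langle\phi,\psi\rangle_{L^2}}\in\R$ for $\tau$-invariant $\phi,\psi$, hence $\langle i\phi,\psi\rangle_{L^2}\in i\R$ and its real part vanishes. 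Thus every tangent vector already lies in the local Coulomb slice, giving $\cK^{agC,\sigma,I}_{j,x}=\cT^{gC,\sigma,I}_{j,x}=\cK^{\sigma,I}_{j,x}$; and in the formula for $\Pigcs_*$ the operators $\pi$ and the correction $(Gd^*b)\phi$ act trivially since $d^*b=0$, so $\Pigcs_*$ is the identity there. (This is the same anti-linearity phenomenon that makes the enlarged local Coulomb slice cease to be enlarged after restriction.)

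Part $(2)$ I would then split by cases. At a reducible $x$ it is immediate from $(3)$. At an irreducible $x=(a,s,\phi)$ I would transport everything down to $(a,s\phi)$: by $(1)$ the blow-down identifies $\cK^{agC,\sigma,I}_{j,x}$ with $\cK^{agC,I}_{j,(a,s\phi)}$, and an identical elementary computation (using $\mathrm{Re}\langle i\phi,\phi\rangle=0$ so that the pointwise conditions match) identifies $\cK^{\sigma,I}_{j,x}$ with $\cK^{I}_{j,(a,s\phi)}$; under these identifications $\Pilcs_x$ and $\Pigcs_x$ become $\Pilc_{(a,s\phi)}$ and $\Pigc_{(a,s\phi)}$, which are mutually inverse isomorphisms by the facts recalled above. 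The main point requiring care — and the only real obstacle — is the bookkeeping of the several slices along the reducible locus, where $\pi^\sigma$ fails to be a local diffeomorphism and $(3)$ must be invoked in place of $(1)$; aside from that, one should check that the estimates of \cite[Section 5.1]{Lidman2016TheEO} restrict to the $I$-invariant subspaces, which is routine since all the relevant formulas there are $I$-compatible.
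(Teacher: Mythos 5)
Your proposal is correct and matches the paper's (implicit) argument: the paper offers no proof of this lemma, simply asserting it as the restriction of \cite[Lemma 5.1.4--5.1.6]{Lidman2016TheEO} to the $I$-invariant subspaces, which is exactly what you carry out. The two simplifications you isolate — the residual gauge group being the discrete $\Z_2$ (so the anticircular slices are the full tangent spaces) and the reality of $\langle\phi,\psi\rangle_{L^2}$ for $\tau$-invariant spinors forcing $\mathrm{Re}\langle i\phi,\psi\rangle_{L^2}=0$ — are precisely the points the paper flags elsewhere in Subsection \ref{subsub: Configuration space and Coulomb slices}, so your write-up supplies the details the paper omits.
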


We introduce a \emph{shear map} as in \cite[Section 5.1]{Lidman2016TheEO} $S_x:\cT^{\sigma,I} \to \cT^{\sigma,I}$ is given by \[\cJ^{\circ,\sigma,I}_{j,x}\oplus \calK^{e,\sigma,I}_{j,x} \to \cJ^{\circ,\sigma,I}_{j,x}\oplus \cT^{agC,\sigma,I}_{j,x}, \text{ } v\oplus w\mapsto v\oplus (\Pigcs_{*})_{x}(w),\] 
where \[\cJ^{\circ,\sigma,I}_{j,x}=\{(-d\xi,0,\xi\phi)| \int_Y \xi=0\} \subset \cJ^{\sigma,I}_{j,x}.\] 
This has an obvious inverse \[\cJ^{\circ,\sigma,I}_{j,x}\oplus \cT^{agC,\sigma,I}_{j,x} \to \cJ^{\circ,\sigma,I}_{j,x}\oplus \calK^{e,\sigma,I}_{j,x}, \text{ } v\oplus w\mapsto v\oplus (\Pielcs_{*})_{x}(w).\]

Let $Z=I\times Y $ be the cylinder, we have a four-dimensional configuration space $\cC(Z)$ consisting of pairs $(a,\phi)$ with $a\in \Omega^1(Z;i\R)$ and $\phi\in \Gamma(\bS^+)$. It can be rewritten as $(a(t)+\alpha(t)dt,\phi(t))$, with $t\in I$ for $a(t)\in \Omega^1(Y;i\R)$, $\alpha(t)\in C^\infty(Y;i\R)$ and $\phi(t)\in \Gamma(\bS)$. Taking the $I$-invariant part, we have the real configuration space $\cC^I(Z)$. 

To obtain a Coulomb slice model for it, we consider $W^I(Z)$ which is the subspace of $\cC^I(Z)$ consisting of configurations that are slicewise living in global Coulomb slice, i.e., $\alpha(t)=0$ and $a(t)\in \mathrm{ker}d^*$ for all $t$. $W^I(Z)$ has a slicewise constant gauge group action by $\cG^{gC,I}(Z)=C^\infty(I;\Z_2)$. However, $\Z_2$ is discrete, so $\cG^{gC,I}(Z)$ is just a copy of $\Z_2$, consisting of two constant gauge transformations. 

Recall that the $I$-action on $\cC(Z)$ is given by slicewise action of $-\iota^*$ and $\tau$, so if $(a+\alpha dt,\phi)\in \cC^I(Z)$, then $(a(t),\phi(t))\in \cC^I(Y)$ and $\alpha(t)$ is a real function on $Y$ for each $t$. In \cite[Section 5.2]{Lidman2016TheEO}, they introduced new concept called the \emph{pseudo-temporal gauge}, which requires $\alpha(t)$ to be slicewise constant. In the real case, this is actually equal to being in the \emph{temporal gauge}, since the only constant real function on $Y$ is zero. Using their notation, we have $\cC^I(Z)=W^I(Z)$.

On $W^I(Z)$, the Seiberg-Witten equations can be written as \[(\frac{d}{dt}+\cX^{gC})(a(t)+\phi(t))=0.\] This is already invariant under the action of $\cG^I(Z)$. The Seiberg-Witten map can also be regarded as a section \[\cF^{gC}: W^{I}(Z)\to \cV^{gC,I}(Z),  \] where $\cV^{gC,I}(Z)$ is the trivial bundle over $ W^I(Z)$ with fiber $W^I(Z)$.

For all these spaces, we can consider Sobolev completions of them and their tangent bundles. When $Z$ is compact, we can as well study their blow-ups; when $Z$ is infinite, we can also complete the spaces with local $L^2_k$ norms. We will follow the notation at the end of \cite[Section 5.2]{Lidman2016TheEO} for these spaces.

In Subsection \ref{sub:Real Monopole Floer homology}, we considered a perturbation of the Seiberg-Witten map by formal gradients. A similar perturbation can be done in the global Coulomb slice. If $\frakq$ is a perturbation on $\cC(Y)$, then \[\eta_{\frakq}(a,\phi)=(\Pigc_*)_{(a,\phi)}\frakq(a,\phi)\] is a valid perturbation on $W$. (So of course on $W^I$.) The formal gradient vector field changes to \[\cX_\frakq^{gC}=\cX^{gC}+\eta_{\frakq}.\] 

In \cite[Section 5.3]{Lidman2016TheEO}, they introduced a new concept \emph{controlled Coulomb perturbation} and proved that very tame perturbations project to controlled Coulomb perturbations by $\Pigc_*$. Thus, we always have controlled perturbations on $W^I(Z)$ from our discussion in Subsection \ref{sub:Real Monopole Floer homology}.

Now we fix a very tame perturbation $\frakq$ and the corresponding controlled Coulomb perturbation $\eta_\frakq$. $\cX^{gC}_\frakq$ can be written as $((\cX^{gC}_\frakq)^0,(\cX^{gC}_\frakq)^1)$ by splitting it into the form part and the spinor part. It will further induce a perturbed vector field on the blow up $W^{\sigma,I}$, given by \[ \cX^{gC,\sigma}_\frakq(a,s,\phi)=((\cX^{gC}_\frakq)^0(a,s\phi),\Lambda_{\frakq}(a,s,\phi)s,\widetilde{(\cX^{gC}_\frakq)}^1(a,s,\phi)-\Lambda_{\frakq}(a,s,\phi)\phi),\] for $\widetilde{(\cX^{gC}_\frakq)}^1(a,s,\phi)=\int_{0}^1 \cD_{(a,sr\phi)}(\cX^{gC}_\frakq)^1(0,\phi)dr$ and $\Lambda_{\frakq}(a,s,\phi)=\mathrm{Re}\left \langle \phi,\widetilde{(\cX^{gC}_\frakq)}^1(a,s,\phi)\right \rangle_{L^2}$.

In a concise way, these are just $\cX^{gC}_{\frakq}=\Pigc_*\circ \cX_\frakq$ and $\cX^{gC,\sigma}_{\frakq}=\Pigcs_*\circ \cX_\frakq^\sigma$. They are $I$-invariant smooth vector fields on $W$ and $W^\sigma$, respectively, thus they restrict to smooth vector fields on $W^I$ and $W^{\sigma,I}$ (without the need to perform a projection).

It is easy to see that any stationary point of $\cX_\frakq^\sigma$ in $\cC^{\sigma,I}$ can be moved into $W^{\sigma,I}$ by a normalized real gauge transformation, while any stationary point of $\cX^{gC,\sigma}_{\frakq}$ in $W^{\sigma,I}$ is a stationary point of $\cX_\frakq^\sigma$. Thus, the projection $\Pigcs$ induces a bijection \[\{\text{stationary points of }\cX_\frakq^\sigma \text{ in } \cC^{\sigma,I}\}/\cG^I\xrightarrow{\cong}\{\text{stationary points of }\cX_\frakq^{gC,\sigma} \text{ in } W^{\sigma,I}\}/\Z_2.\]

This bijection preserves the type of the critical point, i.e., irreducible, boundary-stable or boundary-unstable. The proof of \cite[Proposition 5.4.2]{Lidman2016TheEO} applies to show that \begin{prop}
    The trajectories of $\cX^{gC,\sigma}_{\frakq}$ in $W^{\sigma,I}$ are precisely the global Coulomb projections of the trajectories of $\cX^{\sigma}_{\frakq}$ in $\cC^{\sigma,I}(Y)$. The global Coulomb projection gives rise to a bijection \[\{\text{trajectories of }\cX_\frakq^\sigma \text{ in } \cC^I\}/\cG^I\xrightarrow{\cong}\{\text{trajectories of  }\cX_\frakq^{gC,\sigma} \text{ in } W^I\}/\Z_2.\]
\end{prop}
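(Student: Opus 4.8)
The plan is to carry over the proof of \cite[Proposition 5.4.2]{Lidman2016TheEO} to the real setting, using the fact that all the relevant maps are $I$-equivariant. First I would recall that, by the discussion following the definition of $\cX^{gC,\sigma}_\frakq$, we have the identities $\cX^{gC}_{\frakq}=\Pigc_*\circ \cX_\frakq$ on $W^I$ and $\cX^{gC,\sigma}_{\frakq}=\Pigcs_*\circ \cX_\frakq^\sigma$ on $W^{\sigma,I}$, and that the projection $\Pigcs:\cC^{\sigma,I}\to W^{\sigma,I}$ descends to a bijection on gauge-equivalence classes of stationary points. The core content of the proposition is therefore the statement about trajectories, and the engine is the observation that a path $\gamma:\R\to \cC^{\sigma,I}(Y)$ solves $\frac{d}{dt}\gamma = -\cX^\sigma_\frakq(\gamma)$ in temporal gauge if and only if its slicewise global Coulomb projection $\Pi^{gC,\sigma}\circ\gamma$ solves $\frac{d}{dt}(\Pi^{gC,\sigma}\circ\gamma) = -\cX^{gC,\sigma}_\frakq(\Pi^{gC,\sigma}\circ\gamma)$ in $W^{\sigma,I}$.

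The key steps, in order, would be: (i) Given a trajectory $\gamma$ of $\cX^\sigma_\frakq$ in $\cC^{\sigma,I}(Y)$, apply the slicewise gauge transformation $u(t)=e^{f(t)}$ with $f(t)=Gd^*a(t)$ that carries $\gamma(t)$ into $W^{\sigma,I}$; since $g$ is an invariant metric and $\gamma(t)$ is a real configuration, $f(t)$ is a real function and $u(t)\in \cG^{I,\circ}$, so the transformed path stays in the invariant part. One must check that this gauge transformation, which varies in $t$, introduces the correct $\frac{d}{dt}$-correction term so that the transformed path is exactly $\Pi^{gC,\sigma}\circ\gamma$ together with a compensating $\alpha(t)dt$ component that vanishes after passing to temporal/pseudo-temporal gauge — but here the real case is cleaner, since as noted in Subsection~\ref{sub:Construction in W^I} on $\cC^I(Z)$ the pseudo-temporal gauge coincides with the temporal gauge and $\cC^I(Z)=W^I(Z)$, so no residual slicewise-constant term survives. (ii) Conversely, a trajectory of $\cX^{gC,\sigma}_\frakq$ in $W^{\sigma,I}$ is literally a trajectory of $\cX^\sigma_\frakq$ that already lies in the slice, by the identity $\cX^{gC,\sigma}_\frakq=\Pi^{gC,\sigma}_*\circ\cX^\sigma_\frakq$ and the fact that $\Pi^{gC,\sigma}_*$ restricted to $\cT^{gC,\sigma,I}$ is the identity on the form part after $\pi$ and adds nothing extra at a point already in the slice. (iii) To promote the trajectory correspondence to a \emph{bijection} on the quotients $/\cG^I$ versus $/\Z_2$, observe that two trajectories in $W^{\sigma,I}$ related by a gauge transformation in $\cG^I(Z)=C^\infty(I;\Z_2)$, which by discreteness is just a single element of $\Z_2$ acting by the constant $\pm1$; and conversely a gauge transformation relating two temporal-gauge trajectories in $\cC^{\sigma,I}(Y)$ must be $t$-independent and hence, after the Coulomb normalization, descends to the residual $\Z_2$. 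Asymptotics at $t\to\pm\infty$ and regularity are preserved because $\Pi^{gC,\sigma}$ and its differential are smooth isomorphisms off the reducibles and identity-like on reducibles, so limit stationary points and boundary-(un)stable types match under the bijection established above.

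The main obstacle I anticipate is step (i): verifying that the slicewise Coulomb gauge transformation is smooth in $t$ with the right regularity (so that it is an honest element of the four-dimensional real gauge group $\cG^I(Z)$ acting on the completed spaces), and tracking the $dt$-component correction carefully enough to conclude that the output path is genuinely in temporal gauge and equals $\Pi^{gC,\sigma}\circ\gamma$ slicewise. In \cite{Lidman2016TheEO} this is where the pseudo-temporal gauge machinery does real work because the residual gauge group is $S^1$; in our situation the residual group collapses to the discrete $\Z_2$, which should make the argument strictly easier rather than harder, but one has to be careful that none of their estimates secretly used the continuity of the $S^1$-orbit. Since all the relevant estimates in \cite[Section 3.2, Section 5.1--5.2]{Lidman2016TheEO} were already noted to restrict to the real subspaces (the only change being that the enlarged local Coulomb slice is no longer enlarged), I expect the adaptation to go through verbatim, and the proof of the proposition to be essentially a transcription of \cite[Proposition 5.4.2]{Lidman2016TheEO} with $\Z_2$ in place of $S^1$ throughout.
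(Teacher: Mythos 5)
Your overall plan — carry over \cite[Proposition 5.4.2]{Lidman2016TheEO} to the real setting, noting the discreteness of the residual gauge group collapses the pseudo-temporal gauge issue — is exactly the paper's own (one-line) proof. Steps (i) and (iii) are sound. Step (ii), however, contains a genuine conceptual error.

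You claim that a trajectory of $\cX^{gC,\sigma}_\frakq$ in $W^{\sigma,I}$ is ``literally a trajectory of $\cX^\sigma_\frakq$ that already lies in the slice,'' on the grounds that $\Pi^{gC,\sigma}_*$ restricted to $\cT^{gC,\sigma,I}$ is the identity. This confuses the stationary-point claim (which the paper does make, and which is correct) with the trajectory claim. For a stationary point $x$, $\cX^\sigma_\frakq(x)=0$ is automatic from $\cX^{gC,\sigma}_\frakq(x)=0$ because the formal gradient is $L^2$-orthogonal to the gauge orbit; but for a non-stationary $x\in W^{\sigma,I}$, the vector $\cX^\sigma_\frakq(x)$ is generically \emph{not} in $\cT^{gC,\sigma,I}_x$ — its $1$-form part has a nonzero $d^*$ — so $\cX^{gC,\sigma}_\frakq(x)=\Pi^{gC,\sigma}_*\cX^\sigma_\frakq(x)\ne\cX^\sigma_\frakq(x)$, and the flow of $\cX^{gC,\sigma}_\frakq$ is genuinely different from the flow of $\cX^\sigma_\frakq$. (If they coincided on the slice there would be no need to introduce the metric $\tg$; one could simply restrict the $L^2$ metric.) The correct reverse direction is: given a trajectory $\gamma^{gC}$ of $\cX^{gC,\sigma}_\frakq$ in $W^{\sigma,I}$, recover the preimage trajectory by solving for a time-dependent gauge transformation $u(t)\in\cG^{I,\circ}$ whose logarithmic derivative $-u^{-1}\dot{u}$ supplies precisely the discrepancy $\cX^\sigma_\frakq(\gamma^{gC}(t))-\cX^{gC,\sigma}_\frakq(\gamma^{gC}(t))\in\cJ^\sigma$, so that $u(t)^{-1}\cdot\gamma^{gC}(t)$ is a temporal-gauge trajectory of $\cX^\sigma_\frakq$ with global Coulomb projection $\gamma^{gC}$. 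This is the content of the Lidman--Manolescu argument, and the real-case simplification is the absence of a residual $dt$ constant (as you do note in step (i)), not the degeneration of the two vector fields into one.
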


Again, the four-dimensional Seiberg-Witten map can be interpreted as a section \[\cF^{gC,\tau}_{\frakq}:\cC^{gC,\tau,I}(Z)\to \cV^{gC,\tau,I}(Z).\] In temporal gauge, it can be written as $\cF^{gC,\tau}_{\frakq}=\frac{d}{dt}+\cX_\frakq^{gC,\sigma}$.

\subsection{Hessians}

In \cite[Section 7.3]{li2022monopolefloerhomologyreal}, Li studied the derivative of the Seiberg-Witten map by introducing a notion of Hessian operator. We now further develop this using techniques from \cite[Section 5.5]{Lidman2016TheEO}. The original Hessian operator is defined by\[\mathrm{Hess}_{\frakq,x}=\Pilc_{x}\circ \cD_{x}\cX_{q}:\calK_{k,x}^I\to \calK_{k-1,x}^I,\] when $x$ is an irreducible configuration in $\cC^I(Y)$. Using the new metric $\tg$, we introduce a new Hessian on $W^I_k$ as \[\mathrm{Hess}_{\frakq,x}^{\tg}=\Piagc_{x}\circ \cD_{x}^{\tg}\cX^{gC}_{q}:\calK_{k,x}^{agC,I}\to \calK_{k-1,x}^{agC,I}.\] Here, $\cD^{\tg}$ denotes the connection on $\cT^{gC,I}$ induced by the $\tg$ metric on $W^I_k$. $\cD^{\tg}$ has a simple formula $\cD^{\tg}(X)=\Pigc_*\circ\cD(\Pielc(X))\circ\Pielc$. Using this, we have an alternative formula: 
$\mathrm{Hess}_{\frakq,x}^{\tg}=\Piagc_{x}\circ\cD_x\circ\Pielc_x$.
\cite[Proposition 7.8]{li2022monopolefloerhomologyreal} showed that $\mathrm{Hess}_{\frakq,x}$ is a Fredholm operator of index zero. A similar result holds for this new Hessian:
\begin{lem}\label{lem:extended hessian on blow-down is fredholm index 0}
For any $x=(a,\phi)\in W_k^I$ with $\phi\ne 0$, $\mathrm{Hess}_{\frakq,x}^{\tg}:\calK_{k,x}^{agC,I}\to \calK_{k-1,x}^{agC,I}$ is Fredholm of index zero. Thus, it is injective iff it is surjective.
\end{lem}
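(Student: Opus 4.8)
The plan is to deduce the Fredholm property of $\mathrm{Hess}_{\frakq,x}^{\tg}$ from that of the old Hessian $\mathrm{Hess}_{\frakq,x}$, which is already known to be Fredholm of index zero by \cite[Proposition 7.8]{li2022monopolefloerhomologyreal}, by exhibiting a conjugation between the two operators via the local and global Coulomb projections. Concretely, recall the alternative formula $\mathrm{Hess}_{\frakq,x}^{\tg}=\Piagc_{x}\circ\cD_x\circ\Pielc_x$. Since we are working on $W^I$ where $\Piagc = \Pigc_*$, $\Pielc = \Pilc$, and the enlarged local slice coincides with the ordinary one, I would rewrite this as $\mathrm{Hess}_{\frakq,x}^{\tg}=(\Pigc_*)_x\circ \cD_x\cX_\frakq\circ(\Pilc_x)|_{\cK^{agC,I}_{k,x}}$, noting that $(\Pilc_x)$ restricted to $\cK^{agC,I}_{k,x}=\cT^{gC,I}_{k,x}$ lands in $\cK^I_{k,x}$ and, by part (2) of the lemma just above (the restriction to blow-down of \cite[Lemma 5.1.4--5.1.6]{Lidman2016TheEO}), is an isomorphism $\cK^{agC,I}_{k,x}\xrightarrow{\sim}\cK^I_{k,x}$ with inverse $(\Pigc_*)_x$.

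\textbf{Key steps.} First I would establish that $(\Pilc_x):\cK^{agC,I}_{k,x}\to\cK^I_{k,x}$ and $(\Pigc_*)_x:\cK^I_{k,x}\to\cK^{agC,I}_{k,x}$ are mutually inverse bounded isomorphisms of Banach spaces (both $L^2_k$-completed), which is stated in the preceding discussion; the same holds at each Sobolev level $k-1$. Second, I would observe that although $\cD_x\cX_\frakq$ a priori maps $\cT^I_{k,x}\to\cT^I_{k-1,x}$, the composite $\Pilc_x\circ\cD_x\cX_\frakq$ restricted to $\cK^I_{k,x}$ is exactly $\mathrm{Hess}_{\frakq,x}$, which is Fredholm of index zero. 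Third, I would write
\[
\mathrm{Hess}_{\frakq,x}^{\tg}=(\Pigc_*)_x\big|_{\cK^I_{k-1,x}}\circ\mathrm{Hess}_{\frakq,x}\circ(\Pilc_x)\big|_{\cK^{agC,I}_{k,x}},
\]
a composition of an isomorphism, a Fredholm index-zero operator, and an isomorphism; hence $\mathrm{Hess}_{\frakq,x}^{\tg}$ is Fredholm of index zero. Here I need to double-check that the $\Pigc_*$ appearing on the left of $\cD_x\cX_\frakq$ in the $\tg$-Hessian formula is genuinely the same map (up to the identification $\Piagc=\Pigc_*$ valid on $W^I$) as the one conjugating — this is where the real case simplification is used, since on $W^I$ there is no gauge orbit direction and $\cT^{gC,I}=\cK^{agC,I}=\cK^{gC,I}$.

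\textbf{Main obstacle.} The substantive point is verifying that the formula $\mathrm{Hess}_{\frakq,x}^{\tg}=\Piagc_x\circ\cD_x\circ\Pielc_x$ is literally the conjugate of $\mathrm{Hess}_{\frakq,x}=\Pilc_x\circ\cD_x\cX_\frakq$ by these projections — that is, that the ``$\cD_x$'' appearing in the $\tg$ formula unwinds to $\cD_x\cX_\frakq$ composed with the slice projections in the right order, using $\cD^{\tg}(X)=\Pigc_*\circ\cD(\Pielc(X))\circ\Pielc$. One must be slightly careful that $\Pielc_x$ as a map on tangent vectors (used in the conjugation) and $\Pielc$ as used inside the $\cD^{\tg}$ formula match up after restriction to the invariant part, and that $\cD_x\cX^{gC}_\frakq$ versus $\cD_x\cX_\frakq$ are intertwined correctly via $\cX^{gC}_\frakq=\Pigc_*\circ\cX_\frakq$. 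Once this bookkeeping is done, the Fredholm-index-zero conclusion is immediate from composition with isomorphisms, and the final sentence (injective iff surjective) follows since a Fredholm operator of index zero is injective exactly when its cokernel vanishes, i.e.\ exactly when it is surjective.
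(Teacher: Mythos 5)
Your proposal is correct and follows the route the paper intends: it conjugates $\mathrm{Hess}_{\frakq,x}^{\tg}$ by the mutually inverse isomorphisms $(\Pigc_*)_x$ and $\Pielc_x=\Pilc_x$ (using that on $W^I$ the enlarged and ordinary local slices coincide and $\cX_\frakq(x)\in\cK^{e,I}_x$ so the ``alternative formula'' unwinds to $\Pigc_*\circ\cD_x\cX_\frakq\circ\Pielc_x$), reducing to \cite[Proposition 7.8]{li2022monopolefloerhomologyreal}, whose $k$-ASAFOE underpinnings are exactly what the paper points to after the lemma. The bookkeeping you flag as the main obstacle does go through as you describe, so no gap.
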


Recall that our real three-manifold $Y$ comes with an involution $\iota$. Let $E$ be a vector bundle over $Y$ that decomposes as a direct sum of a real and a complex vector bundle. Equip $E$ with a bundle map $\tau:E\to E$, which is a conjugate-linear involution covering $\iota$. Then $\tau$ acts on the sections of $E$ via $(\tau s)(y)=\tau(s(\iota(y)))$. The proof of the previous proposition relies on the following definition.

\begin{definition} (\cite[Definition 7.4]{li2022monopolefloerhomologyreal})
    An operator $L$ is called \emph{k-almost self-adjoint first order elliptic} ($k$-ASAFOE) if it is of the form $L=L_0+h$ where \begin{enumerate}
        \item $L_0$ is a first order, self-adjoint, elliptic differential operator (with smooth coefficients) acting on invariant sections of a vector bundle $E\to Y$.
        \item $h$ is an operator acting on $\tau$-invariant sections of $E$ as a map \[h:C^\infty(Y;E)^{\tau}\to L^2(Y;E)^{\tau}\] which extends to a bounded map on $L^2_j(Y;E)^{\tau}$ for all $j$ in the range $\vert j\vert\le k$.
    \end{enumerate}
\end{definition}

To fit the Hessian into this setup, we need to consider the extended Hessian \[\widehat{\mathrm{Hess}}_{\frakq,x}:\cT^{I}_{k,x}\oplus L^2_k(Y;i\R)^{-\iota^*}\to \cT^{I}_{k-1,x}\oplus L^2_{k-1}(Y;i\R)^{-\iota^*},\] given by \[\widehat{\mathrm{Hess}}_{\frakq,x}=\begin{bmatrix}
\cD_x\cX_{\frakq} & \bfd_{x}\\
\bfd^{*}_x & 0\\
\end{bmatrix},
\] where $\bfd_x$ encodes the infinitesimal gauge group action and $\bfd^{*}_x$ is its adjoint in $L^2$ metric.

Moving to the blow-up configuration space, we can form a similar  Hessian $\mathrm{Hess}^\sigma_{\frakq,x}=\Pilcs_x\circ\cD^{\sigma}_x\cX^{\sigma}_{\frakq}:\calK^{\sigma,I}_{k,x}\to \calK^{\sigma,I}_{k-1,x}$ and its extension by \[\widehat{\mathrm{Hess}}_{\frakq,x}=\begin{bmatrix}
\cD_x^\sigma\cX_{\frakq} & \bfd^\sigma_{x}\\
\bfd^{\sigma,\dagger}_x & 0\\
\end{bmatrix}.
\] In this formula \[\bfd^{\sigma}_x(\xi)=(-d\xi,0,\xi \phi),\]
\[\bfd^{\sigma,\dagger}_x(b,r,\psi)=-d^* b+is^2 \mathrm{Re}\left \langle i\phi,\psi \right \rangle+i\vert\phi\vert^2 \mathrm{Re} \mu_Y \left \langle  i\phi,\psi \right \rangle.\]

Note that $\bfd^{\sigma,\dagger}_x$ is not the adjoint operator of $\bfd^{\sigma}_x$, so the extended Hessians on the blow-up is no longer symmetric. We use the trick of forming a combination $\bspsi=\psi+r\phi$, so that we can think of $\widehat{\mathrm{Hess}}^\sigma_{\frakq,x}$ as acting on $L^2_j(Y;iT^*Y\oplus\bS\oplus i\R)^{-\iota^*\oplus\tau\oplus-\iota^*}$. Then \cite[Lemma 7.6]{li2022monopolefloerhomologyreal} tells us that it is Fredholm of index zero. As in \cite[Section 7.3]{li2022monopolefloerhomologyreal}, we know that when $x$ is a non-degenerate critical point, $\mathrm{Hess}^\sigma_{\frakq,x}$ is invertible with real spectrum.

Moving to the global Coulomb gauge, we form the $\tg$-Hessian in the blow-up \[\mathrm{Hess}_{\frakq,x}^{\tg,\sigma}=\Piagcs_{x}\circ \cD_{x}^{\tg,\sigma}\cX^{gC,\sigma}_{q}:\calK_{k,x}^{agC,\sigma,I}\to \calK_{k-1,x}^{agC,\sigma,I},\]  where \[\cD_{x}^{\tg,\sigma}X=\Pigcs_{*}\circ\cD^{\sigma}(\Pielcs(X))\circ\Pielcs.\]
Using the relationship between $\Piagcs$ and $\Pielcs$ on $W^I$, we have $\cX^{\sigma}_{\frakq}=\Pielcs\circ \cX^{gC,\sigma}_{\frakq}$. Thus, the $\tg$-Hessian on the blow-up can be rewritten as \[\mathrm{Hess}_{\frakq,x}^{\tg,\sigma}=\Piagcs_{x}\circ \cD_{x}^{\sigma}\cX^{\sigma}_{q}\circ \Pielcs_x.\] 
\begin{lem}(\cite[Lemma 5.5.7-5.5.8]{Lidman2016TheEO})\label{lem:Fredholm properties of g-extended hessian on blow-up}
 \begin{enumerate}
     \item For any $x\in W^{\sigma,I}$, the operator $\mathrm{Hess}^{\tg,\sigma}_{\frakq,x}$ is Fredholm of index zero.
     \item When $x$ is a non-degenerate stationary point of $\cX^{gC,\sigma}_{\frakq}$, the operator   $\mathrm{Hess}^{\tg,\sigma}_{\frakq,x}$ is invertible with real spectrum.
     \item The map $\mathrm{Hess}^{\tg,\sigma}_{\frakq}$ is continuous as a bundle map from $\calK^{agC,\sigma,I}_k$ to $\calK^{agC,\sigma,I}_{k-1}$.
 \end{enumerate}
     
 \end{lem}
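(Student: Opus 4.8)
The plan is to adapt the proof of \cite[Lemma 5.5.7--5.5.8]{Lidman2016TheEO} to the invariant setting, with one caveat: the restriction of an index-zero Fredholm operator to an invariant subspace need not have index zero, so I cannot simply take $I$-fixed parts of the $S^1$-equivariant statement. Instead I would feed the real blow-up Hessian results of \cite[Section 7.3]{li2022monopolefloerhomologyreal} through the identity $\mathrm{Hess}_{\frakq,x}^{\tg,\sigma}=\Piagcs_{x}\circ \cD_{x}^{\sigma}\cX^{\sigma}_{\frakq}\circ \Pielcs_x$ recorded just above the statement. The real work is to promote this identity to an honest conjugation by a bounded isomorphism; once that is done, parts (1) and (2) are formal, and part (3) is essentially the restriction of the $S^1$ case.

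To carry out the reduction, I would exploit that on $W^{\sigma,I}$ the residual gauge group is the discrete group $\Z_2$, so that $\Pielcs=\Pilcs$ and $\Piagcs=\Pigcs_{*}$; moreover every $\iota$-anti-invariant function on $Y$ has zero average for the invariant metric, so the normalized and the full gauge-orbit directions agree, $\cJ^{\circ,\sigma,I}=\cJ^{\sigma,I}$, and hence $\ker\Piagcs_{x}=\ker\Pigcs_{*,x}=\cJ^{\sigma,I}=\ker\Pilcs_{x}$. Combined with the analogue of \cite[Lemma 5.1.4--5.1.6]{Lidman2016TheEO} established above, this shows that for every $j\le k$ the maps $\Pielcs_{x}\colon\cK^{agC,\sigma,I}_{j,x}\to\cK^{\sigma,I}_{j,x}$ and $\Piagcs_{x}\colon\cK^{\sigma,I}_{j,x}\to\cK^{agC,\sigma,I}_{j,x}$ are mutually inverse bounded isomorphisms (and commute with the Sobolev inclusions). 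Splitting $\cD_{x}^{\sigma}\cX^{\sigma}_{\frakq}w$, for $w\in\cK^{\sigma,I}_{k,x}$, into its $\cK^{\sigma,I}$-part $\mathrm{Hess}^{\sigma}_{\frakq,x}w$ and its $\cJ^{\sigma,I}$-part, and using that $\Piagcs_{x}$ annihilates the latter, the defining identity collapses to $\mathrm{Hess}_{\frakq,x}^{\tg,\sigma}=\Piagcs_{x}\circ\mathrm{Hess}^{\sigma}_{\frakq,x}\circ\Pielcs_{x}$, a conjugation of $\mathrm{Hess}^{\sigma}_{\frakq,x}\colon\cK^{\sigma,I}_{k,x}\to\cK^{\sigma,I}_{k-1,x}$ by bounded isomorphisms. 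At reducibles ($s=0$) the two Coulomb slices coincide and $\Pielcs_{x}=\Piagcs_{x}=\mathrm{id}$, so this is automatic.

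Granting this conjugation, parts (1) and (2) follow from the real blow-up Hessian theory. For (1): $\mathrm{Hess}^{\sigma}_{\frakq,x}$ is Fredholm of index zero---the real analogue of the Kronheimer--Mrowka computation, available from \cite[Section 7.3]{li2022monopolefloerhomologyreal} (e.g.\ via the extended Hessian $\widehat{\mathrm{Hess}}^{\sigma}_{\frakq,x}$ of \cite[Lemma 7.6]{li2022monopolefloerhomologyreal}, whose index equals that of $\mathrm{Hess}^{\sigma}_{\frakq,x}$, or from Lemma \ref{lem:extended hessian on blow-down is fredholm index 0} together with the usual finite-rank comparison of blow-down and blow-up Hessians in the spirit of \cite[Section 14]{Kronheimer_Mrowka_2007}); conjugation preserves Fredholmness and index, so $\mathrm{Hess}_{\frakq,x}^{\tg,\sigma}$ is Fredholm of index zero, whence injective iff surjective. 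For (2): at a non-degenerate stationary point $x$ of $\cX^{gC,\sigma}_{\frakq}$ the operator $\mathrm{Hess}^{\sigma}_{\frakq,x}$ is invertible with real spectrum by \cite[Section 7.3]{li2022monopolefloerhomologyreal}, and since $\mathrm{Hess}_{\frakq,x}^{\tg,\sigma}-\mu=\Piagcs_{x}\circ(\mathrm{Hess}^{\sigma}_{\frakq,x}-\mu)\circ\Pielcs_{x}$ for every $\mu\in\C$, both invertibility and the real-spectrum property transport to $\mathrm{Hess}_{\frakq,x}^{\tg,\sigma}$.

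Part (3) is the $I$-invariant restriction of \cite[Lemma 5.5.8]{Lidman2016TheEO}: our perturbation is taken to be the restriction to $\cC^{I}$ of an $I$-equivariant very tame perturbation on $\cC$, so $\mathrm{Hess}^{\tg,\sigma}_{\frakq}$ is a continuous bundle map on $W^{\sigma}$ by \emph{loc.\ cit.}, and it restricts to a continuous bundle map on the smooth submanifold $W^{\sigma,I}$ with values in the $I$-invariant subbundle $\cK^{agC,\sigma,I}\subset\cK^{agC,\sigma}|_{W^{\sigma,I}}$; alternatively one checks directly that $\Pielcs_{x}$, $\cD_{x}^{\sigma}\cX^{\sigma}_{\frakq}$ and $\Piagcs_{x}$ each vary continuously in $x\in W^{\sigma,I}_{k}$ (the Coulomb projections via the smooth dependence on $x$ of the Green-operator solve defining them, the linearized flow because a controlled Coulomb perturbation makes $\cX^{\sigma}_{\frakq}$ a $C^{1}$ section with continuously varying derivative) and composes. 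The step I expect to demand genuine care is the conjugation identity of the second paragraph: it is precisely where the discreteness of the residual gauge group---with the attendant collapse of the enlarged local slice into the ordinary one and the vanishing of averages of $\iota$-anti-invariant functions---is essential, and it is the reason the non-equivariant statement cannot be quoted directly.
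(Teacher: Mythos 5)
Your argument is correct and is, in essence, the proof the paper has in mind: the paper's own proof simply says ``argue as in \cite[Section 5.5]{Lidman2016TheEO}, replacing Kronheimer--Mrowka's Hessian results with Li's real versions,'' and the conjugation identity $\mathrm{Hess}^{\tg,\sigma}_{\frakq,x}=\Piagcs_x\circ\mathrm{Hess}^{\sigma}_{\frakq,x}\circ\Pielcs_x$ that you extract from the defining formula is exactly the engine behind that argument. What you add that the paper leaves tacit is the observation that in the real setting the conjugation lands directly on Li's blow-up Hessian (rather than an intermediate ``enlarged'' Hessian), because $\cK^{e,\sigma,I}=\cK^{\sigma,I}$, $\cJ^{\circ,\sigma,I}=\cJ^{\sigma,I}$, and $\ker\Piagcs_x=\cJ^{\sigma,I}=\ker\Pilcs_x$; this is the genuine real-case simplification and you correctly flag it as the step requiring care. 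Your opening caveat---that one cannot get Fredholmness by naively taking $I$-fixed parts of the $S^1$ statement---is also correct and well worth stating. One small inaccuracy in the parenthetical alternative for part (1): Lemma~\ref{lem:extended hessian on blow-down is fredholm index 0} concerns the \emph{blow-down} Hessian at irreducibles and is not the right lemma to invoke for the blow-up comparison; the right input is $\widehat{\mathrm{Hess}}^{\sigma}_{\frakq,x}$ being Fredholm of index zero via \cite[Lemma 7.6]{li2022monopolefloerhomologyreal} together with the index comparison to $\mathrm{Hess}^{\sigma}_{\frakq,x}$, which is the option you list first and is the one that works.
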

 \begin{proof}
     This can be proved in exactly the same way as in \cite[Section 5.5]{Lidman2016TheEO}. We just need to replace the results from \cite{Kronheimer_Mrowka_2007} with the corresponding real version from \cite[Section 7]{li2022monopolefloerhomologyreal}.
 \end{proof}

Now we define two new types of extended Hessians on the blow-up global Coulomb slice. We first consider the \emph{split extended Hessian on the blow-up}
\[\widehat{\mathrm{Hess}}_{\frakq,x}^{sp,\sigma}: \cT^{\sigma,I}_{j,x}\oplus L^2_j(Y;i\R)^{-\iota^*}\to \cT^{\sigma,I}_{j-1,x}\oplus L^2_{j-1}(Y;i\R)^{-\iota^*},\] given by \[\widehat{\mathrm{Hess}}_{\frakq,x}^{sp,\sigma}=\begin{bmatrix}
\cD_x^\sigma\cX_{\frakq}^{\sigma} & \bfd_{x}^{\sigma}\\
\bfd^{sp,\sigma,\dagger}_x & 0\\
\end{bmatrix},
\]
in which $\bfd^{sp,\sigma,\dagger}$ is just $-d^*$ acting on the component $(-d\xi,0,\xi\phi)\in \cJ^{\circ,\sigma,I}_{j,x} $ when we decompose $\cT^{\sigma,I}_{j,x}$ into $\calK^{\sigma,I}_{j,x} \oplus \cJ^{\circ,\sigma,I}_{j,x}$. These formulas are far simpler than those in \cite[Section 5.5.3]{Lidman2016TheEO}, since our constant gauge group is $\Z_2$, so we have no tangent to the orbit after quotienting out the normalized gauge group. Disregarding this difference, we still have \begin{lem}
    $\widehat{\mathrm{Hess}}_{\frakq,x}^{sp,\sigma}$ is a $k$-ASAFOE operator and when $x$ is a non-degenerate stationary point of $\cX^{gC,\sigma}_{\frakq}$ in $W^{\sigma,I}$, $\widehat{\mathrm{Hess}}_{\frakq,x}^{sp,\sigma}$ is invertible with real spectrum.
\end{lem}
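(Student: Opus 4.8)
The plan is to follow the scheme of \cite[Lemmas 5.5.7--5.5.8]{Lidman2016TheEO}, substituting Li's real analogues from \cite[Section 7]{li2022monopolefloerhomologyreal} for the inputs of \cite{Kronheimer_Mrowka_2007}, and to exploit the feature that, because the remaining constant gauge group on $W^{\sigma,I}$ is the discrete group $\Z_2$, the lower-left block $\bfd^{sp,\sigma,\dagger}_x$ is cleanly separated from the spinorial terms. Two things must be checked: that $\widehat{\mathrm{Hess}}^{sp,\sigma}_{\frakq,x}$ is $k$-ASAFOE, and that at a non-degenerate stationary point it is invertible with real spectrum.

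\emph{The $k$-ASAFOE property.} First I would pass to the coordinates $\bspsi=\psi+r\phi$, in which, exactly as for $\widehat{\mathrm{Hess}}^\sigma_{\frakq,x}$, the operator $\widehat{\mathrm{Hess}}^{sp,\sigma}_{\frakq,x}$ acts on invariant sections of $E=iT^*Y\oplus\bS\oplus i\R$ with its $-\iota^*\oplus\tau\oplus-\iota^*$ structure. The operators $\widehat{\mathrm{Hess}}^{sp,\sigma}_{\frakq,x}$ and $\widehat{\mathrm{Hess}}^{\sigma}_{\frakq,x}$ have the same first-order part: the only first-order terms come from $*d$ on $iT^*Y$, the Dirac operator on $\bS$, and the $(-d,-d^*)$-coupling between $iT^*Y$ and $i\R$, and replacing $\bfd^{\sigma,\dagger}_x$ by $\bfd^{sp,\sigma,\dagger}_x$ alters only zeroth-order (pointwise-algebraic and $L^2$-integral) terms, which is checked from the equation defining $\cK^{\sigma,I}_{j,x}$ together with the identity $\mathrm{Re}\langle i\phi,\phi\rangle=0$ (the same identity that makes the $\bspsi$-substitution well defined). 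Hence $\widehat{\mathrm{Hess}}^{sp,\sigma}_{\frakq,x}=L_0+h$ with $L_0$ first-order, self-adjoint, elliptic, and $h$ the remaining terms --- the perturbation derivative $\cD_x\frakq$, the $a$-dependent Clifford terms, the $\phi$-dependent spinorial terms, and $\bfd^{sp,\sigma,\dagger}_x-\bfd^{\sigma,\dagger}_x$ --- which by $k$-tameness of $\frakq$ extends to a bounded operator on $L^2_j(Y;E)$ for all $|j|\le k$. This is the definition of $k$-ASAFOE; equivalently, $\widehat{\mathrm{Hess}}^{\sigma}_{\frakq,x}$ is $k$-ASAFOE by \cite[Lemma 7.6]{li2022monopolefloerhomologyreal} and $\widehat{\mathrm{Hess}}^{sp,\sigma}_{\frakq,x}$ differs from it by an operator bounded on every such $L^2_j$.

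\emph{Invertibility at non-degenerate $x$.} I would decompose $\cT^{\sigma,I}_{j,x}=\cK^{\sigma,I}_{j,x}\oplus\cJ^{\circ,\sigma,I}_{j,x}$ and identify $\cJ^{\circ,\sigma,I}_{j,x}$ with $L^2_j(Y;i\R)^{-\iota^*}$ via $\bfd^\sigma_x$; this is an isomorphism since a real function ($f=-f\circ\iota$) has vanishing average, hence no nonzero constant part, so $\bfd^\sigma_x$ is injective. As $\Pilcs_x$ is precisely the projection of $\cT^{\sigma,I}$ onto $\cK^{\sigma,I}$ along $\cJ^{\circ,\sigma,I}$, the $(\cK^{\sigma,I}\to\cK^{\sigma,I})$-block of $\cD^\sigma_x\cX^\sigma_\frakq$ equals $\mathrm{Hess}^\sigma_{\frakq,x}$, while $\bfd^\sigma_x$ carries the new $L^2_j(Y;i\R)^{-\iota^*}$ summand isomorphically onto $\cJ^{\circ,\sigma,I}$, and $\bfd^{sp,\sigma,\dagger}_x$ vanishes on $\cK^{\sigma,I}$ and equals $d^*d$ on $\cJ^{\circ,\sigma,I}$. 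Thus, on $\cK^{\sigma,I}_{j,x}\oplus L^2_j(Y;i\R)^{-\iota^*}\oplus L^2_j(Y;i\R)^{-\iota^*}$, the operator takes the block form
\[
\widehat{\mathrm{Hess}}^{sp,\sigma}_{\frakq,x}=\begin{bmatrix}\mathrm{Hess}^\sigma_{\frakq,x}&*&0\\ *&*&\mathrm{Id}\\ 0&d^*d&0\end{bmatrix},
\]
with $d^*d$ acting on real functions, hence invertible (elliptic, trivial kernel and cokernel). By the correspondence of stationary points under $\Pigcs$, non-degeneracy of $x$ for $\cX^{gC,\sigma}_\frakq$ means $\mathrm{Hess}^{\tg,\sigma}_{\frakq,x}$ is invertible (Lemma \ref{lem:Fredholm properties of g-extended hessian on blow-up}), equivalently $\mathrm{Hess}^\sigma_{\frakq,x}$ is invertible --- and invertible with real spectrum by \cite[Section 7.3]{li2022monopolefloerhomologyreal}. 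Solving $\widehat{\mathrm{Hess}}^{sp,\sigma}_{\frakq,x}(\kappa,\xi,\zeta)=(o_1,o_2,o_3)$ by back-substitution --- $\xi$ from the last row via $(d^*d)^{-1}$, then $\kappa$ from the first row via $(\mathrm{Hess}^\sigma_{\frakq,x})^{-1}$, then $\zeta$ from the middle row --- shows $\widehat{\mathrm{Hess}}^{sp,\sigma}_{\frakq,x}$ is invertible.

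\emph{The main obstacle: real spectrum.} The hard part is the real-spectrum claim rather than invertibility, because the off-diagonal $*$-blocks couple $\cK^{\sigma,I}$ to the gauge directions, and one must see this coupling cannot produce a non-real eigenvalue. The cleanest way to close this --- and, I expect, the reason for introducing the \emph{split} Hessian instead of working with $\widehat{\mathrm{Hess}}^{\sigma}_{\frakq,x}$ directly --- is to verify that in the $\bspsi$-coordinates $\widehat{\mathrm{Hess}}^{sp,\sigma}_{\frakq,x}$ is formally self-adjoint for the $L^2$-type inner product inherited from the splitting; being also Fredholm of index zero, it then has real spectrum and is invertible exactly when injective, which by the block-triangular form above happens exactly when $\mathrm{Hess}^\sigma_{\frakq,x}$ is invertible. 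Alternatively, one carries out the eigenvalue bookkeeping for $\widehat{\mathrm{Hess}}^{sp,\sigma}_{\frakq,x}-\mu$ directly, using $\mathrm{spec}(d^*d)\subset\R_{>0}$, to confine the spectrum to $\mathrm{spec}(\mathrm{Hess}^\sigma_{\frakq,x})$ together with an explicit set of real numbers coming from the $d^*d$-block. Either way, this is the argument of \cite[Lemma 5.5.8]{Lidman2016TheEO}, with the real inputs of \cite[Section 7]{li2022monopolefloerhomologyreal} in place of those of \cite{Kronheimer_Mrowka_2007}.
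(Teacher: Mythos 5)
Your block decomposition and the back-substitution argument for invertibility are correct, and the $k$-ASAFOE discussion is on the right track (the difference between $\bfd^{\sigma,\dagger}_x$ and $\bfd^{sp,\sigma,\dagger}_x$ is order $\le 0$, so the $\widehat{\mathrm{Hess}}^{sp,\sigma}_{\frakq,x}$ inherits the property from $\widehat{\mathrm{Hess}}^{\sigma}_{\frakq,x}$, which is $k$-ASAFOE by \cite[Lemma 7.6]{li2022monopolefloerhomologyreal}). The genuine gap is the real-spectrum claim: you correctly flag it as the hard part, but neither of your two proposed routes is actually carried out, and the self-adjointness route cannot work as stated --- in your block form the $(2,3)$ entry is $\mathrm{Id}$ while the $(3,2)$ entry is $d^*d$, which are not adjoints in any inner product naively inherited from the splitting, and the $(2,1)$ $*$-block is genuinely nonzero while $(1,2)$ is not its adjoint.

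What is missing is a simplification of the block form that occurs precisely at stationary points. Since $\cX^\sigma_\frakq$ is equivariant under the normalized gauge group, if $x$ is a stationary point then $\cX^\sigma_\frakq$ vanishes identically along the $\cG^{\circ}$-orbit of $x$, so $\cD^\sigma_x\cX^\sigma_\frakq$ annihilates $\cJ^{\circ,\sigma,I}_{j,x}$. In your block form this kills the $(1,2)$ and $(2,2)$ entries, and the matrix becomes block lower-triangular in the order $(\kappa;(\xi,\zeta))$ with diagonal blocks $\mathrm{Hess}^\sigma_{\frakq,x}$ and $\begin{bmatrix}0 & \mathrm{Id}\\ d^*d & 0\end{bmatrix}$. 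The eigenvalue problem then decouples cleanly: an eigenvector with $\kappa\ne 0$ forces $\mu\in\mathrm{spec}(\mathrm{Hess}^\sigma_{\frakq,x})$, which is real by \cite[Section 7.3]{li2022monopolefloerhomologyreal}, while an eigenvector with $\kappa=0$ satisfies $\zeta=\mu\xi$ and $d^*d\xi=\mu\zeta=\mu^2\xi$, so $\mu^2\in\mathrm{spec}(d^*d)\subset\R_{>0}$ on real functions, hence $\mu$ is real and nonzero. This gauge-equivariance vanishing is the step that makes the eigenvalue bookkeeping you gestured at actually go through; without it, the coupling blocks you left as $*$ have no reason to preserve reality of the spectrum.
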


Next, we consider the \emph{$\tg$-extended Hessian} given by \[\widehat{\mathrm{Hess}}_{\frakq,x}^{\tg,\sigma}: \cT^{\sigma,I}_{j,x}\oplus L^2_j(Y;i\R)^{-\iota^*}\to \cT^{\sigma.I}_{j-1,x}\oplus L^2_{j-1}(Y;i\R)^{-\iota^*},\] given by \[\widehat{\mathrm{Hess}}_{\frakq,x}^{\tg,\sigma}=\begin{bmatrix}
S_x\circ\cD_x^\sigma\cX_{\frakq}^{\sigma}\circ S_x^{-1} & \bfd_{x}^{\sigma}\\
\bfd^{\sigma,\tilde{\dagger}}_x & 0\\
\end{bmatrix},
\] in which $\bfd^{\sigma,\tilde{\dagger}}_x$ is the same as  $\bfd^{sp,\sigma,\dagger}$, we use various notations to be consistent with definitions in \cite[Section 5.5]{Lidman2016TheEO}. This kind of extended Hessian is not as good as others, however, as shown in \cite[Lemma 5.5.11-12]{Lidman2016TheEO}, we have the following:
\begin{lem}\label{lem:operator cH and kASAFOE property of gextended Hessian on blow up}
    Let $x\in W^I_k$ and $1\le j\le k$. Consider the operator $\cH_{x}^\sigma: \cT^{\sigma,I}_{j,x}\oplus L^2_j(Y;i\R)^{-\iota^{*}}\to \cT^{\sigma,I}_{j-1,x}\oplus L^2_{j-1}(Y;i\R)^{-\iota^{*}}$ given by the block matrix \[\cH_{x}^\sigma=\begin{bmatrix}
        (\cD^{\sigma}_x\cX^{gC,\sigma}_{\frakq})\circ \Pigcs_* & \bfd_x^\sigma \\
        \bfd^{\sigma,\tilde{\dagger}}_x& 0\\
    \end{bmatrix}\] 
    \begin{enumerate}
        \item Using the isomorphism $\cT^{\sigma,I}_{j,x} \oplus L^2_j(Y;i\R)^{-\iota^{*}}\cong L^2_j(Y;iT^*Y\oplus\bS\oplus\R)^{-\iota^*\oplus\tau\oplus-\iota^*}$ and a similar one for $j-1$, the operator $\cH_{x}^\sigma$ is $(k-1)$-ASAFOE with linear part \[L_0=\begin{bmatrix}
            *d& 0& -d\\
            0 & D &0\\
            -d^*& 0 & 0\\
        \end{bmatrix}\]
        \item When $j=k$, the operator $\cH_{x}^\sigma$ differs from $L_0$ by a compact operator from $L_k^2$ to $L_{k-1}^2$.
        \item If $x$ is a stationary point, then $\cH_{x}^\sigma$ is $k$-ASAFOE and $\cH_{x}^\sigma =\widehat{\mathrm{Hess}}_{\frakq,x}^{\tg,\sigma}$.
    \end{enumerate}
 \end{lem}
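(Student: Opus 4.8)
The overall approach is to transcribe \cite[Section 5.5]{Lidman2016TheEO} to the $I$-invariant setting, substituting Li's real elliptic-theory inputs from \cite[Section 7]{li2022monopolefloerhomologyreal} for those of \cite{Kronheimer_Mrowka_2007}, and using the simplifications already recorded: since the remaining constant gauge group on $W^I$ is the discrete group $\Z_2$, one has $\Piagcs=\Pigcs_*$, $\Pielcs=\Pilcs$ and $\cK^{e,\sigma,I}_{j,x}=\cK^{\sigma,I}_{j,x}$, so the shear map $S_x$ has the two-block form $v\oplus w\mapsto v\oplus(\Pigcs_*)_x(w)$ on $\cJ^{\circ,\sigma,I}_{j,x}\oplus\cK^{\sigma,I}_{j,x}$. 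For item (1) I would first use $\cX^{gC,\sigma}_\frakq=\Pigcs_*\circ\cX^\sigma_\frakq$ and the product rule to split $\cD^\sigma_x\cX^{gC,\sigma}_\frakq$ into $\Pigcs_*\circ\cD^\sigma_x\cX^\sigma_\frakq$ plus a remainder $R_x$ coming from the point-dependence of $\Pigcs_*$ evaluated against $\cX^\sigma_\frakq(x)$. Now $\Pigcs_*$ differs from the identity only by the term $-dGd^*$ inside $\pi$ and by $b\mapsto(Gd^*b)\phi$ in the spinor slot, both smoothing, hence bounded on every $L^2_j$; and after the combination $\bspsi=\psi+r\phi$ the leading differential part of $\cD^\sigma_x\cX^\sigma_\frakq$ is $(b,\bspsi)\mapsto(*db,D\bspsi)$, exactly as in the blow-up Hessian of \cite[Section 7.3]{li2022monopolefloerhomologyreal}. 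The remaining terms are products with components of $x\in W^I_k$, bounded on $L^2_j$ for $|j|\le k$ by Sobolev multiplication since $k$ is large, or derivatives of the very tame perturbation, bounded on $L^2_j$ for $|j|\le k$ by very tameness. Adding $\bfd^\sigma_x(\xi)=(-d\xi,0,\xi\phi)$, which supplies the $(1,3)$ entry $-d$ and a zeroth-order term, and $\bfd^{\sigma,\tilde{\dagger}}_x=-d^*$ on the form component, which supplies the $(3,1)$ entry, and collecting first-order terms, I would read off exactly $L_0$; its ellipticity and formal self-adjointness are the usual symbol computation, and everything descends to the $-\iota^*\oplus\tau\oplus-\iota^*$-invariant part because $\iota$ is an isometry and $\tau$ is compatible with $\rho$ and $A_0$. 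The remainder $h=\cH_x^\sigma-L_0$ is bounded on $L^2_j$ only for $|j|\le k-1$, the single lost degree being caused entirely by $R_x$, since $\cX^\sigma_\frakq(x)$ a priori lies only in $L^2_{k-1}$; this is the $(k-1)$-ASAFOE claim.

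For (2), when $j=k$ every term of $h$ is in fact compact from $L^2_k$ to $L^2_{k-1}$: the nonlinear $c$-part of $\cX^{gC,\sigma}_\frakq$ and its derivative are of order zero, hence compact by Rellich; the very tame perturbation contributes a compact operator by the controlled-Coulomb, functionally-bounded property of $\eta_\frakq$; and the Green-operator pieces of $\Pigcs_*$ are smoothing. For (3), at a stationary point $\cX^\sigma_\frakq(x)=0$, so $R_x=0$ and $\cD^\sigma_x\cX^{gC,\sigma}_\frakq=\Pigcs_*\circ\cD^\sigma_x\cX^\sigma_\frakq$; since elliptic regularity makes $x$ smooth, $h$ is then bounded on $L^2_j$ for $|j|\le k$, which is the $k$-ASAFOE property. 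To identify $\cH_x^\sigma$ with $\widehat{\mathrm{Hess}}^{\tg,\sigma}_{\frakq,x}$, I would match the blocks: the off-diagonal entries $\bfd^\sigma_x$ and $\bfd^{\sigma,\tilde{\dagger}}_x=\bfd^{sp,\sigma,\dagger}$ coincide by construction, while the $(1,1)$ block of $\cH_x^\sigma$, now equal to $\Pigcs_*\circ\cD^\sigma_x\cX^\sigma_\frakq\circ\Pigcs_*$, is identified with $S_x\circ\cD^\sigma_x\cX^\sigma_\frakq\circ S_x^{-1}$ exactly as in \cite[Lemma 5.5.12]{Lidman2016TheEO}; the inputs to that identification are that gauge-equivariance of $\cX^\sigma_\frakq$ together with $\cX^\sigma_\frakq(x)=0$ forces $\cD^\sigma_x\cX^\sigma_\frakq$ to annihilate $\cJ^{\circ,\sigma,I}_x$, and that $\cX^\sigma_\frakq=\Pielcs\circ\cX^{gC,\sigma}_\frakq$ on $W^{\sigma,I}$ lets $S_x$ intertwine the two vector fields. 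Having no constant-gauge summand makes $S_x$ the simple two-block map above, so this step is slightly cleaner than its prototype.

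The step I expect to need the most care is this last identification of the $(1,1)$ blocks at a stationary point — tracking how $S_x$ conjugates $\cD^\sigma_x\cX^\sigma_\frakq$ through the two decompositions $\cJ^\circ\oplus\cK^e$ and $\cJ^\circ\oplus\cT^{agC}$ of $\cT^{\sigma,I}$ and checking that it agrees with $\Pigcs_*\circ\cD^\sigma_x\cX^\sigma_\frakq\circ\Pigcs_*$ once $\cX^\sigma_\frakq(x)=0$ — together with pinning down the exact degree at which $R_x$ ceases to be bounded, which is what separates the $(k-1)$-ASAFOE of (1) from the $k$-ASAFOE of (3). Everything else is a direct transcription of \cite[Section 5.5]{Lidman2016TheEO}, simplified by the absence of a tangent to the gauge orbit inside $W^I$.
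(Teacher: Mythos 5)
Your proposal is correct and follows the same route as the paper, which simply carries over \cite[Lemma 5.5.11--5.5.12]{Lidman2016TheEO} to the $I$-invariant setting with Li's real elliptic inputs; your decomposition $\cD^\sigma_x\cX^{gC,\sigma}_\frakq=\Pigcs_*\circ\cD^\sigma_x\cX^\sigma_\frakq+R_x$, the identification of $R_x$ (with coefficient $\cX^\sigma_\frakq(x)\in L^2_{k-1}$) as the sole source of the drop from $k$- to $(k-1)$-ASAFOE, and the matching of the $(1,1)$ block with $S_x\circ\cD^\sigma_x\cX^\sigma_\frakq\circ S_x^{-1}$ at a stationary point (using that $\cD^\sigma_x\cX^\sigma_\frakq$ annihilates $\cJ^{\circ,\sigma,I}_x$ and lands in $\cK^{e,\sigma,I}_x$ there) are exactly the ingredients of that argument. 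You also correctly exploit the simplification that $\Piagcs=\Pigcs_*$ and there is no tangent to the orbit in $W^{\sigma,I}$, which is the only point where the real case genuinely differs from the prototype.
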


 \begin{lem}
     If $x$ is a non-degenerate stationary point of $\cX^{gC,\sigma}$ in $W^{\sigma,I}_k$, then $\widehat{\mathrm{Hess}}_{\frakq,x}^{\tg,\sigma}$ is invertible with real spectrum.
 \end{lem}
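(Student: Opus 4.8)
The plan is to deduce the statement from the corresponding fact for the split extended Hessian $\widehat{\mathrm{Hess}}_{\frakq,x}^{sp,\sigma}$ recorded in the preceding lemma, by exhibiting $\widehat{\mathrm{Hess}}_{\frakq,x}^{\tg,\sigma}$ and $\widehat{\mathrm{Hess}}_{\frakq,x}^{sp,\sigma}$ as conjugate via a bounded invertible operator built from the shear map $S_x$.

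First I would observe that a stationary point $x$ of $\cX^{gC,\sigma}$ is smooth by elliptic regularity, so the shear map $S_x\colon\cJ^{\circ,\sigma,I}_{j,x}\oplus\cK^{e,\sigma,I}_{j,x}\to\cJ^{\circ,\sigma,I}_{j,x}\oplus\cT^{agC,\sigma,I}_{j,x}$ and its inverse are bounded isomorphisms on every Sobolev level $1\le j\le k$, being assembled from $\Pigcs_*$, $\Pielcs_*$ and the identity. Hence $T_x:=S_x\oplus\mathrm{id}$ is a bounded invertible operator on $\cT^{\sigma,I}_{j,x}\oplus L^2_j(Y;i\R)^{-\iota^*}$ which restricts to a bounded invertible operator on each lower Sobolev completion; in particular it maps the natural $L^2_1$-domain of the extended Hessians, viewed as unbounded operators on $L^2$, to itself.

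Next I would check the block identity
\[
T_x\,\widehat{\mathrm{Hess}}_{\frakq,x}^{sp,\sigma}\,T_x^{-1}=\widehat{\mathrm{Hess}}_{\frakq,x}^{\tg,\sigma}.
\]
The top-left blocks agree by the very definition of $\widehat{\mathrm{Hess}}_{\frakq,x}^{\tg,\sigma}$, whose upper-left entry is $S_x\circ\cD_x^\sigma\cX_{\frakq}^{\sigma}\circ S_x^{-1}$. For the off-diagonal blocks one uses that both $S_x$ and $S_x^{-1}$ restrict to the identity on $\cJ^{\circ,\sigma,I}_{j,x}$: the image of $\bfd_x^\sigma$ lies in $\cJ^{\circ,\sigma,I}_{j,x}$, so $S_x\circ\bfd_x^\sigma=\bfd_x^\sigma$, and $\bfd^{sp,\sigma,\dagger}_x$ depends only on the $\cJ^{\circ,\sigma,I}$-component of its argument while $\bfd^{\sigma,\tilde{\dagger}}_x=\bfd^{sp,\sigma,\dagger}_x$, so $\bfd^{sp,\sigma,\dagger}_x\circ S_x^{-1}=\bfd^{\sigma,\tilde{\dagger}}_x$. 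This is exactly the bookkeeping of \cite[Section 5.5]{Lidman2016TheEO}, shortened here by the absence of a tangent-to-orbit direction in $W^I$.

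Finally, since conjugation by a bounded invertible operator preserving the domain preserves the spectrum of an (unbounded) operator, and hence both invertibility and the reality of the spectrum, the conclusion follows from the preceding lemma, which gives that $\widehat{\mathrm{Hess}}_{\frakq,x}^{sp,\sigma}$ is invertible with real spectrum at a non-degenerate stationary point of $\cX^{gC,\sigma}_{\frakq}$; this also matches Lemma \ref{lem:operator cH and kASAFOE property of gextended Hessian on blow up}(3), which identifies $\widehat{\mathrm{Hess}}_{\frakq,x}^{\tg,\sigma}$ with the $k$-ASAFOE operator $\cH_x^\sigma$ at stationary points. I expect the main obstacle to be the careful verification of the conjugation identity — in particular confirming how $S_x$ interacts with the decomposition $\cT^{\sigma,I}_{j,x}=\cK^{\sigma,I}_{j,x}\oplus\cJ^{\circ,\sigma,I}_{j,x}$ and that $\bfd^{\sigma,\tilde{\dagger}}_x$ genuinely factors through the projection onto $\cJ^{\circ,\sigma,I}_{j,x}$; the rest is formal once the real-analytic inputs of \cite[Section 7]{li2022monopolefloerhomologyreal} are available.
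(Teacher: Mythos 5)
The operator-theoretic shell of your argument (conjugation by a bounded invertible operator with its inverse preserves invertibility and the spectrum) is fine, and the top row of your block identity does check out, since $\mathrm{im}(\bfd_x^\sigma)\subset\cJ^{\circ,\sigma,I}_{j,x}$ and $S_x$ is the identity there. The gap is exactly at the point you flagged as "the main obstacle": the bottom-left block. The operator $\bfd^{sp,\sigma,\dagger}_x$ is defined by projecting onto $\cJ^{\circ,\sigma,I}_{j,x}$ \emph{along} $\cK^{\sigma,I}_{j,x}$ and applying $-d^*$, whereas $\bfd^{sp,\sigma,\dagger}_x\circ S_x^{-1}$ projects onto $\cJ^{\circ,\sigma,I}_{j,x}$ \emph{along} $\cT^{agC,\sigma,I}_{j,x}$ (because $S_x^{-1}$ carries the $\cT^{agC}$-summand into $\cK^{e,\sigma,I}=\cK^{\sigma,I}$ while fixing the $\cJ^\circ$-summand). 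At an irreducible stationary point these two complements of $\cJ^{\circ,\sigma,I}_{j,x}$ are genuinely different: the paper's lemma on Coulomb slices only gives an isomorphism $\cK^{agC,\sigma,I}_{j,x}\cong\cK^{\sigma,I}_{j,x}$ via $\Pilcs_x$, with equality holding only at reducibles. Concretely, for $w=(b,r,\psi)\in\cT^{agC,\sigma,I}_{j,x}$ with $s>0$ one computes $\bfd^{sp,\sigma,\dagger}_x(w)=is^2\,\mathrm{Re}\left\langle i\phi,\psi_{\cK}\right\rangle\ne 0$ in general, so $\bfd^{sp,\sigma,\dagger}_x$ does \emph{not} factor through the $\cJ^\circ$-projection along $\cT^{agC}$, and hence $\bfd^{sp,\sigma,\dagger}_x\circ S_x^{-1}\ne\bfd^{sp,\sigma,\dagger}_x$. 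Thus either $\bfd^{\sigma,\tilde\dagger}_x$ is literally $\bfd^{sp,\sigma,\dagger}_x$ (as the paper's wording suggests), in which case your conjugation identity fails at irreducible stationary points, or $\bfd^{\sigma,\tilde\dagger}_x$ must be taken to be $\bfd^{sp,\sigma,\dagger}_x\circ S_x^{-1}$, in which case the identity is a tautology but your justification of it (via "$\bfd^{\sigma,\tilde\dagger}_x=\bfd^{sp,\sigma,\dagger}_x$") is exactly the false step, and you would additionally need to recheck Lemma \ref{lem:operator cH and kASAFOE property of gextended Hessian on blow up} with that definition. The discrepancy $\bfd^{sp,\sigma,\dagger}_x-\bfd^{sp,\sigma,\dagger}_x\circ S_x^{-1}$ is a nonzero perturbation, and adding it is no longer a conjugation, so invertibility and reality of the spectrum do not transfer for free.

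For comparison: the paper states this lemma with no proof, deferring (via the earlier blanket remark) to \cite[Section 5.5]{Lidman2016TheEO}. The argument there does not run through a bare conjugation of the split extended Hessian; rather, at a stationary point one exploits that $\cD_x^\sigma\cX^\sigma_{\frakq}$ annihilates $\cJ^{\circ,\sigma,I}_{j,x}$, so that with respect to the decomposition $\cJ^{\circ,\sigma,I}_{j,x}\oplus\cK^{agC,\sigma,I}_{j,x}\oplus L^2_j(Y;i\R)^{-\iota^*}$ the eigenvalue equation for $\widehat{\mathrm{Hess}}_{\frakq,x}^{\tg,\sigma}$ decouples into the $\cK^{agC}$-block, which is $\mathrm{Hess}^{\tg,\sigma}_{\frakq,x}$ and is invertible with real spectrum by Lemma \ref{lem:Fredholm properties of g-extended hessian on blow-up}(2) and non-degeneracy, and the gauge block $\left[\begin{smallmatrix}0&\bfd^\sigma_x\\ \bfd^{\sigma,\tilde\dagger}_x&0\end{smallmatrix}\right]$, whose square is essentially the Laplacian on real functions and is therefore invertible with real spectrum. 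Your proof is fine at reducible stationary points (where $S_x=\mathrm{id}$ and all the slices coincide), but at irreducibles you need either to verify the triangularity/decoupling just described or to redo the bookkeeping of $\bfd^{\sigma,\tilde\dagger}_x$ carefully; as written the block identity is not established.
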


Next, we use a sequence of interpolations to relate different notions of extended Hessians. As a first step, we consider the convex linear combination of $\widehat{\mathrm{Hess}}_{\frakq,x}^{\sigma}$ and $\widehat{\mathrm{Hess}}_{\frakq,x}^{sp,\sigma}$. 
\begin{lem} 
If $x$ is a non-degenerate stationary point of $\cX^{gC,\sigma}$ in $W^{\sigma,I}_k$, then for any $\rho\in [0,1]$, $\rho\widehat{\mathrm{Hess}}_{\frakq,x}^{\sigma}+(1-\rho) \widehat{\mathrm{Hess}}_{\frakq,x}^{sp,\sigma}$is invertible with real spectrum.
 \end{lem}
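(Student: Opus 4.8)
The plan is to carry out the interpolation of \cite[Section 5.5]{Lidman2016TheEO} on the $I$-invariant subspaces, feeding in the real Fredholm and spectral inputs already recorded above (ultimately from \cite[Section 7]{li2022monopolefloerhomologyreal}) and exploiting that on $W^{\sigma,I}$ there is no constant-gauge-orbit direction.

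First I would record that $H_\rho:=\rho\widehat{\mathrm{Hess}}_{\frakq,x}^{\sigma}+(1-\rho)\widehat{\mathrm{Hess}}_{\frakq,x}^{sp,\sigma}$ is $k$-ASAFOE for every $\rho\in[0,1]$, with the same linear part as the two endpoints. The only block in which $\widehat{\mathrm{Hess}}_{\frakq,x}^{\sigma}$ and $\widehat{\mathrm{Hess}}_{\frakq,x}^{sp,\sigma}$ differ is the bottom-left one, $\bfd^{\sigma,\dagger}_x$ versus $\bfd^{sp,\sigma,\dagger}_x$; a direct computation with the Coulomb relations (for $(b,r,\psi)\in\cK^{\sigma,I}_{j,x}$ one has $-d^{*}b+is^{2}\mathrm{Re}\langle i\phi,\psi\rangle=0$ and $\mathrm{Re}\langle i\phi,\psi\rangle_{L^{2}}=0$) shows that $\bfd^{\sigma,\dagger}_x-\bfd^{sp,\sigma,\dagger}_x$ is a bounded zeroth-order operator which in addition vanishes on $\cK^{\sigma,I}_{j,x}$. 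Hence $H_\rho$ differs from $\widehat{\mathrm{Hess}}_{\frakq,x}^{sp,\sigma}$ by a bounded operator supported in the gauge and Lagrange-multiplier directions, so it is $k$-ASAFOE and, in particular, Fredholm of index zero. It therefore suffices to prove that $H_\rho-\mu$ is injective for every $\mu\in\C$, which gives invertibility (take $\mu=0$) and reality of the spectrum at one stroke.

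Next I would use the block decomposition with respect to $\cT^{\sigma,I}_{j,x}=\cK^{\sigma,I}_{j,x}\oplus\cJ^{\circ,\sigma,I}_{j,x}$ together with the factor $L^{2}_{j}(Y;i\R)^{-\iota^{*}}$. At a non-degenerate stationary point $x$ of $\cX^{gC,\sigma}_{\frakq}$ the operator $\cD^{\sigma}_{x}\cX^{\sigma}_{\frakq}$ respects the Coulomb decomposition (the usual consequence of gauge-equivariance together with $x$ being stationary), the block $\bfd^{\sigma}_x$ lands in $\cJ^{\circ,\sigma,I}_{j,x}$, and, as just observed, both $\bfd^{\sigma,\dagger}_x$ and $\bfd^{sp,\sigma,\dagger}_x$ vanish on $\cK^{\sigma,I}_{j,x}$. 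Thus $H_\rho$ is block-diagonal: its $\cK$-block is the Coulomb-slice Hessian $\mathrm{Hess}^{\sigma}_{\frakq,x}$, which is $\rho$-independent and, by non-degeneracy of $x$, invertible with real spectrum; its ``gauge block'' $G_\rho$ on $\cJ^{\circ,\sigma,I}_{j,x}\oplus L^{2}_{j}(Y;i\R)^{-\iota^{*}}$ interpolates between the corresponding blocks of $\widehat{\mathrm{Hess}}_{\frakq,x}^{sp,\sigma}$ and $\widehat{\mathrm{Hess}}_{\frakq,x}^{\sigma}$. So the claim reduces to $G_\rho$ being invertible with real spectrum for all $\rho$. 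Working in the coordinate $\bspsi=\psi+r\phi$, one checks that $\bfd^{sp,\sigma,\dagger}_x$ on $\cJ^{\circ,\sigma,I}_{j,x}$ differs from the $L^{2}$-adjoint of $\bfd^{\sigma}_x$ only by the bounded self-adjoint multiplication by $|\phi|^{2}$, while the extra term $\bfd^{\sigma,\dagger}_x-\bfd^{sp,\sigma,\dagger}_x$ is, after the same identification and using the Coulomb relations at $x$, again bounded and self-adjoint; hence $G_\rho$ is self-adjoint up to a bounded self-adjoint perturbation, so its spectrum is real. Injectivity of $G_\rho-\mu$ for all $\mu\in\C$ then follows, for every $\rho$, from the injectivity of the infinitesimal gauge action $\bfd^{\sigma}_x$ and elliptic regularity, by the same kernel argument as in \cite[Section 5.5]{Lidman2016TheEO}.

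The main obstacle is this last step: one must verify carefully that the bounded zeroth-order difference $\bfd^{\sigma,\dagger}_x-\bfd^{sp,\sigma,\dagger}_x$ enters $G_\rho$ self-adjointly (up to bounded self-adjoint error), so that $G_\rho$ stays invertible with real spectrum throughout $\rho\in[0,1]$ and no eigenvalue is pushed off the real axis or through $0$. In the non-real setting this is exactly the content of the analogous interpolation lemma in \cite[Section 5.5]{Lidman2016TheEO}, and its proof transfers verbatim once the Coulomb projections, the modified metric $\tg$, and the Fredholm facts of \cite{Kronheimer_Mrowka_2007} are replaced by the $I$-invariant counterparts of \cite[Section 7]{li2022monopolefloerhomologyreal}; the only genuine simplification in our case is the absence of a constant-gauge-orbit summand, which makes the block $G_\rho$ slightly smaller than in \cite{Lidman2016TheEO}.
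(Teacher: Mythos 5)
Your high-level strategy matches what the paper implicitly appeals to (Lidman--Manolescu's interpolation, exploiting the block structure of the extended Hessian at a stationary point with respect to $\cK^{\sigma,I}_{j,x}\oplus\cJ^{\circ,\sigma,I}_{j,x}\oplus L^2_j(Y;i\R)^{-\iota^*}$ and the absence of a constant-gauge-orbit direction in the real case), so the overall plan is the right one. However, the ``real spectrum'' part of your argument has two genuine problems.

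First, you say it suffices to show that $H_\rho-\mu$ is injective for \emph{every} $\mu\in\C$. If that were so, $H_\rho$ (Fredholm of index $0$) would have empty spectrum, which is impossible for a $k$-ASAFOE operator on a compact $Y$: the resolvent is compact, so the spectrum is a nonempty discrete set. What you need is injectivity at $\mu=0$ (invertibility) and at every $\mu\in\C\setminus\R$ (reality of spectrum); the same slip recurs in the final sentence (``Injectivity of $G_\rho-\mu$ for all $\mu\in\C$''). Second, the claim that $G_\rho$ is ``self-adjoint up to a bounded self-adjoint perturbation'' does not hold and does not yield real spectrum. Write
\[
G_\rho=\begin{bmatrix}0 & \bfd^\sigma_x \\ (\bfd^\sigma_x)^* & 0\end{bmatrix}+\begin{bmatrix}0 & 0 \\ T_\rho-(\bfd^\sigma_x)^* & 0\end{bmatrix},\qquad T_\rho=\bigl(\rho\bfd^{\sigma,\dagger}_x+(1-\rho)\bfd^{sp,\sigma,\dagger}_x\bigr)\big|_{\cJ^{\circ,\sigma,I}_{j,x}}.
\]
The first summand is self-adjoint, but the second is a strictly lower-triangular block and is never self-adjoint as a block operator (its adjoint has the block in the opposite corner), regardless of whether $T_\rho-(\bfd^\sigma_x)^*$ is self-adjoint on the gauge parameters. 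The correct route is different: identify $\cJ^{\circ,\sigma,I}_{j,x}\cong L^2_j(Y;i\R)^{-\iota^*}$ via $\bfd^\sigma_x$ and check that $T_\rho\bfd^\sigma_x = d^*d+\rho\bigl(s^2|\phi|^2\cdot\,+\,|\phi|^2\mu_Y(\cdot\,|\phi|^2)\bigr)$ is self-adjoint and \emph{strictly} positive (in the real setting, $d\zeta=0$ forces $\zeta=0$ since real functions have zero average). Then $G_\rho\cong\begin{bmatrix}0&I\\ T_\rho\bfd^\sigma_x&0\end{bmatrix}$, which is conjugate via $\mathrm{diag}(I,(T_\rho\bfd^\sigma_x)^{1/2})$ to the self-adjoint operator $\begin{bmatrix}0&(T_\rho\bfd^\sigma_x)^{1/2}\\ (T_\rho\bfd^\sigma_x)^{1/2}&0\end{bmatrix}$, whence $G_\rho$ has real nonzero spectrum. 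Finally, a smaller point: you assert $H_\rho$ is block-\emph{diagonal}, but since the blow-up extended Hessian is not symmetric (as the paper notes) there is no reason the $\cK\to\cJ^\circ$ block of $\cD^\sigma_x\cX^\sigma_\frakq$ should vanish; what you actually get is block-\emph{lower-triangular}, which still gives $\mathrm{spec}(H_\rho)=\mathrm{spec}(\mathrm{Hess}^\sigma_{\frakq,x})\cup\mathrm{spec}(G_\rho)$ and is all you need.
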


Then, we consider a family of metrics $g_\rho=(1-\rho)g_{L^2}+\rho \tg$. Each of these is a well-defined metric on $\cT^{I}_{j,x}$. Using them, we can form the family $g_{\rho}$-extended Hessian\[\widehat{\mathrm{Hess}}_{\frakq,x}^{\rho,\sigma}=\begin{bmatrix}
S_x^{\rho}\circ\cD_x^\sigma\cX_{\frakq}^{\sigma}\circ (S_x^\rho)^{-1} & \bfd_{x}^{\sigma}\\
\bfd^{\sigma,\tilde{\dagger}}_x & 0\\
\end{bmatrix},
\]  
in which $S^\rho$ is the shear map associated to the metric $g_\rho$.
\begin{lem}
If $x$ is a non-degenerate stationary point of $\cX^{gC,\sigma}$ in $W^{\sigma,I}_k$, then for any $\rho\in [0,1]$ the operator $\widehat{\mathrm{Hess}}_{\frakq,x}^{\rho,\sigma}$ is invertible with real spectrum.
\end{lem}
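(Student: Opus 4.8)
The statement is the real, $I$-invariant analogue of the corresponding interpolation lemma of \cite[Section~5.5]{Lidman2016TheEO}, and the plan is to transplant that argument, substituting the real inputs from \cite[Section~7]{li2022monopolefloerhomologyreal}. Fix the non-degenerate stationary point $x$ and $1\le j\le k$. First I would check that the construction is legitimate for every $\rho$: since $g_\rho=(1-\rho)g_{L^2}+\rho\tg$ is a convex combination of two genuine metrics it is itself a genuine metric on $\cT^{I}_{j,x}$, so the $g_\rho$-orthogonal complement of the orbit directions $\cJ^{\circ,\sigma,I}_{j,x}$ inside $\cT^{\sigma,I}_{j,x}$ is a closed complement, the shear map $S_x^{\rho}$ is a zeroth-order bundle isomorphism (the identity on $\cJ^{\circ,\sigma,I}_{j,x}$, altering the complementary summand only by orbit directions), and $S_x^{\rho}$, $(S_x^{\rho})^{-1}$ are bounded on each Sobolev completion and depend smoothly on $\rho$. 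Hence $\rho\mapsto\widehat{\mathrm{Hess}}_{\frakq,x}^{\rho,\sigma}$ is a norm-continuous family of operators $\cT^{\sigma,I}_{j,x}\oplus L^2_j(Y;i\R)^{-\iota^{*}}\to\cT^{\sigma,I}_{j-1,x}\oplus L^2_{j-1}(Y;i\R)^{-\iota^{*}}$, with $\widehat{\mathrm{Hess}}_{\frakq,x}^{0,\sigma}=\widehat{\mathrm{Hess}}_{\frakq,x}^{sp,\sigma}$ and $\widehat{\mathrm{Hess}}_{\frakq,x}^{1,\sigma}=\widehat{\mathrm{Hess}}_{\frakq,x}^{\tg,\sigma}$ at the endpoints.

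Conjugation by the zeroth-order operator $S_x^{\rho}$ does not change the leading symbol, so after the substitution $\bspsi=\psi+r\phi$ the operator $\widehat{\mathrm{Hess}}_{\frakq,x}^{\rho,\sigma}$ has the same linear part $L_0$ as $\cH_x^\sigma$ of Lemma~\ref{lem:operator cH and kASAFOE property of gextended Hessian on blow up}; arguing as there, it is $k$-ASAFOE (since $x$ is a stationary point) and Fredholm of index zero. For invertibility I would use that non-degeneracy of $x$ is metric-independent: at a zero of a metric gradient the linearization is the metric isomorphism composed with the intrinsic Hessian form of $\cL_\frakq$, so the condition ``$\cD_x^\sigma\cX_\frakq^\sigma$ transverse to $\cJ^{\sigma,I}$'', equivalently invertibility of the gauge-fixed Hessian $\mathrm{Hess}_{\frakq,x}^{g_\rho,\sigma}$, holds for \emph{every} $\rho$ exactly when it holds for $\rho=1$, which it does since $x$ is non-degenerate (Lemma~\ref{lem:Fredholm properties of g-extended hessian on blow-up}(2)); and, just as in the blow-down setting, invertibility of the gauge-fixed Hessian is equivalent to invertibility of its extension because the orbit block $\left(\begin{smallmatrix}0&\bfd_x^{\sigma}\\\bfd_x^{\sigma,\tilde{\dagger}}&0\end{smallmatrix}\right)$ is itself invertible. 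Therefore $\widehat{\mathrm{Hess}}_{\frakq,x}^{\rho,\sigma}$ is invertible for all $\rho\in[0,1]$.

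It remains to see that the spectrum is real for every $\rho$. Here I would \emph{not} attempt a single conjugation of $\widehat{\mathrm{Hess}}_{\frakq,x}^{\rho,\sigma}$ onto $\widehat{\mathrm{Hess}}_{\frakq,x}^{sp,\sigma}$, since for $\rho\notin\{0,1\}$ the shear maps use different complements to $\cJ^{\circ,\sigma,I}_{j,x}$ and the lower-left and upper-right blocks are not simultaneously intertwined by $S_x^{\rho}\oplus\mathrm{id}$. Instead I would, for each fixed $\rho$, conjugate by $S_x^{\rho}\oplus\mathrm{id}$ to move the upper-left block back to $\cD_x^\sigma\cX_\frakq^\sigma$ (this \emph{does} preserve the upper-right block $\bfd_x^{\sigma}$, since $S_x^{\rho}$ is the identity on $\cJ^{\circ,\sigma,I}_{j,x}$, where $\bfd_x^{\sigma}$ takes values), arriving at an operator that differs from the genuinely self-adjoint extended Hessian $\widehat{\mathrm{Hess}}_{\frakq,x}^{\sigma}$ only in its lower-left $\dagger$-block; then I would argue as in the convex-combination lemma above, which readily extends to the $\dagger$-block that arises here, to conclude that such a modification keeps the extended Hessian invertible with real spectrum at a non-degenerate critical point. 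Combined with the previous paragraph this gives invertibility with real spectrum for every $\rho$. I expect the only genuine work, and the main obstacle, to be this last block bookkeeping in the real blow-up: tracking how $S_x^{\rho}$, the substitution $\bspsi=\psi+r\phi$ and the weights $s^2,|\phi|^2$ appearing in $\bfd_x^{\sigma,\dagger}$ interact --- exactly the places where the argument of \cite{Lidman2016TheEO} simplifies because there is no $S^1$-orbit on $W^{\sigma,I}$ and the enlarged and ordinary local Coulomb slices coincide.
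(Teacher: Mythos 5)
The paper gives no explicit proof for this lemma; it is one of a string of statements in Subsection~5.5 that the author settles en masse by the remark accompanying Lemma~\ref{lem:Fredholm properties of g-extended hessian on blow-up}: transplant the argument of \cite[Section~5.5]{Lidman2016TheEO} verbatim, substituting the real Fredholm and non-degeneracy inputs from \cite[Section~7]{li2022monopolefloerhomologyreal}. Your proposal is a faithful reconstruction of that implicit argument and follows essentially the same route. The two central observations you make --- that conjugating $\widehat{\mathrm{Hess}}_{\frakq,x}^{\rho,\sigma}$ by $(S_x^{\rho})^{-1}\oplus\mathrm{id}$ returns the upper-left block to $\cD_x^\sigma\cX_\frakq^\sigma$ while leaving the upper-right block $\bfd_x^\sigma$ untouched (because $S_x^{\rho}$ is the identity on $\cJ^{\circ,\sigma,I}$), so that the resulting operator differs from the genuinely symmetric $\widehat{\mathrm{Hess}}_{\frakq,x}^{\sigma}$ only in the $\dagger$-block; and that non-degeneracy, hence invertibility of the gauge-fixed Hessian, is metric-independent --- are precisely the mechanisms that make the interpolation argument of \cite{Lidman2016TheEO} work, and you are right that the real blow-up simplifies things since $\cK^{e,\sigma,I}=\cK^{\sigma,I}$ and there is no $S^1$-orbit tangent to absorb. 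The one place you should firm up before calling this a proof is the claim that the convex-combination argument ``readily extends'' to the lower-left block $\bfd^{\sigma,\tilde\dagger}_x\circ S_x^{\rho}$: unlike the interpolant in the preceding lemma, this is not literally a convex combination of $\bfd^{\sigma,\dagger}_x$ and $\bfd^{sp,\sigma,\dagger}_x$, so one must check directly that the specific spectral argument (the one used for $\widehat{\mathrm{Hess}}_{\frakq,x}^{sp,\sigma}$ after the substitution $\bspsi=\psi+r\phi$) tolerates composing the $\dagger$-block with any zeroth-order, $\cJ^{\circ}$-preserving isomorphism. This is true, and is what \cite{Lidman2016TheEO} actually does, but it is a calculation rather than a formality, so the phrase ``readily extends'' understates the remaining work.
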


As usual, we want to use these various Hessian operators to characterize the non-degeneracy of stationary points. We have the following real analogue of \cite[Lemma 5.6.1]{Lidman2016TheEO}.
\begin{lem}
    Let $x\in W^I_k$ be a stationary point of $\cX^{gC}_\frakq$. The following are equivalent.\begin{enumerate}
        \item $x$ is non-degenerate;
        \item $\mathrm{Hess}_{\frakq,x}:\calK_{k,x}^I\to \calK_{k-1,x}^I$ is surjective;
        \item $\cX^{gC}_\frakq$ is transverse to zero at $x$;
        \item $\mathrm{Hess}_{\frakq,x}^{\tg}:\calK_{k,x}^{agC,I}\to \calK_{k-1,x}^{agC,I}$ is surjective.
    \end{enumerate}
\end{lem}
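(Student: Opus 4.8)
The plan is to adapt the proof of \cite[Lemma 5.6.1]{Lidman2016TheEO}, using the Fredholm index computations already recorded together with the interplay between the local Coulomb slice (carrying the $L^{2}$ metric) and the anticircular global Coulomb slice (carrying the metric $\tg$), and taking advantage of the simplifications of the real setting established in Subsection \ref{sub:Construction in W^I}. The operators $\mathrm{Hess}_{\frakq,x}$ and $\mathrm{Hess}^{\tg}_{\frakq,x}$ in (2) and (4) only make sense when $x=(a,\phi)$ is irreducible, so I will prove the lemma under the standing hypothesis $\phi\ne 0$; reducible stationary points are treated instead, as in \cite[\S 7.3]{li2022monopolefloerhomologyreal} and Subsection \ref{sub:Real Monopole Floer homology}, via the blow-up Hessians and the spectrum of the perturbed Dirac operator. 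I will establish (1)$\Leftrightarrow$(2), then (3)$\Leftrightarrow$(4), and finally (2)$\Leftrightarrow$(4).

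For (1)$\Leftrightarrow$(2): by \cite[Proposition 7.3]{li2022monopolefloerhomologyreal}, non-degeneracy of the irreducible stationary point $x$ (that is, transversality of $\cX^{\sigma}_{\frakq}$ to $\cJ^{\sigma,I}$ at $x$) is equivalent to invertibility of $\mathrm{Hess}_{\frakq,x}\colon\cK^{I}_{k,x}\to\cK^{I}_{k-1,x}$, and by \cite[Proposition 7.8]{li2022monopolefloerhomologyreal} this operator is Fredholm of index zero, so invertibility is the same as surjectivity. For (3)$\Leftrightarrow$(4): regard $\cX^{gC}_{\frakq}$ as a section of the trivial bundle $\cT^{gC,I}_{k-1}$ over $W^{I}_{k}$. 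At a zero of a section the covariant derivative does not depend on the connection, so $\cD_{x}\cX^{gC}_{\frakq}=\cD^{\tg}_{x}\cX^{gC}_{\frakq}$; and in the real case $\Piagc$ restricts to the identity on $\cT^{gC,I}=\cK^{agC,I}$ because there is no tangent-to-orbit summand to project away, hence $\cD_{x}\cX^{gC}_{\frakq}=\Piagc_{x}\circ\cD^{\tg}_{x}\cX^{gC}_{\frakq}=\mathrm{Hess}^{\tg}_{\frakq,x}$. Thus transversality of $\cX^{gC}_{\frakq}$ to zero at $x$, i.e.\ surjectivity of $\cD_{x}\cX^{gC}_{\frakq}$, is literally surjectivity of $\mathrm{Hess}^{\tg}_{\frakq,x}$.

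For (2)$\Leftrightarrow$(4): both $\mathrm{Hess}_{\frakq,x}$ and $\mathrm{Hess}^{\tg}_{\frakq,x}$ are Fredholm of index zero (by \cite[Proposition 7.8]{li2022monopolefloerhomologyreal} and Lemma \ref{lem:extended hessian on blow-down is fredholm index 0}), so for each it is equivalent to ask that the kernel be trivial, and it suffices to exhibit an isomorphism between the two kernels. The key inputs are: $\cX^{gC}_{\frakq}=\Pigc_{*}\circ\cX_{\frakq}$; at the stationary point $x$ the map $\cX_{\frakq}$ vanishes along the entire $\cG^{I}$-orbit of $x$, so $\cD_{x}\cX_{\frakq}$ annihilates the tangent direction $\cJ^{I}$ to that orbit; $\Pigc_{*}$ and $\Pilc=\Pielc$ both project along $\cJ^{I}$; and on $W^{I}$ the maps $(\Pilc_{*})_{x}\colon\cT^{gC,I}_{k,x}\to\cK^{I}_{k,x}$ and $(\Pigc_{*})_{x}\colon\cK^{I}_{k-1,x}\to\cT^{gC,I}_{k-1,x}$ are mutually inverse isomorphisms. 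Unwinding the formula $\cD^{\tg}(X)=\Pigc_{*}\circ\cD(\Pielc(X))\circ\Pielc$ at $x$ with these facts gives the conjugation identity $\mathrm{Hess}^{\tg}_{\frakq,x}=(\Pigc_{*})_{x}\circ\mathrm{Hess}_{\frakq,x}\circ(\Pilc_{*})_{x}$, so $(\Pilc_{*})_{x}$ carries $\ker\mathrm{Hess}^{\tg}_{\frakq,x}$ isomorphically onto $\ker\mathrm{Hess}_{\frakq,x}$. I expect this last step to be the main obstacle --- not because it is deep, but because it demands careful bookkeeping of which projection and which connection ($\cD$ versus $\cD^{\tg}$) enters where, in order to match the $L^{2}$ local-Coulomb picture with the $\tg$ global-Coulomb picture. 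It is nonetheless strictly easier than the $S^{1}$ case of \cite{Lidman2016TheEO}, since the discreteness of the residual gauge group $\Z_{2}$ eliminates all tangent-to-orbit corrections; the analogue for reducible stationary points, and for the blow-up Hessians $\widehat{\mathrm{Hess}}_{\frakq,x}^{sp,\sigma}$ and $\widehat{\mathrm{Hess}}_{\frakq,x}^{\tg,\sigma}$, is obtained in the same way, now invoking the interpolation lemmas of this subsection.
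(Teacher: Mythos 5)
Your argument is correct and follows the route the paper intends, namely the adaptation of \cite[Lemma 5.6.1]{Lidman2016TheEO}: the index-zero Fredholm property of both Hessians, the connection-independence of the covariant derivative at a zero of the section, and the conjugation identity $\mathrm{Hess}^{\tg}_{\frakq,x}=(\Pigc_{*})_{x}\circ\mathrm{Hess}_{\frakq,x}\circ(\Pilc_{*})_{x}$ coming from the facts that $\cD_x\cX_{\frakq}$ annihilates $\cJ^I_x$ and that $(\Pigc_*)_x$, $(\Pilc_*)_x$ are mutually inverse between $\cK^I_x$ and $\cT^{gC,I}_x$ in the real setting. The only caveat is that your blanket exclusion of reducibles is unnecessary: at $x=(a,0)$ the function $\zeta$ in $\Pilc$ is still uniquely determined (a real function has zero average), all the relevant projections become identities there, and condition (1) is the blow-down notion of non-degeneracy, so the reducible case of this lemma is the degenerate-but-easy one rather than one that must be deferred to the blow-up Hessians.
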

Combining this with Lemma \ref{lem:extended hessian on blow-down is fredholm index 0}, we have an even simpler characterization of non-degeneracy.
\begin{lem} 
Let $x\in W^{\sigma,I}_k$ be a stationary point of $\cX^{gC,\sigma}_\frakq$, then $x$ is non-degenerate $\Longleftrightarrow$ $\mathrm{Hess}_{\frakq,x}^{\tg,\sigma}$ is injective $\Longleftrightarrow$ $\mathrm{Hess}_{\frakq,x}^{\tg,\sigma}$ is surjective.
\end{lem}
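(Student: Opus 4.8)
The plan is to reduce this to Li's characterization of non-degeneracy in the blow-up configuration space together with the Fredholm index computation already recorded. The first step is to observe that on $W^{\sigma,I}$ the operator $\mathrm{Hess}_{\frakq,x}^{\tg,\sigma}$ is conjugate to the non-$\tg$, non-extended Hessian $\mathrm{Hess}^{\sigma}_{\frakq,x}=\Pilcs_x\circ\cD^{\sigma}_x\cX^{\sigma}_{\frakq}$ from \cite[Section 7.3]{li2022monopolefloerhomologyreal}. Indeed, on $W^{\sigma,I}$ one has $\Piagcs=\Pigcs_*$, and by the blow-up analogue of \cite[Lemma 5.1.4--5.1.6]{Lidman2016TheEO} stated above, $\Pielcs_x=\Pilcs_x$ restricts to an isomorphism $J_j\colon\cK^{agC,\sigma,I}_{j,x}\to\cK^{\sigma,I}_{j,x}$ with inverse $\Pigcs_*$ (the identity when $x$ is reducible). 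Since $\Pigcs_*$ and $\Pilcs_x$ are both linear projections of $\cT^{\sigma,I}$ with kernel $\cJ^{\circ,\sigma,I}_x$, substituting into $\mathrm{Hess}_{\frakq,x}^{\tg,\sigma}=\Pigcs_*\circ\cD^{\sigma}_x\cX^{\sigma}_{\frakq}\circ\Pielcs_x$ and absorbing the $\cJ^{\circ,\sigma,I}_x$-component (which $\Pigcs_*$ annihilates) gives $\mathrm{Hess}_{\frakq,x}^{\tg,\sigma}=J_{k-1}^{-1}\circ\mathrm{Hess}^{\sigma}_{\frakq,x}\circ J_k$, so the two operators share kernel and cokernel.

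After this the argument is short. By Lemma~\ref{lem:Fredholm properties of g-extended hessian on blow-up}(1), $\mathrm{Hess}_{\frakq,x}^{\tg,\sigma}$ is Fredholm of index zero, so injectivity, surjectivity and invertibility all coincide; this settles the second claimed equivalence and reduces the statement to: $x$ is non-degenerate if and only if $\mathrm{Hess}^{\sigma}_{\frakq,x}$ is invertible. By the type-preserving bijection between stationary points of $\cX^{gC,\sigma}_{\frakq}$ in $W^{\sigma,I}$ and gauge orbits of stationary points of $\cX^{\sigma}_{\frakq}$ in $\cC^{\sigma,I}$ recorded earlier, non-degeneracy of $x$ is exactly non-degeneracy of the corresponding critical point of $\cX^{\sigma}_{\frakq}$, i.e.\ transversality of $\cX^{\sigma}_{\frakq}$ to $\cJ^{\sigma,I}$; and \cite[Proposition 7.3]{li2022monopolefloerhomologyreal} (the real counterpart of the discussion in \cite{Kronheimer_Mrowka_2007}) shows this is equivalent to surjectivity, hence by Fredholmness invertibility, of $\mathrm{Hess}^{\sigma}_{\frakq,x}$. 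Chaining the equivalences finishes the proof. Alternatively, for irreducible $x=(a,s,\phi)$ one can conjugate $\mathrm{Hess}_{\frakq,x}^{\tg,\sigma}$ down to $\mathrm{Hess}^{\tg}_{\frakq,(a,s\phi)}$ via the blow-down isomorphism and invoke the preceding lemma together with Lemma~\ref{lem:extended hessian on blow-down is fredholm index 0}.

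The only delicate point is the reducible case $s=0$, where non-degeneracy of $x=(a,0,\phi)$ packages together non-degeneracy of the reducible blow-down $(a,0)$, simplicity of the relevant eigenvalue of the perturbed Dirac operator $D_a$, and transversality of the spinorial block; one must check that the block structure of $\widehat{\mathrm{Hess}}^{\sigma}_{\frakq,x}$, equivalently of $\mathrm{Hess}^{\sigma}_{\frakq,x}$ (cf.\ Lemma~\ref{lem:operator cH and kASAFOE property of gextended Hessian on blow up}), detects exactly these conditions. This is precisely what \cite[Proposition 7.3]{li2022monopolefloerhomologyreal} supplies, so once it is invoked nothing further is needed: the conjugacy of the first paragraph and the index count of Lemma~\ref{lem:Fredholm properties of g-extended hessian on blow-up} are purely formal.
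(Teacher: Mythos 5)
Your proof is correct and amounts to the same argument the paper intends with its one-line justification: index-zero Fredholmness of $\mathrm{Hess}_{\frakq,x}^{\tg,\sigma}$ (Lemma \ref{lem:Fredholm properties of g-extended hessian on blow-up}) makes injectivity, surjectivity and invertibility coincide, and surjectivity is equivalent to non-degeneracy because the $\tg$-Hessian is conjugate, via $\Pilcs_x$ and $\Pigcs_*$, to the $L^2$ local-slice Hessian $\mathrm{Hess}^{\sigma}_{\frakq,x}$, whose surjectivity is exactly transversality of $\cX^{\sigma}_{\frakq}$ to $\cJ^{\sigma,I}$. Two small remarks: the equivalence of non-degeneracy with surjectivity of $\mathrm{Hess}^{\sigma}_{\frakq,x}$ is essentially the definition of transversality (since $\cD^{\sigma}_x\cX^{\sigma}_{\frakq}$ annihilates $\cJ^{\sigma,I}_x$ at a zero) rather than the content of \cite[Proposition 7.3]{li2022monopolefloerhomologyreal}, and at reducibles the conjugating isomorphism is the identity, so the extra care in your last paragraph is not actually needed.
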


Till now, we have only worked with $W^I$ and $W^{\sigma,I}$. Now we are ready to quotient out the remaining $\Z_2$-action. For $x\in W^{\sigma,I}_k$, we write $[x]$ for its equivalence class in $W^{\sigma,I}_k/\Z_2$. The $\Z_2$-action is discrete, so the tangent map of this quotient is an isomorphism. Note that $\cX^{gC,\sigma}_{\frakq}$ is $\Z_2$-equivariant, so there is an induced vector field  $\cX^{agC,\sigma}_{\frakq}$ on $W^{\sigma,I}_k/\Z_2$. 
\begin{lem}\label{lem:identification of stationary points on blowup}
By composing the global Coulomb projection with the quotient of $\Z_2$-action, we have the following one-to-one correspondences:
\[\{\text{stationary points of }\cX_\frakq^\sigma \text{ in } \cC^{\sigma,I}\}/\cG^I_{k+1}\xrightarrow{\cong}\{\text{stationary points of }\cX_\frakq^{agC,\sigma} \text{ in } W^{\sigma,I}_{k}/\Z_2\},\]
\[\{\text{trajectories of }\cX_\frakq^\sigma \text{ in } \cC^{\sigma,I}\}/\cG^I_{k+1}\xrightarrow{\cong}\{\text{trajectories of  }\cX_\frakq^{agC,\sigma} \text{ in } W^{\sigma,I}_{k}/\Z_2\}.\]
\end{lem}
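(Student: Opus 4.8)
The plan is to deduce the lemma by combining the two bijections already established at the level of $W^{\sigma,I}$ --- the correspondence of stationary points induced by $\Pigcs$ and the correspondence of trajectories furnished by the Proposition above (proved by the argument of \cite[Proposition 5.4.2]{Lidman2016TheEO}) --- with a routine covering-space argument for the residual $\Z_2$-action.

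First I would observe that, although the $\Z_2=\{\pm 1\}$-action on $W^I_k$ fixes the reducibles $(a,0)$ and is therefore not free, its lift to the blow-up $W^{\sigma,I}_k$ sends $(a,s,\phi)$ to $(a,s,-\phi)$ and hence \emph{is} free, since $\left\Vert\phi\right\Vert_{L^2}=1$ forces $\phi\neq -\phi$. Thus $W^{\sigma,I}_k\to W^{\sigma,I}_k/\Z_2$ is a smooth double covering of Banach manifolds, in particular a local diffeomorphism, and --- as already noted in the text --- the $\Z_2$-equivariant field $\cX^{gC,\sigma}_\frakq$ descends to the well-defined smooth field $\cX^{agC,\sigma}_\frakq$ on the quotient, intertwined with $\cX^{gC,\sigma}_\frakq$ by the covering map. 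Consequently stationary points of $\cX^{gC,\sigma}_\frakq$ form free orbits $\{x,-x\}$, each mapping bijectively to a stationary point of $\cX^{agC,\sigma}_\frakq$ downstairs; and by unique lifting of paths for the equivariant flow, a trajectory $\bar\gamma\colon\R\to W^{\sigma,I}_k/\Z_2$ of $\cX^{agC,\sigma}_\frakq$ has exactly two lifts $\gamma,-\gamma$, both trajectories of $\cX^{gC,\sigma}_\frakq$ and forming one $\Z_2$-orbit. (If one wants to be careful with the $L^2_k$-convergence condition built into the trajectory spaces, it is preserved under lifting because the covering is trivial over a neighbourhood of each endpoint.) This yields a bijection $\{\text{stationary points of }\cX^{gC,\sigma}_\frakq\text{ in }W^{\sigma,I}_k\}/\Z_2\xrightarrow{\cong}\{\text{stationary points of }\cX^{agC,\sigma}_\frakq\text{ in }W^{\sigma,I}_k/\Z_2\}$ and its evident trajectory analogue.

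Composing these with the $\Pigcs$-induced identification $\{\text{stationary points of }\cX^\sigma_\frakq\text{ in }\cC^{\sigma,I}\}/\cG^I_{k+1}\xrightarrow{\cong}\{\text{stationary points of }\cX^{gC,\sigma}_\frakq\text{ in }W^{\sigma,I}_k\}/\Z_2$ and its trajectory counterpart from the Proposition gives precisely the two correspondences claimed. I do not anticipate a genuine obstacle: the only subtlety is the failure of freeness of the $\Z_2$-action on the blow-down versus its freeness on the blow-up, which is exactly why the construction is carried out on $W^{\sigma,I}$ rather than on $W^I$ in the first place; everything else is bookkeeping of Sobolev indices together with standard covering-space theory.
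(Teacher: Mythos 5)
Your argument is correct and coincides with what the paper does implicitly: the lemma is stated without an explicit proof, and the paper is relying on exactly the same two ingredients you use, namely the earlier $\Pi^{gC,\sigma}$-induced bijections modulo $\Z_2$ and the observation that the residual $\Z_2$-action on the blow-up $W^{\sigma,I}_k$ (via $(a,s,\phi)\mapsto(a,s,-\phi)$, free because $\left\Vert\phi\right\Vert_{L^2}=1$) makes $W^{\sigma,I}_k\to W^{\sigma,I}_k/\Z_2$ a double covering along which the equivariant vector field descends, so that $\Z_2$-orbits of stationary points and trajectories upstairs correspond bijectively to stationary points and trajectories of $\cX^{agC,\sigma}_\frakq$ downstairs.
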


Let $x\in W^{\sigma,I}_k$ be a stationary point of $\cX^{gC,\sigma}_\frakq$, then $[x]$ is a stationary point of $\cX^{agC,\sigma}_\frakq$ in $ W^{\sigma,I}_k/\Z_2$. Since we have $L^2_{k-1}\text{ completion of }T_{x}W^{\sigma,I}=\calK^{gC,\sigma,I}_{k-1}\cong\calK^{agC,\sigma,I}_{k-1}\cong L^2_{k-1}\text{ completion of }T_{[x]} W^{\sigma,I}/\Z_2$. We have \[\mathrm{Hess}_{\frakq,x}^{\tg,\sigma}=\cD^{\sigma}_{[x]}\cX^{agC,\sigma}_{\frakq}.\]

\begin{lem}\label{lem:identification of non-degeneracy}
Under the identification in Lemma \ref{lem:identification of stationary points on blowup}, the non-degeneracy of a stationary point $x$ of $\cX^{\sigma}_{\frakq}$ is equivalent to the injectivity (or bijectivity) of $\cD^{\sigma}_{[\Pigcs(x)]}\cX^{agC,\sigma}_{\frakq}$.
    
\end{lem}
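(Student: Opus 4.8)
The plan is to concatenate the equivalences already established. First, recall that non-degeneracy of a stationary point of $\cX^\sigma_{\frakq}$ in $\cC^{\sigma,I}(Y)$ is invariant under the action of the real gauge group: $\cG^I_{k+1}$ acts by diffeomorphisms of $\cC^{\sigma,I}(Y)$ carrying the vector field $\cX^\sigma_{\frakq}$ to itself and the subbundle $\cJ^{\sigma,I}$ to itself, so transversality of $\cX^\sigma_{\frakq}$ to $\cJ^{\sigma,I}$ at $x$ holds if and only if it holds at $u\cdot x$ for every $u$. Since $x$ and its global Coulomb representative $\Pigcs(x)\in W^{\sigma,I}_k$ lie in the same $\cG^I_{k+1}$-orbit, $x$ is non-degenerate if and only if $\Pigcs(x)$ is; and under Lemma \ref{lem:identification of stationary points on blowup} the gauge orbit of $x$ is carried to $[\Pigcs(x)]\in W^{\sigma,I}_k/\Z_2$.

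Second, apply the lemma immediately preceding this statement to the stationary point $\Pigcs(x)$ of $\cX^{gC,\sigma}_{\frakq}$ in $W^{\sigma,I}_k$: its non-degeneracy is equivalent to the injectivity of $\mathrm{Hess}^{\tg,\sigma}_{\frakq,\Pigcs(x)}$, and, since that operator is Fredholm of index zero by Lemma \ref{lem:Fredholm properties of g-extended hessian on blow-up}, equivalent to its surjectivity, hence to its bijectivity. Third, invoke the identification displayed just above the statement, $\mathrm{Hess}^{\tg,\sigma}_{\frakq,\Pigcs(x)}=\cD^\sigma_{[\Pigcs(x)]}\cX^{agC,\sigma}_{\frakq}$, which comes from the isomorphisms identifying the $L^2_{k-1}$-completion of $T_{\Pigcs(x)}W^{\sigma,I}$ with $\cK^{agC,\sigma,I}_{k-1}$ and, via the invertible tangent map of the free discrete quotient $W^{\sigma,I}_k\to W^{\sigma,I}_k/\Z_2$, with the $L^2_{k-1}$-completion of $T_{[\Pigcs(x)]}(W^{\sigma,I}_k/\Z_2)$. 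Chaining these three steps gives that $x$ is non-degenerate if and only if $\cD^\sigma_{[\Pigcs(x)]}\cX^{agC,\sigma}_{\frakq}$ is injective, equivalently bijective, which is the claim.

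There is no serious obstacle: the statement is a bookkeeping corollary of Lemma \ref{lem:identification of stationary points on blowup}, the two Hessian lemmas above, and the displayed formula. The only points deserving a line of justification are the gauge-invariance of non-degeneracy used to replace $x$ by $\Pigcs(x)$, and the fact that the tangent identification above is compatible with the pushed-forward vector field $\cX^{agC,\sigma}_{\frakq}$ on $W^{\sigma,I}_k/\Z_2$ and its linearization at $[\Pigcs(x)]$ — but the latter is exactly the content of the displayed formula preceding the statement, which we are entitled to assume.
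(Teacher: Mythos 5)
Your proposal is correct and is essentially the argument the paper intends: the lemma is stated without proof as an immediate consequence of the gauge-invariance of non-degeneracy, the characterization of non-degeneracy on $W^{\sigma,I}_k$ via injectivity/surjectivity of $\mathrm{Hess}^{\tg,\sigma}_{\frakq,x}$ (an index-zero Fredholm operator), and the displayed identity $\mathrm{Hess}^{\tg,\sigma}_{\frakq,x}=\cD^{\sigma}_{[x]}\cX^{agC,\sigma}_{\frakq}$. Your explicit justification of the two glossed-over points (gauge-invariance and compatibility of the tangent identification with the pushed-forward vector field) is a faithful filling-in of what the paper leaves implicit.
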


\subsection{Path space and trajectories}
For $x,y\in W^{\sigma,I}$, and a path $\gamma_0$ in $W^{\sigma,I}$ from $x$ to $y$ in the sense that it agrees with $x$ ($y$) when $t\to -\infty$ ($t\to \infty$). We introduce the following four-dimensional configuration space \[\cC^{gC,\tau,I}_{k}(x,y)=\{\gamma\in \cC^{gC,\tau,I}_{k,loc}(Z)|\gamma-\gamma_0\in L^{2}_k(Z;iT^*Z)^{-\iota^*}\times L^2_k(\R,\R)\times L^{2}_k(Z;\bS)^{\tau}\}.\]
Any $\gamma \in \cC^{gC,\tau,I}_{k}(x,y)$ can be written as a path \[\gamma(t)=(a(t)+\alpha(t)dt,s(t),\phi(t)),\] in which $s(t)\ge 0$ and $\left\Vert \phi(t) \right\Vert_{L^2}=1$ for any $t$.

$\cC^{gC,\tau,I}_{k}(x,y)$ embeds in to a larger Hilbert manifold $\widetilde{\cC}^{gC,\tau,I}_{k}(x,y)$, which is defined by removing the condition $s(t)\ge 0$. Inside these, we consider $W^{\tau,I}_{k}(x,y)$ and $\widetilde{W}^{\tau,I}_{k}(x,y)$ consisting of those configurations with $\alpha=0$.

On these configuration spaces, we have no nontrivial gauge group action, since any real map $u:\R\to \Z_2$ satisfying $1-u\in L_{k+1}^2$ is the constant map $1$. So we actually have $\cC^{gC,\tau,I}_{k}(x,y)=\cB^{gC,\tau,I}_{k}(x,y)$. Nevertheless, $\cB^{gC,\tau,I}_{k}(x,y)$ depends only on the classes $[x],[y]\in W^{\sigma,I}/\Z_2$. It is easy to see that $\cB^{gC,\tau,I}_{k}(x,y)$ is Hausdorff.

In Subsection \ref{subsub:Perturbations}, we have considered a similar space $\cB^{\tau,I}_{k}([x],[y])$ and a corresponding ``tilde'' version. We are now aiming to relate $\widetilde{\cB}^{\tau,I}_{k}([x],[y])$ to $\widetilde{\cB}^{gC,\tau,I}_{k}([x],[y])$. We first consider the map $\Pi^{gC,\tau}: \widetilde{\cC}^{\tau,I}_{k}(x,y)\to \widetilde{\cC}^{gC,\tau,I}_{k}(x,y)$. This is defined by \[(a(t)+\alpha(t)dt,s(t),\phi(t))\mapsto \Pigcs(a(t),s(t),\phi(t)).\] The last term in \cite[Equation (120)]{Lidman2016TheEO} is missing, since the average of a real function on a real manifold equipped with an equivariant metric is always zero.

\begin{lem}(\cite[Lemma 5.7.2]{Lidman2016TheEO})
There is a well-defined, continuous map \[\Pi^{[gC],\tau}: \widetilde{\cB}^{\tau,I}_{k}([x],[y])\to \widetilde{\cB}^{gC,\tau,I}_{k}([x],[y]),\text{ }[\gamma]\mapsto [\Pi^{gC,\tau}(\gamma)].\] And this takes $\cB^{\tau,I}_{k}([x],[y])$ to $\cB^{gC,\tau,I}_{k}([x],[y])$
    
\end{lem}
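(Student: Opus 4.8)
The plan is to mimic the proof of \cite[Lemma 5.7.2]{Lidman2016TheEO} while checking at each step that the constructions respect the real structure. The map $\Pi^{[gC],\tau}$ is meant to be induced by $\Pi^{gC,\tau}$ on the quotients, but since the (real) gauge group on the four-dimensional path spaces is trivial (as observed just before the statement, any real $u:\R\to\Z_2$ with $1-u\in L^2_{k+1}$ is constant $1$), we have $\widetilde{\cB}^{gC,\tau,I}_k([x],[y])=\widetilde{\cC}^{gC,\tau,I}_k(x,y)$, and on the source $\widetilde{\cB}^{\tau,I}_k([x],[y])$ is the quotient of $\widetilde{\cC}^{\tau,I}_k(x,y)$ by real gauge transformations $u:Z\to S^1$ with $1-u\in L^2_{k+1}$. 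So the real content is: (i) $\Pi^{gC,\tau}$ is well-defined as a map $\widetilde{\cC}^{\tau,I}_k(x,y)\to\widetilde{\cC}^{gC,\tau,I}_k(x,y)$, i.e.\ it lands in the asserted affine space and preserves the $I$-invariant and the $s$-unconstrained conditions; (ii) it is constant on real gauge orbits, so it descends; (iii) the descended map is continuous; and (iv) it sends the $s\ge 0$ locus to the $s\ge 0$ locus, i.e.\ $\cB^{\tau,I}_k\to\cB^{gC,\tau,I}_k$.

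For (i), I would write $\gamma(t)=(a(t)+\alpha(t)\,dt,s(t),\phi(t))$ and recall $\Pi^{gC,\tau}(\gamma)(t)=\Pigcs((a(t),s(t),\phi(t)))=(a(t)-df(t),s(t),e^{f(t)}\phi(t))$ with $f(t)=Gd^*a(t)$; the key simplification over \cite{Lidman2016TheEO} is that because we use an invariant metric, $f(t)$ is automatically a real function for each $t$ (as established in Subsection \ref{subsub: Configuration space and Coulomb slices}), so $e^{f(t)}\in\cG^{I,\circ}$ and the image is genuinely in $\cC^{gC,\tau,I}$. The $\alpha$-component of the image is $0$, and the fact that $\Pi^{gC,\tau}(\gamma)-\gamma_0$ lies in the required Sobolev completion is the same elliptic-regularity / Green's-operator estimate as in \cite[Section 5.7]{Lidman2016TheEO} restricted to invariant sections — since $G$ preserves invariant functions this restriction is immediate. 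For (ii), a real gauge transformation acts on $\widetilde{\cC}^{\tau,I}_k(x,y)$ in the temporal/$\tau$-model by a path $u(t)\in\cG^I(Y)$, and since the global Coulomb projection on $\cC^I(Y)$ is by construction gauge-invariant for the normalized real gauge group (it picks the unique representative in $W^I$), the only thing to check is the residual constant component $\pi_0(\cG^I)=\Z_2$: a constant $\pm1$ gauge transformation acts on $W^{\sigma,I}$ through $\Z_2$, so $\Pi^{gC,\tau}$ intertwines this with the $\Z_2$ action on $W^{\sigma,I}$ and hence $[\Pi^{gC,\tau}(\gamma)]$ depends only on $[\gamma]$; this is exactly why the target must be phrased in terms of $[x],[y]\in W^{\sigma,I}/\Z_2$. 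Continuity in (iii) follows because $\Pi^{gC,\tau}$ is a composition of smooth maps (the Green's operator is bounded on the relevant Sobolev spaces, multiplication by $e^{f(t)}$ is continuous since $k\ge 5$ gives a Banach algebra) and the quotient maps on both sides are continuous and open. For (iv), note $\Pigcs$ fixes the $s$-coordinate, so if $s(t)\ge 0$ for all $t$ then the same holds for the image; hence $\cB^{\tau,I}_k([x],[y])$ maps into $\cB^{gC,\tau,I}_k([x],[y])$.

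The main obstacle — really the only place where the real case is not a completely formal restriction — is verifying that $\Pi^{gC,\tau}$ descends correctly through the two-step quotient and that the target is the right space: one has to track that the normalized real gauge group acts trivially after global Coulomb projection while the residual $\Z_2$ does not, and match this against the definitions of $\widetilde{\cB}^{\tau,I}_k$ and $\widetilde{\cB}^{gC,\tau,I}_k$ given just above. Everything else is a transcription of \cite[Lemma 5.7.2]{Lidman2016TheEO}, using that the invariant metric forces the Coulomb gauge parameter $f(t)=Gd^*a(t)$ to be real (so no extra correction terms appear, consistent with the remark that the last term of \cite[Equation (120)]{Lidman2016TheEO} vanishes) and that $G$ and Clifford multiplication restrict to the $\tau$- and $(-\iota^*)$-invariant subspaces.
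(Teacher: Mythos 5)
Your argument is correct and, as the paper does, simply adapts the Lidman--Manolescu proof to the real, invariant-metric setting (slicewise projection, $f(t)=Gd^*a(t)$ automatically real, $\alpha$-component dropped with no correction term). One small imprecision in step (ii): you write $u(t)\in\cG^I(Y)$ and then invoke $\Z_2$-equivariance to handle the residual component of $\pi_0(\cG^I)$, but that check is vacuous here. The decay condition $1-u\in L^2_{k+1}(Z)$ forces $u\to 1$ at $\pm\infty$, so by continuity $u(t)$ lies in the identity component $\cG^I_+=\cG^{I,\circ}$ for every $t$; hence the descent of $\Pi^{gC,\tau}$ uses only the $\cG^{I,\circ}$-invariance of $\Pigcs$, and the constant $-1$ never appears in the path-space gauge group. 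The $\Z_2$-equivariance of $\Pigcs$ is instead what underlies the separate remark, made just before the lemma, that $\cB^{gC,\tau,I}_k$ depends only on the classes $[x],[y]\in W^{\sigma,I}/\Z_2$.
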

\begin{proof}
    The argument for \cite[Lemma 5.7.2]{Lidman2016TheEO} works in the real case as well.
\end{proof}

Due to the existence of three-dimensional gauge transformations, this map is surjective but not injective.

Consider $[x],[y]\in W^{\sigma,I}/\Z_2$ being stationary points of $\cX^{agC,\sigma}_\frakq$. Define $M^{agC}([x],[y])$ to be the moduli space of trajectories of $\cX^{agC,\sigma}_\frakq$, living inside $\cB^{gC,\tau,I}_{k}([x],[y])$. Similarly, we can consider $M^{agC,red}([x],[y])$ when both of them are reducible.
\begin{prop}
    Every trajectory of $\cX^{agC,\sigma}_\frakq$ in $W^{\sigma,I}/\Z_2$ is actually in $\cB^{gC,\tau,I}_{k}([x],[y])$. Further, the map $\Pi^{[gC],\tau}$ leads to a homeomorphism between moduli spaces $M([x],[y])$ and $M^{agC}([x],[y])$.
\end{prop}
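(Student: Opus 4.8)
The plan is to transfer the entire statement from the downstairs (configuration space) setting to the global Coulomb slice setting via the maps already constructed, reducing everything to the analogous theorem in \cite[Section 5.7]{Lidman2016TheEO}, restricted to the $I$-invariant part. First I would establish the first claim: every trajectory $\gamma$ of $\cX^{agC,\sigma}_\frakq$ connecting stationary points $[x],[y]$ lies in $\cB^{gC,\tau,I}_{k}([x],[y])$. The key input is that, by Lemma \ref{lem:identification of stationary points on blowup}, such a trajectory is the global Coulomb projection of a trajectory of $\cX^\sigma_\frakq$ in $\cC^{\sigma,I}$, and trajectories of $\cX^\sigma_\frakq$ between non-degenerate critical points have the exponential decay needed to land in the $L^2_k$-based path space $\cC^{\tau,I}_k(x,y)$ (this is the real analogue of the decay estimates in \cite{Kronheimer_Mrowka_2007}, established in \cite{li2022monopolefloerhomologyreal}). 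Applying $\Pi^{gC,\tau}$ and using that this projection is built from the Green's operator $G$ (which is bounded on the relevant Sobolev spaces and commutes with the $I$-action since the metric is equivariant), the projected trajectory lies in $\cC^{gC,\tau,I}_k(x,y) = \cB^{gC,\tau,I}_k([x],[y])$. Since $\cX^{agC,\sigma}_\frakq$ is simply the vector field induced on $W^{\sigma,I}/\Z_2$ and the $\Z_2$-quotient is a local diffeomorphism, being a trajectory upstairs and downstairs are equivalent, so the claim follows.

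Next I would prove that $\Pi^{[gC],\tau}$ restricts to a bijection between $M([x],[y])$ and $M^{agC}([x],[y])$. Surjectivity: given a trajectory in $M^{agC}([x],[y])$, lift it (via the local diffeomorphism $W^{\sigma,I}\to W^{\sigma,I}/\Z_2$ and the fact that flows lift) to a trajectory of $\cX^{gC,\sigma}_\frakq$ in $W^{\sigma,I}$; by Proposition 5.4.2's real analogue this is the global Coulomb projection of a trajectory of $\cX^\sigma_\frakq$ in $\cC^{\sigma,I}$, whose image under the quotient-by-$\cG^I$ gives a point of $M([x],[y])$ mapping to it. Injectivity: suppose two elements of $M([x],[y])$ have the same image; lifting to trajectories of $\cX^\sigma_\frakq$ in $\cC^{\sigma,I}(Y)$, their global Coulomb projections differ by at most the slicewise constant $\Z_2$-gauge, but any such transformation is already absorbed in passing to $\cB^{\tau,I}_k([x],[y])$ — here one uses that, as observed in Subsection \ref{sub:Construction in W^I}, the real map $u\colon Z\to\Z_2$ with $1-u\in L^2_{k+1}$ must be constant, and constant gauge transformations act trivially on gauge-equivalence classes. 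Uniqueness of the global Coulomb representative within a $\cG^{I,\circ}$-orbit (established in Subsection \ref{subsub: Configuration space and Coulomb slices}) then forces the two trajectories to coincide in $M([x],[y])$.

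Finally, to upgrade the bijection to a homeomorphism, I would check continuity of $\Pi^{[gC],\tau}$ and of its inverse in the relevant topologies. Continuity of $\Pi^{[gC],\tau}$ is inherited from Lemma 5.7.2's real version (already granted in the excerpt). For the inverse, I would argue that on moduli spaces — which by the regularity hypothesis built into admissibility of $\frakq$ are smooth finite-dimensional manifolds — the projection is a smooth bijective map whose derivative is an isomorphism (the tangent map of $\Pi^{gC,\tau}$ is modeled on $\Pi^{gC,\sigma}_*$, which is an isomorphism on the anticircular slices by the Hessian identifications of Subsection \ref{sub:Construction in W^I} and Lemma \ref{lem:identification of non-degeneracy}); invariance of domain, or the inverse function theorem applied in local charts, then gives that the inverse is continuous. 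The main obstacle I anticipate is the injectivity bookkeeping: one must carefully disentangle the various gauge groups ($\cG^I_{k+1}$ downstairs versus the trivialized situation upstairs) and confirm that the only ambiguity in the global Coulomb representative that survives is precisely the residual $\Z_2$, which is exactly the quotient already taken in forming $W^{\sigma,I}/\Z_2$ — this is where the rational-homology-sphere hypothesis ($\pi_0(\cG^I)=\Z_2$) is essential and must be invoked with care. Everything else is a routine transcription of \cite[Section 5.7]{Lidman2016TheEO} to invariant subspaces, using that all the projections and operators involved commute with $I$ because the metric is chosen equivariant.
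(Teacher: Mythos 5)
Your proposal is correct and follows essentially the same route as the paper, which simply invokes Lemma \ref{lem:identification of stationary points on blowup} together with the real-case decay/convergence result (\cite[Theorem 8.6]{li2022monopolefloerhomologyreal}) and observes that the proof of \cite[Proposition 5.7.3]{Lidman2016TheEO} carries over verbatim to the invariant setting. You have merely spelled out the details (decay into the $L^2_k$ path space, the $\Z_2$ versus $\cG^I$ bookkeeping, and continuity of the inverse) that the paper leaves implicit in that citation.
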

\begin{proof}
    Using Lemma \ref{lem:identification of stationary points on blowup} and \cite[Theorem 8.6]{li2022monopolefloerhomologyreal}, the proof of \cite[Proposition 5.7.3]{Lidman2016TheEO} is still valid in real case.
\end{proof}

%As observed above, $\cG^{gC,I}(Z)$ is trivial, so $\widetilde{\cB}^{gC,\tau,I}_{k}([x],[y])=\widetilde{\cC}^{gC,\tau,I}_{k}([x],[y])$ is naturally a Hilbert manifold. Analysis in \cite[Section 5.8]{Lidman2016TheEO} becomes trivial when restrict to the real part.

%Let $x,y$ be stationary points of $\cX^{agC,\sigma}_\frakq$ in $\cC^{\sigma,I}$. Recall that the moduli space of trajectories can be described as the zero set of \[\cF^{\tau}_{\frakq}:\cC^{\tau,I}_k(x,y)\to \cV^{\tau,I}(Z)\] the modulo gauge. $\cF^{\tau}_{\frakq}$ can be written explicitly as $\frac{d}{dt}+\cX^{\sigma}_{\frakq}$ in temporal gauge.
%$\cV^{\tau,I}(Z)$ is a bundle over $\widetilde{\cC}^{\tau,I}_k(x,y)$, to which we can extend the section $\cF^{\tau}_{\frakq}$. The study of $\cD^{\tau}_{\gamma}\cF^{\tau}_{\frakq}$ is needed to understand the local structure of moduli space and to define gradings. Li studied this in \cite[Section 8]{li2022monopolefloerhomologyreal}.
%Fix $\gamma\in \cC^{\tau,I}(x,y)$ in temporal gauge. The tangent space $\cT_{j,\gamma}^{\tau,I}$ consists of elements of the form $(V,\beta)$, where $V(t)=(b(t),r(t),\psi(t))$ is a path in $\cC^{\sigma,I}(Y)$ and $\beta(t)$ is a path in $L^2(Y;i\R)^{-\iota^*}$.

%\begin{prop}(\cite[Proposition 8.13]{li2022monopolefloerhomologyreal})
    
%\end{prop}
%\YX{Review for original version TBA}

In \cite[Section 5.9]{Lidman2016TheEO}, they reviewed how to analyze Fredholm properties of the operator $Q_{\gamma}$ before working in $W$. To be concise, we directly work in the global Coulomb slice. One should refer to the beginning part of \cite[Section 5.9]{Lidman2016TheEO} and \cite[Section 8]{li2022monopolefloerhomologyreal} for original constructions.

We have previously considered the moduli spaces $M^{agC}([x],[y])\subset \widetilde{\cB}^{gC,\tau,I}_{k}([x],[y])$. This can also be described as the zero set of the section \[\cF^{gC,\tau}_{\frakq}:\widetilde{\cC}^{gC,\tau,I}_{k}(x,y) \to \cV^{gC,\tau,I}_{k-1}(Z).\] In contrast to the usual setup, we now have no gauge transformation to mod out.

Fix some $\gamma\in \cC^{gC,\tau,I}(x,y)$ in temporal gauge. We differentiate $\cF^{gC,\tau}_{\frakq}$ using the $L^2$ metric to study the local structure of moduli spaces. Define $\cD^{\tau}_{\gamma}\cF^{gC,\tau}_{\frakq}:\cT^{gC,\tau,I}_{j,\gamma}\to \cV^{gC,\tau,I}_{j,\gamma}$ by \[\cD^{\tau}_{\gamma}\cF^{gC,\tau}_{\frakq}(V)=\frac{\cD^{\sigma}}{dt} V+(\cD^{\sigma}_{\gamma(t)} \cX_{\frakq}^{gC,\sigma})(V),\]
here $V(t)=(b(t),r(t),\psi(t))$ is a path in $\cC^{\sigma,I}(Y)$.

The formula here is different from those in \cite[Section 5.9]{Lidman2016TheEO}, since adding the real assumption makes a trajectory in the pseudo-temporal gauge the same as in temporal gauge. 

\begin{prop}\label{prop: Fredholm property of Q gc gamma}
    Let $x,y\in W^{\sigma,I}$ be non-degenerate critical points of $\cX^{\sigma}_{\frakq}$. Take any path $\gamma\in W^{\tau,I}_k(x,y)$. Then for $j\le k$, the operator \[\cD^{\tau}_{\gamma}\cF^{gC,\tau}_{\frakq}: \calK^{gC,\tau,I}_{j,\gamma}\to \cV^{gC,\tau,I}_{j-1,\gamma}\] is Fredholm. Furthermore, the Fredholm index is the same as that of  \[(\cD^{\tau}_{\gamma}\cF^{\tau}_{\frakq})|_{\calK^{\tau,I}_{j,\gamma}}:\calK^{\tau,I}_{j,\gamma}\to \cV^{\tau,I}_{j-1,\gamma} \]
\end{prop}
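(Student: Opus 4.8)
The plan is to reduce the statement to the already-known Fredholm theory in the full configuration space by means of the shear map $S_\gamma$ and the Coulomb projection $\Pi^{[gC],\tau}$, running the argument from \cite[Section 5.9]{Lidman2016TheEO} with the simplifications afforded by the real setting (no tangent to the gauge orbit after quotienting the normalized gauge group). First I would note that $\cD^{\tau}_{\gamma}\cF^{gC,\tau}_{\frakq}$ has, on its diagonal in the path parameter, the operator $\frac{\cD^{\sigma}}{dt} + \cD^{\sigma}_{\gamma(t)}\cX^{gC,\sigma}_{\frakq}$; applying the shear $S_{\gamma(t)}$ conjugates the zeroth-order piece $\cD^{\sigma}_{\gamma(t)}\cX^{gC,\sigma}_{\frakq}$ into $S_{\gamma(t)}\circ \cD^{\sigma}_{\gamma(t)}\cX^{\sigma}_{\frakq}\circ S_{\gamma(t)}^{-1}$, up to a zeroth-order bounded correction coming from differentiating $S$ along the path (this correction is controlled because $\gamma$ is in temporal gauge and $\gamma-\gamma_0$ lies in the stated Sobolev space). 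Thus $\cD^{\tau}_{\gamma}\cF^{gC,\tau}_{\frakq}$ on $\cK^{gC,\tau,I}_{j,\gamma}$ differs by a bounded zeroth-order operator from a path operator whose asymptotic slices at $t\to\pm\infty$ are exactly the $\tg$-extended Hessians $\widehat{\mathrm{Hess}}^{\tg,\sigma}_{\frakq,x}$ and $\widehat{\mathrm{Hess}}^{\tg,\sigma}_{\frakq,y}$ (by Lemma \ref{lem:operator cH and kASAFOE property of gextended Hessian on blow up}(3)), which are invertible with real spectrum at the non-degenerate stationary points $x,y$.

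Next I would invoke the standard APS-type criterion: a path operator $\frac{d}{dt}+L(t)$ between appropriate Sobolev spaces on the cylinder is Fredholm provided the endpoint operators $L(\pm\infty)$ are invertible (and the relevant $k$-ASAFOE / uniform ellipticity hypotheses hold along the path), which is precisely the packaging already set up in \cite[Section 5.9]{Lidman2016TheEO} and its real counterpart in \cite[Section 8]{li2022monopolefloerhomologyreal}. Since the hypotheses have been verified in the preceding lemmas — $\cH^\sigma_x$ is $(k-1)$-ASAFOE with the explicit translation-invariant linear part $L_0$, it differs from $L_0$ by a compact operator when $j=k$, and at stationary points it equals $\widehat{\mathrm{Hess}}^{\tg,\sigma}_{\frakq,x}$ which is invertible — the Fredholm property of $\cD^{\tau}_{\gamma}\cF^{gC,\tau}_{\frakq}$ on $\cK^{gC,\tau,I}_{j,\gamma}$ follows verbatim. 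For the index comparison, I would use that the Coulomb projection $\Pi^{[gC],\tau}$ and the shear map intertwine $\cD^{\tau}_{\gamma}\cF^{gC,\tau}_{\frakq}$ with $(\cD^{\tau}_{\gamma'}\cF^{\tau}_{\frakq})|_{\cK^{\tau,I}}$ (for $\gamma'$ a lift of $\gamma$) up to compact error, exactly as in the proof of \cite[Proposition 5.9.?]{Lidman2016TheEO}: the shear is a bundle isomorphism homotopic through Fredholm operators to the identity, and the index is homotopy-invariant, so the two Fredholm indices agree. Alternatively one can compare via the extended Hessians: $\widehat{\mathrm{Hess}}^{\tg,\sigma}$ and $\widehat{\mathrm{Hess}}^{sp,\sigma}$ (hence $\widehat{\mathrm{Hess}}^{\sigma}$) are joined by the interpolation family of the preceding lemmas, all members invertible at stationary points, so the associated path-operator indices coincide.

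The main obstacle I anticipate is bookkeeping rather than a genuinely new difficulty: one must check that the zeroth-order correction from $\frac{d}{dt}S_{\gamma(t)}$ is actually bounded (not merely densely defined) on the relevant Sobolev completions, and that it is compact when $j=k$ so that it does not affect the index — this uses the $W^{\tau,I}_k$ decay of $\gamma-\gamma_0$ together with the controlled-Coulomb-perturbation estimates of \cite[Section 5.3]{Lidman2016TheEO}. One also has to be slightly careful that in the real setting the ``enlarged'' local Coulomb slice is no longer enlarged, so the $L^2_{j}(Y;i\R)^{-\iota^*}$ factor appearing in the extended Hessians carries no average-subtraction term; this is consistent with the remark following the definition of $\Pi^{gC,\tau}$ (the missing last term in \cite[Equation (120)]{Lidman2016TheEO}), and I would simply track it through so that the asymptotic operators genuinely match the $k$-ASAFOE operators of the earlier lemmas. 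With these points handled, the proof is the real, Coulomb-slice transcription of \cite[Section 5.9]{Lidman2016TheEO}.
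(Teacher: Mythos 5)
Your overall strategy is the right one, and you correctly identify $\cH^\sigma_x$ as the $(k-1)$-ASAFOE operator driving the Fredholm theory and the $\tg$-extended Hessians as the hyperbolic asymptotic operators at $\pm\infty$. You also rightly flag the real-specific simplifications (no average-subtraction term, no tangent to the orbit, temporal $=$ pseudo-temporal gauge).

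However, there is a genuine gap in the mechanism. The APS/ASAFOE Fredholm theory from Kronheimer--Mrowka (and its real version in \cite[Section 8]{li2022monopolefloerhomologyreal}) applies to path operators $\frac{d}{dt}+L(t)$ where the slicewise domain is the $L^2_j$ completion of sections of a fixed vector bundle over $Y$. The domain $\cK^{gC,\tau,I}_{j,\gamma}$ (equivalently $\cT^{gC,\tau,I}_{j,\gamma}$) is not of this form: it is cut out slicewise by the condition $d^*b(t)=0$, so it is a proper closed subspace of $L^2_j(Z;iT^*Y\oplus\bS\oplus\R)^{-\iota^*\oplus\tau\oplus-\iota^*}$, not itself such a completion. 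Conjugating by the shear map $S_{\gamma(t)}$ does not repair this: the shear maps between $\cK^{e,\sigma,I}$ and $\cT^{agC,\sigma,I}$ inside the same fiber $\cT^{\sigma,I}_x$, so the conjugated operator still has a slicewise-constrained domain (the $I$-invariant local Coulomb slice $\cK^{\tau,I}_{j,\gamma}$), and the KM packaging still does not apply directly. The proof in the paper instead \emph{extends} $Q^{gC}_\gamma$ to $\widehat{Q}^{gC}_\gamma=\begin{bmatrix}Q^{gC}_\gamma&0\\0&\widehat{R}\end{bmatrix}$ acting on all of $\cT^{\tau,I}_{j,\gamma}$, where $\widehat{R}$ is a conjugate (by the identification $\Psi$) of the model operator $R=\frac{d}{dt}+\begin{bmatrix}0&-d\\-d^*&0\end{bmatrix}$ on $(\im d_{(0)}\oplus\im d^*_{(1)})^{-\iota^*}$. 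Only the extended operator $\widehat{Q}^{gC}_\gamma$ is of the required form $\frac{d}{dt}+L_0+\hat{h}^{gC}_t$ with $\cH^\sigma_{\gamma(t)}$ appearing as a summand, so only $\widehat{Q}^{gC}_\gamma$ can be compared to $\widehat{Q}_\gamma$ by the interpolation argument. One then has to verify that $\widehat{R}$ is bijective, so the extra block neither destroys Fredholmness nor shifts the index. Without the extension and the bijectivity of $\widehat{R}$, the argument you wrote cannot be pushed through; this is the missing idea, and it is not mere bookkeeping. Your ``shear conjugation along the path'' is also not the step the paper uses — the shear only enters via the definition of $\widehat{\mathrm{Hess}}^{\tg,\sigma}_{\frakq,x}$, and no time-dependent conjugation appears in the actual proof.
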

\begin{proof}
    We modify the proof of \cite[Proposition 5.9.2]{Lidman2016TheEO}. Let $Q_{\gamma}^{gC}$ be a shorthand for $\cD^{\tau}_{\gamma}\cF^{gC,\tau}_{\frakq}$. Since we have no tangent to the orbit on the real global Coulomb slice or the difference between pseudo-temporal and genuine temporal gauge, this can alternatively be seen as a map from $\cT^{gC,\tau,I}_{j,\gamma}$ to $\cT^{gC,\tau,I}_{j-1,\gamma}$. Although the picture is simpler than the usual case due to these two reasons, $\cT^{gC,\tau,I}_{j,\gamma}$ is still not a space of invariant sections for some vector bundle, since we have added the slicewise global Coulomb gauge requirement. 

    To remedy this, we extend the operator as follows. Define a linear operator \[R=\frac{d}{dt}+\begin{bmatrix}
        0& -d\\
        -d^*& 0\\
    \end{bmatrix}: (\mathrm{im} d_{(0)}\oplus \mathrm{im} d^*_{(1)})^{-\iota^*} \to (\mathrm{im} d_{(0)}\oplus \mathrm{im} d^*_{(1)})^{-\iota^*}.\] Here, we follow the convention in \cite[Proposition 5.9.2]{Lidman2016TheEO}, i.e., we use a subscript $(p)$, $p\in\{0,1\}$ to denote the the imaginary $p$-forms on which those operators acts slicewise on $\{t\}\times Y$. Since $Y$ is a real rational homology sphere, $(\mathrm{im} d^*_{(1)})^{-\iota^*}$ consists of real functions on $Z$ (real condition automatically implies that it integrates slicewise to 0), and $(\mathrm{im} d_{(0)})^{-\iota^*}=(\mathrm{ker} d_{(1)})^{-\iota^*}$.

    Decompose $\cT^{\tau,I}_{j,\gamma}$ into $\cT^{gC,\tau,I}_{j,\gamma}\oplus(\cJ^{\circ,\tau,\I}_{j,\gamma}\oplus (\mathrm{im} d^*_{(1)})^{-\iota^*})$, where $\cJ^{\circ,\tau,\I}_{j,\gamma}$ consists of time dependent elements in $\cJ^{\circ,\sigma,\I}_{j,\gamma(t)}=\{(-d\xi,0,\xi \phi(t))|\int_{Y}\xi=0\}\subset \cJ^{\sigma,I}_{j,\gamma(t)}$. We have a natural identification $\Psi:(\mathrm{im} d_{(0)})^{-\iota^*}\to \cJ^{\circ,\tau,\I}_{j,\gamma}$ given slicewise by \[\Psi(-d\xi,0,0)=(-d\xi,0,\xi\cdot \phi(t)).\] We conjugate $R$ by $\Psi$ to get \[\widehat{R}:\Psi\circ \frac{d}{dt}\circ\Psi^{-1}+\begin{bmatrix}
        0& \bfd^\sigma\\
        -d^*& 0\\
    \end{bmatrix}: \cJ^{\circ,\tau,\I}_{j,\gamma} \oplus (\mathrm{im} d^*_{(1)})^{-\iota^*} \to \cJ^{\circ,\tau,\I}_{j,\gamma} \oplus (\mathrm{im} d^*_{(1)})^{-\iota^*} .\] 

We extend $Q_{\gamma}^{gC}$ to $\widehat{Q}_{\gamma}^{gC}=\begin{bmatrix}
    Q_{\gamma}^{gC} &0\\
    0& \widehat{R}\\
\end{bmatrix}: \cT^{\tau,I}_{j,\gamma}\to \cT^{\tau,I}_{j-1,\gamma}$. 

$\cT^{\tau,I}_{j,\gamma}$ has an alternative decomposition $\cV^{\tau,I}_{j,\gamma}\oplus L^2(Z;i\R)^{-\iota^*}$, where $\cV^{\tau,I}$ is the space of real four-dimensional configurations with trivial $dt$ component and slicewise being in $\cT^{\sigma,I}$. With respect to this new decomposition, we rewrite $\widehat{Q}_{\gamma}^{gC}$ as \[\widehat{Q}_{\gamma}^{gC}=\frac{D^{\sigma}}{dt}+\begin{bmatrix}
    M &0\\
    0& 0\\
\end{bmatrix}+\begin{bmatrix}
    H &\bfd^{\sigma}_{\gamma(t)}\\
    \bfd^{\sigma,\tilde{\dagger}}_{\gamma(t)} & 0\\
\end{bmatrix}.\]  Here, $M$ and $H$ have explicit formulas: $M$ acts as zero on $\cT^{gC,\sigma, I}_{j,\gamma(t)}\subset \cT^{\sigma, I}_{j,\gamma(t)}$ and for the summand $\cJ^{\circ,\sigma, I}_{j,\gamma(t)}\subset \cT^{\sigma, I}_{j,\gamma(t)}$,
\[M(-d\xi,0,\xi\phi)=\Pi^{\perp}_{\phi}(0,0,\xi\frac{d\phi}{dt}).\] Using the decomposition $\cT^{\sigma, I}_{j,\gamma(t)}=\cT^{gC,\sigma, I}_{j,\gamma(t)}\oplus \cJ^{\circ,\sigma,I}_{j,\gamma(t)}$, \[H=\begin{bmatrix}
    \cD_{\gamma(t)}^{\sigma}\cX^{gC,\sigma}_{\frakq} & 0\\
    0 & 0\\
\end{bmatrix},\] so the third term is exactly $\cH_{\gamma(t)}^{\sigma}$ in Lemma \ref{lem:operator cH and kASAFOE property of gextended Hessian on blow up}. 

Now we observe that $\widehat{Q}_{\gamma}^{gC}$ can be written in the form \[\widehat{Q}_{\gamma}^{gC}=\frac{d}{dt}+L_0+\hat{h}_{t}^{gC},\]  which appears in the proof of \cite[Proposition 8.13] {li2022monopolefloerhomologyreal} and \cite[Proposition 14.4.3]{Kronheimer_Mrowka_2007}, see also \cite[Proposition 5.9.1]{Lidman2016TheEO}. The time dependence of $L_0+\hat{h}_{t}^{gC}$ makes it different from $\cH_{\gamma(t)}^{\sigma}$ in Lemma \ref{lem:operator cH and kASAFOE property of gextended Hessian on blow up}, but as $t$ goes to $\infty$, we have $L_0+\hat{h}_{\pm\infty}^{gC}=\widehat{\mathrm{Hess}}^{\tg,\sigma}_{\frakq,\pm}$. These limits are hyperbolic under our non-degenerate assumption on stationary points and the lemmas from previous subsection. With the help of \cite[Proposition 8.10]{li2022monopolefloerhomologyreal}, the interpolation argument in \cite[p. 93]{Lidman2016TheEO} still works for our real version of $\widehat{Q}_{\gamma}^{gC}$ operator, showing that it is Fredholm and has the same index as the old operator $Q_{\gamma}$ considered in Subsection \ref{subsub:Perturbations}.

As in \cite[Proposition 5.9.2]{Lidman2016TheEO}, we see that $\hat{R}$ is bijective, so it has no contribution to the index. Our proof is complete, since we have no $\bfd$ factors in the definition of $Q^{gC}_{\gamma}$.
\end{proof}

As a summary, we have \begin{lem}\label{lem:equivalence of fredholm and surjectivity}
    Under the same hypothesis as in Proposition \ref{prop: Fredholm property of Q gc gamma},
    \begin{enumerate}
        \item the operators \[\cD^{\tau}_{\gamma}\cF^{gC,\tau}_{\frakq},Q_{\gamma}^{gC},
        \widehat{Q}_{\gamma}^{gC}\] are Fredholm of the same index;
        \item one of the operators 
        \[\cD^{\tau}_{\gamma}\cF^{gC,\tau}_{\frakq},Q_{\gamma}^{gC},
        \widehat{Q}_{\gamma}^{gC}\] is surjective if and only if the other two are surjective.
    \end{enumerate}
\end{lem}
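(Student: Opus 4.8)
The plan is to deduce Lemma \ref{lem:equivalence of fredholm and surjectivity} essentially as a bookkeeping corollary of Proposition \ref{prop: Fredholm property of Q gc gamma} together with the structure of the extension $\widehat{Q}_\gamma^{gC}$ built in its proof. First I would recall the three operators in play: $\cD^\tau_\gamma\cF^{gC,\tau}_\frakq$ acting on $\cK^{gC,\tau,I}_{j,\gamma}$, the shorthand $Q_\gamma^{gC}=\cD^\tau_\gamma\cF^{gC,\tau}_\frakq$ now viewed on all of $\cT^{gC,\tau,I}_{j,\gamma}$ (legitimate because on the real global Coulomb slice there is no tangent-to-orbit and pseudo-temporal gauge coincides with temporal gauge, so $\cK^{gC,\tau,I}=\cT^{gC,\tau,I}$), and the extension $\widehat{Q}_\gamma^{gC}=\mathrm{diag}(Q_\gamma^{gC},\widehat{R})$ on $\cT^{\tau,I}_{j,\gamma}=\cT^{gC,\tau,I}_{j,\gamma}\oplus(\cJ^{\circ,\tau,I}_{j,\gamma}\oplus(\mathrm{im}\,d^*_{(1)})^{-\iota^*})$.

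For part (1), the Fredholm claim for $\cD^\tau_\gamma\cF^{gC,\tau}_\frakq$ and the equality of its index with that of $(\cD^\tau_\gamma\cF^\tau_\frakq)|_{\cK^{\tau,I}}$ is exactly Proposition \ref{prop: Fredholm property of Q gc gamma}; the operator $Q_\gamma^{gC}$ is literally the same map, just with its domain and codomain relabeled by the identifications $\cK^{gC,\tau,I}_{j,\gamma}=\cT^{gC,\tau,I}_{j,\gamma}$ and $\cV^{gC,\tau,I}_{j-1,\gamma}\cong\cT^{gC,\tau,I}_{j-1,\gamma}$, so it is Fredholm of the same index. For $\widehat{Q}_\gamma^{gC}$ I would invoke the fact, already established inside the proof of Proposition \ref{prop: Fredholm property of Q gc gamma}, that $\widehat{R}$ is bijective; a direct sum of a Fredholm operator with a bijective one is Fredholm with index unchanged, giving $\mathrm{ind}\,\widehat{Q}_\gamma^{gC}=\mathrm{ind}\,Q_\gamma^{gC}$.

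For part (2), I would argue that surjectivity transfers across exactly these same two relabelings. The passage $\cD^\tau_\gamma\cF^{gC,\tau}_\frakq\leftrightarrow Q_\gamma^{gC}$ is an identity of maps under isomorphisms of source and target, so one is onto iff the other is. The passage $Q_\gamma^{gC}\leftrightarrow\widehat{Q}_\gamma^{gC}$ uses that a block-diagonal operator $\mathrm{diag}(A,B)$ with $B$ invertible is surjective iff $A$ is surjective: $\ker$ and $\mathrm{coker}$ split as direct sums and the $B$-summand contributes nothing. Hence all three are simultaneously surjective or not. Since all three have the same (finite) index by part (1), this also matches their kernels up to the same fixed dimension shift, though that is not needed for the statement.

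I do not expect a genuine obstacle here: the content is entirely in Proposition \ref{prop: Fredholm property of Q gc gamma} and the bijectivity of $\widehat{R}$, both already in hand. The only point requiring a line of care is making sure the identifications $\cK^{gC,\tau,I}_{j,\gamma}\cong\cT^{gC,\tau,I}_{j,\gamma}$ and $\cV^{gC,\tau,I}_{j-1,\gamma}\cong\cT^{gC,\tau,I}_{j-1,\gamma}$ are topological isomorphisms (they are, being the Sobolev completions of fixed finite-codimension subbundle identities together with the canonical identification of the fiber of $\cV^{gC,\tau,I}$ with $W^I_{j-1}$), so that ``Fredholm of the same index'' and ``surjective'' are literally preserved rather than merely up to bounded perturbation. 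With that in place the lemma follows by the two elementary facts about direct sums of operators recalled above.
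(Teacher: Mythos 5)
Your proof is correct and matches the paper's intent exactly: the paper states this lemma ``as a summary'' immediately after the proof of Proposition \ref{prop: Fredholm property of Q gc gamma} with no further argument, since the content is precisely what you spell out — $Q_\gamma^{gC}$ is $\cD^\tau_\gamma\cF^{gC,\tau}_\frakq$ under the identifications $\cK^{gC,\tau,I}_{j,\gamma}=\cT^{gC,\tau,I}_{j,\gamma}$ and $\cV^{gC,\tau,I}_{j-1,\gamma}\cong\cT^{gC,\tau,I}_{j-1,\gamma}$, and $\widehat{Q}_\gamma^{gC}=\mathrm{diag}(Q_\gamma^{gC},\widehat{R})$ with $\widehat{R}$ bijective, so Fredholmness, index, and surjectivity all transfer.
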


The argument in \cite[Proposition 5.10.1]{Lidman2016TheEO} works with all those objects replaced by their real counterparts. In conclusion, we have the following:
\begin{prop}\label{prop: identification of regularity}
    Let $x,y\in \cC^{\sigma,I}(Y)$ be two stationary points of $\cX^{\sigma}_{\frakq}$ and $\gamma\in \cC^{\tau,I}_{k}(x,y) $ be a path in temporal gauge. Let $x^\flat=\Pigcs(x)$ and $y^\flat=\Pigcs(y)$ and $\gamma^\flat=\Pigcs(\gamma)$. Then \begin{enumerate}
        \item The operators  $\cD^{\tau}_{\gamma^\flat}\cF^{gC,\tau}_{\frakq}: \calK^{gC,\tau,I}_{j,\gamma^\flat}\to \cV^{gC,\tau,I}_{j-1,\gamma^\flat}$ and $(\cD^{\tau}_{\gamma}\cF^{\tau}_{\frakq})|_{\calK^{\tau,I}_{j,\gamma}}:\calK^{\tau,I}_{j,\gamma}\to \cV^{\tau,I}_{j-1,\gamma}$ have the same Fredholm index.
        \item Suppose that $\gamma$ is a trajectory of $\cX_{\frakq}^{\sigma}$, such that $[\gamma^{\flat}]\in \cB^{gC,\tau,I}_{k}([x],[y])$ is a trajectory of $\cX_{\frakq}^{agC,\sigma}$. If $(\cD^{\tau}_{\gamma}\cF^{\tau}_{\frakq})|_{\calK^{\tau,I}_{j,\gamma}}$ is surjective, so is $\cD^{\tau}_{\gamma^\flat}\cF^{gC,\tau}_{\frakq}$.
    \end{enumerate}
\end{prop}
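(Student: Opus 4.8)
The plan is to transcribe the proof of \cite[Proposition 5.10.1]{Lidman2016TheEO}, substituting the real analogues assembled earlier in this section for their Lidman--Manolescu counterparts; the essential inputs are Proposition \ref{prop: Fredholm property of Q gc gamma} and Lemma \ref{lem:equivalence of fredholm and surjectivity}. As a preliminary I would record three facts. First, since $\frakq$ is admissible the points $x,y$ are non-degenerate, hence by Lemma \ref{lem:identification of non-degeneracy} so are $x^\flat=\Pigcs(x)$ and $y^\flat=\Pigcs(y)$. Second, $\gamma^\flat=\Pigcs(\gamma)$ lies in $W^{\tau,I}_k(x^\flat,y^\flat)$, because the slicewise global Coulomb projection outputs a configuration with vanishing $dt$-component (this is where the real-specific coincidence of temporal and pseudo-temporal gauge enters, cf.\ the remark after \cite[Equation (120)]{Lidman2016TheEO}). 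Third, $\Pigcs$ acts slicewise by a real gauge transformation, so $[\gamma^\flat(t)]=[\gamma(t)]$ in $\cB^{\sigma,I}(Y)$ for every $t$.

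For part (1) I would argue through the relative grading. By \cite[Section 8.3]{li2022monopolefloerhomologyreal} the Fredholm index of $(\cD^{\tau}_{\gamma}\cF^{\tau}_{\frakq})|_{\cK^{\tau,I}_{j,\gamma}}$ equals the relative grading $\mathrm{gr}_z([x],[y])$ attached to the homotopy class $z$ of the path $t\mapsto[\gamma(t)]$ (up to the contribution of the bijective gauge-fixing term $\bfd^{\tau,\dagger}$). By the preliminary remarks $\gamma$ and $\gamma^\flat$ determine the same $z$ and the same endpoints up to gauge, so $(\cD^{\tau}_{\gamma^\flat}\cF^{\tau}_{\frakq})|_{\cK^{\tau,I}_{j,\gamma^\flat}}$ has the same index. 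Since $\gamma^\flat\in W^{\tau,I}_k(x^\flat,y^\flat)$ with non-degenerate endpoints, Proposition \ref{prop: Fredholm property of Q gc gamma} identifies this number with the index of $\cD^{\tau}_{\gamma^\flat}\cF^{gC,\tau}_{\frakq}$, which is (1).

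For part (2), assume in addition that $\gamma$ is a trajectory with $(\cD^{\tau}_{\gamma}\cF^{\tau}_{\frakq})|_{\cK^{\tau,I}_{j,\gamma}}$ surjective; then $\gamma^\flat$ is a trajectory of $\cX^{gC,\sigma}_{\frakq}$. The remaining point is to transfer surjectivity, and this is where the trajectory hypothesis is used. I would use the linearization of the slicewise Coulomb projection: $(\cD\Pi^{gC,\tau})_{\gamma}\colon\cT^{\tau,I}_{j,\gamma}\to\cT^{gC,\tau,I}_{j,\gamma^\flat}$ is surjective (slicewise it is $(\Pigcs_*)_{\gamma(t)}$), and since $\gamma$ is a zero of $\cF^{\tau}_{\frakq}$ while $\cF^{gC,\tau}_{\frakq}$ arises from $\cF^{\tau}_{\frakq}$ by pre- and post-composing with the Coulomb projection, the chain rule at $\gamma$ produces a commuting square whose other vertical arrow, the slicewise projection on the codomain, is again surjective; a diagram chase then forces $\cD^{\tau}_{\gamma^\flat}\cF^{gC,\tau}_{\frakq}$ to be surjective (there is no $\bfd$-row to handle separately, as $W^{I}$ carries no gauge slice to split off). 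Equivalently, one can route this through the extended operators $\widehat{Q}^{gC}_{\gamma^\flat}$ and $Q^{gC}_{\gamma^\flat}$ of the proof of Proposition \ref{prop: Fredholm property of Q gc gamma} and invoke Lemma \ref{lem:equivalence of fredholm and surjectivity}, using that $\widehat{R}$ is bijective.

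I expect the surjectivity transfer in (2) to be the only real obstacle: the index identification in (1) comes essentially for free from the relative grading, whereas surjectivity is not carried along by the interpolation argument underlying Proposition \ref{prop: Fredholm property of Q gc gamma}, so one must produce the commuting square above (equivalently, an identification of cokernels) and check that at a trajectory no correction terms spoil commutativity --- which is exactly what pins down the hypotheses in (2). Apart from that, the proof is a line-by-line translation of \cite[Proposition 5.10.1]{Lidman2016TheEO}, with the real Hessian and Fredholm lemmas of the preceding subsections replacing the corresponding results of \cite{Kronheimer_Mrowka_2007}; if anything the real case is simpler here, since there is no tangent-to-orbit direction and the enlarged and ordinary local Coulomb slices coincide.
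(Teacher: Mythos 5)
Your overall strategy matches the paper exactly: the published proof is the one sentence ``the argument in \cite[Proposition 5.10.1]{Lidman2016TheEO} works with all those objects replaced by their real counterparts,'' and you propose to do precisely that, with the correct auxiliary inputs (Proposition \ref{prop: Fredholm property of Q gc gamma}, Lemma \ref{lem:equivalence of fredholm and surjectivity}, and the real-specific simplifications coming from the coincidence of temporal and pseudo-temporal gauge). Your preliminary observations and the structure of part (1) are fine; appealing to the gauge-invariance of the Fredholm index (via the relative grading) rather than directly conjugating by the slicewise gauge transformation is a slight detour from what Lidman--Manolescu actually do, but it lands on the right answer.

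The part I would push back on is the ``commuting square'' you propose for the surjectivity transfer in (2). The slicewise projection $(\cD\Pi^{gC,\tau})_{\gamma}$ is time-dependent (it depends on $\gamma(t)$), so it does not commute with the $\frac{d}{dt}$ part of $\cF^{\tau}_{\frakq}$, and the metric $\tg$ used to form $\cX^{gC}_{\frakq}$ also differs from the $L^2$ metric. Being at a zero of $\cF^{\tau}_{\frakq}$ does not remove these correction terms, so the naive square does not commute and the diagram chase as stated does not close up. Moreover, Lemma \ref{lem:equivalence of fredholm and surjectivity} only relates the three operators $\cD^{\tau}_{\gamma^\flat}\cF^{gC,\tau}_{\frakq}$, $Q^{gC}_{\gamma^\flat}$, $\widehat{Q}^{gC}_{\gamma^\flat}$ \emph{at the same $\gamma^\flat$}; it still leaves a gap between the operator at $\gamma^\flat$ and the operator $(\cD^{\tau}_{\gamma}\cF^{\tau}_{\frakq})|_{\cK^{\tau,I}}$ at $\gamma$. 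The step actually needed is to conjugate the extended operator $\widehat{Q}_{\gamma}$ over to $\widehat{Q}^{gC}_{\gamma^\flat}$ using the time-dependent real gauge transformation relating $\gamma$ to $\gamma^\flat$ (together with the shear-map interpolation that controls the change of metric), which is exactly the interpolation machinery of Proposition \ref{prop: Fredholm property of Q gc gamma} / \cite[Proposition 5.9.2]{Lidman2016TheEO} rather than a single chain-rule diagram. You do flag the alternate route, but as written the needed conjugation is not there, and that is the substantive content of the Lidman--Manolescu argument.
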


%This improves the homeomorphisms between moduli spaces mentioned above to diffeomorphisms. 
It is important to note that this argument works for reducible moduli spaces as well. 

\subsection{Gradings and $v$-action}\label{sub: Gradings and U-action}

Throughout this subsection, we assume that we have chosen an admissible perturbation $\frakq$, so that all the stationary points are non-degenerate and all the moduli spaces $M([x],[y])$ and $M^{red}([x],[y])$ are regular.

The relative grading can be identified as follows using the index equalities from the previous subsection. 

\begin{align*}
    \mathrm{gr}(x,y)&= \mathrm{ind}(\cD^{\tau}_{\gamma}\cF^{\tau}_{\frakq})|_{\calK^{\tau,I}_{j,\gamma}}=\mathrm{ind}Q_{\gamma}\\
   & = \mathrm{ind}\cD^{\tau}_{\gamma^{\flat}}\cF^{gC,\tau}_{\frakq}=\mathrm{ind}Q_{\gamma^\flat}^{gC}\\
\end{align*}

\begin{lem}(\cite[Lemma 5.11.1]{Lidman2016TheEO})
    Let $x=(a,s,\phi)\in W^{\sigma,I}_k$, then \[\mathrm{Re}\left \langle \widetilde{\cX^{gC}_\frakq}^1(a,s,\phi),\phi \right \rangle_{L^2}=\mathrm{Re}\left \langle \widetilde{\cX_\frakq}^1(a,s,\phi),\phi \right \rangle_{L^2}\]
\end{lem}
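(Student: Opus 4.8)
The plan is to use the relation $\cX^{gC}_{\frakq}=\Pigc_{*}\circ\cX_{\frakq}$ together with the fact that, on $W$, the map $\Pigc_{*}$ has the simple form $(\Pigc_{*})_{(a,\phi)}(b,\psi)=(\pi(b),\psi+\xi\phi)$ with $\xi=Gd^{*}b$. Writing $\cX_{\frakq}=((\cX_{\frakq})^{0},(\cX_{\frakq})^{1})$ for its form and spinor components, this gives, for any $(a,\phi)\in W^{I}$,
\[(\cX^{gC}_{\frakq})^{1}(a,\phi)=(\cX_{\frakq})^{1}(a,\phi)+\zeta(a,\phi)\,\phi,\qquad \zeta(a,\phi):=Gd^{*}\bigl((\cX_{\frakq})^{0}(a,\phi)\bigr),\]
where $\zeta$ is an $i\R$-valued function, being $Gd^{*}$ of an $i\R$-valued $1$-form. (On $W^{I}$ it is moreover a real function in the sense of \cite{li2022monopolefloerhomologyreal}, but all we shall need is that $\mathrm{Re}\,\zeta\equiv 0$.)

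First I would substitute this into the definition $\widetilde{\cX^{gC}_{\frakq}}^{1}(a,s,\phi)=\int_{0}^{1}\cD_{(a,sr\phi)}(\cX^{gC}_{\frakq})^{1}(0,\phi)\,dr$ (and $\widetilde{\cX_{\frakq}}^{1}$ is given by the analogous formula with $\cX^{gC}_{\frakq}$ replaced by $\cX_{\frakq}$), then apply the Leibniz rule to the correction term $\zeta(\cdot)\cdot(\text{spinor component})$: differentiating at $(a,sr\phi)$ in the direction $(0,\phi)$ yields
\[\cD_{(a,sr\phi)}\bigl[\zeta(\cdot)\,(\text{spinor})\bigr](0,\phi)=\bigl(\cD_{(a,sr\phi)}\zeta\cdot(0,\phi)\bigr)(sr\phi)+\zeta(a,sr\phi)\,\phi,\]
and both summands are ($i\R$-valued function)-multiples of $\phi$. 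Integrating over $r\in[0,1]$ we obtain
\[\widetilde{\cX^{gC}_{\frakq}}^{1}(a,s,\phi)-\widetilde{\cX_{\frakq}}^{1}(a,s,\phi)=\theta(a,s,\phi)\cdot\phi\]
for some $i\R$-valued function $\theta(a,s,\phi)$ on $Y$.

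To finish, I would pair with $\phi$ and take real parts. For any scalar function $c$ one has $\mathrm{Re}\langle c\phi,\phi\rangle=\mathrm{Re}(c)\,|\phi|^{2}$ pointwise, independently of the conjugation convention, so
\[\mathrm{Re}\,\bigl\langle \theta\phi,\phi\bigr\rangle_{L^{2}}=\int_{Y}\mathrm{Re}(\theta)\,|\phi|^{2}=0,\]
since $\theta$ is $i\R$-valued. Combined with the previous display, this gives $\mathrm{Re}\langle\widetilde{\cX^{gC}_{\frakq}}^{1}(a,s,\phi),\phi\rangle_{L^{2}}=\mathrm{Re}\langle\widetilde{\cX_{\frakq}}^{1}(a,s,\phi),\phi\rangle_{L^{2}}$, as desired.

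I do not expect a genuine obstacle here: the statement is an essentially algebraic verification, and the real/$I$-invariant setting contributes nothing new, since every operation involved preserves the invariant subspaces and the computation is local on $Y$. The two points that deserve a line of care are confirming that on $W^{I}$ the discrepancy $(\cX^{gC}_{\frakq})^{1}-(\cX_{\frakq})^{1}$ is exactly an $i\R$-valued function times the spinor — in particular that, after restricting $\Pigc_{*}$ to $W$, no term such as $\rho(\text{$1$-form})\phi$ survives — and tracking the Leibniz rule through the $\widetilde{(\,\cdot\,)}^{1}$ construction so that the derivative $\cD\zeta$ only contributes another scalar-times-spinor term.
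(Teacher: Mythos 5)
Your proof is correct, and it is essentially the argument behind the cited \cite[Lemma 5.11.1]{Lidman2016TheEO} (the paper itself gives no proof, only the citation): the global Coulomb projection alters the spinor component of $\cX_{\frakq}$ only by an $i\R$-valued function times the spinor, this property is preserved by the Leibniz rule through the $\widetilde{(\,\cdot\,)}^1$ construction, and such terms have vanishing real pairing with $\phi$. No gaps.
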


This lemma tells us that the spinorial energy $\Lambda_\frakq$ works in the global Coulomb slice as well.
As in \cite[Section 5.11]{Lidman2016TheEO}, we can analyze the eigenvalue at reducible stationary points directly and show that 

\begin{lem}
    Fix a reducible stationary point $(a,0)\in W^{I}$ of $\cX^{gC}_{\frakq}$. For each $N\in\N$, there exists $\omega_1,\omega_2>0$ such that among all stationary points of $\cX^{agC,\sigma}_{\frakq}$ that agree with $(a,0)$ after blowing down, those having grading in $[-N,N]$ are precisely the reducible stationary points with spinorial energy in $[-\omega_1,\omega_2]$.
\end{lem}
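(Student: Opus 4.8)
The plan is to reduce the statement to the spectral theory of a single self-adjoint elliptic operator on $Y$, and then to repeat, in this infinite-dimensional setting, the index bookkeeping of Lemma~\ref{lem:index of reducible in f.d. for Morse quasi gradient}; this is the real analogue of the argument in \cite[Section 5.11]{Lidman2016TheEO}.

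First I would identify the relevant stationary points. By Proposition~\ref{prop:zeros on the blow up} together with Lemma~\ref{lem:identification of stationary points on blowup}, the stationary points of $\cX^{agC,\sigma}_{\frakq}$ that blow down to the fixed reducible $(a,0)$ are exactly the classes $[(a,0,\phi)]$ with $\phi$ a unit eigenvector of the self-adjoint operator $\mathbf{D}$ governing the spinorial part of the linearization at $(a,0)$ (a Dirac operator perturbed by $\widetilde{\cX_{\frakq}}^1$), and the spinorial energy $\Lambda_{\frakq}(a,0,\phi)$ is precisely the associated eigenvalue; by the preceding lemma, $\mathrm{Re}\langle\widetilde{\cX^{gC}_{\frakq}}^1,\phi\rangle=\mathrm{Re}\langle\widetilde{\cX_{\frakq}}^1,\phi\rangle$, so this description is unchanged in the global Coulomb slice. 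The operator $\mathbf{D}$ inherits from $\widehat{\mathrm{Hess}}^{\tg,\sigma}_{\frakq,(a,0)}$ (cf. Lemma~\ref{lem:operator cH and kASAFOE property of gextended Hessian on blow up}) the structure of a self-adjoint first-order elliptic operator: its spectrum is real, discrete, accumulates only at $\pm\infty$, has finite multiplicities, and --- since $(a,0)$ is non-degenerate --- omits $0$; after a further generic choice of admissible $\frakq$ we may take it simple. Listing the eigenvalues as $\cdots<\nu_2<\nu_1<0<\mu_1<\mu_2<\cdots$ gives boundary-unstable reducibles $\mathfrak{b}_j=[(a,0,\psi_j)]$ and boundary-stable reducibles $\mathfrak{a}_i=[(a,0,\phi_i)]$.

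Next I would compute the relative gradings among these points. Using $\mathrm{gr}=\mathrm{ind}(Q_\gamma)=\mathrm{ind}(Q^{gC}_{\gamma^\flat})$ from Subsection~\ref{sub: Gradings and U-action} and choosing the connecting path $\gamma^\flat$ inside the reducible locus, the operator $Q^{gC}_{\gamma^\flat}$ splits, its spinorial block being the standard operator $\tfrac{d}{dt}+\mathbf{D}(t)$ that interpolates between two eigenvalue-shifted copies of $\mathbf{D}$, whose index is the corresponding spectral flow. Evaluating this spectral flow exactly as in the finite-dimensional model of Lemma~\ref{lem:index of reducible in f.d. for Morse quasi gradient} shows that $\mathrm{gr}$ is a non-decreasing function of the eigenvalue which jumps by $+1$ at each eigenvalue, with a single repeat across the change of sign; in particular $\mathrm{gr}(\mathfrak{a}_1)=\mathrm{gr}(\mathfrak{b}_1)$, while $\mathrm{gr}(\mathfrak{a}_i)=\mathrm{gr}(\mathfrak{a}_1)+(i-1)$ and $\mathrm{gr}(\mathfrak{b}_j)=\mathrm{gr}(\mathfrak{b}_1)-(j-1)$. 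Hence the set $S$ of reducibles over $(a,0)$ with grading in $[-N,N]$ is finite and, once $N\ge|\mathrm{gr}(\mathfrak{a}_1)|$ (the only regime needed in the proof of Theorem~\ref{thm:main theorem}, where the grading window tends to $\infty$), is a block $\mathfrak{b}_{j_1},\dots,\mathfrak{b}_1,\mathfrak{a}_1,\dots,\mathfrak{a}_{i_1}$ straddling the spectral gap at $0$; its members are precisely the reducibles with eigenvalue in $[\nu_{j_1},\mu_{i_1}]$, so by discreteness of the spectrum I would take $\omega_2\in(\mu_{i_1},\mu_{i_1+1})$ and $\omega_1\in(-\nu_{j_1},-\nu_{j_1+1})$, both positive, which gives $[-\omega_1,\omega_2]\cap\mathrm{spec}(\mathbf{D})=\{\nu_{j_1},\dots,\nu_1,\mu_1,\dots,\mu_{i_1}\}$ and hence the claim. (For smaller $N$ the argument is the same, shrinking $[-\omega_1,\omega_2]$ inside the gap at $0$ when $S$ meets only one side.)

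The hard part will be the spectral-flow computation in the second step: isolating the spinorial block of $Q^{gC}_{\gamma^\flat}$ along a reducible path while keeping track of the projection $\Pi^{\perp}_{\phi}$ and of the boundary-stable/boundary-unstable dichotomy measured by $\Lambda_{\frakq}$, identifying its index with a spectral flow, and matching the sign conventions with Lemma~\ref{lem:index of reducible in f.d. for Morse quasi gradient}. Everything else is routine spectral theory, and the translation of Lidman--Manolescu's treatment only uses the real replacements already set up in the previous subsections.
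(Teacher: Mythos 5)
Your proposal is correct and follows the same route the paper intends: the paper simply defers to \cite[Section 5.11]{Lidman2016TheEO} after observing that $\Lambda_{\frakq}$ transfers to the global Coulomb slice, and you have reproduced that argument (spectral theory of the perturbed real Dirac operator at $(a,0)$, spectral-flow computation of the relative grading matched against the bookkeeping of Lemma~\ref{lem:index of reducible in f.d. for Morse quasi gradient}) with the real replacements already in place. Your ``further generic choice of admissible $\frakq$'' to obtain a simple spectrum is redundant --- admissibility already forces each blown-up reducible $(a,0,\phi_i)$ to be non-degenerate, which is exactly simplicity of the spectrum of the perturbed Dirac operator --- but this is a harmless inefficiency rather than an error.
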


Next, we define a $v$-action on the homology group by cutting down moduli spaces of the form $M^{agC}([x],[y])$ or $M^{agC,red}([x],[y])$. 

Recall that we have considered a $v$-action on the original $\widecheck{\mathit{HMR}}$ group defined by intersecting the moduli spaces with a generic section of the line bundle on $\cB^{\sigma,I}([x],[y])$ associated to the gauge group evaluation $ev_p:\cG^I\to\Z_2$ for $p\in \mathrm{fix}(\iota)$.

On $W^{\sigma,I}_k/\Z_2$, we have a natural real line bundle $L^{agC}$ associated to the quotient map $W^{\sigma,I}_k\to W^{\sigma,I}_k/\Z_2$. By cutting the moduli spaces using a generic section of this line bundle, we obtain a $v$-action on the new $\widecheck{\mathit{HMR}}$, which we will define using stationary points and trajectories in the global Coulomb slice.

In the following, we will identify these two actions by identifying the corresponding cut-down moduli spaces based on the existing identification between stationary points and trajectories. To see this, we need a modification pf the $v$-action on the original $\widecheck{\mathit{HMR}}$. Let $p$ be the chosen point on the fixed set at which we perform the evaluation. The $v$-action can be alternatively defined by intersecting the moduli spaces with a transverse section of the line bundle $L^{\sigma}$ associated to the evaluation $\cG_{k+1}^I(N_p)\to \Z_2$, $u\mapsto u(p,0)$ for $N_{p}=[-1,1]\times Y \subset\R \times Y$. The unique continuation results identify this with the one defined in \cite[Section 13]{li2022monopolefloerhomologyreal}. 

Recall that for $u_{\pm}\in\cG_{\pm}^I(Y)$, we have $u_{\pm}=\pm e^{f}$ for some real function $f:Y \to i\R$. The same analysis holds for $Z=\R\times Y$, since they share the same homotopy type. In view of this, the second step of modification in \cite[Section 5.12]{Lidman2016TheEO} is not needed for us, since the average value of a real function using an equivariant metric is always zero.

Next, we move from the $\sigma$-model to the $\tau$-model as they did. There is a restriction map $r$ from an open subset $\cU\subset\cB^{\sigma,I}_{k}(N_p)$ to $\cB^{\tau,I}_{k}(N_{p})$. Again by the unique continuation, the moduli spaces $M([x],[y])$ and $M^{red}([x],[y]),$ restricted to $N_p$, yield configurations in $\cU$, which map homoemorphically onto their image under $r$. The evaluation map above also yields a line bundle $L^{\tau}$ over $\cB^{\tau,I}_{k}(N_{p})$. Its pullback by $r^*$ coincides with $L^{\sigma}|_{\cU}$. Thus, the $v$-action can also be defined by cutting the moduli space by a generic section of $L^{\tau}$.

After these modifications, we are ready to relate this to the line bundle $L^{agC}$ over $W^{\sigma,I}_{k}$. Recall that we have the global Coulomb projection $\Pi^{[gC],\tau}$ for configurations on $\R\times Y$. The same formula applies to $ [-1,1]\times Y$ as well. We use the same notation for the map $\Pi^{[gC],\tau}:\cB^{\tau,I}_k(N_p)\to \cB^{gC,\tau,I}_k(N_p)$.

We have a natural ``evaluation map'' \[ \cB^{gC,\tau,I}_k(N_p) \to W^{\sigma,I}_{k-1/2},\text{ } [\gamma]\mapsto [\gamma(0)].\] 

Pick any generic section $\zeta^{agC}$ of $L^{agC}$ such that it is transverse to the zero section as well as all the moduli spaces $M^{agC}([x],[y])$ and $M^{agC,red}([x],[y])$. Let $\mathscr{Z}^{agC}$ be the zero set of $\zeta^{agC}$. The intersections \[M^{agC}([x],[y])\cap \scrZ^{agC} \text{ and } M^{agC,red}([x],[y])\cap \scrZ^{agC}\] will contribute to the $v$-action. (Here, we implicitly identify the moduli spaces in $L^2_k$ with their image in $L^2_{k-1/2}$.) They could also be identified with their image in $\cB^{gC,\tau,I}$ under restriction. 

We pull back $L^{agC}$ on $W^{\sigma,I}/\Z_2$ to $L^{agC,\tau}$ over $\cB^{gC,\tau,I}_{k}(N_p)$. Then $\zeta^{agC}$ pulls back to a section of $L^{agC,\tau}$, and the intersection of the zero set with the moduli spaces can be identified via these pull-backs. A key observation is that $L^{\tau}$ is exactly the pull-back of $L^{agC,\tau}$ via $\Pi^{[gC],\tau}$. Pull back the section again to obtain $\zeta^{\tau}$, which can be used to define the $v$-action on the original $\widecheck{\mathit{HMR}}$. We have the following commutative diagram with injective horizontal maps:
	\[\xymatrix{M([x],[y]) \ar[d]^{\Pi^{[gC],\tau}} \ar[r] & \cB^{\tau,I}_{k}(N_p) \ar[d]^{\Pi^{[gC],\tau}}\\
		M^{agC}([x],[y]) \ar[r] & \cB^{gC,\tau,I}_{k}(N_p)\\} \]

Recall that the left vertical map is a homeomorphism, so we have an identification between cut-down moduli spaces \[M([x],[y])\cap \scrZ^{\tau} \cong M^{agC}([x],[y])\cap \scrZ^{agC}\] and a similar reducible version. This concludes the identification of $v$-actions.
 
\subsection{New definition of $\mathit{HMR}$}\label{sub:New definition of HMR}

The preparations in the previous subsections allow us to rephrase $\widecheck{\mathit{HMR}}$ in terms of configurations in the global Coulomb slice. This is done as follows.

Fix an admissible perturbation $\frakq$. We take boundary-stable and irreducible stationary points of $\cX^{agC,\sigma}_{\frakq}$ in $W^{\sigma,I}/\Z_2$ as generators of $\widecheck{CMR}$. These are in one-to-one correspondence with $\frC^{o,I}\cup\frC^{s,I}$. Using Lemma \ref{lem:identification of non-degeneracy}, we know that all these stationary points are non-degenerate.

We use moduli spaces $M^{agC}([x],[y])$ and their reducible counterparts to define the differential on $\widecheck{CMR}$. They are regular due to Proposition \ref{prop: identification of regularity} and the admissible assumption on $\frakq$. We also know that they are homeomorphic to the original ones. This identifies the differentials. 

Finally, in Subsection \ref{sub: Gradings and U-action}, we have exhibited equalities for relative gradings as well as homeomorphisms between the cut-down moduli spaces. These identify the gradings and module structures.

Combining all these, we get a reformulation of $\widecheck{\mathit{HMR}}$ in the global Coulomb slice. 

\section{Relating finite and infinite dimensional Morse homologies}\label{sec:Relating finite and infinite dimensional Morse homologies}

\subsection{Finite dimensional approximations}
The two main distinctions between real monopole Floer homology and real Seiberg-Witten Floer homotopy type are \begin{enumerate}
    \item One works on the whole configuration space, the other works in the global Coulomb slice;
    \item One uses perturbation, the other does not but cut off the configuration space using eigenspaces. 
\end{enumerate}

In the previous section, we reconstructed real monopole Floer homology in the global Coulomb slice. Now, we want a version of real Seiberg-Witten Floer homotopy type with perturbation.

In \cite[Section 6.1]{Lidman2016TheEO}, they introduced a new property \emph{very compact} (\cite[Definition 6.1.1]{Lidman2016TheEO}) on self-maps of the global Coulomb slice $W$ to characterize perturbations that are allowed for finite-dimensional approximation. The same definition works with $W$ replaced by $W^{I}$. We choose not to review this definition, since any $I$-equivariant very compact map  $W\to W$  gives rise to a very compact map $W^{I}\to W^{I}$ and we will only need maps of this form. We restate some propositions in the real setup for future references.

\begin{prop}(\cite[Proposition 6.1.2]{Lidman2016TheEO})\label{prop:properties of very compact maps}
Let $\eta:W^{I}\to W^{I}$ be a very compact map. Fix $k\ge 5$. Suppose that there exists a closed, bounded subset $N$ of $W_k^I$ with an open subset $U\subset N$ satisfying that all finite trajectories contained in $N$ are actually contained in $W^I\cap U$. Then \begin{enumerate}
    \item For $\lambda\gg 0$, trajectories of $l+p^{\lambda}\eta$ contained in $N$ must be contained in $U$;
    \item We can define the Floer spectrum $\Sigma^{-\mathrm{dim}(V^{0}_{-\lambda})\R} \Sigma^{-\mathrm{dim}(U^{0}_{-\lambda} )\Tilde{\R}} I^{\lambda}$ as in Subsection \ref{subsub:The Conley index and the real Seiberg-Witten Floer spectrum}. And this is independent of $\lambda$ up to stable equivalence.
    \item Further, if $\eta$ is $\Z_2$-equivariant, then the Floer spectrum can be defined equivariantly and is well-defined up to $\Z_2$-equivariant stable equivalence.
\end{enumerate}
    
\end{prop}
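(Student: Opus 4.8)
The plan is to obtain the statement by applying \cite[Proposition 6.1.2]{Lidman2016TheEO} to the Hilbert space $W^I$ in place of $W$, with a routine equivariant upgrade for part (3). First I would record the structural facts that make this legitimate: $W^I=(\ker d^*)^{-\iota^*}\oplus\Gamma(\bS)^\tau$ is a separable Hilbert space, $l$ commutes with $I$ and hence restricts to a self-adjoint Fredholm operator on $W^I$ with every eigenspace $I$-invariant, so each $W^\lambda$ is $I$-invariant and the smoothed cutoff projections $p^\lambda$ may be chosen $I$-equivariant and therefore restrict to projections $W^I\to (W^I)^\lambda=W^\lambda\cap W^I$. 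Since $\eta$ is by hypothesis the restriction of an $I$-equivariant very compact self-map of $W$, it is a very compact self-map of $W^I$ in the sense of \cite[Definition 6.1.1]{Lidman2016TheEO}, so all hypotheses of \cite[Proposition 6.1.2]{Lidman2016TheEO} hold verbatim with $W$ replaced by $W^I$.

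For part (1) I would rerun the compactness argument behind \cite[Proposition 6.1.2]{Lidman2016TheEO} (ultimately \cite[Proposition 3]{Manolescu_2003}): if the conclusion failed there would be $\lambda_n\to\infty$ and trajectories $\gamma_n$ of $l+p^{\lambda_n}\eta$ lying in $N$ but meeting $N\setminus U$; very compactness of $\eta$ supplies uniform local $L^2_k$ bounds, so a subsequence of $\gamma_n$ converges, in the relevant topology, to a finite trajectory $\gamma_\infty$ of $l+\eta$ contained in $N$, hence in $W\cap U$ by hypothesis, and openness of $U$ forces $\gamma_n\subset U$ for large $n$, a contradiction. No group action enters here.

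For part (2), granted part (1), the maximal invariant set $S^\lambda$ of the flow of $l+p^\lambda\eta$ inside $N$ (equivalently inside the ball $B(R)$ of Proposition \ref{prop:flow lines in B(R)}) is an isolated invariant set in the finite-dimensional space $W^\lambda\cap W^I$; I would take its Conley index $I^\lambda$, form the desuspension $\Sigma^{-\dim(V^0_{-\lambda})\R}\Sigma^{-\dim(U^0_{-\lambda})\widetilde{\R}}I^\lambda$, and prove independence of $\lambda$ up to stable equivalence by the attractor--repeller/continuation argument of \cite[Section 6.1]{Lidman2016TheEO} comparing cutoffs at $\lambda<\lambda'$, which takes place entirely inside $W^I$ and carries over unchanged. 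For part (3), when $\eta$ is $\Z_2$-equivariant all of these objects---the flows, the sets $N$ and $U$, the ball $B(R)$, and each $S^\lambda$---are $\Z_2$-invariant, and $V^0_{-\lambda}$, $U^0_{-\lambda}$ are by construction sums of trivial, respectively sign, representations, so I would instead take the $\Z_2$-equivariant Conley index and desuspend by the corresponding representation spheres as in Subsection \ref{subsub:The Conley index and the real Seiberg-Witten Floer spectrum}, deducing independence of $\lambda$ from the $\Z_2$-equivariant continuation argument together with equivariant Conley index theory \cite{Pruszko1999}.

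I expect the only genuine content to be equivariance bookkeeping rather than any new analysis: one must verify that $p^\lambda$ can be chosen $\Z_2$-equivariant (immediate by averaging over $\Z_2$, since $W^\lambda$ is $l$-invariant), that the approximating flows and isolating neighborhoods remain $\Z_2$-invariant, and that the continuation maps comparing different values of $\lambda$ can be taken equivariant. All the hard estimates---uniform bounds, convergence of approximate trajectories, isolation of $S^\lambda$---are imported directly from \cite{Lidman2016TheEO} and \cite{Manolescu_2003}, so the main point of caution is simply that nothing in this transplant breaks the $\Z_2$-symmetry.
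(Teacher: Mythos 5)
Your proposal matches the paper's (implicit) treatment exactly: the paper simply restates \cite[Proposition 6.1.2]{Lidman2016TheEO} on $W^I$ after observing that any $I$-equivariant very compact map $W\to W$ restricts to a very compact map $W^I\to W^I$, and you are spelling out the transplant and equivariance bookkeeping that make this legitimate. One small wobble in your sketch of part (1): the standard contradiction argument translates in time so that $\gamma_n(0)\in N\setminus U$ and then uses closedness of $N\setminus U$ (together with convergence at $t=0$) to force $\gamma_\infty(0)\notin U$, rather than deducing $\gamma_n\subset U$ from $\gamma_\infty\subset U$ via openness, which would require convergence of whole trajectories in a topology you do not have.
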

\begin{lem}(\cite[Lemma 6.1.4]{Lidman2016TheEO})\label{lem:Compactness of trajectories with varying lambda}
Let $I\subset \R$ be a closed interval (we allow it to be $\R$.) Under the hypothesis of Proposition \ref{prop:properties of very compact maps}, suppose that we have a sequence of numbers $\lambda_n\to \infty$ and a sequence of trajectories $\gamma_n$ of $l+p^{\lambda_n}\eta$ that lie entirely in $N$. Then there exists a subsequence of $\gamma_n$ for which the restriction to any compact subinterval $I'\subset I^{\circ}$ converges in $C^{\infty}$ topology of $W^I(I\times Y)$ to a trajectory of $l+\eta$.
    
\end{lem}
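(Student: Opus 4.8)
The plan is to transplant the proof of \cite[Lemma 6.1.4]{Lidman2016TheEO} to the $\Z_2$-invariant setting, taking advantage of the fact that the $I$-invariant subspace $W^I$ is a closed linear subspace preserved by $l$, by $p^\lambda$ (since the eigenspaces of $l$ split off their $I$-invariant parts), and by the very compact map $\eta$, and that the metric, the projections, and all the estimates of \cite[Section 3.2]{Lidman2016TheEO} restrict to $W^I$. Concretely, the first step is to record that each $\gamma_n$, being a trajectory of $l + p^{\lambda_n}\eta$ lying in the bounded set $N\subset W^I_k$, satisfies a uniform $L^2_k$ bound on slices, so that after passing to a subsequence we may extract weak limits on every compact subinterval $I'\subset I^\circ$. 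Here one uses that $l$ is a self-adjoint elliptic operator with discrete spectrum on $W^I$ (inherited from $W$), so the finite-dimensional truncations $W^{I,\lambda_n}$ exhaust $W^I$ and $p^{\lambda_n}\to \mathrm{id}$ strongly.

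The second step is the bootstrapping/elliptic regularity argument. Writing the trajectory equation $\tfrac{d}{dt}\gamma_n = -(l + p^{\lambda_n}\eta)(\gamma_n)$, one uses that $\eta$ is very compact — in particular its image lies in a fixed compact subset of every Sobolev level and it is "quadratically bounded" in the sense of \cite[Definition 6.1.1]{Lidman2016TheEO} — to upgrade the weak $L^2_k$ convergence on compact subintervals to $C^\infty_{\mathrm{loc}}$ convergence, exactly as in \cite{Lidman2016TheEO}; the point is that $p^{\lambda_n}\eta(\gamma_n)\to \eta(\gamma_\infty)$ strongly because $\eta$ is compact and $p^{\lambda_n}\to\mathrm{id}$, so the limit $\gamma_\infty$ solves $\tfrac{d}{dt}\gamma_\infty = -(l+\eta)(\gamma_\infty)$, i.e.\ it is a trajectory of $l+\eta$ in $W^I(Y\times I)$. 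All of this is literally the argument of \cite[Lemma 6.1.4]{Lidman2016TheEO} with $W$ replaced by $W^I$; since $W^I\subset W$ is a closed subspace invariant under $l$, $p^\lambda$ and $\eta$, every estimate used there holds verbatim on $W^I$, and the limit $\gamma_\infty$ automatically lies in $W^I$ because $W^I$ is closed under $C^\infty_{\mathrm{loc}}$ limits.

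The main (and only) genuine obstacle is making sure that the very-compactness estimates really are inherited by the restriction $\eta|_{W^I}$ and that the extraction of limits interacts correctly with the varying cut-off parameter $\lambda_n$ — this is precisely the content we have arranged to be able to quote: by Proposition \ref{prop:properties of very compact maps} and the preceding discussion, any $\Z_2$-equivariant very compact $\eta\colon W\to W$ restricts to a very compact map $W^I\to W^I$, so no new analysis is needed. The remaining bookkeeping, that the hypothesis of Proposition \ref{prop:properties of very compact maps} (a bounded $N$ with the trapping open set $U$) is exactly what guarantees the $\gamma_n$ stay in a bounded region uniformly in $n$, is immediate from the statement. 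Thus the proof reduces to the single sentence: \emph{the proof of \cite[Lemma 6.1.4]{Lidman2016TheEO} applies verbatim after replacing $W$, $I^\lambda$, and all associated objects with their $I$-invariant counterparts, using that $W^I$ is a closed $l$-, $p^\lambda$- and $\eta$-invariant subspace of $W$ on which all the estimates of \cite[Section 3.2]{Lidman2016TheEO} restrict.}
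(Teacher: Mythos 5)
Your proposal matches the paper's (implicit) proof exactly: the paper states this lemma with only a citation to \cite[Lemma 6.1.4]{Lidman2016TheEO}, relying on the preceding observation that any $I$-equivariant very compact map $W\to W$ restricts to a very compact map $W^I\to W^I$, which is precisely the restriction argument you spell out. No further justification is needed.
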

\begin{prop}(\cite[Proposition 6.1.5]{Lidman2016TheEO})
If $\frakq$ is a very tame perturbation, then $\Pigcs_* \frakq: W^{I}\to W^{I}$ is very compact.
    
\end{prop}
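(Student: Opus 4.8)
The statement is Proposition 6.1.5 of Lidman--Manolescu transported to the real setting: if $\frakq$ is very tame, then $\Pigcs_*\frakq\colon W^I\to W^I$ is very compact. The plan is to deduce this from the non-equivariant statement already available. By the discussion in Subsection~\ref{subsub:Perturbations}, Li's real perturbations are restrictions to $\cC^I$ of genuine (very tame) perturbations $\frakq$ on $\cC(Y)$ that are moreover $I$-equivariant as maps $\cC(Y)\to\cT_0$. By \cite[Proposition 6.1.5]{Lidman2016TheEO}, $\Pigc_*\frakq\colon W\to W$ is then very compact. Since $\Pigc$ and $\cX$ (hence $\frakq$, chosen $I$-equivariant) all commute with the involution $I$, the map $\Pigc_*\frakq$ is $I$-equivariant. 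So it suffices to observe, as flagged in the paragraph preceding Proposition~\ref{prop:properties of very compact maps}, that the $I$-invariant part of an $I$-equivariant very compact map $W\to W$ is very compact as a map $W^I\to W^I$.

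First I would recall precisely the conditions in \cite[Definition 6.1.1]{Lidman2016TheEO}: a self-map $\eta$ of $W$ is very compact if (roughly) it extends to a smooth map $W_k\to W_k$ for a range of $k$, it is bounded on bounded sets together with appropriate operator-norm bounds on its derivatives, and it satisfies the key compactness/boundedness estimates needed to run the Conley-index finite-dimensional approximation (the functional boundedness and the uniform $C^0$-to-$L^2_{k}$ type control). Then I would check each clause restricts. The point is that $W^I\hookrightarrow W$ is a closed linear subspace, $I$-invariant, with $\Pigcs_*\frakq$ mapping it to itself; Sobolev completions $W^I_k = (W_k)^I$ are closed subspaces preserved by the projections $p^\lambda$ (which are $L^2$-orthogonal and $I$-equivariant since the metric is invariant and $l$ is $I$-equivariant, so $W^\lambda$ is $I$-invariant and $p^\lambda$ restricts). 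Smoothness, boundedness, and the norm estimates are all inherited verbatim by restricting to a subspace with the subspace norm, because the relevant norms on $W^I$ are just the ambient norms and $\Pigcs_*\frakq|_{W^I}$ is literally the restriction of a map that already satisfies them. No new analysis is needed; this is a ``restriction preserves the defining inequalities'' argument.

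Concretely, the write-up would read: by Subsection~\ref{subsub:Perturbations} we may take $\frakq$ to be the restriction of an $I$-equivariant very tame perturbation on $\cC(Y)$; by \cite[Proposition 6.1.5]{Lidman2016TheEO} the map $\eta := \Pigc_*\frakq\colon W\to W$ is very compact, and by $I$-equivariance of $\Pigc$ and of $\frakq$ it is $I$-equivariant; finally $\eta|_{W^I} = \Pigcs_*\frakq$ because $\Pigc$ restricts to $\Pigcs$ on $\cC^I$ and hence on $W^I$, and the restriction of an $I$-equivariant very compact map to the fixed subspace is again very compact since every clause of \cite[Definition 6.1.1]{Lidman2016TheEO} involves norms and estimates that survive passage to a closed invariant subspace (using that $W^\lambda$ is $I$-invariant so $p^\lambda$ restricts to $W^I$). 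Therefore $\Pigcs_*\frakq$ is very compact.

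The main (and really only) obstacle is a bookkeeping one: making sure that \emph{every} ingredient in the rather technical Definition 6.1.1 of ``very compact'' is genuinely of the form ``an inequality or a smoothness/continuity statement about a map and its derivatives in certain Sobolev norms,'' so that restriction to the closed invariant subspace $W^I$ is automatic. A subtler point to double-check is that the smooth family of projections $p^\lambda$ used in the finite-dimensional approximation can be chosen $I$-equivariantly --- this is fine because $l$ is $I$-equivariant (the metric $g$ is invariant), so its eigenspaces, and hence $W^\lambda$, are $I$-invariant, and the genuine $L^2$-projections $p^{\lambda_i^\bullet}$ are automatically $I$-equivariant; one then averages any interpolating family over $\Z_2$ to make the whole family $I$-equivariant without disturbing the $p^{\lambda_i^\bullet}$. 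With that in hand, no independent estimate is required and the proof is a one-line appeal to \cite[Proposition 6.1.5]{Lidman2016TheEO} plus the restriction principle.
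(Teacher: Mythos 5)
Your argument is exactly the one the paper has in mind: it states no proof for this proposition, instead relying on the remark just before Proposition~\ref{prop:properties of very compact maps} that any $I$-equivariant very compact map $W\to W$ restricts to a very compact map $W^I\to W^I$, combined with the convention that $\frakq$ is taken $I$-equivariant (pairing with invariant forms and spinors) so that \cite[Proposition 6.1.5]{Lidman2016TheEO} applies upstairs. Your additional checks (that $W^I_k=(W_k)^I$ is a closed subspace, that the eigenspaces $W^\lambda$ are $I$-invariant since $l$ is $I$-equivariant for an invariant metric, and that the interpolating family $p^\lambda$ can be averaged to be $I$-equivariant) are the right bookkeeping to make that one-line restriction principle rigorous.
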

\begin{prop}(\cite[Proposition 6.1.6]{Lidman2016TheEO})\label{prop:identification of perturbed and unperturned homotopy type}
    Let $\frakq$ be a very tame perturbation. Then $\mathit{SWF}_{\Z_2,\frakq}(Y,\iota,\s)$ and $\mathit{SWF}_{\Z_2}(Y,\iota,\s)$ are $\Z_2$-equivariantly stably homotopy equivalent.
\end{prop}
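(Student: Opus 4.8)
\subsection*{Proof proposal}

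The plan is to transport the argument of \cite[Proposition 6.1.6]{Lidman2016TheEO} to the $\Z_2$-equivariant setting, using a straight-line homotopy of perturbations together with the continuation invariance of the equivariant Conley index. Recall from Subsection~\ref{sub:Construction in W^I} that the perturbed Coulomb vector field is $\cX^{gC}_\frakq = l + c + \eta_\frakq$ with $\eta_\frakq = \Pigc_*\frakq$, and that both $c$ and $\eta_\frakq$ are $\Z_2$-equivariant self-maps of $W^I$ --- the latter because $\frakq$ is the formal gradient of a cylinder function assembled from $I$-invariant forms and spinors. For $t\in[0,1]$ set $\eta_t = c + t\,\eta_\frakq$, so that $l+\eta_0$ is the unperturbed flow underlying $\mathit{SWF}_{\Z_2}(Y,\iota,\s)$ and $l+\eta_1$ is the perturbed flow underlying $\mathit{SWF}_{\Z_2,\frakq}(Y,\iota,\s)$. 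Since the family $\{t\frakq : t\in[0,1]\}$ consists of very tame perturbations (these conditions are preserved under scaling by $t\in[0,1]$), \cite[Proposition 6.1.5]{Lidman2016TheEO} in its $W^I$ form shows that each $\eta_t$ is a $\Z_2$-equivariant very compact map, and the family is continuous in $t$.

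The first step is a uniform compactness statement: there is a single radius $R>0$ such that every finite-type trajectory of $l+\eta_t$ in $W^I_k$ lies in $B(R)$, simultaneously for all $t\in[0,1]$. This follows from the a priori energy and $C^0$ estimates for finite-type trajectories of the perturbed three-dimensional Seiberg--Witten flow, which are uniform over the compact family of tame perturbations $\{t\frakq : t\in[0,1]\}$; restricting to $W^I$ does not affect them. Because $[0,1]$ is compact, Proposition~\ref{prop:flow lines in B(R)}, applied in $W^I$, then upgrades to a statement uniform in $t$: there is $\lambda_0$ so that for every $\lambda\ge\lambda_0$ and every $t$, any trajectory of $l+p^\lambda\eta_t$ that remains in $\overline{B(2R)}$ is contained in $B(R)$. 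Hence, for such $\lambda$, the set $\overline{B(2R)}\cap (W^I)^\lambda$ is a common isolating neighborhood for the $\Z_2$-invariant isolated invariant sets $S^\lambda_t$ of the flows $l+p^\lambda\eta_t$.

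Now fix $\lambda\ge\lambda_0$. By the continuation property of the $\Z_2$-equivariant Conley index --- invariance of the index under a continuous deformation of the flow through flows sharing a fixed isolating neighborhood (see \cite[Section 2.4]{Lidman2016TheEO}, \cite{Pruszko1999}) --- the equivariant Conley indices $I^\lambda_t$ are mutually $\Z_2$-equivariantly homotopy equivalent; in particular $I^\lambda_0\simeq_{\Z_2}I^\lambda_1$. The desuspension data $\dim(V^0_{-\lambda})$, $\dim(U^0_{-\lambda})$ and the metric correction term $n^R(Y,\iota,\s,g)$ depend only on $l$, $\lambda$ and $g$, not on $t$, so the two formal desuspensions agree up to $\Z_2$-equivariant stable equivalence. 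Finally, invoking Proposition~\ref{prop:properties of very compact maps}(2)--(3) --- independence of the equivariant Floer spectrum of the choice of $\lambda$ --- we conclude that $\mathit{SWF}_{\Z_2,\frakq}(Y,\iota,\s)$ and $\mathit{SWF}_{\Z_2}(Y,\iota,\s)$ are $\Z_2$-equivariantly stably homotopy equivalent, irrespective of the cut-off levels used in their respective definitions.

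The main obstacle is the uniform compactness statement above: one must check that the bound $R$, and then the threshold $\lambda_0$, can be chosen independently of $t$, i.e.\ uniformly over the one-parameter family $\{t\frakq\}$, rather than for a single fixed perturbation. This, however, follows directly from the form of the estimates in \cite{Kronheimer_Mrowka_2007} and \cite{li2022monopolefloerhomologyreal}, which depend only on bounds for the perturbation and its derivatives over a compact set of tame perturbations; the remainder of the argument is formal, takes place entirely inside $W^I$ with $\Z_2$-equivariant data, and introduces nothing beyond \cite[Section 6.1]{Lidman2016TheEO}.
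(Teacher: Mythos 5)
Your argument is the same one the paper gives, just spelled out in more detail: interpolate linearly from $0$ to $\frakq$ (so that each intermediate $t\frakq$ is a very tame perturbation projecting to a very compact map), verify that a single isolating ball and cut-off level work uniformly in $t$, and then invoke continuation invariance of the $\Z_2$-equivariant Conley index together with $\lambda$-independence from Proposition~\ref{prop:properties of very compact maps}. The paper simply references the analogous metric-independence argument of Konno--Taniguchi rather than writing out the uniform compactness step, but the route is the same.
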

\begin{proof}
    This is the same as the proof of metric independence in \cite[Section 3]{Konno_Miyazawa_Taniguchi_2025}. We can interpolate between $0$ and $\frakq$ in such a way that the while family all fits into Proposition \ref{prop:properties of very compact maps}. Then we use the fact that the Conley index is invariant under perturbation. 
\end{proof}
\subsection{Outline of the proof}\label{sub:Outline of the proof}

We will work with $\mathit{SWF}_{\Z_2,\frakq}$ that is closer to the real monopole Floer homology due to the existence of perturbation. Now, we still need to relate $l+c_{\frakq}$ to $l+p^{\lambda}c_{\frakq}$ on $W^{\lambda,I}$. We shall consider a vector field on $W_k$ defined by taking finite-dimensional approximation of the non-linear part of $\cX_{\frakq}$: \[\cX^{gC}_{\frakq^{\lambda}}=l+p^{\lambda}c_{\frakq}=l+c+\eta_{\frakq}^{\lambda},\]
where $\eta_{\frakq}^{\lambda}=p^{\lambda}c_{\frakq}-c$. It was shown that $\eta_{\frakq}^{\lambda}$ is a very compact map $W\to W$. Since it is equivariant for $I$ (recall that we use $\frakq$ defined by pairing with invariant forms and spinors), it restricts to a very compact map $W^{I}\to W^{I}$. $\cX^{gC}_{\frakq^{\lambda}}$ induces $\cX^{gC,\sigma}_{\frakq^{\lambda}}$ on the blow-up $W^{\sigma,I}$ and can be further pushed forward to $W^{\sigma,I}/\Z_2$.

With the notations ready, we now outline a proof for Theorem \ref{thm:main theorem}, which is a straightforward modification of \cite[Section 6.2]{Lidman2016TheEO}.

Let $\frC_{[-N,N]}$ be the set of stationary points of $\xagcsq$ in $W^{\sigma,I}/\Z_2$ with grading in the range $[-N,N]$. Their $\Z_2$-orbits form the set of stationary points of $\xgcsq$ in $W^{\sigma,I}$. Let $\frD_{[-N,N]}$ be the union of their orbits. By the compactness result in \cite{li2022monopolefloerhomologyreal}, we can choose $N$ large enough so that the projection of $\frD_{[-N,N]}$ to $W^I$ contains all the stationary points of $\cX^{gC}_{\frakq}$. This implies that $\frC_{[-N,N]}$ contains all the irreducibles. We may further assume that all boundary stable reducibles have gradings bigger than $-N$ by choosing $N$ large enough. Under these assumptions, all generators of $\widecheck{CMR}_{\le N}(Y,\iota,\s,\frakq)$ lie in $\frC_{[-N,N]}$. Note that its homology agrees with $\widecheck{\mathit{HMR}}$ at degrees less than $N$.

Let \[\cN=\{x\in W^{\sigma,I}_k|d_{L^2_k}(x,\frD_{[-N,N]})\le 2\delta\},\] where $d_{L^2_k}$ denotes the $L^2_k$ distance and $\delta>0$ is sufficiently small so that the only stationary points in $\cN$ are those in $\frD_{[-N,N]}$. Similarly, let \[\cU=\{x\in W^{\sigma,I}_k|d_{L^2_k}(x,\frD_{[-N,N]})< \delta\}\subset \cN.\] Then $\cN/\Z_2$ and $\cU/\Z_2$ are closed and open neighborhoods of $\frC_{[-N,N]}$ in $W^{\sigma,I}/\Z_2$.

Using this, we construct a chain complex $\widecheck{CMR}^{\lambda}$ determined by $\xagcsql$. It has stationary points of $\xagcsql$ in $\cN/\Z_2$ as generators and its differential is defined analogously to the original real monopole Floer homology by counting invariant trajectories. This chain complex actually comes from the Morse-Smale quasi-gradient flow of $\xagcsql$ for $\lambda$ sufficiently large. We will see an explicit identification between $\widecheck{CMR}^{\lambda}$ and $\widecheck{CMR}_{\le N}(Y,\iota,\s,\frakq)$ with the help of the inverse function theorem. Using the description of  $\widecheck{CMR}_{\le N}(Y,\iota,\s,\frakq)$ in the global Coulomb slice, we shall also be able to identify gradings and $v$-actions.

On the other hand, $\widecheck{CMR}^{\lambda}$ can also be identified with a truncation of a Morse complex for $B(2R)\cap W^{\lambda,I}$. Then as in Subsection \ref{sub:Morse homology and Morse quasi gradient flow}, the homology of this Morse complex is isomorphic to $\widetilde{H}^{\Z_2}_{\le M}(\mathit{SWF}_{\Z_2,\frakq}(Y,\iota,\s))$ for some $M$. By taking a sufficiently large $\lambda$, we can assume $M>N$ and obtain \[\widecheck{\mathit{HMR}}_{\le N-1}(Y,\iota,\s)\cong \widetilde{H}^{\Z_2}_{\le N-1}(\mathit{SWF}_{\Z_2,\frakq}(Y,\iota,\s)). \] Finally, we show in Subsection \ref{sub:Main thm} that $M$ and $N$ tend to $\infty$ when $\lambda$ does. This concludes the proof.

We have remarked in Subsection \ref{sub:Construction in W^I} that all real configuration spaces or slices can be embedded into the original ones, as well as their blow-ups. In view of this, the results in \cite[Section 6.3]{Lidman2016TheEO} work for us without change. So we won't repeat the results from that section and we will quote them when needed. 
 
\subsection{Convergence of stationary points}\label{sub:Convergence of stationary points}
Starting from now, we shall fix the following:
\begin{itemize}
    \item a very tame, admissible perturbation $\frakq$;
    \item a Sobolev index $k\ge 5$;
    \item a radius $R>0$, so that all real stationary points and finite type trajectories live in $B(2R)\subset W^{I}_k$;
    \item a number $N>0$ and closed/ open subset $\cN$/ $\cU$ as in Subsection \ref{sub:Outline of the proof}. We assume additionally that projection of $\cN$ in $W_{k}^{I}$ is contained in $B(2R)$, and $N$ is large enough to contain each reducible critical point $(a,0,\phi)$ of $\xagcsq$, where $\phi$ is the unit eigenvector of $D_a$ with smallest positive eigenvalue.
    \item a strict bound $\omega$ on the absolute value of spinorial energy of points in $\cN$.
    
\end{itemize}
\subsubsection{Convergence and Stability}

By applying Lemma \ref{lem:Compactness of trajectories with varying lambda} to the constant trajectories, we have the following:
\begin{lem}\label{lem:compactness of stationary point on blow-down}
    Suppose $x_n$ is a sequence of stationary points of $\xgcqln$ in $B(2R)\subset W^{I}_k$, where $\lambda_n\to \infty$. Then, there is a subsequence that converges to $x\in W^{I}_k$, a stationary point of $\xgcq$. If all $x_n$ are reducible, then $x$ is reducible.
\end{lem}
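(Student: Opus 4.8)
The plan is to realise each stationary point as a constant Seiberg--Witten trajectory on the cylinder and then invoke the compactness result for finite-dimensional approximations, Lemma \ref{lem:Compactness of trajectories with varying lambda}. Indeed, a point $x\in W^I_k$ is a zero of $\xgcqln=l+p^{\lambda_n}c_{\frakq}$ precisely when the constant path $\gamma_n\colon\R\to W^I_k$, $\gamma_n(t)\equiv x_n$, is a trajectory of $l+p^{\lambda_n}c_{\frakq}$ over $Y\times\R$. So from the hypothesised $x_n$ we obtain constant trajectories $\gamma_n$ of $l+p^{\lambda_n}c_{\frakq}$, all lying in $\overline{B(2R)}\subset W^I_k$.

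To apply the lemma I would first put ourselves under the hypotheses of Proposition \ref{prop:properties of very compact maps}: take the very compact map $\eta=c_{\frakq}\colon W^I\to W^I$ (a sum of very compact maps, hence very compact), together with $N=\overline{B(3R)}$ and $U=B(2R)$. By the choice of $R$ (and Proposition \ref{prop:flow lines in B(R)}), every finite-type trajectory of $l+c_{\frakq}$ is contained in $B(2R)=U$, so the confinement hypothesis of Proposition \ref{prop:properties of very compact maps} holds; and the $\gamma_n$ lie in $\overline{B(2R)}\subset N$. Hence Lemma \ref{lem:Compactness of trajectories with varying lambda}, applied with the interval $I=\R$, yields a subsequence $\gamma_{n_j}$ whose restriction to every compact subcylinder $Y\times I'$ converges in $C^\infty$ to a trajectory $\gamma_\infty$ of $l+c_{\frakq}=\xgcq$.

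Being a $C^\infty_{\mathrm{loc}}$-limit of $t$-independent configurations, $\gamma_\infty$ is itself $t$-independent, say $\gamma_\infty(t)\equiv x$; thus $x$ is a stationary point of $\xgcq$. Restricting the convergence to a single time slice $Y\times\{0\}$ gives $x_{n_j}\to x$ in $C^\infty(Y)$, hence in particular in $L^2_k$, so $x\in W^I_k$ (indeed $x\in W^I$, the invariance condition being closed). If, moreover, every $x_n$ is reducible, then its spinor component vanishes, and since spinor components converge the spinor component of $x$ vanishes as well, so $x$ is reducible.

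I do not expect a substantive obstacle: the argument is just a specialisation of Lemma \ref{lem:Compactness of trajectories with varying lambda} to constant trajectories, and the only point needing any care is the verification that the fixed ball $\overline{B(2R)}$ meets the confinement hypothesis of Proposition \ref{prop:properties of very compact maps}, namely that finite-type trajectories of the limiting flow cannot reach the boundary of $N$; this is immediate from the choice of $R$ together with Proposition \ref{prop:flow lines in B(R)}. The remaining steps, passing from $C^\infty_{\mathrm{loc}}$-convergence of four-dimensional configurations to convergence of the three-dimensional points, observing that the limit remains constant, and noting the persistence of a vanishing spinor, are formal.
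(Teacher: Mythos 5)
Your proof is correct and is exactly the paper's argument: the paper proves this lemma in one line by "applying Lemma \ref{lem:Compactness of trajectories with varying lambda} to constant trajectories," and your proposal simply spells out the routine verifications (confinement hypothesis via the choice of $R$, constancy of the $C^\infty_{\mathrm{loc}}$ limit, persistence of a vanishing spinor).
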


Now we want to reproduce this result on the blow-up configuration space, which requires properties of the controlled Coulomb perturbations.
\begin{lem} (\cite[Lemma 7.1.2]{Lidman2016TheEO})
    Fix $\epsilon>0$. There is some $b\gg 0$ such that for all $\lambda>b$, we have: If $x\in \cN\subset W^{\sigma,I}$ is a zero of $\xgcsql$, then there exists $x'\in \cN$ such that $\xgcsq(x')=0$ and $x,x'$ have $L^2_k$ distance at most $\epsilon$ in $L^{2}_k(Y;iT^{*}Y)^{-\iota^*}\oplus \R\oplus L^2_k(Y;\bS)^{\tau}$.
\end{lem}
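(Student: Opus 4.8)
The statement is a "stability on the blow-up" lemma: a zero of the $\lambda$-approximated blown-up vector field $\xagcsql$ that lies in $\cN$ is $L^2_k$-close to an exact zero $\xagcsq(x')=0$ in $\cN$, provided $\lambda$ is large. I would follow the proof of \cite[Lemma 7.1.2]{Lidman2016TheEO} verbatim, checking at each step that the $\Z_2$-equivariant/invariant setup causes no difficulty. The argument is by contradiction: suppose not, so there are $\lambda_n\to\infty$ and $x_n\in\cN$ with $\xagcsqln(x_n)=0$, but every zero $x'$ of $\xagcsq$ in $\cN$ has $L^2_k$-distance $>\epsilon$ from $x_n$.

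First I would pass to the blow-down. Write $x_n=(a_n,s_n,\phi_n)$. Since zeros of $\xagcsql$ project (via blow-down, after clearing $s$) to zeros of the blown-down vector field $\xgcql$ on $B(2R)\subset W^I_k$ — this is the blow-up/blow-down correspondence recorded in Proposition \ref{prop:zeros on the blow up} and its Coulomb-slice analogue in Subsection \ref{sub:Construction in W^I} — Lemma \ref{lem:compactness of stationary point on blow-down} gives, after passing to a subsequence, convergence of the blow-downs to a zero of $\xgcq$ in $W^I_k$, with reducibility preserved in the reducible case. The content that is genuinely at the blow-up level is the spinor direction: one must control $\phi_n$ (a unit vector in $L^2_k$) and $s_n\ge 0$. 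Here I would use that $\xagcsql$ in the spinor slot is governed by the family of operators $\widetilde{(\cX^{gC}_{\frakq^\lambda})}{}^1$ and the spinorial energy $\Lambda_{\frakq^\lambda}$; since $p^\lambda\to\mathrm{id}$ in the relevant operator topology on the controlled Coulomb perturbation (this is exactly the "very compact" input, Proposition \ref{prop:properties of very compact maps} and the surrounding discussion), elliptic bootstrapping along $Y$ upgrades weak convergence of $\phi_n$ to $L^2_k$-convergence, and $s_n\to s$ in $[0,\infty)$. The limit $x=(a,s,\phi)$ is then a zero of $\xagcsq$; it lies in $\cN$ because $\cN$ is closed and $x_n\in\cN$.

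Next I would record why the limit $x$ is \emph{admissible as the sought $x'$} — i.e. genuinely in $\cN$, with $L^2_k$-distance from $x_n$ eventually $<\epsilon$ — which contradicts the assumption. Convergence in $L^2_k$ is precisely the statement that $d_{L^2_k}(x_n,x)\to 0$; the only subtlety is that the distance is measured in $L^2_k(Y;iT^*Y)^{-\iota^*}\oplus\R\oplus L^2_k(Y;\bS)^\tau$ rather than inside $W^{\sigma,I}_k$, but since the blow-up configuration space is a submanifold cut out by $\{s\ge 0,\ \|\phi\|_{L^2}=1\}$ and both $x_n$ and $x$ satisfy these, ambient $L^2_k$-closeness is all that is needed. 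Invariance under $\iota^*$, $\tau$ is preserved under $L^2_k$-limits, so $x$ indeed lands in the real (invariant) slice; the $\Z_2$-action plays no role here since we are on $W^{\sigma,I}$ before quotienting.

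\textbf{Main obstacle.} The genuinely delicate point is the elliptic-regularity upgrade in the spinor direction: to conclude $\phi_n\to\phi$ in $L^2_k$ (not merely weakly, or in a lower Sobolev norm) one must know that the finite-dimensional cut-offs $p^{\lambda_n}c_\frakq$ converge to $c_\frakq$ with sufficient uniformity that the $L^2_k$ a priori bounds on $\phi_n$ coming from the equation $\xagcsqln(x_n)=0$ do not degenerate as $n\to\infty$. This is exactly the place where the "controlled Coulomb perturbation" and "very compact" hypotheses are used, and in the real case it is the observation that an $I$-equivariant very compact map restricts to a very compact map $W^I\to W^I$ (stated in Subsection \ref{sub:Outline of the proof}) that lets the Lidman--Manolescu--Mrowka argument go through unchanged. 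I expect no new ideas are required beyond transcribing their proof into the invariant setting, with the harmonic-average simplifications already noted in Subsection \ref{sub:Construction in W^I}.
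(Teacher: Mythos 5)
Your proposal is correct and follows essentially the same route as the paper, which likewise argues by contradiction using Lemma \ref{lem:compactness of stationary point on blow-down} for the blow-down together with the spinorial-energy control inherited from \cite[Section 6.3]{Lidman2016TheEO} to handle the $(s,\phi)$ directions. The only cosmetic point is that the statement concerns $\xgcsql$ on $W^{\sigma,I}$ rather than $\xagcsql$ on the quotient, but as you note the $\Z_2$-action plays no role before quotienting.
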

\begin{proof}
    This can be proved (by contradiction) in exactly the same way as \cite[Lemma 7.1.2]{Lidman2016TheEO} by a combined use of Lemma \ref{lem:compactness of stationary point on blow-down} and energy control from \cite[Section 6.3]{Lidman2016TheEO}.
\end{proof}

Using this, we have the following:\begin{cor}
    For $\lambda\gg 0$, if $x$ is a zero of $\xgcsql$ in $\cN$, then $x$ is actually in $\cU$ and $\vert \Lambda_{\frakq^\lambda} (x)\vert\le \omega$. Here, $\Lambda_{\frakq^\lambda}$ is defined using the same formula as $\Lambda_{\frakq}$, see \cite[Equation 166]{Lidman2016TheEO}.
\end{cor}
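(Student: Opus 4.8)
The goal is the corollary following Lemma~7.1.2: for $\lambda$ large, any zero $x$ of $\xgcsql$ in $\cN$ lies in $\cU$, and moreover $|\Lambda_{\frakq^\lambda}(x)|\le\omega$. The plan is to combine the approximation statement of the preceding lemma with the separation properties built into the definitions of $\cN$ and $\cU$, plus a convergence argument for the spinorial energy.

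\medskip

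\emph{First step (containment in $\cU$).} Fix $\epsilon>0$ smaller than $\delta/2$, where $\delta$ is the constant from Subsection~\ref{sub:Outline of the proof} defining $\cN$ and $\cU$. By the preceding lemma, for $\lambda\gg 0$ every zero $x$ of $\xgcsql$ in $\cN$ admits $x'\in\cN$ with $\xgcsq(x')=0$ and $d_{L^2_k}(x,x')\le\epsilon$. Since $\delta$ was chosen so that the only stationary points of $\xgcsq$ in $\cN$ are those in $\frD_{[-N,N]}$, we have $x'\in\frD_{[-N,N]}$, hence $d_{L^2_k}(x,\frD_{[-N,N]})\le\epsilon<\delta$, which is exactly the condition $x\in\cU$. (One should note that the distance function in the blow-up is the one recorded among our fixed data, on $L^2_k(Y;iT^*Y)^{-\iota^*}\oplus\R\oplus L^2_k(Y;\bS)^\tau$, matching the lemma.)

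\medskip

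\emph{Second step (bound on spinorial energy).} Here I would argue by contradiction, paralleling the proof of Lemma~\ref{lem:compactness of stationary point on blow-down}. Suppose no such $\lambda$ exists: then there is a sequence $\lambda_n\to\infty$ and zeros $x_n=(a_n,s_n,\phi_n)$ of $\xgcsqln$ in $\cN$ with $|\Lambda_{\frakq^{\lambda_n}}(x_n)|>\omega$. Apply Lemma~\ref{lem:Compactness of trajectories with varying lambda} to the constant trajectories at $x_n$: after passing to a subsequence, $x_n$ converges in $W^{\sigma,I}_k$ (by the energy-control estimates from \cite[Section 6.3]{Lidman2016TheEO} this upgrades to genuine $L^2_k$ convergence on the blow-up, not merely on the blow-down) to a stationary point $x_\infty=(a_\infty,s_\infty,\phi_\infty)$ of $\xgcsq$ in $\cN$, hence $x_\infty\in\frD_{[-N,N]}$. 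Now $\Lambda_{\frakq^\lambda}$ is given by the same formula as $\Lambda_{\frakq}$ with $c_{\frakq}$ replaced by $p^\lambda c_{\frakq}$; since $p^\lambda c_{\frakq}\to c_{\frakq}$ in the relevant operator norm and $\Lambda$ depends continuously on the configuration and on this nonlinear term, $\Lambda_{\frakq^{\lambda_n}}(x_n)\to\Lambda_{\frakq}(x_\infty)$. But by our choice of $\omega$ as a strict bound on $|\Lambda_{\frakq}|$ over $\cN$ (equivalently over $\frD_{[-N,N]}$), we have $|\Lambda_{\frakq}(x_\infty)|<\omega$, contradicting $|\Lambda_{\frakq^{\lambda_n}}(x_n)|>\omega$ for all $n$. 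This yields the desired $\lambda$.

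\medskip

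\emph{Main obstacle.} The delicate point is upgrading the convergence supplied by Lemma~\ref{lem:Compactness of trajectories with varying lambda} — stated as $C^\infty$ convergence on compact subintervals of the cylinder — to $L^2_k$ convergence of the stationary points themselves on the blow-up, together with convergence of the blow-up data $(s_n,\phi_n)$ rather than just the blow-down $s_n\phi_n$. This is exactly the subtlety already handled in the proof of Lemma~\ref{lem:compactness of stationary point on blow-down} and Lemma~7.1.2 via the controlled-Coulomb-perturbation estimates and the energy control of \cite[Section 6.3]{Lidman2016TheEO}; one invokes those same ingredients here. The continuity of $\Lambda$ in the perturbation parameter is then routine, since $\Lambda_{\frakq^\lambda}$ is a fixed algebraic expression in $x$ and the compact operator $p^\lambda c_{\frakq}$, and the latter converges uniformly on bounded sets.
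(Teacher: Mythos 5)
Your proposal is correct, and the first step is exactly the argument the paper leaves implicit: the preceding lemma produces $x'\in\frD_{[-N,N]}$ with $d_{L^2_k}(x,x')\le\epsilon<\delta$, whence $x\in\cU$. For the second step, however, you take a detour. The cleaner route — which is what ``Using this'' signals — is to reuse the same lemma once more: since $\frD_{[-N,N]}$ is finite and $\omega$ was fixed as a \emph{strict} bound on $|\Lambda_\frakq|$ over $\cN$, one has $\sup_{\frD_{[-N,N]}}|\Lambda_\frakq|<\omega$ with a definite gap; continuity of $\Lambda_\frakq$ in the $L^2_k$ blow-up variable, together with the uniform-on-bounded-sets convergence $\Lambda_{\frakq^\lambda}\to\Lambda_\frakq$ coming from $p^\lambda c_\frakq\to c_\frakq$, then closes the estimate once $\epsilon$ is small and $\lambda$ is large. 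Instead you re-derive the needed blow-up compactness from scratch via Lemma~\ref{lem:Compactness of trajectories with varying lambda} and the Section~6.3 energy control — but that is essentially the proof of the preceding lemma itself, so you are duplicating work already packaged for you. The contradiction argument is nonetheless valid, and you correctly identify all the substantive ingredients (finiteness of $\frD_{[-N,N]}$, strictness of $\omega$, continuity of $\Lambda$ in the configuration and in the nonlinear term, and $p^\lambda\to 1$); the only criticism is one of economy, not correctness.
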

\begin{cor}
    For $\lambda\gg 0$, all the stationary points of $\xgcsql$ in $\cN$ live inside the finite-dimensional blow-up $(W^{\lambda,I})^{\sigma}$. 
\end{cor}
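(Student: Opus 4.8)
The plan is to read off the conclusion directly from the defining equations of a stationary point, exploiting the special shape $\xgcql = l + p^{\lambda} c_{\frakq}$ of the approximated vector field: the nonlinear term $p^{\lambda} c_{\frakq}$ lands in the finite-dimensional subspace $W^{\lambda,I}$, while the linear Fredholm operator $l$ is left untouched. The key linear-algebra input is that the $L^2$-orthogonal splitting $W^{I} = W^{\lambda,I}\oplus (W^{\lambda,I})^{\perp}$ is preserved by $l$ (since $W^{\lambda,I}$ is spanned by eigenvectors of the self-adjoint operator $l$), and that $l$ restricted to $(W^{\lambda,I})^{\perp}$ is injective, all of its eigenvalues there having absolute value strictly greater than $\lambda$. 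I would fix $\lambda$ large enough that the previous corollary applies and, in addition, $\lambda > \omega$; then for a stationary point $x = (a,s,\phi)\in \cN$ of $\xgcsql$ the previous corollary gives $x\in\cU$ and $\lvert\Lambda_{\frakq^{\lambda}}(x)\rvert\le\omega$, and I would treat the irreducible and reducible cases separately.

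In the irreducible case $s>0$, the $\frakq^{\lambda}$-analogue of Proposition~\ref{prop:zeros on the blow up} shows that $\Lambda_{\frakq^{\lambda}}(x)=0$ and that the blow-down $(a,s\phi)\in W^{I}$ is a stationary point of $\xgcql = l + p^{\lambda} c_{\frakq}$, so $l(a,s\phi) = -p^{\lambda} c_{\frakq}(a,s\phi)\in W^{\lambda,I}$. Decomposing $(a,s\phi)$ along $W^{\lambda,I}\oplus (W^{\lambda,I})^{\perp}$ and using that $l$ preserves both summands, the component of $l(a,s\phi)$ in $(W^{\lambda,I})^{\perp}$ equals $l$ applied to the $(W^{\lambda,I})^{\perp}$-part of $(a,s\phi)$ and must therefore vanish; injectivity of $l$ there forces $(a,s\phi)\in W^{\lambda,I}$, hence $(a,\phi)\in W^{\lambda,I}$ and $x\in (W^{\lambda,I})^{\sigma}$.

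In the reducible case $s=0$, the equation $\xgcsql(a,0,\phi)=0$ says that $(a,0)$ is a reducible stationary point of $\xgcql$ --- whence $a\in W^{\lambda,I}$ by the argument just given applied to the form part --- and that the unit spinor $\phi$ is an eigenvector of the perturbed Dirac operator $\widetilde{(\cX^{gC}_{\frakq^{\lambda}})}^{1}(a,0,\cdot)$ with eigenvalue $\Lambda_{\frakq^{\lambda}}(a,0,\phi)$, which by the previous corollary has absolute value $\le\omega$. By the explicit formula for the blow-up vector field this operator has the form $D + p^{\lambda} B_{a}$ for a bounded operator $B_{a}$ (the linearization of the projected nonlinear spinorial part), the point being that $D$ itself is \emph{not} projected. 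Decomposing $\phi = \phi_{<}+\phi_{>}$ along $W^{\lambda,I}\oplus(W^{\lambda,I})^{\perp}$, the $(W^{\lambda,I})^{\perp}$-component of the eigenvector equation reads $D\phi_{>} = \Lambda_{\frakq^{\lambda}}(a,0,\phi)\,\phi_{>}$; since every eigenvalue of $D$ on $(W^{\lambda,I})^{\perp}$ has absolute value greater than $\lambda>\omega\ge\lvert\Lambda_{\frakq^{\lambda}}(a,0,\phi)\rvert$, we conclude $\phi_{>}=0$, i.e.\ $\phi\in W^{\lambda,I}$ and $x\in (W^{\lambda,I})^{\sigma}$.

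This is the $I$-invariant restriction of the argument in \cite[Section~7.1]{Lidman2016TheEO} and uses no analytic input beyond the previous two corollaries. I expect the only delicate point to be the reducible case: one must verify that, at a reducible stationary point, the perturbed Dirac operator differs from the bare $D$ only by a $p^{\lambda}$-projected term --- so that its high-frequency behaviour is governed by $D$ alone --- and then feed in the bound $\omega$ on the spinorial energy together with the choice $\lambda>\omega$. Everything else is bookkeeping on the eigenspace decomposition of $l$.
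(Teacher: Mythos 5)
Your argument is correct and is exactly the argument the paper implicitly relies on (it is the $I$-invariant restriction of \cite[Corollary~7.1.4]{Lidman2016TheEO}): blow down in the irreducible case and use that $l$ respects the $L^2$-eigenspace splitting $W^{I}=W^{\lambda,I}\oplus(W^{\lambda,I})^{\perp}$ with $l$ injective on the complement, and in the reducible case combine the explicit form $D_{\frakq^{\lambda},a}=D+(p^{\lambda})^{1}\cD_{(a,0)}c_{\frakq}(0,\cdot)$ with the spinorial-energy bound $\lvert\Lambda_{\frakq^{\lambda}}\rvert\le\omega<\lambda$ from the preceding corollary. You have correctly isolated the one delicate point (that only the $D$-part of the perturbed Dirac operator survives projection to $(W^{\lambda,I})^{\perp}$), which is precisely why the conclusion requires $x\in\cN$ rather than holding for all stationary points in $B(2R)^{\sigma}$.
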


As in \cite[Section 7.2]{Lidman2016TheEO}, we list the eigenvalues of $l$ (with multiplicities) by $\lambda_n$ according to their absolute values and pick a homeomorphism $f: (0,\infty]\to [0,1)$ with the following properties:
\begin{enumerate}
    \item $f|_{(0,\infty)}$ is a strictly decreasing and takes value in $(0,1)$.
    \item $\lim_{n\to \infty}\vert \lambda_{n}\vert^2 f(\vert \lambda_{n+1}\vert) =\infty$.
\end{enumerate}
This allows us to analyze the change of $\lambda$ from a different point of view.

\begin{lem} (\cite[Lemma 7.2.1]{Lidman2016TheEO})
The map \[h:W_{k}^I \times (-1,1)\to W^I_{k-1},\text{ } h(x,r)=x-p^{f^{-1}(\vert r\vert)}(x)\]   is continuously differentiable with $\cD h_{(x,0)}(0,1)=0$ for all $x$. 
\end{lem}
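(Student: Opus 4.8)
The statement is the real analogue of \cite[Lemma 7.2.1]{Lidman2016TheEO}, so the plan is to adapt that argument verbatim, checking only that each step restricts compatibly to the $I$-invariant subspaces. Recall that $p^\lambda: W\to W^\lambda$ was chosen to be a smooth family of projections (smooth in $\lambda$), and that for each genuine eigenvalue-threshold $\lambda$ the map $p^\lambda$ agrees with the honest $L^2$ orthogonal projection onto the span of eigenvectors of $l$ with eigenvalue in $(-\lambda,\lambda)$. Since $l$ is the restriction to $W^I$ of the self-adjoint operator $*d\oplus D$ on $W$, which commutes with $I$, each eigenspace of $l$ is $I$-invariant; hence $p^\lambda$ preserves $W^I$ and restricts to the corresponding projection $W^I\to W^{\lambda,I}$. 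So the formula $h(x,r)=x-p^{f^{-1}(|r|)}(x)$ makes sense as a map $W^I_k\times(-1,1)\to W^I_{k-1}$, and it is literally the restriction of the map $h$ from \cite[Lemma 7.2.1]{Lidman2016TheEO}.

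The key points to verify are: (i) $h$ extends continuously and $C^1$ across $r=0$; and (ii) the partial derivative in the $r$-direction at $r=0$ vanishes. For (i), note that $f^{-1}(|r|)\to\infty$ as $r\to 0$, so $p^{f^{-1}(|r|)}(x)\to x$ in $W^I_k$, hence $h(x,r)\to 0$ in $W^I_{k-1}$; continuity away from $r=0$ is immediate from smoothness of the projection family. The $C^1$ statement and the vanishing of $\cD h_{(x,0)}(0,1)$ are exactly where the two defining properties of $f$ are used: the difference quotient in $r$ is controlled because $x-p^\mu(x)$ has $W^I_{k-1}$-norm bounded by (a constant times) $|\lambda_{n+1}|^{-2}\,\|x\|_{W^I_k}$ once the threshold $\mu=f^{-1}(|r|)$ has passed the $n$-th eigenvalue, while $|r|=f(\mu)\ge f(|\lambda_{n+1}|)$ up to the next jump, and property (2), $|\lambda_n|^2 f(|\lambda_{n+1}|)\to\infty$, forces the ratio (norm of $h$)/$|r|$ to tend to $0$. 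This is precisely the estimate carried out in \cite{Lidman2016TheEO}, and it involves only the linear operator $l$ and the projections $p^\mu$, both of which restrict to $W^I$; there is no gauge-theoretic input, so nothing new is needed.

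I do not expect a genuine obstacle here: the whole content is linear-algebraic and metric-analytic on the fixed Hilbert space $W^I$, and every object in sight ($l$, its eigenspaces, the projection family $p^\mu$, the metric $\tg$) is $I$-invariant by the choice of an equivariant metric on $Y$. The one place to be slightly careful is making sure the chosen smooth family $p^\lambda$ on $W$ was picked $I$-equivariantly; since the eigenspaces are $I$-invariant one may always average over $\Z_2$ (or simply take the modification to be supported on $I$-invariant spectral projections), so this costs nothing. Concretely, then, the proof reads: the map $h$ is the restriction of the map $h$ of \cite[Lemma 7.2.1]{Lidman2016TheEO} to the closed subspace $W^I_k\times(-1,1)$, all of whose ingredients preserve $W^I$, so the conclusion follows from the non-equivariant statement together with the observation that restriction of a $C^1$ map to a closed linear subspace is $C^1$ with the restricted derivative.
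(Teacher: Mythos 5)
Your proposal is correct and matches the paper's approach: the paper gives no proof for this lemma beyond citing \cite[Lemma 7.2.1]{Lidman2016TheEO}, the implicit justification being exactly your observation that $l$, its eigenspaces, and the projection family $p^\lambda$ are all $I$-equivariant, so the map $h$ is the restriction of the non-equivariant one to the closed subspace $W^I_k\times(-1,1)$. Your remark that the smooth family $p^\lambda$ should be chosen (or averaged to be) $\Z_2$-equivariant is the one point worth making explicit, and you handle it correctly.
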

Let $[x_0]\in W^{\sigma,I}/\Z_2$ be a non-degenerate irreducible stationary point of $\xagcsq$. The non-degeneracy assumption implies that \[\cD_{[x_0]}(\xagcsq): \calK^{agC,\sigma,I}_{k,[x_0]}\to \calK^{agC,\sigma,I}_{k-1,[x_0]}\] is an invertible linear operator.
\begin{prop} (\cite[Proposition 7.2.2]{Lidman2016TheEO})
    Let $[x]\in W^{\sigma,I}/\Z_2$ be a non-degenerate irreducible stationary point of $\xagcsq$. Then for any small enough neighborhood $U_{[x]}$ of $[x]$ in $W^{\sigma,I}/\Z_2$ and large enough $\lambda$, there is a unique $[x_{\lambda}]\in U_{[x]}$ such that $\xagcsql([x_{\lambda}])=0$.
\end{prop}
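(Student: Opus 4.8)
The plan is to prove this as an application of the inverse function theorem, in the style of \cite[Proposition 7.2.2]{Lidman2016TheEO}, using the convergence results established just above. First I would set up the relevant Banach-space framework: near the non-degenerate irreducible stationary point $[x]$, choose a small neighborhood $U_{[x]}$ in $W^{\sigma,I}/\Z_2$ on which the blow-up and Coulomb-slice structures give us a smooth chart, and express $\xagcsq$ and $\xagcsql = l + p^{\lambda}c_{\frakq}$ (pushed to the quotient) as smooth maps between the relevant $L^2_k$ and $L^2_{k-1}$ completions of $\cK^{agC,\sigma,I}$. The key analytic input is that $\cD_{[x]}(\xagcsq)\colon \cK^{agC,\sigma,I}_{k,[x]}\to \cK^{agC,\sigma,I}_{k-1,[x]}$ is an invertible bounded linear operator — this is the non-degeneracy hypothesis, reformulated via Lemma \ref{lem:identification of non-degeneracy} and the fact that $\mathrm{Hess}^{\tg,\sigma}_{\frakq,x} = \cD^{\sigma}_{[x]}\xagcsq$.

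Next I would use the $C^1$-convergence of the approximated vector fields: as $\lambda \to \infty$, the maps $\xagcsql$ converge to $\xagcsq$ together with their first derivatives, uniformly on the closed neighborhood $\cN/\Z_2$ (this is exactly what the very-compactness of $\eta^{\lambda}_{\frakq} = p^{\lambda}c_{\frakq} - c$ buys us, via Proposition \ref{prop:properties of very compact maps} and the corollaries of the previous subsection, which already put the zeros of $\xgcsql$ inside $\cU$ and inside the finite-dimensional blow-up). Consequently, for $\lambda$ large, $\cD(\xagcsql)$ is close to $\cD(\xagcsq)$ in operator norm near $[x]$, hence still invertible with a uniform bound on the inverse. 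I would then apply a quantitative inverse/implicit function theorem: since $\xagcsq([x]) = 0$ and $\xagcsql([x])$ is small (by uniform convergence) while $\xagcsql$ is a contraction-type perturbation of an invertible linear map on a small ball around $[x]$, there is a unique zero $[x_{\lambda}]$ of $\xagcsql$ in $U_{[x]}$, and $[x_{\lambda}] \to [x]$. A subtle point worth spelling out is that $\xagcsql$ takes values in the finite-dimensional $W^{\lambda,I}$ while $\xagcsq$ does not; but on $\cN$ the genuine stationary points of $\xagcsql$ already lie in $(W^{\lambda,I})^{\sigma}$, so one can either run the fixed-point argument directly in the finite-dimensional slice or observe that $p^{\lambda}$ converges to the identity in the relevant operator topology so the off-slice components are controlled.

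The existence half of the argument is the routine part; the part demanding care is \emph{uniqueness} within $U_{[x]}$, i.e. ruling out spurious zeros of $\xagcsql$ that limit onto $[x]$ but are not the one produced by the contraction. Here I would argue by contradiction exactly as in \cite{Lidman2016TheEO}: if for a sequence $\lambda_n \to \infty$ there were two distinct zeros $[x_{\lambda_n}], [x'_{\lambda_n}] \in U_{[x]}$, then after rescaling the difference by its norm and passing to a subsequence (using the finite-dimensionality on $\cN$ together with the compactness in Lemma \ref{lem:compactness of stationary point on blow-down} to get convergence of both the points and the normalized difference vectors), one would produce a nonzero element of $\ker \cD_{[x]}(\xagcsq)$, contradicting invertibility. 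I expect this compactness-plus-rescaling step to be the main obstacle, since it requires carefully matching the finite-dimensional approximation with the blow-up structure and the Coulomb-slice metric $\tg$ — but all the ingredients (the $C^1$ bound $\cD h_{(x,0)}(0,1)=0$ from the preceding lemma, the energy control from \cite[Section 6.3]{Lidman2016TheEO}, and the fact that zeros eventually lie in $(W^{\lambda,I})^{\sigma}$) are already in place, so the argument of \cite[Proposition 7.2.2]{Lidman2016TheEO} transfers with only notational changes.
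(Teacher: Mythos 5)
Your proof is correct and takes essentially the same approach as the paper: the inverse function theorem hinging on the non-degeneracy of $[x]$ (so that $\cD_{[x]}\xagcsq = \mathrm{Hess}^{\tg,\sigma}_{\frakq,x}$ is invertible), using the convergence machinery built up in the preceding lemmas. The paper executes it somewhat more cleanly than you propose: rather than arguing for a uniform $C^1$ convergence of $\xagcsql$ on $\cN/\Z_2$ and then running a quantitative inverse function theorem followed by a separate uniqueness-by-contradiction, it uses the homeomorphism $f$ from the preceding lemma to package the whole family into a single $C^1$ vector field on $W^{\sigma,I}/\Z_2 \times (-1,1)$ (with the key property $\cD h_{(x,0)}(0,1) = 0$ supplying the tangency in the $r$-direction at $r = 0$), doubles the Banach manifold to neutralize the $\sigma$-blow-up boundary, and applies the inverse function theorem once, obtaining existence and uniqueness simultaneously. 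One point worth tightening in your write-up: literal operator-norm $C^1$ uniform convergence of $\xagcsql$ on $\cN/\Z_2$ is not what the preceding lemma gives, nor does it fall out of very-compactness for free; what the lemma actually gives is precisely the joint $C^1$ smoothness of the $f$-reparametrized family at $r=0$, which is the cleaner hypothesis for the IFT. If you phrase the convergence input in that form, your separate rescaling-and-contradiction step for uniqueness becomes unnecessary, since uniqueness is already built into the IFT on the parametrized family.
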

\begin{proof}
    As for \cite[Proposition 7.2.2]{Lidman2016TheEO}, the homeomorphism $f$ allows us to construct a vector field on $W^{\sigma,I}/\Z_2\times (-1,1)$. Then we double this Banach manifold and apply the inverse function theorem (with the help of non-degeneracy assumption) to prove the desired result.
\end{proof}

Now we introduce the notation $\frC_{\cN}$ as a short hand for $\frC_{[-N,N]}$, and $\frC_{\cN}^{\lambda}$ for the set of stationary points of $\xagcsql$ in $\cN/\Z_2$. The propositions above tell us:
\begin{cor}\label{cor:1 to 1 correspondence between stationary points of cutoff}
    For $\lambda\gg 0$, there is a one-to-one correspondence \[\Xi_{\lambda}: \frC_{\cN}^{\lambda}\to \frC_{\cN}.\] and this correspondence preserves the type of stationary points.
\end{cor}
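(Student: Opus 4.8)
## Proof Proposal for Corollary \ref{cor:1 to 1 correspondence between stationary points of cutoff}

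The plan is to assemble the one-to-one correspondence $\Xi_\lambda$ by combining the two results immediately preceding the corollary, namely the uniqueness/existence statement for approximate irreducible stationary points (the real analogue of \cite[Proposition 7.2.2]{Lidman2016TheEO}) and the corollary stating that for $\lambda \gg 0$ every stationary point of $\xgcsql$ in $\cN$ already lies in the finite-dimensional blow-up $(W^{\lambda,I})^\sigma$. First I would fix $N$ and $\cN$, $\cU$ as in the running hypotheses, so that $\frC_{\cN} = \frC_{[-N,N]}$ is a finite set (finiteness follows from non-degeneracy of the admissible perturbation $\frakq$ together with the compactness results of \cite{li2022monopolefloerhomologyreal}). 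Since every element of $\frC_{\cN}$ is non-degenerate by Lemma \ref{lem:identification of non-degeneracy}, the irreducible ones are covered by the preceding proposition: for each irreducible $[x]\in\frC_{\cN}$ choose a small neighborhood $U_{[x]}$, pairwise disjoint and each containing no other stationary point of $\xagcsq$, and for $\lambda$ large obtain a unique $[x_\lambda]\in U_{[x]}$ with $\xagcsql([x_\lambda])=0$. Define $\Xi_\lambda([x_\lambda]) = [x]$ on these.

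For the reducible stationary points the argument is a little different because these are not isolated in the naive sense — they come in families governed by the eigenvalues of the perturbed Dirac operator — but within the grading window $[-N,N]$ and with the spinorial-energy bound $\omega$ built into the choice of $\cN$, only finitely many survive. Here I would invoke the preceding lemma (the real analogue of \cite[Lemma 5.11.1]{Lidman2016TheEO}, showing $\Lambda_{\frakq}$ computed in the global Coulomb slice agrees with the original), so that the reducible critical points of $\xagcsq$ in $\cN$ correspond exactly to eigenvectors of the relevant perturbed Dirac operator with eigenvalue in a compact interval $[-\omega_1,\omega_2]$. The corresponding approximate eigenvalue problem on $(W^{\lambda,I})^\sigma$ — the restriction of the Dirac operator to the finite-dimensional cut-off — has, for $\lambda$ large, exactly the same number of eigenvalues in that interval with the same signs, by standard spectral perturbation of the compact operator $p^\lambda c_\frakq - c$. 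This gives the correspondence on reducibles and simultaneously shows it respects the boundary-stable/boundary-unstable dichotomy, since that dichotomy is read off from the sign of the spinorial energy, which is preserved under the approximation by the spectral-stability statement.

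To see that $\Xi_\lambda$ is genuinely a bijection and not merely an injection, I would argue as follows. Injectivity on irreducibles is immediate from the disjointness of the $U_{[x]}$; across types it is clear since type is preserved. For surjectivity: suppose some stationary point $[y_\lambda]$ of $\xagcsql$ in $\cN/\Z_2$ were not in the image. Passing to a sequence $\lambda_n \to \infty$ of such stray points and applying the real compactness results (Lemma \ref{lem:compactness of stationary point on blow-down} together with its blow-up refinement and the energy control from \cite[Section 6.3]{Lidman2016TheEO}), a subsequence converges to a stationary point $[y]$ of $\xagcsq$ in $\cN/\Z_2$; but $\cN$ was chosen so that its only stationary points of $\xagcsq$ are those of $\frC_\cN$, so $[y]\in\frC_\cN$, and then $[y_{\lambda_n}]$ must eventually lie in the neighborhood around $[y]$ used to define $\Xi_{\lambda_n}$, contradicting stray-ness by the uniqueness clause. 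The count on reducibles matches by the spectral argument above, so $\Xi_\lambda$ is onto.

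The main obstacle I anticipate is not the irreducible case — which is essentially a packaged application of the inverse function theorem as in \cite[Proposition 7.2.2]{Lidman2016TheEO}, already available in the real setting — but rather making the reducible bookkeeping clean: one must check that the spinorial energy $\Lambda_{\frakq^\lambda}$ of the finite-dimensional approximations converges to that of the limit uniformly enough that no eigenvalue crosses zero, and that no new eigenvalue drifts into the window $[-\omega_1,\omega_2]$ from outside as $\lambda$ grows. Both follow from norm-convergence $p^\lambda c_\frakq \to c_\frakq$ on bounded sets together with the strict bound $\omega$ being chosen \emph{strictly} larger than the energies actually occurring in $\frC_\cN$, which is exactly why $\omega$ was fixed that way in the list of standing assumptions; so the obstacle is really just a matter of organizing the spectral estimates rather than a genuine difficulty.
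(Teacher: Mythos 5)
Your proposal is correct and follows the route the paper intends: the corollary is stated with no separate argument beyond ``the propositions above tell us,'' i.e., the combination of the $\epsilon$-closeness lemma (the real analogue of \cite[Lemma 7.1.2]{Lidman2016TheEO}, which gives surjectivity) with the inverse-function-theorem existence/uniqueness statement for irreducible stationary points, and that is exactly your assembly. Your explicit handling of the reducibles via the spinorial-energy window and spectral stability of the cut-off perturbed Dirac operator, and of type preservation via the sign of $\Lambda_{\frakq^\lambda}$, fills in bookkeeping the paper leaves implicit (and is consistent with its later lemmas on reducible stationary points), but it is the same argument in substance.
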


From now on, we use $[x_{\infty}]$ for stationary points of $\xagcsq$
and $[x_{\lambda}]$ for its preimage $\Xi_{\lambda}^{-1}([x_{\infty}])$. Note that the implicit function theorem tells us that $[x_{\lambda}]$ is smooth in $\lambda$, provided it is sufficiently large.

Besides those stationary points in $\cN$, we have some extra control on reducible stationary points in $(B(2R)\cap W^{\lambda,I})^\sigma$ but not necessarily in $\cN$.
\begin{lem} (\cite[Lemma 7.2.4]{Lidman2016TheEO})
    Fix $\epsilon>0$. For $\lambda\gg 0$, we have a one-to-one correspondence in $B(2R)$ between reducible stationary points $x_{\infty}$ of $\xgcq$ and reducible stationary points $x_{\lambda}$ of $\xgcql$. Moreover, $x_{\lambda}$ is $\epsilon$-close to $x_{\infty}$ if $\lambda$ is large enough. 
\end{lem}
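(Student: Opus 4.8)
The plan is to mirror the proof of \cite[Lemma 7.2.4]{Lidman2016TheEO}, substituting the $I$-invariant spaces throughout and feeding in the real analogues already assembled in Subsection~\ref{sub:Convergence of stationary points}. The first thing I would record is that the reducible locus is preserved by both $\xgcq$ and $\xgcql$: the ambient (non-real) maps on $W$ are $S^1$-equivariant for scalar multiplication on the spinor, $\{\phi=0\}$ is the $S^1$-fixed set, and the perturbation is the gradient of an $S^1$-invariant cylinder function, so the spinor component of each vector field -- and of its finite-dimensional cut-off -- vanishes at any point $(a,0)$. Hence a reducible stationary point of $\xgcq$ in $B(2R)\subset W^I_k$ is precisely a zero $(a,0)$, $a\in(\ker d^*)^{-\iota^*}$, $\|a\|_{L^2_k}<2R$, of the form part $(\cX^{gC}_{\frakq})^0(\cdot,0)$, which is a compact perturbation of $*d$ on $(\ker d^*)^{-\iota^*}$; since $Y$ is a rational homology sphere $*d$ is invertible there, so (intersecting the zero set of a compact perturbation of an invertible operator with a bounded set) there are finitely many, and since $\frakq$ is admissible each is non-degenerate, hence isolated in the reducible locus. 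The reducibles of $\xgcql$ are cut out the same way inside $(\ker d^*)^{-\iota^*}\cap W^{\lambda}$.

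Next I would carry out the compactness step. Given a sequence $x_{\lambda_n}$ of reducible stationary points of $\xgcqln$ in $B(2R)$ with $\lambda_n\to\infty$, Lemma~\ref{lem:compactness of stationary point on blow-down} -- whose statement already includes that a limit of reducibles is reducible -- together with the energy control of \cite[Section 6.3]{Lidman2016TheEO} (valid unchanged because $W^I\hookrightarrow W$) produces a subsequence converging in $L^2_k$ to a reducible stationary point $x_\infty$ of $\xgcq$ in $B(2R)$. A contradiction argument then gives: for $\lambda\gg0$, every reducible of $\xgcql$ in $B(2R)$ is $L^2_k$-$\epsilon$-close to one of the finitely many reducibles of $\xgcq$.

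For the converse direction -- a unique $x_\lambda$ near each $x_\infty$ -- I would run the inverse function theorem argument underlying the real analogue of \cite[Proposition 7.2.2]{Lidman2016TheEO}, but restricted to the reducible slice. Using the homeomorphism $f$ of \cite[Lemma 7.2.1]{Lidman2016TheEO} one packages $\xgcq$ and the family $\{\xgcqln\}$ into a single $C^1$ vector field on $((\ker d^*)^{-\iota^*}\cap B(2R))\times(-1,1)$; since the reducible slice is already a linear space the doubling trick is unnecessary, and the invertibility of $\cD_{x_\infty}(\cX^{gC}_{\frakq})^0$ on $(\ker d^*)^{-\iota^*}$ (which is what non-degeneracy of $x_\infty$ amounts to on reducibles) lets the inverse function theorem produce, for $\lambda\gg0$, a unique reducible zero $x_\lambda$ of $\xgcql$ in any prescribed small neighbourhood of $x_\infty$, depending smoothly on $\lambda$ and limiting to $x_\infty$. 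Combining the two steps, the map $x_\lambda\mapsto(\text{the nearby }x_\infty)$ is well defined, injective by the local uniqueness, and surjective by the compactness step, so it is the asserted bijection, with $x_\lambda$ being $\epsilon$-close to $x_\infty$ once $\lambda$ is large.

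The point I expect to need the most attention is not the analysis -- that is routine given Subsection~\ref{sub:Convergence of stationary points} -- but the bookkeeping around the fact that these reducibles need not lie in $\cN$: their gradings may exceed $N$ in absolute value, so neither $\cN$ nor Corollary~\ref{cor:1 to 1 correspondence between stationary points of cutoff} is available here. What saves the argument is that the reducible locus is a fixed linear slice on which every relevant vector field is a compact perturbation of $l$, so the compactness and inverse-function-theorem machinery of Subsection~\ref{sub:Convergence of stationary points} can be applied globally on that slice rather than only near $\frD_{[-N,N]}$; I would also double-check that the $I$-equivariance of $\frakq$ (Li's pairing-with-invariant-objects class) is exactly what makes each cut-off $\xgcql$ restrict to $W^{\lambda,I}$, so that the whole discussion stays inside the real subspaces.
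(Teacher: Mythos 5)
Your proposal is correct and follows essentially the route the paper intends: the paper states this lemma by citation to \cite[Lemma 7.2.4]{Lidman2016TheEO} without writing out a proof, and the expected argument is exactly your restriction of the compactness and inverse-function-theorem machinery of Subsection \ref{sub:Convergence of stationary points} to the linear reducible slice $(\ker d^*)^{-\iota^*}$, where $*d$ is invertible and the cut-off flows are compact perturbations of it. Your closing observations — that the doubling trick is unneeded on the linear slice, and that the argument must run globally on the reducible locus rather than only inside $\cN$ — are precisely the right points of care.
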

\subsubsection{Hyperbolicity}
Recall that an operator is called \emph{hyperbolic} if its complexification has a spectrum away from the imaginary axis. 

We have seen in Lemma \ref{lem:Fredholm properties of g-extended hessian on blow-up} that $\mathrm{Hess}^{\tg,\sigma}_{\frakq,x}$ is invertible with real spectrum when $x$ is a non-degenerate stationary point. Hence, it is by definition a hyperbolic operator. Now we introduce Hessians for the cut-off flow: \[\mathrm{Hess}^{\tg,\sigma}_{\frakq^{\lambda},x}=\Piagcs_x \circ\cD^{\tg,\sigma}_x \xgcsql :\calK^{agC,\sigma,I}_{k,x}\to \calK^{agC,\sigma,I}_{k-1,x}.\] When $x$ is a stationary point, the Hessian operator is independent of the connection chosen, so we can simplify the expression into \[\mathrm{Hess}^{\tg,\sigma}_{\frakq^{\lambda},x}=\Piagcs_x \circ\cD^{\sigma}_x \xgcsql .\] We shall say $x$ is a \emph{hyperbolic} stationary point when this operator is hyperbolic.

\begin{prop}(\cite[Proposition 7.3.1]{Lidman2016TheEO})\label{prop:byperbolicity in N}
    For $\lambda$ large enough, the stationary points of $\xgcsql$ inside $\cN$ are hyperbolic. Consequently, among stationary points of $\xagcsql|_{(W^{\lambda,I})^{\sigma}/\Z_2}$, those inside $\cN/\Z_2$ are all hyperbolic.
\end{prop}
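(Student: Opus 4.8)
The plan is to argue by contradiction, in parallel with the proof of \cite[Proposition 7.3.1]{Lidman2016TheEO}, using the convergence of approximate stationary points from the previous subsection together with a convergence statement for the associated Hessians. So suppose the first assertion fails: there is a sequence $\lambda_n\to\infty$ and, for each $n$, a stationary point $x_n\in\cN$ of $\xgcsqln$ such that $\mathrm{Hess}^{\tg,\sigma}_{\frakq^{\lambda_n},x_n}$ has a purely imaginary eigenvalue $i\mu_n$ with $\mu_n\in\R$, say with unit eigenvector $v_n\in\cK^{agC,\sigma,I}_{k,x_n}$ (smooth by elliptic regularity). By the corollaries of the previous subsection we may assume each $x_n$ lies in $\cU$ and in the finite-dimensional blow-up $(W^{\lambda_n,I})^\sigma$, and by the compactness established there (cf.\ Lemma \ref{lem:compactness of stationary point on blow-down} and Corollary \ref{cor:1 to 1 correspondence between stationary points of cutoff}) we may pass to a subsequence with $x_n\to x_\infty$ in $L^2_k$, where $x_\infty$ is a stationary point of $\xgcsq$ in $\cN$. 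Since $\delta$ was chosen so that the only stationary points of $\xgcsq$ in $\cN$ lie in $\frD_{[-N,N]}$, and all of these are non-degenerate, $x_\infty$ is non-degenerate; hence $\mathrm{Hess}^{\tg,\sigma}_{\frakq,x_\infty}$ is invertible with real spectrum by Lemma \ref{lem:Fredholm properties of g-extended hessian on blow-up}.

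The core of the argument is then a compactness step in spaces of growing dimension, exploiting that all Hessians in sight share a common self-adjoint elliptic linear part. By Lemma \ref{lem:operator cH and kASAFOE property of gextended Hessian on blow up} and its evident cut-off analogue (insert $p^\lambda$ in front of the compact part), I would write $\mathrm{Hess}^{\tg,\sigma}_{\frakq^{\lambda},x}=L_0+h^{\lambda}_x$ and $\mathrm{Hess}^{\tg,\sigma}_{\frakq,x}=L_0+h^{\infty}_x$ with the same $L_0$, the operators $h^{\lambda}_x$ being uniformly bounded on $L^2$ over $x\in\cN$ and all $\lambda$ (cutting off by $p^\lambda$ does not increase norms). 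Pairing the eigenvalue equation with $v_n$ in $L^2$ and using self-adjointness of $L_0$, the imaginary part gives a uniform bound $|\mu_n|\le C$, so after a further subsequence $\mu_n\to\mu\in\R$. Ellipticity of $L_0$ then yields $\Vert v_n\Vert_{L^2_k}\le C'\Vert v_n\Vert_{L^2_{k-1}}$, so by Rellich a subsequence of $v_n$ converges in $L^2_{k-1}$; feeding this back, together with $x_n\to x_\infty$, the strong convergence $p^{\lambda_n}\to\mathrm{id}$ on $L^2_{k-1}$, and compactness of the non-linear part, upgrades the convergence to $L^2_k$ and produces a nonzero limit $v$ with $\mathrm{Hess}^{\tg,\sigma}_{\frakq,x_\infty}v=i\mu v$. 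Since this operator has real spectrum, necessarily $\mu=0$, and then $v$ is a nonzero element of its kernel, contradicting invertibility. This proves the first statement.

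For the last sentence I would observe that the $\Z_2$-action on $W^{\sigma,I}$ is free, so the quotient map is a local diffeomorphism; it carries the stationary points of $\xgcsql$ in $\cN$ bijectively onto those of $\xagcsql$ in $\cN/\Z_2$ — all of which lie in $(W^{\lambda,I})^\sigma$ for $\lambda\gg0$ by the earlier corollary — and intertwines $\mathrm{Hess}^{\tg,\sigma}_{\frakq^{\lambda},x}$ with $\cD^{\sigma}_{[x]}\xagcsql$, so hyperbolicity descends. The hard part is the middle step: because the $v_n$ live in finite-dimensional subspaces of increasing dimension, neither the a priori bound on $\mu_n$ nor the extraction of a nonzero limit $v$ is automatic, and both rely essentially on the $k$-ASAFOE structure (a fixed self-adjoint elliptic $L_0$ plus a uniformly bounded, essentially compact, remainder) together with $p^{\lambda}\to\mathrm{id}$; once that limit is in hand, the clash with the non-degeneracy of $x_\infty$ is immediate.
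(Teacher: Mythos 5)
Your proposal is correct and follows essentially the same route the paper takes, namely to run the contradiction argument of Lidman--Manolescu's Proposition~7.3.1 verbatim: extract a limiting stationary point $x_\infty$, use the common self-adjoint elliptic linear part of the Hessians to bound $|\mu_n|$ and bootstrap a nonzero eigenvector, then contradict the real spectrum of $\mathrm{Hess}^{\tg,\sigma}_{\frakq,x_\infty}$.

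Two details you gloss over and should be aware of when writing this out. First, the $k$-ASAFOE form $L_0+h$ that you cite from Lemma~\ref{lem:operator cH and kASAFOE property of gextended Hessian on blow up} is a statement about the \emph{extended} Hessian $\cH^\sigma_x=\widehat{\mathrm{Hess}}^{\tg,\sigma}_{\frakq,x}$ acting on $\cT^{\sigma,I}_{j,x}\oplus L^2_j(Y;i\R)^{-\iota^*}$, not about $\mathrm{Hess}^{\tg,\sigma}_{\frakq,x}$ on $\cK^{agC,\sigma,I}_{j,x}$, so the pairing/Rellich/bootstrap chain must be carried out at the level of the extended Hessians (with the perturbed cut-off version $\cH^{\sigma,\lambda}_x$), and the conclusion transferred to the Hessians via the block relationship involving $\bfd^\sigma_x$ and $\bfd^{\sigma,\tilde\dagger}_x$, using the paper's lemma that at non-degenerate stationary points $\widehat{\mathrm{Hess}}^{\tg,\sigma}_{\frakq,x}$ has real spectrum. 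Second, for the ``consequently'' clause the issue is not only the $\Z_2$ quotient (which is indeed a local diffeomorphism) but the passage from the infinite-dimensional operator to the linearization of $\xagcsql$ restricted to the finite-dimensional $(W^{\lambda,I})^\sigma/\Z_2$: this works because $p^\lambda c_{\frakq}$ has image in $W^{\lambda,I}$, so $T(W^{\lambda,I})^\sigma$ is invariant under $\cD^\sigma_x\xgcsql$ and the complementary diagonal block is $l|_{(W^{\lambda,I})^\perp}$, which is already hyperbolic, making the full operator block upper-triangular; hence hyperbolicity of the infinite-dimensional operator is equivalent to hyperbolicity of its finite-dimensional restriction.
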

\begin{proof}
    This can be proved in exactly the same way as \cite[Proposition 7.3.1]{Lidman2016TheEO}. One can observe that computations and analysis about Hessians in \cite[p122]{Lidman2016TheEO} hold without change for real operators, and that we can still use $l$ to define $L^{2}_k$ metrics, then argue by contradiction as they did.
\end{proof}

We again pay some attention to those reducible stationary points of $\xgcsql$ that lie in $B(2R)^{\sigma}$ but not in $\cN$. Such a reducible $(a,0,\phi)$
satisfies \[-*da=(p^{\lambda}c_{\frakq})^0(a,0)\text{ and } D_{\frakq^{\lambda},a}(\phi)=D\phi+(p^{\lambda})^1(\cD_{(a,0)}c_{\frakq}(0,\phi))=\kappa \phi\] for some $\kappa\in \R$. Though we have $(a,0)\in W^{\lambda,I}$ by assumption, $(a,0,\phi)$ is not necessarily in $(W^{\lambda,I})^{\sigma}$. 

Now we restrict the choice of $\lambda$ to the sequence $\lambda_1^{\bullet}<\lambda_2^{\bullet}<\ldots$ that we have fixed in Subsection \ref{subsub:Finite dimensional approximation}, so that $p^{\lambda}$ is a genuine $L^2$ orthogonal projection. Using such a $\lambda$, we can write \[D+(p^{\lambda})^1\cD_{(a,0)}c_{\frakq}(0,\cdot)=D+(p^{\lambda})^1\cD_{(a,0)}c_{\frakq}(0,\cdot)(p^{\lambda})^1\] in which the right-hand side is self-adjoint. Using this, we get a stronger version of Proposition \ref{prop:byperbolicity in N}.
\begin{prop} (\cite[Proposition 7.4.1]{Lidman2016TheEO})\label{prop:hyperbolicity in B(2R)}
    We can choose an admissible perturbation $\frakq$ so that for $\lambda\in\{\lambda_1^{\bullet}<\lambda_2^{\bullet}<\ldots\} $ sufficiently large, the restriction of $\xgcsql$ to $(B(2R)\cap W^{\lambda,I})^{\sigma}$ has only hyperbolic stationary points.
\end{prop}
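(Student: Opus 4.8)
The plan is to follow the proof of \cite[Proposition 7.4.1]{Lidman2016TheEO}: we reduce the statement to a genericity property of the perturbed Dirac operator at reducibles, and arrange it uniformly over the distinguished sequence of cut-offs. First I would dispose of everything except reducibles outside $\cN$. By Proposition \ref{prop:byperbolicity in N}, for $\lambda\gg 0$ every stationary point of $\xgcsql$ in $\cN$ is hyperbolic, so it suffices to treat stationary points of $\xgcsql$ in $(B(2R)\cap W^{\lambda,I})^\sigma\setminus\cN$. Any such point is reducible: an irreducible stationary point $x_n$ of $\xgcsqln$ with $\lambda_n\to\infty$ staying outside $\cN$ would, by the energy bounds of \cite[Section 6.3]{Lidman2016TheEO} and Lemma \ref{lem:compactness of stationary point on blow-down}, subconverge to an irreducible stationary point of $\xgcq$, and all of those lie in $\frD_{[-N,N]}$ by our choice of $N$, a contradiction. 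So let $x=(a,0,\phi)$ be such a reducible; then $(a,0)$ is a reducible stationary point of $\xgcql$ in $W^{\lambda,I}$ which is $\epsilon$-close to a non-degenerate reducible stationary point of $\xgcq$ (by \cite[Lemma 7.2.4]{Lidman2016TheEO}), and $\phi\in W^{\lambda,I}$ is a unit eigenvector of $D_{\frakq^\lambda,a}=D+(p^\lambda)^1\cD_{(a,0)}c_\frakq(0,\cdot)$ with some real eigenvalue $\kappa$.

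Next I would analyze $\mathrm{Hess}^{\tg,\sigma}_{\frakq^\lambda,x}=\Piagcs_x\circ\cD^\sigma_x\xgcsql$ at such a reducible. At $s=0$ the form component of $\xgcsql$ is independent of the spinor, while the $s$-component linearizes to multiplication by the spinorial energy $\Lambda_{\frakq^\lambda}(x)=\kappa$, so $\mathrm{Hess}^{\tg,\sigma}_{\frakq^\lambda,x}$ is block-triangular with respect to the splitting (form directions) $\oplus$ (blow-up spinor directions) $\oplus$ ($s$-direction), and its spectrum is the union of the spectra of the three diagonal blocks: the form Hessian of $\xgcql$ at $(a,0)$, the projection $\mathbb{L}_{(a,0)}$ of $D_{\frakq^\lambda,a}$ to $\langle\phi\rangle^\perp$ (with eigenvalues $\kappa'-\kappa$, where $\kappa'$ runs over the other eigenvalues of $D_{\frakq^\lambda,a}$), and the number $\kappa$. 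Restricting $\lambda$ to the sequence $\{\lambda_i^\bullet\}$, so that $p^\lambda$ is an honest $L^2$-orthogonal projection, makes all three blocks self-adjoint — the form block because $*d$ is self-adjoint on $\ker d^*$ and $\frakq$ is a formal gradient, and the $\mathbb{L}$-block because $D_{\frakq^\lambda,a}=D+(p^\lambda)^1\cD_{(a,0)}c_\frakq(0,\cdot)(p^\lambda)^1$ is self-adjoint on $W^{\lambda,I}$ — hence each has real spectrum, and the form block is moreover invertible for $\lambda\gg 0$ by $C^0$-closeness to the invertible limit. Therefore $\mathrm{Hess}^{\tg,\sigma}_{\frakq^\lambda,x}$ is hyperbolic exactly when $D_{\frakq^\lambda,a}$ has simple spectrum and $0$ is not an eigenvalue.

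It remains — and this is the main point — to choose the perturbation so that for every $\lambda\in\{\lambda_i^\bullet\}$ with $\lambda\gg 0$ and every reducible stationary point $(a,0)$ of $\xgcql$ in $B(2R)\cap W^{\lambda,I}$, the self-adjoint operator $D_{\frakq^\lambda,a}$ on $W^{\lambda,I}$ has simple spectrum and trivial kernel. For fixed $\lambda$ there are only finitely many such $(a,0)$, and they vary with $\frakq$, so one runs a Sard--Smale parametrized-transversality argument, perturbing the spinorial part of an invariant cylinder-function perturbation and using that self-adjoint operators with a repeated eigenvalue (or with kernel) form a positive-codimension subset — exactly as in \cite[Section 7.4]{Lidman2016TheEO}, with the Dirac operator and its perturbations replaced by the $\tau$-invariant ones of \cite[Section 7]{li2022monopolefloerhomologyreal}, which are again first-order self-adjoint elliptic ($k$-ASAFOE) operators with the same genericity behaviour. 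Since $\{\lambda_i^\bullet\}$ is countable, these conditions — one residual subset of the large Banach space of very tame perturbations for each $\lambda$, together with admissibility — intersect in a residual, hence nonempty, set, and any $\frakq$ in it works. The delicate part of the argument is precisely this uniform bookkeeping: producing a single admissible $\frakq$ that simultaneously handles the whole countable family of cut-offs.
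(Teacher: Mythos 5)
Your proof is correct and expands the paper's one-line reference to LM Proposition 7.4.1 and the KM Lemma 12.6.2 / Li Lemma 7.13 genericity lemma in the expected way: dispose of stationary points in $\cN$ via Proposition \ref{prop:byperbolicity in N}, identify the diagonal blocks of the Hessian at the remaining reducibles with the form Hessian, the number $\kappa$, and $(D_{\frakq^\lambda,a}-\kappa)|_{\langle\phi\rangle^\perp}$, and arrange simple spectrum and trivial kernel of each finite-dimensional Dirac operator $D_{\frakq^\lambda,a}$ by a Sard--Smale argument in the (invariant) perturbation parameter, intersecting residual sets over the countable family $\{\lambda_i^\bullet\}$. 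This is essentially the paper's intended argument; the only step worth spelling out a bit more is that an irreducible of $\xgcsqln$ escaping $\cN$ cannot subconverge to a reducible, since its spinorial energy vanishes identically and would force a kernel of $D_{\frakq,a}$ at the limiting reducible, contradicting admissibility.
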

\begin{proof}
    The strategy for \cite[Proposition 7.4.1]{Lidman2016TheEO} applies by noting that we have a real analogue \cite[lemma 7.13]{li2022monopolefloerhomologyreal} of \cite[Lemma 12.6.2]{Kronheimer_Mrowka_2007}.
\end{proof}

\subsection{Identification of gradings}\label{sub:Identification of grading}
We shall see in the next subsection that for $\lambda\gg 0$, $\xgcsql$ is a Morse-Smale equivariant quasi-gradient, so it can be used to define a  Morse chain complex as we have seen in Subsection \ref{sub:Morse homology and Morse quasi gradient flow}. In this part, we relate gradings of stationary points of $\xgcsq$ and $\xgcsql$. (We have a one-to-one correspondence between them in certain grading range provided that $\lambda$ is large enough.) A subtlety here is that we have two notions of grading for stationary points of $\xgcsql$, one from the infinite-dimensional manifold $W^{\sigma,I}/\Z_2$, the other from the finite-dimensional manifold $(B(2R)\cap W^{\lambda,I})^{\sigma}/\Z_2$. We shall identify them soon. 
\subsubsection{Relative grading}\label{subsub:Relative grading}
To define a relative grading between stationary points of $\xagcsql$ in $\cN/\Z_2$, we need to express the flow equation as a section and take its covariant derivative. Along trajectories, the choice of connection has no effect on the result, so we will use $\cD_{\gamma_{\lambda}}^{\tau}\cF^{gC,\tau}_{\frakq^{\lambda}}$.

\begin{lem}
    Fix $1\le j\le k$ and $\lambda\gg 0$. For any $[x_{\infty}]$ and $[y_{\infty}]\in\frC_{\cN}$ and each path $[\gamma_{\lambda}]\in \cB^{gC,\tau,I}_k([x_{\lambda}],[y_{\lambda}])$ with representative $\gamma_\lambda\in \cC^{gC,\tau,I}_k(x_{\lambda},y_{\lambda})$, the operator \[\cD_{\gamma_\lambda}^{\tau}\cF^{gC,\tau}_{\frakq^{\lambda}}:\calK^{gC,\tau,I}_{j,\gamma_{\lambda}}\to \calK^{gC,\tau,I}_{j-1,\gamma_{\lambda}}\] is Fredholm with index $\mathrm{gr}([x_{\infty}],[y_{\infty}])$.
\end{lem}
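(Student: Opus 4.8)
The plan is to first verify that $\cD_{\gamma_\lambda}^\tau\cF^{gC,\tau}_{\frakq^\lambda}$ is Fredholm by putting it into the standard form $\tfrac{d}{dt}+L_0+\hat h_t$ with hyperbolic limits, and then to compute its index by deforming the two limiting extended Hessians---through invertible operators---to those of the uncut-off flow $\xgcsq$, where the index has already been identified with the relative grading in Subsection~\ref{sub: Gradings and U-action}.

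\emph{Fredholmness.} First I would follow the proof of Proposition~\ref{prop: Fredholm property of Q gc gamma}: extend $\cD_{\gamma_\lambda}^\tau\cF^{gC,\tau}_{\frakq^\lambda}$ by the bijective operator $\widehat R$ of that proof (built from the linear operator $R$, which the cut-off leaves untouched) to an operator $\widehat Q^{gC}_{\gamma_\lambda}$ acting between genuine spaces of $\tau$-invariant sections over $Z$, and rewrite it as $\tfrac{d}{dt}+L_0+\hat h^{gC,\lambda}_t$ with the same fixed first-order part $L_0$. As $t\to\pm\infty$ the zeroth-order term converges to the cut-off $\tg$-extended Hessians $\widehat{\mathrm{Hess}}^{\tg,\sigma}_{\frakq^\lambda,x_\lambda}$, $\widehat{\mathrm{Hess}}^{\tg,\sigma}_{\frakq^\lambda,y_\lambda}$, defined by the same formulas as their $\lambda=\infty$ counterparts in Lemma~\ref{lem:operator cH and kASAFOE property of gextended Hessian on blow up}. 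For $\lambda\gg 0$, $x_\lambda$ and $y_\lambda$ lie in $\frC^\lambda_{\cN}$ and are non-degenerate stationary points of $\xgcsql$ by Corollary~\ref{cor:1 to 1 correspondence between stationary points of cutoff}, so Proposition~\ref{prop:byperbolicity in N} (together with the same reasoning as in the $\lambda=\infty$ case relating the extended and non-extended $\tg$-Hessians) makes these limits hyperbolic. The Atiyah--Patodi--Singer--type criterion---the real analogue \cite[Proposition~8.10]{li2022monopolefloerhomologyreal} of the corresponding result in \cite{Kronheimer_Mrowka_2007}---then shows $\widehat Q^{gC}_{\gamma_\lambda}$, and hence $\cD_{\gamma_\lambda}^\tau\cF^{gC,\tau}_{\frakq^\lambda}$, is Fredholm; the index is independent of $j$ by elliptic regularity, and $\widehat R$ being bijective contributes nothing to it.

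\emph{The index.} Since the global Coulomb path space carries no gauge action, $\widetilde{\cC}^{gC,\tau,I}_{k}(x_\lambda,y_\lambda)$ is connected, so the index does not depend on the chosen representative $\gamma_\lambda$; and, as for any operator of the form $\tfrac{d}{dt}+A_t$ with invertible ASAFOE limits $A_\pm$, it equals a relative Morse index of the ordered pair $(\widehat{\mathrm{Hess}}^{\tg,\sigma}_{\frakq^\lambda,x_\lambda},\widehat{\mathrm{Hess}}^{\tg,\sigma}_{\frakq^\lambda,y_\lambda})$ alone, which is a locally constant function of that pair on the invertible operators. The same statements hold for $\lambda=\infty$, and there the index equals $\mathrm{ind}\,\cD_{\gamma_\infty}^\tau\cF^{gC,\tau}_{\frakq}=\gr([x_\infty],[y_\infty])$ by Subsection~\ref{sub: Gradings and U-action}. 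It therefore suffices to show that the pair of limiting Hessians converges, through invertible operators, to its $\lambda=\infty$ version. By Subsection~\ref{sub:Convergence of stationary points}, $x_\lambda\to x_\infty$ in $W^{\sigma,I}_k$; moreover $\xgcsql=l+p^\lambda c_{\frakq}$ differs from $\xgcsq=l+c_{\frakq}$ by $(p^\lambda-\mathrm{id})c_{\frakq}$, whose derivative tends to $0$ in operator norm because the strong convergence $p^\lambda\to\mathrm{id}$ is composed with the compact operator $\cD c_{\frakq}$. Together with the continuity in $x$ of the extended $\tg$-Hessian (as in Lemma~\ref{lem:Fredholm properties of g-extended hessian on blow-up} and Lemma~\ref{lem:operator cH and kASAFOE property of gextended Hessian on blow up}(3)), this yields $\widehat{\mathrm{Hess}}^{\tg,\sigma}_{\frakq^\lambda,x_\lambda}\to\widehat{\mathrm{Hess}}^{\tg,\sigma}_{\frakq,x_\infty}$ in the operator norm $L^2_j\to L^2_{j-1}$, and likewise at the $y$-end. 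Since $\widehat{\mathrm{Hess}}^{\tg,\sigma}_{\frakq,x_\infty}$ is invertible with real spectrum it has a positive spectral gap, so for $\lambda\gg 0$ the straight-line paths joining the $\frakq^\lambda$- and $\frakq$-Hessians stay inside the hyperbolic operators; hence the relative Morse index, and thus the Fredholm index, is unchanged, and
\[\mathrm{ind}\,\cD_{\gamma_\lambda}^\tau\cF^{gC,\tau}_{\frakq^\lambda}=\mathrm{ind}\,\cD_{\gamma_\infty}^\tau\cF^{gC,\tau}_{\frakq}=\gr([x_\infty],[y_\infty]).\]

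\emph{Main obstacle.} The crux is the last step, and also the place where the hypothesis ``$\lambda\gg 0$'' genuinely does work: one must upgrade the merely strong convergence $p^\lambda\to\mathrm{id}$ to genuine operator-norm convergence of the (extended) Hessians---which is precisely what compactness of $\cD c_{\frakq}$ buys---and then exploit a uniform spectral gap of the limiting Hessian to keep the straight-line interpolation inside the hyperbolic operators. The Fredholm bookkeeping is otherwise a routine transcription of the corresponding step in \cite{Lidman2016TheEO}, with the real inputs from \cite[Sections~7--8]{li2022monopolefloerhomologyreal} substituted throughout.
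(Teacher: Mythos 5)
Your proof follows essentially the same route as the paper's: extend $\cD_{\gamma_\lambda}^{\tau}\cF^{gC,\tau}_{\frakq^{\lambda}}$ by the bijective block $\widehat{R}$ to $\widehat{Q}^{gC}_{\gamma_\lambda}$ as in Proposition \ref{prop: Fredholm property of Q gc gamma}, deduce Fredholmness from the hyperbolic ASAFOE limits via \cite[Proposition 8.10]{li2022monopolefloerhomologyreal}, and then identify the index with the $\lambda=\infty$ case by an interpolation through operators with hyperbolic limits (the paper simply defers this last step to the argument of \cite[Lemma 9.1.1]{Lidman2016TheEO}, while you make it concrete via norm convergence of the cut-off Hessians, using compactness of $\cD c_{\frakq}$, plus a spectral gap). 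One phrasing to tighten: the index of $\tfrac{d}{dt}+A_t$ is not a function of the endpoint pair alone (it is a spectral flow and in general depends on the homotopy class of the path), but since $\widetilde{\cC}^{gC,\tau,I}_k(x_\lambda,y_\lambda)$ is contractible and your deformation moves the entire family while keeping both limits hyperbolic, the conclusion is unaffected.
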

We remark that since we have no tangent to the orbit in the real case, we have no need to restrict the domain as in \cite[Lemma 9.1.1]{Lidman2016TheEO}.
\begin{proof}
We have proved Proposition \ref{prop: Fredholm property of Q gc gamma} in detail. The same argument applies here: we can introduce an extension $\widehat{Q}_{\gamma_{\lambda}}^{gC}$ of $\cD_{\gamma_\lambda}^{\tau}\cF^{gC,\tau}_{\frakq^{\lambda}}$ and show that it is Fredholm and that it shares the same index with $\cD_{\gamma_\lambda}^{\tau}\cF^{gC,\tau}_{\frakq^{\lambda}}$. So it remains to show that $\widehat{Q}_{\gamma_{\lambda}}^{gC}$ and $\widehat{Q}_{\gamma}^{gC}$ share the same index.

We can show that the indices of these operators are independent of the choice of $\gamma$ or $\gamma_{\lambda}$ by using a standard continuation argument. Then we shall use the interpolation argument as in the proof of \cite[Lemma 9.1.1]{Lidman2016TheEO} to prove the index identification using a specific choice of $\gamma$ and $\gamma_{\lambda}$.  That argument is quite long but standard, in particular, it applies to the real operators without change.
    
\end{proof}
We can now define a relative grading between stationary points of $\xagcsql$ in $\cN/\Z_2$ by \[\mathrm{gr}([x_{\lambda}],[y_{\lambda}])=\mathrm{ind}\cD_{\gamma_\lambda}^{\tau}\cF^{gC,\tau}_{\frakq^{\lambda}}.\]

\begin{cor}
    The correspondence $\Xi_{\lambda}: \frC_{\cN}^{\lambda}\to \frC_{\cN}$ from Corollary \ref{cor:1 to 1 correspondence between stationary points of cutoff} preserves relative grading.
\end{cor}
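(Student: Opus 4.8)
The plan is to read the statement off directly from the preceding Lemma together with the definition of the relative grading on $\frC^{\lambda}_{\cN}$ given just above it. Recall that after the reformulation of $\widecheck{\mathit{HMR}}$ in the global Coulomb slice (Subsection \ref{sub: Gradings and U-action}), the relative grading between $[x_{\infty}],[y_{\infty}]\in\frC_{\cN}$ is $\mathrm{gr}([x_{\infty}],[y_{\infty}])=\mathrm{ind}\,\cD^{\tau}_{\gamma^{\flat}}\cF^{gC,\tau}_{\frakq}$, while the relative grading between $[x_{\lambda}],[y_{\lambda}]\in\frC^{\lambda}_{\cN}$ was just defined to be $\mathrm{gr}([x_{\lambda}],[y_{\lambda}])=\mathrm{ind}\,\cD^{\tau}_{\gamma_{\lambda}}\cF^{gC,\tau}_{\frakq^{\lambda}}$. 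By construction of the correspondence $\Xi_{\lambda}$ in Corollary \ref{cor:1 to 1 correspondence between stationary points of cutoff} one has $\Xi_{\lambda}([x_{\lambda}])=[x_{\infty}]$ and $\Xi_{\lambda}([y_{\lambda}])=[y_{\infty}]$, and the last Lemma asserts precisely that, for $\lambda\gg 0$, $\mathrm{ind}\,\cD^{\tau}_{\gamma_{\lambda}}\cF^{gC,\tau}_{\frakq^{\lambda}}=\mathrm{gr}([x_{\infty}],[y_{\infty}])$. Chaining these three identities yields $\mathrm{gr}([x_{\lambda}],[y_{\lambda}])=\mathrm{gr}(\Xi_{\lambda}([x_{\lambda}]),\Xi_{\lambda}([y_{\lambda}]))$, which is the assertion.

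Two minor points should be dispatched along the way. First, one must know that the right-hand side of the definition $\mathrm{gr}([x_{\lambda}],[y_{\lambda}])=\mathrm{ind}\,\cD^{\tau}_{\gamma_{\lambda}}\cF^{gC,\tau}_{\frakq^{\lambda}}$ is independent of the chosen path class $[\gamma_{\lambda}]\in\cB^{gC,\tau,I}_{k}([x_{\lambda}],[y_{\lambda}])$, so that the relative grading is well defined; but this is built into the statement of the Lemma, which gives the index as $\mathrm{gr}([x_{\infty}],[y_{\infty}])$ for \emph{every} such path. Second, $\lambda$ must be taken large enough for both the bijection $\Xi_{\lambda}$ (Corollary \ref{cor:1 to 1 correspondence between stationary points of cutoff}) and the index identification (the Lemma) to be in force; since each holds for all sufficiently large $\lambda$, this is automatic.

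I do not expect a genuine obstacle in the Corollary itself: it is the bookkeeping form of the Lemma's index equality. The substance lies in the Lemma, and, as sketched there, the mechanism is the same as in Proposition \ref{prop: Fredholm property of Q gc gamma}: one extends $\cD^{\tau}_{\gamma_{\lambda}}\cF^{gC,\tau}_{\frakq^{\lambda}}$ to an operator $\widehat{Q}^{gC}_{\gamma_{\lambda}}$ of the form $\tfrac{d}{dt}+L_{0}+\hat{h}^{gC}_{t}$, observes that the auxiliary block $\widehat{R}$ is bijective and hence contributes nothing to the index (and that there are no $\bfd$-factors to correct for), makes the index independent of the path by a homotopy of connections, and then runs the interpolation argument of \cite[Lemma 9.1.1]{Lidman2016TheEO} to compare the indices of $\widehat{Q}^{gC}_{\gamma_{\lambda}}$ and $\widehat{Q}^{gC}_{\gamma}$ for the unperturbed cut-off. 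Once that is in hand, the present Corollary follows with no further analysis.
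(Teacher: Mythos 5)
Your proposal is correct and matches the paper, which offers no separate proof of this Corollary precisely because it is immediate: the relative grading on $\frC^{\lambda}_{\cN}$ is defined as the index of $\cD^{\tau}_{\gamma_{\lambda}}\cF^{gC,\tau}_{\frakq^{\lambda}}$, and the preceding Lemma identifies that index with $\mathrm{gr}([x_{\infty}],[y_{\infty}])$. Your sketch of the Lemma's own proof (extension to $\widehat{Q}^{gC}_{\gamma_{\lambda}}$, bijectivity of $\widehat{R}$, and the interpolation argument) also agrees with the paper's route.
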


\begin{prop} (\cite[Proposition 9.1.4]{Lidman2016TheEO}) \label{prop:indentification between f.d. and inf.d gr}
   For stationary points $[x_{\lambda}]$ and $[y_{\lambda}]$ of $\xagcsql$ in $\cN/\Z_2$, $\mathrm{gr}([x_{\lambda}],[y_{\lambda}])$ is computed in infinite dimensional setup. It coincides with the difference of gradings of $[x_{\lambda}]$ and $[y_{\lambda}]$  considering as stationary points of $\xagcsql$ restricted to $(B(2R)\cap W^{\lambda,I})^{\sigma}/\Z_2$.
\end{prop}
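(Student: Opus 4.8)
The plan is to follow the proof of \cite[Proposition 9.1.4]{Lidman2016TheEO}, which becomes slightly simpler in the $\Z_2$-equivariant setting because there is no tangent-to-the-orbit direction to track in the global Coulomb slice. By the lemma of Subsection~\ref{subsub:Relative grading} together with the extension procedure built into the proof of Proposition~\ref{prop: Fredholm property of Q gc gamma}, the infinite-dimensional relative grading is
\[
\mathrm{gr}([x_{\lambda}],[y_{\lambda}]) \;=\; \mathrm{ind}\,\cD^{\tau}_{\gamma_{\lambda}}\cF^{gC,\tau}_{\frakq^{\lambda}} \;=\; \mathrm{ind}\,\widehat{Q}_{\gamma_{\lambda}}^{gC},
\]
where $\widehat{Q}_{\gamma_{\lambda}}^{gC}=\frac{d}{dt}+L_0+\hat{h}_{t}^{gC}$ has endpoint operators $L_0+\hat{h}_{\pm\infty}^{gC}=\widehat{\mathrm{Hess}}_{\frakq^{\lambda},\pm}^{\tg,\sigma}$, which are hyperbolic by the results of Subsection~\ref{sub:Convergence of stationary points}. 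So the whole problem is to compute this Fredholm index and identify it with a spectral flow of finite-dimensional Hessians.

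First I would record that the cut-off flow $\xgcsql=l+p^{\lambda}c_{\frakq}$ preserves the finite-dimensional subspace $W^{\lambda,I}$ (hence its blow-up), so for $\lambda\gg 0$ --- when, by the corollaries of Subsection~\ref{sub:Convergence of stationary points}, both endpoints lie in $(W^{\lambda,I})^{\sigma}$ --- the connecting trajectory $\gamma_{\lambda}$ stays inside $(W^{\lambda,I})^{\sigma}$. Then I would use the $L^{2}$-orthogonal splitting $W^{I}=W^{\lambda,I}\oplus(W^{\lambda,I})^{\perp}$, extended to the blow-up tangent spaces and to the domain of $\widehat{Q}_{\gamma_{\lambda}}^{gC}$: at a reducible stationary point $(a,0,\phi)$ the spinor $\phi$ is an eigenvector of the perturbed Dirac operator with eigenvalue in the $\lambda$-window, so the ``infinite'' spinor directions $(W^{\lambda,I})^{\perp}$ sit inside $\phi^{\perp}$, the $s$-variable and the bijective extension factor $\widehat{R}$ belong to the finite part, and along $\gamma_{\lambda}$ these identifications are consistent. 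Since $p^{\lambda}c_{\frakq}$, and likewise the auxiliary terms $M$, $H$ appearing in $\hat{h}_{t}^{gC}$, take values in $W^{\lambda}$, the operator $L_0+\hat{h}_{t}^{gC}$ is block lower-triangular for this splitting: its $(W^{\lambda,I})^{\perp}\to(W^{\lambda,I})^{\perp}$ block is the constant, invertible operator $l|_{(W^{\lambda,I})^{\perp}}$ (equivalently, $D$ on the high spinor modes), while its $W^{\lambda,I}\to(W^{\lambda,I})^{\perp}$ block vanishes.

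It then follows from the usual additivity of the Fredholm index under a block-triangular deformation (keeping the two diagonal blocks Fredholm, which holds because the endpoints remain hyperbolic by Proposition~\ref{prop:hyperbolicity in B(2R)} and $l|_{(W^{\lambda,I})^{\perp}}$ is invertible) that $\mathrm{ind}\,\widehat{Q}_{\gamma_{\lambda}}^{gC}$ equals the index of the finite block plus the index of $\frac{d}{dt}+l|_{(W^{\lambda,I})^{\perp}}$; the latter is $0$ since its coefficient is constant and invertible, and the $\widehat{R}$ factor contributes nothing. The finite block is exactly the linearization along $\gamma_{\lambda}$ of $\xagcsql$ on the finite-dimensional manifold $(B(2R)\cap W^{\lambda,I})^{\sigma}/\Z_2$, whose Fredholm index is, by the standard spectral-flow description, the difference of the Morse indices of $[x_{\lambda}]$ and $[y_{\lambda}]$ there --- and in the reducible case this reproduces the index formula of Lemma~\ref{lem:index of reducible in f.d. for Morse quasi gradient}. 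Assembling these identities yields the proposition.

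The step I expect to require the most care is the block-triangular decomposition on the blow-up: one must verify that the $s$-direction, the tangent-to-the-sphere spinor directions, and the extension factors in $\widehat{Q}_{\gamma_{\lambda}}^{gC}$ all distribute over $W^{\lambda,I}\oplus(W^{\lambda,I})^{\perp}$ uniformly along the entire trajectory --- not merely at the two stationary endpoints --- and that the complement block stays invertible, which is precisely where the hyperbolicity estimates of Subsection~\ref{sub:Convergence of stationary points}, valid only for $\lambda$ large, are used. Everything else is the real, $\Z_2$-equivariant transcription of \cite[Section~9.1]{Lidman2016TheEO} and goes through with the objects replaced by their real analogues.
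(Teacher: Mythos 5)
Your approach is essentially the same as the paper's: the paper splits the argument between Proposition~\ref{prop:indentification between f.d. and inf.d gr} and Lemma~\ref{lem:index of Q gamma gc}, and you have merged them, using the slicewise decomposition $W^{I}=W^{\lambda,I}\oplus(W^{\lambda,I})^{\perp}$ to exhibit a block-triangular structure with an invertible off-diagonal block, the constant invertible complement contributing index zero, and $\widehat{R}$ contributing nothing. That part is fine.

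There is, however, one step you elide that the paper treats as a genuine (if short) step. You assert that ``the finite block is exactly the linearization along $\gamma_{\lambda}$ of $\xagcsql$ on the finite-dimensional manifold,'' and deduce that its index is the difference of Morse indices. But the operator that actually defines the Morse index on $(B(2R)\cap W^{\lambda,I})^{\sigma}/\Z_2$ is $\Piagcs\circ\left(\frac{D^{\sigma}}{dt}+\cD^\sigma \xgcsql\right)$, whereas the finite block $Q^{gC,\lambda}_{\gamma}$ comes out as $\frac{D^{\sigma}}{dt}+\cD^\sigma\xgcsql$; the anticircular projection $\Piagcs_{\gamma(t)}$ is $t$-dependent, so it does not commute with $\frac{D^{\sigma}}{dt}$, and the two operators are not literally equal. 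The paper closes this gap by a linear interpolation between $\Piagcs\circ\left(\frac{D^{\sigma}}{dt}+\cD^\sigma \xgcsql\right)$ and $\frac{D^{\sigma}}{dt}+\Piagcs\circ\cD^\sigma \xgcsql$ (an index-preserving homotopy through Fredholm operators, since the endpoint hyperbolicity is preserved). You should add a sentence to this effect rather than claiming the finite block ``is exactly'' the Morse operator --- what you need, and what is true, is that they have the same Fredholm index, and that requires the interpolation.
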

\begin{proof}
    We follow the strategy of \cite[Proposition 9.1.4]{Lidman2016TheEO}. Since different gradings are defined using indices of different operators, it is necessary for us to relate those operators. Let $\gamma=(a(t),s(t),\phi(t)):\R \to (B(2R)\cap W^{\lambda,I})^{\sigma}$  be a path connecting stationary points $x_\lambda$, $y_\lambda$ in $(B(2R)\cap W^{\lambda,I})^{\sigma}$. We can associate to it an operator \[Q_{\gamma}^{gC}=\cD_{\gamma}^{\tau}\cF^{gC,\tau}_{\frakq^{\lambda}}: \cT^{gC,\tau,I}_{j,\gamma}(x_\lambda,y_\lambda) \to \cV^{gC,\tau,I}_{j,\gamma}(Z),\] which defines the relative grading for stationary points of $\xgcsql$ in the infinite-dimensional setting. This restricts to \[Q_{\gamma}^{gC,\lambda}:\cT^{gC,\tau,\lambda,I}_{j,\gamma}(x_\lambda,y_\lambda) \to \cV^{gC,\tau,\lambda,I}_{j,\gamma}(Z),\] for $\cT^{gC,\tau,\lambda,I}_{j,\gamma}(x_\lambda,y_\lambda)=\{(b,r,\psi)\in \cT^{gC,\tau,I}_{j,\gamma}(x_\lambda,y_\lambda)| (b(t),\psi(t))\in W^{\lambda,I} \text{ for all } t\}$ and $\cV^{gC,\tau,\lambda,I}_{j,\gamma}(Z)$ is defined similarly.

    We will show that $\mathrm{ind} Q_{\gamma}^{gC}= \mathrm{ind} Q_{\gamma}^{gC,\lambda}$ in the next lemma, so it suffices to see that $\mathrm{ind} Q_{\gamma}^{gC,\lambda}$ characterizes the relative grading in the finite-dimensional setup.

    This operator has both its domain and codomain consisting of paths $V(t)=(b(t),r(t),\psi(t)):\R\to T(W^{\lambda,I})^\sigma$. The observation about norm in the proof of \cite[Proposition 9.1.4]{Lidman2016TheEO} holds for real configurations: the $L^2_j$ norm of $V$ as a four-dimensional configuration is equivalent to its $L^2_j$ norm as a map from $\R$ to $W^{\lambda,I}$. We have already proved the hyperbolicity of stationary points for this vector field, so the relative index between them is well-defined. 

    We consider another operator\[\Piagcs\circ(\frac{D^{\sigma}}{dt}+\cD^\sigma \xgcsql):T_{j,\gamma}\cP(x_{\lambda},y_{\lambda})\to T_{j-1,\gamma}\cP(x_{\lambda},y_{\lambda})\] for $T_{j,\gamma}\cP(x_{\lambda},y_{\lambda})=L^2_{k}(\R,T(W^{\lambda,I})^\sigma)$. (Our expression is simpler than those in \cite[Proposition 9.1.4]{Lidman2016TheEO}, since in pseudo-temporal gauge is the same as in temporal gauge for real configurations.)  This can be used to define the Morse index in the finite-dimensional setup.

    Due to the vanishing of the tangent to the orbit on the global Coulomb slice and the $dt$ component in pseudo-temporal gauge, now \[\cT^{gC,\lambda,\tau,I}_{j,\gamma}=T_{j,\gamma}\cP(x_{\lambda},y_{\lambda})=\cV^{gC,\lambda,\tau,I}_{j,\gamma},\]so \[Q_{\gamma}^{gC,\lambda}=
    \frac{D^\sigma}{dt}+\cD^\sigma\xgcsql.\] The situation for us is far simpler than the one in \cite[Proposition 9.1.4]{Lidman2016TheEO} now. What we need to compare are just $\Piagcs\circ(\frac{D^{\sigma}}{dt}+\cD^\sigma \xgcsql)$
 and $\frac{D^{\sigma}}{dt}+\Piagcs\circ\cD^\sigma \xgcsql$. A straightforward linear interpolation finishes the proof. 
\end{proof}
\begin{lem}(\cite[Lemma 9.1.5]{Lidman2016TheEO})\label{lem:index of Q gamma gc}
     For $\lambda=\lambda_i^{\bullet}\gg 0$, the index of $Q^{gC}_{\gamma}$ is equal to that of $Q^{gC,\lambda}_{\gamma}$.
 \end{lem}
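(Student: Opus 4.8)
The plan is to use the block decomposition of $Q^{gC}_\gamma$ relative to the $l$-invariant orthogonal splitting $W^I = W^{\lambda,I}\oplus W^{\perp,I}_\lambda$, where $W^{\perp,I}_\lambda$ is spanned by the eigenvectors of $l$ whose eigenvalues have absolute value $\ge\lambda$. This splitting is honest precisely because $\lambda=\lambda_i^{\bullet}$ is not an eigenvalue of $l$ and $p^{\lambda}$ is then a genuine $L^2$-orthogonal projection, so both summands are preserved by $l$. The key observation is that the nonlinear part of $\xgcsql$ is $p^{\lambda}c_{\frakq}$, whose image lies in $W^{\lambda,I}$; hence its linearization along $\gamma$ feeds only into the $W^{\lambda,I}$-component. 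Moreover, the $\lambda$-cutoff locus $(B(2R)\cap W^{\lambda,I})^{\sigma}$ sits inside $W^{\sigma,I}$ as a totally geodesic submanifold — the form part is flat and the spinor part is a sub-sphere of the unit sphere — so the ambient connection $\frac{D^{\sigma}}{dt}$ along the $W^{\lambda,I}$-valued path $\gamma$ respects the splitting. Combining these two facts, $Q^{gC}_\gamma=\frac{D^{\sigma}}{dt}+\cD^{\sigma}_{\gamma(t)}\xgcsql$ is block upper-triangular, with diagonal blocks $Q^{gC,\lambda}_\gamma$ (the restriction to the $\lambda$-cutoff path space) and an operator $R^{\perp}_\gamma$ that equals $\frac{d}{dt}+l|_{W^{\perp,I}_\lambda}$ plus a uniformly bounded zeroth-order term coming from the $\Lambda$-subtraction on the blow-up.

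First I would record that all three operators are Fredholm: for $Q^{gC}_\gamma$ this is Proposition \ref{prop: Fredholm property of Q gc gamma} applied with $\frakq$ replaced by $\frakq^{\lambda}$, the hyperbolicity of the endpoint $\tg$-extended Hessians now being supplied by Propositions \ref{prop:byperbolicity in N} and \ref{prop:hyperbolicity in B(2R)}; for $Q^{gC,\lambda}_\gamma$ the same argument applies verbatim on the finite-dimensional $(W^{\lambda,I})^{\sigma}$; and for $R^{\perp}_\gamma$ Fredholmness follows from invertibility of $l|_{W^{\perp,I}_\lambda}$ modulo a bounded perturbation. Upper-triangularity and additivity of the Fredholm index then give $\mathrm{ind}\,Q^{gC}_\gamma=\mathrm{ind}\,Q^{gC,\lambda}_\gamma+\mathrm{ind}\,R^{\perp}_\gamma$.

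It then remains to show $\mathrm{ind}\,R^{\perp}_\gamma=0$. For $\lambda=\lambda_i^{\bullet}$ sufficiently large, $l|_{W^{\perp,I}_\lambda}$ is self-adjoint with spectrum in $\{|\mu|\ge\lambda\}$, and the bounded correction is controlled uniformly in $\lambda$ by the fixed $C^{1}$-size of $\gamma$ inside $B(2R)$ (together with the bound $\omega$ on spinorial energy in $\cN$); so the operator family $A(t)=l|_{W^{\perp,I}_\lambda}+(\text{correction})(t)$ is self-adjoint and uniformly invertible, with spectral gap around $0$ of size at least $\lambda-C$ for all $t$. Then $R^{\perp}_\gamma=\frac{d}{dt}+A(t)$ is invertible on $L^{2}(\R;W^{\perp,I}_\lambda)$ — there is no spectral flow since no eigenvalue of $A(t)$ crosses $0$ — hence $\mathrm{ind}\,R^{\perp}_\gamma=0$. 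Combining with the previous paragraph gives $\mathrm{ind}\,Q^{gC}_\gamma=\mathrm{ind}\,Q^{gC,\lambda}_\gamma$, which together with Proposition \ref{prop:indentification between f.d. and inf.d gr} identifies the infinite-dimensional and finite-dimensional relative gradings.

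I expect the main obstacle to be making the block decomposition rigorous on the blow-up: the naive splitting $W^{\lambda,I}\oplus W^{\perp,I}_\lambda$ interacts with the unit-norm constraint on the reference spinor and with the scalar $\Lambda$-term, so one must verify that the only failure of block-triangularity is through bounded terms that are moreover negligible compared with $l|_{W^{\perp,I}_\lambda}$ for large $\lambda$. This is exactly the point at which the restriction to the genuine-projection sequence $\{\lambda_i^{\bullet}\}$ and the a priori $C^{1}$-bounds on trajectories in $B(2R)$ (from Subsection \ref{sub:Convergence of stationary points} and \cite[Section 6.3]{Lidman2016TheEO}) are used, and where one transcribes the estimates of \cite[Lemma 9.1.5]{Lidman2016TheEO} to the real, $I$-invariant operators; everything else is the standard argument of that reference.
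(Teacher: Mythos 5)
Your proposal follows the same basic strategy as the paper's (and Lidman--Manolescu's) proof: decompose $W^{I}=W^{\lambda,I}\oplus(W^{\lambda,I})^{\perp}$ using that $\lambda=\lambda_i^{\bullet}$ gives a genuine $L^2$-orthogonal projection, observe that since $\gamma$ lives in $(W^{\lambda,I})^\sigma$ and the nonlinear part $p^{\lambda}c_{\frakq}$ has image in $W^{\lambda,I}$, the operator $Q_\gamma^{gC}$ is block triangular with $Q_\gamma^{gC,\lambda}$ as one diagonal block, and then argue that the other diagonal block is invertible so its index vanishes. (The only cosmetic difference from the paper's wording is upper- vs.\ lower-triangular, which is just the ordering of the two summands.) Where you diverge slightly is in the justification of invertibility of the complementary block $R_\gamma^\perp=\frac{d}{dt}+A(t)$: the paper (deferring to \cite[Lemma 9.1.5]{Lidman2016TheEO}) invokes a contradiction argument, whereas you give a direct spectral-gap argument — the family $A(t)=l|_{(W^{\lambda,I})^{\perp}}+(\text{bounded correction from }\Lambda_{\frakq^{\lambda}})$ is self-adjoint with uniform gap of size $\ge\lambda-C$ around zero once $\lambda$ is large relative to the a priori bounds on $B(2R)$, so $\frac{d}{dt}+A(t)$ has no spectral flow and is invertible. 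Your observation that $\frac{D^\sigma}{dt}$ respects the splitting because the reference spinor $\phi(t)\in W^{\lambda,I}$ and the sphere-connection correction $\langle\frac{d\psi}{dt},\phi\rangle\phi$ vanishes for $\psi\in(W^{\lambda,I})^{\perp}$ is the right way to see the totally geodesic point, and noting that the off-diagonal block is compact (being a projection of $\cD c_{\frakq}$) is what makes the index additivity hold. In short: same decomposition, slightly more explicit and arguably cleaner invertibility step; both routes are valid, and yours trades the contradiction framework for a quantitative gap estimate.
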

 \begin{proof}
     The proof for \cite[Lemma 9.1.5]{Lidman2016TheEO} works for us without any essential change. The main idea is that, by using a slicewise decomposition $W^I=W^{\lambda,I}\oplus (W^{\lambda,I})^\perp$, $Q^{gC,\lambda}_{\gamma}$ appears as a block of $Q^{gC}_{\gamma}$ when it is expressed as a lower-triangular matrix; then it can be shown by contradiction that the other diagonal block is invertible. This identifies the indices of the two operators in question.
 \end{proof}
 In the rest of this subsubsection, we will fix some $\lambda=\lambda_i^{\bullet}\gg 0$ and a reducible stationary point $(a,0)\in B(2R)$ of $\xgcql$. Consider a reducible critical point $[(a,0,\phi)]$ inside $(B(2R)\cap W^{\lambda,I})^{\sigma}/\Z_2$. Let $\kappa(\phi)$ be the corresponding eigenvalue of $D_{\frakq,a}$. We shall see in the next subsection that $\xagcsql$ is an equivariant Morse quasi-gradient. The relative grading can be computed as follows.
 \begin{lem}
     Let $[(a,0,\phi)]$ and $[(a,0,\phi')]$ be stationary points of $\xagcsql$ as above. Assume $\kappa(\phi)>\kappa(\phi')$. Then the relative grading between them computed from the finite-dimensional manifold $(B(2R)\cap W^{\lambda,I})^{\sigma}/\Z_2$ with vector field $\xagcsql$ is given by \[\mathrm{gr}([(a,0,\phi)],[(a,0,\phi')])=\begin{cases} i(\kappa(\phi),\kappa(\phi')) \text{, if the two eigenvalues have the same sign;}\\
     i(\kappa(\phi),\kappa(\phi'))-1 \text{, otherwise.}   
     \end{cases}\]
     Here $i(\kappa(\phi),\kappa(\phi'))$ denote the number of eigenvalues in between $\kappa(\phi)$ and $\kappa(\phi')$ plus one.
 \end{lem}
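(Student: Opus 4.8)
The plan is to reduce the relative grading computation to the index of the operator $Q^{gC,\lambda}_\gamma$ restricted to a reducible path, which by Lemma \ref{lem:index of Q gamma gc} and Proposition \ref{prop:indentification between f.d. and inf.d gr} is the relative Morse grading in the finite-dimensional manifold $(B(2R)\cap W^{\lambda,I})^\sigma/\Z_2$. At a reducible stationary point $[(a,0,\phi)]$ the operator $Q^{gC,\lambda}_\gamma$ decouples into a form part and a spinor part, since the blow-up Seiberg-Witten vector field $\xgcsql$ linearized at a reducible splits along $T(W^{\lambda,I})^\sigma = (\text{form directions}) \oplus \langle\phi\rangle^\perp$. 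The form part contributes nothing to the relative grading between $[(a,0,\phi)]$ and $[(a,0,\phi')]$, because both reducibles share the same blow-down $(a,0)$, so the form summands of their Hessians agree. Hence the entire relative grading comes from the spinor part, i.e.\ from the spectral flow of the family of perturbed Dirac operators $D_{\frakq,a}$ acting on the normal bundle $N_{(a,0)}$, exactly as in the model situation of Lemma \ref{lem:index of reducible in f.d. for Morse quasi gradient}.

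First I would invoke the fact, to be established in the next subsection, that $\xagcsql$ is an equivariant Morse quasi-gradient on $(B(2R)\cap W^{\lambda,I})^\sigma/\Z_2$; in particular at the reducible locus the operator $L_{(a,0)}$ playing the role of $L_q$ is exactly (a compression of) $D_{\frakq,a}$, which is self-adjoint with simple real spectrum by Proposition \ref{prop:hyperbolicity in B(2R)} and the non-degeneracy hypothesis. Then Lemma \ref{lem:index of reducible in f.d. for Morse quasi gradient} applies verbatim: labelling the eigenvalues of $D_{\frakq,a}$ (restricted to $W^{\lambda,I}$) in increasing order, the index of the stationary point $[(a,0,\phi)]$ over the base point $(a,0)\in Q$ is $\mathrm{ind}_Q((a,0)) + (j-1)$ if $\kappa(\phi) = \lambda_j > 0$ and $\mathrm{ind}_Q((a,0)) + j$ if $\kappa(\phi) = \lambda_j < 0$, and likewise for $[(a,0,\phi')]$.

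Subtracting the two expressions gives the claimed formula. If $\kappa(\phi)$ and $\kappa(\phi')$ have the same sign, the $\mathrm{ind}_Q$ terms and the $\pm 1$ corrections cancel, leaving the difference of eigenvalue-positions, which is precisely $i(\kappa(\phi),\kappa(\phi'))$ — the number of eigenvalues strictly between them plus one (this "plus one" accounts for the off-by-one in the difference of the ordinal labels). If the two eigenvalues have opposite signs, exactly one of the two stationary points carries the extra $+1$ from the $\lambda_i < 0$ case, producing the additional $-1$ in the formula.

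The only genuine subtlety — and the step I expect to require the most care — is verifying that the form-direction part of the Hessian contributes identically for the two reducibles and that the decoupling is clean in the cut-off global Coulomb slice; this is where one needs the fact that $p^\lambda$ is a genuine $L^2$-orthogonal projection (so we restrict to $\lambda = \lambda_i^\bullet$, as already assumed) so that $D + (p^\lambda)^1 \cD_{(a,0)}c_\frakq(0,\cdot)$ is self-adjoint, matching the setup of Proposition \ref{prop:hyperbolicity in B(2R)}. Everything else is a direct transcription of the $S^1$-equivariant computation in \cite[Section 9]{Lidman2016TheEO}, with $\Z[U]$ replaced by $\F[v]$ and complex eigenspaces replaced by one-dimensional real ones.
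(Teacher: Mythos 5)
Your proof is correct and fills in what the paper dismisses as ``classical Morse theory'': the core step is exactly the application of Lemma~\ref{lem:index of reducible in f.d. for Morse quasi gradient} to the two boundary stationary points over the same base $(a,0)\in Q$, and your sign/off-by-one bookkeeping at the end matches the statement. One small streamlining: your opening paragraph invoking $Q^{gC,\lambda}_\gamma$, Lemma~\ref{lem:index of Q gamma gc}, and Proposition~\ref{prop:indentification between f.d. and inf.d gr} is circular and unnecessary here --- the lemma is phrased entirely in terms of the finite-dimensional Morse index of $\xagcsql$, so once one knows $\xagcsql$ is a $\Z_2$-equivariant Morse quasi-gradient, one can apply Lemma~\ref{lem:index of reducible in f.d. for Morse quasi gradient} directly without routing through the infinite-dimensional operator; likewise, the cancellation of the form-direction contribution is already packaged in the $\mathrm{ind}_Q(q)$ term of that lemma and does not need to be rederived from a decoupling of the Hessian.
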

\begin{proof}
    This comes from classical Morse theory.
\end{proof}
\begin{lem}\label{lem:grading comparison in and outside N}
    Suppose that $[(a,0,\phi)]$ is a stationary point of $\xagcsql$ that lies in $(B(2R)\cap W^{\lambda,I})^{\sigma}/\Z_2$ but not in $\cN/\Z_2$.\begin{enumerate}
        \item If $\kappa(\phi)>0$, then for all stationary points of the form $[(a,0,\phi')]$ that are contained in $\cN/\Z_2$, we have $\mathrm{gr}([(a,0,\phi)],[(a,0,\phi')])\ge 1$.
        \item If $\kappa(\phi)<0$, then for all stationary points of the form $[(a,0,\phi')]$ that are contained in $\cN/\Z_2$, we have $\mathrm{gr}([(a,0,\phi)],[(a,0,\phi')])\le -1$.
    \end{enumerate}
\end{lem}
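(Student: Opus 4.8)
The plan is to reduce the statement to the eigenvalue-counting formula of the preceding lemma together with the additivity (hence antisymmetry) of the relative grading. Fix the reducible blow-down $(a,0)$ and list the eigenvalues of the associated perturbed Dirac operator as $\dots<\mu_{-2}<\mu_{-1}<0<\kappa_1<\kappa_2<\dots$ (there is no zero eigenvalue, by non-degeneracy). The first step is to identify which of the corresponding reducible stationary points $[(a,0,\psi)]$ of $\xagcsql$ lie in $\cN/\Z_2$. Since such a point has spinorial energy $\Lambda_{\frakq^{\lambda}}(a,0,\psi)=\kappa(\psi)$ (as $\|\psi\|_{L^2}=1$), and since $\cN$ was characterized earlier via spinorial energy, for $\lambda$ large the in-$\cN$ reducibles over $(a,0)$ are exactly those with $\kappa(\psi)$ in a fixed interval $[-\omega_1,\omega_2]$ with $\omega_1,\omega_2>0$; because enlarging $N$ enlarges this interval and we have taken $N$ large, I may assume $[-\omega_1,\omega_2]$ contains $\kappa_1$ and $\mu_{-1}$. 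Hence the in-$\cN$ positive eigenvalues form an initial segment $\kappa_1<\dots<\kappa_n$ of the positive spectrum, and the in-$\cN$ negative eigenvalues include $\mu_{-1}$. Consequently, if $[(a,0,\phi)]$ is a reducible stationary point of $\xagcsql$ in $(B(2R)\cap W^{\lambda,I})^{\sigma}/\Z_2$ that is not in $\cN/\Z_2$, then $\kappa(\phi)\notin[-\omega_1,\omega_2]$, so $\kappa(\phi)>0$ forces $\kappa(\phi)$ to exceed every in-$\cN$ eigenvalue, while $\kappa(\phi)<0$ forces $\kappa(\phi)$ to lie below every in-$\cN$ eigenvalue.

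Suppose first $\kappa(\phi)>0$, so $\kappa(\phi)>\kappa(\phi')$ for every reducible $[(a,0,\phi')]$ in $\cN/\Z_2$; the preceding lemma then computes $\mathrm{gr}([(a,0,\phi)],[(a,0,\phi')])$ directly. If $\kappa(\phi')>0$ the value is $i(\kappa(\phi),\kappa(\phi'))\ge 1$. If $\kappa(\phi')<0$ the value is $i(\kappa(\phi),\kappa(\phi'))-1$, but $\kappa(\phi')<0<\kappa_1<\kappa(\phi)$ exhibits $\kappa_1$ strictly between $\kappa(\phi')$ and $\kappa(\phi)$, so $i(\kappa(\phi),\kappa(\phi'))\ge 2$ and the value is again $\ge 1$. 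This proves (1).

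Now suppose $\kappa(\phi)<0$, so $\kappa(\phi)<\kappa(\phi')$ for every such $[(a,0,\phi')]$. By antisymmetry it is enough to show $\mathrm{gr}([(a,0,\phi')],[(a,0,\phi)])\ge 1$, and the preceding lemma applies with the eigenvalues in the correct order. If $\kappa(\phi')<0$ the value is $i(\kappa(\phi'),\kappa(\phi))\ge 1$. If $\kappa(\phi')>0$ the value is $i(\kappa(\phi'),\kappa(\phi))-1$, and $\kappa(\phi)<\mu_{-1}<0<\kappa(\phi')$ exhibits $\mu_{-1}$ strictly between, so $i(\kappa(\phi'),\kappa(\phi))\ge 2$ and the value is $\ge 1$. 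Hence $\mathrm{gr}([(a,0,\phi)],[(a,0,\phi')])\le -1$, proving (2).

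The only point needing real care is the structural input of the first paragraph: that an out-of-$\cN$ reducible with $\kappa(\phi)>0$ sits strictly above the entire in-$\cN$ block of eigenvalues (and symmetrically for $\kappa(\phi)<0$), and that this block straddles $0$ so that $\kappa_1$ and $\mu_{-1}$ are available as separating eigenvalues in the mixed-sign subcases. This is exactly where the choice of $N$ large and the earlier description of $\cN$ in terms of spinorial energy are used; once that is granted, the remainder is the classical eigenvalue count already packaged in the preceding lemma, and I expect this bookkeeping to be the main---indeed the only---obstacle.
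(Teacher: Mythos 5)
Your proof is correct and follows the same strategy the paper delegates to \cite[Lemma 9.1.7]{Lidman2016TheEO}: combine the eigenvalue-counting formula of the preceding lemma with the observation (forced by the choice of $N$ and the equality of the Morse indices of the lowest-positive and highest-negative reducibles over a fixed $a$) that the in-$\cN$ reducible eigenvalues form an interval $[-\omega_1,\omega_2]$ straddling zero, so that $\kappa_1$ and $\mu_{-1}$ serve as the needed separating eigenvalues in the mixed-sign subcases. The structural point you flag at the end—that $\mu_{-1}$ must lie in the interval, which the paper never states explicitly but which follows from its assumption on $\kappa_1$ together with the equality of Morse indices—is precisely what makes the argument close, and you have identified and justified it correctly.
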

\begin{proof}
    We have done enough preparations, so the proof of \cite[Lemma 9.1.7]{Lidman2016TheEO} works for us.
\end{proof}
\begin{cor}\label{cor:identification of lowest positive reducible}
Let $x$ be a reducible stationary point of $\xagcsq$ that has the lowest positive eigenvalue among all reducible stationary points with some fixed connection component in $(B(2R)\cap W^{\lambda,I})^{\sigma}$. Then $[x_{\lambda}]$ is the reducible stationary point of $\xagcsql$ with the lowest positive eigenvalue among all reducible stationary points with a fixed connection component in $(B(2R)\cap W^{\lambda,I})^{\sigma}/\Z_2$.
\end{cor}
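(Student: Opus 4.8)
The plan is to read off the statement from the correspondence $\Xi_{\lambda}$ together with the two grading formulas already established for reducibles lying over a fixed connection component. Write $x=(a_{\infty},0,\phi_{\infty})$, where $(a_{\infty},0)$ is the blow-down connection — a reducible stationary point of $\xgcq$ in $B(2R)$ — and $\phi_{\infty}$ is the unit eigenvector of $D_{\frakq,a_{\infty}}$ with smallest positive eigenvalue. By our standing choice of $N$ (made so that $\cN$ contains every such lowest-positive reducible), $[x]$ lies in $\cN/\Z_{2}$, so $[x_{\lambda}]:=\Xi_{\lambda}^{-1}([x])$ is defined for $\lambda\gg 0$ and lies in $\cN/\Z_{2}$. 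Let $(a_{\lambda},0)$ be the reducible stationary point of $\xgcql$ corresponding to $(a_{\infty},0)$ under \cite[Lemma 7.2.4]{Lidman2016TheEO}; since $[x_{\lambda}]$ stays close to $[x]$, its blow-down connection is close to $a_{\infty}$ and hence, by the uniqueness in that lemma, equals $a_{\lambda}$, so we may write $[x_{\lambda}]=(a_{\lambda},0,\phi_{\lambda})$. Because $\Xi_{\lambda}$ preserves the type of stationary points, $[x_{\lambda}]$ is boundary stable, i.e. $\kappa(\phi_{\lambda})>0$.

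Now suppose for contradiction that some reducible $[(a_{\lambda},0,\psi)]$ of $\xagcsql$ in $(B(2R)\cap W^{\lambda,I})^{\sigma}/\Z_{2}$ satisfies $0<\kappa(\psi)<\kappa(\phi_{\lambda})$. By the Morse-theoretic grading formula for reducibles over a fixed connection (the lemma preceding Lemma \ref{lem:grading comparison in and outside N}), applied in the finite-dimensional manifold, since $\kappa(\phi_{\lambda})>\kappa(\psi)$ have the same sign,
\[ \mathrm{gr}\,([(a_{\lambda},0,\phi_{\lambda})],[(a_{\lambda},0,\psi)]) = i(\kappa(\phi_{\lambda}),\kappa(\psi)) \ge 1 . \]
If $[(a_{\lambda},0,\psi)]\notin\cN/\Z_{2}$, then, $\kappa(\psi)$ being positive, Lemma \ref{lem:grading comparison in and outside N}(1) (with $\phi'=\phi_{\lambda}$) gives $\mathrm{gr}\,([(a_{\lambda},0,\psi)],[(a_{\lambda},0,\phi_{\lambda})])\ge 1$, i.e. $\mathrm{gr}\,([(a_{\lambda},0,\phi_{\lambda})],[(a_{\lambda},0,\psi)])\le -1$, contradicting the display. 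Hence $[(a_{\lambda},0,\psi)]\in\cN/\Z_{2}$, and we may apply $\Xi_{\lambda}$: it preserves type and relative grading (the corollary following the relative-grading lemma, the finite- and infinite-dimensional gradings being identified by Proposition \ref{prop:indentification between f.d. and inf.d gr}) and is compatible with the connection correspondence, so $\Xi_{\lambda}([(a_{\lambda},0,\psi)])=[(a_{\infty},0,\psi_{\infty})]$ for some unit eigenvector $\psi_{\infty}$ of $D_{\frakq,a_{\infty}}$ with $\kappa(\psi_{\infty})>0$, and $\mathrm{gr}\,([(a_{\infty},0,\phi_{\infty})],[(a_{\infty},0,\psi_{\infty})])=\mathrm{gr}\,([(a_{\lambda},0,\phi_{\lambda})],[(a_{\lambda},0,\psi)])\ge 1$. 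Applying the eigenvalue-counting formula for relative gradings of reducibles over $a_{\infty}$ in the uncut-off setting — which has the same form as the finite-dimensional one — this forces $\kappa(\phi_{\infty})>\kappa(\psi_{\infty})>0$, contradicting the minimality of $\kappa(\phi_{\infty})$. Therefore no such $\psi$ exists, and $[x_{\lambda}]=(a_{\lambda},0,\phi_{\lambda})$ has the lowest positive eigenvalue among all reducibles over $a_{\lambda}$ in $(B(2R)\cap W^{\lambda,I})^{\sigma}/\Z_{2}$.

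The main obstacle is the bookkeeping in the $\cN$-case rather than any serious analysis: one must verify that $\Xi_{\lambda}$ is genuinely compatible with the \cite[Lemma 7.2.4]{Lidman2016TheEO} connection correspondence — so that $\psi_{\infty}$ really sits over $a_{\infty}$ and is thus a legitimate competitor for ``lowest positive eigenvalue'' — and that the grading preserved by $\Xi_{\lambda}$, a priori the infinite-dimensional index $\mathrm{gr}([x_{\infty}],[y_{\infty}])$, is the same grading to which the finite-dimensional Morse formula applies. The first point follows from the uniqueness in the implicit function theorem argument underlying $\Xi_{\lambda}$ (Proposition 7.2.2) and the second from Proposition \ref{prop:indentification between f.d. and inf.d gr}; with these in hand the remainder is a direct sign comparison between the two grading formulas and Lemma \ref{lem:grading comparison in and outside N}.
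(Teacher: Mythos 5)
Your proof is correct and takes essentially the approach the paper intends (the paper states this corollary without an explicit proof). The case analysis — using the finite-dimensional eigenvalue-counting formula over $a_{\lambda}$, Lemma \ref{lem:grading comparison in and outside N}(1) to dispose of competitors outside $\cN/\Z_2$, and the grading-preserving correspondence $\Xi_{\lambda}$ together with minimality of $\kappa(\phi_{\infty})$ to dispose of competitors inside $\cN/\Z_2$ — is exactly what the surrounding lemmas are set up to deliver, and your closing remarks correctly identify the two bookkeeping points (compatibility of $\Xi_\lambda$ with the connection correspondence from \cite[Lemma 7.2.4]{Lidman2016TheEO}, and the finite-/infinite-dimensional grading identification of Proposition \ref{prop:indentification between f.d. and inf.d gr}) that make the argument go through.
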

\subsubsection{Absolute grading}
In our main theorem, we claimed that the isomorphism between $\widetilde{H}^{\Z_2}_*(\mathit{SWF}_{\Z_2}(Y,\iota,\s))$ and $\widecheck{\mathit{HMR}}_*(Y,\iota,\s)$ respects the absolute grading when it is defined on both sides. Because our proving strategy uses $\widecheck{CMR}^{\lambda}$ as an intermediate step, we need to define an absolute grading for stationary points of $\xagcsql$ in $(B(2R)\cap W^{\lambda,I})^{\sigma}$. We will see that $\xagcsql$ is an equivariant quasi-gradient flow, so its Morse complex computes the reduced $\Z_2$-equivariant homology for $I^{\lambda}$ in a certain range. Recall that in Subsection \ref{sub:Real Seiberg-Witten homotopy type}, we defined $\mathit{SWF}_{\Z_2}(Y,\iota,\s)=\Sigma^{-\mathrm{dim}(V^{0}_{-\lambda})\R} \Sigma^{-(\mathrm{dim}(U^{0}_{-\lambda})+n^R(Y,\iota,\s,g) )\Tilde{\R}} I^{\lambda}$.  This motivates us to define \[\mathrm{gr}_{\lambda}^{\mathit{SWF}}([x_\lambda])=\mathrm{ind}([x_\lambda]\text{ in  } (W^{\lambda,I})^\sigma/\Z_2)-\mathrm{dim}(V^{0}_{-\lambda})-\mathrm{dim}(U^{0}_{-\lambda})-n^R(Y,\iota,\s,g).\] Equipped with this absolute grading, the Morse complex of $\xagcsql$ computes $\widetilde{H}^{\Z_2}_*(\mathit{SWF}_{\Z_2}(Y,\iota,\s))$ in a certain grading range (according to connectivity of $I^{\lambda}$.) Therefore, what we need to do is relate this with $\mathrm{gr}^{\Q}$ defined in Subsection \ref{subsub:Gradings}.
\begin{prop}
    For any $\lambda=\lambda_i^{\bullet}\gg 0$ and $[x]\in \frC$, we have \[\mathrm{gr}_{\lambda}^{\mathit{SWF}}([x_\lambda])=\mathrm{gr}^{\Q}([x]).\] 
\end{prop}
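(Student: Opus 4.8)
The plan is to use that $\mathrm{gr}_{\lambda}^{\mathit{SWF}}$ and $\mathrm{gr}^{\Q}$ are both absolute lifts of \emph{one and the same} relative grading, so that their difference is a single global constant, which I then pin down by evaluating at a cleverly chosen reducible. Precisely: by the corollary that $\Xi_{\lambda}$ preserves relative grading, Proposition~\ref{prop:indentification between f.d. and inf.d gr}, and the identification of relative gradings in Subsection~\ref{sub: Gradings and U-action}, for all $[x],[y]\in\frC$ within the grading window where everything is defined we have
\[
\mathrm{gr}_{\lambda}^{\mathit{SWF}}([x_{\lambda}])-\mathrm{gr}_{\lambda}^{\mathit{SWF}}([y_{\lambda}])=\mathrm{gr}(x,y)=\mathrm{gr}^{\Q}([x])-\mathrm{gr}^{\Q}([y]),
\]
so that $c:=\mathrm{gr}_{\lambda}^{\mathit{SWF}}([x_{\lambda}])-\mathrm{gr}^{\Q}([x])$ does not depend on $[x]$, and it suffices to prove $c=0$ for each $\lambda=\lambda_{i}^{\bullet}\gg0$. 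I would evaluate at the reducible $[x_{0}]\in\frC^{s,I}$ represented by the eigenvector of the smallest positive eigenvalue of the perturbed Dirac operator $D_{\frakq,0}$ at the unique reducible solution $a_{0}=0$ on $Y$ (uniqueness uses $H^{1}(Y;\R)=0$); by Corollary~\ref{cor:identification of lowest positive reducible}, $[x_{0,\lambda}]$ is then the lowest positive reducible of $\xagcsql$ inside $(B(2R)\cap W^{\lambda,I})^{\sigma}/\Z_{2}$.

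Next I compute $\mathrm{gr}_{\lambda}^{\mathit{SWF}}([x_{0,\lambda}])$ by hand. Choosing the admissible perturbation so that its form part vanishes at the reducibles (automatic for the standard cylinder functions, whose form part is quadratic in the spinor), the fixed set $Q\subset B(2R)\cap W^{\lambda,I}$ is the reducible locus, $\mathrm{ind}_{Q}((0,0))=\dim V^{0}_{-\lambda}$, and the normal operator $L_{(0,0)}$ is the cut-off operator $D_{\frakq^{\lambda},0}$ on $\Gamma(\bS)^{\tau}\cap W^{\lambda}$. Writing $m(\lambda)$ for the number of negative eigenvalues of $D_{\frakq^{\lambda},0}$ and applying Lemma~\ref{lem:index of reducible in f.d. for Morse quasi gradient} to the lowest positive eigenvector (which occupies position $m(\lambda)+1$), one gets $\mathrm{ind}\bigl([x_{0,\lambda}]\text{ in }(W^{\lambda,I})^{\sigma}/\Z_{2}\bigr)=\dim V^{0}_{-\lambda}+m(\lambda)$, hence
\[
\mathrm{gr}_{\lambda}^{\mathit{SWF}}([x_{0,\lambda}])=m(\lambda)-\dim U^{0}_{-\lambda}-n^{R}(Y,\iota,\s,g).
\]
For $\lambda=\lambda_{i}^{\bullet}\gg0$ the difference $m(\lambda)-\dim U^{0}_{-\lambda}$ stabilizes to a fixed integer $\mathrm{sf}$, the spectral flow of the path $t\mapsto D+t(D_{\frakq,0}-D)$ across $0$, so $\mathrm{gr}_{\lambda}^{\mathit{SWF}}([x_{0,\lambda}])=\mathrm{sf}-n^{R}(Y,\iota,\s,g)$.

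For the remaining side I use the real $\mathrm{spin^c}$ bound description of $n^{R}$. Fix a real $\mathrm{spin^c}$ four-manifold $(W,\iota,\s')$ bounding $(Y,\iota,\s)$ with an equivariant metric extending $g$, and puncture it to a real cobordism $W^{\circ}\colon S^{3}\to Y$. By the definition of $\mathrm{gr}^{\Q}$ in Subsection~\ref{subsub:Gradings}, $\mathrm{gr}^{\Q}([x_{0}])=-\mathrm{gr}_{z}([x_{0}^{S^{3}}],W^{\circ},[x_{0}])+\tfrac18(c_{1}(\s')^{2}-\sigma(W))+\iota^{R}(W)$, and $\mathrm{gr}_{z}$ of the two reducibles is $\mathrm{ind}\,Q_{\gamma}$ for a reducible path on $W^{\circ}$. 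Splitting $Q_{\gamma}$ into its form part and its Dirac part, in the $\tau$-invariant form of the excision/APS computation of \cite[Section~27]{Kronheimer_Mrowka_2007} carried out in \cite[Section~8]{li2022monopolefloerhomologyreal}, the form part contributes $-\bigl(b^{1}_{-\iota^{*}}(W)-b^{+}_{-\iota^{*}}(W)-b^{0}_{-\iota^{*}}(W)\bigr)=-\iota^{R}(W)$, while the Dirac part is $\mathrm{ind}_{\R}^{\tau}(D^{+}_{W})$ (APS conditions at the lowest positive reducibles on both ends) corrected by $\mathrm{sf}$ at the $Y$-end and by a fixed quantity at the $S^{3}$-end which the normalization $\mathrm{gr}^{\Q}([x_{0}^{S^{3}}])=0$ on $S^{3}$ forces to be zero. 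Combining these with $n^{R}(Y,\iota,\s,g)=\mathrm{ind}_{\R}^{\tau}(D^{+}_{W})-\tfrac18(c_{1}^{2}(\s')-\sigma(W))$ from Remark~\ref{rmk: different definitions of correction terms}(1) yields $\mathrm{gr}^{\Q}([x_{0}])=\mathrm{sf}-n^{R}(Y,\iota,\s,g)=\mathrm{gr}_{\lambda}^{\mathit{SWF}}([x_{0,\lambda}])$, whence $c=0$.

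The hard part will be the index bookkeeping in the third step: splitting $\mathrm{gr}_{z}$ across a real $\mathrm{spin^c}$ cobordism into a form index equal to $\iota^{R}(W)$, a Dirac index $\mathrm{ind}_{\R}^{\tau}(D^{+}_{W})$, and boundary spectral-flow corrections, with all orientation and sign conventions ($b^{+}_{-\iota^{*}}$, the direction of the APS deformation, the base point on $S^{3}$) matched to those of \cite{li2022monopolefloerhomologyreal} and \cite{Konno2024}, exactly as in \cite[Section~9]{Lidman2016TheEO} for the $S^{1}$-theory. Everything else is routine: the constancy of $c$ is formal, the finite-dimensional Morse-index computation is classical given Lemma~\ref{lem:index of reducible in f.d. for Morse quasi gradient} and Corollary~\ref{cor:identification of lowest positive reducible}, and the stabilization of the spectral flow together with the identity $\mathrm{ind}_{Q}((0,0))=\dim V^{0}_{-\lambda}$ each need only a line or two.
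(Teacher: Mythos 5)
Your overall strategy matches the paper's: reduce to a single reducible via the relative-grading identification and compute both sides explicitly. But two shortcuts you propose are not justified and the paper deliberately avoids them.

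First, you assume you can take an admissible perturbation whose form part (and its derivative in the connection direction) vanishes at reducibles, so that $\mathrm{ind}_{Q}((0,0))=\dim V^{0}_{-\lambda}$. This is not automatic for the cylinder functions: a general cylinder function has the form $g(\vec r,\vec q)$ and its formal gradient's form component involves $\sum_i(\partial g/\partial r_i)\,\nu_i$, whose derivative in the connection direction at a reducible involves second derivatives $\partial^2 g/\partial r_i\partial r_k$ and is generically nonzero. Restricting to perturbations that avoid this would impose an additional constraint on the class of perturbations, and you would then have to re-verify admissibility (both non-degeneracy and moduli regularity) within that restricted class, which the paper's framework does not provide. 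The paper sidesteps this entirely: it does \emph{not} assume $\mathrm{SF}(\frakq)^{0}=0$, but instead tracks both spectral-flow corrections $\mathrm{SF}(\frakq)^{0}$ and $\mathrm{SF}(\frakq)^{1}$ explicitly and shows they appear on both sides and cancel. Concretely, the paper reduces the proposition to the identity
\[
\dim V^{0}_{-\lambda}+\dim U^{0}_{-\lambda}-\mathrm{ind}\bigl([x_{\lambda}]\ \text{in}\ (W^{\lambda,I})^{\sigma}/\Z_{2}\bigr)=\mathrm{SF}(\frakq)^{0}+\mathrm{SF}(\frakq)^{1},
\]
proved by interpreting the left side as a spectral flow from $l$ to $l+p^{\lambda}A_{\lambda}p^{\lambda}$ and interpolating to $l+A_{\infty}$ with an injectivity argument. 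Your sketch, by positing $\mathrm{SF}(\frakq)^{0}=0$, skips this entire last step, and the claim that the choice of perturbation is free to make that so is the genuine gap.

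Second, you work with the punctured cobordism $W^{\circ}\colon S^{3}\to Y$ and invoke a normalization argument to kill a residual spectral-flow contribution at the $S^{3}$ end. The paper instead regards $W$ as a filling with a single boundary component $Y$ (it explicitly remarks that the formula differs slightly from the definition in Subsection~\ref{subsub:Gradings} for this reason), which avoids the $S^{3}$ end altogether; your $S^{3}$ normalization step, as written, is hand-waving and would require a separate verification that the spectral flow on the $S^{3}$ side vanishes for the chosen perturbation there. Finally, be careful with the sign of the spectral flow: in the paper's convention, $\mathrm{gr}^{\Q}([x_{0}])=-n^{R}-\mathrm{SF}(\frakq)^{0}-\mathrm{SF}(\frakq)^{1}$, whereas your formula $\mathrm{gr}_{\lambda}^{\mathit{SWF}}([x_{0,\lambda}])=m(\lambda)-\dim U^{0}_{-\lambda}-n^{R}$ requires $m(\lambda)-\dim U^{0}_{-\lambda}=-\mathrm{SF}(\frakq)^{1}$, not $+\mathrm{SF}(\frakq)^{1}$; the direction of the path defining the spectral flow must be kept consistent with \cite[Section~28]{Kronheimer_Mrowka_2007}.
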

\begin{proof}
We follow the strategy of \cite[Proposition 9.2.1]{Lidman2016TheEO}.
Since we have already identified relative gradings in the previous subsection, we only need to show the absolute gradings of some specified generator are the same. We do this for $[x]=[(a,0,\phi)]\in \frC^{s,I}$, which corresponds to the lowest positive eigenvalue of $D_{\frakq,a}$. Pick a real $\mathrm{spin^c}$ bound $(W,\iota,\s)$ for $(Y,\iota,\s)$. (We abuse the same notations for three- and four-dimensional involutions and real $\mathrm{spin^c}$ structures.) Recall from Subsection \ref{subsub:Gradings} that only when such a $W$ exists, the $\mathrm{gr}^{\Q}$ on $\widecheck{\mathit{HMR}}$ is defined. In this case, \[\mathrm{gr}^{\Q}([x])=-\mathrm{gr}_z(W,[x])+\frac{1}{8}(c_1(\s)^2-\sigma(W))+\iota^R(W).\]
The formula here is a little different from that one in Subsection \ref{subsub:Gradings}, since we punctured $W$ to a cobordism from $S^3$ to $Y$ at that time, but now we regard $W$ as manifold with a single boundary component.
Recall that \[n^R(Y,\iota,\s,g)=\mathrm{ind}_{\C}(D^+_A)-1/8 (c_1^2(\s)-\sigma(X)),\]  and $\mathrm{ind}_{\C}(D^+_A)=\mathrm{ind}^{\tau}_{\R}(D^+_A)$, so what we want to show is \[\mathrm{gr}_z(W,[x])=\mathrm{ind}^{\tau}_{\R}(D^+_A)+b^1_{-\iota^*}(W)-b^+_{-\iota^*}(W)-b^0_{-\iota^*}(W)-\mathrm{ind}([x_\lambda]\text{ in  } (W^{\lambda,I})^\sigma/\Z_2)-\mathrm{dim}(V^{0}_{-\lambda})-\mathrm{dim}(U^{0}_{-\lambda}).\]

As in \cite[Lemma 28.3.2]{Kronheimer_Mrowka_2007}, we compute $\mathrm{gr}_z(W,[x])$ using a reducible configuration. Two parts contribute to this: the perturbed signature operator and the perturbed Dirac operator acting on $-\iota^*$-fixed and $\tau$-fixed parts of the original domains, respectively. The index of the former is $b^1_{-\iota^*}(W)-b^+_{-\iota^*}(W)-b^0_{-\iota^*}(W)$ if we ignore the perturbation. But now we need to deal with the difference, which is given by the index of the signature operator on $[0,1]\times Y$ with boundary data $(0,0)$ and $(\frakq,a)$. This can be computed from the spectral flow of the family \[\begin{bmatrix}
    0& -d^*\\
    -d& *d+2t\cD_{(ta,0)}\frakq^0\\
\end{bmatrix}:(\Omega^0(Y;i\R)\oplus\Omega^1(Y;i\R))^{-\iota^*}\to (\Omega^0(Y;i\R)\oplus\Omega^1(Y;i\R))^{-\iota^*},\text{ } t\in [0,1].\]

By taking a compact perturbation, we can reduce this to \[\begin{bmatrix}
    0& -d^*\\
    -d& *d+2t\cD_{(a,0)}\frakq^0\\
\end{bmatrix}, \text{ } t\in [0,1].\] 

Since $(a,0)$ is a stationary point in the blow-down, we have $\cD_{(a,0)}\frakq^0=\begin{bmatrix}
    0&0\\
    0& \cD_{(a,0)}\eta_{\frakq}^0
\end{bmatrix}$ when we decompose $-\iota^*$-invariant imaginary one forms into $(\mathrm{ker}d)^{-\iota^*}\oplus(\mathrm{ker}d^*)^{-\iota^*}$. Using this block form, only \[*d+2t\cD_{(a,0)}\eta_{\frakq}^0: (\mathrm{ker}d^*)^{-\iota^*}\to (\mathrm{ker}d^*)^{-\iota^*},\text{ } t\in [0,1]\]contributes to the spectral flow, we shall denote it by $\mathrm{SF}(\frakq)^0$.

The contribution from the perturbed Dirac operator is $\mathrm{ind}_{\R}^{\tau}(D_{\frakq,a}^+-\lambda_0)$ for $D_{\frakq,a}^+$, the APS operator with $D_{\frakq,a}$ on boundary and $\lambda_0$ the eigenvalue of $D_{\frakq,a}$ at $x$. Since $\lambda_0$ is the smallest positive eigenvalue, $D_{\frakq,a}^+$ shares the same domain with $D_A^+$ appear in the definition of $n^R$. Furthermore, $D_{\frakq,a}^+-\lambda_0$ and $D_{\frakq,a}^+$ differ by a constant, hence compact term, so they have the same real index. Note that \[\mathrm{ind}_{\R}^{\tau}(D_{\frakq,a}^+)-\mathrm{ind}_{\R}^{\tau}(D^+)=\mathrm{SF}(\frakq)^1,\] the spectral flow of the real part of perturbed Dirac operator on $Y$ moving from $(0,0)$ to $(\frakq,a)$. Since $(a,0)$ is a reducible stationary  point, we have $D^{gC}_{\frakq,a}=D_{\frakq,a}$, so this can be computed in Coulomb gauge.

In conclusion, we have \[\mathrm{gr}_z(W,[x])=\mathrm{ind}^{\tau}_{\R}(D^+_A)+b^1_{-\iota^*}(W)-b^+_{-\iota^*}(W)-b^0_{-\iota^*}(W)+\mathrm{SF}(\frakq)^0+\mathrm{SF}(\frakq)^1.\] 

It remains to show \[\mathrm{dim}(V^{0}_{-\lambda})+\mathrm{dim}(U^{0}_{-\lambda})-\mathrm{ind}([x_\lambda]\text{ in  } (W^{\lambda,I})^\sigma/\Z_2)=\mathrm{SF}(\frakq)^0+\mathrm{SF}(\frakq)^1.\]

We will analyze each term in this equation. $\mathrm{dim}(V^{0}_{-\lambda})+\mathrm{dim}(U^{0}_{-\lambda})$ is the number of eigenvalues of $l$ in $(-\lambda,0)$. The term $\mathrm{ind}([x_\lambda]\text{ in  } (W^{\lambda,I})^\sigma/\Z_2)$ is more complicated.  Let $x_{\lambda}=(a_{\lambda},0,\phi_{\lambda})$. By Corollary \ref{cor:identification of lowest positive reducible}, we know that $[x_{\lambda}]$ has the lowest positive eigenvalue among all stationary points of $\xagcsql$ in $(B(2R)\cap W^{\lambda,I})^\sigma$ with connection component $a_{\lambda}$. Two parts contribute to this index: the number of negative eigenvalues of the linearization of $l+p^{\lambda}c_{\frakq}$ restricted to connection part of $W^{\lambda,I}$, i.e., $*d+\cD_{(a_{\lambda,0})}(p^{\lambda}c_{\frakq})^0(\cdot,0)$ and the number of negative eigenvalues of the linearization of $l+p^{\lambda}c_{\frakq}$ restricted to spinorial summand of $W^{\lambda,I}$, i.e., $D+\cD_{(a_{\lambda,0})}(p^{\lambda}c_{\frakq})^1(0,\cdot)$. In summary, the left-hand side of the equation above is the spectral flow from $l$ to $l+p^\lambda A_{\lambda}$ for\[A_{\lambda}(b,\psi)=(\cD_{(a_{\lambda,0})}(p^{\lambda}c_{\frakq})^0(b,0),\cD_{(a_{\lambda,0})}(p^{\lambda}c_{\frakq})^1(0,\psi)),\] acting on $W^{\lambda,I}$. Recall that we require $\lambda=\lambda_i^{\bullet}\gg 0$, so this is the same as the spectral flow from $l$ to $l+p^\lambda A_{\lambda}p^\lambda$ considering as operators from $W^{I}_k$ to $W_{k-1}^I$. Now, we reduce the problem to showing that there is no spectral flow between $l+p^\lambda A_{\lambda}p^\lambda$ and $l+ A_{\infty}$ as operators from $W^{I}_k$ to $W_{k-1}^I$. The final step can be done exactly the same as in \cite[p.156]{Lidman2016TheEO}: First check that both operators are injective, then interpolate between them linearly and argue by contradiction that each operator in this family is injective, hence there is no spectral flow as we desired. 
\end{proof}
 With this identification in hand, Lemma \ref{lem:grading comparison in and outside N} can be packed into a more concise statement:\begin{prop}
    Any reducible stationary point of $\xagcsql$ in $(B(2R)\cap W^{\lambda,I})^\sigma/\Z_2$ that has grading in $[-N,N]$ is contained in $\cN/\Z_2$, provided that $\lambda=\lambda_{i}^\bullet\gg 0$.
 \end{prop}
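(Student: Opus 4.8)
This statement repackages Lemma~\ref{lem:grading comparison in and outside N}, and the plan is to prove the contrapositive. Suppose $[(a,0,\phi)]$ is a reducible stationary point of $\xagcsql$ in $(B(2R)\cap W^{\lambda,I})^\sigma/\Z_2$ that does \emph{not} lie in $\cN/\Z_2$; we want $|\mathrm{gr}([(a,0,\phi)])| > N$, which rules out grading in $[-N,N]$. Splitting into the mirror-image cases $\kappa(\phi)>0$ and $\kappa(\phi)<0$, treated by parts (1) and (2) of Lemma~\ref{lem:grading comparison in and outside N}, the one thing to supply is, when $\kappa(\phi)>0$, a reducible stationary point $[(a,0,\phi')]$ of $\xagcsql$ that \emph{lies in} $\cN/\Z_2$, has the same blow-down connection $a$, and has grading exactly $N$ (and, when $\kappa(\phi)<0$, one of grading $-N$). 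Given such a witness, Lemma~\ref{lem:grading comparison in and outside N} gives $\mathrm{gr}([(a,0,\phi)]) - \mathrm{gr}([(a,0,\phi')]) \ge 1$, hence $\mathrm{gr}([(a,0,\phi)]) \ge N+1$, in the first case, and symmetrically $\mathrm{gr}([(a,0,\phi)]) \le -N-1$ in the second; either way we are done.

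To produce the witness I would argue as follows. Since $a$ is a reducible stationary connection of $\xgcql$ in $B(2R)\cap W^{\lambda,I}$, for $\lambda=\lambda_i^\bullet\gg 0$ the one-to-one correspondence \cite[Lemma~7.2.4]{Lidman2016TheEO} writes $a=a_\lambda$ for a unique reducible stationary connection $a_\infty$ of $\xgcq$ in $B(2R)$. Enumerating the reducibles $(a_\infty,0,\psi_j)$ of $\xagcsq$ over $a_\infty$ by the eigenvalues of the perturbed Dirac operator, and using the Morse-theoretic formula for relative gradings of reducibles over a fixed connection (consecutive eigenvalues of equal sign contribute $1$, the zero-crossing contributes $0$), one sees that the grading is nondecreasing and unbounded above and below in $j$, hence attains every integer; in particular there are reducibles $(a_\infty,0,\psi_{j_+})$ and $(a_\infty,0,\psi_{j_-})$ over $a_\infty$ of grading exactly $N$ and $-N$, and both lie in $\frC_{[-N,N]}=\frC_\cN$. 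Applying the correspondence $\Xi_\lambda$ of Corollary~\ref{cor:1 to 1 correspondence between stationary points of cutoff}, together with the grading identifications of Subsection~\ref{subsub:Relative grading} and Proposition~\ref{prop:indentification between f.d. and inf.d gr}, the points $\Xi_\lambda^{-1}(a_\infty,0,\psi_{j_\pm})$ are reducible stationary points of $\xagcsql$ lying in $\cN/\Z_2$, of the same type and the same grading ($N$, resp. $-N$); and because $\Xi_\lambda$ comes from the implicit function theorem and hence displaces points arbitrarily little while distinct reducible connections remain bounded apart, their blow-down connection is $a_\lambda = a$. These are exactly the witnesses used above, and Corollary~\ref{cor:identification of lowest positive reducible} is the special case $\psi_0$ of this bookkeeping.

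I expect the genuine subtlety to be precisely this last identification: verifying that $\Xi_\lambda^{-1}$ carries the reducibles of $\xagcsq$ over $a_\infty$ of grading in $[-N,N]$ to reducibles of $\xagcsql$ over $a$, matching blow-down connections and preserving the eigenvalue order (hence the type and the relative grading), so that they may legitimately be fed into Lemma~\ref{lem:grading comparison in and outside N}. This combines \cite[Lemma~7.2.4]{Lidman2016TheEO}, the continuity of $\Xi_\lambda$ in $\lambda$, and the hyperbolicity of the relevant stationary points (Propositions~\ref{prop:byperbolicity in N} and \ref{prop:hyperbolicity in B(2R)}); once $\lambda=\lambda_i^\bullet$ is large enough all of these are in force and the argument closes. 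Everything else is bookkeeping already assembled in the preceding subsections.
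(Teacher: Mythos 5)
Your proposal is correct and follows the same route the paper intends: the paper presents this proposition as Lemma~\ref{lem:grading comparison in and outside N} "packed into a more concise statement" once the absolute grading identification is in place, and your contrapositive argument via that lemma is exactly that repackaging. The extra detail you supply — producing reducible witnesses in $\cN/\Z_2$ over the same connection with gradings exactly $\pm N$ by noting that the gradings of reducibles over a fixed connection step by $0$ or $1$ and are unbounded in both directions, then transporting them via $\Xi_\lambda$ — is the content the paper leaves implicit, and it is sound.
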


\subsection{Morse quasi-gradient flow and Morse-Smale condition}\label{sub:Morse quasi-gradient flow and Morse Smale condition}
In \cite[Section 8]{Lidman2016TheEO}, they constructed an $S^1$-equivariant Morse function on $W^{\lambda}$ that has $\xgcql$ approximately as its gradient to show that $\xgcql$ is an $S^1$-equivariant Morse quasi-gradient. We now show that $\xgcql$ is a $\Z_2$-equivariant Morse quasi-gradient on $W^{\lambda,I}\cap B(2R)$ as defined in Definition \ref{defi:Z_2 equivariant Morse quasi-gradient}. As in \cite[Section 8]{Lidman2016TheEO}, we only consider the cut-off defined by $\lambda=\lambda_i^{\bullet}\gg0$ throughout this subsection.

Our aim is the following.
\begin{prop}\label{prop: xgcql is a equivariant Morse quasi gradient}
    We can choose an admissible perturbation $\frakq$ such that for all $\lambda=\lambda_i^{\bullet}\gg0$, the vector field $\xgcql$ is a $\Z_2$-equivariant Morse quasi-gradient on $W^{\lambda,I}\cap B(2R)$.
\end{prop}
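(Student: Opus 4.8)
The plan is to follow the strategy of \cite[Section 8]{Lidman2016TheEO} essentially verbatim, since the only structural change is that the ambient equivariance group has degenerated from $S^1$ to the discrete group $\Z_2$, which if anything \emph{simplifies} the construction. First I would fix a very tame admissible perturbation $\frakq$ (refined as in Proposition \ref{prop:hyperbolicity in B(2R)} so that all stationary points in $(B(2R)\cap W^{\lambda,I})^\sigma$ are hyperbolic) and recall the $\Z_2$-equivariant function $f^\lambda$ on $W^{\lambda,I}$ obtained by restricting Lidman--Manolescu's $S^1$-invariant Morse-type function --- concretely, a suitably damped version of $-\cL_{\frakq^\lambda}$ plus a small quadratic correction --- to the invariant slice. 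Because the original construction in \cite[Section 8]{Lidman2016TheEO} produces an $S^1$-equivariant object, and because our perturbation is built from pairings with $\tau$-invariant forms and spinors (so that $\cX^{gC}_{\frakq^\lambda}$ and $f^\lambda$ are genuinely $I$-equivariant), the restriction to $W^{\lambda,I}$ is automatically $\Z_2$-equivariant, and $df^\lambda(\xgcql) \ge 0$ with equality exactly at stationary points survives the restriction unchanged. This disposes of condition (4) in Definition \ref{defi:Z_2 equivariant Morse quasi-gradient}.

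Next I would verify conditions (1)--(3). For (1), hyperbolicity of the stationary points of the induced vector field on $(W^{\lambda,I}-Q)/\Z_2$ is immediate from Proposition \ref{prop:byperbolicity in N} and Proposition \ref{prop:hyperbolicity in B(2R)} together with the observation that the blow-down/quotient tangent maps are isomorphisms in the discrete case, so hyperbolicity upstairs and downstairs agree. For (2), the fixed set $Q$ consists of the reducibles (those configurations with $\phi = 0$, i.e.\ $s = 0$ in the unblown-up picture is not the fixed locus --- rather $Q$ is the reducible locus $\{\phi=0\}$ inside $B(2R)\cap W^{\lambda,I}$ before blowing up); the restriction of $\xgcql$ to $Q$ is $l^0 + p^\lambda c_{\frakq}^0$ on the form part, whose stationary points are the reducible critical points, and these are hyperbolic by the same propositions. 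For (3), at a reducible stationary point $(a,0)$ the normal operator $L_{(a,0)}$ is precisely the perturbed Dirac-type operator $D + (p^\lambda)^1 \cD_{(a,0)}c_{\frakq}(0,\cdot)$ restricted to $W^{\lambda,I}$; using that we have restricted to $\lambda = \lambda_i^\bullet$ so $p^\lambda$ is a genuine $L^2$-orthogonal projection, this operator equals its own ``sandwiched'' version $D + (p^\lambda)^1\cD_{(a,0)}c_{\frakq}(0,\cdot)(p^\lambda)^1$ and is therefore self-adjoint (this is exactly the trick used in the proof of Proposition \ref{prop:hyperbolicity in B(2R)}); simplicity of its nonzero spectrum is arranged by a further generic choice of $\frakq$ within the large Banach space of very tame admissible perturbations, exactly as in \cite[Section 8]{Lidman2016TheEO} and using the real transversality package of \cite[Section 7]{li2022monopolefloerhomologyreal}.

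The one genuinely new point --- and the step I expect to require the most care --- is condition (3)'s requirement and the compatibility of the blow-up picture: I need the normal operator $L_q$ to be self-adjoint \emph{with respect to the metric used to form the blow-up}, and since we are working with the $\tg$-metric rather than $L^2$ I must check, as in Lemma \ref{lem:Fredholm properties of g-extended hessian on blow-up} and the surrounding discussion, that after the sequence of interpolations between $L^2$ and $\tg$ the relevant operator remains self-adjoint with simple real spectrum at the reducibles. Fortunately all the Hessian technology of Section \ref{sec:Real Monopole Floer homology in global Coulomb slice} has already been set up precisely for this, so the argument is: $L_q$ in the $\tg$-metric is conjugate (via the shear map $S_q$, which is the identity on the relevant reducible block because the global Coulomb projection is the identity at reducibles by Lemma in Subsection \ref{sub:Construction in W^I}) to the $L^2$-self-adjoint perturbed Dirac operator above, hence has the same spectrum. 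Then the genericity of $\frakq$ gives simplicity. I would conclude by invoking Lemma \ref{lem:index of reducible in f.d. for Morse quasi gradient}: conditions (1)--(3) of Definition \ref{defi:Z_2 equivariant Morse quasi-gradient} are then equivalent to hyperbolicity of all stationary points of $(\xgcql)^\sigma$ on $(W^{\lambda,I})^\sigma/\Z_2$ with the reducible normal operators self-adjoint and simple away from zero, which is exactly what Propositions \ref{prop:byperbolicity in N}, \ref{prop:hyperbolicity in B(2R)} and the above genericity deliver. The main obstacle, to reiterate, is not any single hard estimate but rather keeping the bookkeeping straight between the $L^2$- and $\tg$-metrics when checking self-adjointness and simplicity of the normal operators at reducibles --- everything else is a transcription of \cite[Section 8]{Lidman2016TheEO} with $S^1$ replaced by $\Z_2$, which only deletes the circle-action complications.
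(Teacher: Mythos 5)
Your handling of conditions (1)--(3) of Definition \ref{defi:Z_2 equivariant Morse quasi-gradient} matches the paper: they are reduced to Proposition \ref{prop:hyperbolicity in B(2R)} together with the observation that $\tg$ and the $L^2$ metric agree at reducibles (so the shear map is the identity there and self-adjointness/simplicity of the normal operator is inherited from the sandwiched perturbed Dirac operator at $\lambda=\lambda_i^\bullet$). The gap is in your treatment of condition (4). You propose to obtain the required function by \emph{restricting} Lidman--Manolescu's $S^1$-invariant function $F_\lambda$ to the invariant slice, arguing that $I$-equivariance of the perturbation makes this automatic. This is precisely the route the paper rules out in Remark \ref{rmk: warning on construction of FlambdaR}: the construction of $F_\lambda$ in \cite[Section 8]{Lidman2016TheEO} is a cut-off translation built around \emph{all} the (finitely many, non-degenerate) stationary orbits of $\cX^{gC}_{\frakq}$ in the full slice $W$, and its well-definedness therefore requires $\frakq$ to be admissible for all stationary points and trajectories in $W$, not merely the $I$-invariant ones. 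With a perturbation built from pairings with $\tau$-invariant forms and spinors --- which you correctly insist on for $I$-equivariance --- it is not known that this global admissibility can be achieved, so the object you want to restrict may simply not exist. (Conversely, a generic non-invariant perturbation makes $F_\lambda$ available but destroys $I$-invariance, so the restriction argument fails for the opposite reason.)

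The paper's resolution is to \emph{reconstruct} the function directly on $W^I$: one sets $F^R_\lambda=\cL_\frakq\circ T_\lambda$, where $T_\lambda$ is a $\Z_2$-equivariant cut-off translation supported near the $I$-invariant stationary points only, sending each approximate stationary point $x^j_\lambda$ to the true one $x^j_\infty$; since the residual orbits are discrete (pairs of irreducibles, single reducibles), the auxiliary functions $\omega^j_\lambda$ from the $S^1$ construction are unnecessary and the estimates of \cite[Sections 8.3--8.5]{Lidman2016TheEO} carry over to yield the two-sided bound of Proposition \ref{prop:existence of Flambdar}, hence condition (4). Your proof would be repaired by replacing the restriction step with this intrinsic construction on $W^I$; as written, the existence of your $f^\lambda$ is unjustified. (A minor additional inaccuracy: the function is not ``a damped $-\cL_{\frakq^\lambda}$ plus a quadratic correction'' but $\cL_\frakq$ precomposed with a translation.)
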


Conditions (1)-(3) in Definition \ref{defi:Z_2 equivariant Morse quasi-gradient} follow from Proposition \ref{prop:hyperbolicity in B(2R)} and the fact that $\tg$ and the $L^2$ metric coincide at reducible stationary points. Thus, it is enough for us to find a $\Z_2$-equivariant function that fits into (4) of Definition \ref{defi:Z_2 equivariant Morse quasi-gradient}. We will adapt their method to show the following.

\begin{prop}\label{prop:existence of Flambdar}
    For each $\lambda\gg 0$, there exists a function $F^R_{\lambda}:W^{\lambda,I}\cap B(2R)\to \R$ such that \[\frac{1}{4}\left\Vert \xgcql \right\Vert_{\tg}^2\le dF^R_{\lambda}(\xgcql) \le 4\left\Vert \xgcql \right\Vert_{\tg}^2.\] In particular, $ dF^R_{\lambda}(\xgcql)\ge 0$ with equality holding only at stationary points of $\xgcql$.
\end{prop}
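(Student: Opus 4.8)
The plan is to follow the construction of \cite[Section 8]{Lidman2016TheEO}, carried out on the fixed-point manifold $W^{\lambda,I}\cap B(2R)$ rather than on $W^{\lambda}\cap B(2R)$. The reason for restricting to the invariant part \emph{before} running the argument (instead of restricting a function built on all of $W^{\lambda}$) is that our perturbation $\frakq$ was chosen from the class obtained by pairing with $I$-invariant objects, precisely so that $\xgcql$ is $I$-equivariant and the very-compactness results apply; but this means $\xgcql$ need not have hyperbolic stationary points on the non-invariant part of $W^{\lambda}\cap B(2R)$, so the hypotheses of the construction are only available after passing to $W^{\lambda,I}$, where Proposition \ref{prop:byperbolicity in N} and Proposition \ref{prop:hyperbolicity in B(2R)} guarantee that every stationary point of $\xgcql$ in $B(2R)$ is hyperbolic (for reducibles this uses in addition that $D_{\frakq^\lambda,a}$ is invertible when $\lambda=\lambda_i^\bullet\gg0$, since it is then $L^2$-self-adjoint and close to the invertible operator $D_{\frakq,a}$). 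Since $\cL_\frakq$ and $\xgcql$ are also invariant, resp. equivariant, under the residual constant gauge action $\phi\mapsto-\phi$, the function produced by the construction will be automatically $\Z_2$-invariant, as demanded by condition (4) of Definition \ref{defi:Z_2 equivariant Morse quasi-gradient}.

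The construction itself has two ingredients. Away from the zeros of $\xgcql$ one uses the restriction $\cL_\frakq|_{W^{\lambda,I}}$ of the perturbed Chern--Simons--Dirac functional: from the defining property of the metric $\tg$ one has $d\cL_\frakq(v)=\langle\cX_\frakq,v\rangle_{L^2}=\langle\cX^{gC}_\frakq,v\rangle_{\tg}$ for every $v$ tangent to $W^{I}$, and taking $v=\xgcql$ gives $d\cL_\frakq(\xgcql)=\|\xgcql\|^2_{\tg}+\langle (1-p^\lambda)c_\frakq,\xgcql\rangle_{\tg}$; the error term is controlled via the tail estimate $\|(1-p^\lambda)w\|_{W_{k-1}}\le\lambda^{-1}\|w\|_{W_k}$ together with the uniform bound on $c_\frakq$ over $B(2R)$, so it is dominated by $\|\xgcql\|^2_{\tg}$ once $\xgcql$ is bounded away from $0$ and $\lambda$ is large. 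Near each stationary point one instead uses a local Lyapunov quadratic form: hyperbolicity of the linearization of $\xgcql$ there lets one solve a Lyapunov equation to produce a symmetric form $B$ in a suitable chart whose directional derivative along $\xgcql$ is pinched between positive multiples of $\|\xgcql\|^2_{\tg}$. One then glues these local models to $\cL_\frakq$ by a $\Z_2$-invariant partition of unity subordinate to a cover in which the overlap regions are bounded away from all stationary points; shrinking the local models if necessary, and using that the stationary points and the spectra of their Hessians converge as $\lambda\to\infty$ (Subsection \ref{sub:Convergence of stationary points}) so that all constants can be chosen uniformly in $\lambda=\lambda_i^\bullet\gg0$, one obtains $F^R_\lambda$ with the stated sandwich estimate.

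The main obstacle is the patching step: verifying that the two-sided estimate --- a multiplicative, not merely additive, comparison --- holds uniformly across the transition regions between the local Lyapunov models and $\cL_\frakq$, with constants independent of $\lambda$. This is exactly the content of \cite[Section 8]{Lidman2016TheEO}, and once the argument is phrased on $W^{\lambda,I}\cap B(2R)$ its only inputs are the finite-dimensionality of $W^{\lambda,I}$, the hyperbolicity of all stationary points of $\xgcql$ in $B(2R)$, and the continuity of these data in $\lambda$ near $\lambda=\infty$, so it carries over with only notational changes; the $\Z_2$-equivariance requires no extra work, being either built into the local models or imposed afterwards by averaging the finished function over the residual $\Z_2$, which preserves the sandwich estimate because $\tg$ is $\Z_2$-invariant and $\xgcql$ is $\Z_2$-equivariant. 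The final assertion of the proposition, namely $dF^R_\lambda(\xgcql)\ge0$ with equality only at stationary points, is then immediate, and together with Propositions \ref{prop:byperbolicity in N} and \ref{prop:hyperbolicity in B(2R)} and the coincidence of $\tg$ with the $L^2$ metric at reducibles it establishes condition (4), hence Proposition \ref{prop: xgcql is a equivariant Morse quasi gradient}.
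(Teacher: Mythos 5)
Your framing of where the difficulty lies, why one must work on $W^{\lambda,I}$ from the outset rather than restricting a function built on all of $W^{\lambda}$, and how hyperbolicity and $\Z_2$-invariance enter, all match the paper. But your local construction near the stationary points is genuinely different from the paper's, and the one step you defer is deferred to the wrong place. The paper (following \cite[Section 8]{Lidman2016TheEO}) does not glue local Lyapunov quadratic forms to $\cL_{\frakq}$ with a partition of unity; it defines a single global formula $F^R_{\lambda}=\cL_{\frakq}\circ T_{\lambda}$, where $T_{\lambda}(x)=x+\sum_j H^j_{\lambda}(x)(x^j_{\infty}-x^j_{\lambda})$ is a cut-off translation moving each approximate stationary point $x^j_{\lambda}$ of $\xgcql$ onto the corresponding stationary point $x^j_{\infty}$ of $\xgcq$, and the estimates of \cite[Sections 8.3--8.5]{Lidman2016TheEO} are proved for that specific composed functional. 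So your assertion that the patching of Lyapunov models ``is exactly the content of'' that reference is a misattribution: the reference never patches Lyapunov forms, and the patching is precisely the step you also do not carry out.

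Moreover, the patching as described does not obviously close. Writing $F=\chi B+(1-\chi)\cL_{\frakq}$ on a transition annulus of radius about $\epsilon$ around a stationary point, one gets
\[dF(\xgcql)=\chi\, dB(\xgcql)+(1-\chi)\, d\cL_{\frakq}(\xgcql)+d\chi(\xgcql)\,(B-\cL_{\frakq}).\]
The first two terms are convex combinations of quantities comparable to $\Vert \xgcql\Vert_{\tg}^2$, but the third is an \emph{additive} error of size roughly $\epsilon^{-1}\Vert\xgcql\Vert_{\tg}\cdot\sup|B-\cL_{\frakq}|$. On the annulus $\Vert\xgcql\Vert_{\tg}$ is of order $\epsilon$, while even after normalizing $B$ to match $\cL_{\frakq}$ and its differential at the stationary point, the quadratic parts of $B$ (a Lyapunov form for the linearization of $\xgcql$) and of $\cL_{\frakq}$ differ by an $O(1)$ amount, so $|B-\cL_{\frakq}|=O(\epsilon^2)$ with a non-small constant; the error term is then of the same order $O(\epsilon^2)$ as $\Vert\xgcql\Vert_{\tg}^2$ itself, and the two-sided multiplicative bound with fixed constants does not follow. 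An additional point: $x_{\lambda}$ is a zero of $\xgcql$ but not of $\cX^{gC}_{\frakq}$, so $d\cL_{\frakq}$ does not vanish there, which is exactly why $\cL_{\frakq}$ alone fails near $x_{\lambda}$ and why the translation $T_{\lambda}$ (which arranges that the critical points of $F^R_{\lambda}$ sit at the zeros of $\xgcql$, with $dF^R_{\lambda}$ everywhere a controlled perturbation of $d\cL_{\frakq}$) is the mechanism that makes the sandwich estimate provable. To repair your argument you should either carry out the Lyapunov gluing with quantitative control of the transition term, or replace the local models by the paper's cut-off translation.
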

\begin{remark}\label{rmk: warning on construction of FlambdaR}
    (A warning on the construction of $F_{\lambda}^R$.) We have to be careful about how to adapt their proof to our case. If we use the pairing with invariant forms and sections to define the perturbations, then the function $F_\lambda$ from their construction is $I$-invariant, so the estimation carries over to the $I$-invariant subspace, but it is unclear to the author whether it can achieve admissible property for all (not just $I$-invariant) stationary points and trajectories, which is needed for $F_\lambda$ to be well-defined. On the other hand, if we use a more general perturbation, we won't face the admissibility issue, but now the functional under consideration is no longer $I$-invariant, so taking gradient in $W$ is not the same as in $W^I$, making it impossible to make use of the existing inequality to conclude the proof. Since we want to use the fact that $I$-equivariant very compact maps on $W$ restrict to very compact maps on $W^I$ (so we have no need to repeat some argument from \cite[Chapter 6]{Lidman2016TheEO}), we will keep the invariant assumption on perturbations and reconstruct $F^R_\lambda$ on $W^I$. Fortunately, their construction can be repeated on $W^I$ with minor changes.
\end{remark}

\begin{lem} (\cite[Lemma 8.1.1]{Lidman2016TheEO})
    Fix $\epsilon>0$. Then, for all $\lambda\gg 0$, we have \[\frac{1}{4}\left\Vert \xgcql \right\Vert_{\tg}^2\le d\cL_\frakq(\xgcql) \le 4\left\Vert \xgcql \right\Vert_{\tg}^2, \] at any point in $W^{\lambda,I}\cap B(2R)$ that is at $L_k^2$-distance at least $\epsilon$ from all stationary points of $\xgcql$ in $B(2R)^I$.
\end{lem}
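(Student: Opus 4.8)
The plan is to transcribe the proof of \cite[Lemma 8.1.1]{Lidman2016TheEO} onto the invariant subspace, the only genuinely new point being to check that the objects involved descend from $W$ to $W^{I}$. Because $\frakq$ is built by pairing with invariant forms and spinors, $\cL_{\frakq}$ is $I$-invariant and $\xgcq$ is $I$-equivariant, so both restrict to $W^{I}$; and since $\xgcq$ is the $\tg$-gradient of $\cL_{\frakq}$ on $W$, for any $x\in W^{\lambda,I}\cap B(2R)$ and any tangent vector $v$ at $x$ one has $d\cL_{\frakq}(v)=\langle\xgcq(x),v\rangle_{\tg}$. Using that we have restricted to $\lambda=\lambda_{i}^{\bullet}$, so $p^{\lambda}$ is the $L^{2}$-orthogonal projection onto $W^{\lambda}$ and commutes with the linear part $l$, we get the two identities $\xgcql(x)=p^{\lambda}\xgcq(x)$ and $(1-p^{\lambda})\xgcq(x)=(1-p^{\lambda})c_{\frakq}(x)$.

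First I would record the algebraic identity
\[ d\cL_{\frakq}\big(\xgcql(x)\big)=\big\langle\xgcq(x),\xgcql(x)\big\rangle_{\tg}=\big\|\xgcql(x)\big\|_{\tg}^{2}+\big\langle(1-p^{\lambda})c_{\frakq}(x),\xgcql(x)\big\rangle_{\tg}, \]
so that by Cauchy--Schwarz the lemma reduces to the estimate $\|(1-p^{\lambda})c_{\frakq}(x)\|_{\tg}\le\tfrac34\|\xgcql(x)\|_{\tg}$ on the stated region (which yields the bounds $\tfrac14$ and $\tfrac74\le 4$). Next I would bound the left-hand side uniformly: since $c_{\frakq}$ is a compact map (equivalently $\eta^{\lambda}_{\frakq}=p^{\lambda}c_{\frakq}-c$ is very compact) and $\tg$ is uniformly equivalent to the $L^{2}$ metric on the bounded set $B(2R)$, the quantity $\beta_{\lambda}:=\sup_{x\in B(2R)}\|(1-p^{\lambda})c_{\frakq}(x)\|_{\tg}$ tends to $0$ as $\lambda\to\infty$. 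It then remains to produce a $\lambda$-independent lower bound $\|\xgcql(x)\|_{\tg}\ge\delta>0$ on the region described in the statement, for $\lambda\gg0$; granting this, $\beta_{\lambda}<\tfrac34\delta$ for $\lambda\gg0$ finishes the argument.

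For the lower bound I would argue by contradiction. By the real analogues of \cite[Proposition 7.2.2, Lemma 7.2.4]{Lidman2016TheEO} established in Subsection~\ref{sub:Convergence of stationary points}, for $\lambda\gg0$ every stationary point of $\xgcql$ in $B(2R)^{I}$ lies $L^{2}_{k}$-within $\epsilon/2$ of a stationary point of $\xgcq$ and conversely; hence a point at $L^{2}_{k}$-distance $\ge\epsilon$ from $\{\xgcql=0\}$ lies at distance $\ge\epsilon/2$ from the finite set $\{\xgcq=0\}\cap B(2R)^{I}$, and it suffices to establish $\|\xgcq(x)\|_{\tg}\ge 2\delta$ there, since then $\|\xgcql(x)\|_{\tg}\ge\|\xgcq(x)\|_{\tg}-\beta_{\lambda}\ge\delta$ for $\lambda$ large. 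This last bound is a compactness statement: if $x_{n}\in B(2R)\cap W^{I}_{k}$ with $\|\xgcq(x_{n})\|_{\tg}\to 0$, then since $\xgcq=l+c_{\frakq}$ with $l$ self-adjoint Fredholm with real spectrum and $c_{\frakq}$ compact, and since in $B(2R)$ with $k\ge5$ the $C^{0}$-norm of the spinor and the energy are controlled, Li's real Seiberg--Witten compactness (the analogue of the compactness used to prove stationary points are isolated, cf.\ \cite[Section 7]{li2022monopolefloerhomologyreal} together with the energy estimates of \cite[Section 6.3]{Lidman2016TheEO}) lets one pass to a subsequence converging in $W^{I}_{k}$ to some $x_{\infty}$ with $\xgcq(x_{\infty})=0$, contradicting $d_{L^{2}_{k}}(x_{n},\{\xgcq=0\})\ge\epsilon/2$.

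I expect the genuine difficulty to be confined to this last compactness step --- upgrading the $\tg$-smallness of $\xgcq(x_{n})$ to $L^{2}_{k}$-subconvergence of $x_{n}$ --- which carries all the analytic content; the remainder is linear algebra together with the already-established real convergence lemmas for stationary points. A secondary point requiring care, flagged in Remark~\ref{rmk: warning on construction of FlambdaR}, is that the invariance of $\frakq$ is essential here: it is what makes $\xgcq|_{W^{I}}$ the $\tg$-gradient of $\cL_{\frakq}|_{W^{I}}$ and what allows the very-compactness of $\eta^{\lambda}_{\frakq}$ on $W$ to descend to $W^{I}$, so that the estimates of \cite[Section 6]{Lidman2016TheEO} need not be reproved.
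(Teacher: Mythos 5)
Your proposal is correct and takes essentially the same approach as the paper, which simply defers to the contradiction argument of \cite[Lemma 8.1.1]{Lidman2016TheEO}; you have reconstructed that argument faithfully (the $\tg$-gradient identity plus Cauchy--Schwarz, uniform decay of $\|(1-p^{\lambda})c_{\frakq}\|_{\tg}$, and a compactness/contradiction sub-argument for the lower bound on $\|\xgcql\|_{\tg}$ away from stationary points), correctly transposed to $W^{I}$ and with the right emphasis on why the $I$-invariance of $\frakq$ is what allows the restriction to go through.
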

\begin{proof}
    We can show by contradiction in exactly the same way as for \cite[Lemma 8.1.1]{Lidman2016TheEO}.
\end{proof}

We shall construct $F_\lambda^R$ in the same way as they do, i.e., consider the composition of $\cL_\frakq$ with a cut-off translation on $B(2R)\cap W^{\lambda,I}$ supported in the neighborhood of stationary points. To be concise, we won't repeat details since their  definitions and estimations directly apply to our case. Instead, we briefly review the construction and point out things that should be modified.

By the finiteness of stationary points, we can choose $\epsilon>0$ satisfying \cite[Assumption 8.2.1]{Lidman2016TheEO}. Note that now our orbits of stationary points are pairs of points when it is irreducible and are single points when it is reducible. In particular, all orbits are discrete point sets. When $\lambda$ is large enough, we can assume that for each $i$, the approximation $x_\lambda^i$ is within distance $\epsilon$ to $x^i_\infty$. Thus, the approximation for $(a,\phi)$ and $(a,-\phi)$ can be distinguished when they are a pair of irreducible stationary points sharing the same orbit. 

Assume that some $0<\epsilon\ll 1$ satisfying \cite[Assumption 8.2.1]{Lidman2016TheEO}(i.e., a constant that is far smaller than distance between orbits and norm of irreducible points) is fixed and $\lambda$ is chosen large enough in the sense above. We can take $H^j_\lambda$(an appropriate cut-off function that depends only on the distance function) defined in \cite[p130]{Lidman2016TheEO} and consider \[ T_{\lambda}(x)=x+\sum_{i=1}^m H^j_\lambda(x)(x^j_\infty-x_\lambda^j).\] Since we have discrete orbits, the functions $\omega^j_\lambda$ are redundant now. Here, we let $x^1_\infty,\ldots,x^m_\infty$ be all stationary points (not orbits!) of $\xgcsq$ in $W^I$. Then we define $F_\lambda^R=\cL_\frakq \circ T_{\lambda}$. This is obviously $\Z_2$-invariant since the definition of $H^j_\lambda$ involves only a fixed function $h$ and the distance function, and it supports in $2\epsilon$-neighborhoods of stationary points. Since $F^R_\lambda$ is actually a simplified version of $F_\lambda$ constructed in \cite[Section 8.2]{Lidman2016TheEO}, the estimations in \cite[Section 8.3-8.5]{Lidman2016TheEO} carry over to prove Proposition \ref{prop:existence of Flambdar} and thus to establish Proposition \ref{prop: xgcql is a equivariant Morse quasi gradient}. Some parts of proof can even be simplified since we do not need to take care of derivative of $\omega^j_\lambda$ terms. 

As a corollary, we have the following.
\begin{cor}
There is some $C_0>0$ such that for $\lambda\gg 0$, \[\frac{1}{4C_0}\left\Vert \xgcql \right\Vert_{L^2}^2\le dF^R_{\lambda}(\xgcql) \le 4C_0\left\Vert \xgcql \right\Vert_{L^2}^2.\]
\end{cor}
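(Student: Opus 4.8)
The plan is to deduce the corollary from Proposition \ref{prop:existence of Flambdar} by showing that on the finite-dimensional slice $W^{\lambda,I}$, the $\tg$-metric and the $L^2$-metric are uniformly comparable, with a comparison constant that can be taken independent of $\lambda$ once $\lambda$ is large. Given such a uniform comparison $C_0^{-1}\|\cdot\|_{L^2}^2\le \|\cdot\|_{\tg}^2\le C_0\|\cdot\|_{L^2}^2$ on the relevant domain, one simply feeds it into the two-sided bound $\tfrac14\|\xgcql\|_{\tg}^2\le dF^R_\lambda(\xgcql)\le 4\|\xgcql\|_{\tg}^2$ from Proposition \ref{prop:existence of Flambdar} to absorb the constant into $4C_0$ on the upper side and $1/(4C_0)$ on the lower side, which is exactly the claimed inequality.

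The main work, then, is the uniform metric comparison. Recall the $\tg$-metric at $(a,\phi)\in W^I$ is $\langle v,v'\rangle_{\tg}=\mathrm{Re}\langle\Pielc_{(a,\phi)}v,\Pielc_{(a,\phi)}v'\rangle$, so it differs from $L^2$ precisely through the local Coulomb projection $\Pielc$, whose failure to be an isometry is governed by the size of $\phi$ (the correction term $\zeta\phi$ with $\zeta$ determined by $\phi$). First I would restrict attention to configurations in $\overline{B(2R)}\subset W^I_k$, where $k\ge 5$, so that $\phi$ is bounded in $C^0$ (Sobolev embedding) uniformly; on such a bounded set the operators $\Pielc_{(a,\phi)}$ and $\Pigc_*{}_{(a,\phi)}$ are uniformly bounded with uniformly bounded inverses in the $L^2$ operator norm — this is exactly the content of the estimates in \cite[Section 3.2]{Lidman2016TheEO} restricted to the real subspaces, which the excerpt already notes carry over. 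This gives a comparison constant $C_0$ depending only on $R$ and $k$, not on $\lambda$: indeed $W^{\lambda,I}$ is an $L^2$-orthogonal (honest, since $\lambda=\lambda_i^\bullet$) subspace of $W^I$, and restricting an inequality of quadratic forms to a subspace only improves it, so the same $C_0$ works on every $W^{\lambda,I}\cap B(2R)$.

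Concretely the steps are: (i) fix $R$ and $k$ as in Subsection \ref{sub:Convergence of stationary points}; (ii) invoke the real-restricted version of the Coulomb-slice estimates to get a constant $C_0>0$ with $C_0^{-1}\|v\|_{L^2}^2\le \|v\|_{\tg}^2\le C_0\|v\|_{L^2}^2$ for all $v\in T_{(a,\phi)}W^I$ with $(a,\phi)\in \overline{B(2R)}$; (iii) observe this restricts verbatim to $v\in T_{(a,\phi)}(W^{\lambda,I})$ for $(a,\phi)\in W^{\lambda,I}\cap B(2R)$, with the same $C_0$; (iv) combine with Proposition \ref{prop:existence of Flambdar}, applied at the point $x\in W^{\lambda,I}\cap B(2R)$ to the vector $\xgcql(x)$, to get $dF^R_\lambda(\xgcql)\le 4\|\xgcql\|_{\tg}^2\le 4C_0\|\xgcql\|_{L^2}^2$ and likewise $dF^R_\lambda(\xgcql)\ge \tfrac14\|\xgcql\|_{\tg}^2\ge \tfrac{1}{4C_0}\|\xgcql\|_{L^2}^2$. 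I do not expect any genuine obstacle here; the only point requiring mild care is making sure the constant in the Coulomb estimates is taken uniformly over the ball $\overline{B(2R)}$ rather than pointwise, but that is precisely how those estimates are phrased in \cite[Section 3.2]{Lidman2016TheEO}, so the corollary follows essentially formally.
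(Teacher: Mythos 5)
Your proposal is correct and follows essentially the same route as the paper: the corollary is obtained by combining Proposition \ref{prop:existence of Flambdar} with the uniform equivalence of the $\tg$ and $L^2$ metrics on the relevant bounded set, the paper simply citing this equivalence directly from \cite[Proposition 8.4.2]{Lidman2016TheEO} rather than rederiving it from the Coulomb-slice estimates as you sketch.
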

\begin{proof}
    This follows from the equivalence between $L^2$ and $\tg$ metric from \cite[Proposition 8.4.2]{Lidman2016TheEO}.
\end{proof}
\begin{cor}
    Let $\gamma:\R\to W^{\lambda,I}$ be a flow of $\xgcql$, for $\lambda\gg 0$. Then the limits $\lim_{t\to \pm \infty}[\gamma(t)]$ exist in $(W^{\lambda,I}\cap B(2R))/\Z_2$ and they are both projections of stationary points.
\end{cor}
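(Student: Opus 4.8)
The plan is to read this off from the general structure theory of $\Z_2$-equivariant Morse quasi-gradients developed in Subsection \ref{sub:Morse homology and Morse quasi gradient flow}. By Proposition \ref{prop: xgcql is a equivariant Morse quasi gradient}, for $\lambda\gg 0$ (along the distinguished sequence $\lambda_i^\bullet$) the vector field $\xgcql$ is a $\Z_2$-equivariant Morse quasi-gradient on the space $X:=W^{\lambda,I}\cap B(2R)$, which is an open ball and hence a smooth $\Z_2$-manifold without boundary (the action preserving $B(2R)$ since the ball is centered at the fixed reducible). The second bullet of Lemma \ref{lem:properties of equivariant Morse quasi gradient} then states exactly that every flow line $\gamma\colon\R\to X$ of such a vector field has $\lim_{t\to\pm\infty}[\gamma(t)]$ existing in $X/\Z_2$, with both limits equal to projections of stationary points of $\xgcql$. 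This is precisely the assertion of the corollary, so the proof reduces to checking that the given $\gamma$ is indeed a flow line valued in $X$.

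The only point needing care is thus the confinement of $\gamma$ to $W^{\lambda,I}\cap B(2R)$, which is where the Lyapunov function $F^R_\lambda$ of Proposition \ref{prop:existence of Flambdar} (and hence the whole quasi-gradient structure) lives. For this I would invoke the a priori ball estimate: a full trajectory of the cut-off field $\xgcql=l+p^\lambda c_{\frakq}$ that stays in $\overline{B(2R)}$ for all $t$ is in fact contained in $B(R)\subset W^{\lambda,I}\cap B(2R)$ --- this is Proposition \ref{prop:flow lines in B(R)} for the unperturbed cut-off and holds equally for the $I$-equivariant very compact perturbation by Proposition \ref{prop:properties of very compact maps}. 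Since the conclusion of the corollary is meaningful only when the forward and backward limits of $[\gamma(t)]$ land in $(W^{\lambda,I}\cap B(2R))/\Z_2$, these bounded trajectories are exactly the ones at issue; moreover, such a $\gamma$ has image in the compact set $\overline{B(R)}\cap W^{\lambda,I}$, which is what makes Lemma \ref{lem:properties of equivariant Morse quasi gradient} applicable (its proof needs the flow line to be precompact).

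For completeness, the mechanism inside that lemma, specialized here, is the usual one: condition (4) of Definition \ref{defi:Z_2 equivariant Morse quasi-gradient} gives $dF^R_\lambda(\xgcql)\ge 0$, and combined with the preceding corollary (the $L^2$ form of the estimate on $F^R_\lambda$) this makes $F^R_\lambda$ strictly decreasing along $\gamma$ away from stationary points; on the compact quotient $(\overline{B(R)}\cap W^{\lambda,I})/\Z_2$ a monotone function forces $[\gamma(t)]$ to converge, and conditions (1)--(3) together with Proposition \ref{prop:hyperbolicity in B(2R)} identify the limit as an isolated hyperbolic stationary point of $\xgcql$, which in particular lies in $B(R)$. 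I do not expect a genuine obstacle here: the argument is the $\Z_2$-transcription of \cite[Lemma 2.6.5]{Lidman2016TheEO}, and the single subtlety is the bookkeeping of the confinement to $B(2R)$ and the passage to the $\Z_2$-quotient, both already supplied by the results cited above.
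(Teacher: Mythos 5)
Your proposal is correct and is essentially the paper's (implicit) argument: the corollary is stated without proof precisely because it follows from Proposition \ref{prop: xgcql is a equivariant Morse quasi gradient} together with the general structure result Lemma \ref{lem:properties of equivariant Morse quasi gradient}, with $F^R_\lambda$ from Proposition \ref{prop:existence of Flambdar} serving as the Lyapunov function. Your added care about confinement to $B(2R)$ (via Proposition \ref{prop:flow lines in B(R)} and Proposition \ref{prop:properties of very compact maps}) and the compactness of $\overline{B(R)}\cap W^{\lambda,I}$ is a reasonable, correct filling-in of details the paper leaves unstated.
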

\begin{cor}
    Let $[\gamma]:\R\to (W^{\lambda,I}\cap B(2R))^\sigma/\Z_2$ be a flow of $\xagcsql$, for $\lambda\gg 0$. Then the limits $\lim_{t\to \pm \infty}[\gamma(t)]$ exist in $(W^{\lambda,I}\cap B(2R))^\sigma/\Z_2$ and they are both stationary points of $\xagcsql$.
\end{cor}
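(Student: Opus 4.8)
The plan is to deduce this corollary directly from the previous one, which controls flow lines of $\xgcql$ on the blow-down $W^{\lambda,I}\cap B(2R)$, by lifting that information through the blow-up/blow-down map. First I would recall the setup from Subsection~\ref{sub:Morse homology and Morse quasi gradient flow}: since $\xgcql$ is a $\Z_2$-equivariant Morse quasi-gradient on $X = W^{\lambda,I}\cap B(2R)$ (Proposition~\ref{prop: xgcql is a equivariant Morse quasi gradient}), the induced vector field on $X^\sigma/\Z_2$ is precisely $\xagcsql$, which is (after the previous corollaries and Proposition~\ref{prop: xgcql is a equivariant Morse quasi gradient}) a genuine Morse--Smale quasi-gradient on the compact manifold-with-boundary $(W^{\lambda,I}\cap B(2R))^\sigma/\Z_2$. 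The statement I want is then exactly the third bullet of Lemma~\ref{lem:properties of equivariant Morse quasi gradient}, applied to $X$ with the equivariant Morse quasi-gradient $\xgcql$.

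So the proof is short: given a flow line $[\gamma]:\R\to (W^{\lambda,I}\cap B(2R))^\sigma/\Z_2$ of $\xagcsql$, I invoke Lemma~\ref{lem:properties of equivariant Morse quasi gradient}(third bullet) with $\tilde v = \xgcql$, which directly gives that $\lim_{t\to\pm\infty}[\gamma(t)]$ exist in $(W^{\lambda,I}\cap B(2R))^\sigma/\Z_2$ and are stationary points of $v^\sigma = \xagcsql$. To apply that lemma one needs $\xgcql$ to actually \emph{be} a $\Z_2$-equivariant Morse quasi-gradient on $X$, and in particular one needs the function $F^R_\lambda$ from Proposition~\ref{prop:existence of Flambdar} to serve as the function required by item~(4) of Definition~\ref{defi:Z_2 equivariant Morse quasi-gradient}; all of this is in place for $\lambda = \lambda_i^\bullet \gg 0$. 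One subtlety worth spelling out is that the previous corollary is phrased for flows on the blow-down $W^{\lambda,I}$, whereas here we start with a flow on the blow-up quotient; but the blow-down projection sends a flow line of $v^\sigma$ (not on the boundary) to a flow line of $v$ on $(X-Q)/\Z_2$, and a flow line on the boundary $\partial(X^\sigma/\Z_2) = N^1(Q)/\Z_2$ projects to a stationary point $q\in Q$ together with an eigen-flow of $\mathbb{L}_q$, which by the hyperbolicity in Proposition~\ref{prop:hyperbolicity in B(2R)} converges at both ends; either way the limits in $X^\sigma/\Z_2$ exist, because convergence of $[\gamma(t)]$ downstairs plus the normal eigen-data determine a unique limit point upstairs. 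This is precisely the content of the proof of Lemma~\ref{lem:properties of equivariant Morse quasi gradient}, so no new argument is needed.

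I expect the only real content to be bookkeeping: one must make sure that ``$\lambda\gg 0$'' is interpreted as the same threshold under which Proposition~\ref{prop: xgcql is a equivariant Morse quasi gradient}, the hyperbolicity statements, and the preceding corollary all hold simultaneously, and that the interval $I=\R$ case of boundedness (flows staying in $B(2R)$) is guaranteed — this is exactly the content of Proposition~\ref{prop:flow lines in B(R)} together with Proposition~\ref{prop:properties of very compact maps}(1), so a flow of $\xagcsql$ that is a priori only known to lie in $\cN/\Z_2$ or in $(W^{\lambda,I}\cap B(2R))^\sigma/\Z_2$ stays there for all time. There is no serious obstacle; the statement is a formal consequence of material already assembled, and I would simply write ``This follows from Lemma~\ref{lem:properties of equivariant Morse quasi gradient} applied to the $\Z_2$-equivariant Morse quasi-gradient $\xgcql$ on $W^{\lambda,I}\cap B(2R)$, noting that $\xagcsql$ is the induced vector field $v^\sigma$ on the blow-up quotient,'' possibly with one sentence recalling why the boundedness hypothesis is satisfied.
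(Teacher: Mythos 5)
Your proposal is correct and takes essentially the same route as the paper: the corollary is an immediate consequence of Lemma~\ref{lem:properties of equivariant Morse quasi gradient} (third bullet) once Proposition~\ref{prop: xgcql is a equivariant Morse quasi gradient} (via Proposition~\ref{prop:existence of Flambdar}) establishes that $\xgcql$ is a $\Z_2$-equivariant Morse quasi-gradient on $W^{\lambda,I}\cap B(2R)$ whose induced vector field on the blow-up quotient is $\xagcsql$. One small note: the hypothesis already assumes $[\gamma]$ maps all of $\R$ into $(W^{\lambda,I}\cap B(2R))^\sigma/\Z_2$, so the boundedness-for-all-time discussion at the end is not actually needed here.
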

For $I\subset \R$, an interval and $\gamma:\R\to W^{\lambda,I}\cap B(2R)$, a trajectory of $\xgcql$, we can define its energy \[\cE(\gamma)=\int_I \left\Vert \frac{d\gamma}{dt} \right\Vert^2_{L^2(Y)} dt=\int_I \left\Vert \xgcql(\gamma(t))\right\Vert^2_{L^2(Y)} dt. \]
\begin{cor}
There exists some $C_0>0$ such that for any $\lambda\gg 0$ and any closed interval $[t_1,t_2]\subset \R$, if $\gamma:[t_1,t_2]\to W^{\lambda,I}\cap B(2R)$ is a trajectory of $\xgcql$, then \[\frac{1}{4C_0} \cE(\gamma)\le F^R_\lambda(\gamma(t_2))- F^R_\lambda(\gamma(t_1)) \le  4C_0\cE(\gamma).\]
    
\end{cor}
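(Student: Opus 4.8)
The plan is to obtain this corollary as the pointwise estimate
\[
\tfrac{1}{4C_0}\big\|\xgcql\big\|_{L^2}^2\ \le\ dF^R_{\lambda}\big(\xgcql\big)\ \le\ 4C_0\big\|\xgcql\big\|_{L^2}^2
\]
established above on $W^{\lambda,I}\cap B(2R)$, integrated along a trajectory of $\xgcql$. Fix $C_0>0$ and a cut-off $\lambda=\lambda_i^{\bullet}\gg 0$ for which that estimate (and the construction of $F^R_{\lambda}$ in Proposition~\ref{prop:existence of Flambdar}) holds; note that $C_0$ is already chosen uniformly in $\lambda$. For a trajectory $\gamma:[t_1,t_2]\to W^{\lambda,I}\cap B(2R)$ of $\xgcql$, the map $t\mapsto F^R_{\lambda}(\gamma(t))$ is continuously differentiable, since $F^R_{\lambda}=\cL_{\frakq}\circ T_{\lambda}$ is smooth on $W^{\lambda,I}\cap B(2R)$ and $\gamma$ is a smooth curve there; by the chain rule and the flow equation its derivative is
\[
\frac{d}{dt}F^R_{\lambda}(\gamma(t))=dF^R_{\lambda}\big|_{\gamma(t)}\big(\tfrac{d\gamma}{dt}\big)=dF^R_{\lambda}\big|_{\gamma(t)}\big(\xgcql(\gamma(t))\big).
\]

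The steps, in order: first I would note that by hypothesis $\gamma(t)$ lies in $W^{\lambda,I}\cap B(2R)$ for every $t\in[t_1,t_2]$, so the pointwise estimate applies at each such point and bounds $\frac{d}{dt}F^R_{\lambda}(\gamma(t))$ between $\tfrac{1}{4C_0}\|\xgcql(\gamma(t))\|_{L^2}^2$ and $4C_0\|\xgcql(\gamma(t))\|_{L^2}^2$. Then I would integrate this chain of inequalities over $[t_1,t_2]$ and apply the fundamental theorem of calculus to the middle term, obtaining
\[
\frac{1}{4C_0}\int_{t_1}^{t_2}\big\|\xgcql(\gamma(t))\big\|_{L^2}^2\,dt\ \le\ F^R_{\lambda}(\gamma(t_2))-F^R_{\lambda}(\gamma(t_1))\ \le\ 4C_0\int_{t_1}^{t_2}\big\|\xgcql(\gamma(t))\big\|_{L^2}^2\,dt.
\]
Finally, I would use that along a trajectory $\tfrac{d\gamma}{dt}=\xgcql(\gamma(t))$, whence $\|\tfrac{d\gamma}{dt}\|_{L^2(Y)}^2=\|\xgcql(\gamma(t))\|_{L^2(Y)}^2$ and both outer integrals equal $\cE(\gamma)$; this is exactly the asserted inequality.

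I do not expect any genuine obstacle here: the statement is just the antiderivative of the pointwise estimate along a trajectory, and each ingredient --- continuous differentiability of $t\mapsto F^R_{\lambda}(\gamma(t))$, the flow equation, the two-sided pointwise bound, and the energy identity --- is already in hand. The one point to keep track of is the sign convention for trajectories of $\xgcql$: under the convention in which the inequality $dF^R_{\lambda}(\xgcql)\ge 0$ of Proposition~\ref{prop:existence of Flambdar} makes $F^R_{\lambda}$ nondecreasing along trajectories, the estimate comes out with the stated signs; for the reversed orientation one reads $F^R_{\lambda}(\gamma(t_2))-F^R_{\lambda}(\gamma(t_1))$ as its absolute value (equivalently, swaps $t_1$ and $t_2$).
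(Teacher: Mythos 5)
Your proof is correct and is exactly the intended argument: the corollary is the integral over $[t_1,t_2]$ of the pointwise estimate $\tfrac{1}{4C_0}\|\xgcql\|_{L^2}^2\le dF^R_\lambda(\xgcql)\le 4C_0\|\xgcql\|_{L^2}^2$, combined with the flow equation and the definition of $\cE(\gamma)$; the paper states this corollary without proof, and this is the only reasonable derivation. Your caveat about the sign convention is well placed — with the downward gradient convention $\tfrac{d\gamma}{dt}=-\xgcql(\gamma)$, the middle term should be read as $F^R_\lambda(\gamma(t_1))-F^R_\lambda(\gamma(t_2))$ (or as the absolute value), which is the minor bookkeeping point you flagged.
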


Next, we turn to the Morse-Smale condition on $\xgcql$, which is also crucial since we want to define a Morse complex from it. In the case without a real structure, Lidman and Manolescu proved this in \cite[Chapter 10]{Lidman2016TheEO}. We will fit their argument into our setup.

Following their discussion, the Morse-Smale condition can be rephrased in terms of the surjectivity of the following operator: \[\Piagcs\circ(\frac{D^{\sigma}}{dt}+\cD^\sigma \xgcsql):T_{j,\gamma}\cP(x_{\lambda},y_{\lambda})\to T_{j-1,\gamma}\cP(x_{\lambda},y_{\lambda}),\] for $\gamma:\R\to (W^{\lambda,I}\cap B(2R))^\sigma\subset (W^{\lambda,I})^\sigma$ a trajectory between stationary points $x$, $y$ of $\xgcsql$. This has already appeared in the proof of Proposition \ref{prop:indentification between f.d. and inf.d gr}. Of course, $\gamma$ can also be regarded as a path in $W^{\sigma,I}$, along which we have the linearized operator \[\cD^{\tau}_{\gamma}\cF_{\frakq^\lambda}^{gC,\tau}:\cT^{gC,\tau,I}_{k,\gamma}(x,y)\to \cV^{gC,\tau,I}_{k-1,\gamma}(Z).\]

\begin{lem}\label{lem:surjectivity equivalence for Morse-Smale condition}
    The surjectivities of two operators associated to $\gamma$ are equivalent.
\end{lem}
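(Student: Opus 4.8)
\textbf{Proof plan for Lemma \ref{lem:surjectivity equivalence for Morse-Smale condition}.}

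The plan is to reduce this to the machinery already assembled for the Fredholm theory of $\cD^{\tau}_{\gamma}\cF^{gC,\tau}_{\frakq}$, namely Proposition \ref{prop: Fredholm property of Q gc gamma}, Lemma \ref{lem:equivalence of fredholm and surjectivity}, and the comparison carried out inside the proof of Proposition \ref{prop:indentification between f.d. and inf.d gr}. The two operators in question are the ``finite-dimensional'' trajectory operator
\[P^{\lambda}_{\gamma} = \Piagcs\circ\bigl(\tfrac{D^{\sigma}}{dt}+\cD^\sigma \xgcsql\bigr):T_{j,\gamma}\cP(x_{\lambda},y_{\lambda})\to T_{j-1,\gamma}\cP(x_{\lambda},y_{\lambda}),\]
where $T_{j,\gamma}\cP(x_{\lambda},y_{\lambda})=L^2_{j}(\R,T(W^{\lambda,I})^\sigma)$, and the ``infinite-dimensional'' linearized Seiberg--Witten operator $\cD^{\tau}_{\gamma}\cF_{\frakq^\lambda}^{gC,\tau}:\cT^{gC,\tau,I}_{k,\gamma}(x,y)\to \cV^{gC,\tau,I}_{k-1,\gamma}(Z)$ along the same path $\gamma$, now viewed as a trajectory in the full $W^{\sigma,I}$. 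First I would record that both operators are Fredholm: the first because $\xgcql$ is an equivariant Morse quasi-gradient on $W^{\lambda,I}\cap B(2R)$ (Proposition \ref{prop: xgcql is a equivariant Morse quasi gradient}) with hyperbolic endpoints, so $P^{\lambda}_{\gamma}$ is a first-order asymptotically hyperbolic operator on a finite-dimensional manifold; the second by Proposition \ref{prop: Fredholm property of Q gc gamma} applied with the perturbation $\frakq^{\lambda}$ (whose linearized endpoints are hyperbolic by Proposition \ref{prop:hyperbolicity in B(2R)}).

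The heart of the argument is to split $\cT^{gC,\tau,I}_{k,\gamma}$ slicewise by the $L^2$-orthogonal decomposition $W^I=W^{\lambda,I}\oplus (W^{\lambda,I})^{\perp}$, exactly as in Lemma \ref{lem:index of Q gamma gc}. Relative to this decomposition, $\cD^{\tau}_{\gamma}\cF_{\frakq^\lambda}^{gC,\tau}$ becomes lower-triangular: the $(W^{\lambda,I})$-block is $P^{\lambda}_{\gamma}$ (up to the harmless difference between $\Piagcs\circ(\tfrac{D^\sigma}{dt}+\cD^\sigma\xgcsql)$ and $\tfrac{D^\sigma}{dt}+\Piagcs\circ\cD^\sigma\xgcsql$, which a linear interpolation removes without affecting surjectivity, precisely as at the end of the proof of Proposition \ref{prop:indentification between f.d. and inf.d gr}), the off-diagonal term involves $p^{\lambda}$ and hence vanishes on one block, and the $(W^{\lambda,I})^{\perp}$-block is $\tfrac{D^{\sigma}}{dt}+l|_{(W^{\lambda,I})^{\perp}}$ plus a compact correction. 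For $\lambda=\lambda_i^{\bullet}\gg 0$ the operator $l$ restricted to $(W^{\lambda,I})^{\perp}$ has no small eigenvalues, so this diagonal block is invertible (this is exactly the contradiction argument already used in Lemma \ref{lem:index of Q gamma gc}). For a lower-triangular operator whose off-diagonal block respects the splitting and whose one diagonal block is invertible, surjectivity of the whole operator is equivalent to surjectivity of the other diagonal block, so $\cD^{\tau}_{\gamma}\cF_{\frakq^\lambda}^{gC,\tau}$ is surjective if and only if $P^{\lambda}_{\gamma}$ is. Finally, by Lemma \ref{lem:equivalence of fredholm and surjectivity} (applied with $\frakq^{\lambda}$), surjectivity of $\cD^{\tau}_{\gamma}\cF^{gC,\tau}_{\frakq^{\lambda}}$ is equivalent to surjectivity of the restricted operator $(\cD^{\tau}_{\gamma}\cF^{gC,\tau}_{\frakq^{\lambda}})|_{\cK^{gC,\tau,I}_{j,\gamma}}$, which is the operator appearing in the statement.

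The step I expect to be the main obstacle is verifying that the slicewise decomposition $W^I=W^{\lambda,I}\oplus(W^{\lambda,I})^{\perp}$ is genuinely compatible with the blow-up and with the anticircular projection $\Piagcs$ along the whole trajectory $\gamma$, so that the claimed block-triangular form is literally correct and the off-diagonal entries behave as asserted; on the blow-up the spinorial normalization $\|\phi\|_{L^2}=1$ couples the two summands, and one must check that the coupling terms land in the correct block (this is where the $\sigma$-model subtleties of \cite[Section 5.9, Section 9.1]{Lidman2016TheEO} enter). A secondary technical point is ensuring the equivalence of the two $L^2_j$-norms---on $V$ as a four-dimensional configuration versus as a map $\R\to (W^{\lambda,I})^{\sigma}$---holds uniformly so that the Fredholm and surjectivity statements transfer between the two pictures; this was already observed in the proof of Proposition \ref{prop:indentification between f.d. and inf.d gr} and I would simply invoke it. Once these compatibilities are in place, the argument is a routine repackaging of Lemmas \ref{lem:index of Q gamma gc} and \ref{lem:equivalence of fredholm and surjectivity}.
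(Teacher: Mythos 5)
Your proposal assembles exactly the same three ingredients as the paper's proof---Lemma \ref{lem:equivalence of fredholm and surjectivity} with $\frakq^\lambda$ in place of $\frakq$, the slicewise block decomposition $W^I=W^{\lambda,I}\oplus(W^{\lambda,I})^{\perp}$ and invertibility of the off-range block from Lemma \ref{lem:index of Q gamma gc}, and the linear interpolation between $\Piagcs\circ(\tfrac{D^{\sigma}}{dt}+\cD^\sigma \xgcsql)$ and $\tfrac{D^{\sigma}}{dt}+\Piagcs\circ\cD^\sigma \xgcsql$ from the proof of Proposition \ref{prop:indentification between f.d. and inf.d gr}---so the argument is correct and matches the paper's route. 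The only stylistic difference is the order of the steps: the paper applies Lemma \ref{lem:equivalence of fredholm and surjectivity} first, passing from $\cD^{\tau}_{\gamma}\cF_{\frakq^\lambda}^{gC,\tau}$ to $Q_{\gamma}^{gC}$, and then does the block decomposition and interpolation; you instead perform the decomposition directly and add a final invocation of Lemma \ref{lem:equivalence of fredholm and surjectivity}, which is redundant here since in the real setting the operator in the lemma statement already has domain $\cT^{gC,\tau,I}_{k,\gamma}=\cK^{gC,\tau,I}_{k,\gamma}$, but this extra step is harmless.
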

\begin{proof}
    First, we note that Lemma \ref{lem:equivalence of fredholm and surjectivity} holds with $\frakq^{\lambda}$ in place of $\frakq$, so the surjectivity of $\cD^{\tau}_{\gamma}\cF_{\frakq^\lambda}^{gC,\tau}$ is equivalent to that of $Q_{\gamma}^{gC}$. We have remarked in the proof of Lemma \ref{lem:index of Q gamma gc} that $Q_{\gamma}^{gC,\lambda}$ appears as a block of $Q_{\gamma}^{gC}$ and their surjectivities are equivalent since the other diagonal block is invertible. Finally, we can relate $Q_{\gamma}^{gC,\lambda}$ to $\Piagcs\circ(\frac{D^{\sigma}}{dt}+\cD^\sigma \xgcsql)$ via an interpolation as in Proposition \ref{prop: Fredholm property of Q gc gamma} (see \cite[Proposition 9.1.4]{Lidman2016TheEO} for details) and show that their surjectivities are equivalent.
\end{proof}

Thus, it is enough for us to work with $\cD^{\tau}_{\gamma}\cF_{\frakq^\lambda}^{gC,\tau}$. We should also take extra care when $\gamma$ is boundary-obstructed: in that case, $\cD^{\tau}_{\gamma}\cF_{\frakq^\lambda}^{gC,\tau}$ can never be surjective, so instead, we ask for surjectivity of $(\cD^{\tau}_{\gamma}\cF_{\frakq^\lambda}^{gC,\tau})^\partial$, which acts along paths in $\partial (W^{\lambda,I})^\sigma$.

\begin{prop}\label{prop:Morse smale condition on xgcsql}
We can choose the admissible perturbation $\frakq$ such that for any $\lambda\in \{\lambda_{1}^\bullet<\lambda_{2}^\bullet<\ldots\}$ sufficiently large, we have the following. Given any trajectory $\gamma$ of $\xgcsql|_{(W^{\lambda,I}\cap B(2R))^\sigma}$ we have:\begin{enumerate}
    \item If $\gamma$ is boundary-unobstructed, then $\cD^{\tau}_{\gamma}\cF_{\frakq^\lambda}^{gC,\tau}$ is surjective;
    \item If $\gamma$ is boundary-obstructed, then $(\cD^{\tau}_{\gamma}\cF_{\frakq^\lambda}^{gC,\tau})^\partial$ is surjective;
\end{enumerate}
    
\end{prop}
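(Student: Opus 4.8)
The plan is to argue by contradiction, transporting the question to the infinite-dimensional global Coulomb picture, where admissibility of $\frakq$ already supplies the needed regularity, and using compactness of trajectories as $\lambda\to\infty$. By Lemma \ref{lem:surjectivity equivalence for Morse-Smale condition} it is enough to prove surjectivity of $\cD^{\tau}_{\gamma}\cF^{gC,\tau}_{\frakq^{\lambda}}$ when $\gamma$ is boundary-unobstructed and of its boundary restriction $(\cD^{\tau}_{\gamma}\cF^{gC,\tau}_{\frakq^{\lambda}})^{\partial}$ when $\gamma$ is boundary-obstructed. Suppose this fails: then there are $\lambda_{n}=\lambda^{\bullet}_{i_{n}}\to\infty$ and trajectories $\gamma_{n}$ of $\xgcsqln$ inside $(W^{\lambda_{n},I}\cap B(2R))^{\sigma}$ for which the relevant operator has non-regular cokernel, that is $\dim\mathrm{coker}\ge 1$ in the unobstructed case and $\dim\mathrm{coker}\ge 2$ in the obstructed case (where the one-dimensional cokernel is the regular situation).

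First I would run the compactness machinery. By Proposition \ref{prop:hyperbolicity in B(2R)} the stationary points of $\xgcsqln$ in $(B(2R)\cap W^{\lambda_{n},I})^{\sigma}$ are hyperbolic, so each $\gamma_{n}$ has exponentially decaying ends and finite energy, and this energy is uniformly bounded because everything stays in $B(2R)$ and by the energy estimates imported from \cite[Section 6.3]{Lidman2016TheEO} and Subsection \ref{sub:Morse quasi-gradient flow and Morse Smale condition}. Hence, after passing to a subsequence and inserting translations, $\gamma_{n}$ converges — in $C^{\infty}_{\mathrm{loc}}$ by Lemma \ref{lem:Compactness of trajectories with varying lambda}, with uniform exponential control near the breaking points by hyperbolicity — to a broken trajectory $(\gamma^{1}_{\infty},\dots,\gamma^{\ell}_{\infty})$ of $\xgcsq$ in $W^{\sigma,I}$ whose consecutive endpoints are stationary points of $\xgcsq$. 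In the boundary-obstructed case boundary-obstructedness passes to each limiting piece, so the broken trajectory stays in the reducible locus.

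Then I would invoke regularity of the limit. Every endpoint of every $\gamma^{j}_{\infty}$ is a stationary point of $\xgcsq$, hence by the bijections of Lemma \ref{lem:identification of stationary points on blowup} corresponds to a non-degenerate stationary point of $\cX^{\sigma}_{\frakq}$ in $\cC^{\sigma,I}$ whose moduli spaces of trajectories are regular, since $\frakq$ is admissible; by Proposition \ref{prop: identification of regularity} this regularity transfers to $\cD^{\tau}_{\gamma^{j}_{\infty}}\cF^{gC,\tau}_{\frakq}$ (or its boundary restriction) — surjective in the unobstructed case, cokernel exactly one-dimensional in the obstructed case. To conclude, one checks that $\cD^{\tau}_{\gamma_{n}}\cF^{gC,\tau}_{\frakq^{\lambda_{n}}}$ converges, in the sense needed for upper semicontinuity of cokernel dimension, to $\bigoplus_{j}\cD^{\tau}_{\gamma^{j}_{\infty}}\cF^{gC,\tau}_{\frakq}$, using that $p^{\lambda_{n}}c_{\frakq}\to c_{\frakq}$ strongly together with the uniform exponential decay of the $\gamma_{n}$, and the standard gluing bookkeeping for broken trajectories (an interior breaking through a non-degenerate stationary point contributes no extra cokernel in the unobstructed case, each obstructed piece contributes exactly one). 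This bounds $\dim\mathrm{coker}$ for $\gamma_{n}$ by $0$ (unobstructed) resp.\ $1$ (obstructed) for $n$ large, contradicting the assumption.

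The hard part will be this final step: turning the $\lambda\to\infty$ convergence of the family of linearized operators into a rigorous upper-semicontinuity statement, including the breaking analysis and the careful counting of cokernel contributions through boundary-obstructed pieces. The real setting lightens this bookkeeping — there is no gauge orbit to mod out and pseudo-temporal gauge coincides with temporal gauge — so what remains is to verify that the compactness, hyperbolicity, and regularity inputs from \cite{li2022monopolefloerhomologyreal} and from Subsections \ref{sub:Convergence of stationary points}–\ref{sub:Morse quasi-gradient flow and Morse Smale condition} assemble exactly as in \cite[Chapter 10]{Lidman2016TheEO}.
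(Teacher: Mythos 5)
The paper's own proof is a one-sentence citation of \cite[Proposition 10.0.2]{Lidman2016TheEO}, pointing the reader to \cite[Theorem 8.24]{li2022monopolefloerhomologyreal} and \cite[Proposition 15.1.3]{Kronheimer_Mrowka_2007} for the underlying technique. Both of those references are Sard--Smale transversality arguments: one works over the Banach space of (very tame) perturbations, shows that a suitable parametrized moduli space is cut out transversely, and applies Sard's theorem to extract a residual set of $\frakq$ achieving regularity, then intersects over the countably many $\lambda=\lambda^\bullet_i$. Your proposal is a genuinely different route: a persistence-by-compactness argument asserting that regularity of the $\lambda=\infty$ moduli spaces, which comes for free from admissibility, is automatically inherited by the cut-off moduli spaces for $\lambda$ large.

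That difference is not cosmetic. Your strategy, if it went through, would prove the \emph{stronger} statement that the conclusion holds for every admissible $\frakq$, whereas the proposition (and its source) only claims that one can \emph{choose} $\frakq$, i.e.\ one may have to pass to a residual subset. The phrasing is a signal that a further genericity step is being taken, and the cited \cite[Proposition 15.1.3]{Kronheimer_Mrowka_2007} is exactly such a step. Your compactness route also has the gap you yourself flag as ``the hard part'': a sequence $\gamma_n$ of trajectories of $\xagcsqln$ converges, after reparametrization, only to a \emph{broken} trajectory of $\xagcsq$, so the operators $\cD^\tau_{\gamma_n}\cF^{gC,\tau}_{\frakq^{\lambda_n}}$ do not converge in any straightforward sense to a single operator whose surjectivity you already know; you would need a full linear gluing analysis (with the boundary-obstructed codimension bookkeeping) to deduce surjectivity of the approximations, and that is precisely the content one avoids with Sard--Smale. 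There is also an ordering concern: the convergence-to-broken-trajectory machinery you rely on (Propositions \ref{prop:upstair covergence with energy control}, \ref{prop:unpara convergenc in local norm}, \ref{prop:convergence of trajectory class to broken ones downstair}) lives in Subsection \ref{sub:Convergence of trajectories}, which comes \emph{after} this proposition, so invoking it here risks circularity.

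Two smaller points. First, your opening reduction via Lemma \ref{lem:surjectivity equivalence for Morse-Smale condition} runs in the wrong direction: the proposition is already phrased as surjectivity of $\cD^\tau_\gamma\cF^{gC,\tau}_{\frakq^\lambda}$, and that lemma is what later converts this surjectivity into the geometric Morse--Smale condition for $\xagcsql$, not the other way around. Second, for the purely reducible trajectories your compactness picture is actually unnecessary: on the reducible locus the flow is the projectivized flow of the (cut-off) perturbed Dirac operator, so once Proposition \ref{prop:hyperbolicity in B(2R)} gives simple spectrum away from zero, transversality there is linear algebra — this part of your sketch could be made rigorous directly, without any gluing.
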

\begin{proof}
    This is a real analogue of \cite[Proposition 10.0.2]{Lidman2016TheEO}. Their argument works well for us. Readers can also refer to \cite[Theorem 8.24]{li2022monopolefloerhomologyreal} and \cite[Proposition 15.1.3]{Kronheimer_Mrowka_2007} for detailed arguments.
\end{proof}

Now we put the results from Subsection \ref{sub:Convergence of stationary points}, \ref{sub:Identification of grading} and this one together. Our ultimate goal is to identify $\widecheck{\mathit{HMR}}_*(Y,\iota,\s)$ with $\widetilde{H}^{\Z_2}_*(\mathit{SWF}_{\Z_2}(Y,\iota,\s))$. We have outlined our strategy in Subsection \ref{sub:Outline of the proof}. More precisely, we will make use of an intermediate group-the Morse homology of $\xagcsql$ on $(B(2R)\cap W^{\lambda,I})^\sigma/\Z_2$ to show that, for each $N\gg0$, we have $\widecheck{\mathit{HMR}}_{\le N}(Y,\iota,\s) \cong \widetilde{H}^{\Z_2}_{\le N}(\mathit{SWF}_{\Z_2}(Y,\iota,\s))$. This intermediate group is well-defined by Proposition \ref{prop: xgcql is a equivariant Morse quasi gradient} and \ref{prop:Morse smale condition on xgcsql}. And we have seen in Subsection \ref{sub:Morse homology and Morse quasi gradient flow} that this is isomorphic to $\widetilde{H}^{\Z_2}_{\le N}(\mathit{SWF}_{\Z_2}(Y,\iota,\s))$ in an appropriate grading range. The remaining step is to identify this with $\widecheck{\mathit{HMR}}$ in the same grading range. 

We have a one-to-one correspondence between generating set of their respective chain complexes, provided $\lambda\in \{\lambda_{1}^\bullet<\lambda_{2}^\bullet<\ldots\}$ is sufficiently large. We will show in the next subsection that we have similar results for the moduli space of trajectories. This will identify the differentials, give rise to a chain map, and finish the proof. 

\subsection{Convergence of trajectories}\label{sub:Convergence of trajectories}
\subsubsection{Self-diffeomorphism of configuration space}

In \cite[Chapter 11]{Lidman2016TheEO}, they extended the one-to-one correspondence $\Xi_{\lambda}:\frC^{\lambda}_{\cN} \to \frC_{\cN}$ of stationary points from \cite[Corollary 7.2.3]{Lidman2016TheEO} to a self-diffeomorphism $\Xi_{\lambda}:W^{\sigma}_0\to W^{\sigma}_0$. The diffeomorphism is crucial in the identification of the moduli spaces of trajectories.

We want to use their proving strategy, so we need a real analogue of their result. As for the function $F_{\lambda}$, we cannot simply restrict their construction due to the reasons listed in Remark \ref{rmk: warning on construction of FlambdaR}. Nevertheless, we can closely follow their construction to get some $\Xi
_\lambda$ on the $I$-invariant configuration space. 

To be concise, we won't repeat their construction, but we will list the properties that shall be useful and point out the modifications that are needed.
\begin{lem}(\cite[Lemma 11.0.1]{Lidman2016TheEO})\label{lem: properties of Xi in dimension 3}
    For $\lambda\gg 0$, we have a $\Z_2$-equivariant diffeomorphism  $\Xi_{\lambda}:W^{\sigma,I}_0\to W^{\sigma,I}_0$ satisfying:\begin{enumerate}
        \item $\Xi_{\lambda}$ sends $x_{\lambda}$ to $x_{\infty}$ for each stationary point $x_{\infty}\in\cN$.
        \item $\Xi_{\lambda}$ restricts to a self-diffeomorphism of $W_{j}^{\sigma,I}$, for $0\le j\le k$.
        \item Let $\Xi_\infty$ be the identity map. Then, for $0\le j\le k$, $\Xi_{\lambda}:W_{j}^{\sigma,I}\to W_{j}^{\sigma,I}$ and all its derivatives are smooth in $\lambda$ at and near $\infty$.
        \item $\Xi_{\lambda}$ extends to the double $\widetilde{W}^{\sigma,I}$ with all the properties above preserved.
    \end{enumerate}
\end{lem}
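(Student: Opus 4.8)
The plan is to rerun the construction of \cite[Chapter 11]{Lidman2016TheEO} essentially verbatim, but carried out directly on the real blow-up $W^{\sigma,I}$ (and its double $\widetilde{W}^{\sigma,I}$) rather than obtained by restricting the ambient non-real self-diffeomorphism; as Remark \ref{rmk: warning on construction of FlambdaR} explains, restriction is not available to us, because our perturbation, chosen by pairing with invariant objects, is only known to be admissible on the invariant configuration space. The input is the finite list $x^1_\infty,\dots,x^m_\infty$ of stationary points of $\xgcsq$ lying in $\cN\subset W^{\sigma,I}$ (the $\Z_2$-orbits over $\frC_{\cN}$), together with the approximations $x^j_\lambda$ furnished by Corollary \ref{cor:1 to 1 correspondence between stationary points of cutoff} and the implicit function theorem: each $x^j_\lambda$ is a stationary point of $\xgcsql$, depends smoothly on $\lambda$ near $\infty$, and converges to $x^j_\infty$ in $L^2_k$ --- hence in $C^1$, as $k\ge 5$ --- as $\lambda\to\infty$; moreover, since the correspondence of Corollary \ref{cor:1 to 1 correspondence between stationary points of cutoff} preserves the type of the stationary point, a reducible $x^j_\infty$ has a reducible $x^j_\lambda$, so $x^j_\lambda$ too lies on the boundary $\{s=0\}$. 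Then, for $\lambda\gg 0$ and a fixed small $\epsilon$, I would take pairwise disjoint $\epsilon$-balls around the $x^j_\infty$ measured in a $\Z_2$-invariant distance (the ambient $L^2$-distance to $x^j_\infty$ is $\Z_2$-invariant, since $\Z_2$ acts by isometries, and is smooth on every $W^{\sigma,I}_j$), and define $\Xi_\lambda$ as the composition of cut-off translations --- one per $j$, supported in the $j$-th ball, carrying $x^j_\lambda$ to $x^j_\infty$ and equal to the identity outside. As $x^j_\lambda\to x^j_\infty$ each factor is $C^1$-close to the identity, hence a diffeomorphism; disjointness of supports makes the composition send each $x^j_\lambda$ to $x^j_\infty$; and $\Xi_\infty=\mathrm{id}$.

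Two features need extra bookkeeping. For $\Z_2$-equivariance: the points $x^j_\infty$ sit in disjoint $\Z_2$-orbits of size two --- for reducibles on the blow-up as well as for irreducibles, because the spinor component never vanishes --- and since $\xgcsql$ is $\Z_2$-equivariant the implicit-function-theorem solution near the conjugate point is the conjugate of $x^j_\lambda$; so if the balls and cut-off functions are chosen $\Z_2$-invariantly and the cut-off translation at a point and at its conjugate are performed together, every factor is $\Z_2$-equivariant, hence so is $\Xi_\lambda$. For the boundary and property (4): near a reducible $x^j_\infty=(a,0,\phi)$ the translation vector $x^j_\infty-x^j_\lambda$ has vanishing $s$-component, so the cut-off translation there can be taken to fix the $s$-coordinate; then $\Xi_\lambda$ preserves each slice $\{s=c\}$ near the boundary, restricts to a self-diffeomorphism of $\partial W^{\sigma,I}=\{s=0\}$, is tangent to the boundary, and is given by a formula that makes sense for all $s\in\R$, giving the extension to $\widetilde{W}^{\sigma,I}$ with the same properties. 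Properties (2) and (3) --- that $\Xi_\lambda$ restricts to a diffeomorphism of each $W^{\sigma,I}_j$ and is, with all its $\lambda$-derivatives, smooth in $\lambda$ near $\infty$ --- then follow exactly as in \cite[Chapter 11]{Lidman2016TheEO}, the cut-off functions being smooth on every Sobolev level and the $\lambda$-smoothness being inherited from that of $x^j_\lambda$.

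The genuinely delicate point --- and the one I expect to cost the most care --- is the bookkeeping near the reducible (boundary) stationary points, where one must reconcile the gauge $\Z_2$-action on the spinor, the blow-up structure defining the boundary and its double, and the Sobolev scale all at once. What makes it work is that the reducible stationary points and all their $\lambda$-approximations lie on the common locus $\{s=0\}$: there the local cut-off translations can be chosen simultaneously $\Z_2$-equivariant, $s$-preserving, and smooth across Sobolev levels, and once this is arranged every estimate and transversality statement in \cite[Chapter 11]{Lidman2016TheEO} transfers without change, so that no new analytic input is required.
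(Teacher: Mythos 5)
Your construction is close in spirit to the paper's, but it has a real gap: you never address the normalization constraint $\left\Vert\phi\right\Vert_{L^2}=1$ in the blow-up $W^{\sigma,I}$. You write $\Xi_\lambda$ as a composition of ``cut-off translations'' using the vector $x^j_\infty - x^j_\lambda$, but $W^{\sigma,I}$ is not an affine subspace of its ambient Hilbert space (it is a sphere bundle in the spinor direction), so a map of the form $x\mapsto x+\beta(\cdot)(x^j_\infty-x^j_\lambda)$ does not take values in $W^{\sigma,I}$ whenever the translation vector has a nontrivial $\phi$-component --- which it generically does, even for reducibles, since $\phi^j_\lambda\ne\phi^j_\infty$. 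You carefully arrange for the $s$-coordinate to be preserved near the boundary, but that is the easy constraint; the sphere constraint is the one that actually obstructs a naive translation. This is exactly the issue that the paper flags at the start of its proof (``$W^{\sigma,I}_j$ is not an affine space, so we shall embed it into $\widehat W^{\sigma,I}_j$\,\dots'') and then resolves by introducing, for each representative $x_\infty$ of an orbit, the chart
\[G_{x_\infty}\colon U_{x_\infty}=\{(a,s,\phi):\left\langle\phi,\phi_\infty\right\rangle_{L^2}>0\}\ \to\ V_{x_\infty}=\{(a,s,\phi): s\ge0,\ \left\langle\phi,\phi_\infty\right\rangle_{L^2}=0\},\]
which straightens the sphere constraint to a linear one, performing the cut-off translation $\Upsilon_\lambda$ inside the affine space $V_{x_\infty}$, and setting $\Xi_\lambda=G_{x_\infty}^{-1}\circ\Upsilon_\lambda\circ G_{x_\infty}$ locally.

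The rest of your proposal --- one representative per $\Z_2$-orbit (orbits of size two, for reducibles as well, since $\phi\ne0$ on the blow-up), invariant disjoint neighbourhoods, copying by equivariance, observing that the $s$-component of the displacement vanishes at reducibles so the boundary and its double $\widetilde W^{\sigma,I}$ are respected, and inheriting smoothness in $\lambda$ from the implicit function theorem --- matches the paper's argument and is correct as far as it goes. To repair the gap, insert the chart step before doing any translating: the cut-off translation lives in $V_{x_\infty}$, not in $W^{\sigma,I}$ itself.
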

\begin{prop}(\cite[Proposition 11.0.2]{Lidman2016TheEO})\label{prop: properties of Xi in dimension 4}
Let $\Xi_{\lambda}$ be as in Lemma \ref{lem: properties of Xi in dimension 3}. Fix $x_\infty$ and $y_\infty$, stationary points of $\xgcsq$ in $\cN$ and let $x_{\lambda}$, $y_\lambda$ be their approximations (stationary points of $\xgcsql$ with lowest distance to them, that exist and are unique when $\lambda$ is large enough). Then for $\lambda\gg 0$ and $1\le j\le k$, we have the following.
\begin{enumerate}
    \item For a compact interval $I\subset\R$, the map $\Xi_{\lambda}$ induces a $\Z_2$-equivariant diffeomorphism of $\widetilde{W}^{\tau,I}(I\times Y)$, which is smooth in $\lambda$ at and near $\infty$.
    \item $\Xi_{\lambda}$ induces diffeomorphisms from $\widetilde{W}^{\tau,I}_j(x_\lambda,y_\lambda)$ to $\widetilde{W}^{\tau,I}_j(x_\infty,y_\infty)$, which are smooth in $\lambda$ at and near $\infty$.
    \item $\Xi_{\lambda}$ induces diffeomorphisms from $\widetilde{\cB}^{gC,\tau,I}_j([x_\lambda],[y_\lambda])$ to $\widetilde{\cB}^{gC,\tau,I}_j([x_\infty],[y_\infty])$ that vary smoothly in $\lambda$ at and near $\infty$.
    \item The diffeomorphisms from $\widetilde{\cB}^{gC,\tau,I}_j([x_\lambda],[y_\lambda])$ to $\widetilde{\cB}^{gC,\tau,I}_j([x_\infty],[y_\infty])$ lift to bundle maps \[\xymatrix{\cV^{gC,\tau}_j \ar[d] \ar[r]^{(\Xi_{\lambda})_*} & \cV^{gC,\tau}_j \ar[d]\\
		\widetilde{\cB}^{gC,\tau,I}_j([x_\lambda],[y_\lambda]) \ar[r]^{\Xi_{\lambda}} & \widetilde{\cB}^{gC,\tau,I}_j([x_\infty],[y_\infty])\\}. \]
        When $x_\infty\ne y_\infty$, similar result holds for $\widetilde{\cB}^{gC,\tau,I}_j([x_\infty],[y_\infty])/\R$.
\end{enumerate}
    
\end{prop}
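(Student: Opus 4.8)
The plan is to mirror the proof of \cite[Proposition 11.0.2]{Lidman2016TheEO} almost verbatim, taking the three-dimensional diffeomorphism from Lemma \ref{lem: properties of Xi in dimension 3} as the only genuinely new ingredient and exploiting the fact that in the real global Coulomb setting there is no nontrivial four-dimensional gauge group to quotient out (as recorded in Section \ref{sec:Real Monopole Floer homology in global Coulomb slice}, $\cC^{gC,\tau,I}_k(x,y)=\cB^{gC,\tau,I}_k(x,y)$). First I would let $\Xi_\lambda$ act on four-dimensional configurations slicewise, $(\Xi_\lambda\gamma)(t)=\Xi_\lambda(\gamma(t))$. Since by Lemma \ref{lem: properties of Xi in dimension 3} the three-dimensional $\Xi_\lambda$ restricts to a self-diffeomorphism of every $W^{\sigma,I}_j$, $0\le j\le k$, which together with all its derivatives is smooth in $\lambda$ at and near $\infty$, the standard Sobolev estimates for composition on path spaces show that $\gamma\mapsto\Xi_\lambda\circ\gamma$ is a diffeomorphism of $\widetilde{W}^{\tau,I}(I\times Y)$ over a compact interval $I$, smooth in $\lambda$; $\Z_2$-equivariance is inherited slicewise. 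This is item (1).

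For item (2), the point to use is that, by construction (following \cite[Chapter 11]{Lidman2016TheEO}, and in direct parallel with the cut-off translation $T_\lambda$ used above in building $F^R_\lambda$), $\Xi_\lambda$ agrees with the affine translation $x\mapsto x+(x^j_\infty-x^j_\lambda)$ on a small neighborhood of each stationary point $x^j_\lambda\in\cN$. Hence if $\gamma\in\widetilde{W}^{\tau,I}_j(x_\lambda,y_\lambda)$ then for $t\ll 0$ the slice $\gamma(t)$ lies in such a neighborhood of $x_\lambda$, so there $(\Xi_\lambda\gamma)(t)=\gamma(t)+(x_\infty-x_\lambda)$; translation by a fixed element is an $L^2_j$-isometry carrying $x_\lambda$ to $x_\infty$, so the asymptotic conditions defining the path space are preserved and $\Xi_\lambda\gamma$ is asymptotic to $x_\infty$ at $-\infty$, and symmetrically to $y_\infty$ at $+\infty$. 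Thus $\Xi_\lambda$ carries $\widetilde{W}^{\tau,I}_j(x_\lambda,y_\lambda)$ into $\widetilde{W}^{\tau,I}_j(x_\infty,y_\infty)$; its inverse is $\Xi_\lambda^{-1}$, which has all the same properties by Lemma \ref{lem: properties of Xi in dimension 3}(4) applied on the double, so it is a diffeomorphism, again smooth in $\lambda$. Item (3) then follows at once: in the real case $\widetilde{\cB}^{gC,\tau,I}_j([x],[y])$ is just this path space, and the $\Z_2$-equivariance of $\Xi_\lambda$ makes the induced map independent of the chosen representatives of $[x_\infty],[y_\infty]$, hence well defined on orbits. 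For item (4) I would lift $\Xi_\lambda$ to a bundle map of $\cV^{gC,\tau}_j$ by the slicewise differential $(D\Xi_\lambda)_{\gamma(t)}$, which is a bundle isomorphism covering $\Xi_\lambda$ and smooth in $\lambda$ by Lemma \ref{lem: properties of Xi in dimension 3}(3); when $x_\infty\ne y_\infty$ the slicewise construction commutes with the $\R$-translation action, so everything descends to the quotient by $\R$.

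The only portion I expect to require real care is the bookkeeping for smoothness in $\lambda$ uniformly at and near $\lambda=\infty$ (where $\Xi_\infty=\mathrm{id}$), together with checking that the ``affine near stationary points'' structure of $\Xi_\lambda$ genuinely preserves the $L^2_k$-asymptotic structure built into $\widetilde{W}^{\tau,I}_j(x_\lambda,y_\lambda)$; both of these are handled exactly as in \cite[Chapter 11]{Lidman2016TheEO}. In fact the real situation is strictly simpler than theirs: with no four-dimensional gauge group present, all of their normalizations of, and slice choices for, the four-dimensional gauge action are unnecessary here, and the passage from $\widetilde{W}^{\tau,I}$ to $\widetilde{\cB}^{gC,\tau,I}$ is essentially trivial.
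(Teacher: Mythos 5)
Your approach is essentially the same as the paper's: both you and the paper defer the heavy lifting to \cite[Chapter 11]{Lidman2016TheEO}, observing that the preliminary estimates in \cite[Sections 11.2.2, 11.3.1]{Lidman2016TheEO} are formal and hence pass to $W^I$, that there is no four-dimensional gauge group in the real global Coulomb setting so $\cC^{gC,\tau,I}_{k}(x,y)=\cB^{gC,\tau,I}_{k}(x,y)$, and that the slicewise action of the three-dimensional $\Xi_\lambda$ together with its differential gives the diffeomorphisms and bundle lifts. The paper itself simply enumerates a list of remarks to this effect and declares the $\lambda$-smoothness handled as in the text after \cite[Proposition 11.3.7]{Lidman2016TheEO}, so your level of detail is, if anything, a bit greater.

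One small imprecision in your item (2): you state that $\Xi_\lambda$ literally agrees with the affine translation $x\mapsto x+(x_\infty-x_\lambda)$ near a stationary point. On the blow-up $W^{\sigma,I}_j$ this does not parse as written, because the blow-up is not a linear space (the constraint $\|\phi\|_{L^2}=1$ is not preserved by addition); $\Xi_\lambda$ is a translation only after conjugating by the chart map $G_{x_\infty}$, or equivalently after embedding into the ambient linear space $\widehat{W}^{\sigma,I}_j$. The paper flags exactly this point in its remark (1) (``$W^{\sigma,I}_j$ is not an affine space, so we shall embed it into $\widehat{W}_j^{\sigma,I}$''), and the verification that the asymptotic $L^2_j$ conditions are preserved must be carried out through those charts rather than by treating $\Xi_\lambda$ as literally affine. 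This does not affect the viability of the argument, but the phrasing should be corrected to avoid suggesting a structure that $W^{\sigma,I}_j$ does not have.
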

\begin{cor} (\cite[Corollary 11.0.3-11.0.4]{Lidman2016TheEO})
    Fix some $j$ with $1\le j\le k$. \begin{itemize}
        \item If a sequence $\gamma_n\in W^{\tau,I}_{j,loc}(I\times Y)$ converges to some $\gamma_\infty$, then $\Xi_{\lambda_n}(\gamma_n)\to \gamma_\infty$ for any sequence $\lambda_n\to \infty$. 
        \item Let $\gamma_0\in W^{\tau,I}_k(x_\infty, y_\infty)$. If a sequence $\gamma_n\in W^{\tau,I}_{j,loc}(x_{\lambda_n},y_{\lambda_n})$ with $\lambda_n\to \infty$ satisfies \[\left\Vert \Xi_{\lambda_n}^{-1}(\gamma_0)-\gamma_n \right\Vert_{L^2_j(\R\times Y)}\to 0,\] then \[\left\Vert \Xi_{\lambda_n}(\gamma_n)-\gamma_0 \right\Vert_{L^2_j(\R\times Y)}\to 0.\]
    \end{itemize}
    
\end{cor}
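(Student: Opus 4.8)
The plan is to deduce both assertions directly from the regularity of the family $\Xi_\lambda$ recorded in Lemma~\ref{lem: properties of Xi in dimension 3} and Proposition~\ref{prop: properties of Xi in dimension 4}, reproducing the argument for \cite[Corollary 11.0.3-11.0.4]{Lidman2016TheEO} almost verbatim; since the $\Z_2$-equivariance of $\Xi_\lambda$ plays no role in a convergence statement, there is essentially nothing new to do beyond invoking the real analogues already established. The first step is to convert the qualitative statement ``$\Xi_\lambda$ is smooth in $\lambda$ at and near $\infty$ with $\Xi_\infty=\mathrm{id}$'' into quantitative uniform estimates: after reparametrizing $\lambda$ near $\infty$ by the homeomorphism $f$ as in the proof of Proposition~\ref{prop: properties of Xi in dimension 4}, the map $\lambda\mapsto\Xi_\lambda$ is $C^1$ up to and including $\lambda=\infty$, so the mean value theorem produces, for each bounded subset $K\subset W^{\tau,I}_{j,loc}(I\times Y)$ and each compact subinterval $I'\subset I$, a quantity $\varepsilon(\lambda)\to 0$ (as $\lambda\to\infty$) with $\sup_{\gamma\in K}\|\Xi_\lambda(\gamma)-\gamma\|_{L^2_j(I'\times Y)}\le\varepsilon(\lambda)$, and likewise in the global norm $L^2_j(\R\times Y)$ on bounded subsets of $\widetilde{W}^{\tau,I}_j(x_\infty,y_\infty)$ by Proposition~\ref{prop: properties of Xi in dimension 4}(2); the same discussion shows each $\Xi_\lambda$ is Lipschitz on $K$ with constant bounded uniformly for $\lambda$ near $\infty$.

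For the first bullet I would then argue as follows. Since $\gamma_n\to\gamma_\infty$ in $W^{\tau,I}_{j,loc}(I\times Y)$, the family $\{\gamma_n\}$ is bounded on every compact subinterval $I'\subset I$, so applying the estimate above with $K=\{\gamma_n\}$ gives
\[
\|\Xi_{\lambda_n}(\gamma_n)-\gamma_\infty\|_{L^2_j(I'\times Y)}\le\|\Xi_{\lambda_n}(\gamma_n)-\gamma_n\|_{L^2_j(I'\times Y)}+\|\gamma_n-\gamma_\infty\|_{L^2_j(I'\times Y)}\le\varepsilon(\lambda_n)+o(1),
\]
which tends to $0$; letting $I'$ exhaust $I$ gives $\Xi_{\lambda_n}(\gamma_n)\to\gamma_\infty$ in $W^{\tau,I}_{j,loc}(I\times Y)$. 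For the second bullet I would first note that $\Xi_\lambda^{-1}$ inherits all of the conclusions of Lemma~\ref{lem: properties of Xi in dimension 3} and Proposition~\ref{prop: properties of Xi in dimension 4} (by the inverse function theorem, with $\Xi_\infty^{-1}=\mathrm{id}$), so the uniform estimates above hold for $\Xi_\lambda^{-1}$ as well; applying them to the constant ``sequence'' $\gamma_0\in W^{\tau,I}_k(x_\infty,y_\infty)$ yields $\|\Xi_{\lambda_n}^{-1}(\gamma_0)-\gamma_0\|_{L^2_j(\R\times Y)}\to 0$, and combining this with the hypothesis $\|\Xi_{\lambda_n}^{-1}(\gamma_0)-\gamma_n\|_{L^2_j(\R\times Y)}\to 0$ gives $\gamma_n\to\gamma_0$ in $L^2_j(\R\times Y)$. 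Then I would apply the global-norm version of the first bullet to the sequence $\gamma_n\to\gamma_0$, with $K$ a fixed bounded $L^2_j$-neighborhood of $\gamma_0$ containing all $\gamma_n$ for $n$ large, to conclude $\|\Xi_{\lambda_n}(\gamma_n)-\gamma_0\|_{L^2_j(\R\times Y)}\to 0$.

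The only point requiring care — and what I expect to be the main (mild) obstacle — is the bookkeeping of norms and base points: $\Xi_\lambda$ maps the affine space $\widetilde{W}^{\tau,I}_j(x_\lambda,y_\lambda)$ onto $\widetilde{W}^{\tau,I}_j(x_\infty,y_\infty)$, so an expression such as ``$\|\Xi_\lambda(\gamma)-\gamma\|$'' must be read after writing everything in the common model vector space $L^2_j(\R\times Y)$ using the base paths, and one must verify that the reparametrized family depends on $(\gamma,\lambda)$ jointly in a $C^1$ fashion all the way to $\lambda=\infty$ so that the mean value estimates are genuinely uniform. Both facts are contained in the proof of Proposition~\ref{prop: properties of Xi in dimension 4} (the real transcription of \cite[Proposition 11.0.2]{Lidman2016TheEO}), so no new analysis is needed; in particular the distinction between the $W^{\tau,I}_{j,loc}(I\times Y)$-topology used in the first bullet and the global $L^2_j(\R\times Y)$-topology used in the second is absorbed by the same mechanism, since both appear already in Proposition~\ref{prop: properties of Xi in dimension 4}.
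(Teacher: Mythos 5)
Your first bullet is handled exactly as the paper (following Lidman--Manolescu) intends: the corollary is stated as an immediate consequence of the smoothness of $\lambda\mapsto\Xi_\lambda$ at and near $\infty$ with $\Xi_\infty=\mathrm{id}$, and your splitting $\Xi_{\lambda_n}(\gamma_n)-\gamma_\infty=(\Xi_{\lambda_n}(\gamma_n)-\gamma_n)+(\gamma_n-\gamma_\infty)$ together with the uniform-on-bounded-sets estimate is the standard way to extract that consequence. In fact the estimate here is even cheaper than you suggest, since $\Xi_\lambda$ is an explicit cut-off translation supported near finitely many stationary points, with translation vectors $x^j_\infty-x^j_\lambda\to 0$, so $\|\Xi_\lambda(x)-x\|$ is controlled uniformly in $x$.

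Your assembly of the second bullet, however, contains a step that fails as written. The intermediate claims $\|\Xi_{\lambda_n}^{-1}(\gamma_0)-\gamma_0\|_{L^2_j(\R\times Y)}\to 0$ and ``$\gamma_n\to\gamma_0$ in $L^2_j(\R\times Y)$'' are ill-posed: $\Xi_{\lambda_n}^{-1}(\gamma_0)$ lies in $W^{\tau,I}_j(x_{\lambda_n},y_{\lambda_n})$ while $\gamma_0$ lies in $W^{\tau,I}_k(x_\infty,y_\infty)$, so their difference tends to $y_{\lambda_n}-y_\infty\neq 0$ as $t\to+\infty$ (and to $x_{\lambda_n}-x_\infty$ as $t\to-\infty$) and is therefore not of finite $L^2(\R\times Y)$ norm for any fixed $n$. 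Only differences of paths with the \emph{same} asymptotic endpoints are measured by this norm. The fix uses an ingredient you already listed: since $\Xi_{\lambda_n}$ is a diffeomorphism from $\widetilde{W}^{\tau,I}_j(x_{\lambda_n},y_{\lambda_n})$ onto $\widetilde{W}^{\tau,I}_j(x_\infty,y_\infty)$ with derivative bounds uniform in $n$ (smoothness at and near $\infty$), the mean value inequality gives
\[
\left\Vert \Xi_{\lambda_n}(\gamma_n)-\gamma_0 \right\Vert_{L^2_j}=\left\Vert \Xi_{\lambda_n}(\gamma_n)-\Xi_{\lambda_n}\bigl(\Xi_{\lambda_n}^{-1}(\gamma_0)\bigr) \right\Vert_{L^2_j}\le C\left\Vert \gamma_n-\Xi_{\lambda_n}^{-1}(\gamma_0) \right\Vert_{L^2_j}\to 0,
\]
where now both comparisons are made within a single affine path space at a time. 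With this one-line rearrangement your argument matches the intended proof; the rest of your write-up does not need to change.
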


Now we list some key points for the construction and the proof.

\begin{enumerate}
    \item $W^{\sigma,I}_j$ is not an affine space, but we can embed it into a larger affine space $\widehat{W}_j^{\sigma,I}$ which is defined by releasing the restriction $s\ge0$ and $\left\Vert \phi \right\Vert_{L^2}=1$. Using this embedding, we can take derivatives, consider the difference between elements, and take $L_j^2$ norms on $W^{\sigma,I}_j$ in a natural way. The same remarks work for $W^{\sigma,I}_j(I\times Y)$.
    \item As in the proof of \cite[Lemma 11.1.1]{Lidman2016TheEO}, we work in charts near each stationary point of the form $x_\infty$. As we have seen in Subsection \ref{sub:Morse quasi-gradient flow and Morse Smale condition}, our orbits are discrete, so there is no tangent to the orbit. This makes the construction easier since we have no need to take $U_{x_\infty}$ at first and then let $S^1$ act on it. We just take one $x_\infty$ from each orbit, define maps supported in a neighborhood, then use $\Z_2$-equivariance to extend it to the other point in its orbit, and finally extend to the whole space by identity map. More precisely, for each $x_{\infty}=(a_\infty,s_\infty,\phi_\infty)$, we define a chart ``centered'' at it. Note that \[U_{x_\infty}=\{(a,s,\phi)\in W^{\sigma,I}_0|\left \langle \phi,\phi_{\infty} \right \rangle_{L^2}>0\}\]  is indeed a neighborhood of $x_{\infty}$ in $W^{\sigma,I}_0$, since $2>\left\Vert \phi-\phi_\infty \right\Vert_{L^2} =2-2\left \langle \phi,\phi_{\infty} \right \rangle_{L^2}$ implies $\left \langle \phi,\phi_{\infty} \right \rangle_{L^2}>0$. Let \[V_{x_\infty}=\{(a,s,\phi)\in (\mathrm{ker}d^*)_0^{-\iota^*}\oplus \R\oplus L^2(Y;\bS)^\tau| s\ge 0, \left \langle \phi,\phi_{\infty} \right \rangle_{L^2}=0\},\] as defined in the proof of \cite[Lemma 11.1.1]{Lidman2016TheEO}. The formulas \[G_{x_\infty}:U_{x_\infty}\to V_{x_\infty}, \text{ } (a,s,\phi)\mapsto (a,s,\frac{\phi}{\left \langle \phi,\phi_{\infty} \right \rangle_{L^2}}-\phi_\infty)\]
    and \[G_{x_\infty}^{-1}:V_{x_\infty}\to U_{x_\infty}, \text{ } (a,s,\phi)\mapsto (a,s,\frac{\phi+\phi_\infty}{\left\Vert \phi+\phi_\infty \right\Vert_{L^2}})\] give rise to a pair of diffeomorphisms inverse to each other. It is easy to see that $G_{x_{\infty}}(x_\infty)=(a,s,0)$ and the approximate stationary point $x_{\lambda}$ lies in $U_{x_\infty}$ whenever $\lambda$ is large enough. 

    We can choose $0<\delta\ll 1/2$ as in the last paragraph of \cite[p.166]{Lidman2016TheEO}, which satisfies the disjoint assumption for all stationary points, not just for the chosen representative of each orbit. The formula for $\Upsilon_{\lambda}$ (a cut-off translation $V_{x_{\infty}}\to V_{x_{\infty}}$)works for us without change. Then, $\Xi_{\lambda}=G_{x_\infty}^{-1}\circ \Upsilon_{\lambda}\circ G_{x_\infty}$ gives us the desired diffeomorphism in a neighborhood of $x_{\infty}$. We ``copy'' it to the corresponding neighborhood for the other point in its orbit and repeat this construction for each orbit of stationary points. This finishes the construction and it is obviously a $\Z_2$-equivariant diffeomorphism of $W^{\sigma,I}$.

    \item With the help of the explicit formula listed at the end of \cite[Section 11.1]{Lidman2016TheEO}, we can see that it maps $x_\lambda$ to $x_\infty$ and that it extends to a self-diffeomorphism of $\widetilde{W}^{\sigma,I}$. 

    \item The three-dimensional properties in Lemma \ref{lem: properties of Xi in dimension 3} mainly concern the smoothness of $\Xi_\lambda$ and its derivatives. It is obvious that $\Xi_\lambda$ extends to a neighborhood of $W^{\sigma,I}_0$ in $\widehat{W}^{\sigma,I}_0$, so we can take derivatives. The explicit formulas, together with the inverse function theorem used in the proof of convergence of stationary points tell us that $\Xi_\lambda$ has all smoothness we asked for.

    \item For the four-dimensional properties, the preliminary estimates in \cite[Section 11.2.2 \& 11.3.1]{Lidman2016TheEO} are quite formal, so they work in $W^I$ as well. We can introduce the four-dimensional $\widehat{W}^I$ using the same formula, so the smoothness in $\lambda$ can be defined in the same way as they described after \cite[Proposition 11.3.7]{Lidman2016TheEO}. Then, their proof works in the real case with only notation changes.

    \item For the extension to other path spaces, note that there is no difference between temporal gauge and pseudo-temporal gauge, and there is no non-trivial gauge group on infinite cylinder. So life is actually easier for us, and a simplified version of their argument works for us. The same remark also works for the lifting of bundle maps. 
 
 \end{enumerate}

Now, we can conclude the proof of Lemma \ref{lem: properties of Xi in dimension 3} and Proposition \ref{prop: properties of Xi in dimension 4}. This family of diffeomorphisms will be useful when we consider convergence of trajectories in the $L_k^2$ norm later.

\subsubsection{Convergence of trajectories downstairs} \label{subsub:Convergence of trajectories downstair}
\begin{prop}\label{prop:convergence of trajectories downstair}
    Let $I\subset \R$ be an interval and let $\gamma_n$ be a sequence of trajectories of $\xgcqln$ contained in $B(2R)$, with $\lambda_n\to \infty$. Then there is a subsequence of $\gamma_n$ for which the restriction to any compact subinterval $I'\subset I^\circ$ converges in $C^\infty$ topology of $W^I(I\times Y)$ to $\gamma$, a trajectory of $\xgcq$.
\end{prop}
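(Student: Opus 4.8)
The plan is to deduce this from the finite-dimensional-approximation compactness result already stated as Lemma~\ref{lem:Compactness of trajectories with varying lambda}, applied to the very compact map $\eta_{\frakq}=\Pigc_*\frakq$ on $W^I$, together with elliptic bootstrapping. First I would set up the hypotheses of Lemma~\ref{lem:Compactness of trajectories with varying lambda}: take $N=\overline{B(2R)}\subset W_k^I$ and $U$ a slightly larger open ball, so that (by the definition of $R$) all finite-type Seiberg--Witten trajectories of $l+\eta_{\frakq}$ lie in $W\cap U$. Since $\xgcqln = l+p^{\lambda_n}c_{\frakq} = l+\eta_{\frakq}^{\lambda_n}+(\text{linear})$ in the notation of Subsection~\ref{sub:Outline of the proof}, the sequence $\gamma_n$ is exactly a sequence of trajectories of $l+p^{\lambda_n}(\eta_{\frakq}+c)$ lying in $N$, so Lemma~\ref{lem:Compactness of trajectories with varying lambda} produces a subsequence whose restriction to any compact $I'\subset I^\circ$ converges in $C^\infty(W^I(Y\times I'))$ to a trajectory $\gamma$ of $l+\eta_{\frakq}=\xgcq$. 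That is essentially the statement; the real content is just that the cited lemma was stated only for $W^I\to W^I$ very compact maps and we must check $\eta_{\frakq}^{\lambda}+(\text{the compact part }c)$ fits, which it does because $c$ is a fixed compact (in fact very compact) operator and $\eta_{\frakq}^{\lambda}$ is very compact by Proposition~6.1.5 of \cite{Lidman2016TheEO} in its real form.

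The one point that needs care, and which I expect to be the main obstacle, is the passage from $C^\infty_{loc}$ convergence on the open interval $I^\circ$ to a statement with the ``correct'' boundedness of the limit --- i.e. that $\gamma$ is genuinely a trajectory of $\xgcq$ in $B(2R)$ and not merely a local solution. For $I$ compact this is immediate; for $I=\R$ one uses that each $\gamma_n$ stays in $\overline{B(2R)}\subset W_k^I$ for all $t$, so the limit $\gamma$ also takes values in $\overline{B(2R)}$, and a finite-type solution of $\xgcq$ confined to $\overline{B(2R)}$ is an honest Seiberg--Witten trajectory in the global Coulomb slice (this is where the energy bounds from \cite[Section~6.3]{Lidman2016TheEO}, which are insensitive to the real restriction, are invoked). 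I would also remark that the argument is identical to the one Lidman--Manolescu give in the proof of their corresponding proposition (the $S^1$-equivariant statement underlying \cite[Proposition~7.5.1]{Lidman2016TheEO} or its analogue), with every configuration space replaced by its $I$-invariant subspace, and that no new analytic input beyond the three already-cited facts (very compactness of $\eta_{\frakq}^{\lambda}$, energy control, and Lemma~\ref{lem:Compactness of trajectories with varying lambda}) is required.

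Concretely, the proof would be short:

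\begin{proof}
    Write $\xgcqln=l+p^{\lambda_n}c_{\frakq}$, so that in the notation of Subsection~\ref{sub:Outline of the proof} we have $\xgcqln=l+c+\eta_{\frakq}^{\lambda_n}$, where $\eta_{\frakq}^{\lambda_n}\colon W^I\to W^I$ is very compact and $\eta_{\frakq}^{\lambda_n}\to\eta_{\frakq}$. Apply Lemma~\ref{lem:Compactness of trajectories with varying lambda} with $N=\overline{B(2R)}\subset W_k^I$ and $U$ an open ball with $N\subset U$; by the choice of $R$ all finite trajectories of $l+\eta_{\frakq}=\xgcq$ contained in $N$ lie in $W\cap U$, so the hypotheses are met. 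Hence there is a subsequence of $\gamma_n$ whose restriction to any compact subinterval $I'\subset I^\circ$ converges in the $C^\infty$ topology of $W^I(Y\times I')$ to a trajectory of $l+\eta_{\frakq}=\xgcq$. When $I$ is noncompact one checks that the limiting configuration still takes values in $\overline{B(2R)}$, since each $\gamma_n$ does; combined with the energy estimates of \cite[Section~6.3]{Lidman2016TheEO} (which hold verbatim on the real subspace, as noted in Subsection~\ref{sub:Outline of the proof}), this shows that the limit is a genuine Seiberg--Witten trajectory of $\xgcq$ in $B(2R)$. This is exactly the asserted $\gamma$.
\end{proof}
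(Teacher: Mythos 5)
Your proposal is correct and is essentially the paper's own proof, which simply cites Proposition \ref{prop:properties of very compact maps} and Lemma \ref{lem:Compactness of trajectories with varying lambda}. One small slip: the hypothesis of those results requires an \emph{open subset $U\subset N$} (e.g. $N=\overline{B(2R)}$ and $U=B(R)$, with Proposition \ref{prop:flow lines in B(R)} guaranteeing that finite-type trajectories confined to $N$ actually lie in $U$), not a larger open ball containing $N$ as you wrote; with that containment fixed the argument is exactly the intended one.
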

\begin{proof}
    This follows from Proposition \ref{prop:properties of very compact maps} and Lemma \ref{lem:Compactness of trajectories with varying lambda}. 
\end{proof}

The definitions in \cite[Section 12.1]{Lidman2016TheEO} for \emph{stationary point class}, \emph{parametrized trajectory class}, \emph{unparametrized trajectory class}, etc. work for us without change, so we won't repeat them for simplicity. The compactness argument in \cite[Proposition 12.1.4]{Lidman2016TheEO} holds for us also. \begin{prop}\label{prop:convergence of trajectory class to broken ones downstair}
    Fix $[x]$, $[y]$ stationary point classes of $\xgcq$. Fix $\lambda_n\to \infty$ and a sequence of unparametrized trajectories $[\Breve{\gamma}_n]$ of $\xgcqln$ from $[x_{\lambda_n}]$ to $[y_{\lambda_n}]$ and such that the representatives $\gamma_n$ of $[\Breve{\gamma}_n]$ are contained in $W^{\lambda_n,I}\cap B(2R)$. Then there exists a subsequence of $[\Breve{\gamma}_n]$ that converges to a broken trajectory class $[\Breve{\gamma}_\infty]$ of $\xgcq$.
\end{prop}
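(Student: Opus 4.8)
The plan is to mimic the standard compactness argument for Seiberg-Witten trajectories (\cite[Proposition 12.1.4]{Lidman2016TheEO}), using the convergence result we have already established downstairs. First I would lift everything to the $\Z_2$-cover: since the $\Z_2$-action on $W^{\sigma,I}_k$ is free and the quotient map $W^{\sigma,I}_k \to W^{\sigma,I}_k/\Z_2$ is a local diffeomorphism, a trajectory class $[\Breve\gamma_n]$ downstairs lifts (after choosing a lift of the initial stationary point $[x_{\lambda_n}]$) to a genuine trajectory $\gamma_n$ of $\xgcqln$ in $W^{\lambda_n,I}\cap B(2R)$; this is the representative named in the hypothesis. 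So it suffices to extract a subsequence of the $\gamma_n$ converging to a broken trajectory of $\xgcq$ in $W^{\sigma,I}$, and then project back down.

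\textbf{Key steps.} (1) Energy bound: using the corollary in Subsection \ref{sub:Morse quasi-gradient flow and Morse Smale condition} relating $\cE(\gamma)$ to the jump of $F^R_\lambda$ along the flow, together with the uniform bound on $\|F^R_\lambda\|$ coming from boundedness of $B(2R)$ and the uniform equivalence of the $L^2$ and $\tg$ metrics, I get a uniform bound $\cE(\gamma_n)\le C$ on the total energy along $\R$. (2) Bubbling off at the ends: pick the finitely many times $t$ where $\gamma_n$ passes near each stationary point of $\cN$; after translating by these times and passing to a subsequence, Proposition \ref{prop:convergence of trajectories downstair} gives, on each compact subinterval, $C^\infty_{loc}$-convergence to a (possibly constant) trajectory of $\xgcq$ in $W^I$. (3) No loss of energy in the limit: because all stationary points of $\xgcq$ in $\cN$ are hyperbolic (Proposition \ref{prop:byperbolicity in N}), exponential decay near stationary points together with the uniform energy bound forces the number of ``breaks'' to be finite and the total energy to be accounted for — this is the usual gluing-free half of the compactness theorem, and it is exactly where \cite[Section 12.1]{Lidman2016TheEO} does the work; the argument is formal once hyperbolicity and the energy inequality are in hand. (4) Matching endpoints: the limiting pieces concatenate into a broken trajectory class $[\Breve\gamma_\infty]$ of $\xgcq$ from $[x]$ to $[y]$, using that the approximate stationary points $x_{\lambda_n}$, $y_{\lambda_n}$ converge to $x_\infty$, $y_\infty$ (Corollary \ref{cor:1 to 1 correspondence between stationary points of cutoff} and the implicit function theorem), and that distinct broken ends land on distinct stationary points because $\delta$ was chosen smaller than the pairwise distances. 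Finally project the broken trajectory back to $W^{\sigma,I}/\Z_2$ by the quotient map.

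\textbf{Main obstacle.} The only real subtlety — and the step I expect to be the main obstacle — is the passage from the $C^\infty_{loc}$-convergence supplied by Proposition \ref{prop:convergence of trajectories downstair} (which is a statement about $W^I$, i.e. on the blow-\emph{down}) to convergence on the blow-up $W^{\sigma,I}$ and hence to a broken trajectory \emph{class}. Near irreducible stationary points this is automatic since blow-down is a diffeomorphism, but near reducibles one must control the spinorial component after rescaling, i.e. rule out that the $s$-coordinate converges to a point where the normalized spinor $\phi$ degenerates. This is handled exactly as in \cite[Section 12.1]{Lidman2016TheEO} using the hyperbolicity of the reducible stationary points and the structure of $\xgcsql$ on the blow-up; the real-equivariant setting changes nothing here, since the $\Z_2$-quotient is étale and all the relevant operators are the $I$-invariant restrictions of the operators treated there. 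I would therefore state the proof as: ``The argument of \cite[Proposition 12.1.4]{Lidman2016TheEO} applies verbatim, after replacing the compactness input with Proposition \ref{prop:convergence of trajectories downstair}, the hyperbolicity input with Proposition \ref{prop:byperbolicity in N}, and the energy inequality with the corollary of Subsection \ref{sub:Morse quasi-gradient flow and Morse Smale condition}; lifting and descending along the free $\Z_2$-quotient is harmless.''
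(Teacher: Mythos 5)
Your overall plan coincides with the paper's: the paper simply states that the compactness argument of \cite[Proposition 12.1.4]{Lidman2016TheEO} carries over, with Proposition \ref{prop:convergence of trajectories downstair} as the compactness input, the $F^R_\lambda$ energy inequality replacing the $F_\lambda$ one, and the real Fredholm/non-degeneracy results standing in for the originals. Your steps (1)--(4) reproduce that skeleton.

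However, the paragraph you label ``Main obstacle'' is aimed at the wrong target. This proposition is entirely a statement about the \emph{blow-down}: the flow is $\xgcq = \cX^{gC}_\frakq$ on $W^I$, the representatives $\gamma_n$ live in $W^{\lambda_n,I}\cap B(2R)$, and the limit $[\Breve{\gamma}_\infty]$ is a broken trajectory class of $\xgcq$ in $W^I$. There is no passage from blow-down to blow-up to make here; the blow-up analogue (spinorial energy bounds, behaviour near reducibles, etc.) is precisely what the subsequent Propositions (\ref{prop:upstair covergence with energy control} and \ref{prop:unpara convergenc in local norm}) are for, and they require substantially more work (the near-constant approximation estimates of Subsection \ref{subsub:Near constant approximations}). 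Relatedly, your lifting argument is also mis-aimed: you invoke freeness of the $\Z_2$-action on $W^{\sigma,I}_k$ to lift $[\Breve\gamma_n]$, but $\gamma_n$ lives in $W^{\lambda_n,I}$, where the $\Z_2$-action fixes every reducible and is not free --- fortunately this is immaterial since the hypothesis already hands you the representatives $\gamma_n$. A couple of your citations similarly point at blow-up results (Proposition \ref{prop:byperbolicity in N} is about $\xgcsql$, Corollary \ref{cor:1 to 1 correspondence between stationary points of cutoff} is about $\xagcsql$); the relevant blow-down inputs are Lemma \ref{lem:compactness of stationary point on blow-down} and the lemma following it on reducible stationary points in $B(2R)$, together with the admissibility of $\frakq$. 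With these reorientations the argument is the paper's argument.
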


\subsubsection{Convergence of parametrized trajectories in the blow-up}

Now we move to the blow-up configuration space. We first consider convergence of parametrized trajectories under some control of spinorial energy. One should note that any real trajectory in $W^I$ and its cut-off or blow-up is a trajectory that is considered in usual monopole Floer homology, so all those energy controls in \cite[Chapter 12]{Lidman2016TheEO} work for us. (And we may even get better bounds.) We will use $\gamma^\tau$ for a path upstairs and $\gamma$ for its blow-down image.

\begin{prop}(\cite[Proposition 12.2.1]{Lidman2016TheEO})\label{prop:upstair covergence with energy control}
    Fix $\omega>0$ and a compact interval $I=[t_1,t_2]\subset \R$. Consider a smaller interval $I_{\epsilon}=[t_1+\epsilon,t_2-\epsilon]\subset [t_1,t_2]$ for $\epsilon>0$. Suppose that $\gamma_n^\tau:I\to (W^{\lambda_n,I}\cap B(2R))^\sigma$ is a sequence of trajectories of $\xgcsqln$ with $\lambda_n\to \infty$. If there are bounds \[\Lambda_{\frakq^{\lambda_n}}(\gamma^{\tau}_n(t_1+\epsilon))\le \omega,\text{ } \Lambda_{\frakq^{\lambda_n}}(\gamma^{\tau}_n(t_2-\epsilon))\ge -\omega\]
    at ends of $I_{\epsilon}$, for all $n$. Then there exists a subsequence of $\gamma_n^\tau$ whose restriction to any compact subinterval $I'\subset I_{\epsilon}^\circ$ converges in $C^\infty$ topology of $W^{\tau,I}(I_{\epsilon}\times Y)$ to $\gamma^\tau$, a trajectory of $\xgcsq$. 
\end{prop}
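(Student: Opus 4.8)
The plan is to mirror the proof of \cite[Proposition 12.2.1]{Lidman2016TheEO}: use the blow-down convergence result Proposition \ref{prop:convergence of trajectories downstair} to control the connection and full-spinor components, and then split off the radial coordinate $s$ using the spinorial energy. Write $\gamma_n^\tau=(a_n,s_n,\phi_n)$. Its blow-down $\gamma_n=(a_n,s_n\phi_n)$ is a trajectory of $\xgcqln$ contained in $B(2R)$, so after passing to a subsequence Proposition \ref{prop:convergence of trajectories downstair} gives $\gamma_n\to\gamma=(a,\sigma_\infty)$ in $C^\infty$ on compact subintervals of $I^\circ$, with $\gamma$ a trajectory of $\xgcq$. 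In particular $a_n\to a$ and $s_n\phi_n\to\sigma_\infty$ in $C^\infty$ on every compact $I'\subset I_\epsilon^\circ$, and since $\left\Vert\phi_n(t)\right\Vert_{L^2}=1$ we get $s_n(t)=\left\Vert s_n\phi_n(t)\right\Vert_{L^2}\to\left\Vert\sigma_\infty(t)\right\Vert_{L^2}=:s(t)$ uniformly on such $I'$.

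Next I would use that $\Lambda$ is monotonically decreasing along trajectories of the cut-off blow-up flow; this holds in our situation because any real trajectory in $W^{\sigma,I}$ and its cut-offs is in particular a trajectory considered in the usual monopole Floer setting, so the energy estimates of \cite[Chapter 12]{Lidman2016TheEO} apply without change. Combined with the hypotheses $\Lambda_{\frakq^{\lambda_n}}(\gamma_n^\tau(t_1+\epsilon))\le\omega$ and $\Lambda_{\frakq^{\lambda_n}}(\gamma_n^\tau(t_2-\epsilon))\ge-\omega$, this gives $\left\vert\Lambda_{\frakq^{\lambda_n}}(\gamma_n^\tau(t))\right\vert\le\omega$ for every $t\in I_\epsilon$, and hence, via $\frac{d}{dt}s_n=\Lambda_{\frakq^{\lambda_n}}(\gamma_n^\tau)\,s_n$, a uniform (in $n$) Lipschitz bound on $\log s_n$ over $I_\epsilon$. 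By unique continuation for finite-type trajectories of $\xgcq$, the limiting spinor $\sigma_\infty$ is either nowhere zero on $I_\epsilon^\circ$, or vanishes identically there.

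In the first case, $s$ is bounded below by a positive constant on each compact $I'\subset I_\epsilon^\circ$, so $s_n$ is bounded below on $I'$ for $n$ large and $\phi_n=(s_n\phi_n)/s_n\to\sigma_\infty/s=:\phi$ in $C^\infty(I'\times Y)$; the limit $\gamma^\tau=(a,s,\phi)$ satisfies $\left\Vert\phi(t)\right\Vert_{L^2}=1$, and since its blow-down $\gamma$ is a trajectory of $\xgcq$ and $s>0$, it is the unique lift of $\gamma$ to the blow-up, hence itself a trajectory of $\xgcsq$ (cf.\ Proposition \ref{prop:zeros on the blow up} and the fact that the blow-down map intertwines $\xgcsq$ with $\xgcq$ away from the reducibles). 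In the second case $s_n\to0$ uniformly on compact subintervals of $I_\epsilon^\circ$, the blow-down limit $\gamma=(a,0)$ is reducible, and I would read off $C^\infty$ convergence of $\phi_n$ directly from its evolution equation: since $\phi_n$ solves
\[\frac{d}{dt}\phi_n=\widetilde{(\cX^{gC}_{\frakq^{\lambda_n}})}^1(a_n,s_n,\phi_n)-\Lambda_{\frakq^{\lambda_n}}(a_n,s_n,\phi_n)\phi_n\]
with $a_n\to a$ smoothly, $s_n\to0$, $\left\Vert\phi_n(t)\right\Vert_{L^2}=1$ and $\left\vert\Lambda_{\frakq^{\lambda_n}}\right\vert\le\omega$, the right-hand side is a uniformly controlled zeroth- and first-order perturbation of the Dirac operator, and the interior elliptic/parabolic bootstrapping already used for the compactness theorems produces a subsequence converging in $C^\infty$ on compact subintervals of $I_\epsilon^\circ$ to a $\phi$ solving the reducible blow-up equation; then $\gamma^\tau=(a,0,\phi)$ is a reducible trajectory of $\xgcsq$. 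In both cases $\gamma_n^\tau\to\gamma^\tau$ in $C^\infty$ on compact subintervals of $I_\epsilon^\circ$, which is the claim.

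The main difficulty here, as elsewhere in this paper, is not conceptual but a matter of checking that every analytic ingredient of the \cite{Lidman2016TheEO} argument survives restriction to the invariant subspace and holds uniformly in the cut-off parameter $\lambda_n$ — specifically the monotonicity of the spinorial energy and the $n$-uniform bootstrapping for $\phi_n$ in the reducible case. This is precisely where the observation that a real trajectory is an honest trajectory of the usual monopole flow does the work, so no new estimates are needed beyond those already quoted.
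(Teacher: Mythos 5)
Your overall strategy matches the one used in the paper (and in \cite[Proposition 12.2.1]{Lidman2016TheEO}): start from the blow-down convergence supplied by Proposition \ref{prop:convergence of trajectories downstair}, split off the radial coordinate $s$, and use a uniform bound on the spinorial energy over $I_\epsilon$ to run a bootstrap on $\phi_n$. The part of your argument that is wrong, however, is exactly the step that makes the hypotheses at the two endpoints of $I_\epsilon$ do their job: $\Lambda_{\frakq^\lambda}$ is \emph{not} monotonically decreasing along trajectories of the cut-off blow-up flow, and neither the paper nor \cite{Lidman2016TheEO} nor \cite{Kronheimer_Mrowka_2007} asserts this. What is actually proved (and what the paper restates for you as Lemma \ref{lem:energy control in B(2R)}) is only a \emph{one-sided} differential inequality $\frac{d}{dt}\Lambda_{\frakq^\lambda}(\gamma^\tau(t))\le C\left\Vert\xgcql(\gamma(t))\right\Vert_{L^2_k(Y)}$. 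Monotonicity would give $\left\vert\Lambda\right\vert\le\omega$ on all of $I_\epsilon$ for free; the true estimate does not. You must instead integrate the one-sided bound forward from $t_1+\epsilon$ and backward from $t_2-\epsilon$, and combine it with the $n$-uniform energy bound for trajectories confined to $B(2R)$ (which controls $\int\left\Vert\xgcql(\gamma_n)\right\Vert\,dt$ by Cauchy--Schwarz) to obtain $\left\vert\Lambda_{\frakq^{\lambda_n}}(\gamma_n^\tau(t))\right\vert\le\omega+C'$ uniformly in $n$ on $I_\epsilon$, for some $C'$ depending on the energy bound but not on $n$. The weaker constant is harmless for the bootstrap, but deducing it requires the energy argument, not monotonicity; as written your step fails.

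Two smaller points. First, there are sign slips: the downward gradient flow gives $\frac{d}{dt}s_n=-\Lambda_{\frakq^{\lambda_n}}(\gamma_n^\tau)s_n$ and $\frac{d}{dt}\phi_n=-\bigl(\widetilde{(\cX^{gC}_{\frakq^{\lambda_n}})}^1-\Lambda_{\frakq^{\lambda_n}}\phi_n\bigr)$, not the formulas with the opposite sign; these do not affect the Lipschitz estimate on $\log s_n$, but they should be fixed. Second, the paper's proof flags a genuine extra difficulty that your closing paragraph only gestures at: the proof of the underlying KM Lemma 10.9.1 / Theorem 10.9.2 differentiates the configuration under the $S^1$-action $e^{i\theta}$, which does not preserve $W^I$. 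The resolution is to embed the sequence into the ambient $W$, run the $S^1$-based estimate there, and then note that the resulting uniform bounds restrict to $W^I$; simply saying "a real trajectory is also an ordinary trajectory" is the right instinct but does not by itself explain why the differentiated $S^1$-orbit argument can be applied to configurations that a priori live only in the invariant slice. Otherwise your case split using the unique continuation Lemma \ref{lem:unique continuation for traj in B(2R)} and the bootstrap in the reducible case are consistent with the intended argument, though kept at the level of a sketch.
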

\begin{proof}
    The key to proving \cite[Proposition 12.2.1]{Lidman2016TheEO} is the estimation in \cite[Lemma 12.2.4]{Lidman2016TheEO}. The argument is based on \cite[lemma 10.9.1 and Theorem 10.9.2]{Kronheimer_Mrowka_2007}. We have real analogues \cite[Lemma 6.9 and Theorem 6.10]{li2022monopolefloerhomologyreal} for it and Li observed that the original argument in \cite{Kronheimer_Mrowka_2007} works in the real case. Although on the real global Coulomb slice we can no longer multiply the configuration by $e^{i\theta}$ and take derivatives, we can first embed the sequence in $W^I$ into the larger space $W$ and use their result. This allows us to repeat their argument and conclude the proof. For later use, we restate \cite[Lemma 12.2.3 and 12.2.4]{Lidman2016TheEO} below.
\end{proof}
\begin{lem}( \cite[Lemma 12.2.3]{Lidman2016TheEO})\label{lem:unique continuation for traj in B(2R)}
Let $\gamma(t)=(a(t),\phi(t))$ be a trajectory of $\xgcql$ in $B(2R)$ for some $\lambda\in (0,\infty]$. If $\phi(t)=0$ for some $t$, then $\phi\equiv 0$.
\end{lem}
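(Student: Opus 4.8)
The plan is to reduce the statement to a uniqueness principle for the linear equation satisfied by the spinor component of the trajectory. Because we work in the global Coulomb slice, $\gamma$ is automatically in temporal gauge (which in the real setting coincides with the pseudo-temporal gauge), so it solves $\tfrac{d}{dt}\gamma(t)=-(l+p^{\lambda}c_{\frakq})(\gamma(t))$ with $l(a,\phi)=(*da,D\phi)$. Since $l$ is block diagonal with respect to the splitting of $W^{I}$ into form and spinor parts, so is the cut-off projection $p^{\lambda}$. The perturbation $\frakq$ is tame, hence preserves reducibles, so the spinor part $c_{\frakq}^{1}$ of the nonlinear term vanishes whenever $\phi=0$; writing $c_{\frakq}^{1}(a,\phi)=\mathcal{A}(a,\phi)\,\phi$ for a bundle-endomorphism-valued map $\mathcal{A}$ (compare the $\widetilde{(\cX^{gC}_{\frakq})}^{1}$ notation used earlier), the spinor component of the flow equation reads
\[\frac{d\phi}{dt}+D\phi+p^{\lambda}\big(\mathcal{A}(a(t),\phi(t))\,\phi(t)\big)=0\qquad\text{on }I\times Y .\]
As $\gamma$ stays in $B(2R)\subset W_{k}^{I}$ with $k\ge 5$, Sobolev embedding bounds $a(t)$ and $\phi(t)$ in $C^{0}$, so $\mathcal{A}(a(t),\phi(t))$ is a zeroth-order bundle endomorphism whose operator norm is controlled uniformly in $t$.

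When $\lambda<\infty$ this is immediate. The flow of $l+p^{\lambda}c_{\frakq}$ preserves $W^{\lambda,I}$, and the block-diagonality of $l$ and $p^{\lambda}$ implies that $\phi(t)$ is a curve in the finite-dimensional vector space $\big(W^{\lambda,I}\big)^{\mathrm{sp}}$ satisfying a linear ordinary differential equation $\dot\phi=-A(t)\phi$, where $A(t)=D|_{W^{\lambda}}+p^{\lambda}\mathcal{A}(a(t),\phi(t))$ is smooth in $t$ because the flow is a smooth vector field on the finite-dimensional manifold $B(2R)\cap W^{\lambda,I}$. Uniqueness for linear ODEs then forces $\phi\equiv 0$ once $\phi(t)=0$ for a single $t$.

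When $\lambda=\infty$ the operator $\tfrac{d}{dt}+D$ is of Dirac type on the four-manifold $I\times Y$, and $\mathcal{A}(a,\phi)$ is a bounded bundle endomorphism, so $\phi$ satisfies a differential inequality $|(\partial_{t}+D)\phi|\le C|\phi|$. The conclusion then follows from the unique continuation property for Dirac-type operators vanishing on a hypersurface --- the Aronszajn--Carleman estimates used in \cite[Section 7.1]{Kronheimer_Mrowka_2007}, whose real counterpart is recorded in \cite[Section 6]{li2022monopolefloerhomologyreal} --- since vanishing of $\phi$ on the slice $\{t\}\times Y$ propagates in both directions of $t$, forcing $\phi\equiv 0$. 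In fact the shortest route, uniform in $\lambda$, is to observe that a real trajectory of $\xgcql$ in $B(2R)$ is in particular an ordinary trajectory of $l+p^{\lambda}c_{\frakq}$ in $B(2R)$, so the statement is exactly \cite[Lemma 12.2.3]{Lidman2016TheEO}; the real structure plays no role. The only delicate point is the backward-uniqueness aspect --- that vanishing on a single time-slice, not on an open set, already propagates --- but this is built into the cited unique continuation result, so no genuinely new difficulty arises in the real case.
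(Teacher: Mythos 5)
Your proposal is correct, and in particular your closing observation is exactly the paper's approach: the paper gives no separate proof of this lemma, simply restating \cite[Lemma 12.2.3]{Lidman2016TheEO} on the grounds that a real trajectory in $W^{I}\cap B(2R)$ is in particular an ordinary trajectory in $W\cap B(2R)$, so the original unique continuation statement applies verbatim. Your additional self-contained argument (finite-dimensional ODE uniqueness for $\lambda<\infty$, Dirac-type unique continuation for $\lambda=\infty$, both relying on tame perturbations preserving reducibles) is a sound expansion of what the cited lemma proves, though it is more than the paper itself supplies.
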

\begin{lem}(\cite[Lemma 12.2.4]{Lidman2016TheEO})\label{lem:energy control in B(2R)}
There is a constant $C>0$ such that for any $\lambda\gg 0$ and interval $[t_1,t_2]\subset\R$, trajectory $\gamma^\tau:[t_1,t_2]\to B(2R)^\sigma$ of $\xgcsql$ and $t\in [t_1,t_2]$, we have \[\frac{d}{dt}\Lambda_{\frakq^\lambda}(\gamma^\tau(t))\le C\cdot \left\Vert \xgcql(\gamma(t)) \right\Vert_{L^2_k(Y)}.\]
    
\end{lem}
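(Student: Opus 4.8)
The final statement to prove is Lemma \ref{lem:energy control in B(2R)}, which gives a uniform bound on the time-derivative of the spinorial energy $\Lambda_{\frakq^\lambda}$ along a trajectory $\gamma^\tau$ of $\xgcsql$ in the blow-up of $B(2R)$, in terms of $\|\xgcql(\gamma(t))\|_{L^2_k(Y)}$.

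\textbf{Approach.} The plan is to reduce the real/invariant statement to the non-equivariant one already proved by Kronheimer--Mrowka \cite[Lemma 10.9.1, Theorem 10.9.2]{Kronheimer_Mrowka_2007} and reformulated in the global Coulomb setting by Lidman--Manolescu \cite[Lemma 12.2.4]{Lidman2016TheEO}. Since any $I$-invariant trajectory in $(W^{\lambda,I})^\sigma$ is in particular a trajectory in $(W^\lambda)^\sigma$, the inequality follows immediately from the corresponding inequality for $\xgcql$ on $W^\lambda$ once one checks that the perturbation $\frakq$, being built from pairings with $I$-invariant forms and sections, is still very tame and controlled as a perturbation on the ambient space (this was recorded in Subsection \ref{sub:Construction in W^I}). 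Concretely, I would: (i) recall the defining formula $\Lambda_{\frakq^\lambda}(a,s,\phi)=\mathrm{Re}\langle \phi,\widetilde{(\cX^{gC}_{\frakq^\lambda})}^1(a,s,\phi)\rangle_{L^2}$ and differentiate it along the trajectory using the blow-up flow equation $\frac{d}{dt}\gamma^\tau=-\xgcsql(\gamma^\tau)$; (ii) expand $\frac{d}{dt}\Lambda_{\frakq^\lambda}$ into a sum of terms, isolating the ``good'' term which is manifestly $\leq 0$ (coming from the fact that $D_a$ is formally self-adjoint, plus controlled perturbation contributions) and the ``error'' terms which are bounded by a constant times $\|\xgcql(\gamma(t))\|$ using the uniform $C^0$- and $L^2_k$-bounds available on $B(2R)$ and the fact that $p^\lambda$ has operator norm $\le 1$; (iii) conclude the stated inequality. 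The uniform-in-$\lambda$ nature of the constant $C$ is exactly what the controlled Coulomb perturbation property guarantees, since all relevant operator norms and the nonlinearity $c_{\frakq}$ are bounded independently of $\lambda$ on $B(2R)$.

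\textbf{Key steps in order.} First, state precisely what estimates are inherited from the ambient space: on $B(2R)\subset W_k$ (hence on $B(2R)^\sigma$), both $\|\phi\|_{C^0}$ and the nonlinear part $c_{\frakq}$ are uniformly bounded, and $\|p^\lambda\|\le 1$, so $\xgcql$ differs from $l$ by a uniformly bounded nonlinear term. Second, differentiate $\Lambda_{\frakq^\lambda}$ along the flow, being careful with the blow-up bookkeeping (the $\widetilde{\phantom{x}}$ operator and the rescaled spinor $s\phi$). Third, organize the resulting expression following the template of \cite[Lemma 10.9.1]{Kronheimer_Mrowka_2007}: the leading contribution is nonpositive up to a term controlled by $\|\xgcql(\gamma(t))\|_{L^2_k(Y)}$, and all remaining terms are products of the flow velocity with bounded quantities, hence also controlled by the same norm. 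Fourth, observe that each ingredient has a real analogue: Li's \cite[Lemma 6.9, Theorem 6.10]{li2022monopolefloerhomologyreal} provide the needed estimates on $\tau$-invariant sections, and the algebraic manipulations never require multiplying a configuration by $e^{i\theta}$ (the one operation unavailable after passing to the invariant global Coulomb slice), so they restrict verbatim. Alternatively, and most economically, embed $W^I\hookrightarrow W$ and apply \cite[Lemma 12.2.4]{Lidman2016TheEO} directly; the constant $C$ obtained there works for the restricted trajectory.

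\textbf{Main obstacle.} The only genuine subtlety—and the reason the proof is not entirely trivial—is making sure that the uniformity of $C$ in $\lambda$ survives the restriction to the invariant subspace, i.e.\ that the controlled Coulomb perturbation $\eta_{\frakq}$ and its finite-dimensional approximations $\eta_{\frakq}^\lambda$ behave well when we only see their action on $I$-invariant configurations. This is handled by the remark in Subsection \ref{sub:Construction in W^I} that $I$-equivariant very compact maps on $W$ restrict to very compact maps on $W^I$, together with \cite[Proposition 6.1.5]{Lidman2016TheEO}; once this is invoked, the estimate is a direct transcription. In writing the proof I would therefore keep it short: cite the ambient inequality, note that an invariant trajectory is a trajectory of the ambient cut-off flow, and conclude.
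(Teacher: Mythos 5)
Your proposal is correct and matches the paper's treatment: the paper likewise observes that an $I$-invariant trajectory is a trajectory of the ambient cut-off flow, embeds $W^I$ into $W$, and invokes \cite[Lemma 12.2.4]{Lidman2016TheEO} (whose proof uses the $S^1$-derivative unavailable on $W^I$), with Li's real analogues of the Kronheimer--Mrowka estimates supplying the invariant-sector ingredients. Your "most economical" route is exactly the one taken in the text, and your attention to the uniformity of $C$ in $\lambda$ via the controlled Coulomb perturbation property is the right point to flag.
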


Using Proposition \ref{prop:upstair covergence with energy control} and a diagonalization argument, we have the following.

\begin{cor}
   Fix $\omega>0$ and a closed interval $I\subset \R$. Suppose that $\gamma_n^\tau:I\to (W^{\lambda_n,I}\cap B(2R))^\sigma$ is a sequence of trajectories of $\xgcsqln$ with $\lambda_n\to \infty$. Further assume $\vert \Lambda_{\frakq^{\lambda_n}}(\gamma_n^\tau(t)) \vert\le \omega$. Then there exists a subsequence of $\gamma_n^\tau$ whose restriction to any compact subinterval $I'\subset I^\circ$ converges in $C^\infty$ topology of $W^{\tau,I}(I'\times Y)$ to $\gamma^\tau$, a trajectory of $\xgcsq$. 
\end{cor}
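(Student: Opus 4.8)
The plan is to exhaust the interior $I^{\circ}$ by an increasing sequence of compact subintervals, apply Proposition \ref{prop:upstair covergence with energy control} on each of them, and then pass to a diagonal subsequence. First I would fix a nested exhaustion $J_{1}\subset J_{2}\subset\cdots$ of $I^{\circ}$ by compact intervals $J_{m}=[a_{m},b_{m}]$ with $\bigcup_{m}J_{m}=I^{\circ}$, and for each $m$ pick $\epsilon_{m}>0$ small enough that $\widetilde{J}_{m}:=[a_{m}-\epsilon_{m},\,b_{m}+\epsilon_{m}]\subset I$; this is possible because each $J_{m}$ is compactly contained in $I^{\circ}$ (and if $I=\R$ or a half-line there is nothing to arrange). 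Since the hypothesis gives $\vert\Lambda_{\frakq^{\lambda_{n}}}(\gamma_{n}^{\tau}(t))\vert\le\omega$ for \emph{all} $t\in I$, in particular the endpoint bounds $\Lambda_{\frakq^{\lambda_{n}}}(\gamma_{n}^{\tau}(a_{m}))\le\omega$ and $\Lambda_{\frakq^{\lambda_{n}}}(\gamma_{n}^{\tau}(b_{m}))\ge-\omega$ required in Proposition \ref{prop:upstair covergence with energy control} hold for every $n$. Applying that proposition with $\widetilde{J}_{m}$ playing the role of $I$ and $J_{m}$ that of $I_{\epsilon}$, I get, for each fixed $m$ and starting from any subsequence of the $\gamma_{n}^{\tau}$, a further subsequence whose restrictions to $J_{m}$ converge in the $C^{\infty}$ topology of $W^{\tau,I}(J_{m}\times Y)$ to a trajectory of $\xgcsq$.

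Next I would diagonalize in $m$: extract a subsequence converging on $J_{1}$, then from it a further subsequence converging on $J_{2}$, and so on, and take the diagonal subsequence, which then converges in $C^{\infty}$ on every $J_{m}$, hence on every compact $I'\subset I^{\circ}$ since any such $I'$ is contained in some $J_{m}$. By uniqueness of $C^{\infty}$ limits the limiting trajectories on overlapping $J_{m}$'s agree, so they glue to a single map on $I^{\circ}$ which, being a local $C^{\infty}$ limit of trajectories of $\xgcsq$, is itself a trajectory $\gamma^{\tau}$ of $\xgcsq$; its invariance under the real structure is automatic, being a closed condition preserved under $C^{\infty}$ convergence.

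The whole argument is routine, and I expect the only point that needs a word of care to be the availability of the endpoint energy bounds on \emph{every} $\widetilde{J}_{m}$ simultaneously — which is exactly what the uniform hypothesis $\vert\Lambda_{\frakq^{\lambda_{n}}}\vert\le\omega$ on all of $I$ supplies, in place of the two-point control used in Proposition \ref{prop:upstair covergence with energy control}. (If one only had a one-sided bound at a single end, one would first have to propagate it using Lemma \ref{lem:energy control in B(2R)} before running the same scheme, but under the stated hypothesis this is unnecessary.)
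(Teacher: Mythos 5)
Your proposal is correct and is essentially the paper's own argument: the paper derives this corollary precisely by applying Proposition \ref{prop:upstair covergence with energy control} together with a diagonalization over an exhaustion of $I^{\circ}$ by compact subintervals, exactly as you describe. The only cosmetic point is that the proposition's conclusion gives convergence on compact subintervals of $J_m^{\circ}$ rather than on $J_m$ itself, but your nested exhaustion absorbs this automatically.
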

\subsubsection{Near constant approximations}\label{subsub:Near constant approximations}

\cite[Section 12.3]{Lidman2016TheEO} consists of technical results about moduli spaces of broken trajectories. Almost all of their arguments work for us, so we shall restate the results in our context and notation and make some remarks on the proof. 

\begin{lem}(\cite[Lemma 12.3.1]{Lidman2016TheEO})\label{lem:contol on L^2_1 norm by Flambda}
Fix any stationary point $x_{\infty}=(a_{\infty},\phi_\infty)$ of $\xgcq$ in $W^{I}$. It can be regarded as a constant trajectory of  $\xgcq$ on $W^{I}(I\times Y)$. Then there is a neighborhood $U$ of $x_{\infty}$ in $W^{I}(I\times Y)$ and a constant $C$ independent of $\lambda\gg 0$, such that if $\gamma\in U$ is a trajectory of $\xgcql$, we have \[\left\Vert \gamma-x_{\lambda} \right\Vert_{L^{2}_k(I\times Y)}\le C(F^R_{\lambda}(\gamma(t_1))-F^R_{\lambda}(\gamma(t_2))).\]
\end{lem}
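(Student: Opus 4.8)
The plan is to follow the strategy of \cite[Lemma 12.3.1]{Lidman2016TheEO}, adapting it to the real global Coulomb slice. The statement has the flavor of a local coercivity estimate near a hyperbolic stationary point: on a sufficiently small neighborhood, the $L^2_k$-distance of a cut-off trajectory from the approximate stationary point is controlled by the drop of the energy-type function $F^R_\lambda$ along the trajectory. The first step is to reduce to an $L^2_1$ (or really $L^2_k$ via parabolic bootstrapping) estimate on an interval: since $\gamma$ is a trajectory of $\xgcql$, the corollaries at the end of Subsection \ref{sub:Morse quasi-gradient flow and Morse Smale condition} give $\frac{1}{4C_0}\cE(\gamma)\le F^R_\lambda(\gamma(t_1))-F^R_\lambda(\gamma(t_2))$ (note the orientation: $F^R_\lambda$ decreases along downward flow, so take $t_1<t_2$ with the sign as written), which bounds $\int_I \|\tfrac{d\gamma}{dt}\|^2_{L^2(Y)}dt = \int_I \|\xgcql(\gamma)\|^2_{L^2(Y)}dt$ by the right-hand side. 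So the energy, i.e.\ the $L^2_1$-in-$t$, $L^2$-in-$Y$ part of $\gamma - x_\lambda$, is already controlled.

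Second, I would upgrade this to the full $L^2_k(I\times Y)$ norm. Here one uses that $\gamma$ lies in a small neighborhood $U$ of the constant trajectory $x_\infty$, hence (for $\lambda\gg0$) in a small neighborhood of the constant trajectory $x_\lambda$, together with the hyperbolicity of $x_\lambda$ as a stationary point of $\xgcsql$ established in Proposition \ref{prop:byperbolicity in N} and Proposition \ref{prop:hyperbolicity in B(2R)}. The linearized operator $\mathrm{Hess}^{\tg,\sigma}_{\frakq^\lambda,x_\lambda}$ is invertible (or at worst hyperbolic), and the relevant four-dimensional operator $\frac{d}{dt} + \mathrm{Hess}^{\tg,\sigma}_{\frakq^\lambda,x_\lambda}$ is then invertible on $L^2_k(I\times Y)$ with a bound on its inverse that is uniform in $\lambda$ near $\infty$ — this uniformity is exactly what the smoothness-in-$\lambda$ statements (Lemma \ref{lem: properties of Xi in dimension 3}, Proposition \ref{prop: properties of Xi in dimension 4}) and the convergence results of Subsection \ref{sub:Convergence of stationary points} provide. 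Writing $\gamma - x_\lambda = \xi$, the flow equation becomes $\frac{d\xi}{dt} + \mathrm{Hess}^{\tg,\sigma}_{\frakq^\lambda,x_\lambda}\xi = r(\xi)$ where $r$ is quadratically small in $\xi$; on a small enough $U$ the nonlinear term is absorbed and elliptic bootstrapping together with the uniform invertibility gives $\|\xi\|_{L^2_k(I\times Y)} \le C'\|\xi\|_{L^2_1(I\times Y)}$, or more precisely $\le C'\cE(\gamma)^{1/2}$ plus controlled boundary terms. Combining with the first step, $\|\gamma - x_\lambda\|_{L^2_k(I\times Y)}^2 \le C'' \cE(\gamma) \le C'''(F^R_\lambda(\gamma(t_1)) - F^R_\lambda(\gamma(t_2)))$; since $F^R_\lambda(\gamma(t_1)) - F^R_\lambda(\gamma(t_2))$ is bounded on $U$ one can further trade the square for a linear bound, yielding the stated inequality (possibly after shrinking $U$ and enlarging $C$).

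The delicate point, as flagged in Remark \ref{rmk: warning on construction of FlambdaR}, is that $F^R_\lambda$ is our reconstructed function on $W^I$ rather than the literal restriction of Lidman–Manolescu's $F_\lambda$; so I must check that the estimates of Subsection \ref{sub:Morse quasi-gradient flow and Morse Smale condition} genuinely give the two-sided comparison $\tfrac{1}{4C_0}\|\xgcql\|^2_{L^2}\le dF^R_\lambda(\xgcql)\le 4C_0\|\xgcql\|^2_{L^2}$ on all of $B(2R)\cap W^{\lambda,I}$ — which the corollary following Proposition \ref{prop:existence of Flambdar} does provide — and that the chart $G_{x_\infty}$ used to make $W^{\sigma,I}$ locally affine interacts correctly with the blow-down when $x_\infty$ is irreducible. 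I expect the main obstacle to be the uniformity in $\lambda$ of the constant $C$: one must argue that the operator norms of $(\frac{d}{dt}+\mathrm{Hess}^{\tg,\sigma}_{\frakq^\lambda,x_\lambda})^{-1}$ do not blow up as $\lambda\to\infty$. This follows because $\mathrm{Hess}^{\tg,\sigma}_{\frakq^\lambda,x_\lambda} \to \mathrm{Hess}^{\tg,\sigma}_{\frakq,x_\infty}$ in operator norm (a consequence of $x_\lambda\to x_\infty$ in $L^2_k$ and continuity of the Hessian, Lemma \ref{lem:Fredholm properties of g-extended hessian on blow-up}), the limit is invertible, and invertibility is an open condition; the embedding $W^I\hookrightarrow W$ lets us borrow whatever pieces of the analogous $\lambda$-uniform estimates from \cite[Chapter 12]{Lidman2016TheEO} are convenient. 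With that in place the argument of \cite[Lemma 12.3.1]{Lidman2016TheEO} goes through verbatim in the real setting.
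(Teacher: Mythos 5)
Your overall route is the same as the paper's: the paper's proof consists of the remark that the argument of \cite[Lemma 12.3.1]{Lidman2016TheEO} goes through, and your two ingredients --- the quasi-gradient comparison $\tfrac{1}{4C_0}\cE(\gamma)\le \vert F^R_\lambda(\gamma(t_1))-F^R_\lambda(\gamma(t_2))\vert$ and the $\lambda$-uniform invertibility of the Hessian at the nearby hyperbolic stationary point $x_\lambda$ --- are exactly the inputs of that argument. The genuine gap is your last step. From $\Vert\gamma-x_\lambda\Vert^2\le C''\cE(\gamma)\le C'''\Delta$, where $\Delta=F^R_\lambda(\gamma(t_1))-F^R_\lambda(\gamma(t_2))$, the boundedness of $\Delta$ on $U$ only yields $\Vert\gamma-x_\lambda\Vert\le\sqrt{C'''}\,\Delta^{1/2}$; converting $\Delta^{1/2}$ into $C\Delta$ requires $\Delta$ to be bounded \emph{below}, which is precisely what fails as $\gamma$ approaches the constant trajectory at $x_\lambda$. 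A one-dimensional model shows the linear bound is actually false: for $f(x)=\tfrac{\mu}{2}x^2$ and $\dot\gamma=-\mu\gamma$ on a fixed interval, the ratio of $\Vert\gamma\Vert_{L^2(I)}$ to the drop of $f$ grows like $1/\gamma(t_1)$. What the argument delivers, and what \cite[Lemma 12.3.1]{Lidman2016TheEO} actually asserts, is the inequality with the \emph{square} of the norm on the left-hand side (the printed statement here, with $L^2_k$ in place of $L^2_1$ and the square dropped, is a transcription slip --- note the label of the lemma and the square roots that reappear in the analogues of \cite[Propositions 12.3.3--12.3.4]{Lidman2016TheEO} and in Proposition \ref{prop:exponential decay of Flambda on blow up}). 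You should prove the squared estimate rather than manufacture a linear one.

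Two further points of comparison. The only substantive content of the paper's proof is the explanation of why, unlike in \cite{Lidman2016TheEO}, no constant gauge rotation $u\in S^1$ must be applied to $\gamma$ before estimating: for $I$-invariant configurations the orbit direction $(0,i\phi)$ is automatically orthogonal to $W^I$, the residual gauge group is the discrete $\Z_2$, and one only shrinks $U$ so as to separate $x_\infty$ from $(-1)\cdot x_\infty$. You use this silently when you treat the linearization at $x_\lambda$ as honestly invertible with no orbit direction to project out; it deserves to be stated, since it is the one place the real setting differs from \cite{Lidman2016TheEO}. Finally, bootstrapping all the way to $L^2_k(I\times Y)$ on the full interval $I$ needs boundary control that you only gesture at (``plus controlled boundary terms''); the paper, following \cite{Lidman2016TheEO}, establishes the $L^2_1$ estimate here and defers higher regularity to Lemma \ref{lem:contol on L^2_k norm by Flambda}, which is stated on a compact subinterval $I'\subset I^\circ$ precisely so that interior elliptic estimates suffice.
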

\begin{proof}
    First, note that we don't need to apply a gauge transformation as in \cite[Lemma 12.3.1]{Lidman2016TheEO}. That is because if $(a,\phi)$ and $(b,\psi)$ are both $I$-invariant configurations, then it is automatically true that $(0,i\phi)$ and $(b,\psi)$ are orthogonal, since we are using an $I$-invariant metric and we can choose $U$ small enough so that the corresponding neighborhood of $(-1)\cdot x_{\infty}$ is disjoint from it. This observation also tells us that the proof of \cite[Lemma 12.3.1]{Lidman2016TheEO} works in an even simpler way for us.
    \end{proof}

By bootstrapping, we have 
\begin{lem}(\cite[Lemma 12.3.2]{Lidman2016TheEO})\label{lem:contol on L^2_k norm by Flambda}
Fix any stationary point $x_{\infty}=(a_{\infty},\phi_\infty)$ of $\xgcq$ in $W^{I}$. It can be regarded as a constant trajectory of  $\xgcq$ on $W^{I}(I\times Y)$. Then there is a neighborhood $U$ of $x_{\infty}$ in $W^{I}(I\times Y)$ and a constant $C$ independent of $\lambda\gg 0$, such that if $\gamma\in U$ is a trajectory of $\xgcql$, we have \[\left\Vert \gamma-x_{\lambda} \right\Vert_{L^{2}_{k+1}(Y\times I')}\le C(F^R_{\lambda}(\gamma(t_1))-F^R_{\lambda}(\gamma(t_2))),\] for any compact subinterval $I'\subset I^\circ$.
\end{lem}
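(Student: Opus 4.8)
\textbf{Proof proposal for Lemma~\ref{lem:contol on L^2_k norm by Flambda}.}
The plan is to upgrade the $L^2_k$ bound of Lemma~\ref{lem:contol on L^2_1 norm by Flambda} to an $L^2_{k+1}$ bound on any compact subinterval $I'\Subset I^\circ$ by a standard elliptic bootstrapping argument, exactly paralleling \cite[Lemma 12.3.2]{Lidman2016TheEO} but carried out on the real global Coulomb slice. First I would recall that a trajectory $\gamma$ of $\xgcql$ in the neighborhood $U$ of $x_\infty$ satisfies the four-dimensional equation $(\tfrac{d}{dt}+\cX^{gC}_{\frakq^\lambda})(\gamma)=0$ on $I\times Y$; equivalently, writing $\delta=\gamma-x_\lambda$ (using the embedding $W^{\sigma,I}\hookrightarrow\widehat W^{\sigma,I}$ to make sense of the difference as in the self-diffeomorphism discussion), the configuration $\delta$ solves a perturbed linear-plus-quadratic equation of the schematic form $(\tfrac{d}{dt}+L_0)\delta = B(\gamma)\,\delta + (\text{quadratic in }\delta)$, where $L_0$ is the model elliptic operator from Lemma~\ref{lem:operator cH and kASAFOE property of gextended Hessian on blow up} and $B$ depends smoothly on $\gamma$. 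Since $\tfrac{d}{dt}+L_0$ is a first-order elliptic operator on the cylinder and $\gamma$ stays in a bounded region of $W^{\sigma,I}_k\subset W^{\sigma,I}_{k}$ (so that by Lemma~\ref{lem:contol on L^2_1 norm by Flambda} already $\|\delta\|_{L^2_k(I\times Y)}\le C(F^R_\lambda(\gamma(t_1))-F^R_\lambda(\gamma(t_2)))$ and in particular $\gamma$ lies in a fixed $L^2_k$-ball), interior elliptic regularity on the slightly shrunken interval gives
\[
\|\delta\|_{L^2_{k+1}(I'\times Y)} \le C'\bigl(\|\delta\|_{L^2_k(I''\times Y)} + \|(\tfrac{d}{dt}+L_0)\delta\|_{L^2_k(I''\times Y)}\bigr),
\]
for $I'\Subset I''\Subset I^\circ$, where the constant $C'$ depends only on $I', I''$ and the model operator, hence is independent of $\lambda$ for $\lambda\gg 0$.

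The remaining step is to absorb the right-hand side. The term $\|(\tfrac{d}{dt}+L_0)\delta\|_{L^2_k}$ is controlled by $\|\delta\|_{L^2_k}$ together with a quadratic term $\|\delta\|_{L^2_k}^2$ (using the multiplication $L^2_k\times L^2_k\to L^2_k$ valid for $k\ge 5$ on a $4$-manifold, together with the fact that $B$ and the quadratic coefficients are uniformly bounded smooth functions of $\gamma$ in the chart and that the $\lambda$-dependence enters only through the uniformly bounded perturbation $\eta^\lambda_\frakq$ — here one invokes the controlled Coulomb perturbation estimates of \cite[Section 6.3]{Lidman2016TheEO}, which apply because our $\frakq$ is very tame and $I$-equivariant). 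Shrinking $U$ so that $\|\delta\|_{L^2_k}$ is small, the quadratic term is dominated by the linear term, and we obtain $\|\delta\|_{L^2_{k+1}(I'\times Y)}\le C''\|\delta\|_{L^2_k(I''\times Y)}$. Combining this with Lemma~\ref{lem:contol on L^2_1 norm by Flambda} applied on $I''$ yields the claimed bound $\|\gamma-x_\lambda\|_{L^2_{k+1}(Y\times I')}\le C(F^R_\lambda(\gamma(t_1))-F^R_\lambda(\gamma(t_2)))$ with $C$ independent of $\lambda$.

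The one genuinely real-specific simplification, which I would remark on just as in the proof of Lemma~\ref{lem:contol on L^2_1 norm by Flambda}, is that no preliminary gauge fixing is needed: on $W^{\sigma,I}(I\times Y)$ there is no nontrivial gauge action (any real $u:I\times Y\to S^1$ with $1-u\in L^2_{k+1}$ is constant $1$), and the only subtlety — distinguishing the chart around $x_\infty$ from the one around $(-1)\cdot x_\infty$ in the same $\Z_2$-orbit — is handled by taking $U$ small, which is already forced by the requirement $\|\delta\|_{L^2_k}$ small. The main obstacle, and the place where care is genuinely required rather than routine, is ensuring that all the constants in the bootstrapping — the elliptic estimate constant $C'$, the multiplication-operator norm, and the bound on the coefficients of the nonlinearity — are uniform in $\lambda$ as $\lambda\to\infty$; this is exactly where one needs that $p^\lambda c_\frakq\to c_\frakq$ with uniform control (the ``very compact'' property, Proposition~\ref{prop:properties of very compact maps}, together with the energy estimates quoted from \cite[Section 6.3]{Lidman2016TheEO}), so that the model operator $L_0$ and the size of the perturbation do not degenerate. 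Modulo this uniformity check, the argument is a verbatim transcription of \cite[Lemma 12.3.2]{Lidman2016TheEO} with every space replaced by its $I$-invariant counterpart.
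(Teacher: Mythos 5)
Your proposal is correct and follows essentially the same route as the paper, which dispatches this lemma with the single phrase ``By bootstrapping'' (and defers to \cite[Lemma 12.3.2]{Lidman2016TheEO}). You have filled in the standard interior elliptic bootstrap from the lower-order bound of Lemma~\ref{lem:contol on L^2_1 norm by Flambda}, correctly flagged the two places where the real setting simplifies matters (no gauge fixing, only the chart-disjointness issue for the $\Z_2$-orbit) and the one place where genuine care is needed (uniformity of the elliptic and nonlinearity constants in $\lambda$, resting on the very-compactness of $\eta_\frakq^\lambda$ and the energy control from \cite[Section 6.3]{Lidman2016TheEO}). Two small imprecisions worth tidying: (i) the relevant model operator here is the blow-down linearization $\tfrac{d}{dt}+l$, not the $L_0$ of Lemma~\ref{lem:operator cH and kASAFOE property of gextended Hessian on blow up}, which is the extended Hessian's linear part on the blow-up with the gauge-fixing row/column appended; and (ii) if the input bound is an $L^2_1$ bound as in \cite{Lidman2016TheEO} rather than $L^2_k$ (the label and the displayed formula in the paper's Lemma~\ref{lem:contol on L^2_1 norm by Flambda} disagree on this), the single-step regularity gain must be iterated $k$ times with a chain of shrinking intervals; the argument goes through unchanged because $\gamma$ lies in a fixed bounded set of $L^\infty_t L^2_k(Y)$, giving the $C^0$ control needed for the low-regularity multiplications.
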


Based on this, we have 
\begin{prop}(\cite[Proposition 12.3.3]{Lidman2016TheEO})\label{prop:contol on L2k norm by Flambda on blow up1}
Fix any stationary point of $\xgcsq$, $x_{\infty}\in B(2R)^{\sigma}\subset W^{\sigma,I}$  and let $x_{\lambda}$ be the nearby stationary point of $\xgcsql$. $I'$ is a compact subinterval of $I^\circ=(t_1,t_2)$. Then there is a neighborhood $U$ of $x_{\infty}$ in $W^{I}(I\times Y)$ and a constant $C$ independent of $\lambda\gg 0$, such that if $\gamma^\tau: I\to (B(2R)\cap W^{\lambda,I})^{\sigma}$ is a trajectory of $\xgcsql$ in $U$, then we have \begin{enumerate}
    \item if $x_{\infty}$ is irreducible, then \[\left\Vert \gamma^\tau-x_{\lambda} \right\Vert_{L^{2}_{k+1}(I'\times Y)}\le C(F^R_{\lambda}(\gamma(t_1))-F^R_{\lambda}(\gamma(t_2))).\]
    \item if $x_{\infty}$ is reducible, then \[\left\Vert \gamma^\tau-x_{\lambda} \right\Vert_{L^{2}_{k+1}(I'\times Y)}\le C((F^R_{\lambda}(\gamma(t_2))-F^R_{\lambda}(\gamma(t_1)))^{\frac{1}{2}}+\Lambda_{\frakq^\lambda}(\gamma(t_1))-\Lambda_{\frakq^\lambda}(\gamma(t_2))).\] 
\end{enumerate} 
    
\end{prop}

\begin{prop}(\cite[Proposition 12.3.4]{Lidman2016TheEO})\label{prop:contol on L2k norm by Flambda on blow up2}
Fix any stationary point of $\xgcsq$, $x_{\infty}\in B(2R)^{\sigma}\subset W^{\sigma,I}$  and let $x_{\lambda}$ be the corresponding stationary point of $\xgcsql$. Then there is a neighborhood $U$ in $W^{I}([-1,1]\times Y)$ of the blow-down image of $x_{\infty}$ and a constant $C$ independent of $\lambda\gg 0$ with the following properties.  If $\gamma^\tau: [-1,1] \to (B(2R)\cap W^{\lambda,I})^{\sigma}$ is a trajectory of $\xgcsql$ whose blow-down image lies in $U$ and $x_{\infty}$ is irreducible, then \[\frac{d}{dt}\Lambda_{\frakq^\lambda}(\gamma^\tau(t))|_{t=0}\le C(F^R_{\lambda}(\gamma(-1))-F^R_{\lambda}(\gamma(1)))^{\frac{1}{2}}.\]    
\end{prop}

Since $F^R_{\lambda}$ is $\Z_2$-equivariant, it induces a function $F^R_{\lambda}:(B(2R)\cap W^{\lambda,I})^\sigma/\Z_2\to \R$ (we abuse the same notation for it). Using the one-to-one correspondence between stationary points in Lemma \ref{lem:identification of stationary points on blowup}, we have the following.

\begin{prop}(\cite[Proposition 12.3.5]{Lidman2016TheEO})\label{prop:exponential decay of Flambda on blow up}
    Let $[x_{\infty}]$ be a stationary point of $\xagcsq$ in $W^{\sigma,I}/\Z_2$ with grading in $[-N,N]$. Then there is some $\delta>0$ such that for all $\lambda\gg 0$ and every trajectory $[\gamma]:[0,\infty)\to (B(2R)\cap W^{\lambda,I})^\sigma/\Z_2$ of $\xagcsql$ with $\lim_{t\to \infty}[\tau^{*}_t\gamma]=[x_{\lambda}]$ in $L_{k,loc}^2$, there is some $t_0$, so that for all $t\ge t_0$, \[F^R_{\lambda}([\gamma(t)])-F^R_{\lambda}([x_{\lambda}])\le C e^{-\delta t}\] where $C=F^R_{\lambda}([\gamma(t_0)])-F^R_{\lambda}([x_{\lambda}])$.
\end{prop}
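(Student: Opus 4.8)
The plan is to mimic the proof of \cite[Proposition 12.3.5]{Lidman2016TheEO}, the whole argument taking place on the finite-dimensional space $(B(2R)\cap W^{\lambda,I})^{\sigma}/\Z_2$, where all Sobolev norms are equivalent; since $F^{R}_{\lambda}$ is $\Z_2$-invariant and descends, and the estimate is local near the discrete orbit of $[x_{\lambda}]$, we may work in $W^{\sigma,I}$ near a chosen lift $x_{\lambda}$ and then project. Write $g(t)=F^{R}_{\lambda}([\gamma(t)])-F^{R}_{\lambda}([x_{\lambda}])$, interpreted via the blow-down, so that $g$ is the value of $F^{R}_{\lambda}$ along the blow-down trajectory of $\xgcql$ (which is an honest trajectory, including when it lies in the reducible locus). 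Since $F^{R}_{\lambda}$ decreases along such trajectories (Proposition \ref{prop:existence of Flambdar}) and $[\tau^{*}_{t}\gamma]\to[x_{\lambda}]$, restriction of the $L^{2}_{k,\mathrm{loc}}$ convergence gives $\gamma(t)\to x_{\lambda}$ in $L^{2}_{k}(Y)$, so $g$ is nonnegative, nonincreasing, and tends to $0$; in particular there is $t_{0}$ with $\gamma(t)$ in a fixed small neighborhood $U$ of $x_{\lambda}$ for all $t\ge t_{0}$.

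The heart of the matter is a differential inequality $g'(t)\le -\delta\, g(t)$ on $[t_{0},\infty)$ for some $\delta>0$ independent of $\lambda\gg0$. Two ingredients combine. First, the quasi-gradient estimate of Proposition \ref{prop:existence of Flambdar} (in its $L^{2}$-metric form, with $\lambda$-independent constant $C_{0}$) gives $g'(t)=-dF^{R}_{\lambda}\bigl(\xgcql(\gamma(t))\bigr)\le -\tfrac{1}{4C_{0}}\|\xgcql(\gamma(t))\|_{L^{2}}^{2}$. Second, I would observe that the blow-down of $x_{\lambda}$ is a \emph{hyperbolic critical point of $F^{R}_{\lambda}$}: it is a critical point because $F^{R}_{\lambda}=\cL_{\frakq}\circ T_{\lambda}$, $T_{\lambda}$ carries this point to the corresponding zero of $\cX^{gC}_{\frakq}$, and $\cX^{gC}_{\frakq}$ is the $\tg$-gradient of $\cL_{\frakq}|_{W^{I}}$, so $d\cL_{\frakq}$ vanishes there; and the linearization of $\xgcql$ there is invertible by the non-degeneracy of the reducible critical points forced by admissibility (which in particular forces the relevant Dirac operators to have trivial kernel) together with Proposition \ref{prop:hyperbolicity in B(2R)}, with a uniform lower bound on the smallest singular value for $\lambda\gg0$ coming from the convergence of stationary points and Hessians in Subsection \ref{sub:Convergence of stationary points}. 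Hence on $U$ one has $\|\xgcql(x)\|_{L^{2}}\ge c\,\|x-x_{\lambda}\|$ and, by Taylor expansion at the critical point, $F^{R}_{\lambda}(x)-F^{R}_{\lambda}(x_{\lambda})\le C_{1}\|x-x_{\lambda}\|^{2}$, both with $\lambda$-independent constants (uniformity of $U$, $c$, $C_{1}$ being guaranteed by the smooth dependence at $\infty$ of $x_{\lambda}$, $F^{R}_{\lambda}$ and $T_{\lambda}$, cf. Lemma \ref{lem: properties of Xi in dimension 3}). Combining gives $g(t)\le\tfrac{C_{1}}{c^{2}}\|\xgcql(\gamma(t))\|_{L^{2}}^{2}$, hence $g'(t)\le -\tfrac{c^{2}}{4C_{0}C_{1}}\,g(t)$, and Gronwall on $[t_{0},t]$ yields $g(t)\le g(t_{0})e^{-\delta(t-t_{0})}$ with $\delta=c^{2}/(4C_{0}C_{1})$, which is the asserted exponential decay.

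For reducible stationary points one must additionally track the directions normal to the boundary of the blow-up, where $F^{R}_{\lambda}$ is less sensitive; here the spinorial-energy estimates of Proposition \ref{prop:contol on L^2_k norm by Flambda on blow up} (the real analogues of \cite[Propositions 12.3.3 and 12.3.4]{Lidman2016TheEO}) are used exactly as in \cite{Lidman2016TheEO} to bound the blow-up distance $\|\gamma^{\tau}-x_{\lambda}\|$ by $g(t)^{1/2}$ plus a decaying multiple of $\Lambda_{\frakq^{\lambda}}$, after which the argument proceeds as above. The main obstacle is precisely this bookkeeping: propagating the $\lambda$-independence of all the constants and of the neighborhood $U$ through the hyperbolicity and Hessian-convergence inputs, and handling the boundary case uniformly. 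There are no genuinely new $\Z_2$-versus-$S^{1}$ difficulties, since after restricting to the invariant part and cutting off we are in the finite-dimensional Morse-theoretic situation, and the required real analogues of the estimates from \cite{Lidman2016TheEO} and \cite{li2022monopolefloerhomologyreal} have already been assembled in the previous subsections.
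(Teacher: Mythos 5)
The Gronwall-style differential inequality $g'(t)\le -\delta\, g(t)$ is indeed the right shape of argument, but the way you produce it does not yield a $\lambda$-uniform rate $\delta$, and that uniformity is precisely what the proposition asserts (``there is some $\delta>0$ such that for all $\lambda\gg0$\ldots''). The problem is your step combining the quadratic upper bound $F^R_\lambda(x)-F^R_\lambda(x_\lambda)\le C_1\|x-x_\lambda\|^2$ with the linearization lower bound $\|\cX^{gC}_{\frakq^\lambda}(x)\|_{L^2}\ge c\|x-x_\lambda\|_{L^2}$. The constant $c$ is uniform in $\lambda$ (it is essentially the spectral gap of the limiting extended Hessian, preserved under convergence of the approximate operators), but the constant $C_1$ is \emph{not}: the Hessian of $F^R_\lambda$ at the blow-down of $x_\lambda$, as an $L^2$ operator on $W^{\lambda,I}$, has norm of order $\lambda$, since its leading block is $\langle\psi,D_{a}\psi\rangle$ and $W^{\lambda,I}$ contains Dirac eigenvectors with eigenvalues up to $\lambda$. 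The ``smooth dependence at $\infty$'' statements you cite concern the diffeomorphisms $\Xi_\lambda$ and the translations $T_\lambda$ in $L^2_j$; near $x_\lambda$ one has $dT_\lambda=\mathrm{Id}$, so $\mathrm{Hess}(F^R_\lambda)=\mathrm{Hess}(\cL_\frakq)|_{W^{\lambda,I}}$ on the nose, and no amount of smoothness of $T_\lambda$ tames its $L^2$ operator norm. (The same difficulty appears if you use the $L^2_k$ metric throughout: then $C_1$ is uniform but the linearization lower bound $c$ fails, because $\|l v\|_{L^2}/\|v\|_{L^2_k}\to0$ on high-eigenvalue modes.) Your chain of inequalities therefore only yields $\delta\sim c^2/(C_0 C_1)\sim 1/\lambda$, which goes to zero.

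The remedy is not to route through the distance $\|x-x_\lambda\|$ at all: one must compare $g(t)$ directly to $\|\cX^{gC}_{\frakq^\lambda}(\gamma(t))\|^2_{L^2}$ using the spectral decomposition of the hyperbolic linearization (a Lojasiewicz-type inequality $g\lesssim\|\cX\|^2/\mu_{\min}$, with $\mu_{\min}$ the uniform spectral gap — this is immediate when the linearizations of the flow and of $F^R_\lambda$ are simultaneously diagonalized, as one checks for the model $F=\tfrac12 x^2+\tfrac{\lambda}{2}y^2$, where your route gives $\delta=2/\lambda$ while the direct comparison gives $\delta=2$). Equivalently, one can work in an intermediate weighted norm of the type $\|(1+|l|)^{1/2}\cdot\|_{L^2}$, for which both the Taylor upper bound and the linearization lower bound hold with $\lambda$-independent constants. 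In Lidman--Manolescu this is exactly what the chain of near-constant estimates (their Lemmas 12.3.1--12.3.4, reproduced here as Lemmas \ref{lem:contol on L^2_1 norm by Flambda}--\ref{lem:contol on L^2_k norm by Flambda} and Propositions \ref{prop:contol on L^2_k norm by Flambda on blow up} and the spinorial-energy bound) is set up to deliver, and the proof of their Proposition 12.3.5 proceeds through those, not through a naive pointwise Taylor bound. Your proposal correctly identifies the near-constant estimates as needed for the reducible case, but underestimates their role: they are needed in the irreducible case too, precisely to get the $\lambda$-uniform $\delta$.
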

\begin{prop}(\cite[Proposition 12.3.6]{Lidman2016TheEO})\label{prop:exponential decay of Flambda on blow down}
     Let $x_{\infty}$ be a stationary point of $\xgcq$ in $W^{I}$. Then there is a neighborhood $U$ of $[x_{\infty}]$ in $(B(2R)\cap W^{I})/\Z_2$ and a constant $\delta>0$ such that for all $\lambda\gg 0$ and every trajectory $[\gamma]:[t_1,t_2] \to W
     ^{\lambda,I}\cap U$ of $\xgcql$ in $L_{k,loc}^2$, we have inequalities \[-C_2 e^{\delta (t-t_2)}\le F^R_{\lambda}([\gamma(t)])-F^R_{\lambda}([x_{\lambda}])\le C_1 e^{-\delta (t-t_1)}\] where $C_1=\vert F^R_{\lambda}([\gamma(t_1)])-F^R_{\lambda}([x_{\lambda}])\vert$ and $C_2=\vert F^R_{\lambda}([\gamma(t_2)])-F^R_{\lambda}([x_{\lambda}])\vert$.
\end{prop}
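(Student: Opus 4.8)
The plan is to prove this as the real, blow-down analogue of \cite[Proposition 12.3.6]{Lidman2016TheEO}: it is the standard assertion that a gradient-like flow decays exponentially near a hyperbolic rest point, the only new feature being that every constant must be chosen uniformly in $\lambda\gg 0$. First I would fix a small $L^2_k$-neighborhood $U$ of $[x_\infty]$ in $(B(2R)\cap W^I)/\Z_2$, so small that for $\lambda\gg 0$ the only stationary point of $\xgcql$ in $U$ is $[x_\lambda]$; recall that $[x_\lambda]$ is in particular a critical point of $F^R_\lambda$ by the real analogue of Lemma \ref{lem:properties of equivariant Morse quasi gradient}. Two uniform inputs are already available: the Lyapunov inequalities $\frac{1}{4C_0}\|\xgcql\|_{L^2}^2\le dF^R_\lambda(\xgcql)\le 4C_0\|\xgcql\|_{L^2}^2$ with $\lambda$-independent $C_0$ (the corollary to Proposition \ref{prop:existence of Flambdar}), and the norm-control $\|\gamma-x_\lambda\|_{L^2_{k+1}(I'\times Y)}\le C\,(F^R_\lambda(\gamma(t_1))-F^R_\lambda(\gamma(t_2)))$ of Lemmas \ref{lem:contol on L^2_1 norm by Flambda}--\ref{lem:contol on L^2_k norm by Flambda}, again with $C$ independent of $\lambda$.

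Next I would establish the pointwise gradient inequality $\|\xgcql(x)\|_{L^2}^2\ge 4C_0\delta\,|F^R_\lambda(x)-F^R_\lambda(x_\lambda)|$ for all $x$ in a fixed neighborhood of $x_\infty$ and all $\lambda\gg 0$, with $\delta>0$ independent of $\lambda$. This rests on uniform hyperbolicity of the linearizations $\cD_{x_\lambda}\xgcql$ on $W^{\lambda,I}$: their spectra stay uniformly away from the imaginary axis, because $x_\lambda\to x_\infty$ in $L^2_k$, the associated Hessians converge to the invertible operator $\mathrm{Hess}^{\tg}_{\frakq,x_\infty}$ (using the real Hessian theory of Section \ref{sec:Real Monopole Floer homology in global Coulomb slice} together with Proposition \ref{prop:hyperbolicity in B(2R)}), and the large eigenvalues of $l$ only push $\cD_{x_\lambda}\xgcql$ farther from zero. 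Combining this with the quadratic Taylor bound for $F^R_\lambda$ at its critical point $x_\lambda$ and the elliptic-bootstrapping estimates that reconcile the $L^2$ and higher Sobolev norms on the cut-off slice (precisely Lemmas \ref{lem:contol on L^2_1 norm by Flambda}--\ref{lem:contol on L^2_k norm by Flambda} and Proposition \ref{prop:upstair covergence with energy control}) yields the inequality. Then, writing $g(t)=F^R_\lambda(\gamma(t))-F^R_\lambda(x_\lambda)$, the flow equation gives $g'(t)=-dF^R_\lambda(\xgcql(\gamma(t)))\le -\tfrac{1}{4C_0}\|\xgcql(\gamma(t))\|_{L^2}^2\le -\delta\,|g(t)|$, and since $g$ is nonincreasing one splits according to the sign of $g(t)$: on any subinterval of $[t_1,t_2]$ where $g>0$ the differential inequality integrates to $g(t)\le |g(t_1)|\,e^{-\delta(t-t_1)}$, and on any subinterval where $g<0$ it integrates to $g(t)\ge -|g(t_2)|\,e^{\delta(t-t_2)}$; the two one-sided bounds then hold for every $t\in[t_1,t_2]$, which is the claim. (The blow-up statement Proposition \ref{prop:exponential decay of Flambda on blow up} comes out of the same computation on $[0,\infty)$.)

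The main obstacle is the uniformity in $\lambda$, and inside it the mismatch between the $L^2$ and $L^2_k$ norms on $W^{\lambda,I}$: a naive Taylor estimate for $F^R_\lambda$ is controlled by $\|x-x_\lambda\|_{L^2_k}^2$ while the lower bound for $\|\xgcql(x)\|_{L^2}$ is only linear in $\|x-x_\lambda\|_{L^2}$, and the two norms are not comparable uniformly in $\lambda$. This is exactly why Lemmas \ref{lem:contol on L^2_1 norm by Flambda}--\ref{lem:contol on L^2_k norm by Flambda} and Proposition \ref{prop:contol on L^2_k norm by Flambda on blow up} are set up as they are—recovering the high Sobolev norms from the drop of $F^R_\lambda$ via the four-dimensional Seiberg--Witten equation and bootstrapping—and I would close the gap through them rather than by arguing in a single function space. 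As for the passage to the real setting, the modifications to the Lidman--Manolescu argument are only cosmetic: restrict every configuration space, slice, and Hessian to its $I$-invariant part, replace the $S^1$-quotient by the $\Z_2$-quotient (so there is no tangent-to-orbit term, which in fact simplifies the local charts), and invoke Li's real versions of the Kronheimer--Mrowka estimates; none of these touch the structure of the exponential-decay argument, so once the uniform inputs above are in place the adaptation is routine.
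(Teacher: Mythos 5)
Your proposal is correct and follows essentially the same route as the paper, which for this proposition simply invokes \cite[Proposition 12.3.6]{Lidman2016TheEO} and records that the argument carries over to the real setting: your reconstruction --- uniform Lyapunov bounds plus a {\L}ojasiewicz-type inequality near the uniformly hyperbolic stationary point, then integrating $g'\le-\delta|g|$ separately in the two sign regimes --- is exactly the content of the cited proof, and your identification of the $L^2$ versus $L^2_k$ norm mismatch as the point where Lemmas \ref{lem:contol on L^2_1 norm by Flambda}--\ref{lem:contol on L^2_k norm by Flambda} must enter is accurate. The real-case simplifications you list (discrete $\Z_2$ quotient, no tangent to the orbit, no gauge rotation needed in the local charts) are precisely the modifications the paper notes for this subsection.
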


Finally, for a trajectory $\gamma$ of $\xgcsql$, we introduce \[K_{\lambda}(\gamma)=\int_{\R}\vert \frac{d\Lambda_{\frakq^\lambda}(\gamma)}{dt}\vert dt, \text{ }K_{\lambda,+}(\gamma)=\int_{\R}( \frac{d\Lambda_{\frakq^\lambda}(\gamma)}{dt})^{+} dt.\] These two quantities may be infinite in prior, but when one is finite, so is the other. Furthermore, if $\gamma$ is a trajectory from $x_{\lambda}$ to $y_{\lambda}$, we have \[\Lambda_{\frakq^\lambda}(x_{\lambda})-\Lambda_{\frakq^\lambda}(y_{\lambda})=K_{\lambda}(\gamma)-2K_{\lambda,+}(\gamma).\] Using this new terminology, we have \begin{cor}
    Let $x_{\infty}$ be a stationary point of $\xgcq$ in $W^I$. Given any $\eta>0$, there is a neighborhood $U$ of $x_{\infty}$ as a constant trajectory in $W_{k}^I([-1,1]\times Y)$ with the following property for $\lambda\gg 0$. Let $J\subset \R$ be an interval and $J'=J+[-1,1]$. If we have a trajectory $\gamma^\tau:J'\to (W^{\lambda,I}\cap B(2R))^{\sigma}$ of $\xgcsql$ such that $\tau_t\gamma$ are contained in $U$ for all $t\in J$, then $K_{\lambda,+}^J(\gamma^\tau)\le \eta$.
\end{cor}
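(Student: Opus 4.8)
The plan is to adapt the proof of the corresponding statement at the end of \cite[Section 12.3]{Lidman2016TheEO}, substituting throughout the real analogues of its inputs. Since $W^{\sigma,I}$, its finite-dimensional cut-offs and their blow-ups all embed as regular submanifolds of the ambient Lidman--Manolescu objects, the only tools I will use are Proposition \ref{prop:contol on L^2_k norm by Flambda on blow up} (in its pointwise form for $\tfrac{d}{dt}\Lambda_{\frakq^\lambda}$ at an irreducible stationary point, and in its $L^2_{k+1}$ form near a reducible one), the energy control of Lemma \ref{lem:energy control in B(2R)}, the exponential decay of Propositions \ref{prop:exponential decay of Flambda on blow up}--\ref{prop:exponential decay of Flambda on blow down}, and the monotonicity of $F^R_\lambda$ along the flow from Proposition \ref{prop:existence of Flambdar}. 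Writing $J'=[t_1,t_2]$ (the cases $t_1=-\infty$ or $t_2=+\infty$ handled by exhausting $J$ by bounded subintervals) and $f(t)=F^R_\lambda(\gamma(t))$, which is non-increasing, the task is to bound $K^J_{\lambda,+}(\gamma^\tau)=\int_J\bigl(\tfrac{d}{dt}\Lambda_{\frakq^\lambda}(\gamma^\tau(t))\bigr)^+\,dt$, and I would treat the cases $x_\infty$ irreducible and $x_\infty$ reducible separately.

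If $x_\infty$ is irreducible, then for each $t\in J$ the translate $\tau_t\gamma^\tau$, restricted to $[-1,1]\times Y$, is a trajectory of $\xgcsql$ into $(B(2R)\cap W^{\lambda,I})^\sigma$ whose blow-down $\tau_t\gamma$ lies in $U$ (here I use $[t-1,t+1]\subset J'$ together with the hypothesis), so Proposition \ref{prop:contol on L^2_k norm by Flambda on blow up} gives $\tfrac{d}{dt}\Lambda_{\frakq^\lambda}(\gamma^\tau(t))\le C\bigl(f(t-1)-f(t+1)\bigr)^{1/2}$, hence the same bound for the positive part. Integrating over $J$, I would shrink $U$ so that the blow-down exponential decay of Proposition \ref{prop:exponential decay of Flambda on blow down} applies to $\gamma$ on $J'$; this yields $f(t-1)-f(t+1)\le C_1 e^{-\delta(t-1-t_1)}+C_2 e^{-\delta(t_2-t-1)}$ with $C_1,C_2$ bounded by $\omega_\lambda(U):=\sup_U|F^R_\lambda-F^R_\lambda(x_\infty)|+|F^R_\lambda(x_\lambda)-F^R_\lambda(x_\infty)|$, and the resulting integral is geometric, of size $O\bigl(\omega_\lambda(U)^{1/2}\bigr)$.

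If $x_\infty$ is reducible, the frozen normal flow near it is the downward gradient flow of the spinorial Rayleigh quotient, along which $\Lambda_{\frakq^\lambda}$ is non-increasing, so the positive part of $\tfrac{d}{dt}\Lambda_{\frakq^\lambda}$ along the true trajectory is controlled, via the $L^2_{k+1}$-estimate of Proposition \ref{prop:contol on L^2_k norm by Flambda on blow up} and the energy control of Lemma \ref{lem:energy control in B(2R)}, by $\|\xgcql(\gamma(t))\|$ on short subintervals, hence by $\bigl(-\tfrac{d}{dt}f\bigr)^{1/2}$ up to a term in the drop of $\Lambda_{\frakq^\lambda}$ itself; telescoping these against the exponential decay of $f-F^R_\lambda(x_\lambda)$ (Proposition \ref{prop:exponential decay of Flambda on blow up}) exactly as before again gives a bound of size $O\bigl(\omega_\lambda(U)^{1/2}\bigr)$. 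In either case $\omega_\lambda(U)\to 0$ as $U$ shrinks, uniformly for $\lambda\gg 0$, because the constants in all the cited results are uniform in $\lambda$ and $x_\lambda,F^R_\lambda$ depend smoothly on $\lambda$ near $\infty$; choosing $U$ small enough then forces $K^J_{\lambda,+}(\gamma^\tau)\le\eta$.

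The main obstacle I anticipate is precisely this last point: bounding an integral over a possibly unbounded $J$ by a quantity that stays small uniformly as $\lambda\to\infty$. Both difficulties dissolve once one has the uniform-in-$\lambda$ exponential decay of $F^R_\lambda(\gamma(t))-F^R_\lambda(x_\lambda)$ toward the ends of $J'$; with that in hand the remaining steps are the routine telescoping and Cauchy--Schwarz manipulations of \cite[Section 12.3]{Lidman2016TheEO}, which carry over verbatim to the real configuration spaces.
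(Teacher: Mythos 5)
Your irreducible-case argument is essentially the right one: apply the pointwise bound $\frac{d}{dt}\Lambda_{\frakq^\lambda}(\gamma^\tau(t))\le C\bigl(F^R_\lambda(\gamma(t-1))-F^R_\lambda(\gamma(t+1))\bigr)^{1/2}$ from the $[-1,1]$-interval proposition (the paper's analogue of \cite[Prop.\ 12.3.4]{Lidman2016TheEO}), feed in the blow-down exponential decay of Proposition \ref{prop:exponential decay of Flambda on blow down}, and observe that the resulting sum/integral of exponentials converges to a quantity of size $O(\sqrt{\omega})$ independently of the length of $J$, where $\omega\to 0$ as $U$ shrinks. You also correctly note that the uniformity in $\lambda$ comes from the uniform constants in the cited results and the smooth dependence of $x_\lambda$ and $F^R_\lambda$ on $\lambda$ near $\infty$.

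The reducible case is where the proposal has a genuine gap. First, you cite Proposition \ref{prop:exponential decay of Flambda on blow up} (the blow-up version), but that result requires $\lim_{t\to\infty}[\tau^*_t\gamma]=[x_\lambda]$, which is not a hypothesis of the corollary; near a reducible, the blow-up $\gamma^\tau$ need not be anywhere near any particular blow-up stationary point $(a_\lambda,0,\phi_\lambda)$ even when the blow-down stays in $U$. The relevant result is again the blow-down version, Proposition \ref{prop:exponential decay of Flambda on blow down}. Second, the chain ``controlled by $\|\xgcql(\gamma(t))\|$ $\ldots$ hence by $(-f')^{1/2}$ up to a term in the drop of $\Lambda_{\frakq^\lambda}$ itself; telescoping $\ldots$'' is not a closed argument: the error term you absorb into ``the drop of $\Lambda_{\frakq^\lambda}$'' is $\Lambda_{\frakq^\lambda}(\gamma^\tau(t_1))-\Lambda_{\frakq^\lambda}(\gamma^\tau(t_2))=K^J_{\lambda,-}(\gamma^\tau)-K^J_{\lambda,+}(\gamma^\tau)$, which involves the very quantity you are trying to bound, so the estimate as written risks circularity unless you also control $K^J_{\lambda,-}$. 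The cleaner route (which I believe is the one actually taken in \cite[Section 12.3]{Lidman2016TheEO}) is to use Lemma \ref{lem:energy control in B(2R)} together with the $L^2_{k+1}$ near-constant estimate of Lemma \ref{lem:contol on L^2_k norm by Flambda} (the blow-down statement, which makes no irreducibility restriction) to bound $\|\xgcql(\gamma(t))\|_{L^2_k(Y)}$ directly in terms of the drop $F^R_\lambda(\gamma(t-1))-F^R_\lambda(\gamma(t+1))$ on nearby unit intervals, then integrate this against the blow-down exponential decay exactly as in your irreducible case; no separate appeal to the Rayleigh-quotient structure or to the $\Delta\Lambda_{\frakq^\lambda}$ term of Proposition 12.3.3(2) is needed, and the circularity disappears.
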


\subsubsection{Convergence of unparametrized trajectories on the blow-up}
Similar to those definitions from \cite[Section 12.1]{Lidman2016TheEO} that we have used in Subsection \ref{subsub:Convergence of trajectories downstair}, we can define stationary point class, (un)parametrized trajectory in $W^{\sigma,I}$ and its quotient. We want analogous convergence on the blow-up and its quotient. More precisely, we will show the following.
\begin{prop}(\cite[Proposition 12.4.1]{Lidman2016TheEO})\label{prop:unpara convergenc in local norm}
    Let $[x]$, $[y]$ be a pair of $I$-fixed stationary points of $\xagcsq$ in grading range $[-N,N]$. Fix $\lambda_n\to \infty$ and a sequence of unparametrized trajectories $[\Breve{\gamma}_n]$ of $\xagcsqln$ from $[x_{\lambda_n}]$ to $[y_{\lambda_n}]$. We require $[\Breve{\gamma}_n]$ to have representatives $\gamma_n$ contained in $(W^{\lambda,I}\cap B(2R))^\sigma$. Then there is a subsequence of $[\Breve{\gamma}_n]$ that converges to an unparametrized trajectory $[\Breve{\gamma}_\infty]$ of $\xagcsq$.
\end{prop}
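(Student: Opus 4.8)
The plan is to adapt the proof of \cite[Proposition 12.4.1]{Lidman2016TheEO} to the $\Z_2$-equivariant real setting, exploiting the simplifications that come from the discreteness of the constant gauge group. First I would lift each unparametrized trajectory $[\Breve{\gamma}_n]$ to a parametrized representative $\gamma_n^\tau$ of $[\Breve{\gamma}_n]$ living in $(W^{\lambda_n,I}\cap B(2R))^\sigma$; since the $\Z_2$ action on $W^{\sigma,I}/\Z_2$ is free and discrete, a choice of lift is harmless and the reparametrization freedom is only the usual $\R$-translation. I would then use the energy/grading bounds: the grading being in $[-N,N]$ together with the corollaries from Subsection \ref{sub:Morse quasi-gradient flow and Morse Smale condition} (the $F^R_\lambda$-energy estimates, $\frac{1}{4C_0}\cE(\gamma)\le F^R_\lambda(\gamma(t_2))-F^R_\lambda(\gamma(t_1))\le 4C_0\cE(\gamma)$) give a uniform bound on the total energy $\cE(\gamma_n^\tau)$ independent of $n$, and the spinorial-energy bound $\vert\Lambda_{\frakq^{\lambda_n}}\vert\le\omega$ on $\cN$ (from the Corollary after Proposition \ref{prop:byperbolicity in N}) together with Lemma \ref{lem:energy control in B(2R)} controls the variation of $\Lambda_{\frakq^{\lambda_n}}$ along the flow.

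Next I would set up the usual center-of-mass/translation normalization. Because the total energy is uniformly bounded while the trajectories may run through several stationary points, I would split each $\gamma_n^\tau$ into finitely many ``blocks'' separated by long stretches where the trajectory is near-constant at some stationary point (using the near-constant estimates in Subsection \ref{subsub:Near constant approximations}, in particular Proposition \ref{prop:contol on L^2_k norm by Flambda on blow up}, Proposition \ref{prop:exponential decay of Flambda on blow up} and Proposition \ref{prop:exponential decay of Flambda on blow down}, and the $K_\lambda$, $K_{\lambda,+}$ bookkeeping from the final Corollary of that subsection). For each block I translate so that the block is ``centered'' and apply the local $C^\infty_{loc}$ convergence already established: on the blow-down, Proposition \ref{prop:convergence of trajectories downstair} (equivalently Lemma \ref{lem:Compactness of trajectories with varying lambda}) gives a limiting trajectory of $\xgcq$; on the blow-up, Proposition \ref{prop:upstair covergence with energy control} and its Corollary give a limiting trajectory of $\xgcsq$ on compact subintervals, provided the $\Lambda$-bounds at the endpoints hold, which they do by the uniform $\omega$-bound. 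Passing to a diagonal subsequence over an exhaustion of $\R$ by compact intervals produces, for each block, a limiting (possibly broken) trajectory of $\xagcsq$ in $W^{\sigma,I}/\Z_2$, and the near-constant estimates guarantee that the limits of consecutive blocks share a common stationary point, so they concatenate to a broken trajectory class $[\Breve{\gamma}_\infty]$.

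The remaining point is to verify genuine convergence of $[\Breve{\gamma}_n]$ to $[\Breve{\gamma}_\infty]$ in the sense of the topology on the space of broken trajectory classes (not merely $C^\infty_{loc}$ on a fixed representative): no energy is lost in the limit, and no spurious extra breaking occurs. This follows from the energy identity $F^R_\lambda(\gamma(t_1))-F^R_\lambda(\gamma(t_2))\asymp\cE(\gamma)$ applied block-by-block together with the exponential-decay estimates (Propositions \ref{prop:exponential decay of Flambda on blow up}, \ref{prop:exponential decay of Flambda on blow down}) which show the ``necks'' between blocks contribute arbitrarily small energy; this rules out concentration of energy at points other than the stationary points appearing in $[\Breve{\gamma}_\infty]$. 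Throughout, the self-diffeomorphisms $\Xi_\lambda$ from Lemma \ref{lem: properties of Xi in dimension 3} and Proposition \ref{prop: properties of Xi in dimension 4} are used to compare the cut-off trajectory spaces $\widetilde{\cB}^{gC,\tau,I}_j([x_\lambda],[y_\lambda])$ with the limiting one $\widetilde{\cB}^{gC,\tau,I}_j([x_\infty],[y_\infty])$ and thereby phrase everything in a single fixed space so that $L^2_k$-convergence (not just $L^2_{k,loc}$) makes sense. The main obstacle I anticipate is the bookkeeping at the breaking points on the blow-up: one must track the spinorial energy $\Lambda_{\frakq^{\lambda_n}}$ carefully to ensure that the endpoint inequalities needed for Proposition \ref{prop:upstair covergence with energy control} are available at each intermediate stationary point, and to distinguish boundary-stable from boundary-unstable limits; but since every real trajectory is in particular an ordinary Seiberg-Witten trajectory, all the estimates of \cite[Chapter 12]{Lidman2016TheEO} apply verbatim (often with strictly better constants after restricting to the invariant part), and no essentially new analysis is required beyond carrying the $\Z_2$-equivariance and the ``no tangent to the orbit'' simplification through the argument.
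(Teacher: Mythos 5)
Your proposal is correct and follows essentially the same route as the paper, which simply invokes the argument of \cite[Section 12.4]{Lidman2016TheEO} (with the energy/grading bounds of Lemmas 12.4.2--12.4.4 re-derived for the real grading); your block decomposition, $F^R_\lambda$-energy accounting, and spinorial-energy bookkeeping via $K_\lambda$, $K_{\lambda,+}$ are exactly the ingredients of that argument. The only superfluous element is the use of $\Xi_\lambda$ to obtain $L^2_k$-convergence, which belongs to the subsequent subsection rather than to this proposition, whose conclusion is only convergence in the (local) sense of unparametrized broken trajectory classes.
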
\begin{proof}
    The proof of \cite[Proposition 12.4.1]{Lidman2016TheEO} used various energy bounds from \cite[Lemma 12.4.2-12.4.4]{Lidman2016TheEO}. Although the definition of grading changes, so we cannot directly quoted their results, the argument in that section works for us.
\end{proof}
\begin{cor}(\cite[Corollary 12.4.5]{Lidman2016TheEO})
Fix $\epsilon>0$. For $\lambda\gg 0$, let $[\gamma_\lambda]$ be a $I$-invariant trajectory of $\xagcsql$ from $[x_{\lambda}]$ to $[y_{\lambda}]$ such that one of the following is true.\begin{itemize}
    \item $[\gamma_\lambda]$ is boundary-unobstructed and $\mathrm{gr}([x_{\lambda}],[y_{\lambda}])=1$; 
    \item $[\gamma_\lambda]$ is boundary-obstructed and $\mathrm{gr}([x_{\lambda}],[y_{\lambda}])=0$.
\end{itemize}
    Further, suppose that the gradings of $[x_{\lambda}]$, $[y_{\lambda}]$ lie in $[-N,N]$. Then $[\gamma_\lambda]$ is $\epsilon$-closed in $W_{k,loc}^{I}(\R\times Y)/\Z_2$ to $[\gamma]$, a trajectory of $\xagcsq$ with endpoints in the grading range $[-N,N]$.
\end{cor}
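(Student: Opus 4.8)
The plan is to run the contradiction-and-compactness scheme of \cite[Corollary 12.4.5]{Lidman2016TheEO}, whose substantive inputs are already in place: the real equivariant compactness statement Proposition \ref{prop:unpara convergenc in local norm}, the grading identifications of Subsection \ref{sub:Identification of grading}, the $\Z_2$-equivariant Morse--Smale property of $\xagcsql$, and the admissibility of the fixed perturbation $\frakq$. So the real work is bookkeeping rather than new analysis.

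Concretely, I would argue by contradiction: if the statement fails, there are $\epsilon_0>0$, a sequence $\lambda_n\to\infty$ inside $\{\lambda_1^\bullet<\lambda_2^\bullet<\cdots\}$, and trajectories $[\gamma_{\lambda_n}]$ of $\xagcsqln$ in $(W^{\lambda_n,I}\cap B(2R))^\sigma/\Z_2$ satisfying the obstruction/grading hypotheses and running between stationary points of grading in $[-N,N]$, none of which is $\epsilon_0$-close in $W^I_{k,loc}(\R\times Y)/\Z_2$ to a trajectory of $\xagcsq$ with endpoints in $[-N,N]$. By Lemma \ref{lem:grading comparison in and outside N} and the preceding discussion of Subsection \ref{sub:Identification of grading}, for $\lambda\gg0$ every stationary point of $\xagcsql$ of grading in $[-N,N]$ lies in $\cN/\Z_2$ and, by Corollary \ref{cor:1 to 1 correspondence between stationary points of cutoff}, is the $\lambda$-approximation $[x_\lambda]$ of a unique $[x_\infty]\in\frC_{\cN}$ of the same grading and boundary type. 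Since $\frC_{\cN}$ is finite I may pass to a subsequence so that the endpoints of $[\gamma_{\lambda_n}]$ are $[x_{\lambda_n}]$ and $[y_{\lambda_n}]$ for fixed $[x_\infty],[y_\infty]\in\frC_{\cN}$, and then apply Proposition \ref{prop:unpara convergenc in local norm} to extract a further subsequence along which the unparametrized trajectory classes $[\Breve\gamma_{\lambda_n}]$ converge to an unparametrized, a priori broken, trajectory class $[\Breve\gamma_\infty]$ of $\xagcsq$ from $[x_\infty]$ to $[y_\infty]$.

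The remaining point is that $[\Breve\gamma_\infty]$ is in fact unbroken. Because the correspondence of Corollary \ref{cor:1 to 1 correspondence between stationary points of cutoff} preserves grading and boundary type, $[x_\infty]\to[y_\infty]$ is boundary-unobstructed with $\mathrm{gr}([x_\infty],[y_\infty])=1$, or boundary-obstructed with $\mathrm{gr}([x_\infty],[y_\infty])=0$; in either case admissibility of $\frakq$ makes $\Breve M([x_\infty],[y_\infty])$ a regular $0$-dimensional moduli space. A breaking through $m-1\ge1$ intermediate stationary points would exhibit a codimension-$(m-1)$ stratum of the compactification whose factors all have non-negative dimension — using the monopole dimension formulas of \cite[Section 24]{Kronheimer_Mrowka_2007} and their real analogues in \cite[Section 11]{li2022monopolefloerhomologyreal}, together with the fact that at most one factor is boundary-obstructed — which is impossible in a $0$-dimensional moduli space. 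Hence $[\Breve\gamma_\infty]\in\Breve M([x_\infty],[y_\infty])$ is an honest trajectory with endpoints of grading in $[-N,N]$. Finally, normalizing parametrizations so that the strictly decreasing Chern--Simons--Dirac functional $\cL_\frakq$ takes a fixed value at $t=0$ along every non-constant trajectory of $\xgcsq$ (cf.\ the corollaries of Proposition \ref{prop:existence of Flambdar}), the unparametrized convergence upgrades to $W^I_{k,loc}(\R\times Y)/\Z_2$-convergence of suitably reparametrized $[\gamma_{\lambda_n}]$ to $[\Breve\gamma_\infty]$, contradicting the choice of the sequence.

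I expect the only step requiring genuine care to be the no-breaking argument: one must track the contribution of a boundary-obstructed component to the dimension count, and the assertion that a broken trajectory carries at most one such component, exactly as in the non-equivariant theory. Since the $\Z_2$-action on the blown-up configuration space is free and every moduli space in play is regular, the combinatorics of \cite[Chapter 12]{Lidman2016TheEO} transfers verbatim, and the rest of the argument is a routine transcription.
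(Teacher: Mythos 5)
Your proposal is correct and takes the same contradiction-and-compactness route as the paper, which simply invokes the argument of \cite[Corollary 12.4.5]{Lidman2016TheEO} and remarks that the moduli-space compactification results of Kronheimer--Mrowka carry over to the real setting. Your write-up elaborates the steps the paper leaves implicit; the only small imprecisions are (i) when reparametrizing the cut-off trajectories $[\gamma_{\lambda_n}]$ one should normalize using the quasi-gradient functions $F^R_\lambda$ of Proposition \ref{prop:existence of Flambdar} (which are not $\cL_\frakq$ itself, though they limit to it), and (ii) the paper cites \cite[Sections 14 \& 16]{Kronheimer_Mrowka_2007} (the stratified compactification and its codimension bounds) rather than Section 24 for the no-breaking step, but both routes convey the same dimension-count input, and your acknowledgment that boundary-obstructed components enter the codimension count is exactly the right caveat.
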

\begin{proof}
    We argue by contradiction, as they did for \cite[Corollary 12.4.5]{Lidman2016TheEO}. We just need to note that the properties of moduli spaces they quoted from \cite[Section 14 \& 16] {Kronheimer_Mrowka_2007} are still true for real moduli spaces.
\end{proof}
\subsubsection{Convergence in $L_k^2$}

With all these preparations in hand, we now improve the local norm convergence in Proposition \ref{prop:unpara convergenc in local norm} to convergence in $L^{2}_k$ norm.
\begin{prop}(\cite[Proposition 12.5.1]{Lidman2016TheEO})
    Let $[\gamma_n]:\R\to (W^{\lambda_n,I}\cap B(2R))^\sigma/\Z_2$ be a sequence of trajectories of $\xagcsql$ between stationary points $[x_{\lambda_n}]$, $[y_{\lambda_n}]$. We also suppose
    that unparametrized trajectories $[\Breve{\gamma}_n]$ converge to $[\Breve{\gamma}_\infty]$, an unbroken trajectory from $[x_{\infty}]$ to $[y_{\infty}]$. Then possibly after some reparametrization, $\Xi_{\lambda_n}(\gamma_n)$ converges to $[\gamma_\infty]$ in $\cB^{gC,\tau}_k([x_{\infty}],[y_{\infty}])$, where $[\gamma_{\infty}]$ is a representative of $[\Breve{\gamma}_\infty]$.
\end{prop}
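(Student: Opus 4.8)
The plan is to transcribe the argument of \cite[Proposition 12.5.1]{Lidman2016TheEO}, feeding in the real analogues collected in Subsection \ref{subsub:Near constant approximations}. First I would use the $L^2_{k,loc}$-convergence of Proposition \ref{prop:unpara convergenc in local norm}: after passing to a subsequence, the unparametrized trajectories $[\Breve{\gamma}_n]$ converge to $[\Breve{\gamma}_\infty]$, so a representative $[\gamma_\infty]$ of $[\Breve{\gamma}_\infty]$ is chosen and each $\gamma_n$ is reparametrized so that $F^R_{\lambda_n}(\gamma_n(0))$ equals a fixed regular value strictly between the critical values of $[x_\infty]$ and $[y_\infty]$ (possible for $n$ large, since the critical values of $\xagcsqln$ converge to those of $\xagcsq$); with this normalization the parametrized paths converge to $[\gamma_\infty]$ in $L^2_{k,loc}$ on every compact subinterval. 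Applying the diffeomorphisms $\Xi_{\lambda_n}$ of Proposition \ref{prop: properties of Xi in dimension 4}, which equal the identity at $\lambda=\infty$ and are smooth in $\lambda$ near $\infty$, moves everything into the fixed space $\cB^{gC,\tau,I}_k([x_\infty],[y_\infty])$ while preserving local convergence by \cite[Corollary 11.0.3--11.0.4]{Lidman2016TheEO}. It then remains to upgrade this to convergence in the global $L^2_k(Z)$-norm, for which I must show that the tails $\gamma_n|_{(-\infty,-T]}$ and $\gamma_n|_{[T,\infty)}$ carry, uniformly in $n$, an $L^2_k$-norm relative to $x_{\lambda_n}$ (resp.\ $y_{\lambda_n}$) that tends to $0$ as $T\to\infty$.

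For the outgoing tail I would cover $[T,\infty)$ by unit subintervals and apply Proposition \ref{prop:contol on L^2_k norm by Flambda on blow up}: on each such interval the $L^2_{k+1}$-distance of $\gamma^\tau_n$ to $y_{\lambda_n}$ is controlled, when $[y_\infty]$ is irreducible, by the drop of $F^R_{\lambda_n}$ across the interval, and when $[y_\infty]$ is reducible by the square root of that drop plus the variation of the spinorial energy $\Lambda_{\frakq^{\lambda_n}}$. The exponential decay of $F^R_{\lambda_n}(\gamma_n(t))-F^R_{\lambda_n}([y_{\lambda_n}])$ from Proposition \ref{prop:exponential decay of Flambda on blow up} makes the first contribution the tail of a geometric series; the variation of $\Lambda_{\frakq^{\lambda_n}}$ over $[T,\infty)$ is controlled by $K_{\lambda_n,+}$ via $\Lambda_{\frakq^\lambda}(x_\lambda)-\Lambda_{\frakq^\lambda}(y_\lambda)=K_\lambda-2K_{\lambda,+}$ together with the final corollary of Subsection \ref{subsub:Near constant approximations}, which forces $K^J_{\lambda_n,+}\le\eta$ once $\gamma_n$ stays near the constant trajectory at $y_\infty$, and this too decays exponentially in $T$. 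Summing yields a bound on $\|\gamma_n-y_{\lambda_n}\|_{L^2_k([T,\infty)\times Y)}$ independent of $n$ and vanishing as $T\to\infty$; the incoming tail at $x_{\lambda_n}$ is identical, using Proposition \ref{prop:exponential decay of Flambda on blow down}. Transporting these tail bounds through $\Xi_{\lambda_n}$ (again by Proposition \ref{prop: properties of Xi in dimension 4} and \cite[Corollary 11.0.3--11.0.4]{Lidman2016TheEO}) and combining with the $L^2_k$-convergence on the compact middle $[-T,T]$ gives convergence in $\cB^{gC,\tau}_k([x_\infty],[y_\infty])$.

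The only step requiring care beyond transcription is the coordination between the choice of reparametrization and the diffeomorphisms $\Xi_{\lambda_n}$: Proposition \ref{prop:unpara convergenc in local norm} determines $[\Breve{\gamma}_n]$ only up to a time shift, so the level set of $F^R_{\lambda_n}$ must be pinned \emph{before} applying $\Xi_{\lambda_n}$, and one then needs the smoothness-in-$\lambda$ statements of Proposition \ref{prop: properties of Xi in dimension 4} to guarantee that $\Xi_{\lambda_n}$ neither spoils the pinned normalization nor disturbs the tail estimates. The main genuine obstacle, exactly as in \cite{Lidman2016TheEO}, is the reducible case, where the $L^2_{k+1}$-tail bound degrades to the square root of the energy drop plus the variation of $\Lambda_{\frakq^{\lambda_n}}$; handling it is precisely what forces the separate exponential control of the spinorial energy implicit in Proposition \ref{prop:contol on L^2_k norm by Flambda on blow up} and the $K_{\lambda,+}$ corollary. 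Everything else is formal once the real unique continuation and energy estimates (\cite[Lemma 6.9 and Theorem 6.10]{li2022monopolefloerhomologyreal}), recorded in Lemmas \ref{lem:unique continuation for traj in B(2R)} and \ref{lem:energy control in B(2R)}, are available, which they are by the discussion preceding this proposition.
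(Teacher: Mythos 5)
Your proposal is correct and follows essentially the same route as the paper: the paper's proof simply defers to the argument of \cite[Proposition 12.5.1]{Lidman2016TheEO} (via the real analogue \cite[Theorem 8.6]{li2022monopolefloerhomologyreal} of \cite[Theorem 13.3.5]{Kronheimer_Mrowka_2007}) together with the estimates of Subsection \ref{subsub:Near constant approximations}, which is exactly the combination you spell out — local convergence plus $F^R_\lambda$-pinned reparametrization, transport by $\Xi_{\lambda_n}$, and exponential tail control (with the separate spinorial-energy bookkeeping in the reducible case). Your write-up is in effect a fleshed-out version of the paper's citation-style proof, with the additional correct observation that the only gauge ambiguity remaining in the real setting is the constant $\pm 1$.
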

\begin{proof}
 They proved \cite[Proposition 12.5.1]{Lidman2016TheEO} based on \cite[Theorem 13.3.5]{Kronheimer_Mrowka_2007}, and there is a real analogue \cite[Theorem 8.6]{li2022monopolefloerhomologyreal}. In the real case, the only gauge transformations available are constant $\pm 1$. Nevertheless, we note that in the proof, they showed a stronger result that $\left\Vert \Xi_{\lambda_n}(\gamma_n)-\gamma_\infty\right\Vert_{L^2_k(\R\times Y)}\to 0$, based on their estimation in \cite[Section 12.3]{Lidman2016TheEO}. We can do the same by using corresponding results from Subsection \ref{subsub:Near constant approximations} to conclude the proof.
    
\end{proof}
We can specialize to trajectories in small index moduli spaces and get the following refined result.
\begin{lem}(\cite[Lemma 12.5.2]{Lidman2016TheEO})
Fix some $\epsilon,N>0$. For $\lambda$ sufficiently large, we have the following. Let $[\gamma_{\lambda}]$ be a trajectory from $[x_\lambda]$ to $[y_{\lambda}]$ such that either \begin{itemize}
    \item $[\gamma_\lambda]$ is boundary-unobstructed and $\mathrm{gr}([x_{\lambda}],[y_{\lambda}])=1$; 
    \item $[\gamma_\lambda]$ is boundary-obstructed and $\mathrm{gr}([x_{\lambda}],[y_{\lambda}])=0$.
\end{itemize} 
Further suppose that the gradings of $[x_{\lambda}]$, $[y_{\lambda}]$ lie in $[-N,N]$. Then $[\gamma_\lambda]$ is $\epsilon$-closed in $L_{k}^2(\R\times Y)$ to $[\gamma]$, a trajectory of $\xagcsq$ with endpoints in the grading range $[-N,N]$.
\end{lem}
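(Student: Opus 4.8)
The plan is to run the contradiction argument of \cite[Lemma 12.5.2]{Lidman2016TheEO} in the real setting. Suppose the conclusion fails for some fixed $\epsilon,N>0$. Then I would extract a sequence $\lambda_n\to\infty$, which may be taken inside the distinguished sequence $\{\lambda_1^\bullet<\lambda_2^\bullet<\cdots\}$ so that all of Subsections \ref{sub:Convergence of stationary points}--\ref{sub:Convergence of trajectories} apply, together with trajectories $[\gamma_{\lambda_n}]$ of $\xagcsqln$ satisfying the stated index and grading hypotheses but lying at $L^2_k(\R\times Y)$-distance at least $\epsilon$ from every trajectory of $\xagcsq$ with endpoints in the grading range $[-N,N]$. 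Such trajectories are contained in $B(2R)$ by Proposition \ref{prop:flow lines in B(R)}, and since $\xagcsq$ has only finitely many stationary points with grading in $[-N,N]$, after passing to a subsequence I may assume $[x_{\lambda_n}]=[x_\lambda]$ and $[y_{\lambda_n}]=[y_\lambda]$ are the approximations of fixed stationary points $[x_\infty]$, $[y_\infty]$ of $\xagcsq$, with relative grading unchanged by Corollary \ref{cor:1 to 1 correspondence between stationary points of cutoff} and Proposition \ref{prop:indentification between f.d. and inf.d gr}.

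Next I would invoke the Corollary following Proposition \ref{prop:unpara convergenc in local norm} (the real analogue of \cite[Corollary 12.4.5]{Lidman2016TheEO}): because $[\gamma_{\lambda_n}]$ lies in an index-$1$ boundary-unobstructed (resp.\ index-$0$ boundary-obstructed) moduli space with endpoints in $[-N,N]$, for $n$ large it is $W^I_{k,loc}(\R\times Y)/\Z_2$-close to an \emph{unbroken} trajectory $[\gamma_\infty]$ of $\xagcsq$ whose endpoints still lie in $[-N,N]$. The unbrokenness is the key point: a broken limit $[\Breve{\gamma}_\infty]$ would have each unparametrized factor of index $\ge 1$ if boundary-unobstructed, or $\ge 0$ if boundary-obstructed (in which case it runs from a boundary-stable to a boundary-unstable critical point), so index additivity along $[\Breve{\gamma}_\infty]$ is incompatible with $\mathrm{gr}([x_\infty],[y_\infty])$ being $1$ (resp.\ $0$); this is exactly the structure established for real moduli spaces in \cite[Section 8]{li2022monopolefloerhomologyreal}, paralleling \cite[Section 16]{Kronheimer_Mrowka_2007}. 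Thus, after a further subsequence, the unparametrized trajectories $[\Breve{\gamma}_n]$ converge to a single unbroken $[\Breve{\gamma}_\infty]$ with representative $[\gamma_\infty]$.

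Finally I would upgrade this local convergence to $L^2_k$ convergence. Choosing parametrized representatives and applying the $L^2_k$-convergence proposition stated just above this lemma (the real analogue of \cite[Proposition 12.5.1]{Lidman2016TheEO}), after a suitable reparametrization one has $\Xi_{\lambda_n}(\gamma_n)\to\gamma_\infty$ in $\cB^{gC,\tau,I}_k([x_\infty],[y_\infty])$, and in fact $\Vert\Xi_{\lambda_n}(\gamma_n)-\gamma_\infty\Vert_{L^2_k(\R\times Y)}\to 0$; here the diffeomorphisms $\Xi_{\lambda_n}$ of Lemma \ref{lem: properties of Xi in dimension 3} and Proposition \ref{prop: properties of Xi in dimension 4} carry the cut-off picture in $(B(2R)\cap W^{\lambda_n,I})^\sigma$ into the fixed space $W^{\sigma,I}$, and the decay estimates powering the convergence are the near-constant approximation results of Subsection \ref{subsub:Near constant approximations}. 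Since $\Xi_{\lambda}$ and all its derivatives are smooth in $\lambda$ at and near $\infty$ with $\Xi_\infty=\mathrm{id}$ (Proposition \ref{prop: properties of Xi in dimension 4}), both $\Xi_{\lambda_n}$ and $\Xi_{\lambda_n}^{-1}$ converge to the identity uniformly with derivatives on bounded sets, so $\Vert\gamma_n-\Xi_{\lambda_n}^{-1}(\gamma_\infty)\Vert_{L^2_k(\R\times Y)}\to 0$ and hence $[\gamma_{\lambda_n}]\to[\gamma_\infty]$ in $L^2_k(\R\times Y)$ after projecting to $W^{\sigma,I}/\Z_2$. For $n$ large this contradicts the $\epsilon$-separation, completing the argument. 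The step I expect to demand the most care is not analytic --- the exponential-decay estimates are already in place in Subsection \ref{subsub:Near constant approximations}, and no new phenomena arise in the real setting because the real configuration spaces embed in the ordinary ones and our perturbations are $I$-equivariant --- but rather the bookkeeping of matching reparametrizations and tracking which model ($(W^{\lambda_n,I})^\sigma$, $W^{\sigma,I}$, or the $\Xi_{\lambda_n}$-conjugated one) each trajectory lives in, and confirming that the unbrokenness of $[\gamma_\infty]$ genuinely follows from the index hypotheses in the presence of boundary-obstructed strata.
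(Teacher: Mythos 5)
Your proposal is correct and follows exactly the argument the paper intends (it cites \cite[Lemma 12.5.2]{Lidman2016TheEO} and supplies no further proof): argue by contradiction, pass to a subsequence with fixed limiting endpoints, invoke the real analogue of \cite[Corollary 12.4.5]{Lidman2016TheEO} to rule out breaking via the index hypotheses, and then upgrade the local convergence to $L^2_k$ convergence using the real analogue of \cite[Proposition 12.5.1]{Lidman2016TheEO} together with the fact that $\Xi_{\lambda}$ tends to the identity as $\lambda\to\infty$. No gaps; the bookkeeping points you flag at the end are precisely the ones already handled by Lemma \ref{lem: properties of Xi in dimension 3}, Proposition \ref{prop: properties of Xi in dimension 4}, and the estimates of Subsection \ref{subsub:Near constant approximations}.
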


\section{The equivalence of homology theories}\label{sec:The equivalence of homology theories}
\subsection{Stability}\label{sub:Stabilit}
Consider a pair of $[x_\infty]$ and $[y_\infty]$ in $\frC_{\cN}$. Assume the relative grading between them is one and they are not boundary-obstructed. Fix any admissible perturbation, we have moduli space \[\Breve{M}^{agC}([x_\infty],[y_\infty])=M^{agC}([x_\infty],[y_\infty])/\R\] consisting of finitely many points. This can be alternatively viewed as the zero set of $\cF^{gC,\tau}_\frakq$ as a section from $\widetilde{\cB}^{gC,\tau,I}([x_\infty],[y_\infty])/\R$ to $\cV^{gC,\tau,I}(\R\times Y)$, restricted to the part with $s(t)\ge 0$.

When $\lambda$ is large enough, we have well-defined nearby stationary points \[[x_\lambda]=\Xi_{\lambda}^{-1}([x_\infty]), \text{ } [y_\lambda]=\Xi_{\lambda}^{-1}([y_\infty])\] and a similar moduli space between them in $(B(2R)\cap W^{\lambda,I})^\sigma$ \[\Breve{M}^{agC}([x_\lambda],[y_\lambda])=M^{agC}([x_\lambda],[y_\lambda])/\R.\] We know that this space is also zero-dimensional, since we have identified relative gradings in Subsection \ref{subsub:Relative grading}. We now want to show the following:\begin{prop}(\cite[Proposition 13.1.1]{Lidman2016TheEO})
    For $\lambda=\lambda_i^\bullet\gg 0$, we have a one-to-one correspondence between $\Breve{M}^{agC}([x_\infty],[y_\infty])$ and $\Breve{M}^{agC}([x_\lambda],[y_\lambda])$.
\end{prop}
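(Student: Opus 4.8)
The plan is to identify both moduli spaces with the zero set of a single Fredholm section on one fixed ambient Banach manifold, via the diffeomorphisms $\Xi_\lambda$ constructed in Proposition \ref{prop: properties of Xi in dimension 4}, and then apply the inverse function theorem together with the convergence and compactness results of Subsection \ref{sub:Convergence of trajectories}. First I would pull everything back by $\Xi_\lambda$: since $\Xi_\lambda$ induces a diffeomorphism $\widetilde{\cB}^{gC,\tau,I}_k([x_\lambda],[y_\lambda])/\R \to \widetilde{\cB}^{gC,\tau,I}_k([x_\infty],[y_\infty])/\R$ covered by a bundle map on $\cV^{gC,\tau}_k$, the section $\cF^{gC,\tau}_{\frakq^\lambda}$ transports to a section $\cF_\lambda$ of $\cV^{gC,\tau}_{k-1}$ over the \emph{fixed} manifold $\widetilde{\cB}^{gC,\tau,I}_k([x_\infty],[y_\infty])/\R$, whose zero set (intersected with $\{s(t)\ge 0\}$) is in bijective correspondence with $\Breve{M}^{agC}([x_\lambda],[y_\lambda])$. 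By part (3)-(4) of Proposition \ref{prop: properties of Xi in dimension 4} the sections $\cF_\lambda$ are smooth in $\lambda$ at and near $\infty$, and $\cF_\infty = \cF^{gC,\tau}_\frakq$, whose zero set is $\Breve{M}^{agC}([x_\infty],[y_\infty])$.

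Next I would establish the two halves of the bijection separately. For \emph{injectivity/existence near each $\infty$-trajectory}: fix $[\gamma]\in \Breve{M}^{agC}([x_\infty],[y_\infty])$; regularity of the moduli space (guaranteed by Proposition \ref{prop: identification of regularity} and admissibility of $\frakq$) means $\cD^{\tau}_{\gamma}\cF^{gC,\tau}_\frakq$ restricted to $\cK^{gC,\tau,I}$ modulo the $\R$-translation direction is surjective, hence $\cF_\infty$ is transverse to zero at $[\gamma]$ (after the usual exponential-decay/Fredholm setup on the cylinder). Since $\cF_\lambda \to \cF_\infty$ with derivatives in $\lambda$, the implicit function theorem in the $\lambda$ variable — exactly as in the proof of the stationary-point correspondence, Proposition \ref{cor:1 to 1 correspondence between stationary points of cutoff}, but now on the path-space Banach manifold — produces for all $\lambda \gg 0$ a unique zero $[\gamma_\lambda]$ of $\cF_\lambda$ in a small neighborhood of $[\gamma]$, depending smoothly on $\lambda$; one checks $s_\lambda(t)\ge 0$ for $\lambda\gg 0$ since $\gamma$ either lies in the interior ($s>0$) or in the boundary stratum, using the near-constant estimates of Proposition \ref{prop:contol on L^2_k norm by Flambda on blow up} near the reducible endpoints. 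This gives an injective map $\Breve{M}^{agC}([x_\infty],[y_\infty]) \hookrightarrow \Breve{M}^{agC}([x_\lambda],[y_\lambda])$ for $\lambda\gg0$. For \emph{surjectivity}: suppose not; then there is a sequence $\lambda_n\to\infty$ and trajectories $[\gamma_n]\in \Breve{M}^{agC}([x_{\lambda_n}],[y_{\lambda_n}])$ staying away from the image of the finite set $\Breve{M}^{agC}([x_\infty],[y_\infty])$. By Proposition \ref{prop:unpara convergenc in local norm} (with $N$ chosen so that $[x_\infty],[y_\infty]$ have gradings in $[-N,N]$) a subsequence of the $[\Breve\gamma_n]$ converges to an unparametrized broken trajectory $[\Breve\gamma_\infty]$ of $\xagcsq$; since $\mathrm{gr}([x_\infty],[y_\infty])=1$ and the trajectory is not boundary-obstructed, the index-$1$ compactness forces $[\Breve\gamma_\infty]$ to be unbroken, i.e. a single element of $\Breve{M}^{agC}([x_\infty],[y_\infty])$. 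Then Proposition 4.6.1-style $L^2_k$ convergence (the last proposition of Subsection \ref{sub:Convergence of trajectories}, via $\Xi_{\lambda_n}$) shows $[\gamma_n]$ is eventually $\epsilon$-close in $L^2_k$ to that element, and by the uniqueness from the implicit function theorem it must \emph{equal} the one produced above — contradiction. Hence the map is also surjective.

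The main obstacle I expect is the \emph{boundary behavior at reducible endpoints combined with the $\R$-quotient}. Transversality of $\cF^{gC,\tau}_\frakq$ on the full blow-up manifold $\widetilde{\cB}^{gC,\tau,I}$ is not the same as transversality inside the stratum $\{s\ge 0\}$ with its boundary $\{s=0\}$; when $[x_\infty]$ or $[y_\infty]$ is boundary-stable or boundary-unstable, one must run the argument with the stratified structure (as in the boundary-unobstructed versus boundary-obstructed dichotomy already present throughout, and in the remark following Proposition \ref{prop: identification of regularity} that the reducible moduli-space case works too), and the near-constant control of the spinorial energy $\Lambda_{\frakq^\lambda}$ from Propositions \ref{prop:contol on L^2_k norm by Flambda on blow up} and \ref{prop:exponential decay of Flambda on blow up} is exactly what is needed to guarantee the sign $s_\lambda(t)\ge 0$ is preserved under the finite-dimensional approximation. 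The rest — smoothness of $\cF_\lambda$ in $\lambda$, the Fredholm package on the cylinder, the implicit function theorem — is routine given Proposition \ref{prop: properties of Xi in dimension 4} and the material of Section \ref{sec:Real Monopole Floer homology in global Coulomb slice}, and one simply transcribes the proof of \cite[Proposition 13.1.1]{Lidman2016TheEO}, replacing each ingredient by its real counterpart established above.
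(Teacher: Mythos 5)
Your proposal is correct and follows essentially the same route as the paper: the paper reduces the bijection to the local existence-and-uniqueness statement (Proposition \ref{prop:unique approximation for traj}, the real analogue of \cite[Proposition 13.1.2]{Lidman2016TheEO}, proved via $\Xi_\lambda$ and the implicit function theorem) and then obtains surjectivity by the same contradiction argument you describe, using the convergence of unparametrized trajectories and the $L^2_k$ convergence via $\Xi_{\lambda_n}$. Your additional remarks on the boundary strata and the $s\ge 0$ condition are consistent with the paper's (terser) treatment.
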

\begin{proof}
    The proof of this follows from Proposition \ref{prop:unique approximation for traj} as \cite[Proposition 13.1.1]{Lidman2016TheEO} follows from \cite[Proposition 13.1.2]{Lidman2016TheEO} using an argument by contradiction.
\end{proof}
\begin{prop}(\cite[Proposition 13.1.2]{Lidman2016TheEO})\label{prop:unique approximation for traj}
    Consider $[x_\infty]$ and $[y_\infty]$ in $\frC_{\cN}$ with $\mathrm{gr}([x_\infty],[y_\infty])=1$ and not boundary-obstructed. Fix $[\Breve{\gamma}_\infty]\in \Breve{M}^{agC}([x_\lambda],[y_\lambda]) $ and a small neighborhood $U$ of it in $\cB^{gC,\tau,I}([x_\infty],[y_\infty])/\R$. Then, for $\lambda\gg 0$, there is a unique $[\gamma_\lambda]\in \Breve{M}^{agC}([x_\lambda],[y_\lambda])$ with $\Xi_{\lambda}(\gamma_\lambda)$ in $U$. 
\end{prop}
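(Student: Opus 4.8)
The plan is to follow the strategy of \cite[Proposition 13.1.2]{Lidman2016TheEO}, adapting it to the real setting where the only available gauge transformations on the cylinder are the constants $\pm 1$, so that the $\tau$-model path spaces $\cB^{gC,\tau,I}_k([x_\infty],[y_\infty])$ carry no residual gauge action and the argument simplifies. First I would set up the local model: using the diffeomorphisms from Proposition \ref{prop: properties of Xi in dimension 4}, transport everything to a fixed neighborhood of the fixed $[\Breve\gamma_\infty]$ inside $\cB^{gC,\tau,I}([x_\infty],[y_\infty])/\R$. The claim becomes that the zero set of a perturbed section $\cF^{gC,\tau}_{\frakq^\lambda}$ (pushed forward by $\Xi_\lambda$) near $[\Breve\gamma_\infty]$ consists of a single point for all $\lambda\gg 0$. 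Since $[x_\infty],[y_\infty]$ are non-degenerate and $[\Breve\gamma_\infty]$ is regular (Proposition \ref{prop: identification of regularity} and the admissibility of $\frakq$), the linearized operator $\cD^\tau_{\gamma_\infty}\cF^{gC,\tau}_\frakq$ on the slice transverse to the $\R$-translation is an isomorphism; by the Fredholm continuity results of Subsection \ref{sub:Convergence of stationary points} and Lemma \ref{lem:surjectivity equivalence for Morse-Smale condition} it remains an isomorphism, with uniformly bounded inverse, for the approximating operators $\cD^\tau_{\gamma}\cF^{gC,\tau}_{\frakq^\lambda}$ with $\gamma$ in the chart and $\lambda$ large.

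The core is then an inverse/implicit function theorem argument exactly as in \cite[Proposition 13.1.2]{Lidman2016TheEO}: one shows that $\Xi_\lambda^{-1}(\cF^{gC,\tau}_{\frakq^\lambda})$ converges, in an appropriate $C^1$ sense on the chart, to $\cF^{gC,\tau}_\frakq$ as $\lambda\to\infty$ — this uses smoothness in $\lambda$ at $\infty$ of the family $\Xi_\lambda$ from Lemma \ref{lem: properties of Xi in dimension 3} and Proposition \ref{prop: properties of Xi in dimension 4}, together with the fact that $\eta^\lambda_\frakq=p^\lambda c_\frakq-c\to 0$ as a very compact map. Quantitatively, one needs: (i) a uniform lower bound on the operator norm of the inverse of the linearization over the chart, (ii) a uniform Lipschitz bound on the nonlinearity, and (iii) the estimate $\|\Xi_\lambda^{-1}\cF^{gC,\tau}_{\frakq^\lambda}(\gamma_\infty)\|\to 0$ at the reference solution. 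Then the quantitative inverse function theorem produces, for $\lambda\gg 0$, a unique zero in the chart, i.e. a unique $[\gamma_\lambda]\in\Breve M^{agC}([x_\lambda],[y_\lambda])$ with $\Xi_\lambda(\gamma_\lambda)\in U$. The near-constant approximation estimates from Subsection \ref{subsub:Near constant approximations} (Lemmas \ref{lem:contol on L^2_1 norm by Flambda}, \ref{lem:contol on L^2_k norm by Flambda} and Propositions \ref{prop:exponential decay of Flambda on blow up}, \ref{prop:exponential decay of Flambda on blow down}) are what guarantee that the approximating trajectories have uniformly controlled $L^2_k$-behavior near the endpoints, so that the whole analysis takes place in a space of paths with uniform exponential decay, which is needed for the Fredholm theory to apply uniformly in $\lambda$.

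I would handle the boundary-stable/boundary-unstable subtlety by noting that in the index-one, not-boundary-obstructed case the moduli space $\Breve M^{agC}$ is cut out transversally in the full (blow-up) configuration space with the constraint $s(t)\ge 0$ automatically satisfied in a neighborhood of $[\Breve\gamma_\infty]$ when $[\Breve\gamma_\infty]$ is irreducible, and when it is a reducible trajectory between a boundary-stable and a boundary-unstable point one must additionally track the spinorial energy $\Lambda_{\frakq^\lambda}$, using Proposition \ref{prop:contol on L^2_k norm by Flambda on blow up} to control $\frac{d}{dt}\Lambda_{\frakq^\lambda}$; this is precisely parallel to \cite{Lidman2016TheEO} and goes through without change since all the needed real analogues are in place.

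The main obstacle I anticipate is establishing the \emph{uniform-in-$\lambda$} quantitative control needed to run a single inverse function theorem: proving that the linearizations $\cD^\tau_\gamma\cF^{gC,\tau}_{\frakq^\lambda}$ (restricted to the transverse slice) are invertible with inverse norms bounded independently of $\lambda\gg 0$, and that the convergence $\Xi_\lambda^{-1}\cF^{gC,\tau}_{\frakq^\lambda}\to\cF^{gC,\tau}_\frakq$ is strong enough (in operator and $C^1$ norm on the chart) for the implicit function theorem. This is where the smoothness of $\Xi_\lambda$ at $\infty$ and the very compactness of $\eta^\lambda_\frakq$ do the real work, and where one must be careful that passing to the $I$-invariant part does not destroy the uniform estimates — but since $I$-equivariant very compact maps on $W$ restrict to very compact maps on $W^I$ and all the relevant operators are restrictions of the ones in \cite{Lidman2016TheEO}, the uniform bounds transfer. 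Once these uniform estimates are in hand, the uniqueness statement is the standard contraction-mapping consequence and the proof concludes.
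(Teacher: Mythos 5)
Your proposal follows essentially the same route as the paper, which simply defers to \cite[Proposition 13.1.2]{Lidman2016TheEO}: transport via $\Xi_\lambda$, use smoothness at $\lambda=\infty$ from Lemma \ref{lem: properties of Xi in dimension 3} and Proposition \ref{prop: properties of Xi in dimension 4} to form a $C^1$-family of sections, and apply a quantitative implicit/inverse function theorem using regularity of $[\Breve{\gamma}_\infty]$ for uniqueness, with the near-constant estimates of Subsection \ref{subsub:Near constant approximations} keeping everything in a uniformly exponentially decaying path space. One small correction: $\eta^\lambda_\frakq=p^\lambda c_\frakq-c$ does \emph{not} tend to $0$ as $\lambda\to\infty$; it tends to $\eta_\frakq=c_\frakq-c$, and the convergence your argument actually needs is $\cX^{gC}_{\frakq^\lambda}-\cX^{gC}_{\frakq}=(p^\lambda-1)c_\frakq\to 0$, which does hold and is what justifies the $C^1$-closeness of the $\lambda$-family to the limiting section.
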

\begin{proof}
    See the proof of \cite[Proposition 13.1.2]{Lidman2016TheEO}; replacing objects there by their real analogue is enough.
\end{proof}

We have analogous results in the boundary-obstructed case when the relative grading between $x$ and $y$ is 0 instead of 1.

\subsection{$v$-action}\label{sub:U-action}
We have seen the identification \[M([x],[y])\cap \scrZ^{\tau} \cong M^{agC}([x],[y])\cap \scrZ^{agC},\] in Subsection \ref{sub: Gradings and U-action} and a similar result on reducible moduli spaces. We now wish to further identify this with cut-down moduli spaces consisting of approximate trajectories. The approximate trajectories live in $(W^{\lambda,I})^\sigma/\Z_2$, so we restrict the zero set of $\zeta^{agC}$ to such spaces.

We will use an admissible perturbation guaranteed in Subsection \ref{sub:Morse quasi-gradient flow and Morse Smale condition}, making the moduli spaces of flow lines of $\xgcsql$ in $(W^{\lambda,I})^\sigma/\Z_2$ regular for all $\lambda=\lambda_i^{\bullet}$ sufficiently large. Since we only need to take care of countably many moduli spaces, we can choose the section $\zeta^{agC}$ to be transverse to all of them. Now, we can consider cut-down moduli spaces \[M^{agC}([x_{\lambda}],[y_{\lambda}])\cap \scrZ^{agC} \text{ and } M^{agC,red}([x_{\lambda}],[y_{\lambda}])\cap \scrZ^{agC},\] for $[x_\infty]$, $[y_\infty]$ in $\frC_{\cN}$. These moduli spaces are sufficient for determining the $v$-action in the grading range $[-N,N]$.

\begin{prop}
    \begin{enumerate}
        \item Suppose that $[x_\infty],[y_\infty]\in \frC_{\cN}$ have $\mathrm{gr}([x_\infty],[y_\infty])=2$ and are not boundary-obstructed. For $\lambda=\lambda_i^{\bullet}\gg 0$, there is a one-to-one correspondence between $M^{agC}([x_{\lambda}],[y_{\lambda}])\cap \scrZ^{agC}$ and $M^{agC}([x_{\infty}],[y_{\infty}])\cap \scrZ^{agC}$.
        \item Suppose that$[x_\infty],[y_\infty]\in \frC_{\cN}$ are reducibles having $\mathrm{gr}([x_\infty],[y_\infty])=1$ and are boundary-obstructed. For $\lambda=\lambda_i^{\bullet}\gg 0$, there is a one-to-one correspondence between $M^{agC,red}([x_{\lambda}],[y_{\lambda}])\cap \scrZ^{agC}$ and $M^{agC,red}([x_{\infty}],[y_{\infty}])\cap \scrZ^{agC}$.
    \end{enumerate}
\end{prop}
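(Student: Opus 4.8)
The plan is to run, in the cut-down setting, the same machinery that established the stability of the ordinary trajectory moduli spaces in Subsection~\ref{sub:Stabilit}, i.e.\ the real analogues of \cite[Proposition~13.1.1--13.1.2]{Lidman2016TheEO}. Throughout one works with $\lambda=\lambda_i^\bullet\gg0$, so that $p^\lambda$ is a genuine $L^2$-projection and the conclusions of Subsections~\ref{sub:Convergence of stationary points}, \ref{sub:Identification of grading} and \ref{sub:Morse quasi-gradient flow and Morse Smale condition} are available. The first step is a dimension count: by Proposition~\ref{prop:indentification between f.d. and inf.d gr} and the relative-grading identifications of Subsection~\ref{subsub:Relative grading} we have $\mathrm{gr}([x_\lambda],[y_\lambda])=\mathrm{gr}([x_\infty],[y_\infty])$, so $M^{agC}([x_\lambda],[y_\lambda])$ and $M^{agC}([x_\infty],[y_\infty])$ (respectively their reducible counterparts in case~(2)) have the same dimension; combined with the admissibility of $\frakq$ (Proposition~\ref{prop: identification of regularity} for case~(1), and the $\partial$-regularity appropriate to boundary-obstructed reducibles for case~(2)) and the fact that $\zeta^{agC}$ was chosen, by a countable family of genericity conditions, transverse to $M^{agC}([x_\infty],[y_\infty])$ and to every $M^{agC}([x_\lambda],[y_\lambda])$ with $\lambda=\lambda_i^\bullet$ large, all four cut-down moduli spaces in the statement are compact $0$-manifolds, hence finite sets.

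Next I would build an approximation map from $M^{agC}([x_\infty],[y_\infty])\cap\scrZ^{agC}$ to $M^{agC}([x_\lambda],[y_\lambda])\cap\scrZ^{agC}$. Using the diffeomorphisms $\Xi_\lambda$ of Proposition~\ref{prop: properties of Xi in dimension 4} and the observation from Subsection~\ref{sub: Gradings and U-action} that the line bundle used to cut the $\lambda$-moduli is the pull-back under $\Xi_\lambda$ of the one used at $\lambda=\infty$, the cut-down moduli space $M^{agC}([x_\lambda],[y_\lambda])\cap\scrZ^{agC}$ is identified, after applying $\Xi_\lambda$, with the zero set of the pair consisting of $\cF^{gC,\tau}_{\frakq^\lambda}$ and the pull-back of a section cutting out $\scrZ^{agC}$, on a region of $\widetilde{\cB}^{gC,\tau,I}_k([x_\infty],[y_\infty])$. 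For each fixed $[\Breve{\gamma}_\infty]\in M^{agC}([x_\infty],[y_\infty])\cap\scrZ^{agC}$ I would apply the inverse function theorem to this augmented section, exactly as in the proof of Proposition~\ref{prop:unique approximation for traj}: the linearization at $[\Breve{\gamma}_\infty]$ is invertible because $\cD^\tau\cF^{gC,\tau}_\frakq$ is surjective there (regularity) and the cutting section is transverse there, and one checks that the $\frakq^\lambda$-augmented section converges to the $\frakq$-one in the relevant operator norm as $\lambda\to\infty$. This yields, for $\lambda$ large, a unique nearby $[\Breve{\gamma}_\lambda]\in M^{agC}([x_\lambda],[y_\lambda])\cap\scrZ^{agC}$ with $\Xi_\lambda(\Breve{\gamma}_\lambda)$ close to $[\Breve{\gamma}_\infty]$.

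To upgrade this to a bijection for $\lambda$ large I would argue by contradiction, as Proposition~\ref{prop:unique approximation for traj} implies the bijection in the uncut case. If along some $\lambda_n\to\infty$ (in $\{\lambda_i^\bullet\}$) there were a point $[\Breve{\gamma}_{\lambda_n}]\in M^{agC}([x_{\lambda_n}],[y_{\lambda_n}])\cap\scrZ^{agC}$ not of the above form, then by the compactness results of Subsection~\ref{sub:Convergence of trajectories} (Proposition~\ref{prop:unpara convergenc in local norm} together with the $L^2_k$-convergence statement following it) a subsequence of $\Xi_{\lambda_n}(\Breve{\gamma}_{\lambda_n})$ converges in $L^2_k(\R\times Y)$ to an unparametrized trajectory of $\xagcsq$ with endpoints $[x_\infty],[y_\infty]$; continuity of the cutting section and closedness of $\scrZ^{agC}$ then place the limit in $M^{agC}([x_\infty],[y_\infty])\cap\scrZ^{agC}$, and the uniqueness clause of the previous step forces $[\Breve{\gamma}_{\lambda_n}]$ to be its approximation for $n$ large, a contradiction. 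The same convergence argument applied to two distinct limit points shows the approximation map is injective, since their $\Xi_\lambda$-images are pinned $\epsilon$-close to distinct limits; this gives the desired one-to-one correspondence. Case~(2) is handled identically, replacing $M^{agC}$ by $M^{agC,red}$ and using the $\partial$-version of every operator and moduli space throughout.

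The step I expect to be the main obstacle is ruling out breaking in the last paragraph while keeping track of the cutting locus. Since $\scrZ^{agC}$ is defined through an evaluation at the slice $t=0$, one must ensure that as $n\to\infty$ the position of the cut neither escapes to $t=\pm\infty$ nor lands on a broken component of lower total grading. The first is excluded by the energy bounds and near-constant estimates of Subsection~\ref{subsub:Near constant approximations} (Propositions~\ref{prop:exponential decay of Flambda on blow up} and \ref{prop:exponential decay of Flambda on blow down}), which prevent a definite amount of energy from being spent near either end; the second is excluded by the genericity of $\zeta^{agC}$ with respect to all lower strata together with the fact that, for the grading combinations occurring here ($\le 2$ in case~(1), $\le 1$ for boundary-obstructed reducibles in case~(2)), the relevant cut-down moduli space over any admissible broken pair has negative formal dimension, hence is empty. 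This is precisely the compactness bookkeeping that underlies $\widecheck{\partial}^2=0$ and the $v$-action relations, and it transfers verbatim to the real setting because every real Seiberg--Witten trajectory in $W^I$ or its finite-dimensional cut-off is in particular an ordinary Seiberg--Witten trajectory, so the energy estimates of \cite{Kronheimer_Mrowka_2007} and their real counterparts in \cite{li2022monopolefloerhomologyreal} apply without change.
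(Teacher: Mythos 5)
Your proposal is correct and follows essentially the same route as the paper, which simply defers to the proof of \cite[Proposition 13.2.1]{Lidman2016TheEO}: augment the stability argument of Subsection \ref{sub:Stabilit} (inverse function theorem via $\Xi_\lambda$, then a compactness/contradiction step) by the cutting section $\zeta^{agC}$, using that the $\lambda$- and $\infty$-cutting loci are related by pull-back. Your added bookkeeping on where the cut sits relative to possible breaking is exactly the content implicitly imported from \cite{Lidman2016TheEO}, so nothing is missing.
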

\begin{proof}
    See the proof of \cite[Proposition 13.2.1]{Lidman2016TheEO}.
\end{proof}

\subsection{Main theorem}\label{sub:Main thm}
Now we are ready to prove our main theorem. We follow the discussion in \cite[Chapter 14]{Lidman2016TheEO}, but various changes are needed to fit it into the real definitions, so we provide details as complete as possible.
\begin{proof}(proof of Theorem \ref{thm:main theorem})
Recall that in \ref{prop:identification of perturbed and unperturned homotopy type}, we have seen that \[ \widetilde{H}^{\Z_2}_{*}(\mathit{SWF}_{\Z_2}(Y,\iota,\s))\cong \widetilde{H}^{\Z_2}_{*}(\mathit{SWF}_{\Z_2,\frakq}(Y,\iota,\s)),\] in which $\mathit{SWF}_{\Z_2,\frakq}(Y,\iota,\s)$ is the spectrum defined using $l+p^\lambda c_{\frakq}$ in place of $l+p^\lambda c$. Here, we need $\frakq$ to be a very tame, admissible perturbation satisfying the properties in Proposition \ref{prop:hyperbolicity in B(2R)}, \ref{prop: xgcql is a equivariant Morse quasi gradient} and \ref{prop:Morse smale condition on xgcsql}. Recall that \[\mathit{SWF}_{\Z_2,\frakq}(Y,\iota,\s)=\Sigma^{-\mathrm{dim}(V_{-\lambda}^0)\R} \Sigma^{-(\mathrm{dim}(U_{-\lambda}^0)+n^{R}(Y,\iota,\s,g))\widetilde{\R}} I^{\lambda}_\frakq,\] in which $I^{\lambda}_\frakq$ is the $\Z_2$-equivariant Conley index for the flow $l+p^\lambda c_{\frakq}$.

Proposition \ref{prop: xgcql is a equivariant Morse quasi gradient} and \ref{prop:Morse smale condition on xgcsql} tell us that for $\lambda=\lambda_i^\bullet\gg 0$, $\xgcsql$ is a Morse-Smale equivariant quasi gradient on $W^{\lambda,I}\cap B(2R)$. Thus, we can construct a Morse complex $(\widecheck{C}_{\lambda},\widecheck{\partial}_\lambda)$ for $\xagcsql$ on $(W^{\lambda,I}\cap B(2R))^{\sigma}$. Then, as discussed in Subsection \ref{sub:Morse homology and Morse quasi gradient flow}, we have an isomorphism of $\F[v]$-modules  \[\widetilde{H}^{\Z_2}_{i}(I^{\lambda}_\frakq)\cong H_i(\widecheck{C}_{\lambda},\widecheck{\partial}_\lambda),\]  for $0\le i\le n_{\lambda}-1$, where $n_\lambda$ is the connectivity of $(I^\lambda_\frakq,(I^\lambda_\frakq-(I^\lambda_\frakq)^{\Z_2})\cup *)$.

Taking the grading shift (suspension) into account, this becomes \[\widetilde{H}^{\Z_2}_{i}(\mathit{SWF}_{\Z_2,\frakq}(Y,\iota,\s))\cong H_{i+\mathrm{dim}V_{-\lambda}^0+\mathrm{dim}U_{-\lambda}^0+n^R(Y,\iota,\s,g)}(\widecheck{C}_{\lambda},\widecheck{\partial}_\lambda),\] 
 for $-\mathrm{dim}V_{-\lambda}^0-\mathrm{dim}U_{-\lambda}^0-n^R(Y,\iota,\s,g)\le i\le n_{\lambda}-1-\mathrm{dim}V_{-\lambda}^0-\mathrm{dim}U_{-\lambda}^0-n^R(Y,\iota,\s,g)$.

Let \[M_{\lambda}=\min\{\mathrm{dim}V_{-\lambda}^0+\mathrm{dim}U_{-\lambda}^0+n^R(Y,\iota,\s,g), n_{\lambda}-1-\mathrm{dim}V_{-\lambda}^0-\mathrm{dim}U_{-\lambda}^0-n^R(Y,\iota,\s,g)\}.\] The isomorphism above holds in grading range $[-M_\lambda,M_\lambda]$. To finish the proof as outlined in Subsection \ref{sub:Outline of the proof}, we need to show $M_\lambda\to \infty$ as $\lambda=\lambda_i^\bullet$ goes to $\infty$. 

It is clear that $\mathrm{dim}V_{-\lambda}^0+\mathrm{dim}U_{-\lambda}^0$ goes to $\infty$ as $\lambda$ does. We need to show that $n_{\lambda}-1-\mathrm{dim}V_{-\lambda}^0-\mathrm{dim}U_{-\lambda}^0$ also grows without bound, since $n^R$ does not change with $\lambda$. This is done by analyzing how $n_\lambda$ changes with $\lambda$, i.e. the growth of connectivity of the pair $(I^\lambda_\frakq,(I^\lambda_\frakq-(I^\lambda_\frakq)^{\Z_2})\cup *)$.

Fix a $\mu=\lambda_i^\bullet$ that is large enough for all nice properties in previous sections to hold. Then, for $\lambda=\lambda_j^\bullet>\mu$, we have \[I_\frakq^\lambda=I_\frakq^\mu\wedge I(l)^\lambda_\mu,\] where $I(l)^\lambda_\mu$ is the Conley index associated to the isolated invariant set $\{0\}$ in the linear flow induced by $l$ on the complement of $W^{\mu,I}$ in $W^{\lambda,I}$ (See \cite[Section 7]{Manolescu_2003} or \cite[Section 3]{Konno_Miyazawa_Taniguchi_2025}). We decompose this with respect to the sign of eigenvalues and type of eigenvectors (or say type of representations.) More precisely, define 
\[a^{\mu,\lambda}_+=\mathrm{dim}V_{\mu}^{\lambda},\text{ } b^{\mu,\lambda}_+=\mathrm{dim}U_{\mu}^{\lambda},\]
\[a^{\mu,\lambda}_-=\mathrm{dim}V_{-\lambda}^{-\mu},\text{ } b^{\mu,\lambda}_-=\mathrm{dim}U_{-\lambda}^{-\mu}.\]

Recall that we combine notations from \cite{Lidman2016TheEO} and \cite{Konno2024}, but replace $W$ in \cite{Konno2024} by $U$ for the space of representations $\widetilde{\R}$ to avoid confusion with the global Coulomb slice. Then we have \[I(l)^\lambda_\mu\simeq D(\R^{a_+^{\mu,\lambda}})_+\wedge D(\widetilde{\R}^{b_+^{\mu,\lambda}})_+\wedge (\R^{a_-^{\mu,\lambda}})^+\wedge (\widetilde{\R}^{b_-^{\mu,\lambda}})^+.\] Here, if $V$ is a vector space, $D(V)_+$ is the unit disk of $V$ union an extra base point and $V^+$ is the one point compactification of $V$. With all terminologies set up, we have \[(I_\frakq^\lambda,(I_\frakq^\lambda)^{\Z_2})\simeq (D(\R^{a_+^{\mu,\lambda}})_+\wedge D(\widetilde{\R}^{b_+^{\mu,\lambda}})_+\wedge (\R^{a_-^{\mu,\lambda}})^+\wedge (\widetilde{\R}^{b_-^{\mu,\lambda}})^+\wedge I^\mu_\frakq, D(\R^{a_+^{\mu,\lambda}})_+\wedge (\R^{a_-^{\mu,\lambda}})^+ \wedge (I_\frakq^\mu)^{\Z_2}).\] Note that $\wedge V^+$ has the same effect as $\Sigma^{V}$. 

Observe that for a $\Z_2$-space $(X,Y)$, changing it into $(D(\R)_+\wedge X,D(\R)_+\wedge Y)$, $(D(\widetilde{\R})_+\wedge X,Y)$, $(\Sigma^{\R }X,\Sigma^{\R} Y)$ and $(\Sigma^{\widetilde{\R}}X,Y)$ changes the connectivity of $(X,(X-Y)\cup *)$ by 0, 1, 1, 1, respectively. Therefore, \[n_{\lambda}=n_\mu+b_+^{\mu,\lambda}+a_-^{\mu,\lambda}+b_-^{\mu,\lambda}.\]

By definition, $\mathrm{dim}U_{-\lambda}^0+\mathrm{dim}V_{-\lambda}^0=\mathrm{dim}U_{-\mu}^0+\mathrm{dim}V_{-\mu}^0+a_-^{\mu,\lambda}+b_-^{\mu,\lambda}$, so \[n_{\lambda}-\mathrm{dim}U_{-\lambda}^0-\mathrm{dim}V_{-\lambda}^0=n_{\mu}-\mathrm{dim}U_{-\mu}^0-\mathrm{dim}V_{-\mu}^0+b_+^{\mu,\lambda}\to \infty, \text{ as } \lambda\to \infty.\] 

This conclude the estimation of $M_\lambda$.

As they remarked in \cite[p206-207]{Lidman2016TheEO}, the natural grading on $\widecheck{C}_{\lambda}$ is not the same as the gradings we have considered throughout this paper. To remedy this, we define $\widecheck{CMR}^{\lambda}$ by taking the Morse complex $(\widecheck{C}_{\lambda},\widecheck{\partial}_\lambda)$ and shifting the gradings down by $n^{R}(Y,\iota,\s,g)+\mathrm{dim}V_{-\lambda}^0+\mathrm{dim}U_{-\lambda}^0$, so that the stationary points have the grading $\mathrm{gr}^{\mathit{SWF}}$as  we considered in \ref{sub:Identification of grading}. Now we have \[\widetilde{H}^{\Z_2}_{i}(\mathit{SWF}_{\Z_2,\frakq}(Y,\iota,\s))\cong \widecheck{\mathit{HMR}}_i^\lambda(Y,\iota,\s,\frakq),\] for $i\in [-M_\lambda,M_\lambda]$. As a final step, we want to identify $ \widecheck{\mathit{HMR}}_i^\lambda(Y,\iota,\s,\frakq)$ with $ \widecheck{\mathit{HMR}}_i(Y,\iota,\s,\frakq)$. The chain complexes are the same in each fixed grading range $[-N,N]$ - as we have a grading-preserving bijection between stationary points from Subsection \ref{sub:Identification of grading} and a one-to-one correspondence between trajectories from Subsection \ref{sub:Stabilit}.  Then we have \[\widecheck{\mathit{HMR}}_i^\lambda(Y,\iota,\s,\frakq)\cong \widecheck{\mathit{HMR}}_i(Y,\iota,\s,\frakq),\] for $i\in [-N+1,N-1]$. Then Subsection \ref{sub:U-action} tells us that this isomorphism is actually one between $\F[v]$-modules.

Finally, for each $N$, we can take $\lambda=\lambda_i^\bullet$ large enough so that $M_\lambda>N$, as we shown above. Then, by combining all these discussions, we see that \[\widecheck{\mathit{HMR}}_{*}(Y,\iota,\s)\cong \widetilde{H}^{\Z_2}_{*}(\mathit{SWF}_{\Z_2}(Y,\iota,\s))\] in each grading. This concludes the proof. \end{proof}

Now we generalize the theorem as we promised in Remark \ref{rmk:generalize to nonhomology sphere}. We will adapt the proof above to show the following.
\begin{prop}\label{prop:generalize to non-homology sphere}
Let $(Y,\iota)$ be a real three-manifold and $\s$ be a compatible real $\mathrm{spin^c}$ structure. Suppose that $H^1(Y;\Z)^{-\iota^*}=0$. Then we have an isomorphism of relatively graded $\widetilde{H}_{\Z_2}^*(S^0;\F)\cong \F[v]$-modules \[\widecheck{\mathit{HMR}}_{*}(Y,\iota,\s)\cong \widetilde{H}^{\Z_2}_{*}(\mathit{SWF}_{\Z_2}(Y,\iota,\s);\F).\] We also have counterparts for the ``bar'', `` from'' and ``tilde'' version of $\mathit{HMR}$ as for real rational homology spheres.
\end{prop}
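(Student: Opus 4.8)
The plan is to revisit the proof of Theorem \ref{thm:main theorem} and check that the hypothesis ``$Y$ is a rational homology sphere'' entered only through a short list of structural consequences, each of which already follows from $H^1(Y;\Z)^{-\iota^*}=0$; once this bookkeeping is done, every step of Sections \ref{sec:Real Monopole Floer homology in global Coulomb slice} and \ref{sec:Relating finite and infinite dimensional Morse homologies} transfers verbatim. First I would recall that $\widecheck{\mathit{HMR}}(Y,\iota,\s)$ together with its ``bar'', ``from'' and ``tilde'' companions is defined for an arbitrary real three-manifold by Li's construction in \cite{li2022monopolefloerhomologyreal}, and that $\mathit{SWF}_{\Z_2}(Y,\iota,\s)$ is well defined in this generality by Miyazawa \cite{miyazawa2023gaugetheoreticinvariantembedded}.

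Next I would isolate the two facts that actually matter. (i) The exact sequence $0\to\Z_2\to\pi_0(\cG^{I,h})\to H^1(Y;\Z)^{-\iota^*}\to 0$ from \cite[Section 5.7]{li2022monopolefloerhomologyreal} gives $\pi_0(\cG^I)=\pi_0(\cG^{I,h})=\Z_2$ as soon as $H^1(Y;\Z)^{-\iota^*}=0$, with the two components distinguished by the value $\pm 1$ at any $p\in\mathrm{fix}(\iota)$ and each $u_\pm=\pm e^{if}$ for a real function $f$, exactly as in Subsection \ref{subsub: Configuration space and Coulomb slices}; this is all that is needed for the constant gauge group surviving in $W^I$ to be $\Z_2$, for the $\cR_n$-module structure to collapse to an $\F[v]$-structure, and for the evaluation bundles defining the $v$-action to be real line bundles. (ii) By Hodge theory with the equivariant metric $g$, the space of $\iota$-anti-invariant harmonic $1$-forms is $\cH^1(Y;i\R)^{-\iota^*}\cong H^1(Y;\R)^{-\iota^*}$, which vanishes under the hypothesis; hence $(\ker d^*)^{-\iota^*}=(\operatorname{im} d^*)^{-\iota^*}$, so the real global Coulomb slice $W^I$, the projections $\Pigc$ and $\Pigcs$, and the linear part $l|_{W^I}=(\ast d, D)$ take exactly the form used in the paper. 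In particular $\ast d$ is injective on $(\ker d^*)^{-\iota^*}$ (so the only reducibles come from the Dirac operator, handled by blow-up as before), the cut-offs $W^{\lambda,I}$ and the finite-dimensional-approximation spectrum behave as before, and finite-type Seiberg--Witten trajectories in $W^I$ stay in a ball $B(R)$ with no drift in a harmonic direction, so Proposition \ref{prop:flow lines in B(R)} and the very-compactness input of Proposition \ref{prop:properties of very compact maps} remain valid.

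Granting (i) and (ii), I would then re-run the two main sections with no change of substance. In Section \ref{sec:Real Monopole Floer homology in global Coulomb slice}, Li's tame / very tame / admissible perturbations (built by pairing with invariant forms and spinors) exist for an arbitrary real three-manifold, the Hessian and Fredholm analysis (Propositions \ref{prop: Fredholm property of Q gc gamma} and \ref{prop: identification of regularity}) never used $b_1(Y)=0$, and the identification of gradings and of the $v$-action in Subsection \ref{sub: Gradings and U-action} used only $\pi_0(\cG^I)=\Z_2$. In Section \ref{sec:Relating finite and infinite dimensional Morse homologies}, the convergence of stationary points and trajectories, the hyperbolicity statements, the equivariant Morse quasi-gradient $\xgcql$ with its Morse--Smale property, and the self-diffeomorphisms $\Xi_\lambda$ all carry over with real objects in place of the $S^1$-equivariant ones. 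Finally I would repeat the argument of Subsection \ref{sub:Main thm}: for each $N$ pick $\lambda=\lambda_i^\bullet$ with $M_\lambda>N$ to get $\widecheck{\mathit{HMR}}_{\le N-1}(Y,\iota,\s)\cong\widetilde{H}^{\Z_2}_{\le N-1}(\mathit{SWF}_{\Z_2}(Y,\iota,\s))$ as $\F[v]$-modules, hence the full isomorphism, with the same cut-off argument giving the other three versions. When $(Y,\iota,\s)$ bounds a real $\mathrm{spin^c}$ four-manifold, the absolute gradings $\mathrm{gr}^\Q$ of Subsection \ref{subsub:Gradings} and $\mathrm{gr}^{\mathit{SWF}}$ of Subsection \ref{sub:Identification of grading} are defined and agree by the index and spectral-flow computation there, which takes place entirely in the invariant part.

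The hard part is fact (ii): one must be sure that the compactness theorems of \cite{Kronheimer_Mrowka_2007}, the finite-dimensional approximation machinery of \cite{Lidman2016TheEO}, and their real analogues in \cite{li2022monopolefloerhomologyreal} are genuinely valid for $(Y,\iota)$ once the anti-invariant harmonic forms vanish---i.e., that the only obstruction to dropping the rational homology sphere hypothesis was the potential noncompactness contributed by $H^1$. On the homotopy-type side this is precisely what Miyazawa verified in \cite{miyazawa2023gaugetheoreticinvariantembedded}, and the monopole side is parallel; I do not expect additional difficulties, since every construction in the proof is performed with $I$-equivariant data and then restricted to the invariant subspace, where the harmonic direction is simply absent.
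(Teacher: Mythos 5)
Your overall strategy --- isolate the consequences of the rational homology sphere hypothesis and check they follow from $H^1(Y;\Z)^{-\iota^*}=0$ --- matches the paper's, and your facts (i) and (ii) are correctly identified. However, there is a genuine gap in how you dispose of the ``hard part.'' You assert that ``every construction in the proof is performed with $I$-equivariant data and then restricted to the invariant subspace, where the harmonic direction is simply absent.'' This is backwards for precisely the steps that matter: the very-compactness of $\Pigcs_*\frakq$ (Proposition \ref{prop:properties of very compact maps} and its consequences), the estimates of \cite[Section 6.3]{Lidman2016TheEO} used to identify stationary points, and the energy bound of \cite[Lemma 12.2.4]{Lidman2016TheEO} underlying Proposition \ref{prop:upstair covergence with energy control} are all established on the \emph{ambient} slice $W$ --- the last by differentiating the $S^1$ gauge action, which does not exist on $W^I$ --- and only then restricted to $W^I$. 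When $b_1(Y)\neq 0$ the ambient $W$ is a Hilbert bundle over the Picard torus, not a Hilbert space, so these inputs are not available off the shelf and cannot be ``inherited by restriction.''

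The paper's proof supplies the missing device: it introduces the subspace $W_0=pr^{-1}(0)$, where $pr:W\to H^1(Y;\R)$ records the harmonic part of the $1$-form component. Since $H^1(Y;\R)^{-\iota_*}=0$, one has $W^I=W_0^I\subset W_0$, and $W_0$ is a genuine Hilbert space carrying the residual $S^1$ action. All the ambient arguments (very-compactness, the boundedness and energy estimates via \cite{Khandhawit_2018}, as in \cite{miyazawa2023gaugetheoreticinvariantembedded}, and the $e^{i\theta}$-differentiation in the proof of the trajectory-convergence estimates) are then carried out on $W_0$ and restricted to $W^I$. Without $W_0$ or an equivalent substitute, your claim that Propositions \ref{prop:flow lines in B(R)}, \ref{prop:properties of very compact maps} and \ref{prop:upstair covergence with energy control} ``remain valid'' is unsupported; with it, the rest of your outline does go through as you describe.
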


We first recall the reason why finite-dimensional approximation in \cite{Manolescu_2003} does not work for a general three manifold. When $b_1(Y)\ne 0$, $W$ is only a Hilbert bundle over the Picard torus. The key point here is that under the assumption of Proposition \ref{prop:generalize to non-homology sphere}, the fixed part $W^I$ of the global Coulomb slice is now a Hilbert space. Since the proof that occupy the previous sections of this paper mainly focuses on $W^I$ and does not rely heavily on the property of $W$, it almost works for this new setting. 

Note that any real $\mathrm{spin^c}$ structure $\s$ on such a real three-manifold must be torsion (in the sense that $c_1(\s)$ is torsion), since, by Hodge theory and Poincare duality, $H^1(Y;\Z)^{-\iota^*}=0$ implies $H^1(Y;\R)^{-\iota^*}=0$ and $H^2(Y;\R)^{-\iota^*}=0$. It was shown in \cite[Proposition 2.4]{miyazawa2023gaugetheoreticinvariantembedded} that for such a triple $(Y,\iota,\s)$, $\mathit{SWF}_{\Z_2}(Y,\iota,\s)$ is a well-defined.   
Real monopole Floer homology is originally defined for all closed real three-manifolds, so we have no need to pay extra attention to its well-definedness. When arguing for an alternative characterization of $v$-action, we used the fact that for a rational homology sphere, $\cG^I$ has only two components distinguished by whether it has constant $1$ or $-1$. This was guaranteed by the long exact sequence $0\to \Z_2\to\pi_0(\cG^{I,h})\to H^1(Y;\Z)^{-\iota^*}\to 0$ and the vanishing of its fourth term, which is still true now by assumption. There is no difficulty in defining $\mathit{HMR}^\circ$ in $W^I$ for these three-manifolds and in identifying them with the old definitions reviewed in Subsection \ref{sub:Real Monopole Floer homology} as we did in Section \ref{sec:Real Monopole Floer homology in global Coulomb slice}. 

Now $W$ is a Hilbert bundle instead of a Hilbert space. To make our lives easier, we introduce a subspace $W_0$. By the assumption that  $H^1(Y;\Z)^{-\iota^*}=0$, we know that there is still a unique invariant flat connection $A_0$. We let $A_0$ be the base connection, so that now the space of connections can be identified with the space of imaginary valued 1-forms. Pick a basis of harmonic 1-forms $\{a_1, \ldots,a_b\}$. By pairing with these, we can define a projection on $pr:W\to H^1(Y;\R)$ by taking the harmonic part of those 1-forms. We define $W_0$ to be $pr^{-1}(0)$. Unlike $W$, $W_0$ is no longer parametrized by the Picard torus, so it is a Hilbert space and the remaining ``gauge group'' action on it is $S^1$. Also, from $H^1(Y;\R)^{-\iota^*}=0$, we know that $W^I=W_0^I$, in particular $W^I\subset W_0$. Thus, when passing to the real part, we have no information loss.

During the proof of Proposition \ref{prop: Fredholm property of Q gc gamma}, we used the fact that $Y$ is a rational homology sphere to conclude $(\mathrm{im}d_{(0)})^{-\iota^*}=(\mathrm{ker}d_{(1)})^{-\iota^*}$. This is still true under the assumption $H^1(Y;\Z)^{-\iota^*}=0$.

To define perturbed real Seiberg-Witten homotopy type, we need the notion of a very compact map. Defining such a notion on $W$ requires extra care on the Hilbert bundle, but the characterization from \cite[Section 6]{Lidman2016TheEO} works on $W_0$. We can then use the fact that an equivariant very compact map on $W_0$ restricts to very compact map on $W^I$ to see that $\mathit{SWF}_{\Z_{2},\frakq}(Y,\iota,\s)$ is well-defined and is isomorphic to $\mathit{SWF}_{\Z_2}(Y,\iota,\s)$. In Section \ref{sec:Relating finite and infinite dimensional Morse homologies}, we claimed that we can just use estimates in \cite[Section 6.3]{Lidman2016TheEO}, since for rational homology spheres $W^I\subset W$, the analytic properties can be inherited by subspace. These estimations are needed in the identification of stationary points in Subsection \ref{sub:Convergence of stationary points}, but this strategy no longer works. Nevertheless, we note that the proof in \cite[Section 6.3]{Lidman2016TheEO} can be directly repeated on $W^I$ using results from \cite{Khandhawit_2018} as Miyazawa did in \cite{miyazawa2023gaugetheoreticinvariantembedded}.

The convergence of stationary points, identification of grading and the proof of cut-off flow is a Morse-Smale equivariant quasi-gradient works verbatim in this case. The construction of self-diffeomorphism $W^I\to W^I$ also makes sense. 

Now, to finish the proof, it remains to consider convergence of trajectories. Most of the arguments from Subsection \ref{sub:Convergence of trajectories} still work, but when proving Proposition \ref{prop:upstair covergence with energy control}, we embedded $W^I$ into $W$ and made use of \cite[Lemma 12.2.4]{Lidman2016TheEO}, whose proof relies on taking derivative of the $S^1$-gauge action. To remedy this, we observe that the argument from \cite[Section 3]{Khandhawit_2018} tells us that finite energy trajectories of $l+p^\lambda c_\frakq$ in $W_0$ share the same property as those in $W(\text{rational  homology  sphere})$, so we can use the estimates from \cite[Section 12]{Lidman2016TheEO} in $W_0$ using the $S^1$-action and then see that the energy control is still valid for the current $W^I$.

After these preparation, we can argue as in previous subsections to conclude the proof of Proposition \ref{prop:generalize to non-homology sphere}.

\subsection{Fr\o yshov-type invariants}
In \cite{Konno2024}, they considered a Fr\o yshov-type invariant and proved a Fr\o yshov-type inequality for real Seiberg-Witten homotopy type. In \cite{li2022triangle}, Li defined counterparts in real monopole Floer homology for the double branched covers of links with non-zero determinant and also proved a Fr\o yshov-type inequality. In this subsection, we review their definitions and prove Proposition \ref{prop:Froyshov invariant identification}. 

On the Floer homotopy type side, they introduced a more general notion called a space of type $(G,H)-\mathit{SWF}$. Here, we only care about the case $(G,H)=(\Z_2,\Z_2)$.

\begin{definition}(\cite[Definition 3.1]{Konno2024})
    Let $G$ be a group and $H$ be a subgroup of $G$. Let $\scrV$ be a countable direct sum of a fixed one-dimensional real representation of $G$. Let $X$ be a pointed $G$-$CW$ complex. We call $X$ \emph{a space of type $(G,H)-\mathit{SWF}$}, if \begin{itemize}
        \item $X^H$ is $G$-homotopy equivalent to $V^+$, where $V$ is a finite-dimensional subrepresentation of $\scrV$.
        \item $H$ acts freely on $X-X^H$.
    \end{itemize}
\end{definition}

For every space $X$ of type $(\Z_2,\Z_2)-\mathit{SWF}$, we can associate to it a numerical invariant as follows:
\[d(X)=\min \{r| \exists x, 0\ne x\in \widetilde{H}^{\Z_2}_{r}(X;\F) \cap (\bigcap_{l\ge 0}) \mathrm{im}(v^l)\}.\]
Here, $v$ comes from the action of $H_{\Z_2}^*(S^0;\F)=\F[v]$ on $\widetilde{H}^{\Z_2}_*(X;\F)$. We modify the original definition a little since we want to use homology instead of cohomology. Up to (de)suspension, $\mathit{SWF}_{\Z_2}(Y,\iota,\s)$ is a space of type $(\Z_2,\Z_2)-\mathit{SWF}$ using trivial representation, so we can associate to it an invariant $d(Y,\iota,\s)$ by taking $d$ of the Conley index and shifting the grading according to the suspensions in the formula. 

This invariant satisfies the following.
\begin{prop}(\cite[Theorem 3.23]{Konno2024})\label{prop:Froyshov inequality from SWFR}
    Let $(Y_0,\iota_0,\s_0)$, $(Y_1,\iota_1,\s_1)$ be $\mathrm{spin^c}$ real rational homology three spheres. Suppose that $(W,\iota,\s)$ is a smooth compact oriented real cobordism from $(Y_0,\iota_0,\s_0)$ to $(Y_1,\iota_1,\s_1)$ with $b_1(W)=0$. Assume further that $b^+(W)=b^+_{\iota^*}(W)$. Then we have \[d(Y_0,\iota_0,\s_0)+\frac{c_1(\s)^2-\sigma(W)}{8}\le d(Y_1,\iota_1,\s_1).\]
\end{prop}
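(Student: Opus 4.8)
The plan is to combine the identification of Fr\o yshov-type invariants from Proposition \ref{prop:Froyshov invariant identification} together with the main equivalence Theorem \ref{thm:main theorem} to transport everything to the real monopole Floer side, but actually for Proposition \ref{prop:Froyshov inequality from SWFR} itself the cleanest route is to prove the inequality directly on the homotopy-type side using the cobordism map. First I would recall that, by finite-dimensional approximation, a real $\mathrm{spin^c}$ cobordism $(W,\iota,\s)$ from $(Y_0,\iota_0,\s_0)$ to $(Y_1,\iota_1,\s_1)$ with $b_1(W)=0$ and $b^+(W)=b^+_{\iota^*}(W)$ induces a $\Z_2$-equivariant stable map between the real Seiberg-Witten spectra. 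The hypothesis $b^+(W)=b^+_{\iota^*}(W)$ is exactly what is needed so that the cokernel of the linearized Seiberg-Witten operator on $W$ contributes only copies of the trivial representation $\R$ (the ``$b^+$'' part of the index lies entirely in the $\iota^*$-invariant part), hence after the Conley-index construction one gets an honest map of $(\Z_2,\Z_2)$-$\mathit{SWF}$ spaces up to suspension by trivial representation spheres, with the grading shift governed by $\tfrac{1}{8}(c_1(\s)^2-\sigma(W))$ via the index formula and the correction terms $n^R$.

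The key steps, in order, would be: (1) set up the cobordism map using the relative Bauer-Furuta / finite-dimensional approximation as in \cite{Konno2024} and \cite{Manolescu_2003}, restricting the $\Z_2$-equivariant construction so that on invariant parts everything makes sense; (2) compute the net suspension: the $+$-part of the index (which by hypothesis is purely $\R$) gets compactified into $(\R^{b^+_{\iota^*}})^+$, while the remaining index contributes $\widetilde{\R}$-summands that either desuspend or get disk-factors, and bookkeeping of $V^0_{-\lambda}$, $U^0_{-\lambda}$, $n^R$ against the analogous data on both ends produces the grading shift $\tfrac{1}{8}(c_1(\s)^2-\sigma(W))$ --- this is essentially \cite[Lemma 28.3.2]{Kronheimer_Mrowka_2007} repackaged equivariantly, together with Remark \ref{rmk: different definitions of correction terms} to match the $n^R$'s; (3) observe that the induced map on $\widetilde{H}^{\Z_2}_*$ is a module map over $H^*_{\Z_2}(S^0;\F)=\F[v]$ and is compatible with the restriction-to-fixed-point maps, so that in particular it sends a class with $v^l x\ne 0$ for all $l$ to another such class (because the composite with restriction to $X^{\Z_2}$ detects $v$-towers, and the map on fixed points, being a stable map of spheres $(\R^a)^+ \to (\R^{a'})^+$ of the appropriate degree, is a homotopy equivalence after suitable suspension, since it is non-equivariantly a degree-one map by the $b_1(W)=b^+(W)=b^+_{\iota^*}(W)=0$-type hypotheses on the invariant part); (4) conclude that the minimal grading of such a $v$-tower class can only decrease under the map, which after accounting for the grading shift gives exactly $d(Y_0,\iota_0,\s_0)+\tfrac{1}{8}(c_1(\s)^2-\sigma(W))\le d(Y_1,\iota_1,\s_1)$.

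The main obstacle I expect is step (3): showing that the cobordism-induced map is nontrivial on the relevant $v$-towers, equivalently that it restricts to a homotopy equivalence (or at least a map of nonzero degree that is an iso on the relevant localized homology) on the $\Z_2$-fixed-point sets. On the fixed-point level the Seiberg-Witten moduli space on $W$ with reducible configurations gives a map of Thom spectra of index bundles whose non-equivariant degree is controlled by $b_1(W)$ and $b^+_{\iota^*}(W)$; under our hypotheses these vanish appropriately on the invariant part, so the fixed-point map should be a stable homotopy equivalence of spheres, but verifying this carefully --- including that the correction terms and the $\tau$-index identifications $\mathrm{ind}^\tau_{\widetilde{\R}}(D^+_A)=\mathrm{ind}_{\C}(D^+_A)$ line up so the degree shift is precisely $\tfrac{1}{8}(c_1(\s)^2-\sigma(W))$ and not off by the reducible correction --- is the delicate point. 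Everything else (well-definedness of the cobordism map, $\F[v]$-linearity, the formal deduction of the numerical inequality from a map of $(\Z_2,\Z_2)$-$\mathit{SWF}$ spaces of the right degree) is routine and parallels the $S^1$-equivariant story in \cite{Manolescu_2003} and the argument already given in \cite{Konno2024}; indeed, since Proposition \ref{prop:Froyshov inequality from SWFR} is quoted verbatim from \cite[Theorem 3.23]{Konno2024}, one may simply cite that proof, and the only genuinely new content here is noting that the equivalence of Theorem \ref{thm:main theorem} makes this inequality equivalent to the corresponding statement for $\widecheck{\mathit{HMR}}$, which is what feeds into Proposition \ref{prop:Froyshov invariant identification}.
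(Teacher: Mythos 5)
The proposition you are asked about is stated in the paper purely as a citation of \cite[Theorem 3.23]{Konno2024} and is not given a proof there, so there is nothing to compare against in the strict sense; the paper's ``proof'' is the citation itself, which your final sentence correctly identifies as the appropriate move.

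That said, your sketched argument contains a representation-theoretic sign error worth flagging. With the involution $I(a,\phi)=(-\iota^*a,\tau(\phi))$ acting on forms by $-\iota^*$, the $I$-fixed imaginary $2$-forms (i.e.\ the trivial representation $\R$) are the $\iota^*$-\emph{anti}-invariant ones, while the $\iota^*$-invariant forms sit in the sign representation $\widetilde{\R}$. Hence the hypothesis $b^+(W)=b^+_{\iota^*}(W)$, equivalently $b^+_{-\iota^*}(W)=0$, says that the $H^+$-contribution to the cokernel of the linearized cobordism operator lies entirely in $\widetilde{\R}$ (the sign representation) and contributes \emph{nothing} to $\R$ --- the opposite of what you wrote. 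This is precisely what makes the fixed-point piece of the cobordism map an (unsuspended) degree-one map of spheres, allowing the $v$-tower to be transported and the Fr\o yshov inequality to come out. With that correction, your steps (1)--(4) are a fair summary of the argument in \cite{Konno2024}, and the rest of your observations (including the point that, in this paper, the result is simply quoted and the genuinely new content is the identification of $d$ with $-h_R$ via Theorem \ref{thm:main theorem}) line up with the paper.
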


On $\widecheck{\mathit{HMR}}$, Li specialized to $Y=\Sigma(K)$, the double branched cover of a link $K\subset S^3$ with $\mathrm{det}(K)\ne 0$. From now on, we assume $Y$ is of this form. There is a long exact sequence connecting different types of real monopole Floer homology: 
\[\ldots \xrightarrow{p_*} \overline{\mathit{HMR}_*}(Y,\iota,\s) \xrightarrow{i_*} \widecheck{\mathit{HMR}}_*(Y,\iota,\s) \xrightarrow{j_*} \widehat{\mathit{HMR}}_*(Y,\iota,\s) \xrightarrow{p_*} \overline{\mathit{HMR}}_{*-1}(Y,\iota,\s) \xrightarrow{i_*}\ldots\] 

\begin{definition} (\cite[Definition 3.2]{li2022triangle})
    Let $K$ be link as above and $\s$ be a real $\mathrm{spin^c}$ structure on its double branched cover. The \emph{real Fr\o yshov-invariant} $h_{R}(K,\s)$ is the number with the property that the element with the lowest absolute grading in $i_*(\overline{\mathit{HMR}_*}(Y,\iota,\s))\subset \widecheck{\mathit{HMR}}_*(Y,\iota,\s)$ has $\mathrm{gr}^\Q=-h_{R}(K,\s)$.
 \end{definition}

Li showed the following:
\begin{prop}(\cite[Proposition 3.3]{li2022triangle})
Let $K_\pm$ be two links in $S^3$ with nonzero determinant and $S:K_-\to K_+$ be a connected cobordism. Let $\s$ be a real $\mathrm{spin^c}$ structure on $\Sigma(S)$ that restricts to $\s_{\pm}$ on its two boundaries. Suppose that $\Sigma(S)$ is negative definite, i.e.,\[b^+(\Sigma(S))=b_1(S)-b_0(S)+\sigma(K_+)-\sigma(K_-)-\frac{S\cdot S}{2}=0.\] Then \[h_R(K_-,\s_-)\ge h_R(K_+,\s_+)+\frac{c^2_1(\s)-\sigma(\Sigma(S))}{8}.\]
\end{prop}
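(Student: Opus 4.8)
The plan is to deduce the inequality from two facts already available: the identification $d(\Sigma(K),\iota,\s)=-h_R(K,\s)$ of Proposition \ref{prop:Froyshov invariant identification}, and Konno's Fr\o yshov inequality for the real Seiberg--Witten homotopy type, Proposition \ref{prop:Froyshov inequality from SWFR}. The bridge is the four-dimensional double branched cover $\Sigma(S)$ of the ambient cobordism, branched along $S$: applying Konno's inequality to $\Sigma(S)$ and then substituting the identification on the two ends should give exactly the stated estimate.

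Concretely, I would proceed as follows. First, equip $\Sigma(S)$ with its deck transformation $\iota$; since the cover is branched along $S$, the fixed set of $\iota$ is a copy of $S$, a codimension-two submanifold, and $\iota$ is orientation preserving (it is rotation by $\pi$ in the normal directions to the branch locus), so $(\Sigma(S),\iota)$ is a real four-manifold with $\partial\Sigma(S)=\Sigma(K_-)\sqcup\Sigma(K_+)$, on which $\iota$ restricts to the canonical real structures $\iota_\pm$. Since $\det(K_\pm)\neq 0$, each $\Sigma(K_\pm)$ is a rational homology sphere, so the given real $\mathrm{spin^c}$ structure $\s$ makes $(\Sigma(S),\iota,\s)$ a real $\mathrm{spin^c}$ cobordism from $(\Sigma(K_-),\iota_-,\s_-)$ to $(\Sigma(K_+),\iota_+,\s_+)$, restricting to $\s_\pm$ on the ends. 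Next I would check the hypotheses of Proposition \ref{prop:Froyshov inequality from SWFR}: the condition $b^+(\Sigma(S))=b^+_{\iota^*}(\Sigma(S))$ is automatic from negative definiteness, since $b^+(\Sigma(S))=0$ forces $b^+_{\iota^*}(\Sigma(S))=0$; the condition $b_1(\Sigma(S))=0$ is the one requiring real work (discussed below). Granting it, Proposition \ref{prop:Froyshov inequality from SWFR} applied to $(\Sigma(S),\iota,\s)$ yields
\[
d(\Sigma(K_-),\iota_-,\s_-)+\frac{c_1(\s)^2-\sigma(\Sigma(S))}{8}\le d(\Sigma(K_+),\iota_+,\s_+),
\]
and substituting $d(\Sigma(K_\pm),\iota_\pm,\s_\pm)=-h_R(K_\pm,\s_\pm)$ from Proposition \ref{prop:Froyshov invariant identification} and rearranging gives
\[
h_R(K_-,\s_-)\ge h_R(K_+,\s_+)+\frac{c_1(\s)^2-\sigma(\Sigma(S))}{8},
\]
which is the claim. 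I would also note that for double branched covers both $d$ and $h_R$ carry a well-defined absolute $\mathrm{gr}^\Q$ and that Theorem \ref{thm:main theorem} respects it, so Proposition \ref{prop:Froyshov invariant identification} is a genuine numerical equality and the substitution is legitimate.

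The main obstacle I anticipate is verifying $b_1(\Sigma(S))=0$, which Konno's homotopy-type inequality requires but which is not automatic: a transfer argument only gives $H_1(\Sigma(S);\Q)=H_1(\Sigma(S);\Q)^{-\iota^*}$, and the anti-invariant part can a priori be nonzero when $S$ has positive genus. I would handle this either by a direct Mayer--Vietoris/Smith-theory computation of $H_1(\Sigma(S))$ for the double cover of a connected link cobordism with rational homology sphere ends, or, failing that, by first performing equivariant surgery on a system of $\iota$-anti-invariant loops in the interior of $\Sigma(S)$ (chosen disjoint from the branch locus and compatible with $\s$, which is generic in dimension four) so as to kill $b_1$ while leaving the boundary, the condition $b^+=0$, and the quantity $c_1(\s)^2-\sigma(\Sigma(S))$ unchanged, and then running the argument above on the surgered cobordism. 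The bookkeeping showing that such surgeries preserve negative definiteness and $c_1(\s)^2-\sigma$ is the step that will need the most care.
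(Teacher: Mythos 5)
Your derivation is mathematically valid, but it is a genuinely different argument from the one actually used to establish this result. In the paper the proposition is simply quoted from Li's \cite{li2022triangle}, where it is proved by a direct real monopole Floer argument: one constructs the cobordism map for $\Sigma(S)$ on $\overline{\mathit{HMR}}$ and $\widecheck{\mathit{HMR}}$, uses the module structure and the ``to/from/bar'' exact triangle, and compares reducible solutions via the APS index, in parallel to the Fr\o yshov inequality proofs in \cite{Kronheimer_Mrowka_2007}. Your route instead deduces the inequality from the much heavier machinery of this paper — Theorem \ref{thm:main theorem}, Proposition \ref{prop:Froyshov invariant identification}, and Konno--Miyazawa--Taniguchi's Proposition \ref{prop:Froyshov inequality from SWFR} — which Li did not have available. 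What your route buys is a one-line derivation once the isomorphism of Floer theories is in place; what it costs is that it runs the logic of Subsection \ref{sub: Smith type inequalities} backwards. That subsection's Proposition \ref{prop:Froyshov invariant identification} (refined version) records as content that Li's inequality and Konno's inequality, \emph{each independently proved}, end up being the same statement; your argument treats that coincidence as the proof of one of them.

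On the one step you flagged as the obstacle: $b_1(\Sigma(S))=0$ is not a gap you need to close yourself. For a connected cobordism $S$ in $S^3\times I$ between links $K_\pm$ with $\det(K_\pm)\ne 0$, the double branched cover $\Sigma(S)$ has $b_1=0$ (and indeed $b^+_{\iota^*}=0$); this is the computation the paper itself cites from the beginning of \cite[Section 3]{li2022triangle} when it remarks that $b^+(\Sigma(S))=0\Leftrightarrow b^+(\Sigma(S))=b^+_{\iota^*}(\Sigma(S))$. So the Mayer--Vietoris argument you outline is the right thing and is already on record; the equivariant surgery fallback you describe, while workable in principle, is unnecessary and adds real bookkeeping (particularly about preserving the choice of real $\mathrm{spin^c}$ structure and the quantity $c_1^2-\sigma$). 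With that citation in hand your proof is complete, and your check that $b^+=0$ forces $b^+=b^+_{\iota^*}$, and your note that $\mathrm{gr}^{\Q}$ is defined on both sides and respected by Theorem \ref{thm:main theorem} for double branched covers, are both correct and necessary.
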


Now, we are ready to state and prove a refined version of Proposition \ref{prop:Froyshov invariant identification}.
\begin{prop}
    Let $Y$ be the double branched cover of some link $K\subset S^3$ with $\mathrm{det}(K)\ne 0$. equipped with the canonical real structure $\iota$ and a compatible real $\mathrm{spin^c}$ structure, then $d(Y,\iota,\s)=-h_R(K,\s)$. Moreover, the two Fr\o yshov-type inequalities coincide when $Y$ is specialized to this form and $W$ is taken to be a double branched cover over some knot cobordism. 
\end{prop}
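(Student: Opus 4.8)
The statement has two halves. First, I want to show the equality of numerical invariants $d(Y,\iota,\s) = -h_R(K,\s)$. The strategy is to leverage the main theorem: we have a graded $\F[v]$-module isomorphism $\widecheck{\mathit{HMR}}_*(Y,\iota,\s)\cong\widetilde{H}^{\Z_2}_*(\mathit{SWF}_{\Z_2}(Y,\iota,\s);\F)$, and we also have the ``bar'' version isomorphism $\overline{\mathit{HMR}}_*(Y,\iota,\s)\cong t\widetilde{H}^{\Z_2}_*(\mathit{SWF}_{\Z_2}(Y,\iota,\s);\F)$. So both $d$ and $-h_R$ can be read off from the same $\F[v]$-module picture, and I need to check that the two definitions pick out the same grading. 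On the $\mathit{SWF}$ side, $d(X)$ is the minimal grading $m$ such that some class in $\widetilde{H}^{\Z_2}_m$ survives all powers of $v$; the classes that survive all powers of $v$ are exactly the image of the Tate (or here, via the long exact sequence $t\widetilde H\to \widetilde H^{\Z_2}\to c\widetilde H$, the ``infinitely $v$-divisible tower'') — this is the standard fact that $\im(v^\infty)$ agrees with the image of the map from Borel-type to coBorel-type homology localized at $v$, and for a space of type $(\Z_2,\Z_2)$-$\mathit{SWF}$ this image is a single $\F[v]$-tower. On the $\mathit{HMR}$ side, $h_R$ is defined by the lowest grading in $i_*(\overline{\mathit{HMR}})\subset\widecheck{\mathit{HMR}}$, where $i_*$ is exactly the map induced by the long exact sequence relating the bar, check, and hat versions. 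Under the identifications of the main theorem (which are natural with respect to these long exact sequences, as the three versions correspond to Borel/coBorel/Tate homology of $\mathit{SWF}_{\Z_2}$), the image of $i_*$ corresponds to the infinitely-$v$-divisible tower, so its lowest grading equals $d$. The one point to be careful about is the sign/grading convention: Li's $h_R$ is defined so that the lowest element has $\mathrm{gr}^\Q = -h_R$, and the main theorem says the absolute gradings match (via Subsection \ref{sub:Identification of grading}), so $-h_R(K,\s) = \mathrm{gr}^\Q(\text{lowest tower element}) = d(Y,\iota,\s)$.

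\textbf{Second half.} For the coincidence of the two Fr\o yshov inequalities, I specialize Proposition \ref{prop:Froyshov inequality from SWFR} and Li's inequality (Proposition \ref{li2022triangle} version) to the case $Y_i = \Sigma(K_i)$ and $W = \Sigma(S)$ for a knot (or link) cobordism $S\subset S^3\times[0,1]$ with the canonical real structure. The plan is to check that the hypotheses and the conclusions translate into each other term-by-term. On the hypothesis side: $b_1(\Sigma(S))=0$ needs to match $b^1_{-\iota^*}$ conditions, and the key positivity hypothesis $b^+(W)=b^+_{\iota^*}(W)$ in Konno–Miyazawa–Taniguchi's inequality must be shown equivalent to the negative-definiteness condition $b^+(\Sigma(S)) = 0$ that appears in Li's inequality — or more precisely, since $\Sigma(S)$ as a branched double cover has $H^2_{-\iota^*}$ related to the ordinary $H^2(S)$, and $H^2_{\iota^*}$ related to $H^2(\Sigma(S))$, the condition $b^+ = b^+_{\iota^*}$ becomes $b^+_{-\iota^*} = 0$, which via the branched-cover formula for $b^+(\Sigma(S))$ recorded in Li's Proposition is exactly $b_1(S) - b_0(S) + \sigma(K_+) - \sigma(K_-) - \tfrac{S\cdot S}{2} = 0$. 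On the conclusion side, the correction term $\tfrac{c_1(\s)^2 - \sigma(W)}{8}$ is literally the same expression in both statements (with $W = \Sigma(S)$), and under $d = -h_R$ the inequality $d(Y_0) + \tfrac{c_1^2 - \sigma}{8} \le d(Y_1)$ becomes $-h_R(K_-,\s_-) + \tfrac{c_1^2-\sigma}{8} \le -h_R(K_+,\s_+)$, i.e. $h_R(K_-,\s_-) \ge h_R(K_+,\s_+) + \tfrac{c_1^2-\sigma}{8}$, which is precisely Li's inequality. So the two become formally identical once one matches conventions.

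\textbf{Main obstacle.} The genuinely substantive step is the first one: identifying $\im(v^\infty)$ on the $\mathit{SWF}_{\Z_2}$ side with the image $i_*(\overline{\mathit{HMR}})$ on the Floer side, and checking this identification is compatible with the absolute grading. This requires knowing that the main-theorem isomorphism intertwines the maps in the exact triangle $\overline{\mathit{HMR}}\to\widecheck{\mathit{HMR}}\to\widehat{\mathit{HMR}}$ with the exact triangle $t\widetilde H^{\Z_2}\to\widetilde H^{\Z_2}\to c\widetilde H$; this should follow from how those triangles are built (the first from boundary-(un)stable generators, the second from the $\Z_2$-fixed point set and its complement), but it demands one to re-examine the chain-level construction. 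The rest — matching the positivity hypotheses under branched covering and bookkeeping the signature and $c_1^2$ terms — is routine algebraic topology of double branched covers (the relevant formulas for $b^\pm(\Sigma(S))$ and $\sigma(\Sigma(S))$ are classical, e.g. via the $G$-signature theorem or the Murasugi/Gordon–Litherland-type formulas), so I do not expect difficulty there. I would also want to double-check one edge case: whether $h_R$ and $d$ are defined using the same normalization of the base point $[x_0]$ (the eigenvector of lowest positive eigenvalue on $S^3$ with the unknot's real structure), which is exactly the anchor used in $\mathrm{gr}^\Q$ and in $\mathrm{gr}^{\mathit{SWF}}_\lambda$; the proof of the grading-matching proposition in Subsection \ref{sub:Identification of grading} already handles this, so invoking it closes the gap.
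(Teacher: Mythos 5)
Your proposal is correct and follows the same strategy as the paper: both halves match — the first uses the absolutely-graded $\F[v]$-module isomorphism from Theorem \ref{thm:main theorem} to identify the starting grading of the infinite $v$-tower on each side, and the second checks that $b^+(\Sigma(S))=b^+_{\iota^*}(\Sigma(S))$ reduces (via $b^1(\Sigma(S))=b^+_{\iota^*}(\Sigma(S))=0$ for branched double covers) to $b^+(\Sigma(S))=0$, after which the two Fr{\o}yshov inequalities translate into one another verbatim under $d=-h_R$. One remark: the exact-triangle intertwining you flag as the ``main obstacle'' is not actually needed, because both $d$ and $-h_R$ are determined internally by the $\F[v]$-module structure together with the absolute grading (the image $i_*(\overline{\mathit{HMR}})$ is precisely the set of elements surviving all powers of $v$, a purely module-theoretic characterization), so the module isomorphism from Theorem \ref{thm:main theorem} transports the tower directly, with no need to re-examine the chain-level construction of the $\overline{\mathit{HMR}}\to\widecheck{\mathit{HMR}}\to\widehat{\mathit{HMR}}$ triangle or its compatibility with the Tate/Borel/coBorel triangle.
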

\begin{proof}
    The first claim follows from the observation that \begin{itemize}
        \item both $d(Y,\iota,\s)$ and $-h_R(K,\s)$ can be characterized as the starting grading of the infinite $v$-tower in the corresponding homology theory;
        \item we have shown that for $Y$ as in the description, $\widecheck{\mathit{HMR}}_*(Y,\iota,\s)\cong\widetilde{H}^{\Z_2}_*(\mathit{SWF}_{\Z_2}(Y,\iota,\s))$ as absolutely graded $\F[v]$-modules.
    \end{itemize} 

    For the second claim, the computation at the beginning of \cite[Section 3]{li2022triangle} tells us that, for $W=\Sigma(S)$,  $b^1(W)=0$ and $b^+_{\iota^*}(W)=0$, so \[b^+(W)=0 \Longleftrightarrow b^+(W)=b^+_{\iota^*}(W).\] This identifies the assumption in two propositions, the expression of the two inequalities are obviously the same.
\end{proof}

\begin{remark}\begin{enumerate}
    \item On the existence of $\mathrm{gr}^\Q$ on $\widecheck{\mathit{HMR}}$: The grading formula in Subsection \ref{subsub:Gradings} makes sense when $(Y,\iota,\s)$ has a real $\mathrm{spin^c}$ bound. It is obvious that double branched covers over knots and links fit into this assumption. Moreover, using the contact surgery description for real contact manifolds in \cite{Cengiz_2022}, we know that such a bound exists whenever the real $\mathrm{spin^c}$ structure is torsion and supports a real contact structure. The main theorem in \cite{Cengiz_2022} states that every closed real three-manifold supports a real contact structure, which tells us whenever $H^1(Y;\Z)^{-\iota_*}=0$, absolute grading is well-defined on $\mathit{HMR}(Y,\iota,\s)$ for at least one $\s\in \mathrm{Spin^c_R}(Y,\iota)$.
    \item On the definition of Fr\o yshov-invariant: First, we mention that $h_R$ is always well-defined whenever the grading admits an absolute lift, so we have no need to restrict ourselves to branchced cover over links with non-zero determinants. We can also prove a more general version of the Fr\o yshov-type inequality using the same argument as in \cite[Proposition 3.3]{li2022triangle}.  Furthermore, as we have seen at the end of Section \ref{sub:Main thm}, we do not need to assume that $Y$ is a rational homology sphere, $d$ is well-defined whenever $H^1(Y,\Z)^{-\iota^*}$ is zero and for $h_R$ to be well-defined, we need an extra (not very restrictive) assumption on a real $\mathrm{spin^c}$ bound. We still have $d=-h_R$, when they are both well-defined and an analogue of Proposition \ref{prop:Froyshov inequality from SWFR} can be proved as well.
    \item About the absolute grading: One may have noted that the absolute grading on $\widetilde{H}^{\Z_2}_*(\mathit{SWF}_{\Z_2}(Y,\iota,\s))$ is always well-defined, regardless of whether there is a real $\mathrm{spin^c}$ bound or not, so we can actually use the isomorphism as relative graded module to induce an absolute grading on $\widecheck{\mathit{HMR}}_*(Y,\iota,\s)$.
\end{enumerate}
    
\end{remark}

\subsection{Smith-type inequalities}\label{sub: Smith type inequalities}
In this subsection, we introduce some new concepts and prove Theorem \ref{thm:Smith inequality} and \ref{thm:inequality in reduced version}. We will also consider some direct application of it.

Recall that we have defined a $v$-action on $\widecheck{\mathit{HMR}}_*(Y,\iota,\s)$ in Subsection \ref{subsub:Gradings} for $v$, the generator of $\widetilde{H}^{*}_{\Z_2}(S^0;\F)\cong\F[v]$. This comes from a chain map $v:\widecheck{CMR}_*(Y,\iota,\s)\to \widecheck{CMR}_{*-1}(Y,\iota,\s)$. The idea of this module structure originates from the $\widetilde{H}^*_{S^1}(S^0;\Z)\cong \Z[U]$-module structure on $\widecheck{\mathit{HM}}(Y,\underline{\s})$ which comes from a chain map $U:\widecheck{CM}_{*}(Y,\underline{\s})\to \widecheck{CM}_{*-2}(Y,\underline{\s})$. In Heegaard Floer theory, $\widehat{\mathit{HF}}(Y,\underline{\s})$ is the simplest version that counts pseudo-holomorphic disks that do not cross the basepoint. Alternatively, $\widehat{CF}(Y,\underline{\s})$ can be characterized as the mapping cone of $U:CF^+_{*}(Y,\underline{\s})\to CF^+_{*-2}(Y,\underline{\s})$. Motivated by this, Bloom defined $\widetilde{CM}_*(Y,\underline{\s})$ as the mapping cone of this $U$ action on $\widecheck{CM}_*(Y,\underline{\s})$ in \cite{BLOOM20113216} and let $\widetilde{\mathit{HM}}_*(Y,\underline{\s})$ be its homology. Li introduced the real analogue by defining $\widetilde{CMR}(Y,\iota,\s)=\mathrm{Cone}(v:\widecheck{CMR}(Y,\iota,\s)\to \widecheck{CMR}(Y,\iota,\s))$ and taking its homology $\widetilde{\mathit{HMR}}_*(Y,\iota,\s)=H_*(\widetilde{CMR}(Y,\iota,\s))$. 

\begin{lem}\label{lem:equivariant homology and usual homology}
Let $X$ be a based finite $\Z_2$-CW complex. Then we have the following graded isomorphism \[\widetilde{H}^{n}(X;\F)\cong H^n([ H_{\Z_2}^*(X;\F)]_1 \xrightarrow{v} H_{\Z_2}^{*+1}(X;\F)).\]
Here $[C]_1$ means that the degree of $C$ is shifted up by one.
\end{lem}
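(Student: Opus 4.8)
The plan is to realize the mapping-cone formula as the edge homomorphism / cofiber sequence coming from the Borel construction. Recall that for a based $\Z_2$-CW complex $X$, the reduced $\Z_2$-equivariant cohomology $H^*_{\Z_2}(X;\F)$ is by definition $\widetilde{H}^*(X\wedge_{\Z_2}E\Z_{2+};\F)$, and the class $v\in H^1_{\Z_2}(S^0;\F)$ acts by cup product. The inclusion of a point $* \hookrightarrow E\Z_2$ (equivalently, $X = X\wedge S^0 \to X\wedge E\Z_{2+}$ after quotienting) induces the forgetful map $H^*_{\Z_2}(X;\F)\to \widetilde H^*(X;\F)$. The Gysin/cofiber sequence for the $S^0$-bundle $\Z_2 \to E\Z_2 \to B\Z_2 = \R P^\infty$ — more precisely the cofiber sequence $E\Z_{2+} \to S^0 \to \Sigma (\text{something})$ realizing multiplication by $v$ — gives a long exact sequence. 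First I would set this up carefully: there is a cofibration of $\Z_2$-spaces whose associated long exact sequence in reduced $\Z_2$-equivariant cohomology of $X$ is precisely
\[
\cdots \to H^{n-1}_{\Z_2}(X) \xrightarrow{\ v\ } H^{n}_{\Z_2}(X) \to \widetilde H^{n}(X) \to H^{n}_{\Z_2}(X) \xrightarrow{\ v\ } H^{n+1}_{\Z_2}(X) \to \cdots
\]
This is the standard localization/Gysin sequence for $\Z_2$ over $\F_2$; since $\F_2$-coefficients make the relevant Euler class equal to $v$, the connecting maps are exactly cup product with $v$.

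Given this long exact sequence, the next step is purely homological algebra: a long exact sequence with a periodic pattern $A^{*} \xrightarrow{v} A^{*+1} \to B^{*} \to A^{*+1} \xrightarrow{v} A^{*+2}$ identifies $B^n$ with the cohomology of the two-term complex $(A^{n-1} \xrightarrow{v} A^n)$, i.e. with $\mathrm{coker}(v\colon A^{n-1}\to A^n)\oplus \ker(v\colon A^n \to A^{n+1})$ fitting into the mapping cone. Concretely, $\widetilde H^n([A]_1 \xrightarrow{v} A) = \ker(v)_n \oplus \mathrm{coker}(v)_{n}$ where the grading shift $[A]_1$ accounts for the fact that $v$ raises degree by one, and the long exact sequence above breaks into short exact sequences
\[
0 \to \mathrm{coker}(v\colon H^{n-1}_{\Z_2}(X)\to H^n_{\Z_2}(X)) \to \widetilde H^n(X) \to \ker(v\colon H^n_{\Z_2}(X)\to H^{n+1}_{\Z_2}(X)) \to 0,
\]
which splits over the field $\F$. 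This is exactly the mapping-cone cohomology of $[H^*_{\Z_2}(X)]_1 \xrightarrow{v} H^{*+1}_{\Z_2}(X)$, so the claimed isomorphism follows, and it is manifestly graded. I would be careful that the degree-shift convention $[C]_1$ in the statement matches the placement of $v$ as a degree $+1$ map, which it does.

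The main obstacle — really the only substantive point — is pinning down the cofiber sequence that produces multiplication by $v$ as the connecting map, with the correct signs/degrees over $\F_2$. One clean way is to use that $E\Z_2 = S(\R^\infty)$ with the antipodal action and the cofiber sequence $S(\R^\infty)_+ \to S^0 \to S^{\R^\infty}$ of $\Z_2$-spaces obtained from the inclusion of the origin; smashing with $X$ and taking reduced $\Z_2$-equivariant cohomology gives the Gysin sequence, and the connecting map $H^{n}_{\Z_2}(X\wedge S(\R^\infty)_+) = \widetilde H^n(X) \to H^{n+1}_{\Z_2}(X)$ — note $X\wedge S(\R^\infty)_+$ has free $\Z_2$-action away from the basepoint so its equivariant cohomology is ordinary cohomology of $X$ — is multiplication by the Euler class of the trivial line, which over $\F_2$ is $v\in H^1(B\Z_2;\F_2)$. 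Alternatively, and perhaps more self-containedly given the finite-CW hypothesis, I would filter by skeleta / use the cellular chain complex of $X\times_{\Z_2}E\Z_2$ with its $\F[v]$-module structure and run the algebraic mapping-cone argument directly; since $X$ is a finite $\Z_2$-CW complex, all groups in sight are finite-dimensional over $\F$ and there are no $\lim^1$ or convergence issues. Either route is short; the care needed is entirely in the bookkeeping of the grading shift and in confirming the connecting homomorphism is literally cup product with $v$ and not some twist of it.
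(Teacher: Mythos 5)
Your overall strategy is the right one and is essentially the paper's: the paper simply cites \cite[Lemma 14.0.1]{Lidman2016TheEO} and observes that the $S^1$ argument goes through with the Euler class replaced by the first Stiefel--Whitney class and the degree-$2$ shift replaced by a degree-$1$ shift. The long exact (Gysin) sequence you write down is correct, and the homological-algebra half of your argument --- splitting it into short exact sequences $0\to \mathrm{coker}(v)\to \widetilde H^n(X)\to \ker(v)\to 0$ over the field $\F$ and recognizing the result as the cohomology of the two-term mapping cone, with the shift $[\,\cdot\,]_1$ matching the fact that $v$ has degree $+1$ --- is fine.

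There is, however, a concrete error in the geometric step you yourself single out as the only substantive point. The cofiber sequence $S(\R^\infty)_+\to S^0\to S^{\R^\infty}$ you propose is the isotropy separation sequence, and it does \emph{not} produce your long exact sequence: for $Y=X\wedge S(\R^\infty)_+=X\wedge E\Z_{2+}$ one has $Y/\Z_2=X\wedge_{\Z_2}E\Z_{2+}$, so the ``free away from the basepoint'' argument gives $\widetilde H^{*}_{\Z_2}(X\wedge S(\R^\infty)_+)\cong \widetilde H^{*}_{\Z_2}(X)$, i.e.\ the Borel cohomology of $X$ again, not the ordinary cohomology $\widetilde H^{*}(X)$; the third term of that sequence computes the $v$-inverted (Tate/geometric fixed point) theory. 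What you need instead is the \emph{one-dimensional} version: the cofiber sequence $S(\widetilde\R)_+=\Z_{2+}\to S^0\to S^{\widetilde\R}$ of the sign representation $\widetilde\R$. Smashing with $X$, the middle term is identified by the induction isomorphism $\widetilde H^{*}_{\Z_2}(\Z_{2+}\wedge X)\cong \widetilde H^{*}(X)$ (here $(\Z_{2+}\wedge X)\wedge_{\Z_2}E\Z_{2+}\simeq X\wedge E\Z_{2+}\simeq X$ non-equivariantly), and the composite $\widetilde H^{\,n-1}_{\Z_2}(X)\cong\widetilde H^{\,n}_{\Z_2}(\Sigma^{\widetilde\R}X)\to \widetilde H^{\,n}_{\Z_2}(X)$ is cup product with $w_1(\widetilde\R)=v$ by the mod-$2$ Thom isomorphism. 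Relatedly, the class acting is the first Stiefel--Whitney class of the \emph{sign} line (the tautological bundle over $\R P^\infty$), not of ``the trivial line,'' whose Euler class vanishes. With these corrections the argument closes up and agrees with the paper's.
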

\begin{proof}
    This is just \cite[Lemma 14.0.1]{Lidman2016TheEO} with $S^1$ replaced by $\Z_2$ and $\Z$ coefficients replaced by $\F$ coefficients. The proof for that lemma works for us after replacing the Euler class with the first Stiefel Whitney class and changing the grading shift in the long exact sequence of degree two to degree one. 
\end{proof}

This lemma (taking dual to homology theory) and Theorem \ref{thm:main theorem} imply the following isomorphism.

\begin{cor}\label{cor:tilde version isom}
    Let $(Y,\iota,\s)$ be as in Theorem \ref{thm:main theorem}, then we have an isomorphism of graded $\F$-vector spaces \[\widetilde{\mathit{HMR}}_*(Y,\iota,\s)\cong \widetilde{H}_*(\mathit{SWF}_{\Z_2}(Y,\iota,\s)).\]
\end{cor}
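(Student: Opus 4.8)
The statement to prove is Corollary \ref{cor:tilde version isom}: $\widetilde{\mathit{HMR}}_*(Y,\iota,\s)\cong \widetilde{H}_*(\mathit{SWF}_{\Z_2}(Y,\iota,\s))$ as graded $\F$-vector spaces.

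The plan is to combine Theorem \ref{thm:main theorem}, Lemma \ref{lem:equivariant homology and usual homology}, and the definition of $\widetilde{\mathit{HMR}}$ as a mapping cone. First I would recall that $\widetilde{\mathit{HMR}}_*(Y,\iota,\s)$ is by definition the homology of $\mathrm{Cone}(v:\widecheck{CMR}(Y,\iota,\s)\to\widecheck{CMR}(Y,\iota,\s))$. Since taking the homology of a mapping cone of a chain map fits into a long exact sequence, and the induced map on homology is exactly the $v$-action on $\widecheck{\mathit{HMR}}$, there is a standard identification $\widetilde{\mathit{HMR}}_*(Y,\iota,\s)\cong H_*\big(\mathrm{Cone}(v:\widecheck{\mathit{HMR}}_*(Y,\iota,\s)\to\widecheck{\mathit{HMR}}_{*-1}(Y,\iota,\s))\big)$ — strictly speaking one should note the cone on the chain level and the cone on homology have isomorphic homology here because we are over a field $\F$, so all the relevant $\mathrm{Tor}$ terms vanish and the algebraic mapping cone spectral sequence / five-lemma argument degenerates. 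This reduces the computation of $\widetilde{\mathit{HMR}}_*$ to the $\F[v]$-module $\widecheck{\mathit{HMR}}_*(Y,\iota,\s)$ together with its $v$-action.

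Next I would invoke Theorem \ref{thm:main theorem}, which gives an isomorphism $\widecheck{\mathit{HMR}}_*(Y,\iota,\s)\cong\widetilde{H}^{\Z_2}_*(\mathit{SWF}_{\Z_2}(Y,\iota,\s);\F)$ of $\F[v]$-modules (relatively, and absolutely when the absolute grading is defined). Because this is an isomorphism of $\F[v]$-modules, it intertwines the $v$-actions, hence identifies the two mapping cones: $H_*\big(\mathrm{Cone}(v)\ \text{on}\ \widecheck{\mathit{HMR}}\big)\cong H_*\big(\mathrm{Cone}(v)\ \text{on}\ \widetilde{H}^{\Z_2}_*(\mathit{SWF}_{\Z_2})\big)$. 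Finally, Lemma \ref{lem:equivariant homology and usual homology} (in its homological form, obtained by dualizing — one has to be slightly careful here, since the lemma is stated in cohomology, but over the field $\F$ dualization is exact and preserves the $v$-module structure, so the homology version $\widetilde{H}_n(X;\F)\cong\widetilde{H}_n\big([H^{\Z_2}_*(X;\F)]_{-1}\xrightarrow{v}H^{\Z_2}_{*-1}(X;\F)\big)$ holds, with the appropriate degree shift) identifies this last mapping cone homology with $\widetilde{H}_*(\mathit{SWF}_{\Z_2}(Y,\iota,\s))$, where one accounts for the (de)suspensions in the definition $\mathit{SWF}_{\Z_2}(Y,\iota,\s)=\Sigma^{-\mathrm{dim}(V^{0}_{-\lambda})\R}\Sigma^{-(\mathrm{dim}(U^{0}_{-\lambda})+n^R)\widetilde{\R}}I^\lambda$ — but since both $\widetilde{\mathit{HMR}}$ and the reduced homology $\widetilde H_*$ transform the same way under suspension, the grading shifts match up. Chaining these isomorphisms yields the claim.

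The main obstacle, such as it is, is purely bookkeeping rather than geometric: making sure the grading conventions are consistent across the three inputs. Theorem \ref{thm:main theorem} is stated as a relative grading isomorphism in general (and absolute only conditionally), while Corollary \ref{cor:tilde version isom} asserts a graded isomorphism; so the cleanest formulation is as a relatively graded isomorphism, with the remark that it refines to an absolute one exactly when the absolute grading on $\widecheck{\mathit{HMR}}$ exists (or, using Remark (3) of the previous subsection, one can always transport the absolute grading from the $\mathit{SWF}_{\Z_2}$ side). The degree-shift in Lemma \ref{lem:equivariant homology and usual homology} is a shift by one (reflecting $\deg v=1$ in the $\Z_2$-equivariant setting, versus $\deg U=2$ in the $S^1$ case), and the dualization from cohomology to homology over $\F$ reverses gradings; tracking these two shifts together with the suspension shifts in the definition of $\mathit{SWF}_{\Z_2}$ is the only thing requiring care. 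There is no new analysis here — everything reduces to Theorem \ref{thm:main theorem} and elementary homological algebra over a field.
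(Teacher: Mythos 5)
Your proposal is correct and follows the same route as the paper: the paper deduces the corollary in one line by combining the definition of $\widetilde{\mathit{HMR}}$ as the mapping cone of the $v$-action, the $\F[v]$-module isomorphism of Theorem \ref{thm:main theorem}, and (the dual of) Lemma \ref{lem:equivariant homology and usual homology}, which is exactly the chain you spell out. Your extra observation that, over the field $\F$, the homology of the chain-level cone agrees with the cone of $v$ on homology is the standard splitting of the cone long exact sequence and is a harmless expansion of what the paper leaves implicit.
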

This proves \cite[Conjecture 1.4]{li2024realmonopolesspectralsequence}.

Tracing the definition in Subsection \ref{sub:Real Seiberg-Witten homotopy type} (from \cite{Konno2024})and the one in \cite{Manolescu_2003}, we see that \[\mathit{SWF}_{\Z_2}(Y,\iota,\s)=\mathit{SWF}(Y,\underline{\s})^I,\] where we use $\underline{\s}$ to denote the underlying $\mathrm{spin^c}$ structure of the real $\mathrm{spin^c}$ structure $\s$ and $I$ is the involution we defined in Subsection \ref{subsub: Configuration space and Coulomb slices} which contains the information from $\iota$ and $\s$. Recall that we have the following classical Smith inequality; see \cite{Smith1938} or \cite{Floyd52}.
\begin{lem}\label{lem:Smith inequality}
    Suppose that a group $G$ of order $p^n$ ($p$ is some prime number) acts on a compact topological space $X$ with finite-dimensional $H_*(X;\F_p)$. Let $X^G$ be the fixed-point set. Then we have an inequality of Betti numbers: \[\sum_{i} \mathrm{dim}H_i(X^G;\F_p)\le \sum_{i} \mathrm{dim}H_i(X;\F_p). \]
\end{lem}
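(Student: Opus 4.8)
\textbf{Proof proposal for Lemma \ref{lem:Smith inequality}.} The plan is to invoke the classical Smith inequality essentially as a black box, since this is a standard result in equivariant topology; the only work is to record the correct hypotheses and the standard argument, so that the lemma is self-contained enough to be quoted later in the proofs of Theorem \ref{thm:Smith inequality} and Theorem \ref{thm:inequality in reduced version}. First I would reduce to the case $G = \Z_p$: a group of order $p^n$ has a composition series $1 = G_0 \triangleleft G_1 \triangleleft \cdots \triangleleft G_n = G$ with each successive quotient $\Z_p$, and one applies the $\Z_p$-case inductively to the tower of fixed-point sets $X \supseteq X^{G_{n-1}} \supseteq X^{G_{n-2}} \supseteq \cdots \supseteq X^{G_0} = X^G$, using that $X^{G_j}$ is naturally a $G/G_j$-space and that $(X^{G_j})^{G_{j+1}/G_j} = X^{G_{j+1}}$. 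So it suffices to treat $G = \Z_p$ acting on a compact space $X$ with $\dim_{\F_p} H_*(X;\F_p) < \infty$.

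For the $G = \Z_p$ case I would use the Borel construction and the localization theorem, or equivalently the Smith sequence directly. The cleanest route: let $t$ be a generator of $\F_p[\Z_p]$ and set $\sigma = t-1$, $\tau = 1 + t + \cdots + t^{p-1}$ (for $p$ odd one uses the pair $\sigma,\tau$; for $p=2$ one just uses $\sigma = t-1$, which satisfies $\sigma^2 = 0$). Smith theory produces, from the chain complex $C_*(X;\F_p)$ with its $\Z_p$-action, long exact sequences relating $H_*(X;\F_p)$, $H_*(X^G;\F_p)$, and the ``Smith homology'' groups $H_*^{\sigma}, H_*^{\tau}$ of the quotient-type complexes $\sigma C_*$ and $\tau C_*$. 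Chasing these exact sequences gives the Euler-characteristic-type identity together with the inequality
\[
\sum_i \dim_{\F_p} H_i(X^G;\F_p) \;\le\; \sum_i \dim_{\F_p} H_i(X;\F_p),
\]
where finiteness of the right-hand side (our hypothesis) guarantees finiteness of everything in sight and legitimizes the dimension count. I would cite \cite{Smith1938}, \cite{Floyd52} (and note that Bredon's \emph{Introduction to Compact Transformation Groups} or tom Dieck's book contain a textbook treatment) rather than reproducing the Smith sequence in full.

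The one genuine point of care is the hypothesis ``finite-dimensional $H_*(X;\F_p)$'': the classical statements are often given for finite-dimensional (or finitistic, or paracompact finite-cohomological-dimension) $X$, so I would make sure the version I quote is stated with exactly the hypothesis I use here, namely compactness of $X$ plus $\dim_{\F_p} H_*(X;\F_p) < \infty$. Since in the applications $X$ will always be (the Conley index associated to) a finite $\Z_2$-CW complex, one could even replace the hypothesis by ``$X$ a finite $\Z_p$-CW complex,'' in which case both conditions are automatic and the Smith sequence argument is completely elementary at the chain level. I expect \emph{no real obstacle} here — the only thing to get right is the bookkeeping of hypotheses so the lemma is correctly applicable to the Conley indices $\mathit{SWF}(Y,\underline{\s})$ and $\mathit{SWF}(Y,\underline{\s})^I = \mathit{SWF}_{\Z_2}(Y,\iota,\s)$ in the next step of the argument.
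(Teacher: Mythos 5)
Your proposal matches the paper, which gives no proof at all and simply records the statement as ``the classical Smith inequality'' with citations to \cite{Smith1938} and \cite{Floyd52}. Your additional sketch of the reduction to $G=\Z_p$ and the Smith sequence argument is correct and standard, but the paper treats this entirely as a black-box citation.
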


Applying this to $X=\mathit{SWF}(Y,\underline{\s})$, $G=\left \langle I \right \rangle\cong\Z_2$, we see that \[\sum_{i} \mathrm{dim}\widetilde{H}_i(\mathit{SWF}_{\Z_2}(Y,\iota,\s);\F)\le \sum_{i} \mathrm{dim}\widetilde{H}_i(\mathit{SWF}(Y,\underline{\s});\F).\]

Using the results from \cite{Lidman2016TheEO} with $\F$-coefficients, we know that \[\widetilde{H}_*(\mathit{SWF}(Y,\underline{\s});\F)\cong \widetilde{\mathit{HM}}_*(Y,\underline{\s}).\]

Combining this with Corollary \ref{cor:tilde version isom}, we have proved the first part of Theorem \ref{thm:Smith inequality}.

The notion of L-space was first introduced by Ozsváth and Szabó in \cite{OZSVATHlensspacesurgery} to characterize rational homology spheres that cannot be distinguished from lens spaces by Heegaard Floer homology. More precisely, a rational homology sphere is called an \emph{$\F$-L-space} if for each $\mathrm{spin^c}$ structure $\underline{\s}$, $\mathrm{dim} \widehat{\mathit{HF}}(Y,\underline{\s})=1$ as a $\F$-vector space. This is equivalent to $\mathit{HF}^+(Y,\underline{\s})\cong \F[U]$ for each $\mathrm{spin^c}$ structure $\underline{\s}$ by an elementary algebraic argument. In a series of papers \cite{HF=HM1,HF=HM2,HF=HM3,HF=HM4,HF=HM5}, they proved an isomorphism between usual monopole Floer homology and Heegaard Floer homology. So L-spaces can be characterized alternatively by having simplest $\widetilde{\mathit{HM}}$ or $\widecheck{\mathit{HM}}$. Originally, Ozsváth and Szabó defined L-space using Heegaard Floer theory over $\Z$. That notion is strongest in the sense that an L-space would be $\mathbb{K}$-L-space for any field $\mathbb{K}$. In all know examples, no torsion appears in Heegaard Floer theory over $\Z$, so, conjecturally, all these notions of L-space are the same.

Generalizing this notion, we say a real rational homology three-sphere $(Y,\iota)$ is a \emph{real L-space} if for each real $\mathrm{spin^c}$ structure $\s$, $\mathrm{dim} \widetilde{\mathit{HMR}}(Y,\iota,\s)=1$.

Let $Y$ be an L-space. Then for any real structure $\iota$ and a compatible real $\mathrm{spin^c}$ structure $\s$, we know that $\widetilde{\mathit{HM}}(Y,\underline{\s})$ is one dimensional with $\F$ coefficient. Then, the first part of Theorem \ref{thm:Smith inequality} tells us that $\widetilde{\mathit{HMR}}(Y,\iota,\s)$ is also one dimensional. This concludes the proof of this theorem.

\begin{example} 
It was shown in \cite{OZSVATHdoublebranchedcover} that the double branched cover of any quasi-alternating link is an L-space. Quasi-alternating link is a generalization of the classical notion of an alternating link. For its precise definition, see \cite[Definition 3.1]{OZSVATHdoublebranchedcover}. The covering transformation endows $\Sigma(K)$ with a natural real structure $\iota_K$. Thus, Theorem \ref{thm:Smith inequality} allows us to conclude that $(\Sigma(K),\iota_K)$ is a real L-space whenever $K$ is quasi-alternating. Note that our result is actually real structure independent, i.e., for any other real structure $\iota'$ on $\Sigma(K)$, $(\Sigma(K),\iota')$ is also a real L-space. 

In addition, Motegi showed in \cite{MOTEGI2017172} that for links in the infinite family of non-quasi-alternating links $\{L_{m,n}\}_{n>m\ge 2}$ constructed in \cite{greene2009homologicallythinnonquasialternatinglinks} by Greene, each has $\Sigma(L_{m,n})$ an L-space. So we can also conclude for this family that $(\Sigma(L_{m,n}),\iota_{L_{m,n}})$ and more generally $(\Sigma(L_{m,n}),\iota')$ are real L-spaces.
\end{example}

\begin{example} A knot $K$ in $S^3$ is called an \emph{L-space knot} if the result of some non-trivial positive Dehn surgery on $K$ is an L-space(\cite{OZSVATHlensspacesurgery}). In \cite{Hedden2010notionsofpositivity,Ni07fibredknot,Ozsvath2011rationalsurgery,OZSVATHlensspacesurgery}, they showed in various ways that if $K$ is an L-space knot, then for any $r\in \Q_{>0}$ satisfying $r\ge 2g(K)-1$, $S^3_{r}(K)$ is an L-space. It was conjectured that all L-space knots are strongly invertible, see \cite{LidmanMoore2013} or \cite{WATSON2017915}. Although this was disproved by Baker and Luecke in \cite{Baker_2020}, most known examples of L-space knots are strongly invertible. 

On the other hand, we know from \cite[Section 5.1]{Daicorks2023} that we can perform equivariant surgery with any coefficient on an equivariant knot. In particular, this tells us that if $K$ is a strongly invertible knot in $S^3$, then any Dehn surgery $S^3_{r}(K)$ has a real structure $\iota_{K,r}$ inherited from $S^3$. 

\end{example}

\begin{remark}
In \cite{guth2025realheegaardfloerhomology}, Guth and Manolescu constructed real Heegaard Floer homology $\widehat{\mathit{HFR}}$ and its counterparts for real three-manifolds. Thus, we can also consider a Heegaard Floer version of real L-space to be those real rational homology spheres with the simplest $\widehat{\mathit{HFR}}$. In \cite[Remark 6.6]{guth2025realheegaardfloerhomology}, they conjectured that $\widehat{\mathit{HFR}}(\Sigma(K),\iota,\s)\cong \F$ when $Y$ is an L-space. Their conjecture was proved by Hendricks in \cite[Corollary 1.3]{hendricks2025noterealheegaardfloer}, and what we have proved is the monopole version of their conjecture. %It would be interesting to compare our approaches or more generally, the two theories $\mathit{HFR}^\circ$ and $\mathit{HMR}^\circ$. We conjecture that they are actually isomorphic.
\end{remark}

Next, we move to Theorem \ref{thm:inequality in reduced version}, which is analogous to \cite[Theorem 1.4]{LM2016coveringspaces}. The proof here needs a little more algebraic topology, since for $\mathit{HM}$, we consider $S^1$-equivariant theory but for $\mathit{HMR}$, we consider $\Z_2$-equivariant theory. 

Before proving the theorem, we first recall and introduce some definitions. In \cite{OS04closedthreemanifold}, they introduced the reduced version of Heegaard Floer homology, which can be described as \[\mathit{HF}_{\mathrm{red}}(Y,\underline{\s})=\mathit{HF}^+(Y,\underline{\s})/ U^N \mathit{HF}^+(Y,\underline{\s}),\] for $N\gg 0$. It was shown there that this reduced group is independent of the choice of a sufficiently large $N$.  Similarly, we can define \[\mathit{HM}_{\mathrm{red}}(Y,\underline{\s})=\widecheck{\mathit{HM}}(Y,\underline{\s})/ U^N \widecheck{\mathit{HM}}(Y,\underline{\s}),\] \[\mathit{HMR}_{\mathrm{red}}(Y,\iota,\s)=\widecheck{\mathit{HMR}}(Y,\iota,\s)/ v^N \widecheck{\mathit{HMR}}(Y,\iota,\s),\]  for sufficiently large $N$ and show that it is independent of $N$ when it is large enough. Using the isomorphism between Heegaard Floer theory and monopole Floer theory from \cite{HF=HM1} and its sequel papers, we also know that $\mathit{HM}_{\mathrm{red}}(Y,\underline{\s})\cong \mathit{HF}_{\mathrm{red}}(Y,\underline{\s})$. 

\begin{proof}(proof of Theorem \ref{thm:inequality in reduced version})

For notational convenience, let $X=\mathit{SWF}(Y,\underline{\s})$, then $X^I=\mathit{SWF}_{\Z_2}(Y,\iota,\s)$ for $I$ the involution defined in Subsection \ref{subsub: Configuration space and Coulomb slices}. Throughout the proof, we will use $\F$ as coefficient for various homology theories.

Using Theorem \ref{thm:main theorem} and the corresponding isomorphism in usual Seiberg-Witten Floer theory from \cite{Lidman2016TheEO}, we have that \[\mathit{HMR}_{\mathrm{red}}(Y,\iota,\s)=\widetilde{H}^{\Z_2}_*(X^I)/ v^N \widetilde{H}^{\Z_2}_*(X^I) \text{ and } \mathit{HM}_{\mathrm{red}}(Y,\underline{\s})=\widetilde{H}^{S^1}_*(X)/ U^N \widetilde{H}^{S^1}_*(X) ,\] for $N$ sufficiently large.
  
Now we introduce \[\mathit{HM}_{\mathrm{red}}^{\Z_2}(Y,\underline{\s})=\widetilde{H}^{\Z_2}_*(X)/ v^N \widetilde{H}^{\Z_2}_*(X),\] which will bridge the gap between $ \mathit{HM}_{\mathrm{red}}(Y,\underline{\s})$ and $\mathit{HMR}_{\mathrm{red}}(Y,\iota,\s)$.

We first show that $ \mathrm{dim} \mathit{HMR}_{\mathrm{red}}(Y,\iota,\s)\le \mathrm{dim}\mathit{HM}_{\mathrm{red}}^{\Z_2}(Y,\underline{\s})$ following the argument for \cite[Theorem 1.4]{LM2016coveringspaces}. For a representation $V$, which is also a vector space, let $S(V)_+$, $D(V)_+$ denote the unit sphere and unit disk in $V$ with an extra base point added, respectively. We will consider the long exact sequence of $\Z_2$-equivariant Borel homology associated to the pair $(X\wedge D(\R^N)_+, X\wedge S(\R^N)_+)$. Before doing so, we observe that \begin{itemize}
    \item The space $X\wedge S(\R^N)_+$ has a free $\Z_2$-action away from the base point, so its (reduced) Borel homology is isomorphic to the ordinary (reduced) homology of the quotient $\widetilde{H}_*(X\wedge_{\Z_2} S(\R^N)_+)$;
    \item The space $X\wedge D(\R^N)_+$ is $\Z_2$-equivalent to $X$;
    \item Smashing with $(D(\R^N)_+/S(\R^N)_+)\sim (\R^N)^+$ preserves Borel homology up to some degree shift by $N$.
\end{itemize}

Thus, the long exact sequence can be written as \[\ldots\widetilde{H}_*(X\wedge_{\Z_2} S(\R^N)_+)\to \widetilde{H}^{\Z_2}_*(X)\to \widetilde{H}_{*-N}^{\Z_2}(X)\to \ldots\]

The map $\widetilde{H}^{\Z_2}_*(X)\to \widetilde{H}_{*-N}^{\Z_2}(X)$ comes from the composition \[X\hookrightarrow X\wedge D(\R^N)_+\to X\wedge(D(\R^N)_+/S(\R^N)_+)\to X \wedge (\R^N)^+\cong \Sigma^{N\R}X. \] Therefore, on the homology level, the map is given by multiplication by the equivariant mod 2 Euler class of $N\R$, which is $v^N\in H^{N}_{\Z_2}(pt)$.

For $N$ large enough, multiplication by $v^N$ on $\widetilde{H}^{\Z_2}(X)$ has kernel of dimension $N+\mathrm{dim} \mathit{HM}_{\mathrm{red}}^{\Z_2}(Y,\underline{\s})$ and cokernel isomorphic to $\mathit{HM}_{\mathrm{red}}^{\Z_2}(Y,\underline{\s})$. So we have an equality \[\mathrm{dim} \widetilde{H}^{\Z_2}_*(X\wedge_{\Z_2} S(\R^N)_+)=N+ 2\mathrm{dim}\mathit{HM}^{\Z_2}_{\mathrm{red}}(Y,\underline{\s}).\]
With $X^I$ in place of $X$, we have \[\mathrm{dim} \widetilde{H}^{\Z_2}_*(X^I\wedge_{\Z_2} S(\R^N)_+)=N+ 2\mathrm{dim}\mathit{HMR}_{\mathrm{red}}(Y,\iota,\s).\]

Note that the $\Z_2$ fixed-point set of $X\wedge_{\Z_2} S(\R^N)_+$ is $X^I\wedge_{\Z_2} S(\R^N)_+$. Now applying the usual Smith inequality, Lemma \ref{lem:Smith inequality} concludes the proof. 

Next, we show that \[\mathrm{dim} \mathit{HM}_{\mathrm{red}}^{\Z_2}(Y,\underline{\s}) \le 2 \mathrm{dim} \mathit{HM}_{\mathrm{red}}(Y,\underline{\s}),\] which is equivalent to \[\mathrm {dim} \widetilde{H}^{\Z_2}_*(X)/ v^N \widetilde{H}^{\Z_2}_*(X) \le
2 \mathrm {dim} \widetilde{H}^{S^1}_*(X)/ U^N \widetilde{H}^{S^1}_*(X). \]

To see this, we recall the construction of Borel homology: For a $G$ space $X$, the $G$-equivariant Borel homology is defined as the usual homology of the homotopy quotient \[X_{G}= X\times _{G} EG=X\times EG/ (x,y)\sim (gx,g^{-1}y), \forall g\in G.\] In our case, we have $X_{\Z_2}= X\times S^\infty/ (x,y)\sim(-x,-y)$ and $X_{S^1}=X\times S^\infty /(x,y)\sim (e^{i\theta}x, e^{-i\theta}y)$, so there is a natural quotient map $q: X_{\Z_2}\to X_{S^1}$, which is a fibration with $S^1$ fiber. The following argument is suggested by an anonymous referee. 
In this way, we get two $S^1$-bundles $X_{\Z_2}$ and $X\times S^{\infty} $ over $X_{S^1}$. The Euler class of the first is twice that of the second one, so in particular, it vanishes modulo two. Thus, the Gysin sequence associated to $q$ split into short exact sequences \[0\to \widetilde{H}_{k-1}^{S^1}(X)\xrightarrow{q^*} \widetilde{H}_{k}^{\Z_2}(X) \xrightarrow{q_*}\widetilde{H}_{k}^{S^1}(X) \to 0. \] It follows that \[ \frac{\widetilde{H}_{k-1}^{S^1}(X)}{\mathrm{im}(U^N)} \xrightarrow{q^*}\frac{\widetilde{H}_{k}^{\Z_2}(X)}{\mathrm{im}(v^{2N})} \xrightarrow{q_*} \frac{\widetilde{H}_{k-1}^{S^1}(X;\F)}{\mathrm{im}(U^N)} \] is exact in the middle for $N$ sufficiently large. 
These groups are only nonzero for finitely many $k$, summing over all $k$, we get the desired inequality \[\mathrm {dim} \widetilde{H}^{\Z_2}_*(X)/ v^N \widetilde{H}^{\Z_2}_*(X) \le
2 \mathrm {dim} \widetilde{H}^{S^1}_*(X)/ U^N \widetilde{H}^{S^1}_*(X) \], which concludes the proof.

\end{proof}

Using the identification of the usual monopole and Heegaard Floer homology, we have the following direct corollary.
\begin{cor}
    Under the assumption of Theorem \ref{thm:Smith inequality}, we have \[\mathrm{dim} \mathit{HMR}_{\mathrm{red}}(Y,\iota,\s) \le 2 \mathrm{dim} \mathit{HF}_{\mathrm{red}} (Y,\underline{\s}),\] and in particular when $\mathit{HF}_{\mathrm{red}} (Y,\underline{\s})=0$, $\mathit{HMR}_{\mathrm{red}}(Y,\iota,\s)$ must also be zero.
\end{cor}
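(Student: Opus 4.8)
The plan is to deduce this corollary directly from Theorem~\ref{thm:inequality in reduced version} together with the identification of reduced monopole Floer homology with reduced Heegaard Floer homology. Indeed, Theorem~\ref{thm:inequality in reduced version} already supplies the bound
\[
\mathrm{dim}\,\mathit{HMR}_{\mathrm{red}}(Y,\iota,\s)\le 2\,\mathrm{dim}\,\mathit{HM}_{\mathrm{red}}(Y,\underline{\s}),
\]
so the only additional input needed is that $\mathit{HM}_{\mathrm{red}}(Y,\underline{\s})\cong \mathit{HF}_{\mathrm{red}}(Y,\underline{\s})$. This isomorphism was recalled in Subsection~\ref{sub: Smith type inequalities}: it follows from the isomorphism between monopole Floer homology and Heegaard Floer homology established in \cite{HF=HM1} and its sequels, which identifies $\widecheck{\mathit{HM}}(Y,\underline{\s})$ with $\mathit{HF}^+(Y,\underline{\s})$ compatibly with the $U$-actions and hence descends to the quotients by $U^N$ for $N\gg 0$. (If one prefers to stay within gauge theory, one may instead take $\mathit{HM}_{\mathrm{red}}$ as the working model, with no change to the argument.)

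With these two facts in hand, the first step is simply to substitute the Heegaard Floer group for the monopole group on the right-hand side of Theorem~\ref{thm:inequality in reduced version}, giving
\[
\mathrm{dim}\,\mathit{HMR}_{\mathrm{red}}(Y,\iota,\s)\le 2\,\mathrm{dim}\,\mathit{HF}_{\mathrm{red}}(Y,\underline{\s}).
\]
The second step is the ``in particular'' clause: when $\mathit{HF}_{\mathrm{red}}(Y,\underline{\s})=0$ the right-hand side is $0$, so $\mathrm{dim}\,\mathit{HMR}_{\mathrm{red}}(Y,\iota,\s)$ is a nonnegative integer bounded above by $0$, forcing $\mathit{HMR}_{\mathrm{red}}(Y,\iota,\s)=0$. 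For bookkeeping one should also note that the hypotheses match exactly: Theorem~\ref{thm:inequality in reduced version} is stated under the same assumptions as Theorem~\ref{thm:Smith inequality}, namely that $Y$ is a rational homology sphere carrying a real structure $\iota$ and a compatible real $\mathrm{spin^c}$ structure $\s$, which is precisely the hypothesis of the corollary.

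I do not expect any genuine obstacle here; the corollary is a formal consequence of results already proved above, and the only point to be careful about is that the reduced groups are well defined, i.e.\ independent of the cutoff exponent $N$ once $N$ is large enough. But this was already observed when $\mathit{HM}_{\mathrm{red}}$ and $\mathit{HMR}_{\mathrm{red}}$ were introduced, and it is just the standard fact that the $U$- (resp.\ $v$-) tower of a finitely generated $\Z[U]$- (resp.\ $\F[v]$-) module stabilizes. So the proof is essentially one line, with the substantive content carried entirely by Theorem~\ref{thm:inequality in reduced version} and the $\mathit{HM}=\mathit{HF}$ identification.
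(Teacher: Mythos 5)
Your proposal is correct and matches the paper exactly: the corollary is obtained by combining Theorem \ref{thm:inequality in reduced version} with the identification $\mathit{HM}_{\mathrm{red}}(Y,\underline{\s})\cong \mathit{HF}_{\mathrm{red}}(Y,\underline{\s})$ from the $\mathit{HF}=\mathit{HM}$ papers, and the vanishing statement is the trivial specialization. No gaps.
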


\begin{example}
   Seiberg-Witten theory is in general hard to compute, we illustrate Theorem \ref{thm:inequality in reduced version} using the only non-trivial example that the author could find. Basing on the calculation in \cite{MOY97}, Li computes in \cite[Section 14.6]{li2022monopolefloerhomologyreal} real monopole group of several families of Brieskorn spheres $\Sigma(p,q,r)$, equipped the real structure as double branched cover over Montesinos knots $k(p,q,r)$ and its unique real $\mathrm{spin}$ structure. These examples are trivial in the sense that the $I$-action on Seiberg-Witten moduli spaces is trivial, so we actually have a dimension equality \[\mathrm{dim} \mathit{HMR}_{\mathrm{red}}(Y,\iota,\s)= \mathrm{dim} \mathit{HM}_{\mathrm{red}} (Y,\underline{\s})= \mathrm{dim} \mathit{HF}_{\mathrm{red}} (Y,\underline{\s}).\] 
   
   It would be interesting to see whether we can improve the constant 2 that appears in Theorem \ref{thm:inequality in reduced version} or is there any real $\mathrm{spin^c}$ rational homology sphere $(Y,\iota,\s)$ with $\mathrm{dim} \mathit{HM}_{\mathrm{red}} (Y,\underline{\s})<\mathrm{dim} \mathit{HMR}_{\mathrm{red}}(Y,\iota,\s) \le 2 \mathrm{dim} \mathit{HM}_{\mathrm{red}}(Y,\underline{\s})$.
\end{example}

\bibliographystyle{plain}
\bibliography{bibliography}

\end{document}